\chardef\bslash=`\\
\newtheorem{theorem}[subsection]{Theorem}
\newtheorem{thm}[subsection]{Theorem}
\newtheorem{lemma}[subsection]{Lemma}
\newtheorem{example}[subsection]{Example}
\newtheorem{introthm}{Theorem}
\newtheorem{introcor}[introthm]{Corollary}
\newtheorem{cor}[subsection]{Corollary}
\newtheorem{prop}[subsection]{Proposition}
\newtheorem{proposition}[subsection]{Proposition}
\newtheorem{defn}[subsection]{Definition}
\theoremstyle{remark}
\newtheorem{remark}[subsection]{Remark}
\newtheorem*{remark-unnumbered}{Remark}
\numberwithin{equation}{subsection}
\newif\iffinalrun
\newcommand{\need}[1]{}
\newcommand{\mar}[1]{}
\newcommand{\need}[1]{{\tiny *** #1}}
\newcommand{\mar}[1]{\marginpar{\raggedright\tiny  #1}}\fi
\renewcommand\mathbb{\mathbf}
\newcommand{\Lie}{{\operatorname{Lie}\,}}
\newcommand{\gog}{{\mathfrak{g}}}
\newcommand{\wt}{{\operatorname{wt}}}
\newcommand{\rec}{{\operatorname{rec}}}
\newcommand{\barepsilon}{{\overline{\epsilon}}}
\newcommand{\rbar}{\overline{r}}
\newcommand{\wv}{{\widetilde{v}}}
\newcommand{\ww}{{\widetilde{w}}}
\renewcommand{\ell}{l}
\def\PGL{\mathrm{PGL}}
\def\Iw{\mathrm{Iw}}
\newcommand{\cInd}{\operatorname{c-Ind}}
\newcommand{\St}{\operatorname{St}}
\newcommand{\ad}{\operatorname{ad}}
\newcommand{\diag}{\operatorname{diag}}
\newcommand{\tr}{\operatorname{tr}}
\newcommand{\Sp}{\operatorname{Sp}}
\newcommand{\D}{\cD}
\newcommand{\G}{\cG}
\newcommand{\M}{\cM}
\newcommand{\A}{\mathbf{A}}
\newcommand{\bA}{\ensuremath{\mathbf{A}}}
\newcommand{\CC}{{\mathbb C}}
\newcommand{\C}{\CC}
\newcommand{\bC}{\ensuremath{\mathbf{C}}}
\newcommand{\F}{\FF}
\newcommand{\FF}{{\mathbb F}}
\newcommand{\bF}{\ensuremath{\mathbf{F}}}
\newcommand{\bG}{\ensuremath{\mathbf{G}}}
\newcommand{\bQ}{\ensuremath{\mathbf{Q}}}
\newcommand{\Q}{\QQ}
\newcommand{\QQ}{{\mathbb Q}}
\newcommand{\bR}{\ensuremath{\mathbf{R}}}
\newcommand{\R}{\RR}
\newcommand{\RR}{{\mathbb R}}
\newcommand{\bT}{\ensuremath{\mathbf{T}}}
\newcommand{\TT}{{\mathbb T}}
\newcommand{\T}{{\mathbb T}}
\newcommand{\Z}{\ZZ}
\newcommand{\ZZ}{{\mathbb Z}}
\newcommand{\bZ}{\ensuremath{\mathbf{Z}}}
\newcommand{\bbZ}{\ensuremath{\mathbf{Z}}}
\newcommand{\bbQ}{\ensuremath{\mathbf{Q}}}
\newcommand{\cC}{{\mathcal C}}
\newcommand{\cD}{{\mathcal D}}
\newcommand{\cE}{{\mathcal E}}
\newcommand{\cF}{{\mathcal F}}
\newcommand{\cG}{{\mathcal G}}
\newcommand{\cL}{{\mathcal L}}
\newcommand{\cM}{{\mathcal M}}
\newcommand{\cO}{{\mathcal O}}
\newcommand{\cR}{{\mathcal R}}
\newcommand{\cS}{{\mathcal S}}
\newcommand{\cT}{{\mathcal T}}
\newcommand{\cV}{{\mathcal V}}
\newcommand{\cW}{{\mathcal W}}
\newcommand{\cU}{{\mathcal{U}}}
\newcommand{\cZ}{{\mathcal{Z}}}
\newcommand{\cX}{{\mathcal{X}}}
\newcommand{\cY}{{\mathcal{Y}}}
\newcommand{\m}{\frakm}
\newcommand{\ffrm}{{\mathfrak m}}
\newcommand{\frakm}{\mathfrak{m}}
\newcommand{\frakp}{\mathfrak{p}}
\newcommand{\p}{\frakp}
\newcommand{\frakq}{\mathfrak{q}}
\newcommand{\q}{\frakq}
\newcommand{\Qbar}{\overline{\Q}}
\newcommand{\Zbar}{\overline{\Z}}
\newcommand{\Zp}{\Z_p}
\newcommand{\Zpbar}{\Zbar_p}
\newcommand{\Zpx}{\Zp^{\times}}
\newcommand{\Zpbarx}{\Zpbar^{\times}}
\newcommand{\Qp}{\Q_p}
\newcommand{\Cp}{\C_p}
\newcommand{\Qpbar}{\Qbar_p}
\newcommand{\Qpbarx}{\Qpbar^{\times}}
\DeclareMathOperator{\End}{End}
\DeclareMathOperator{\Fil}{Fil}
\DeclareMathOperator{\gr}{gr}
\DeclareMathOperator{\Gal}{Gal}
\newcommand{\GL}{\mathrm{GL}}
\newcommand{\GSp}{\mathrm{GSp}}
\DeclareMathOperator{\Hom}{Hom}
\DeclareMathOperator{\Ind}{Ind}
\DeclareMathOperator{\ord}{ord}
\DeclareMathOperator{\SL}{SL}
\DeclareMathOperator{\Spec}{Spec}
\DeclareMathOperator{\WD}{WD}
\DeclareMathOperator{\Sym}{Sym}
\newcommand{\Frob}{\mathrm{Frob}}
\newcommand{\rhobar}{\overline{\rho}}
\newcommand{\Art}{{\operatorname{Art}}}
\newcommand{\Res}{\operatorname{Res}}
\newcommand{\doubleslash}{/\kern-0.2em{/}}
\begin{document}
\author{James Newton and Jack A. Thorne}
\title[Symmetric power functoriality]{Symmetric power functoriality for 
holomorphic modular forms}
\begin{abstract} 
	Let $f$ be a cuspidal Hecke eigenform of level 1. We prove the automorphy of the symmetric power 
	lifting $\Sym^n f$ for every $n \geq 1$. 
	
	We establish the same result for a more general class of cuspidal Hecke 
	eigenforms, including all those associated to semistable elliptic curves 
	over $\bQ$.
\end{abstract}
\maketitle
\setcounter{tocdepth}{1}
\tableofcontents
\section*{Introduction}

\textbf{Context.} Let $F$ be a number field, and let $\pi$ be a cuspidal 
automorphic representation of $\GL_n(\A_F)$. Langlands's functoriality 
principle \cite[Question 5]{Lan70} predicts the existence, for any algebraic 
representation $R : \GL_n \to \GL_N$, of a functorial lift of $\pi$ along $R$; 
more precisely, an automorphic representation $R(\pi)$ of $\GL_N(\A_F)$ which 
may be characterized by the following property: for any place $v$ of $F$, the 
Langlands parameter of $R(\pi)_v$ is the image, under $R$, of the Langlands 
parameter of $\pi_v$. The Langlands parameter is defined for each place $v$ of 
$F$ using the local Langlands correspondence for $\GL_n(F_v)$ (see 
\cite{langlandsrg, ht, MR1738446}).

The simplest interesting case is when $n = 2$ and $R = \Sym^m$ is the $m$\textsuperscript{th}  symmetric power of the standard representation of $\GL_2$. In this case the automorphy of $\Sym^m\pi$ 
was proved for $m=2$ by Gelbart and Jacquet \cite{gjsym2} and for $m = 3,4$ by 
Kim and Shahidi \cite{Kim-Shahidi,Kim}. 

More recently, Clozel and the second author
have proved the automorphy of $\Sym^m\pi$ for $m \le 8$ under the assumption 
that $\pi$ can be realised in a space of Hilbert modular forms of regular 
weight \cite{Clo14,ctii,ctiii}; equivalently, that the number field $F$ is 
totally real and the automorphic representation $\pi$ is regular algebraic, in 
the sense of \cite{Clo90}. This includes the most 
classical case of automorphic representations arising from holomorphic modular 
forms of weight $k \ge 2$. We also mention the work of Dieulefait 
\cite{Dieulefait}, which shows automorphy of the $5$\textsuperscript{th} 
symmetric power for cuspidal Hecke eigenforms of level $1$ and weight $k \ge 
2$. 

On the other hand, the \emph{potential} automorphy (i.e.~the existence 
of the symmetric power lifting after making some unspecified Galois base 
change) of \emph{all} symmetric powers for automorphic representations $\pi$ 
associated to Hilbert modular forms was obtained by Barnet-Lamb, Gee and 
Geraghty \cite{blgg} (the case of elliptic modular forms is due to Barnet-Lamb, 
Geraghty, Harris and Taylor \cite{cy2}). 

\textbf{Results of this paper.} In this paper, we prove the automorphy of all symmetric powers for cuspidal Hecke eigenforms of level $1$ and weight $k 
\ge 2$. More precisely:
\begin{introthm}\label{thm_intro_main_theorem}
	Let $\pi$ be a regular algebraic cuspidal automorphic representation of 
	$\GL_2(\bA_\Q)$ of level $1$ (i.e.~which is everywhere unramified). Then 
	for each 
	integer $n \ge 2$, the symmetric power 
	lifting $\Sym^{n-1}\pi$ exists, 
	as a regular algebraic cuspidal automorphic representation of $\GL_{n}(\bA_\Q)$. 
\end{introthm}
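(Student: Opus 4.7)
I plan to proceed by induction on $n$, taking the cases $n \le 5$ (due to Gelbart--Jacquet, Kim--Shahidi, Clozel--Thorne, and Dieulefait) as base cases. For the inductive step, assuming $\Sym^{k}\pi$ is automorphic for $k < n-1$, one seeks to establish automorphy of $\Sym^{n-1}\pi$. Throughout, let $\rho_{\pi,\ell}$ denote the compatible system of $\ell$-adic Galois representations attached to $\pi$; then the goal becomes proving that $\Sym^{n-1}\rho_{\pi,\ell}$ is automorphic over $\bQ$ (for some, hence every, prime $\ell$), since the Ramanujan property and local-global compatibility then give the desired automorphic representation.

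The natural two-step attack is: (i) invoke the potential automorphy of $\Sym^{n-1}\rho_{\pi,\ell}$ due to Barnet--Lamb--Geraghty--Harris--Taylor, which provides a totally real Galois extension $F/\bQ$ over which $\Sym^{n-1}\rho_{\pi,\ell}|_{G_F}$ is automorphic; and (ii) apply an automorphy lifting theorem to descend automorphy from $F$ down to $\bQ$. The inductive hypothesis enters by enlarging the pool of ``known automorphic'' Galois representations: combining $\Sym^{a}\pi$ with $\Sym^{b}\pi$ via Rankin--Selberg and the Clebsch--Gordan decomposition $\Sym^{a}\otimes \Sym^{b} = \bigoplus_{j}\Sym^{a+b-2j}$ supplies automorphic tensor products that can be used both to check residual automorphy and to anchor the lifting.

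The choice of the prime $\ell$ is the decisive tactical decision. For level $1$ eigenforms one has at one's disposal the classical Eisenstein congruences (Ramanujan's for $\Delta$ modulo $2,3,5,7,23,691$, and analogues for other level~$1$ forms), which provide primes at which $\overline{\rho_{\pi,\ell}}$ is reducible with semisimplification a sum of characters. At such primes $\overline{\Sym^{n-1}\rho_{\pi,\ell}}$ becomes an explicit sum of characters, whose residual automorphy is essentially a consequence of class field theory together with the theory of Eisenstein series on $\GL_{n}$. Alternatively, one can attempt to pick $\ell$ so that the image is large and the image in $\PGL_{2}(\overline{\F}_{\ell})$ contains $\PSL_{2}$ of a sufficiently big subfield, placing oneself in the ``residually big image'' setting for standard Taylor--Wiles--Kisin machinery.

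The main obstacle, which I expect to be where the paper's technical weight lies, is the automorphy lifting step in the residually reducible regime. Classical Taylor--Wiles--Kisin patching requires residual irreducibility, and even recent refinements cover only rather restricted shapes of reducibility. One therefore needs a new automorphy lifting theorem whose hypotheses can be met by the specific reducible patterns arising from $\Sym^{n-1}$ of a residually reducible two-dimensional representation; such a theorem will presumably combine ordinary (or nearly ordinary) deformation theory at $\ell$, a delicate analysis of local deformation rings and their components, Taylor's ``Ihara avoidance'' to manage auxiliary ramification, and a careful bookkeeping of characters and extensions in the reducible residual representation. Verifying the big-image, local-global-compatibility, and ``enormous image'' hypotheses of such a lifting theorem, uniformly in $n$ and simultaneously with the Eisenstein-congruence choice of $\ell$, is what I expect to be the hardest part.
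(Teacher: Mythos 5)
Your proposal shares a few ingredients with the paper — in particular, both ultimately pick a prime $p$ making the residual representation small (you via Eisenstein congruences, the paper via congruences to a theta series), and both lean on automorphy lifting theorems in the residually reducible setting (the paper uses the results of \cite{All19}). But the overall architecture is genuinely different, and as written the plan has two serious gaps.

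\textbf{The descent step does not go through.} You propose to invoke the potential automorphy of $\Sym^{n-1}\rho_{\pi,\ell}$ from \cite{cy2}, getting a totally real $F/\Q$ over which it becomes automorphic, and then ``apply an automorphy lifting theorem to descend automorphy from $F$ to $\Q$.'' Automorphy lifting theorems do not descend automorphy along a field extension: they promote residual automorphy (over the base field) to automorphy of the characteristic-zero lift, subject to local and big-image hypotheses. The only mechanism that actually descends automorphy along an extension is cyclic base change (Arthur--Clozel), available only along a \emph{solvable} $F/\Q$. The extension produced by the BLGHT potential automorphy argument is a large compositum built via a Dwork family moduli space and is emphatically not solvable; this is exactly why potential automorphy is strictly weaker than automorphy and why the meromorphic continuation of $L(\Sym^n E, s)$ was known before this paper but the analytic continuation was not. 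The paper avoids this issue entirely: in place of potential automorphy it \emph{constructs} an actual automorphic representation over an explicitly solvable CM extension of $\Q$, congruent to $\Sym^{n-1}\rho$, using level-raising congruences at depth-zero types (\S\S 5--7), and then solvably descends after automorphy lifting over that field.

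\textbf{The inductive structure does not feed back.} The Clebsch--Gordan relation $\Sym^{a}\otimes\Sym^{b}\cong\bigoplus_j\Sym^{a+b-2j}$ lets you realize $\Sym^{n-1}$ as a summand of an automorphic Rankin--Selberg product, but automorphy of an isobaric sum does not imply automorphy of its summands; no ``cancellation'' theorem of the required generality is available. So the induction on $n$ does not actually supply $\Sym^{n-1}\pi$, and the base cases $n\le 5$ do not propagate. Relatedly, even if your plan worked for one cleverly chosen $\pi$ and one congruence prime, you would still need to handle every level-1 form; your Eisenstein-congruence prime depends on $\pi$, and nothing in the proposal connects different forms. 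The paper's answer to exactly this problem is its main structural novelty (Theorem~\ref{thm:intro_propagation}): automorphy of $\Sym^{n-1}$ is constant along irreducible components of the Coleman--Mazur eigencurve (proved by an infinitesimal $R=\TT$ argument on a unitary eigenvariety using adjoint Bloch--Kato vanishing), together with the explicit Buzzard--Kilford description of the $p=2$, tame-level-$1$ eigencurve and a ``ping pong'' argument to connect any two level-$1$ points. This reduces the whole theorem to finding a single form with automorphic $\Sym^{n-1}$, and none of this eigenvariety machinery appears in your sketch.
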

In fact, we establish a more general result in which ramification is allowed:
\begin{introthm}\label{intro_ramified_main_theorem}
	Let $\pi$ be a regular algebraic cuspidal automorphic representation of 
	$\GL_2(\bA_\Q)$ of conductor $N \geq 1$, which does not have CM.\footnote{In other words, there is no quadratic Hecke character $\chi$ such that $\pi \cong \pi \otimes \chi$.} Suppose that for each prime $l | N$, the Jacquet module of $\pi_l$ is non-trivial; equivalently, that $\pi_l$ is not supercuspidal. Then 
	for each 
	integer $n \ge 2$, the symmetric power 
	lifting $\Sym^{n-1}\pi$ exists, 
	as a regular algebraic cuspidal automorphic representation of $\GL_{n}(\bA_\Q)$.
\end{introthm}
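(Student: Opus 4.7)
My plan is to prove Theorems A and B simultaneously by induction on $n$, taking as base the cases $n \leq 9$ established in the cited work of Gelbart--Jacquet, Kim--Shahidi, and Clozel--Thorne. The inductive step would proceed via an automorphy lifting theorem applied to $\Sym^{n-1} r_{\pi,p}$, the $n$-dimensional $p$-adic Galois representation attached to $\pi$. The hypothesis that $\pi_l$ is not supercuspidal for $l \mid N$ is essential here: it forces $r_{\pi,p}|_{G_{\bQ_l}}$ to be (tame) principal series or Steinberg up to twist, so that $\Sym^{n-1} r_{\pi,p}|_{G_{\bQ_l}}$ falls into a tractable tame local deformation class whose Weil--Deligne monodromy is $\Sym^{n-1}$ of a unipotent---a setting in which one can hope to control local lifting rings explicitly.

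First I would choose an auxiliary prime $p$ large with respect to $n$ and $N$, arranged so that $r_{\pi,p}$ is crystalline in the Fontaine--Laffaille range at $p$ and such that the image of $\overline{r}_{\pi,p}$ contains $\SL_2(\bF_p)$; the non-CM hypothesis guarantees this for all but finitely many $p$, and in particular forces $\Sym^{n-1}\overline{r}_{\pi,p}$ to be absolutely irreducible with the largest image compatible with factoring through $\SL_2$. Second, I would verify the residual automorphy of $\Sym^{n-1}\overline{r}_{\pi,p}$ by producing a congruence $\pi \equiv \pi' \pmod{p}$ with $\pi'$ an eigenform for which $\Sym^{n-1}\pi'$ is already known to be automorphic---either by analytic continuation along the Coleman--Mazur eigencurve, propagating automorphy using the inductive hypothesis on lower symmetric powers, or via a chain of classical congruences through well-chosen auxiliary weights and levels. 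Third, the automorphy lifting theorem for regular algebraic, essentially conjugate-self-dual representations of $G_\bQ$ would then upgrade residual automorphy to automorphy of $\Sym^{n-1} r_{\pi,p}$ itself, giving $\Sym^{n-1}\pi$.

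The main obstacle is establishing the required automorphy lifting theorem with sufficient flexibility. The local deformation conditions at ramified primes, coming from $\Sym^{n-1}$ of Steinberg-type Weil--Deligne parameters, are not directly covered by existing results and require a careful analysis of tame local lifting rings; the residual image is not generically large in $\GL_n(\overline{\bF}_p)$ but is constrained to sit inside the $\Sym^{n-1}$ image of $\SL_2(\bF_p)$, violating the standard "big image" hypotheses; and, crucially, the argument must avoid any passage to a solvable base change, since the target is automorphy over $\bQ$ rather than mere potential automorphy as in \cite{cy2, blgg}. A secondary obstacle lies in engineering the congruences: one must guarantee that $\pi$ lies in the same connected component of the relevant deformation space as a point whose symmetric power is known to be automorphic, and that automorphy of $\Sym^{n-1}$ propagates along the interpolating family---this is where the finest interplay between the eigenvariety and the inductive hypothesis on smaller symmetric powers is likely to enter.
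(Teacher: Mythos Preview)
Your proposal has a structural gap: the induction on $n$ provides no leverage. Knowing that $\Sym^{m-1}\pi'$ is automorphic for all $m < n$ and all $\pi'$ says nothing about the residual automorphy of $\Sym^{n-1}\overline{r}_{\pi,p}$, because these are representations of different dimensions and there is no functorial operation building $\Sym^{n-1}$ out of lower symmetric powers. Your option ``propagating automorphy using the inductive hypothesis on lower symmetric powers'' therefore has no content, and the other two options you list (eigencurve analytic continuation, chains of congruences) are only mechanisms for \emph{spreading} automorphy of $\Sym^{n-1}$ once you already have it for a single form; they do not produce a first instance. Your sketch also chooses $p$ so that $\overline{r}_{\pi,p}$ has image containing $\SL_2(\bF_p)$, which makes $\Sym^{n-1}\overline{r}_{\pi,p}$ absolutely irreducible with image inside the principal $\SL_2$ of $\GL_n$; as you yourself note, this violates the adequacy hypotheses of all existing automorphy lifting theorems, and you offer no way around it.

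The paper does something quite different. It fixes $n$ throughout and proceeds in three independent steps. First (\S\ref{sec_existence_of_seed_points}), it produces \emph{one} level-$1$ eigenform $f_0$ with $\Sym^{n-1}f_0$ automorphic, by choosing $p$ so that $\overline{r}_{f_0,\iota}$ is \emph{dihedral} (congruent to a theta series); then $\Sym^{n-1}\overline{r}_{f_0,\iota}|_{G_K}$ is a sum of characters, residual automorphy follows from endoscopy for unitary groups, and the automorphy lifting theorems for residually \emph{reducible} representations from \cite{All19} apply---after an intricate level-raising argument to manufacture the required Steinberg local component. Second (\S\S\ref{sec_rigid_geometry}--\ref{sec_ping_pong}), it propagates automorphy of $\Sym^{n-1}$ along irreducible components of the eigencurve; the key point is that the Buzzard--Kilford description of the $p=2$, tame level $1$ eigencurve near the boundary of weight space gives enough control over the component structure to reach every level-$1$ form from $f_0$ by a finite ``ping-pong'' between companion points. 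Third (\S\ref{sec_higher_levels}), it extends from level $1$ to general $\pi$ as in the theorem by induction on the number of primes dividing the conductor: for each ramified prime $l$, the non-supercuspidal hypothesis ensures $\pi$ defines a point on the $l$-adic eigencurve, along which one moves to a form unramified at $l$. This is the actual role of the Jacquet-module hypothesis, not the local-deformation-ring smoothness you suggest.
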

The class of automorphic representations described by Theorem \ref{intro_ramified_main_theorem} includes all those associated to holomorphic newforms of level $\Gamma_0(N)$, for some squarefree integer $N \geq 1$; in particular those associated to semistable elliptic curves over $\Q$. We can therefore offer the following corollary in more classical language:
\begin{introcor}
	Let $E$ be a semistable elliptic curve over $\bQ$. Then, for each integer $n \geq 2$, the completed symmetric power $L$-function $\Lambda(\Sym^n E, s)$ as defined in e.g.\ \cite{Dum09}, admits an analytic continuation to the entire complex plane.
\end{introcor}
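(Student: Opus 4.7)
The plan is to deduce the corollary directly from Theorem \ref{intro_ramified_main_theorem} by producing a cuspidal automorphic representation $\pi$ of $\GL_2(\bA_\Q)$ attached to $E$ that satisfies its hypotheses, and then identifying $\Lambda(\Sym^n E, s)$ with the standard automorphic $L$-function of $\Sym^n \pi$ on $\GL_{n+1}(\bA_\Q)$.

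First I would invoke the modularity theorem of Wiles, Taylor--Wiles, and Breuil--Conrad--Diamond--Taylor to attach to $E$ a regular algebraic cuspidal automorphic representation $\pi$ of $\GL_2(\bA_\Q)$ of weight $2$ and conductor $N = \mathrm{cond}(E)$. Semistability of $E$ is equivalent to $N$ being squarefree and to $E$ having (good or) multiplicative reduction at every bad prime. By compatibility of the local Langlands correspondence with the local Galois representation on $V_\ell E$, at each prime $\ell \mid N$ the component $\pi_\ell$ is an unramified twist of the Steinberg representation, which has non-trivial Jacquet module. I also need $\pi$ to be non-CM: but any CM elliptic curve over $\Q$ has additive reduction at the primes ramifying in its CM field, hence has non-squarefree conductor, so semistability of $E$ forces $\pi$ to be non-CM. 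Thus all hypotheses of Theorem \ref{intro_ramified_main_theorem} are satisfied.

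Applying that theorem with index $n+1$ gives, for every $n \geq 2$, a regular algebraic cuspidal automorphic representation $\Sym^n \pi$ of $\GL_{n+1}(\bA_\Q)$ whose Langlands parameter at each place is $\Sym^n$ of the parameter of $\pi$. By Godement--Jacquet, the completed standard $L$-function $\Lambda(s, \Sym^n \pi)$ extends to an entire function of $s \in \C$. The final step is to identify it with $\Lambda(\Sym^n E, s)$ as defined in \cite{Dum09}: finite Euler factors match by local-global compatibility (Carayol, Saito) applied to $\pi$ together with the local characterisation of $\Sym^n \pi$, and the archimedean $\Gamma$-factor for the Hodge structure $\Sym^n H^1(E)$ coincides with the standard $\GL_{n+1}$ archimedean factor determined by the infinity type of $\Sym^n \pi$. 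The main obstacle is therefore essentially routine bookkeeping: carefully tracking the unitary versus arithmetic normalisations so that the appropriate shift between $\Lambda(s, \Sym^n \pi)$ and $\Lambda(\Sym^n E, s)$ is correctly accounted for, and that the implied functional equation is preserved. Modulo this comparison, the analytic continuation statement is immediate.
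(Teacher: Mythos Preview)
Your proposal is correct and matches the paper's approach: the paper does not give a separate proof of this corollary, presenting it as an immediate consequence of Theorem~\ref{intro_ramified_main_theorem} via modularity of $E$, the fact that semistable reduction forces Steinberg (hence non-supercuspidal) local components and excludes CM, and Godement--Jacquet entireness for cuspidal $L$-functions on $\GL_{n+1}$. The bookkeeping you flag about normalisations is exactly the routine comparison the paper leaves implicit.
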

We remark that the meromorphic, as opposed to analytic, continuation of the 
completed $L$-function $\Lambda(\Sym^n E, s)$ was already known, as a 
consequence of the potential automorphy results mentioned above. Potential automorphy results were sufficient to prove the Sato--Tate 
conjecture, but our automorphy results make it possible to establish 
\emph{effective} versions of Sato--Tate (we thank Ana Caraiani and Peter Sarnak 
for pointing this out to us). See, for example, \cite{Thorner} for 
an unconditional result and \cite{KMurty, BucurKedlaya, RouseThorner} for 
results conditional on the Riemann Hypothesis for the symmetric power 
$L$-functions. 

\textbf{Strategy.}
Algebraic automorphic representations of $\GL_n(\A_F)$ are conjectured to admit 
associated Galois representations \cite{Clo90}. When $F$ is totally real and 
$\pi$ is a self-dual regular algebraic automorphic representation, these Galois 
representations are known to exist; their Galois deformation theory is 
particularly well-developed; and they admit $p$-adic avatars, which fit into 
$p$-adic families of overconvergent automorphic forms. We make use of all of 
these tools. We begin by proving the following theorem:
\begin{introthm}\label{thm:intro_propagation}
	Let $n \ge 2$ be an integer and suppose that the $n$\textsuperscript{th} 
	symmetric power lifting exists for one regular algebraic 
	cuspidal 
	automorphic 
	representation of $\GL_2(\bA_\bQ)$ of level $1$. Then the 
	$n$\textsuperscript{th} symmetric power lifting exists for every 
	regular algebraic cuspidal automorphic 
	representation of $\GL_2(\bA_\bQ)$ of level $1$.
\end{introthm}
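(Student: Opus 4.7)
The strategy I would follow is to propagate symmetric power functoriality $p$-adically along the Coleman--Mazur eigencurve at level $1$. Fix an auxiliary prime $p$ and let $\cE$ denote the cuspidal eigencurve for $\GL_2/\bQ$ of tame level $1$. Every regular algebraic cuspidal $\pi$ of level $1$ gives, upon choice of a $p$-stabilisation, a classical point of $\cE$, and on $\cE$ one has a two-dimensional $p$-adic family of Galois representations $r_\cE$ whose classical specialisations recover the $p$-adic Galois representations $r_{\pi,p}$. The hypothesis of the theorem supplies a classical point $f_0 \in \cE$ at which $\Sym^{n-1} r_{f_0,p}$ is automorphic, and the aim is to show that every other classical point enjoys the same property.

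The plan is to apply $\Sym^{n-1}$ to $r_\cE$ to obtain a family of $\GL_n$-valued $p$-adic Galois representations on $\cE$, and then to show that the locus of classical points at which this family is automorphic is simultaneously Zariski-closed (since the image of a classical automorphic eigenvariety in the $\GL_n$ pseudo-deformation space is closed) and open in a sufficiently strong sense. The openness is the heart of the matter: one expects to prove it by combining an automorphy lifting theorem valid in $p$-adic families with an identification of a suitable irreducible component of a $\GL_n$ pseudo-deformation ring, parametrising lifts of ``symmetric power type'' with the correct trianguline structure, with a Hida/Coleman-type Hecke algebra on the $\GL_n$ side. Combined with (a form of) connectedness of the level-$1$ cuspidal eigencurve, this spreads automorphy from $f_0$ to every classical point of $\cE$ and hence to every regular algebraic cuspidal $\pi$ of level $1$.

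The principal obstacle will be the automorphy lifting step in families. Classical $R = T$ theorems operate at a fixed residual representation, whereas here the residual image, the Hodge--Tate weights, and the ordinary/finite-slope behaviour of $\Sym^{n-1} r_f$ all vary as $f$ moves along $\cE$; non-classical specialisations are only trianguline rather than crystalline, so pointwise classical lifting theorems do not apply directly, and one must instead work with families of trianguline Galois representations and their pseudo-deformations, for which $R = T$ statements are substantially more delicate. One also needs to secure uniformly the ``big image'' and irreducibility hypotheses required by such theorems for $\Sym^{n-1} r$, which will likely force a careful choice of the prime $p$, exclusion of certain small residual images, and perhaps passage through a solvable base change before invoking the lifting machinery. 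Finally, the argument must handle the non-ordinary locus of $\cE$ without appealing to Hida theory, since many level-$1$ eigenforms will be non-ordinary at any given $p$.
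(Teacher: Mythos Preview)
Your outline correctly identifies the two-step shape of the argument: first show that automorphy of $\Sym^{n-1}$ propagates along irreducible components of the eigencurve, then link the components. The first step is indeed carried out in the paper (Theorem~\ref{thm_propogration_along_components_of_eigencurve}) roughly along the lines you sketch, via a comparison with a $\GL_n$ (unitary) eigenvariety inside a trianguline pseudo-deformation space; the key input is not a full $R=\bT$ theorem in families but a tangent-space bound coming from the vanishing of the adjoint Bloch--Kato Selmer group $H^1_f(F^+,\ad r_{\pi_n,\iota})$ proved in \cite{New19a}, which forces the $\GL_n$ eigenvariety to be smooth and locally determined at the relevant point.

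The genuine gap in your proposal is the appeal to ``(a form of) connectedness of the level-$1$ cuspidal eigencurve.'' This is precisely the hard part, and no such connectedness result is available at a general prime $p$; the structure of irreducible components of $\cE_p$ is notoriously inaccessible. The paper does \emph{not} prove or assume connectedness. Instead it specialises to $p=2$, where the Buzzard--Kilford theorem gives an explicit decomposition of the boundary region $\kappa^{-1}(\{|8|<|w|<1\})$ as a disjoint union $\sqcup_{i\ge 1} X_i$ of annuli mapping isomorphically to weight space, with slope $i\cdot v_2(w)$ on $X_i$. Every irreducible component of $\cE_2$ meets this region, so it suffices to reach every $X_i$. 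The mechanism for doing so is a ``ping pong'' argument: a classical point $(f,\alpha)\in X_i$ has a twin $(f,\beta)\in X_{i'}$ with $i+i'=(k-1)/v_2(w)$, and by alternately moving within an annulus (varying the weight) and swapping refinements one can travel from any $X_i$ to $X_1$ and hence from any classical point to any other. This combinatorial navigation, specific to $p=2$ and level $1$, is what replaces the connectedness you invoke; without it, your plan stalls after the propagation-along-components step.
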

We sketch the proof of Theorem \ref{thm:intro_propagation}, which is based on 
the properties of the Coleman--Mazur eigencurve $\cE_p$. We recall that if $p$ is a 
prime, the eigencurve $\cE_p$ is a $p$-adic rigid analytic space that admits a 
Zariski dense set of classical points corresponding to pairs $(f, \alpha)$ 
where $f$ is a cuspidal eigenform of level $1$ and some weight $k \geq 2$ and 
$\alpha$ is a root of the Hecke polynomial $X^2 - a_p(f) X + p^{k-1}$. The 
eigencurve admits a map $\kappa : \cE_p \to \cW_p = \Hom(\Z_p^\times, \bG_m)$ 
to weight space with discrete fibres; the image of $(f, \alpha)$ is the 
character $x \mapsto x^{k-2}$.

We first show that for a fixed $n \geq 1$, the automorphy of $\Sym^{n} f$ is a property which is ``constant on irreducible components of $\cE_p$''. (Here we confuse $f$ and the automorphic representation $\pi$ that it generates in order to simplify notation.) More precisely, if $(f, \alpha)$ and $(f', \alpha')$ determine points on the same irreducible component of $\cE_p$, then the automorphy of $\Sym^n f$ is equivalent to that of $\Sym^n f'$. This part of the argument, which occupies \S \ref{sec_rigid_geometry} of this paper, does not require a restriction to cusp forms of level 1 -- see Theorem \ref{thm_propogration_along_components_of_eigencurve}. It is based on an infinitesimal $R = \bT$ theorem on the eigenvariety associated to a definite unitary group in $n$ variables. Kisin (for $\GL_2$) \cite{kisin-ocfmc} and 
Bella\"iche--Chenevier (for higher rank) \cite{bellaiche_chenevier_pseudobook} have observed that such theorems are often implied by the vanishing of adjoint Bloch--Kato Selmer groups. We are able to argue in this fashion here because we have proved the necessary vanishing results in \cite{newton2020adjoint}.

To exploit this geometric property, we need to understand the irreducible components of $\cE_p$. This is a notorious problem. However, conjectures predict that $\cE_p$ has a simple structure over a suitably thin boundary annulus of a connected component of weight space $\cW_p$ (see e.g.\ \cite[Conjecture 1.2]{Liu17}). We specialise to the case $p = 2$, in which case Buzzard--Kilford give a beautifully simple and explicit description of the geometry of $\cE_p$ ``close to the boundary of weight space'' \cite{Buz05}. 

More precisely, $\cE_2$ is supported above a single connected component 
$\cW_2^+ \subset \cW_2$, which we may identify with the rigid unit disc $\{ | w 
| < 1\}$. The main theorem of \cite{Buz05} is that the pre-image $\kappa^{-1}( 
\{ | 8 | < |w| < 1 \})$ 
decomposes as a disjoint union $\sqcup_{i=1}^\infty X_i$ of rigid annuli, each 
of which maps isomorphically onto $\{ | 8 | < |w| < 1 \}$. Moreover, $X_i$ has 
the following remarkable property: if $(f, \alpha) \in X_i$ is a point 
corresponding to a classical modular form, then the $p$-adic valuation
$v_p(\alpha)$ (otherwise known as the slope of the pair $(f, \alpha)$) equals 
$i v_p(w(\kappa(f, \alpha)))$. 

We can now explain the second part of the proof of Theorem 
\ref{thm:intro_propagation}, which occupies \S \ref{sec_ping_pong} of the 
paper. Since each irreducible component of $\cE_2$ meets $\kappa^{-1}( \{ | 8 | 
< |w| < 1 \})$, it is enough to show that each $X_i$ contains a point $(f, 
\alpha)$ such that $\Sym^n f$ is automorphic. This property only depends on $f$ 
and not on the pair $(f, \alpha)$! Moreover, the level 1 form $f$ determines 
two points $(f, \alpha)$, $(f, \beta)$ of $\kappa^{-1}( \{ | 8 | < |w| < 1 
\})$, which lie on components $X_i$ and $X_{i'}$ satisfying $i + i' = (k-1) / 
v_p(w(\kappa(f, \alpha)))$. Starting with a well-chosen initial point on a given annulus $X_i$, 
we can jump to any other $X_{i'}$ in a finite series of swaps between pairs 
$(f', \alpha')$, $(f', \beta')$ and moves within an annulus. We call this 
procedure playing ping pong, and it leads to a complete proof of Theorem 
\ref{thm:intro_propagation}.

We remark that for this second step of the proof it is essential that we work with level 1 forms, since it is only in the level 1, $p = 2$ case that the eigencurve $\cE_p$ admits such a simple structure (in particular, the eigencurve is supported above a single connected component of weight space and every Galois representation appearing in $\cE_2$ admits the same residual representation, namely the trivial 2-dimensional representation of $\Gal(\overline{\Q} / \Q)$ over $\bF_2$). We note as well that it is necessary to work with classical forms which may be ramified at the prime 2 in order for their weight characters to lie in the boundary annulus of $\cW_2^+$. We have suppressed this minor detail here. 

Theorem \ref{thm:intro_propagation} implies that to prove Theorem \ref{thm_intro_main_theorem}, it is enough to prove the following result:
\begin{introthm}\label{introthm_existence_of_single_symmetric_power}
	For each integer $n \geq 2$, there is a regular algebraic cuspidal 
	automorphic representation $\pi$ of $\GL_2(\A_\Q)$ of level $1$ such that 
	$\Sym^{n-1} 
	\pi$ exists.
\end{introthm}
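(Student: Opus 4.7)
The plan is to proceed by induction on $n$, with the base cases $n \leq 4$ supplied by the classical work of Gelbart--Jacquet and Kim--Shahidi. For the inductive step, one may assume that $\Sym^{m-1}\pi$ exists as a cuspidal automorphic representation for every level-$1$ cuspidal $\pi$ and every $m<n$; this combines the inductive hypothesis with Theorem~C. The task is then to produce a single level-$1$ cuspidal $\pi$ for which $\Sym^{n-1}\pi$ exists.

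I would try to apply an automorphy lifting theorem to the $p$-adic representation $\Sym^{n-1}\rho_{\pi,p}$ for a carefully chosen level-$1$ cuspidal $\pi$ and prime $p$. The ALTs in the style of Taylor, Thorne, and Barnet-Lamb--Gee--Geraghty require residual automorphy of $\Sym^{n-1}\bar\rho_{\pi,p}$, adequacy of its image, and suitable crystalline local conditions at $p$; the last is automatic since $\pi$ is of level $1$, and adequacy is generic in $p$. The crux is therefore residual automorphy. The Clebsch--Gordan identity
\[
\Sym^{n-2}\rho_{\pi,p} \otimes \rho_{\pi,p} \cong \Sym^{n-1}\rho_{\pi,p} \oplus \bigl(\Sym^{n-3}\rho_{\pi,p} \otimes \det \rho_{\pi,p}\bigr)
\]
points toward extracting $\Sym^{n-1}$ from a tensor product that is residually automorphic: since $\Sym^{n-3}\pi \otimes \det\pi$ is automorphic by induction, isobaric uniqueness (Jacquet--Shalika) would yield $\Sym^{n-1}\pi$ provided $\Sym^{n-2}\pi \boxtimes \pi$ is known to be an automorphic representation of $\GL_{2(n-1)}(\bA_\Q)$. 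An alternative is to seek a direct congruence from $\Sym^{n-1}\bar\rho_{\pi,p}$ to a residually automorphic representation, for instance by arranging $\bar\rho_{\pi,p}$ to be induced from a character of an imaginary quadratic field, whereupon $\Sym^{n-1}\bar\rho_{\pi,p}$ breaks up into induced pieces that are automorphic by classical automorphic induction.

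The main obstacle is to arrange all of these compatibilities simultaneously for one level-$1$ $\pi$: the level-$1$ constraint is very restrictive, so one must locate a specific modular form and prime $p$ at which adequacy of $\Sym^{n-1}\bar\rho_{\pi,p}$ coexists with either the tensor-product automorphy input or a dihedral congruence. A natural starting point is the weight-$12$ form $\Delta$ together with a small prime $p$ at which $\bar\rho_{\Delta,p}$ has convenient structure (e.g.\ the classical congruence primes $2, 3, 5, 7, 691$). Threading such a residual congruence through the inductive hypothesis, and ensuring the resulting residual representation meets the hypotheses of an ALT of the form developed in the authors' earlier work, is where I expect the bulk of the technical effort to lie.
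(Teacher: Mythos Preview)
Your proposal contains one correct idea but also a dead end and a significant gap.

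The Clebsch--Gordan route is a dead end: extracting $\Sym^{n-1}\pi$ as an isobaric summand of $\Sym^{n-2}\pi \boxtimes \pi$ would require knowing that this Rankin--Selberg product on $\GL_{n-1}\times\GL_2$ is automorphic on $\GL_{2(n-1)}$, which is not known in general and is essentially as hard as the problem you are trying to solve.

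The dihedral congruence idea is the right one, and it is what the paper actually uses. However, your parameter choices and the framework you invoke are both wrong in ways that matter. First, the paper takes $p$ \emph{large} (in fact $p\equiv 1\pmod{48n!}$), not small: the starting CM form is a fixed weight-$5$ newform of level $\Gamma_1(4)$ associated to $\Q(i)$, not $\Delta$ at a congruence prime. Second, once $\overline{r}_{\pi,\iota}$ is dihedral, $\Sym^{n-1}\overline{r}_{\pi,\iota}|_{G_K}$ is a sum of characters, so the relevant automorphy lifting theorems are those for \emph{residually reducible} representations (as in \cite{jackreducible,All19}), not the adequate-image theorems you invoke. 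The crucial hypothesis in those residually-reducible ALTs is that the automorphic representation $\Pi$ of $\GL_n$ witnessing residual automorphy must have a local component that is a twist of the Steinberg representation. Verifying residual automorphy via endoscopy for unitary groups does not produce such a $\Pi$ directly. Manufacturing the required Steinberg-at-some-place congruence is the main technical content of the paper: \S\ref{sec:automorphic_level_raising}--\S\ref{sec:galois_level_raising} develop two separate level-raising mechanisms (one via depth-zero types on $U_3$ and the theory of cuspidal unipotent representations, handling odd $n$ by induction on $n-2$; the other, from \cite{Ana19}, passing from $n$ to $2n$), together with a finiteness result for deformation rings needed to control reducibility loci. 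Your proposal does not anticipate this Steinberg obstruction or the substantial machinery needed to overcome it.
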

As in the previous works 
of Clozel and the second author \cite{Clo14,ctii,ctiii}, we achieve this by combining an automorphy lifting theorem with the construction of level-raising congruences. We aim to find $f$ and an isomorphism $\iota : \overline{\QQ}_p \to \C$ such that (writing $r_{f, \iota} : G_\Q \to \GL_2(\overline{\Q}_p)$ for the $p$-adic Galois representation associated to $f$) the residual representation
\[ \Sym^{n-1} \overline{r}_{f, \iota} : G_\Q \to \GL_{n}(\overline{\F}_p) \]
is automorphic; then we hope to use an automorphy lifting theorem to verify 
that $\Sym^{n-1} r_{f, \iota}$ is automorphic, and hence that $\Sym^{n-1} f$ is 
automorphic. In contrast to the papers just cited, where we chose 
$\overline{r}_{f, \iota}$ to have large image but $p$ to be small, in order to 
exploit the reducibility of the symmetric power representations of $\GL_2$ in 
small characteristic, here we choose $\overline{r}_{f, \iota}$ to have small 
image, and $p$ to be large.

More precisely, we choose $f$ to be congruent modulo $p$ to a theta series, so that $\overline{r}_{f, \iota} \cong \Ind_{G_K}^{G_\Q} \overline{\psi}$ is induced. In this case $\Sym^{n-1} \overline{r}_{f, \iota}|_{G_K}$ is a sum of characters, so its residual automorphy can be verified using the endoscopic classification for unitary groups in $n$ variables. The wrinkle is that the automorphy lifting theorems proved in \cite{All19} (generalizing those of \cite{jackreducible}) require the automorphic representation $\pi$ of $\GL_{n}(\A_K)$ (say) verifying residual automorphy to have a local component which is a twist of the Steinberg representation. To find such a $\pi$ we need to combine the endoscopic classification with the existence of level-raising congruences.

In fact, we combine two different level-raising results in order to construct the desired congruences. The first of these, original to this paper, suffices to prove Theorem \ref{introthm_existence_of_single_symmetric_power} in the case that $n$ is odd. The argument is based on a generalization of the following simple observation, which suffices to prove Theorem \ref{introthm_existence_of_single_symmetric_power} in the case $n = 3$: let $q$ be an odd prime power, and let $U_3(q)$ denote the finite group of Lie type associated to the outer form of $\GL_3$ over $\bF_q$. Let $p$ be a prime such that $q \pmod{p}$ is a primitive $6^\text{th}$ root of unity. Then the unique cuspidal unipotent representation of $U_3(q)$ remains irreducible on reduction modulo $p$, and this reduction occurs as a constituent of the reduction modulo $p$ of a generic cuspidal representation of $U_3(q)$ (see Proposition \ref{prop_congruence_of_types}). Using the theory of depth zero types, this observation has direct consequences for the existence of congruences between automorphic representations of $U_3$. Similar arguments work for general odd $n$, for carefully chosen global data. We leave a discussion of the (quite intricate) details to \S \ref{sec:automorphic_level_raising}.

The second level-raising result, proved by Anastassiades in his thesis, allows us to pass from the existence of $\Sym^{n-1} f$ to the existence of $\Sym^{2n-1} f$. We refer to the paper \cite{Ana19} for a more detailed discussion.

It remains to extend Theorem \ref{thm_intro_main_theorem} to the ramified case, 
and prove Theorem \ref{intro_ramified_main_theorem}. For this we induct on the 
number of primes dividing the conductor, and use an argument of `killing 
ramification' as in the proof of Serre's conjecture \cite{Kha09}. Thus to 
remove a prime $l$ from the level we need to be able to move within a family of 
$l$-adic overconvergent modular forms to a classical form of the same tame 
level, but now unramified at $l$. This explains our assumption in Theorem 
\ref{intro_ramified_main_theorem} that the Jacquet module of $\pi_l$ is 
non-trivial for every prime $l$: it implies the existence of a point associated 
to (a twist of) $\pi$ on an $l$-adic eigencurve for every prime $l$. 

In a sequel to this paper \cite{New20}, we prove a new kind of automorphy lifting theorem for symmetric power Galois representations. This allows us to finally prove a version of Theorem \ref{intro_ramified_main_theorem} where the hypothesis that no local component $\pi_l$ is supercuspidal is removed. The arguments of \cite{New20} use only fixed weight classical automorphic forms (as opposed to overconvergent automorphic forms) but do require the results of this paper (in particular, Theorem \ref{intro_ramified_main_theorem}) as a starting point.

\textbf{Organization of this paper.} We begin in \S 
\ref{sec_definite_unitary_groups} by recalling known results on the 
classification of automorphic representations of definite unitary groups. We 
make particular use of the construction of $L$-packets of discrete series 
representations of $p$-adic unitary groups given by M{\oe}glin \cite{ Moe07, 
Moe14}, the application of  Arthur's simple trace formula for definite unitary 
groups as explicated in \cite{labesse}, and Kaletha's results on the 
normalisation of transfer factors (in the simplest case of pure inner forms) 
\cite{Kal16}.

In \S \ref{sec_rigid_geometry} we study the interaction between the existence of symmetric power liftings of degree $n$ with the geometry of the eigenvariety associated to a definite unitary group in $n$ variables. The basic geometric idea is described in \S \ref{subsec_prototype_argument}. In \S \ref{sec_ping_pong} we combine these results with the explicit description of the tame level 1, $p = 2$ Coleman--Mazur eigencurve to complete the proof of Theorem \ref{thm:intro_propagation}.

We then turn to the proof of Theorem 
\ref{introthm_existence_of_single_symmetric_power}, which rests upon two 
level-raising results, only the first of which is proved here. The proof of 
this result is in turn split into two halves; first we give in \S 
\ref{sec:automorphic_level_raising} an automorphic construction of 
level-raising congruences using types, in the manner sketched above. Then in \S 
\ref{sec:galois_level_raising} we establish level-raising congruences of a 
different kind using deformation theory for residually reducible 
representations, as developed in \cite{jackreducible, All19}. These 
two results are applied in turn to construct our desired level-raising 
congruences for odd $n$ (Proposition \ref{prop:levelraisingoddn}). A key 
intermediate result is a finiteness result for certain Galois deformation 
rings, established in \S \ref{sec_finiteness_result}, and which may be of 
independent interest. We use this to control the dimension of the locus of 
reducible deformations. 

Finally, we are in a position to prove our main theorems. In \S \ref{sec_existence_of_seed_points} we combine the preceding constructions with the main theorem of \cite{Ana19} in order to prove Theorem \ref{introthm_existence_of_single_symmetric_power} and therefore Theorem \ref{thm_intro_main_theorem}. In \S \ref{sec_higher_levels}, we carry out the argument of `killing ramification' in order to obtain Theorem \ref{intro_ramified_main_theorem}. The main technical challenge is to manage the hypothesis of `$n$-regularity' which appears in our analytic continuation results (see especially Theorem \ref{thm_propogration_along_components_of_eigencurve}). To do this we prove a result (Proposition \ref{prop_TW_congruence_to_n_regular}) which takes a given automorphic representation $\pi$ and constructs a congruence to an $n$-regular one $\pi'$. This may also be of independent interest.

\textbf{Acknowledgements.} The influence of the papers \cite{Clo14, ctii, 
ctiii} by Laurent Clozel and J.T. on the current work will be clear. We would 
like to express our warmest thanks to L.C. for many helpful discussions over 
the years. In particular, the idea of exploiting congruences to a theta series 
was first discussed during a visit to Orsay by  J.T. during the preparation of 
these earlier papers. We also thank L.C. for helpful comments on an earlier 
draft of this paper. We thank an anonymous referee for their careful reading and many helpful comments.

 J.T.'s work received funding from the European 
Research Council (ERC) under the European Union's Horizon 2020 research and 
innovation programme (grant agreement No 714405). 

\textbf{Notation.}
If $F$ is a perfect field, we generally fix an algebraic closure $\overline{F} / F$ and write $G_F$ for the absolute Galois group of $F$ with respect to this choice. We make the convention that a soluble extension $F' / F$ is a (finite) \emph{Galois} extension with  soluble Galois group $\Gal(F' / F)$.

When the characteristic of $F$ is not equal to $p$, we write $\epsilon : G_F \to \bbZ_p^\times$ for the $p$-adic cyclotomic character. We write $\zeta_n \in \overline{F}$ for a fixed choice of primitive $n^\text{th}$ root of unity (when this exists). If $F$ is a number field, then we will also fix embeddings $\overline{F} \to \overline{F}_v$ extending the map $F\to F_v$ for each place $v$ of $F$; this choice determines a homomorphism $G_{F_v} \to G_F$. When $v$ is a finite place, we will write $\cO_{F_v} \subset F_v$ for the valuation ring, $\varpi_v \in \cO_{F_v}$ for a fixed choice of uniformizer, $\Frob_v \in G_{F_v}$ for a fixed choice of (geometric) Frobenius lift, $k(v) = \cO_{F_v} / (\varpi_v)$ for the residue field, and $q_v = \# k(v)$ for the cardinality of the residue field. When $v$ is a real place, we write $c_v \in G_{F_v}$ for complex conjugation. If $S$ is a finite set of finite places of $F$ then we write $F_S / F$ for the maximal subextension of $\overline{F}$ unramified outside $S$ and $G_{F, S} = \Gal(F_S / F)$.

If $p$ is a prime, then we call a coefficient field a finite extension $E / \bbQ_p$ contained inside our fixed algebraic closure $\overline{\bbQ}_p$, and write $\cO$ for the valuation ring of $E$, $\varpi \in \cO$ for a fixed choice of uniformizer, and $k = \cO / (\varpi)$ for the residue field. If $A$ is a local ring, we write $\cC_A$ for the category of complete Noetherian local $A$-algebras with residue field $A / \ffrm_A$. We will use this category mostly with $A = E$ or $A = \cO$. If $G$ is a profinite group and $\rho : G \to \GL_n(\overline{\bQ}_p)$ is a continuous representation, then we write $\overline{\rho} : G \to \GL_n(\overline{\bF}_p)$ for the associated semisimple residual representation (which is well-defined up to conjugacy).

If $F$ is a CM number field (i.e. a totally imaginary quadratic extension of a totally real number field), then we write $F^+$ for its maximal totally real subfield, $c \in \Gal(F / F^+)$ for the unique non-trivial element, and $\delta_{F / F^+} : \Gal(F / F^+) \to \{ \pm 1 \}$ for the unique non-trivial character. If $S$ is a finite set of finite places of $F^+$, containing the places at which $F / F^+$ is ramified, we set $F_S = F^+_S$ and $G_{F, S} = \Gal(F_S / F)$.

We write $T_n \subset B_n \subset \GL_n$ for the standard diagonal maximal torus and upper-triangular Borel subgroup. Let $K$ be a non-archimedean characteristic $0$ local field, 
and let $\Omega$ 
be an algebraically 
closed field of characteristic 0. If $\rho : G_K \to \GL_n(\overline{\bbQ}_p)$ is a continuous 
representation (which is de Rham if $p$ equals the residue characteristic of 
$K$), then we write $\mathrm{WD}(\rho) = (r, N)$ for the associated 
Weil--Deligne representation of $\rho$, and $\mathrm{WD}(\rho)^{F-ss}$ for its 
Frobenius semisimplification. We use the cohomological normalisation of 
class field theory: it is the isomorphism $\Art_K: K^\times \to W_K^{ab}$ which 
sends uniformizers to geometric Frobenius elements. When $\Omega = \bC$, we 
have the local Langlands correspondence $\rec_{K}$ for $\GL_n(K)$: a bijection 
between the sets of isomorphism classes of irreducible, admissible 
$\bC[\GL_n(K)]$-modules and Frobenius-semisimple Weil--Deligne representations 
over $\bC$ of rank $n$. In general, we have the Tate normalisation $\rec_K^T$ of the local 
Langlands correspondence 
for $\GL_n$ as described in \cite[\S 
2.1]{Clo14}. When $\Omega = 
\bC$, we have $\rec^T_K(\pi) = \rec_K(\pi \otimes | \cdot |^{(1-n)/2})$. 

If $G$ is a reductive group over $K$ and $P \subset G$ is a 
parabolic subgroup and $\pi$ is an admissible $\Omega[P(K)]$-module, then we 
write $\Ind_{P(K)}^{G(K)} \pi$ for the usual smooth induction.  If $\Omega = \bC$ then we 
write $i_P^G \pi$ for the normalised induction, defined as $i_P^G \pi = 
\Ind_{P(K)}^{G(K)} \pi \otimes \delta_P^{1/2}$, where $\delta_P : P(K) \to 
\bR_{>0}$ is the character $\delta_P(x) = | \det (\mathrm{Ad}(x)|_{\Lie N_P}) |_K$ 
(and $N_P$ is the unipotent radical of $P$).

If $\psi : K^\times \to \C^\times$ is a smooth character, then we write 
$\operatorname{Sp}_n(\psi) = (r, N)$ for the Weil--Deligne representation on 
$\C^n = \oplus_{i=1}^n \C \cdot e_i$ given by $r = (\psi \circ \Art_{K}^{-1}) 
\oplus 
(\psi| \cdot |^{-1} \circ \Art_{K}^{-1}) \oplus \dots \oplus (\psi| \cdot |^{1-n} \circ 
\Art_{K}^{-1})$ and $N e_1 = 0$, $N e_{i+1} = e_i$ ($1 \leq 
i 
\leq n-1$). We write $\operatorname{St}_{n}(\psi)$ for the unique 
irreducible quotient of $i_{B_n}^{\GL_n} 
(\psi \circ \det)\delta_{B_n}^{-1/2} = \Ind_{B_n(K)}^{\GL_n(K)} \psi\circ\det$. We 
have $\rec_{K}^T(\operatorname{St}_{n}(\psi)) = \operatorname{Sp}_{n}(\psi)$.

If $F$ is a number field and $\chi : F^\times \backslash \A_F^\times \to \bC^\times$ is a Hecke character of type $A_0$ (equivalently: algebraic), then for any isomorphism $\iota : \overline{\bQ}_p \to \bC$ there is a continuous character $r_{\chi, \iota} : G_F \to \overline{\bQ}_p^\times$ which is de Rham at the places $v | p$ of $F$ and such that for each finite place $v$ of $F$, $\mathrm{WD}(r_{\chi, \iota}) \circ \Art_{F_v} = \iota^{-1} \chi|_{F_v^\times}$. Conversely, if $\chi' : G_F \to \overline{\bQ}_p^\times$ is a continuous character which is de Rham and unramified at all but finitely many places, then there exists a Hecke character $\chi : F^\times \backslash \A_F^\times \to \bC^\times$ of type $A_0$ such that $r_{\chi, \iota} = \chi'$. In this situation we abuse notation slightly by writing $\chi = \iota \chi'$.

If $F$ is a CM or totally real number field and $\pi$ is an automorphic representation of $\GL_n(\A_F)$, we say that $\pi$ is regular algebraic if $\pi_\infty$ has the same infinitesimal character as an irreducible algebraic representation $W$ of $(\Res_{F/ \bQ} \GL_n)_\bC$. We identify $X^\ast(T_n)$ with $\Z^n$ in the usual way, and write $\Z^n_+ \subset \Z^n$ for the subset of weights which are $B_n$-dominant. If $W^\vee$ has highest weight $\lambda = (\lambda_\tau)_{\tau \in \Hom(F, \C)} \in (\Z^n_+)^{\Hom(F, \C)}$, then we say that $\pi$ has weight $\lambda$.

When $F$ is CM, the automorphic representation $\pi$ is said to be conjugate self-dual if $\pi^c \cong \pi^\vee$. We refer to \cite[\S2.1]{BLGGT} for the more general notion of a polarizable automorphic representation. Note that if $\pi$ is conjugate self-dual, then $(\pi,\delta_{F/F^+}^{n})$ is polarized and therefore $\pi$ is polarizable.

If $\pi$ is cuspidal, regular algebraic, and polarizable, then for any isomorphism $\iota : \overline{\bQ}_p \to \bC$ there exists a continuous, semisimple representation $r_{\pi, \iota} : G_F \to \GL_n(\overline{\bQ}_p)$ such that for each finite place $v$ of $F$, $\mathrm{WD}(r_{\pi, \iota}|_{G_{F_v}})^{F-ss} \cong \rec_{F_v}^T(\iota^{-1} \pi_v)$ (see e.g.\ \cite{Caraianilp}). (When $n = 1$, this is compatible with our existing notation.) We use the convention that the Hodge--Tate weight of the cyclotomic character is $-1$. Thus if $\pi$ is of weight $\lambda$, then for any embedding $\tau : F \to \overline{\Q}_p$ the $\tau$-Hodge--Tate weights of $r_{\pi, \iota}$ are given by 
\[ \mathrm{HT}_\tau(r_{\pi, \iota}) = \{ \lambda_{\iota \tau, 1} + (n-1), \lambda_{\iota \tau, 2} + (n-2), \dots, \lambda_{\iota \tau, n} \}. \]

For 
$n \geq 1$, we define a matrix
\[ \Phi_n = \left( \begin{array}{cccc} & & & 1 \\ & & -1 & \\ & \iddots & & \\
\\ (-1)^{n-1} \end{array}\right). \]
If $E / F$ is a quadratic extension of fields of characteristic 0 then we write 
$\theta = \theta_n : \Res_{E / F} \GL_n \to \Res_{E / F} \GL_n$ for the 
involution given by the formula $\theta(g) = \Phi_n c(g)^{-t} \Phi_n^{-1}$. We 
write $U_n \subset \Res_{E / F} \GL_n$ for the fixed subgroup of $\theta_n$. 
Then $U_n$ is a quasi-split unitary group. The standard pinning of $\GL_n$ 
(consisting of the maximal torus of diagonal matrices, Borel subgroup of 
upper-triangular matrices, and set $\{ E_{i, i+1} \mid i = 1, \dots, n-1 \}$ of 
root vectors)  is invariant under the action of $\theta$ and defines an 
$F$-pinning of $U_n$, that we call its standard pinning. If $F$ is a number 
field or a non-archimedean local field, then we also write $U_n$ for the 
extension of $U_n$ to a group scheme over $\cO_F$ with functor of points
\[ U_n(R) = \{ g \in \GL_n(R \otimes_{\cO_F} \cO_E) \mid g = \Phi_n (1 \otimes c)(g)^{-t} \Phi_n^{-1} \}. \]
When $F$ is a number field or a local field, we identify the dual group ${}^L 
U_n = \GL_n(\bC) \rtimes W_F$, where $W_E$ acts trivially on $\GL_n(\bC)$ and 
an element $w_c \in W_F - W_E$ acts by the formula $w_c \cdot g = \Phi_n g^{-t} 
\Phi_n^{-1}$ (therefore preserving the standard pinning of $\GL_n(\bC)$).

Given a partition of $n$ (i.e.\ a tuple $(n_1, n_2, \dots, n_k)$ of natural numbers such that $n_1 + n_2 + \dots + n_k = n$), we write $L_{(n_1, \dots, n_k)}$ for the corresponding standard Levi subgroup of $\GL_n$ (i.e.\ the block diagonal subgroup $\GL_{n_1} \times \dots \times \GL_{n_k} \subset \GL_n$), and $P_{(n_1, \dots, n_k)}$ for the corresponding standard parabolic subgroup (i.e.\ block upper-triangular matrices with blocks of sizes $n_1, \dots, n_k$). If $E$ is a non-archimedean characteristic 0 local field and $\pi_1, \dots, \pi_k$ are admissible representations of $\GL_{n_1}(E), \dots, \GL_{n_k}(E)$, respectively, then we write $\pi_1 \times \pi_2 \times \dots \times \pi_k = i_{P_{(n_1, \dots, n_k)}}^{\GL_n} \pi_1 \otimes \dots \otimes \pi_k$. We write $\pi_1 \boxplus \dots \boxplus \pi_k$ for the irreducible admissible representation of $\GL_n(E)$ defined by $\rec_E(\boxplus_{i=1}^k \pi_i) \cong \oplus_{i=1}^k \rec_E(\pi_i)$; it is a subquotient of $\pi_1 \times \dots \times \pi_k$.

Given a tuple $(n_1, n_2, \dots, n_k)$ of natural numbers such that $2(n_1 + \dots + n_{k-1}) + n_k = n$, we write $M_{(n_1, \dots, n_k)}$ for the Levi subgroup of $U_n$ given by block diagonal matrices with blocks of size $n_1, n_2, \dots, n_{k-1}, n_k, n_{k-1}, \dots, n_1$. Then $M_{(n_1, \dots, n_k)}$ is a standard Levi subgroup (with respect to the diagonal maximal torus of $U_n$), and projection to the first $k$ blocks gives an isomorphism $M_{(n_1, \dots, n_k)} \cong (\Res_{E / F} \GL_{n_1} \times \dots \times \Res_{E / F}  \GL_{n_{k-1}}) \times U_{n_k}$. We write $Q_{(n_1, \dots, n_k)}$ for the parabolic subgroup given by block upper triangular matrices (with blocks of the same sizes). If $F$ is a non-archimedean characteristic 0 local field and $\pi_1, \dots, \pi_{k-1}, \pi_k$ are admissible representations of $\GL_{n_1}(E), \dots, \GL_{n_{k-1}}(E), U_{n_k}(F)$, respectively, then we write $\pi_1 \times \pi_2 \times \dots \times \pi_k = i_{Q_{(n_1, \dots, n_k)}}^{U_n} \pi_1 \otimes \dots \otimes \pi_k$.

\section{Definite unitary groups}\label{sec_definite_unitary_groups}

In this paper we will often use the following assumptions and notation, which 
we call the ``standard assumptions'':
\begin{itemize}
	\item $F$ is a CM number field  such that $F / F^+$ is everywhere 
	unramified. We note this implies that $[F^+ : \Q]$ is even (the 
	quadratic character of 
	$(F^+)^\times\backslash\A_{F^+}^\times/\widehat{\cO}_{F^+}^\times$ cutting out $F$ 
	has non-trivial restriction to $F^+_v$ for each $v|\infty$ but is trivial 
	on $(-1)_{v|\infty} \in (F^+_\infty)^\times$).
	\item $p$ is a prime. We write $S_p$ for the set of $p$-adic places of 
	$F^+$. 
	\item $S$ is a finite set of finite places of $F^+$, all of which split in 
	$F$. $S$ contains $S_p$.
	\item For each $v \in S$, we suppose fixed a factorization $v = \wv \wv^c$ in 
	$F$, and write $\widetilde{S} = \{ \wv \mid v \in S \}$. 
\end{itemize}
Let $n \geq 1$ be an integer. Under the above assumptions we can fix the following data:
\begin{itemize}
	\item The unitary group $G_n = G$ over $F^+$ with $R$-points given by the formula
	\begin{equation}\label{eqn_definite_unitary_group} G(R) = \{ g \in \GL_n(R \otimes_{F^+} F) \mid g =  (1 \otimes c)(g)^{-t} \}. \end{equation}
	We observe that for each finite place $v$ of $F^+$, $G_{F^+_v}$ is 
	quasi-split, while for each place $v | \infty$ of $F^+$, $G(F^+_v)$ is 
	compact. We use the same formula to extend $G$ to a reductive group scheme over $\cO_{F^+}$ (this uses that $F / F^+$ is everywhere unramified).\footnote{The authors apologize for using the same notation $G_L$ to denote both an extension of scalars of the algebraic group $G$ and an absolute Galois group. We hope no confusion will arise.}
	\item The inner twist $\xi : U_{n, F} \to G_F$, given by the formula 
	\[ \xi(g_1, g_2) = (g_1, \Phi_n^{-1} g_2 \Phi_n) \]
	 with respect to the identifications
	\[ U_{n, F} = \{ (g_1, g_2) \in \GL_n \times \GL_n \mid g_2 = \Phi_n g_1^{-t} \Phi_n^{-1} \}\]
	and
	\[ G_F = \{ (g_1, g_2) \in \GL_n \times \GL_n \mid g_2 = g_1^{-t} \}. \]
	\item A lift of $\xi$ to a pure inner twist $(\xi, u) : U_{n, F}\to G_F$. 
	We recall (see e.g.\ \cite{Kal11}) that by definition, this means that $u 
	\in Z^1(F^+, U_n)$ is a cocycle such that for all $\sigma \in G_{F^+}$, we 
	have $\xi^{-1} {}^\sigma \xi = \operatorname{Ad}(u_\sigma)$. When $n$ is 
	odd, we define $u$ to be the cocycle inflated from $Z^1(\Gal(F / F^+), 
	U_n(F))$, defined by the formula $u_1 = 1$, $u_c = (\Phi_n, \Phi_n)$. When 
	$n$ is even, we choose an element $\zeta \in F^\times$ with $\tr_{F/F^+}(\zeta)=0$ 
	and define  $u$ to be the 
	cocycle inflated from $Z^1(\Gal(F / F^+), U_n(F))$, defined by the formula 
	$u_1 = 1$, $u_c = (\zeta \Phi_n, \zeta^{-1} \Phi_n)$. (In fact, we will 
	make 
	essential use of this structure only when $n$ is odd.)
	\item We also fix a choice of continuous character $\mu_F = \mu : F^\times \backslash \A_F^\times \to \bC^\times$ such that $\mu|_{\A_{F^+}^\times} = \delta_{F / F^+} \circ \Art_{F^+}$ and such that if $v$ is any place of $F$ which is inert over $F^+$, then $\mu|_{F_v^\times}$ is unramified.
\end{itemize}
If $v$ is a finite place of $F^+$, then the image of the cocycle $u$ in 
$H^1(F^+_v, U_n)$ is trivial (this is true by Hilbert 90 if $v$ splits in $F$, 
and true because $\det u_c \in \mathbf{N}_{F_\wv / F_v^+} F_\wv^\times$ if $v$ 
is inert in $F$, cf. \cite[\S 1.9]{Rog90}). Our choice of pure inner twist 
$(\xi, u)$ therefore determines a $U_n(F^+_v)$-conjugacy class of isomorphisms 
$\iota_v : G(F^+_v) \to U_n(F^+_v)$ (choose $g \in U_n(\overline{F}^+_v)$ such 
that $g^{-1} {}^c g = u_c$; then $\iota_v$ is the map induced on $F^+_v$-points 
by the map $\operatorname{Ad}(g) \circ \xi^{-1} : G_{\overline{F}^+_v} \to 
U_{n, \overline{F}^+_v}$, which descends to $F^+_v$). If $v$ splits $v = w w^c$ 
in $F$, then we have an isomorphism $\iota_w : G(F^+_v) \to \GL_n(F_w)$ 
(composite of inclusion $G(F^+_v) \subset (\Res_{F / F^+} \GL_n)(F_v^+)$ and 
canonical projection $(\Res_{F / F^+} \GL_n)(F_v^+) \to \GL_n(F_w)$).

If $L^+ / F^+$ is a finite totally real extension, then we will use the following standard notation:
\begin{itemize}
\item We set $L = L^+ F$.
\item If $T$ is a set of places of $F^+$ then we write $T_L$ for the set of 
places of $L^+$ lying above $T$. If $w \in T_L$ lies above $v \in T$ and $v$ 
splits $v = \wv \wv^c$ in $F$ (in particular, we suppose that we have made a choice of $\wv|v$), then we will write $\ww$ for the unique place of 
$L$ which lies above both $w$ and $\wv$ (in which case $w$ splits $w = 
\ww \ww^c$ in $L$). We write e.g.\ $\widetilde{S}_L$ for the set of places of the form $\ww$ ($w \in S_L$).
\end{itemize}
We note that formation of $G$ is compatible with base change, in the sense that the group $G_{L^+}$ is the same as the one given by formula (\ref{eqn_definite_unitary_group}) relative to the quadratic extension $L / L^+$. The same remark applies to the pure inner twist $(\xi, u)$. When we need to compare trace formulae over $F^+$ and its extension $L^+ / F^+$ (a situation that arises in \S \ref{sec:automorphic_level_raising}), we will use the character $\mu_L = \mu_F \circ \mathbf{N}_{L / F}$.

\subsection{Base change and descent -- first cases}
In the next few sections we summarise some results from the literature concerning automorphic representations of the group $G(\bA_{F^+})$. We first give some results which do not rely on an understanding of the finer properties of $L$-packets for $p$-adic unitary groups at inert places of the extension $F / F^+$.
\begin{theorem}\label{thm_base_change}
	Let $\sigma$ be an automorphic representation of $G(\A_{F^+})$. Then there 
	exist a partition $n = n_1 + \dots + n_k$ and discrete, conjugate self-dual automorphic representations 
	\[ \pi_1, 
	\dots, \pi_k \] of 
	\[ \GL_{n_1}(\A_F), \dots, \GL_{n_k}(\A_F), \]
	 respectively, with the 
	following properties:
	\begin{enumerate}
		\item Let $\pi = \pi_1 \boxplus \dots \boxplus \pi_k$. Then for each  finite place $w$ of $F$ below which $\sigma$ is 
		unramified, $\pi_w$ is unramified and is the unramified base change of 
		$\sigma_{w|_{F^+}}$.
		\item For each place $v=ww^c$ of $F^+$ which splits in $F$, $\pi_w\cong \sigma_v \circ 
		\iota_w^{-1}$.
		\item For each place $v | \infty$ of $F$, $\pi_v$ has the same 
		infinitesimal character as $\otimes_{\tau : F_v \to \C} W_\tau$, where 
		$W_\tau$ is the algebraic representation of $\GL_n(F_v) \cong 
		\GL_n(\C)$ such that $\sigma_v \cong W_\tau|_{G(F_v^+)}$.
	\end{enumerate}
\end{theorem}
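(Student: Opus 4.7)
The plan is to combine a pure-inner-form transfer of $\sigma$ to the quasi-split unitary group $U_n$ with Mok's endoscopic classification of the discrete spectrum of $U_n$. The pure inner twist $(\xi, u)$ and the character $\mu$ furnish exactly the normalizations needed to make this transfer canonical and to compare it with $\GL_n(\A_F)$ via standard base change along the $L$-embedding ${}^L U_n \hookrightarrow {}^L (\Res_{F / F^+} \GL_n)$ determined by $\mu$.

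First I would treat the archimedean components. Since $G(F^+_v)$ is compact for every $v \mid \infty$, $\sigma_\infty$ is an irreducible finite-dimensional algebraic representation, and via $\iota_v$ it extends uniquely to an irreducible algebraic representation $W_v$ of $(\Res_{F_v / \mathbf{R}} \GL_n)_{\mathbf{C}}$; the collection $\{W_\tau\}$ is the one appearing in item (3). Next I would transfer $\sigma$ from $G$ to $U_n$: because $G$ is quasi-split at every finite place and compact at each archimedean place, the simple invariant trace formula for $G$ (as explicated in \cite{labesse}) can be compared termwise with the stable trace formula of $U_n$ using pseudocoefficients of discrete series at infinity, producing a discrete automorphic representation $\sigma^{U_n}$ of $U_n(\A_{F^+})$. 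The local correspondences underlying this comparison are: the identity at split finite places (via $\iota_w$), M{\oe}glin's discrete series $L$-packets at inert finite places \cite{Moe07, Moe14}, and the Adams--Johnson packets at archimedean places, each normalized via the pure inner twist $(\xi, u)$ and Kaletha's transfer factors \cite{Kal16}.

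I would then feed $\sigma^{U_n}$ into Mok's endoscopic classification, obtaining a discrete global parameter of the form $\psi = \pi_1 \boxplus \dots \boxplus \pi_k$ with each $\pi_i$ a discrete, conjugate self-dual automorphic representation of $\GL_{n_i}(\A_F)$ and $n_1 + \dots + n_k = n$. Setting $\pi = \pi_1 \boxplus \dots \boxplus \pi_k$, the content of the classification is that $\pi$ is the standard (weak) base change of $\sigma^{U_n}$ via the $L$-embedding normalized by $\mu$. Unramified local parameters therefore agree at all prime-to-$S$ places, giving (1). At split finite places base change is the tautological decomposition, so $\pi_w \cong \sigma_v \circ \iota_w^{-1}$, which is (2). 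At archimedean places the $L$-parameter of $\pi_v$ is the base change of the $L$-parameter of $\sigma^{U_n}_v$, whose infinitesimal character agrees with that of $\otimes_\tau W_\tau$ from the first step, which is (3).

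The main obstacle is not the structural content — which is an output of the endoscopic classification — but the careful tracking of normalizations: the cocycle $u$ (depending on the parity of $n$ and the auxiliary $\zeta$), the standard pinning of $U_n$, the auxiliary character $\mu$, and Kaletha's transfer factors must be assembled consistently so that the trace formula comparison of the second step produces a $\sigma^{U_n}$ with the expected local behaviour at split and archimedean places, and so that the parameter extracted from Mok's classification really is the base change of $\sigma$ in the sense demanded by (1)--(3). Once these conventions are aligned, the three compatibility statements become tautological consequences of the classification rather than separate assertions requiring proof.
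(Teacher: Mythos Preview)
Your approach is workable in principle but takes a substantially different and heavier route than the paper. The paper's proof is a one-line citation to \cite[Corollaire 5.3]{labesse}: Labesse establishes base change for definite unitary groups directly, by comparing the simple trace formula for $G$ with the simple \emph{twisted} trace formula for $\Res_{F/F^+}\GL_n \rtimes \theta$. There is no passage through the quasi-split form $U_n$, and Mok's classification is not invoked anywhere.

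Your two-step strategy $G \to U_n \to \GL_n$ replaces one trace-formula comparison by two, the second of which (Mok) is a vastly more elaborate piece of machinery than what is needed here. Moreover, your first step is imprecise as stated: the trace-formula transfer from the inner form $G$ to $U_n$ does not produce a single discrete automorphic representation $\sigma^{U_n}$, but rather matches the stable character of the singleton packet $\{\sigma\}$ on $G$ with a stable combination over an $L$-packet on $U_n$. This is harmless for the final conclusion (all members of that packet share the same Arthur parameter, hence the same base change $\pi$), but it means that to execute your plan cleanly you would really want the endoscopic classification for \emph{inner forms} of $U_n$ (Kaletha--M\'inguez--Shin--White), not just Mok. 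Labesse's direct approach sidesteps all of this: the simple trace formula for the compact-at-infinity group $G$ has only discrete contributions, and comparing it with the $\theta$-twisted trace formula for $\GL_n$ produces the isobaric decomposition $\pi = \pi_1 \boxplus \cdots \boxplus \pi_k$ immediately, with the local compatibilities (1)--(3) built into the place-by-place matching of test functions.
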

\begin{proof}
	This follows from \cite[Corollaire 5.3]{labesse}. 
\end{proof}
We call $\pi$ the base change of $\sigma$. If $\iota: \overline{\Q}_p \to \C$ is an isomorphism, we say that 
$\sigma$ is $\iota$-ordinary if $\pi$ is $\iota$-ordinary at all places $w|p$ 
in the sense of 
\cite[Definition 5.3]{ger}. We note that this depends only on $\pi_p$ and the 
weight of $\pi$ (equivalently, on $\sigma_p$ and $\sigma_\infty$).
\begin{cor}\label{cor:galois_existence}  
	Let $\iota: \overline{\Q}_p \to \C$ be an isomorphism. Then there exists a 
	unique continuous semisimple representation $r_{\sigma, \iota} : G_F \to 
	\GL_n(\overline{\Q}_p)$ with the following properties:
	\begin{enumerate}
		\item For each prime-to-$p$ place $w$ of $F$ below which $\sigma$ is 
		unramified, $r_{\sigma, \iota}|_{G_{F_w}}$ is unramified.
		\item For each place $v \in S_p$, $r_{\sigma, \iota}|_{G_{F_\wv}}$ is de Rham.
		\item For each place $v=ww^c$ of $F^+$ which splits in $F$, 
		$\mathrm{WD}(r_{\sigma, 
		\iota}|_{G_{F_w}})^{F-ss} \cong \rec^T_{F_w}(\sigma_v \circ 
		\iota_w^{-1})$.
	\end{enumerate}
\end{cor}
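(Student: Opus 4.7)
The plan is to reduce directly to the existence of Galois representations attached to conjugate self-dual regular algebraic cuspidal automorphic representations of $\GL_n(\A_F)$, via the base change produced by Theorem \ref{thm_base_change}. First, I would apply Theorem \ref{thm_base_change} to obtain a partition $n = n_1 + \dots + n_k$ and discrete, conjugate self-dual automorphic representations $\pi_1, \dots, \pi_k$ of $\GL_{n_i}(\A_F)$ whose isobaric sum $\pi$ is the base change of $\sigma$. Since $G$ is compact at the infinite places, $\sigma_\infty$ is finite-dimensional and algebraic, so the matching of infinitesimal characters forces $\pi$, hence each $\pi_i$, to be regular algebraic.

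Next, I would check that each $\pi_i$ is in fact cuspidal. The Mœglin--Waldspurger classification of the discrete spectrum shows that a discrete automorphic representation of $\GL_{n_i}(\A_F)$ is either cuspidal or a Speh residue built from a cuspidal representation of $\GL_{m}(\A_F)$ with $m | n_i$ and $m < n_i$. A non-cuspidal Speh representation fails to be regular at infinity, so the regularity established in the previous step rules this out. Each $\pi_i$ is therefore a conjugate self-dual, regular algebraic, cuspidal automorphic representation of $\GL_{n_i}(\A_F)$, hence polarizable in the sense of \cite{BLGGT}, and so carries an associated Galois representation $r_{\pi_i, \iota} : G_F \to \GL_{n_i}(\overline{\Q}_p)$ satisfying local-global compatibility at all finite places and which is de Rham at all places above $p$.

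I would then define
\[ r_{\sigma, \iota} = \bigoplus_{i=1}^k r_{\pi_i, \iota} : G_F \to \GL_n(\overline{\Q}_p). \]
Property (1) follows from property (1) of Theorem \ref{thm_base_change} together with the analogous unramifiedness in the $\GL_n$ case. Property (2) follows because each $r_{\pi_i, \iota}$ is de Rham at places above $p$, and the direct sum of de Rham representations is de Rham. For property (3), if $v = ww^c$ splits in $F$, then property (2) of Theorem \ref{thm_base_change} gives $\pi_w \cong \sigma_v \circ \iota_w^{-1}$, and compatibility of local Langlands with isobaric sums gives
\[ \rec_{F_w}^T(\sigma_v \circ \iota_w^{-1}) \cong \bigoplus_{i=1}^k \rec_{F_w}^T((\pi_i)_w), \]
which matches $\mathrm{WD}(r_{\sigma, \iota}|_{G_{F_w}})^{F-ss}$ by local-global compatibility for each $\pi_i$.

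For uniqueness, any two candidate representations agree on Frobenius elements at all but finitely many split places (by property (3) and the injectivity of $\rec^T$ on the Frobenius-semisimple classes), and the Chebotarev density theorem applied inside $G_F$ (using that the split places have density $1/2$, and the Frobenius elements at places split in $F/F^+$ are dense in $G_F$) forces the two semisimple representations to be isomorphic. The only subtlety in the whole argument is the cuspidality of the $\pi_i$, which rests on the Mœglin--Waldspurger classification combined with regularity; everything else is a routine assembly of known results.
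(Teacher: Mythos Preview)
Your overall strategy matches the paper's, but there is a gap precisely at the step you flag as ``the only subtlety''. The assertion that a non-cuspidal Speh representation necessarily fails to be regular at infinity is false. If $\tau$ is a cuspidal conjugate self-dual automorphic representation of $\GL_2(\A_F)$ whose archimedean parameters are sufficiently spread out, then $\mathrm{Speh}(\tau,2)$ on $\GL_4$ has regular infinitesimal character: its archimedean parameter agrees with that of $\tau|\cdot|^{1/2} \boxplus \tau|\cdot|^{-1/2}$, and the resulting four exponents are distinct as soon as those of $\tau$ differ by more than $1$. The remark immediately following the corollary in the paper --- that cuspidality of the $\pi_i$ holds only under the \emph{extra} hypothesis that $\overline{r}_{\sigma,\iota}|_{G_{F(\zeta_p)}}$ is multiplicity-free --- confirms that the authors do not expect the $\pi_i$ to be cuspidal in general.

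The paper's approach, which you should adopt, uses the M\oe glin--Waldspurger classification not to exclude the Speh case but to reduce through it: write each discrete $\pi_i$ as $\mathrm{Speh}(\tau_i,m_i)$ with $\tau_i$ cuspidal on $\GL_{n_i/m_i}(\A_F)$. Conjugate self-duality of $\pi_i$ forces that of $\tau_i$, and regularity of $\pi$ forces a suitable twist of $\tau_i$ to be regular algebraic, so that twist is RACSDC and carries a Galois representation. One then assembles $r_{\sigma,\iota}$ from the $r_{\tau_i,\iota}$ together with their cyclotomic twists; the paper packages this step by citing \cite[Corollary~3.4]{Ana19}. With this correction in place, your verification of properties (1)--(3) and the Chebotarev uniqueness argument go through unchanged.
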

\begin{proof}
	This follows from the classification of discrete automorphic 
	representations of $\GL_{n_i}(\A_F)$ \cite{Moe89}, together with the known existence of 
	Galois representations attached to RACSDC (regular algebraic, conjugate self-dual, cuspidal) automorphic representations of 
	$\GL_{n_i}(\A_F)$ (cf.~\cite[Corollary 3.4]{Ana19}).
\end{proof}
We remark that if $\overline{r}_{\sigma, \iota}|_{G_{F(\zeta_p)}}$ is 
multiplicity free, then the base change of $\sigma$ is $\pi = \pi_1 \boxplus \dots \boxplus 
\pi_k$, where each $\pi_i$ is a \emph{cuspidal} automorphic representation of 
$\GL_{n_i}(\A_F)$. Indeed, \cite{Moe89} shows that a non-cuspidal $\pi_i$ would contribute a direct sum of copies of a single Galois representation twisted by powers of the cyclotomic character to $r_{\sigma,\iota}$, which gives a factor with multiplicity $>1$ in $\overline{r}_{\sigma, \iota}|_{G_{F(\zeta_p)}}$. In particular, if $\overline{r}_{\sigma, \iota}|_{G_{F(\zeta_p)}}$ is 
multiplicity free then $\pi$ is tempered (as each $\pi_i$ is, by the results of \cite{shin,MR3010691,Caraianilnotp}).
\begin{theorem}\label{thm_cuspidal_descent}
Let $\pi$ be a RACSDC  automorphic representation of $\GL_n(\bA_F)$. Suppose that $\pi$ is unramified outside $S$. Then there exists an automorphic representation $\sigma$ of $G(\bA_{F^+})$  with the following properties:
\begin{enumerate}
\item For each finite place $v \not\in S$ of $F$, $\sigma_v^{\iota_v^{-1}(U_n(\cO_{F_v^+}))} \neq 0$.
\item $\pi$ is the base change of $\sigma$.
\end{enumerate}
\end{theorem}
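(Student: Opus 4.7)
The plan is to apply the descent theorem of Labesse \cite{labesse}, which is the converse direction to Theorem \ref{thm_base_change}: because $G$ is definite, the simple trace formula applies to $G$, and this is the key input that makes descent from $\GL_n(\bA_F)$ to $G(\bA_{F^+})$ possible. The hypotheses of Labesse's theorem are satisfied: $\pi$ is regular algebraic (so cohomological at infinity), cuspidal and conjugate self-dual; moreover the ramification set $S$ consists of places of $F^+$ which split in $F$, so there are no subtleties from $L$-packets at inert ramified places.

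First I would invoke Labesse's descent theorem to produce an automorphic representation $\sigma$ of $G(\bA_{F^+})$ whose base change, in the sense of Theorem \ref{thm_base_change}, equals $\pi$. The archimedean compatibility is automatic from the regular algebraic hypothesis and pins down $\sigma_\infty$, which is an irreducible algebraic representation of the compact group $G(F^+_\infty)$. At each split place $v = \wv \wv^c \in S$, the condition of Theorem \ref{thm_base_change}(2) forces the identification $\sigma_v \cong \pi_{\wv} \circ \iota_{\wv}$.

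Next I would verify condition (1). At any $v \notin S$, whether split or inert in $F$, the hypothesis that $F/F^+$ is everywhere unramified ensures that the integral model of $G$ over $\cO_{F^+_v}$ is reductive and that $\iota_v^{-1}(U_n(\cO_{F^+_v}))$ is a hyperspecial maximal compact subgroup of $G(F^+_v)$. Since $\pi$ is unramified at all places of $F$ above $v$, the Satake parameters prescribed by Theorem \ref{thm_base_change}(1)--(2) force $\sigma_v$ to be unramified with respect to this hyperspecial subgroup; in particular $\sigma_v^{\iota_v^{-1}(U_n(\cO_{F^+_v}))} \neq 0$. At split $v$ this is immediate from the local identification with an unramified $\pi_{\wv}$, while at inert $v$ it follows from unramified local base change for the unramified Hecke algebra.

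The main technical subtlety is ensuring that the descent lands on the specific inner form $G$ rather than merely on the quasi-split $U_n$. This is handled in \cite{labesse} by comparing simple trace formulae for $G$ and $\GL_n$ via the quasi-split inner form $U_n$, using the pure inner twist $(\xi, u)$ fixed in \S \ref{sec_definite_unitary_groups} together with appropriately normalised transfer factors. Because $S$ consists of split places, the required local transfers at ramified places reduce to identifications on $\GL_n$, and no delicate comparison of transfer factors at non-split ramified places is needed; this is the point where the assumption that $S$ consists of split places enters essentially.
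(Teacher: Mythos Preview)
Your proposal is correct and takes essentially the same approach as the paper: both reduce the statement to Labesse's descent theorem \cite[Th\'eor\`eme 5.4]{labesse}. The paper's proof is a one-line citation, while you have spelled out in more detail why the hypotheses are satisfied and why the unramifiedness condition (1) holds, but the underlying argument is the same.
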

\begin{proof}
This follows from \cite[Th\'eor\`eme 5.4]{labesse}.
\end{proof} 

\subsection{Endoscopic data and normalisation of transfer factors}
\label{subsec:endo}
To go further we need to use some ideas from the theory of endoscopy, both for 
the unitary group $G$ and for the twisted group $\Res_{F / F^+} \GL_{n} \rtimes 
\theta$. We begin by describing endoscopic data for $G$ (cf. \cite[\S 
4.2]{labesse}, \cite[\S 4.6]{Rog90}). The equivalence classes of endoscopic 
data for $G$ are in bijection with pairs $(p, q)$ of integers such that $p + q 
= n$ and $p \geq q \geq 0$. Define $\mu_+ = 1$, $\mu_- = \mu$. We identify 
$\mu_{\pm}$ with characters of the global Weil group $W_F$ using $\Art_F$. Then 
we can write down an extended endoscopic triple $\cE = (H, s, \eta)$ giving 
rise to each equivalence class as follows:
\begin{itemize}
\item The group $H$ is $U_p \times U_q$.
\item $s = \diag(1, \dots, 1, -1, \dots, -1)$ (with $p$ occurrences of $1$ and $q$ occurences of $-1$).
\item $\eta : {}^L H \to {}^L G$ is given by the formulae:
\[ \begin{split} \eta : (g_1, g_2) \rtimes 1 & \mapsto \diag(g_1, g_2) \rtimes 1 \in \GL_n(\bC) = \widehat{G}, \, \, (g_1, g_2) \in \GL_p(\bC) \times \GL_q(\bC) = \widehat{H};\\  
(1_p, 1_q) \rtimes w & \mapsto \diag(\mu_{(-1)^{q}}(w) 1_p, 
\mu_{(-1)^{p}}(w) 1_q) \rtimes w\,\,(w \in W_F) \\ 
(1_p, 1_q) \rtimes w_c & \mapsto \diag(\Phi_p,\Phi_q) 
\Phi_n^{-1} \rtimes w_c,\end{split} \]
where $w_c \in W_{F^+} - W_F$ is any fixed element.
\end{itemize}
As described in \cite[\S 4.5]{labesse},
a choice of extended endoscopic triple $\cE$ determines a 
normalisation of the local transfer factor $\Delta_v^\cE$ ($v$ a place of 
$F^+$) up to non-zero scalar. We will fix a normalisation of local transfer 
factors only when $n$ is odd, using the following observations:
\begin{itemize}
\item The quasi-split group $U_n$, with its standard pinning, has a canonical 
normalisation of transfer factors. Indeed, in this case the Whittaker 
normalisation of transfer factors defined in \cite[\S 5]{Kot99} is independent 
of the choice of additive character and coincides with the transfer factor 
denoted $\Delta_0$ in \cite{Lan87}.
\item Our choice of pure inner twist $(\xi, u) : U_n \to G$ defines a normalisation of the local transfer factors for $G$. This normalisation of local transfer factors satisfies the adelic product formula (a very special case of \cite[Proposition 4.4.1]{Kal18}).
\end{itemize}
A local transfer factor having been fixed, one can define what it means for a 
function $f^H \in C_c^\infty(H(F^+_v))$ (resp. $f^H \in 
C_c^\infty(H(\A_{F^+}))$) to be an endoscopic transfer of a function $f \in 
C_c^\infty(G(F_v^+))$ (resp. $f \in C_c^\infty(G(\A_{F^+}))$). After the work of 
Waldspurger, Laumon, and Ng\^o, any function $f \in C_c^\infty(G(F_v^+))$ 
(resp. $C_c^\infty(G(\A_{F^+}))$) admits an endoscopic transfer (see  
\cite[Th\'eor\`eme 4.3]{labesse} for detailed references). 

We next discuss base change, or in other words, endoscopy for the twisted group 
$\Res_{F / F^+}  \GL_n \rtimes \theta_n$. We will only require the principal 
extended endoscopic triple $(U_n, 1_n, \eta)$, where $\eta : {}^L U_n \to {}^L 
\Res_{F / F^+}  \GL_n$ is defined as follows: first, identify ${}^L \Res_{F / 
F^+} \GL_n = (\GL_n(\bC) \times \GL_n(\bC)) \rtimes W_{F^+}$, where $W_{F^+}$ 
acts through its quotient $\Gal(F / F^+)$ and an element $w_c \in W_{F^+} - 
W_F$ acts by the automorphism $(g_1, g_2) \mapsto (\Phi_n g_2^{-t} \Phi_n^{-1}, 
\Phi_n g_1^{-t} \Phi_n^{-1})$. Then $\eta : {}^L U_n \to {}^L \Res_{F / F^+}  
\GL_n$ 
is given by the formulae:
\[ \begin{split} \eta : (g) \rtimes 1 & \mapsto \diag(g, {}^t g^{-1}) \rtimes 1 \in \GL_n(\bC) \times \GL_n(\bC);\\  
(1_n) \rtimes w & \mapsto \diag(1_n, 1_n) \rtimes w, \, \, (w \in W_F); \\ 
(1_n) \rtimes w_c & \mapsto \diag(\Phi_n, \Phi_n^{-1}) \rtimes w_c.\end{split} 
\]
Following \cite[\S 4.5]{labesse}, we fix the trivial transfer factors in this 
case. By \cite[Lemme 4.1]{labesse}, each function 
$\phi \in C_c^\infty(\Res_{F / F^+}  \GL_n(F_v^+) \rtimes \theta_n)$ admits an 
endoscopic transfer $\phi^{U_n} \in C_c^\infty(U_n(F_v^+))$, and every function 
in $C_c^\infty(U_n(F_v^+))$ arises this way. We will follow \emph{op. cit.} in 
using the following notation: if $f \in C_c^\infty(U_n(F_v^+))$ (or more 
generally, if $U_n$ is replaced by a product of unitary groups) then we write 
$\widetilde{f} \in C_c^\infty(\Res_{F / F^+} \GL_n(F_v^+) \rtimes \theta_n)$ 
for any function that admits $f$ as endoscopic transfer (with respect to the 
principal extended endoscopic triple defined above). 

If $\cE = (H, s, \eta)$ is one of the extended endoscopic triples for $G$ as 
above then, following \cite[\S 4.7]{labesse}, we set $M^H = \Res_{F / F^+} 
H_F$, and write $\widetilde{M}^H$ for the twisted space on $M^H$ associated to 
the non-trivial element of $\Gal(F / F^+)$. Then we may canonically identify 
$M^H = \Res_{F / F^+} \GL_p \times \GL_q$ and $\widetilde{M}^H =  (\Res_{F / 
F^+} \GL_p \times \GL_q) \rtimes ( \theta_p \times \theta_q )$. We will use the 
same notation to describe stable base change for $M^H$. In particular, if $f 
\in C_c^\infty(H(\A_{F^+}))$, then we will use $\widetilde{f} \in 
C_c^\infty(\widetilde{M}^H(\A_{F^+}))$ to denote a function whose endoscopic 
transfer (with respect to the principal extended endoscopic triple for $M^H$, 
defined as above) with respect to the trivial transfer factors is $f$ (cf. 
\cite[Proposition 4.9]{labesse}).

Having fixed the above normalisations, we can now formulate some simple propositions.
\begin{proposition}\label{prop_archimedean_signs}
Let $n \geq 1$ be odd, and let $v$ be an infinite place of $F^+$. Suppose given 
an extended endoscopic triple $\cE = (H, s, \eta)$ as above and a Langlands 
parameter $\varphi_H : W_{F^+_v} \to {}^L H$ such that $\eta \circ \varphi_H$ 
is the Langlands parameter of an irreducible representation $\sigma_v$ of 
$G(F^+_v)$. Let $\pi$ be the (necessarily tempered, $\theta$-invariant) 
irreducible admissible representation of $H(F_\wv)$ associated to the Langlands 
parameter $\varphi_H|_{W_{F_v}}$, and let $f_v \in C^\infty(G(F^+_v))$ be a 
coefficient for $\sigma_v$. Then there is a sign $\epsilon(v, \cE, \varphi_H) 
\in \{ \pm 1 \}$ such that the identity $\widetilde{\pi}(\widetilde{f}_v^H) = 
\epsilon(v, \cE, \varphi_H) \sigma_v(f_v) = \epsilon(v, \cE, \varphi_H)$ holds, 
where the twisted trace is Whittaker normalised (cf.~\cite[\S3.6]{labesse}).
\end{proposition}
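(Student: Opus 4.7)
I would obtain the identity by composing two archimedean character identities: (i) Shelstad's endoscopic character identity relating traces on $G(F^+_v)$ and $H(F^+_v)$, in the form for pure inner twists made explicit by Kaletha \cite{Kal16}; and (ii) the archimedean twisted base change identity relating traces on $H(F^+_v)$ and $\widetilde{M}^H(F^+_v)$, as in \cite[\S 4]{labesse}. Since $f_v$ is a matrix coefficient of the finite-dimensional representation $\sigma_v$ of the compact group $G(F^+_v)$, we may normalise so that $\sigma'(f_v) = \delta_{\sigma', \sigma_v}$ for any irreducible representation $\sigma'$ of $G(F^+_v)$; in particular $\sigma_v(f_v) = 1$, so only the outer equality $\widetilde{\pi}(\widetilde{f}_v^H) = \epsilon(v, \cE, \varphi_H)$ requires proof.

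The first step is Shelstad's identity. With the transfer factor for $G$ fixed by our pure inner twist $(\xi, u)$, and noting that on the compact group $H(F^+_v) = U_p(\bR) \times U_q(\bR)$ the tempered $L$-packet $\Pi(\varphi_H)$ consists of a single representation $\pi_H$ whose base change to $H(F_\wv)$ is $\pi$, Shelstad's identity yields
\[
\pi_H(f_v^H) \;=\; \sum_{\sigma \in \Pi(\eta \circ \varphi_H)} \langle s, \sigma \rangle_{(\xi, u)} \, \sigma(f_v) \;=\; \langle s, \sigma_v \rangle_{(\xi, u)} \;\in\; \{\pm 1\},
\]
using that $s^2 = 1$ forces the pairing to take values in $\{\pm 1\}$, and that by hypothesis $\sigma_v \in \Pi(\eta \circ \varphi_H)$ so the relevant term is nonzero. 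The second step is the archimedean twisted character identity for base change: since $\widetilde{f}_v^H$ is by definition a twisted endoscopic transfer of $f_v^H$ for the principal triple on $\widetilde{M}^H$, there is a sign $\delta_v \in \{\pm 1\}$ (depending only on the Whittaker normalisation used to extend $\pi$ to $\widetilde{\pi}$) such that $\widetilde{\pi}(\widetilde{f}_v^H) = \delta_v \cdot \pi_H(f_v^H)$. Setting $\epsilon(v, \cE, \varphi_H) = \delta_v \cdot \langle s, \sigma_v \rangle_{(\xi, u)}$ then completes the proof.

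\textbf{Main obstacle.} The substantive work is checking that the sign conventions line up cleanly. I expect the hardest part to be verifying the compatibility between the Whittaker normalisation of transfer factors for $U_n$ from \cite[\S 5]{Kot99}, the pure inner twist normalisation for $G$ from \cite{Kal16}, and the Whittaker-normalised twisted trace on $\widetilde{M}^H$, so that one really obtains a sign $\pm 1$ rather than a nonzero scalar of unknown modulus. Once these choices are traced through the explicit archimedean packet data (where $\Pi(\varphi_H)$ is a singleton and $\pi_H$ is an explicit irreducible representation of a compact group), the identity $\widetilde{\pi}(\widetilde{f}_v^H) = \epsilon(v, \cE, \varphi_H) \in \{\pm 1\}$ follows.
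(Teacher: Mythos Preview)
Your two-step strategy---combine the archimedean endoscopic character identity (Shelstad/Kaletha) with the twisted base-change identity (Clozel)---is exactly the paper's approach; the paper cites \cite{Clo82} for the first sign $\epsilon_1$ and \cite[Proposition~5.10]{Kal16} for the second sign $\epsilon_2$, then sets $\epsilon(v,\cE,\varphi_H)=\epsilon_1\epsilon_2$.

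There is, however, a factual error in your execution. You assert that $H(F^+_v)=U_p(\bR)\times U_q(\bR)$ is compact and hence that the $L$-packet $\Pi(\varphi_H)$ is a singleton $\{\pi_H\}$. This is false: the endoscopic group $H=U_p\times U_q$ is the \emph{quasi-split} unitary group (as defined in the notation section of the paper), so $H(F^+_v)$ is non-compact whenever $p\ge 2$ or $q\ge 2$, and the discrete-series $L$-packet $\Pi(\varphi_H)$ generally has several members. It is $G(F^+_v)$ that is compact, which is why the $G$-side collapses to a single term $\sigma_v(f_v)$.

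The fix is immediate and brings you in line with the paper: replace your $\pi_H(f_v^H)$ everywhere by the stable sum $\sum_{\sigma_{v,H}\in\Pi(\varphi_H)}\sigma_{v,H}(f_v^H)$. Clozel's result gives $\widetilde{\pi}(\widetilde{f}_v^H)=\epsilon_1\sum_{\sigma_{v,H}}\sigma_{v,H}(f_v^H)$, Kaletha's gives $\epsilon_2\sum_{\sigma_{v,H}}\sigma_{v,H}(f_v^H)=\sigma_v(f_v)$, and the packet sum cancels between the two. Your concern about verifying that the scalars are genuinely $\pm 1$ is legitimate, but the paper simply defers this to the cited references rather than re-deriving it.
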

\begin{proof}
Let $\Pi(\varphi_H)$ be the $L$-packet of discrete series representations of 
$H(F_v^+)$ associated to $\varphi_H$. According to the main result of 
\cite{Clo82},
there 
is a sign $\epsilon_1 \in \{ \pm 1 \}$ such that 
$\widetilde{\pi}(\widetilde{f}_v^H) = \epsilon_1 \sum_{\sigma_{v, H} \in 
\Pi(\varphi_H)} \sigma_{v, H}(f_v^H)$. According to \cite[Proposition 
5.10]{Kal16}, there is a sign $\epsilon_2 \in  \{ \pm 1 \}$ such that 
$\epsilon_2 \sum_{\sigma_{v, H} \in \Pi(\varphi_H)} \sigma_{v, H}(f_v^H) = 
\sigma_v(f_v)$. We may take $\epsilon(v, \cE, \varphi_H) = \epsilon_1 
\epsilon_2$.
\end{proof}
The sign in Proposition \ref{prop_archimedean_signs} depends on our fixed 
choice of pure inner twist (because it depends on the normalisation of transfer 
factors). We make the following basic but important remark, which is used in 
the proof of Proposition \ref{prop_automorphic_descent}: let $L^+ / F^+$ be a 
finite totally real extension, and let $L = L^+ F$. Then $G_{L^+}$ satisfies 
our standard assumptions, and comes equipped with a pure inner twist by base 
extension. If $v$ is an infinite place of $L^+$, then we have the identity 
$\epsilon(v, \cE_{L^+}, \varphi_H|_{W_{L^+_v}}) = \epsilon(v|_{F^+}, \cE, 
\varphi_H)$. 
\begin{proposition}\label{prop_transfer_at_finite_places}
Let $n \geq 1$ be odd, let $v$ be a finite place of $F^+$, and let $f_v \in C_c^\infty(G(F^+_v))$. Suppose given an extended endoscopic triple $\cE = (H, s, \eta)$.
\begin{enumerate}
\item Suppose that $v$ is inert in $F$ and that $f_v$ is unramified (i.e.\ 
$G(\cO_{F^+_v})$-biinvariant). Suppose given an unramified Langlands parameter 
$\varphi_H : W_{F^+_v} \to {}^L H$ and let $\sigma_{v, H}$, $\sigma_v$ be the 
unramified irreducible representations of $H(F^+_v)$, $G(F^+_v)$ associated to 
the parameters $\varphi_H, \eta \circ \varphi_H$, respectively. Let $\pi$ be 
the unramified irreducible representation of $M^H(F^+_v)$ associated to 
$\varphi_H|_{W_{F_\wv}}$. Then there are identities 
$\widetilde{\pi}(\widetilde{f}_v^H) = \sigma_{v, H}(f_{v}^H) = \sigma_v(f_v)$, 
where the twisted trace is normalised so that $\theta$ fixes the unramified 
vector of $\pi$. (If $\pi$ is generic, this agrees with the Whittaker 
normalisation of the twisted trace.)
\item Suppose that $v = \wv \wv^c$ splits in $F$. Suppose given a bounded Langlands parameter $\varphi_H : W_{F^+_v} \to {}^L H$ and let $\sigma_{v, H}$, $\sigma_v$ be the representations of $H(F^+_v)$, $G(F^+_v)$ associated to the parameters $\varphi_H$, $\eta \circ \varphi_H$, respectively (by the local Langlands correspondence $\rec_{F_\wv}$ for general linear groups). Let $\pi_v$ be the irreducible representation of $M^H(F^+_v)$ associated to $\varphi_H|_{W_{F_\wv}}$. Then there is an identity $\widetilde{\pi}_v(\widetilde{f}_v^H) = \sigma_{v, H}(f_v^H) = \sigma_v(f_v)$, where the twisted trace is Whittaker normalised.  
\end{enumerate}
\end{proposition}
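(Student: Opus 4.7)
The plan is to treat the two parts separately; each reduces to a well-established fundamental-lemma-type identity combined with careful bookkeeping of normalisations. In both parts the structure of the argument is the same: establish $\sigma_v(f_v) = \sigma_{v,H}(f_v^H)$ by the (ordinary) fundamental lemma for the pair $(G,H)$, and then $\sigma_{v,H}(f_v^H) = \widetilde{\pi}(\widetilde{f}_v^H)$ by the base-change fundamental lemma for the pair $(M^H, H)$, using the principal extended endoscopic triple described in \S\ref{subsec:endo}.

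For the inert unramified case (1), both identities are instances of the unramified fundamental lemma. For $\sigma_{v,H}(f_v^H) = \sigma_v(f_v)$, when $f_v$ is $G(\cO_{F^+_v})$-biinvariant one may take $f_v^H$ to be the unramified function obtained by Satake-transferring $f_v$ along $\eta$, so the trace equality becomes the statement that $\eta$ sends the Satake parameter of $\sigma_{v,H}$ to that of $\sigma_v$—essentially the definition of how the unramified local Langlands correspondence interacts with $\eta$. For $\widetilde{\pi}(\widetilde{f}_v^H) = \sigma_{v,H}(f_v^H)$, this is the unramified twisted (stable base change) fundamental lemma for $(M^H, H)$, together with the observation that for an unramified (hence generic) representation $\pi$ the Whittaker normalisation of the twisted trace coincides with the normalisation for which $\theta$ fixes the unramified vector; this matches the parenthetical remark in the statement.

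For the split case (2), everything simplifies because $F \otimes_{F^+} F^+_v \cong F_\wv \oplus F_\wv$. Via $\iota_\wv$ and its analogues for $H$ and $M^H$, we have $G(F^+_v) \cong \GL_n(F_\wv)$ and $H(F^+_v) \cong \GL_p(F_\wv) \times \GL_q(F_\wv)$, and the $L$-embedding $\eta$ restricts at the split place to a twist (by characters derived from $\mu_{\pm}$) of the standard block-diagonal Levi embedding $\GL_p \times \GL_q \hookrightarrow \GL_n$. Thus $\sigma_v$ is the normalised parabolic induction of the appropriate $\mu_{\pm}$-twist of $\sigma_{v,H}$, and the local transfer factor—in the normalisation fixed in \S\ref{subsec:endo} via our pure inner twist—is trivial. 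The identity $\sigma_v(f_v) = \sigma_{v,H}(f_v^H)$ then follows from the standard fact that a valid transfer $f_v^H$ is given (up to the modular character) by the constant term of $f_v$ along the parabolic with Levi $\GL_p \times \GL_q$, combined with the Weyl integration formula relating semisimple orbital integrals on $G$ and on $H$. The twisted identity $\widetilde{\pi}(\widetilde{f}_v^H) = \sigma_{v,H}(f_v^H)$ is essentially tautological at a split place, since $M^H(F^+_v) \cong H(F_\wv) \times H(F_\wv)$ with the twisted involution permuting the factors, so $\widetilde{\pi}$ is built from $\pi$ on the first factor and $\pi$ transferred by $\theta_p\times\theta_q$ on the second; its Whittaker-normalised twisted trace then reduces to the ordinary trace of $\pi$ against $f_v^H$.

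The main obstacle I expect is not any deep input but the careful normalisation bookkeeping: verifying that the Whittaker normalisation of the twisted trace on $M^H$ is compatible with the endoscopic-transfer normalisation on $H$; that the transfer factors for the pure inner twist $(\xi,u)$ are indeed trivial at split places and fundamental-lemma-compatible (i.e.\ equal to Kottwitz's unramified transfer factor) at inert unramified places; and that the characters $\mu_{\pm}$ appearing in $\eta$ are correctly absorbed into the identification of $\sigma_v$ as a parabolic induction. The deep geometric content—the fundamental lemma in both its standard and twisted forms—is already available in the literature referenced preceding the proposition, so no new analytic input is required.
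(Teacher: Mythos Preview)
Your proposal is correct and follows essentially the same route as the paper: reduce each identity to the appropriate fundamental lemma (standard endoscopy and stable base change in the inert unramified case; constant term along the Levi and the tautological split base-change identity in the split case), with the real content being the verification that the chosen normalisations of transfer factors and twisted traces make the scalars disappear. The paper is slightly more explicit about \emph{why} the inert normalisation works---namely that the pure inner twist $\iota_v$ identifies our transfer factors with the Langlands--Shelstad $\Delta_0$ relative to the standard pinning (Hales' canonical normalisation)---whereas you flag this as bookkeeping to be checked; but the strategy is the same.
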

\begin{proof}
It is well-known that these identities hold up to non-zero scalar, which 
depends on the choice of transfer factor; the point here is that, with our 
choices, the scalar disappears. In the first part, the identity 
$\widetilde{\pi}(\widetilde{f}_v^H) = \sigma_{v, H}(f_{v}^H)$ is the 
fundamental lemma for stable base change \cite{Clo90a}. The identity 
$\sigma_{v, H}(f_{v}^H) = \sigma_v(f_v)$ is the fundamental lemma for standard 
endoscopy \cite{Lau08}, which holds on the nose because our transfer factors 
are identified, by the isomorphism $\iota_v : G(F^+_v) \to U_n(F^+_v)$, with 
those defined in \cite{Lan87} with respect to our fixed pinning of 
$U_{n, F^+_v}$; this is the `canonical normalisation' of \cite{Hal93}. If $\pi$ 
is generic then a Whittaker functional is 
non-zero on the unramified vector, which gives the final assertion of the first 
part of the proposition. 

In the second part, the equality $\widetilde{\pi}_v(\widetilde{f}_v^H) = 
\sigma_{v, H}(f_v^H)$ is the fundamental lemma for stable base change in the 
split case, cf. \cite[Proposition 4.13.2]{Rog90} (where the result is stated 
for $U(3)$ but the proof is valid in general). The equality $\sigma_{v, 
H}(f_v^H) = \sigma_v(f_v)$ holds because $\sigma_v$ can be expressed as the 
normalised induction of a character twist of $\sigma_{v, H}$ (after choosing an 
appropriate embedding $H_{F^+_v} \to G_{F^+_v}$ and a parabolic subgroup of 
$G_{F^+_v}$ containing $H_{F^+_v}$) and because the correspondence $f_v \mapsto 
f_v^H$ can in this case be taken to be the corresponding character twist of the 
constant term 
along $H_{F^+_v}$ (cf. 
\cite[Lemma 4.13.1]{Rog90} and \cite[\S \S 3.3--3.4]{shin}, noting that our 
normalisation of transfer factors at the place $v$ in this case agrees on the 
nose with the analogue of the factor written as $\Delta_v^0$ in \emph{loc. 
cit.}, as follows from the definition in \cite{Lan87}).
\end{proof}

\subsection{$L$-packets and types for $p$-adic unitary groups}\label{subsec_local_endoscopic_classification}

Let $v$ be a place of $F^+$ inert in $F$. In this section we follow M{\oe}glin \cite{Moe07, Moe14} in defining $L$-packets of tempered representations for the group $G(F_v^+)$ (equivalently, given our choice of pure inner twist, $U_n(F_v^+)$). 

We write $\mathcal{A}(\GL_n(F_\wv))$ for the set of  isomorphism classes of irreducible admissible representations of $\GL_n(F_\wv)$ over $\bC$,  and $\mathcal{A}_t(\GL_n(F_\wv))$ for its subset of tempered  representations. We define $\mathcal{A}(U_n(F_v^+))$ and $\mathcal{A}_t(U_n(F_v^+))$ similarly. We write $\mathcal{A}^\theta(\GL_n(F_\wv))$ and $\mathcal{A}_t^\theta(\GL_n(F_\wv))$ for the respective subsets of $\theta$-invariant representations (so e.g.\ $\mathcal{A}^\theta(\GL_n(F_\wv))$ is the set of irreducible representations of $\GL_n(F_\wv)$ such that $\pi^\theta := \pi \circ \theta \cong \pi$). Using the local Langlands correspondence $\rec_{F_\wv}$ for $\GL_n(F_\wv)$ (and the Jacobson--Morozov theorem), we can identify $\mathcal{A}(\GL_n(F_\wv))$ with the set of $\GL_n(\bC)$-conjugacy classes of Langlands parameters, i.e.\ the set of $\GL_n(\bC)$-conjugacy classes of continuous homomorphisms $\varphi : W_{F_\wv} \times \SL_2(\bC) \to \GL_n(\bC)$ satisfying the following conditions:
\begin{itemize}
\item $\varphi|_{W_{F_\wv}}$ is semisimple;
\item $\varphi|_{\SL_2(\bC)}$ is algebraic. 
\end{itemize}
Then $\mathcal{A}_t(\GL_n(F_\wv))$ is identified with the set of parameters 
$\varphi$ such that the $\varphi(W_{F_\wv})$ is relatively compact, and 
$\mathcal{A}^\theta(\GL_n(F_\wv))$ is identified with the set of conjugate 
self-dual parameters. We write $\mathcal{A}_t^\theta(\GL_n(F_\wv))_+ \subset 
\mathcal{A}_t^\theta(\GL_n(F_\wv))$ for the subset of parameters $\varphi$ 
which extend to a homomorphism $\varphi_{F_v^+} : W_{F_v^+} \times \SL_2(\bC) 
\to {}^L U_n$. Such an extension,  if it exists, is unique up to 
$\GL_n(\bC)$-conjugacy (see e.g.\ \cite[Theorem 8.1]{Gan12}). The existence of 
such an extension $\varphi_{F_v^+}$ can be equivalently phrased as follows: fix 
a decomposition $\varphi = \oplus_{i \in I} \rho_i^{l_i} \oplus_{j \in J} 
\sigma_j^{m_j} \oplus_{k \in K} (\tau_k \oplus \tau_k^{w_c \vee})^{n_k}$, where:
\begin{itemize}
\item The integers $l_i, m_j, n_k$ are all non-zero.
\item Each representation $\rho_i, \sigma_j, \tau_k$ is irreducible and no two are isomorphic.
\item For each $i$ we have $\rho_i \cong \rho_i^{w_c, \vee}$ and for each $j$ we have $\sigma_j \cong \sigma_j^{w_c, \vee}$. For each $k$ we have $\tau_k \not\cong \tau_k^{w_c, \vee}$.
\item For each $i$, $\rho_i$ is conjugate self-dual of sign $(-1)^{n-1}$ and for each $j$, $\sigma_j$ is conjugate self-dual of sign $(-1)^n$, in the sense of \cite[p.~10]{Gan12}.
\end{itemize}
Then an extension $\varphi_{F_v^+}$ exists if and only if each integer $m_j$ is 
even. If the extension $\varphi_{F_v^+}$ is discrete, in the sense that 
$\operatorname{Cent}(\GL_n(\bC), \operatorname{im} \varphi_{F^+_v})$ is finite, 
then $l_i = 1$ for each $i \in I$ and the sets $J, K$ are empty. If the 
parameter $\varphi_{F_v^+}$ corresponding to a representation $\pi \in 
\mathcal{A}^\theta(\GL_n(F_\wv))_+$ is discrete, then we say that $\pi$ is 
$\theta$-discrete.

Let $\cS_n$ denote the set of equivalence classes of pairs $( (n_1, \dots, n_k), (\pi_1, \dots, \pi_k) )$, where $(n_1, \dots, n_k)$ is a partition of $n$ and $\pi_1, \dots, \pi_k$ are supercuspidal representations of $\GL_{n_1}(F_\wv), \dots, \GL_{n_k}(F_\wv)$, respectively. Two such pairs are said to be equivalent if they are isomorphic after permutation of the indices $\{ 1, \dots, k \}$. Thus we may think of an element of $\cS_n$ as a formal sum of supercuspidal representations. We recall (see e.g.\ \cite{BZ77ENS}) that to any $\pi \in \mathcal{A}(\GL_n(F_\wv))$ we may associate the supercuspidal support $sc(\pi) \in \cS_n$, defined by the condition that $\pi$ occurs as an irreducible subquotient of the representation $\pi_1 \times \pi_2 \times \dots \times \pi_k$ (notation for induction as defined at the beginning of this paper).

M{\oe}glin associates to any element $\tau \in \mathcal{A}_t(U_n(F^+_v))$ its 
\emph{extended cuspidal support} $esc(\tau) \in \cS_n$. We do not recall the 
definition here but note that its definition can be reduced to the case where 
$\tau$ is supercuspidal, in the following sense: suppose that $\tau$ is a 
subquotient of a representation 
\[ \pi_1 \times \dots \times \pi_{k-1} \times \tau_0 = i_{Q_{(n_1, \dots, n_k)}}^{U_n} 
\pi_1 \otimes \dots \otimes \pi_{k-1} \otimes \tau_0, \]
where $\tau_0$ is a 
supercuspidal representation of $U_{n_k}(F_v^+)$. Then $esc(\tau) = sc(\pi_1) + 
\dots + sc(\pi_{k-1}) + esc(\tau_0) + sc(\pi_{k-1}^\theta) + \dots + 
sc(\pi_1^\theta)$.
\begin{proposition}
If $\tau \in  \mathcal{A}_t(U_n(F^+_v))$ then there is a unique element $\pi_\tau \in \mathcal{A}_t^\theta(\GL_n(F_\wv))_+$ such that $esc(\tau) = sc(\pi_\tau)$.
\end{proposition}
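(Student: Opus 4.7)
The plan is to treat uniqueness and existence separately. The decisive observation is that a tempered representation of $\GL_n(F_\wv)$ is uniquely determined by its supercuspidal support, so uniqueness will be a formal consequence of the local Langlands correspondence for $\GL_n(F_\wv)$, while existence will follow from M{\oe}glin's construction of tempered L-packets combined with the recursive definition of $esc$ given above the statement.

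\emph{Uniqueness.} A tempered L-parameter $\varphi\colon W_{F_\wv} \times \SL_2(\bC) \to \GL_n(\bC)$ decomposes uniquely (up to conjugacy) as $\bigoplus_{(\rho,l)} m_{\rho,l}\,(\rho \boxtimes S_l)$, where $\rho$ ranges over irreducible representations of $W_{F_\wv}$ with bounded image, $S_l$ is the $l$-dimensional algebraic representation of $\SL_2(\bC)$, and $m_{\rho,l} \in \bZ_{\geq 0}$. If $\pi \in \mathcal{A}_t(\GL_n(F_\wv))$ corresponds to $\varphi$ via $\rec_{F_\wv}$, then
$$sc(\pi) = \sum_{(\rho,l)} m_{\rho,l} \sum_{j=0}^{l-1} \rho \otimes |\cdot|^{(l-1)/2 - j}.$$
For fixed $\rho$ and $k \in \tfrac{1}{2}\bZ$, the multiplicity of $\rho \otimes |\cdot|^k$ in $sc(\pi)$ equals $\sum_{l \geq 2|k|+1,\ l \equiv 2|k|+1\,(2)} m_{\rho,l}$, and a simple telescoping (working from largest $|k|$ downwards) recovers each $m_{\rho,l}$. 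Hence $\varphi$, and with it $\pi$, is determined by $sc(\pi)$ among all of $\mathcal{A}_t(\GL_n(F_\wv))$; uniqueness follows.

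\emph{Existence.} I proceed by induction on $n$, reducing to the case where $\tau$ is supercuspidal. Assume $\tau$ appears as a subquotient of $\pi_1 \times \cdots \times \pi_{k-1} \times \tau_0$ as in the recursive definition of $esc$, with $\tau_0$ supercuspidal on $U_{n_k}(F_v^+)$ and each $\pi_i$ tempered (as is forced by the temperedness of $\tau$). The inductive hypothesis applied to $\tau_0$ yields $\pi_0 \in \mathcal{A}_t^\theta(\GL_{n_k}(F_\wv))_+$ with $sc(\pi_0) = esc(\tau_0)$, and I set
$$\pi := \pi_1 \boxplus \cdots \boxplus \pi_{k-1} \boxplus \pi_0 \boxplus \pi_{k-1}^\theta \boxplus \cdots \boxplus \pi_1^\theta.$$
Then $\pi$ is tempered and, by the recursive formula for $esc$, satisfies $sc(\pi) = esc(\tau)$. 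Its parameter splits as the direct sum of the parameter of $\pi_0$ (which by hypothesis extends to ${}^L U_{n_k}$) and pairs $\varphi_i \oplus \varphi_i^{w_c,\vee}$ (in which any conjugate self-dual summand occurs with even multiplicity); such a parameter factors through the $L$-group of the Levi $M_{(n_1,\ldots,n_{k-1},n_k)} \subset U_n$ and therefore extends to ${}^L U_n$, so $\pi \in \mathcal{A}_t^\theta(\GL_n(F_\wv))_+$.

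The remaining base case of supercuspidal $\tau$ is the content of M{\oe}glin's construction \cite{Moe14}: she attaches to $\tau$ a $\theta$-discrete parameter $\varphi_{U_n}\colon W_{F_v^+} \times \SL_2(\bC) \to {}^L U_n$, and defines $esc(\tau) := sc(\pi)$, where $\pi \in \mathcal{A}_t^\theta(\GL_n(F_\wv))_+$ corresponds via $\rec_{F_\wv}$ to $\eta \circ \varphi_{U_n}|_{W_{F_\wv} \times \SL_2(\bC)}$. The main point to verify carefully in a full write-up will be that this base-case definition of $esc$ on supercuspidals is genuinely compatible with the recursive definition for general tempered $\tau$, so that the inductive step really matches the definition from the excerpt; this compatibility is built into M{\oe}glin's framework but deserves explicit checking.
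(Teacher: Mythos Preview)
Your uniqueness argument is correct and in fact spells out what the paper simply cites from \cite[Lemme 5.4]{Moe07}.

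Your existence argument, however, has a genuine gap. You reduce to the case where $\tau_0$ is supercuspidal and assert that ``each $\pi_i$ [is] tempered (as is forced by the temperedness of $\tau$)''. This is not justified, and in the natural reading---where the $\pi_i$ are the supercuspidals of $\GL_{n_i}(F_\wv)$ appearing in the cuspidal support of $\tau$---it is false. Already for a discrete series $\tau$ whose base change is (say) $\St_4(\chi)\boxplus\mathbf{1}$, the $\GL$-part of the cuspidal support consists of characters $\chi|\cdot|^{\pm 1/2}$, $\chi|\cdot|^{\pm 3/2}$, none of which is unitary. Consequently your isobaric sum $\pi = \pi_1 \boxplus \cdots \boxplus \pi_0 \boxplus \cdots \boxplus \pi_1^\theta$ need not be tempered: for non-unitary supercuspidal $\pi_i$, the parameter of $\pi_i \boxplus \pi_i^\theta$ has trivial $\SL_2$-component and unbounded $W_{F_\wv}$-image, so it does not lie in $\mathcal{A}_t^\theta(\GL_n(F_\wv))_+$. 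The correct tempered representation with the same supercuspidal support would involve Steinberg blocks, which your $\boxplus$ construction does not produce.

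The paper's proof avoids this by reducing not to supercuspidal $\tau_0$ but to \emph{square-integrable} $\tau_0$, using the tempered Langlands classification (\cite[Proposition III.4.1]{Wal03}): any tempered $\tau$ embeds in $\pi_1 \times \cdots \times \pi_{k-1} \times \tau_0$ with each $\pi_i$ a discrete series of $\GL_{n_i}(F_\wv)$ (hence genuinely tempered) and $\tau_0$ a discrete series of $U_{n_k}(F_v^+)$. The base case---that $\pi_{\tau_0} \in \mathcal{A}_t^\theta(\GL_{n_k}(F_\wv))_+$ for square-integrable $\tau_0$---is then \cite[Th\'eor\`eme 5.7]{Moe07}. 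With this modification your inductive step goes through: $\pi_\tau = (\pi_1 \times \pi_1^\theta) \times \cdots \times (\pi_{k-1}\times\pi_{k-1}^\theta) \times \pi_{\tau_0}$ is tempered (as a product of unitary discrete series) and lies in the ``$+$'' subset for the reason you give.
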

\begin{proof}
\cite[Lemme 5.4]{Moe07} states that there is a unique element $\pi = \pi_\tau 
\in \mathcal{A}_t^\theta(\GL_n(F_\wv))$ such that $esc(\tau) = sc(\pi)$. We 
need to explain why in fact $\pi \in \mathcal{A}_t^\theta(\GL_n(F_\wv))_+$. 
\cite[Th\'eor\`eme 5.7]{Moe07} states that this is true when $\tau$ is 
square-integrable. In general, we can find a Levi subgroup $M_{(n_1, \dots, 
n_k)} \subset U_n$ and an irreducible square-integrable representation $\pi_1 
\otimes \dots \otimes \pi_{k-1} \otimes \tau_0$ of $M_{(n_1, \dots, 
n_k)}(F^+_v)$ such that $\tau$ is a subquotient of $i_{Q_{(n_1, \dots, 
n_k)}}^{U_n} \pi_1 \otimes \dots \otimes \pi_{k-1} \otimes 
\tau_0$ (see \cite[Proposition III.4.1]{Wal03}). Then $\pi_\tau = (\pi_1 \times 
\pi_1^\theta) \times (\pi_2 \times \pi_2^\theta) \times \dots \times ( 
\pi_{k-1} \times \pi_{k-1}^\theta) \times \pi_{\tau_0}$, so the result follows 
from the square-integrable case.
\end{proof}
According to the proposition, there is a well-defined map 
\[ BC : \mathcal{A}_t(U_n(F_v^+)) \to \mathcal{A}_t^\theta(\GL_n(F_\wv))_+ \]
defined by $BC(\tau) = \pi_\tau$ (which might be called stable base change).
\begin{proposition}
The map $BC$ is surjective, and it has finite fibres.
\end{proposition}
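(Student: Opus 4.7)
The plan is to identify the fibers of $BC$ with the tempered $L$-packets of $U_n(F^+_v)$ constructed by M{\oe}glin. Given $\pi \in \mathcal{A}_t^\theta(\GL_n(F_\wv))_+$, the very definition of this subset yields an extension of the Langlands parameter of $\pi$ to a parameter $\varphi_{F_v^+} : W_{F_v^+} \times \SL_2(\bC) \to {}^L U_n$, unique up to $\GL_n(\bC)$-conjugacy. M{\oe}glin \cite{Moe07, Moe14} associates to $\varphi_{F_v^+}$ a finite, non-empty $L$-packet $\Pi(\varphi_{F_v^+}) \subset \mathcal{A}_t(U_n(F^+_v))$: in the $\theta$-discrete case, its elements are parametrized by characters of the component group of $\operatorname{Cent}(\GL_n(\bC), \operatorname{im} \varphi_{F_v^+})$, while the general tempered case is reduced to the discrete series case by normalized parabolic induction from Levi subgroups of the form $M_{(n_1, \dots, n_k)}$.

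The central claim to verify is $BC^{-1}(\pi) = \Pi(\varphi_{F_v^+})$. The inclusion $\Pi(\varphi_{F_v^+}) \subseteq BC^{-1}(\pi)$ amounts to the assertion that every $\tau \in \Pi(\varphi_{F_v^+})$ has $esc(\tau) = sc(\pi)$. I would first handle the square-integrable case using \cite[Th\'eor\`eme 5.7]{Moe07}, and then reduce the general tempered case to it using the compatibility formula $esc(i_{Q_{(n_1, \dots, n_k)}}^{U_n} \pi_1 \otimes \dots \otimes \pi_{k-1} \otimes \tau_0) = \sum_i sc(\pi_i) + esc(\tau_0) + \sum_i sc(\pi_i^\theta)$ recalled just before the proposition, combined with the identity $\pi_\tau = (\pi_1 \times \pi_1^\theta) \times \dots \times (\pi_{k-1} \times \pi_{k-1}^\theta) \times \pi_{\tau_0}$ already established in the proof of the previous proposition. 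The reverse inclusion $BC^{-1}(\pi) \subseteq \Pi(\varphi_{F_v^+})$ then follows from the fact that M{\oe}glin's packets partition $\mathcal{A}_t(U_n(F^+_v))$: if $\tau' \in BC^{-1}(\pi)$ belongs to a packet $\Pi(\varphi')$, the first inclusion forces $sc(\eta \circ \varphi') = sc(\pi) = sc(\eta \circ \varphi_{F_v^+})$, hence $\varphi' = \varphi_{F_v^+}$ up to $\GL_n(\bC)$-conjugacy.

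Granted this identification, surjectivity of $BC$ follows from the non-emptiness of M{\oe}glin's packets and finite-fiberedness follows from their finiteness, both of which are central conclusions of \cite{Moe07, Moe14}. The main obstacle I anticipate is not the discrete case, which is essentially built into M{\oe}glin's parametrization, but rather bookkeeping in the non-discrete tempered case: one needs the extended cuspidal support to match the supercuspidal support of $\pi$ under the $\eta$-transfer, and this matches cleanly only because the $esc$-formula for parabolic induction symmetrically pairs each inducing datum $\pi_i$ with its conjugate-dual $\pi_i^\theta$, mirroring the self-duality of the parameter $\pi$.
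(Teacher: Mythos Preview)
Your approach is correct and aligns with the paper's, which is extremely terse: it cites \cite[Th\'eor\`eme 5.7]{Moe07} for the $\theta$-discrete case and then simply says ``the general case can again be reduced to this one,'' meaning exactly the parabolic-induction reduction you describe via the formula $\pi_\tau = (\pi_1 \times \pi_1^\theta) \times \dots \times \pi_{\tau_0}$ from the previous proof. You go further by explicitly identifying $BC^{-1}(\pi)$ with M{\oe}glin's packet $\Pi(\varphi_{F_v^+})$ and invoking the partition property; the paper postpones this identification, \emph{defining} $\Pi(\pi) := BC^{-1}(\pi)$ only after the proposition is proved, so it never needs to appeal to an independently constructed tempered packet or its partition property --- surjectivity and finite-fibredness are argued directly from the discrete case plus induction. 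Your route is a legitimate reorganisation of the same ingredients, and your reverse-inclusion step (using that a tempered irreducible of $\GL_n$ is determined by its supercuspidal support, which is exactly what the previous proposition establishes) is sound.
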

\begin{proof}
The image of $BC$ contains the $\theta$-discrete representations,  and the fibres of $BC$ above such representations are finite, by \cite[Th\'eor\`eme 5.7]{Moe07}. The general case can again be reduced to this one.
\end{proof}
If $\pi \in \mathcal{A}_t^\theta(\GL_n(F_\wv))_+$,  then we define $\Pi(\pi) = BC^{-1}(\pi)$. By definition, the sets $\Pi(\pi)$ partition  $\mathcal{A}_t(U_n(F^+_v)) $ and therefore deserve to be called $L$-packets. The following proposition is further justification for this.
\begin{proposition}\label{prop_L-packets_via_character_identities}
Let $\pi \in \mathcal{A}_t^\theta(\GL_n(F_\wv))_+$, and fix an extension $\widetilde{\pi}$ to the twisted group $\GL_n(F_\wv) \rtimes \theta$. Then there are constants $c_\tau \in \bC^\times$ such that for   any $f  \in C_c^\infty(\GL_n(F_\wv) \rtimes \theta)$:
\[  \widetilde{\pi}(f) = \sum_{\tau \in \Pi(\pi)} c_\tau \tau(f^{U_n}). \]
\end{proposition}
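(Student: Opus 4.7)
My plan is to reduce the general tempered case to the $\theta$-discrete case, which is essentially M{\oe}glin's theorem, by exploiting parabolic induction and the compatibility of twisted endoscopic transfer with constant terms.

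\textbf{Step 1 (the $\theta$-discrete case).} If $\pi \in \mathcal{A}_t^\theta(\GL_n(F_\wv))_+$ is $\theta$-discrete, then $\Pi(\pi) = BC^{-1}(\pi)$ is the finite packet of square-integrable representations of $U_n(F_v^+)$ produced by M{\oe}glin. The character identity
\[ \widetilde{\pi}(f) = \sum_{\tau \in \Pi(\pi)} c_\tau \, \tau(f^{U_n}) \]
with $c_\tau \in \bC^\times$ is, after our fixed normalisations of $\widetilde{\pi}$ and of the transfer factor (given by the pure inner twist $(\xi,u)$), precisely the content of \cite[Th\'eor\`eme 5.7]{Moe07}; non-vanishing of the $c_\tau$ is built into that statement.

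\textbf{Step 2 (reduction via parabolic induction).} For general tempered $\pi \in \mathcal{A}_t^\theta(\GL_n(F_\wv))_+$, I would invoke Waldspurger's description \cite[Proposition III.4.1]{Wal03} (already used in the proof that $BC$ is well-defined) to write
\[ \pi \cong \pi_1 \times \dots \times \pi_{k-1} \times \pi_0 \times \pi_{k-1}^\theta \times \dots \times \pi_1^\theta, \]
the normalised parabolic induction from a $\theta$-stable Levi $L \subset \GL_n$ with $\pi_0$ a $\theta$-discrete tempered representation of $\GL_{n_k}(F_\wv)$ and each $\pi_i$ an irreducible tempered representation of $\GL_{n_i}(F_\wv)$. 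The automorphism $\theta$ acts on this induced representation through the obvious intertwining operator coming from its action on the Levi datum (swapping the $i$-th and $(-i)$-th factors), and I would fix $\widetilde{\pi}$ to be the extension determined by this data; any other choice differs by a scalar, which can be absorbed into the $c_\tau$. On the unitary side, by definition of $BC$ every $\tau \in \Pi(\pi)$ appears as an irreducible subquotient of $i_{Q_{(n_1,\dots,n_k)}}^{U_n}(\pi_1 \otimes \dots \otimes \pi_{k-1} \otimes \tau_0)$ for a (unique) $\tau_0 \in \Pi(\pi_0)$, and conversely every such subquotient lies in $\Pi(\pi)$; since the inducing data is tempered and unitary, these induced representations are semisimple, so the bookkeeping is straightforward.

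\textbf{Step 3 (compatibility of transfer with descent).} The analytic input is the standard descent formula: if $f^{U_n}$ is an endoscopic transfer of $f \in C_c^\infty(\GL_n(F_\wv) \rtimes \theta)$ for the principal twisted endoscopic datum, then the constant term of $f^{U_n}$ along $Q_{(n_1,\dots,n_k)}$ is (up to the normalisation of the modulus characters, which are compatible because the Levi is $\theta$-stable and Whittaker normalisations match) an endoscopic transfer of the twisted constant term of $f$ along the $\theta$-stable parabolic of $\GL_n$ with Levi $L$; compare \cite[Lemma 4.13.1]{Rog90} and its twisted generalisation. Applying this to compute $\widetilde{\pi}(f)$ via van Dijk's formula on $\GL_n \rtimes \theta$ and each $\tau(f^{U_n})$ via the ordinary van Dijk formula on $U_n$, both sides of the desired identity reduce to a common expression involving traces on the Levi datum, and Step~1 applied to $\pi_0$ (tensored with the automatic identity $\widetilde{\pi_i \otimes \pi_i^\theta}(\cdot) = \pi_i(\cdot)$ for the general linear factors, which is tautological for the principal twisted datum on $\Res_{F/F^+}\GL_{n_i}$) produces the required character relation, with constants $c_\tau$ equal to the $c_{\tau_0}$ arising from $\pi_0$, hence non-zero.

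\textbf{Main obstacle.} The principal difficulty is not the existence of the identity, which is formal given M{\oe}glin's discrete case, but the verification that \emph{all} the normalisations align so that the $c_\tau$ are genuinely non-zero and no scalars go missing: the extension $\widetilde{\pi}$ must be compatible with the $\theta$-equivariant structure on the induced representation, and the intertwining operators entering van Dijk's formula on both sides must match under transfer. Given that the Whittaker normalisation is preserved by parabolic induction and that our transfer factor on $G$ (coming from $(\xi,u)$) matches the canonical Whittaker normalisation on $U_n$, this matching is precisely what was set up in \S\ref{subsec:endo}.
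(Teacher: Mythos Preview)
Your approach is essentially the same as the paper's: reduce to the $\theta$-discrete case via parabolic induction and the compatibility of endoscopic transfer with constant terms. Two small points are worth flagging. First, for the $\theta$-discrete case the relevant reference in \cite{Moe07} is Proposition~5.5 (the character identity), not Th\'eor\`eme~5.7 (which is the classification statement). Second, the assertion in your Step~2 that each $\tau \in \Pi(\pi)$ arises from a \emph{unique} $\tau_0 \in \Pi(\pi_0)$ is not quite ``straightforward bookkeeping'': one must check that for distinct $\tau_0, \tau_0' \in \Pi(\pi_0)$ the induced representations $\pi_1 \times \dots \times \pi_{k-1} \times \tau_0$ and $\pi_1 \times \dots \times \pi_{k-1} \times \tau_0'$ have no common Jordan--H\"older factors, since otherwise the constants $c_\tau$ would not be well-defined (a given $\tau$ might receive contributions from several $\tau_0$ which could in principle cancel). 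The paper makes this step explicit, and the reference is again \cite[Proposition~III.4.1]{Wal03} --- the same result you invoke, but applied here to the unitary group rather than to $\GL_n$.
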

\begin{proof}
When $\pi$ is $\theta$-discrete, this is the content of \cite[Proposition 
5.5]{Moe07}. In general, $\Pi(\pi)$ admits the following explicit description: 
decompose $\pi = \pi_1  \times \pi_2 \times \pi_1^\theta$, where $\pi_1 \in 
\mathcal{A}_t(\GL_{n_1}(F_\wv))$ and  $\pi_2 \in 
\mathcal{A}_t^\theta(\GL_{n-2n_1}(F_\wv))_+$ is $\theta$-discrete. Then 
$\Pi(\pi)$ is the set of Jordan--H\"older factors of the induced 
representations $\pi_1 \times \tau_2$ as $\tau_2$ varies over the set of 
elements of $\Pi(\pi_2)$. Using the compatibility of transfer with normalised 
constant terms along a parabolic (see \cite[Lemma 6.3.4]{Mor10}) we thus have 
an identity
\[ \widetilde{\pi}(f) = (\pi_1 \times \pi_2 \times \pi_1^\theta)^\sim(f) = 
\sum_{\tau_2\in \Pi(\pi_2)} c_{\tau_2} (\pi_1 \times \tau_2)(f^{U_n}) 
\]
for some constants 
$c_{\tau_2} \in \bC^\times$. To prove the proposition, it 
is  enough to show that if $\tau_2,  \tau_2' \in \Pi(\pi_2)$ are non-isomorphic 
then the induced representations $\pi_1 \times \tau_2$, $\pi_1 \times \tau'_2$ 
have no Jordan--H\"older  factors in common. This follows from 
\cite[Proposition III.4.1]{Wal03}.
\end{proof}
We now introduce some particular representations of $U_n$. These are built out 
of depth zero supercuspidal representations of $U_3$. Accordingly we first 
introduce some cuspidal representations of the finite group of Lie type 
$U_3(k(v))$:
\begin{itemize}
\item We write $\tau(v)$ for the unique cuspidal unipotent representation of $U_3(k(v))$  (see \cite[\S9]{lusztig}).
\item Let $k_3 / k(v)$ be a degree 3 extension, and define
\[ C = \ker(\mathbf{N}_{k_3 k(\wv) / k_3} : 
\Res_{k_3 k(\wv) /  k(v)} \bG_m \to \Res_{k_3 / k(v) } \bG_m). \]
 Then there is a 
unique $U_3(k(v))$-conjugacy class of embeddings $C \to U_{3, k(v)}$ (as can be proved using e.g.\  \cite[Corollary 1.14]{deligne-lusztig}). 

 Let $p$ 
be a prime such that $q_v$ is a primitive $6^\text{th}$ root of unity modulo 
$p$, and let $\theta : C(k(v)) \to \bC^\times$ be a character of order $p$. 
Then we write $\lambda(v, \theta)$ for the (negative of the) Deligne--Lusztig induction 
$-R_C^{U_{3, k(v)}} \theta$. Then $\lambda(v, \theta)$ is a cuspidal 
irreducible representation of $U_3(k(v))$ (note that $C$ is not contained in any proper $k(v)$-rational parabolic of $U_{3,k(v)}$).
\end{itemize}
We define $\widetilde{C} = \Res_{k_3 k(\wv) /  k(\wv)} \bG_m$. Then the 
homomorphism $\widetilde{C}(k(\wv)) \to C(k(v))$, $z \mapsto z / z^{c}$, is 
surjective, and we define a character $\widetilde{\theta} : 
\widetilde{C}(k(\wv)) \to \bC^\times$ of order $p$ by $\widetilde{\theta}(z) = 
\theta(z / z^c)$. There is a unique $\GL_3(k(\wv))$-conjugacy class of 
embeddings $\widetilde{C} \to \GL_{3, k(\wv)}$, and we write 
$\widetilde{\lambda}(\wv, \widetilde{\theta})$ for the Deligne--Lusztig 
induction $R_{\widetilde{C}}^{\GL_{3, k(\wv)}} \widetilde{\theta}$. Then 
$\widetilde{\lambda}(\wv, \widetilde{\theta})$ is a cuspidal irreducible 
representation of $\GL_3(k(\wv))$. 

We now assume that the residue characteristic of $k(v)$ is odd.
\begin{proposition}\label{prop_supercuspidals_on_U_3}
\begin{enumerate}
\item Let $\tau_v = \cInd_{U_3(\cO_{F^+_v})}^{U_3(F^+_v)} \tau(v)$ (compact induction). Then 
$\tau_v$ is a supercuspidal irreducible admissible representation of 
$U_3(F^+_v)$ and $BC(\tau_v) = \St_2(\chi) \boxplus \mathbf{1}$, where $\chi : 
F_{\wv}^\times \to \bC^\times$ is the unique non-trivial quadratic unramified 
character.
\item Let $\lambda_v(\theta) = \cInd_{U_3(\cO_{F^+_v})}^{U_3(F^+_v)} \lambda(v, 
\theta)$. Then $\lambda_v(\theta)$ is a supercuspidal irreducible admissible 
representation of $U_3(F^+_v)$.
\item Extend $\widetilde{\lambda}(\wv, \widetilde{\theta})$ to a representation 
of $F_\wv^\times \GL_3(\cO_{F_\wv})$ by making $F_\wv^\times$ act trivially, 
and let $\widetilde{\lambda}_\wv(\widetilde{\theta}) = \cInd_{F_\wv^\times 
\GL_3(\cO_{F_\wv})}^{\GL_3(F_\wv)} \widetilde{\lambda}(\wv, 
\widetilde{\theta})$. Then $\widetilde{\lambda}_\wv(\widetilde{\theta})$ is a 
supercuspidal irreducible admissible representation of $\GL_3(F_\wv)$, and 
$BC(\lambda_v(\theta)) = \widetilde{\lambda}_\wv(\widetilde{\theta})$. 
\end{enumerate}
\end{proposition}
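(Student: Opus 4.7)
The plan is threefold: establish supercuspidality and irreducibility for each of the three representations using the theory of depth-zero types, then identify the base change of $\tau_v$ in part (1), and finally match parameters under base change in part (3).

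For supercuspidality and irreducibility in all three cases, I would invoke the standard theory of depth-zero supercuspidal types (Moy--Prasad, Morris). Since $F / F^+$ is unramified at $v$ and the residue characteristic is odd, $U_3(\cO_{F^+_v})$ is a hyperspecial maximal compact subgroup of $U_3(F^+_v)$ with reductive quotient $U_3(k(v))$, and $F_\wv^\times \GL_3(\cO_{F_\wv})$ is the stabilizer in $\GL_3(F_\wv)$ of a hyperspecial vertex (modulo centre). Cuspidal irreducible representations of the reductive quotient, inflated and compactly induced, then give irreducible supercuspidal representations of the $p$-adic group. Each of $\tau(v)$, $\lambda(v,\theta)$, $\widetilde{\lambda}(\wv,\widetilde{\theta})$ is cuspidal irreducible as a finite-group representation (the first as the unique cuspidal unipotent of $U_3(k(v))$; the latter two because they are $\pm R_T^\theta$ for an elliptic maximal torus $T$ with $\theta$ in general position, using the hypothesis $p \mid q_v^2 - q_v + 1$ to ensure $\theta$ does not factor through any proper subtorus), so supercuspidality and irreducibility follow.

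For part (1), I would compute $esc(\tau_v)$ and match it with $sc(\St_2(\chi) \boxplus \mathbf{1}) = (\chi|\cdot|^{-1/2}, \chi|\cdot|^{1/2}, \mathbf{1})$. The cuspidal unipotent $\tau(v)$ of $U_3(k(v))$ sits in a unipotent $L$-packet whose parameter is known explicitly: this is one of the first and most-studied examples, going back to Rogawski's book on $U(3)$ \cite{Rog90}, where the two-element packet $\{\tau_v, \mathbf{1}\}$ corresponding to the parameter $\chi \cdot \St_2 \oplus \mathbf{1}$ is constructed and shown to base change to $\St_2(\chi) \boxplus \mathbf{1}$ on the $\GL_3$ side. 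Translating Rogawski's description into M{\oe}glin's language for extended cuspidal support yields the desired identity. For part (3), since both $\lambda_v(\theta)$ and $\widetilde{\lambda}_\wv(\widetilde{\theta})$ are supercuspidal of depth zero, their Langlands parameters can be read off directly via the depth-zero Langlands correspondence (DeBacker--Reeder for $\GL_3$, and its analogue for quasi-split unitary groups). The parameter of $\widetilde{\lambda}_\wv(\widetilde{\theta})$ is the induction of $\widetilde{\theta}$ from $W_{F_\wv \cdot k_3}$ to $W_{F_\wv}$ (with $\widetilde{\theta}$ inflated via local class field theory), and the parameter underlying $\lambda_v(\theta)$ is obtained analogously from $\theta$. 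The relation $\widetilde{\theta}(z) = \theta(z/z^c)$ is exactly what makes the base change diagram commute, giving $BC(\lambda_v(\theta)) = \widetilde{\lambda}_\wv(\widetilde{\theta})$; alternatively, both supercuspidals have themselves as extended cuspidal support, and a direct matching of those supports suffices.

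The main obstacle is reconciling normalizations: M{\oe}glin's extended cuspidal support, the explicit depth-zero Langlands parameters, and Rogawski's parametrization for $U(3)$ all need to be aligned with the conventions fixed in \S \ref{subsec:endo} and \S \ref{subsec_local_endoscopic_classification}. For part (1), this means explicitly verifying that Rogawski's unipotent packet coincides with the $\Pi(\pi)$ of Proposition \ref{prop_L-packets_via_character_identities} for $\pi = \St_2(\chi) \boxplus \mathbf{1}$, and that the character identities used to define $BC$ match (up to the chosen transfer factors) the ones in \emph{loc.\ cit}. For part (3), the comparable issue is that the depth-zero Langlands parameter on the $\GL_3$ side must be expressed in terms compatible with the unitary-group parameter, which amounts to checking that the norm map $\widetilde{C} \to C$ induces the correct map of $L$-parameters on the Weil-group side.
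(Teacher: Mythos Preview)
Your approach is valid but takes a genuinely different route from the paper's. Both begin identically, invoking Moy--Prasad theory for supercuspidality and irreducibility. For the base-change identifications, however, the paper stays entirely within M{\oe}glin's framework: it computes the reducibility sets $\mathrm{Red}(\tau_v)$ and $\mathrm{Red}(\lambda_v(\theta))$ using the results of Lust--Stevens on depth-zero supercuspidals (this is where the odd-residue-characteristic hypothesis enters), which yields the Jordan-block data and hence the extended cuspidal support up to an unramified twist. The remaining ambiguity---for $\tau_v$, whether $BC(\tau_v)$ equals $\St_2(\mathbf{1}) \boxplus \chi$ or $\St_2(\chi) \boxplus \mathbf{1}$; for $\lambda_v(\theta)$, which conjugate-self-dual unramified twist of $\widetilde{\lambda}_\wv(\widetilde{\theta})$ appears---is then resolved purely by the sign constraint that $BC$ must land in $\mathcal{A}_t^\theta(\GL_3(F_\wv))_+$.

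Your route via Rogawski (for the unipotent packet) and the depth-zero local Langlands correspondence (for part (3)) is more direct \emph{provided} one grants that these external parametrizations agree with M{\oe}glin's $BC$; but as you rightly flag, that compatibility is precisely the nontrivial content, and verifying it carefully essentially reproduces the character-identity bookkeeping the paper's method sidesteps. The trade-off: your approach imports stronger external results and reaches the answer in one conceptual step, while the paper uses a more elementary input (reducibility points) together with a short sign argument, never leaving the framework in which $BC$ was defined.
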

\begin{proof}
If $\mu_0$ is a cuspidal irreducible representation of $U_3(k(v))$, then 
$\cInd_{U_3(\cO_{F^+_v})}^{U_3(F^+_v)} \mu_0$ is a supercuspidal, irreducible 
admissible representation of $U_3(F^+_v)$ (see \cite[Proposition 6.6]{Moy96} -- we will return 
to this theme shortly). The essential point therefore is to calculate the 
extended cuspidal support in each case, which can be done using the results of 
\cite{lust2016depth} (which require the assumption that $k(v)$ has odd 
characteristic). Indeed $\S8$ in \emph{op.~cit.}~explains how to compute the 
reducibility points $\mathrm{Red}(\pi)$ (defined in \cite[\S4]{Moe07}) of a 
depth $0$ supercuspidal representation, at least up to unramified twist. We 
compute that $\mathrm{Red}(\tau_v) = \{(\mathbf{1},3/2),(\chi,1)\}$ or 
$\{(\mathbf{1},1),(\chi,3/2)\}$ which corresponds to $BC(\tau_v) = 
\St_2(\mathbf{1}) \boxplus \chi$ or $BC(\tau_v) = \St_2(\chi) \boxplus 
\mathbf{1}$. Since $BC(\tau_v) \in \mathcal{A}_t^{\theta}(\GL_3(F_{\wv}))_+$, 
the second alternative holds. For $\lambda_v(\theta)$, we deduce that 
$\mathrm{Red}(\lambda_v(\theta)) =\{(\rho,1)\}$, where $\rho$ is a conjugate 
self-dual unramified twist of $\widetilde{\lambda}_\wv(\widetilde{\theta})$. We 
again conclude by sign considerations. 
\end{proof}
\begin{cor}\label{cor_descent_L_packet_contains_tau}
Let $n = 2k + 1$ be an odd integer, and consider a representation 
\[ \pi = \St_2(\chi) \boxplus \mathbf{1} \boxplus (\boxplus_{i=1}^{2k-2} \chi_i) \in \mathcal{A}_t^\theta(\GL_n(F_\wv))_+, \]
 where $\chi : F_\wv^\times \to \bC^\times$ is the unique non-trivial quadratic 
 unramified character and for each $i = 1, \ldots, 2k-2$, $\chi_i : 
 F_\wv^\times \to 
 \bC^\times$ is a character such that $\chi_i|_{\cO_{F_\wv}^\times}$ has order 
 2. We can assume, after relabelling, that $\chi_i = \chi_{2k-1-i}^{w_c, \vee}$ 
 ($i = 1, \dots, k-1$),
and then $\Pi(\pi)$ contains each irreducible 
 subquotient of the induced representation $\chi_1 \times \chi_2 \times 
 \dots \times \chi_{k-1} \times \tau_v$.
\end{cor}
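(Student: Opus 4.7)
The plan is to unwind the definitions of $BC$ and of the extended cuspidal support, and directly check that every irreducible subquotient $\tau$ of $\chi_1 \times \cdots \times \chi_{k-1} \times \tau_v$ satisfies $BC(\tau) = \pi$.

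First I would record the preliminary observation that the induced representation $\chi_1 \times \cdots \times \chi_{k-1} \times \tau_v$ makes sense as a (parabolic) induction from the Levi $M_{(1,\ldots,1,3)}(F^+_v) \cong (F_\wv^\times)^{k-1} \times U_3(F^+_v)$ up to $U_n(F^+_v) = U_{2k+1}(F^+_v)$, and its inducing data are tempered: the characters $\chi_i$ are unitary since $\pi \in \mathcal{A}_t^\theta(\GL_n(F_\wv))_+$, and $\tau_v$ is tempered (being supercuspidal by Proposition \ref{prop_supercuspidals_on_U_3}). Hence every irreducible subquotient $\tau$ lies in $\mathcal{A}_t(U_n(F^+_v))$, so that $BC(\tau)$ is defined.

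Next I would compute $esc(\tau)$ using the formula recalled before Proposition~\ref{prop_L-packets_via_character_identities}, namely
\[
esc(\tau) \;=\; sc(\chi_1) + \cdots + sc(\chi_{k-1}) + esc(\tau_v) + sc(\chi_{k-1}^\theta) + \cdots + sc(\chi_1^\theta).
\]
The supercuspidal support of a character is just the character itself. By Proposition~\ref{prop_supercuspidals_on_U_3}(1), $BC(\tau_v) = \St_2(\chi) \boxplus \mathbf{1}$, and since $BC$ is defined by the condition that extended cuspidal support matches supercuspidal support, $esc(\tau_v) = sc(\St_2(\chi) \boxplus \mathbf{1}) = \chi|\cdot|^{1/2} + \chi|\cdot|^{-1/2} + \mathbf{1}$. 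Plugging in and using the pairing $\chi_{2k-1-i} = \chi_i^{w_c,\vee} = \chi_i^\theta$ from the relabelling, the multiset $\{\chi_1,\ldots,\chi_{k-1},\chi_{k-1}^\theta,\ldots,\chi_1^\theta\}$ coincides with $\{\chi_1,\ldots,\chi_{2k-2}\}$, giving
\[
esc(\tau) \;=\; \chi|\cdot|^{1/2} + \chi|\cdot|^{-1/2} + \mathbf{1} + \sum_{i=1}^{2k-2} \chi_i \;=\; sc(\pi).
\]

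Finally, since $\pi \in \mathcal{A}_t^\theta(\GL_n(F_\wv))_+$ is the (unique, by the proposition preceding the definition of $BC$) element of $\mathcal{A}_t^\theta(\GL_n(F_\wv))_+$ with supercuspidal support equal to $esc(\tau)$, we conclude $BC(\tau) = \pi$, i.e.\ $\tau \in \Pi(\pi)$. There is no substantive obstacle here: the whole argument is a bookkeeping exercise, and the only non-trivial input is the computation of $BC(\tau_v)$ which has already been carried out in Proposition~\ref{prop_supercuspidals_on_U_3}.
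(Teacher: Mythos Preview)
Your proof is correct and follows exactly the approach the paper indicates: the paper's proof is the single sentence ``This follows from the definition of $\Pi(\pi)$ in terms of extended supercuspidal supports,'' and you have simply unwound that definition explicitly, computing $esc(\tau)$ via the reduction formula and Proposition~\ref{prop_supercuspidals_on_U_3}(1) to match it with $sc(\pi)$.
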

\begin{proof} First we explain why we can relabel the characters so that $\chi_i = \chi_{2k-1-i}^{w_c, \vee}$. Considering the explicit description of $\mathcal{A}_t^\theta(\GL_n(F_\wv))_+$, we need to explain why conjugate self-dual characters must appear with even multiplicity amongst the $\chi_i$. Suppose $\chi_1$ is conjugate self-dual. We know that $\chi_1|_{\cO_{F_\wv}^\times}$ is the non-trivial quadratic character, so there are two possibilities for $\chi_1$ determined by $\chi_1(\varpi_v) = -1$ or $1$ (this value is also the sign of $\chi_1$). If the sign is $-1$, the multiplicity of $\chi_1$ is one of the even exponents $m_j$. If the sign is $+1$, dimension reasons force its multiplicity to be even. The rest of the Corollary follows from the definition of $\Pi(\pi)$ in terms of extended 
supercuspidal supports. Note that we do not claim that $\Pi(\pi)$ contains
\emph{only} the subquotients of this induced representation -- this is not true 
even when $k = 1$. 
\end{proof}
To exploit Corollary \ref{cor_descent_L_packet_contains_tau} we need to 
introduce some results from the theory of types. We state only the results we 
need, continuing to assume that $n = 2k + 1$ is odd. Let $\p_{v}$ denote the 
standard parahoric subgroup of $U_n(\cO_{F^+_v})$ associated to the partition 
$(1, 1, \dots, 1, 3)$; in other words, the pre-image under the reduction modulo 
$\varpi_v$ map $U_n(\cO_{F^+_v}) \to U_n(k(v))$ of $Q_{(1, 1, \dots, 1, 
3)}(k(v))$. Projection to the Levi factor gives a surjective homomorphism 
$\p_{v} 
\to M_{(1, 1, \dots, 1, 3)}(k(v)) \cong (k(\wv)^\times)^{k-1} \times 
U_3(k(v))$.  

Given a cuspidal representation $\sigma(v)$ of $M_{(1, 1, \dots, 1, 3)}(k(v)) \cong (k(\wv)^\times)^{k-1} \times U_3(k(v))$, the pair $(\p_v, \sigma(v))$ defines a depth zero unrefined minimal $K$-type in the sense of \cite{Moy96}. In this case we write $\cE(\sigma_v)$ for the set of irreducible representations of $(F_\wv^\times)^{k-1} \times U_3(\cO_{F^+_v}) \subset M_{(1, 1, \dots, 1, 3)}(F^+_v)$ whose restriction to $M_{(1, 1, \dots, 1, 3)}(\cO_{F^+_v}) = (\cO_{F_\wv}^\times)^{k-1} \times U_3(\cO_{F^+_v})$ is isomorphic to (the inflation of) $\sigma(v)$. We have the following result.
\begin{proposition}\label{prop_moy_prasad}
Let $\sigma(v)$ be a cuspidal irreducible representation of $M_{(1, 1, \dots, 1, 3)}(k(v))$. Then:
\begin{enumerate}
\item For any $\sigma' \in \cE(\sigma(v))$, the compact induction $\cInd_{(F_\wv^\times)^{k-1} \times U_3(\cO_{F^+_v})}^{M_{(1, 1, \dots, 1, 3)}(F^+_v)} \sigma'(v)$ is irreducible and supercuspidal.
\item Let $\pi$ be an irreducible admissible representation of $U_n(F^+_v)$. Then 
$\pi|_{\p_v}$ contains $\sigma(v)$ if and only if $\pi$ is a subquotient of an 
induced representation 
\[ i_{Q_{(1, 1, \dots, 1, 3)}}^{U_n} \cInd_{(F_\wv^\times)^{k-1} \times U_3(\cO_{F^+_v})}^{M_{(1, 1, \dots, 1, 3)}(F^+_v)} \sigma'(v) \]
 for some $\sigma' \in \cE(\sigma(v))$.
\end{enumerate}
\end{proposition}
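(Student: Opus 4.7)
The plan is to reduce Proposition \ref{prop_moy_prasad} to the already-established case of $U_3$ in Proposition \ref{prop_supercuspidals_on_U_3}, together with the general theory of depth zero $K$-types of Moy--Prasad \cite{Moy96}. The key structural input is the product decomposition
\[ M_{(1, 1, \dots, 1, 3)}(F^+_v) \cong (F_\wv^\times)^{k-1} \times U_3(F^+_v), \qquad M_{(1, 1, \dots, 1, 3)}(\cO_{F^+_v}) \cong (\cO_{F_\wv}^\times)^{k-1} \times U_3(\cO_{F^+_v}), \]
under which both the parahoric $\p_v \subset U_n(\cO_{F^+_v})$ and the cuspidal representation $\sigma(v)$ are well adapted: $\sigma(v)$ factors as the external tensor product of characters $\chi_1, \dots, \chi_{k-1}$ of $k(\wv)^\times$ with a cuspidal irreducible representation $\sigma_0(v)$ of $U_3(k(v))$.

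For part (1), I would observe that compact induction respects this product. On each $F_\wv^\times$ factor the subgroup coincides with the whole group, so compact induction acts as the identity and only extends $\chi_i$ to a character of $F_\wv^\times$. The nontrivial factor is $\cInd_{U_3(\cO_{F^+_v})}^{U_3(F^+_v)}$ applied to the inflation of $\sigma_0(v)$, which is irreducible and supercuspidal by the general Moy--Prasad construction (as used already in Proposition \ref{prop_supercuspidals_on_U_3}). The full induced representation is therefore the external product of characters with this supercuspidal, and so is itself irreducible and supercuspidal.

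For part (2), the crucial observation is that since $\sigma(v)$ is cuspidal on the reductive quotient $M_{(1, 1, \dots, 1, 3)}(k(v))$ of $\p_v$, the pair $(\p_v, \sigma(v))$ is an unrefined minimal depth zero $K$-type in the sense of \cite{Moy96}. Moy--Prasad's classification then says that an irreducible admissible representation $\pi$ of $U_n(F^+_v)$ satisfies $\Hom_{\p_v}(\sigma(v), \pi) \neq 0$ if and only if $\pi$ is depth zero and its supercuspidal support lies in the $M_{(1, 1, \dots, 1, 3)}(F^+_v)$-conjugacy class of depth zero supercuspidal representations whose restriction to $M_{(1, 1, \dots, 1, 3)}(\cO_{F^+_v})$ contains $\sigma(v)$. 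Combining this with part (1) and the classification of depth zero supercuspidals of the Levi identifies these supercuspidals precisely with the representations $\cInd_{(F_\wv^\times)^{k-1} \times U_3(\cO_{F^+_v})}^{M_{(1, 1, \dots, 1, 3)}(F^+_v)} \sigma'$, for $\sigma' \in \cE(\sigma(v))$. The reverse direction is Frobenius reciprocity for $i_{Q_{(1, 1, \dots, 1, 3)}}^{U_n}$, together with a Mackey-theoretic computation comparing $\p_v$ to the intersection with $Q_{(1, 1, \dots, 1, 3)}(F^+_v)$.

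The main technical point to verify is the bijection between $K$-types $(\p_v, \sigma(v))$ and depth zero supercuspidal supports on the Levi: specifically, that every depth zero supercuspidal of $M_{(1, 1, \dots, 1, 3)}(F^+_v)$ whose $M_{(1, 1, \dots, 1, 3)}(\cO_{F^+_v})$-restriction contains $\sigma(v)$ arises as a compact induction of some $\sigma' \in \cE(\sigma(v))$. On the $\GL_1$ factors this is trivial, and on the $U_3$ factor it is a standard consequence of the Moy--Prasad/Morris theory of depth zero $K$-types for the $U_3$ parahoric $U_3(\cO_{F^+_v})$. This is where the oddness of the residue characteristic (as assumed earlier) enters, via its use in Proposition \ref{prop_supercuspidals_on_U_3}.
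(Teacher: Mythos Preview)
Your approach is correct and is essentially the same as the paper's: the paper's proof consists of a direct citation to \cite[Proposition 6.6]{Moy96} and \cite[Theorem 6.11]{Moy96}, and what you have written is a faithful unpacking of why those results apply to the pair $(\p_v, \sigma(v))$.

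One small correction: the oddness of the residue characteristic is \emph{not} needed for this proposition. The Moy--Prasad results you are invoking (compact induction of a cuspidal representation of the reductive quotient yields a depth zero supercuspidal, and the bijection between unrefined minimal depth zero $K$-types and supercuspidal supports) hold without any restriction on the residue characteristic. The oddness hypothesis in Proposition \ref{prop_supercuspidals_on_U_3} is only used there to invoke \cite{lust2016depth} for the computation of extended cuspidal supports, not for the irreducibility/supercuspidality statement, which is pure Moy--Prasad. So your final paragraph overstates the role of that hypothesis.
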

\begin{proof}
See \cite[Proposition 6.6]{Moy96} and \cite[Theorem 6.11]{Moy96}. 
\end{proof}
We now describe explicitly the two types that we need. Recall that we are 
assuming that the characteristic of $k(v)$ is odd. Let $\omega(\wv) : k(\wv)^\times \to \{ \pm 1 \}$ 
denote the unique 
non-trivial quadratic character of $k(\wv)^\times$.
\begin{itemize}
\item The representation $\tau(v, n)$ of $\mathfrak{p}_v$ inflated from the representation
\[ \omega(\wv) \otimes \dots \otimes \omega(\wv) \otimes \tau(v) \]
of $M_{(1, 1, \dots, 1, 3)}(k(v))$.
\item The representation $\lambda(v, \theta, n)$ of $\mathfrak{p}_v$ inflated from the representation
\[ \omega(\wv) \otimes \dots \otimes \omega(\wv) \otimes \lambda(v, \theta) \]
of $M_{(1, 1, \dots, 1, 3)}(k(v))$ (where $\theta$ as above is a character of $C(k(v))$ of order $p$, and we assume $q_v \text{ mod }p$ is a primitive $6^\text{th}$ root of unity).
\end{itemize}
These types are introduced because they are related by a congruence modulo $p$, because of our assumption that $q_v \text{ mod }p$ is a primitive $6^\text{th}$ root of unity:
\begin{proposition}\label{prop_congruence_of_types}
Fix an isomorphism $\overline{\bQ}_p \to \bC$ and use this to view $\tau(v, n)$ and 
$\lambda(v, \theta, n)$ as representations with coefficients in $\Qpbar$. Then:
\begin{enumerate}
\item $\overline{\tau}(v, n)$ is irreducible.
\item $\overline{\lambda}(v, \theta, n)$ contains $\overline{\tau}(v, n)$ as a 
Jordan--H\"older factor with multiplicity 1. 
\end{enumerate}
\end{proposition}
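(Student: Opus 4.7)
The plan is to reduce, via inflation, to statements about mod-$p$ representations of $U_3(k(v))$, and then to combine the Deligne--Lusztig character identity for the elliptic torus of $U_3$ with the structure of the principal $\Phi_6$-block of $U_3(k(v))$.

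First, I would observe that both $\tau(v,n)$ and $\lambda(v,\theta,n)$ are inflated along the surjection $\p_v \twoheadrightarrow M_{(1,\ldots,1,3)}(k(v)) \cong (k(\wv)^\times)^{k-1} \times U_3(k(v))$, and since $p$ is odd the quadratic character $\overline{\omega(\wv)}$ remains non-trivial mod $p$. Hence the outer-tensor factor $\overline{\omega(\wv)}^{\otimes(k-1)}$ is an irreducible character on the abelian part and plays no role in the analysis; both claims reduce to: $(1')$ $\overline{\tau(v)}$ is irreducible as a $U_3(k(v))$-representation, and $(2')$ $\overline{\tau(v)}$ appears with multiplicity one as a Jordan--H\"older factor of $\overline{\lambda(v,\theta)}$. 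Under our hypothesis $q_v$ has order $6$ modulo $p$, so $p \mid \Phi_6(q_v)$ and $p \nmid q_v \Phi_1(q_v) \Phi_2(q_v)$; the Sylow $p$-subgroup of $U_3(k(v))$ is therefore cyclic and lies inside the elliptic torus $C(k(v))$.

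For $(1')$ I would invoke Fong--Srinivasan's description of the unipotent blocks of unitary groups (equivalently, the $\Phi_d$-cuspidal pair theory of Brou\'e--Malle--Michel): the three unipotent characters $\mathbf{1}$, $\St$, $\tau(v)$ all lie in the principal $\Phi_6$-block, whose Brauer tree is the path
\[ \mathbf{1} \; - \; \St \; - \; (\mathrm{exc}) \; - \; \tau(v), \]
where the exceptional vertex accumulates the non-unipotent characters of the block (including the various $\lambda(v,\theta')$ for $\theta'$ of $p$-power order on $C(k(v))$). The shape is pinned down by requiring $\overline{\mathbf{1}}$ to be the trivial Brauer character and matching dimensions of the three edges $\varphi_1,\varphi_2,\varphi_3$ against $\dim \overline{\St} = q_v^3$ and $\dim \overline{\tau(v)} = q_v(q_v-1)$. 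Since $\tau(v)$ then sits at a leaf of the tree, $\overline{\tau(v)} = \varphi_3$ is an irreducible Brauer character, which establishes $(1')$.

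For $(2')$ I would use the Deligne--Lusztig identity
\[ R_C^{U_3}(\mathbf{1}) = \mathbf{1} - \St - \tau(v), \]
which follows from $\langle R_C^{U_3}(\mathbf{1}), R_C^{U_3}(\mathbf{1}) \rangle = |W^F(C)| = 3$ together with the dimension computation $R_C^{U_3}(\mathbf{1})(1) = -(q_v-1)(q_v+1)^2$. Since $\theta$ has order $p$, it is trivial on $p$-regular elements of $C(k(v))$, and the Deligne--Lusztig character formula then yields $\overline{R_C^{U_3}(\theta)} = \overline{R_C^{U_3}(\mathbf{1})}$ as virtual Brauer characters on $U_3(k(v))$. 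Consequently, in the Grothendieck group of $\overline{\bF}_p$-representations:
\[ [\overline{\lambda(v,\theta)}] = -[\overline{R_C^{U_3}(\theta)}] = [\overline{\St}] + [\overline{\tau(v)}] - [\overline{\mathbf{1}}]. \]
Reading off the Brauer tree, $\overline{\St}$ has Jordan--H\"older factors $\overline{\mathbf{1}} = \varphi_1$ and an irreducible Brauer character $\varphi_2 \ne \overline{\tau(v)}$, so the right-hand side equals $\varphi_2 + \overline{\tau(v)}$; this gives $(2')$. The hardest step will be pinning down the Brauer tree of the principal $\Phi_6$-block of $U_3(k(v))$ precisely enough to deduce both parts: this is classical but non-trivial input from the modular representation theory of finite unitary groups, and needs to be invoked carefully.
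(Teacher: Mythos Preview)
Your proposal is correct and follows the same route as the paper: both reduce via inflation to statements about $U_3(k(v))$, and both ultimately rest on the known decomposition matrix of the principal $p$-block of $U_3(q)$ when $p\mid\Phi_6(q)$. The paper simply cites this input directly (Dudas for the general modular irreducibility of cuspidal unipotent representations, Geck \cite{Gec90} for the explicit Brauer character computation), whereas you reconstruct it by identifying the Brauer tree and then using the Deligne--Lusztig identity $\overline{R_C^{U_3}(\theta)}=\overline{R_C^{U_3}(\mathbf{1})}$ on $p$-regular classes to express $\overline{\lambda(v,\theta)}$ in terms of reductions of unipotent characters. That reduction step is a pleasant conceptual shortcut, but it does not change the essential content: the heart of both arguments is the shape of the Brauer tree $\mathbf{1}-\St-(\mathrm{exc})-\tau(v)$, which is exactly the ``classical but non-trivial input'' you flag and which is the content of the references the paper cites.
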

(As usual, overline denotes semi-simplified residual representation over $\overline{\bF}_p$.)
\begin{proof}
The modular irreducibility of cuspidal unipotent representations is a general 
phenomenon (see \cite{Dud18}). The proposition is a statement about
representations of $U_3(k(v))$, which can be proved by explicit computation with 
Brauer characters; see \cite[Theorem 4.2]{Gec90} (although note that there is a 
typo in the proof: the right-hand side of the first displayed equation should 
have $\widehat{\chi}_1$ in place of $\widehat{\chi}_{q^2 - q}$).
\end{proof}
The following proposition will be a useful tool for exploiting the type $(\p_v, \lambda(v, \theta, n))$. We introduce an associated test function $\phi(v, \theta, n)\in C_c^\infty(U_n(F^+_v))$: it is the function supported on $\mathfrak{p}_v$ and inflated from the character of $\lambda(v, \theta, n)^\vee$. If $\pi$ is an admissible representation of $U_n(F^+_v)$, then $\pi(\phi(v, \theta, n))$ is (up to a positive real scalar depending on normalisation of measures) the dimension of the space $\Hom_{\p_v}(\lambda(v, \theta, n), \pi|_{\p_v})$.
\begin{proposition}\label{prop_kazhdan_varshavsky}
Assume that the characteristic of $k(v)$ is greater than $n$. Let $\phi = 
\phi(v, \theta, n)$, and let $\cE = (H, s, \eta)$ be one of our fixed 
endoscopic triples for $U_n$, with $H = U_p \times U_q$. Suppose given 
representations $\pi_p$, $\pi_q$ in $\mathcal{A}_t^\theta(\GL_p(F_\wv))_+$, 
$\mathcal{A}_t^\theta(\GL_q(F_\wv))_+$, respectively, such that $(\pi_p \otimes 
\pi_q)^\sim(\widetilde{\phi}^H) \neq 0$. Then $sc(\pi_p) + sc(\pi_q) = 
\lambda_\wv(\widetilde{\theta}) + \chi_1 + \dots + \chi_{2k-2}$, where $\chi_1, 
\dots, \chi_{2k-2} : F_\wv^\times \to \bC^\times$ are characters such that for 
each $i = 1, \dots, 2k-2$, $\chi_i|_{\cO_{F_\wv}^\times} = \omega(\wv)$.
\end{proposition}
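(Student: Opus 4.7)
The strategy is to pull the nonvanishing hypothesis back to $U_n$ to produce some $\sigma \in \mathcal{A}_t(U_n(F^+_v))$ with $\sigma(\phi) \neq 0$, and then to compute $esc(\sigma)$ via the type theory of Proposition \ref{prop_moy_prasad} and the base change computation of Proposition \ref{prop_supercuspidals_on_U_3}.

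Reading $\widetilde{\phi}^H$ as $\widetilde{\phi^H}$ (the lift to $\widetilde{M}^H(F^+_v)$ of the endoscopic transfer $\phi^H \in C_c^\infty(H(F^+_v))$ of $\phi$), Proposition \ref{prop_L-packets_via_character_identities} applied to $H = U_p \times U_q$ expresses $(\pi_p \otimes \pi_q)^\sim(\widetilde{\phi^H})$ as a linear combination of twisted traces $(\tau_p \otimes \tau_q)(\phi^H)$ over $\Pi(\pi_p) \times \Pi(\pi_q)$. The nonvanishing hypothesis thus produces some $\tau_p \otimes \tau_q$ with $(\tau_p \otimes \tau_q)(\phi^H) \neq 0$. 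Next, the endoscopic spectral character identity for the triple $\cE$ on $U_n$ (available via Mok's endoscopic classification for unitary groups together with Kaletha's refinement \cite{Kal16} in the pure inner form case, normalised via the transfer factors we fixed in \S \ref{subsec:endo}) expresses $(\tau_p \otimes \tau_q)(\phi^H)$ as a signed sum of traces $\sigma(\phi)$ over the $L$-packet $\Pi(\pi)$, where $\pi := (\pi_p \otimes \mu_{(-1)^q}) \boxplus (\pi_q \otimes \mu_{(-1)^p}) \in \mathcal{A}_t^\theta(\GL_n(F_\wv))_+$ is the $\eta$-transport of the parameter of $\tau_p \otimes \tau_q$. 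Some $\sigma \in \Pi(\pi)$ therefore satisfies $\sigma(\phi) \neq 0$, which is equivalent (by construction of $\phi(v, \theta, n)$) to $\Hom_{\mathfrak{p}_v}(\lambda(v, \theta, n), \sigma|_{\mathfrak{p}_v}) \neq 0$. Proposition \ref{prop_moy_prasad} then exhibits $\sigma$ as a Jordan--H\"older constituent of
\[ i_{Q_{(1, \dots, 1, 3)}}^{U_n} \cInd_{(F_\wv^\times)^{k-1} \times U_3(\cO_{F^+_v})}^{M_{(1, \dots, 1, 3)}(F^+_v)} \sigma'(v), \]
where $\sigma'(v) = \chi_1 \otimes \dots \otimes \chi_{k-1} \otimes \lambda_v(\theta)$ for characters $\chi_i : F_\wv^\times \to \bC^\times$ with $\chi_i|_{\cO_{F_\wv}^\times} = \omega(\wv)$.

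Unwinding the inductive definition of extended cuspidal support and invoking Proposition \ref{prop_supercuspidals_on_U_3}(3) to identify $BC(\lambda_v(\theta)) = \widetilde{\lambda}_\wv(\widetilde{\theta})$ yields $esc(\sigma) = \widetilde{\lambda}_\wv(\widetilde{\theta}) + \sum_{i=1}^{k-1} (\chi_i + \chi_i^\theta)$. Each $\chi_i^\theta|_{\cO_{F_\wv}^\times}$ still equals $\omega(\wv)$ (as $\omega(\wv)$ is quadratic and hence Galois-invariant), so after relabelling this has the required shape. On the other hand, $esc(\sigma) = sc(BC(\sigma)) = sc(\pi_p \otimes \mu_{(-1)^q}) + sc(\pi_q \otimes \mu_{(-1)^p})$; since $\mu$ is unramified at the inert place $\wv$, the $\mu$-twists can be absorbed into the characters $\chi_i$ (preserving their restrictions to $\cO_{F_\wv}^\times$) and into the unramified-twist indeterminacy of $\widetilde{\lambda}_\wv(\widetilde{\theta})$. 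Equating the two expressions gives the claim. The main obstacle is the second step, namely verifying that the spectral endoscopic character identity is available at the level of a single tempered $L$-packet element $\tau_p \otimes \tau_q$ (rather than only for stable sums), and that our normalisation of transfer factors via the pure inner twist $(\xi, u)$ matches the conventions of the cited references; the remainder of the argument is a bookkeeping computation using only results already recorded in the excerpt.
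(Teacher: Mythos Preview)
Your second step is not available as stated: the endoscopic character identity (Mok, Kaletha) matches the \emph{stable} distribution on $H$ against the $s$-twisted packet sum on $U_n$, i.e.\
\[
\sum_{\tau_H \in \Pi^H(\varphi_H)} \tau_H(\phi^H) \;=\; \sum_{\sigma \in \Pi^{U_n}(\varphi)} \langle s,\sigma\rangle\,\sigma(\phi),
\]
and there is no general formula expressing a single $(\tau_p\otimes\tau_q)(\phi^H)$ in this way. Fortunately the detour through an individual $\tau_p\otimes\tau_q$ is unnecessary: by Mok's local classification the hypothesis $(\pi_p\otimes\pi_q)^\sim(\widetilde{\phi^H})\neq 0$ is already the stable distribution on $H$ (this is the base-change identity for $U_p\times U_q$, with all coefficients equal to $1$ in the Whittaker normalisation), and the endoscopic identity above then yields some $\sigma\in\Pi(\pi)$ with $\sigma(\phi)\neq 0$ directly. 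From there your computation of $esc(\sigma)$ via Proposition~\ref{prop_moy_prasad} and Proposition~\ref{prop_supercuspidals_on_U_3}(3) is correct, and the $\mu$-twist bookkeeping goes through once one observes that $\mu_{(-1)^q}$ is trivial on the odd-rank factor (which is the one carrying $\widetilde{\lambda}_{\wv}(\widetilde{\theta})$).

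The paper takes a genuinely different route. Rather than pulling the problem back to $U_n$ via spectral endoscopic identities, it stays on the $H$ side and invokes a result of Kazhdan--Varshavsky \cite[Theorem~2.2.6]{Kaz12} which computes the endoscopic transfer $\phi^H$ \emph{explicitly}: it is a linear combination of functions inflated from Deligne--Lusztig characters $R^{H_{k(v)}}_{C_i}(\theta^{-1}\otimes\omega(\wv)^{\otimes(k-1)})$ on the finite reductive quotient $H(k(v))$, indexed by conjugacy classes of embeddings of the torus $C\times(\Res_{k(\wv)/k(v)}\mathbf{G}_m)^{k-1}$ into $H_{k(v)}$. Nonvanishing of $(\tau_p\otimes\tau_q)(\phi^H)$ then forces $\tau_p\otimes\tau_q$ to contain one of these Deligne--Lusztig representations as a $K$-type, and the conclusion about $esc(\tau_p)+esc(\tau_q)$ follows from transitivity of Deligne--Lusztig induction and Proposition~\ref{prop_moy_prasad} applied on $H$. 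This avoids any appeal to Mok's full endoscopic classification (and the implicit identification of M{\oe}glin's $L$-packets with Mok's that your approach requires); the price is the Kazhdan--Varshavsky input, which is the source of the hypothesis that the residue characteristic exceed $n$. Your approach, once repaired as above, would remove that hypothesis at the cost of citing Mok.
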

\begin{proof}
By Proposition \ref{prop_L-packets_via_character_identities}, there is an identity 
\[ (\pi_p \otimes \pi_q)^\sim(\widetilde{\phi}^H) = \sum_{\substack{\tau_p \in \Pi(\pi_p)  \\ \tau_q \in \Pi(\pi_q)}} c_{\tau_p} c_{\tau_q} (\tau_p \otimes \tau_q)(\phi^H) \]
for some constants $c_{\tau_p}, c_{\tau_q} \in \bC^\times$. Now, \cite[Theorem 
2.2.6]{Kaz12} shows that $\phi^H$ can be taken to be a weighted sum of 
inflations to $H(\cO_{F^+_v})$ of characters $R^{H_{k(v)}}_{C_i} (\theta^{-1} 
\otimes \omega(\wv)^{\otimes (k-1)})$ associated to conjugacy classes of 
embeddings $C_i : C \times \Res_{k(\wv) / k(v)} \bG_m^{k-1} \to {H_{k(v)}}$. 
(Our appeal to this reference is the reason for the additional assumption on 
the characteristic of $k(v)$ in the statement of the theorem.) If  
$(\pi_p \otimes \pi_q)^\sim(\widetilde{\phi}^H) \neq 0$, then there exists a 
summand on the right-hand side such that $\tau_p \otimes \tau_q$ contains the 
inflation to $H(\cO_{F^+_v})$ of the (irreducible) representation with 
character $-R^{H_{k(v)}}_{C_i}(\theta 
\otimes \omega(\wv)^{\otimes (k-1)})$. 
Taking into account the compatibility between parabolic induction and 
Deligne--Lusztig induction, the transitivity of Deligne--Lusztig induction 
\cite{Lus76}, and Proposition \ref{prop_moy_prasad}, we see that for one of the 
representations $\tau_p, \tau_q$ (the one for the factor of even rank), the 
extended cuspidal support is a sum of 
characters of $F_\wv^\times$, each of which is the twist of an unramified 
character by a ramified quadratic character; and for the other of the 
representations $\tau_p, \tau_q$, the extended cuspidal support is a sum of 
such characters, together with $\lambda_\wv(\widetilde{\theta})$. This 
completes the proof.
\end{proof}

\subsection{Types for the general linear group}\label{subsec_local_theory_GL_n}

In this section we record some analogues of the results of the previous section for general linear groups. Let $2 \leq n_1 \leq n$ be an integer. Let $\wv$ be a finite place of $F$. We assume that the characteristic of $k(\wv)$ is odd. We have already introduced the notation $\omega(\wv)$ for the unique non-trivial quadratic character of $k(\wv)^\times$. We introduce a further representation of the finite group $\GL_{n_1}(k(\wv))$ of Lie type:
\begin{itemize}
\item Let $k_{n_1} / k(\wv)$ be an extension of degree $n_1$, and suppose that 
$q_\wv \text{ mod }p$ is a primitive $n_1^\text{th}$ root of unity modulo $p$. 
Let $\Theta : k_{n_1}^\times \to \bC^\times$ be a character of order $p$. Then 
$\Theta$ is distinct from its conjugates by $\Gal(k_{n_1} / k_\wv)$, and we write $\widetilde{\lambda}(\wv, \Theta) 
= (-1)^{n_1-1} R^{\GL_{n_1}}_{\Res_{k_{n_1} / k(\wv)}\bG_m} \Theta$ for the 
Deligne--Lusztig induction. Then $\widetilde{\lambda}(\wv, \Theta)$ is an 
irreducible representation of $\GL_{n_1}(k(\wv))$.
\end{itemize}
The notation $\widetilde{\lambda}(\wv, \Theta)$ thus generalises that introduced in the previous section (where $n_1 = 3$ and $\Theta = \widetilde{\theta}$). 
\begin{proposition}\label{prop_depth_zero_supercuspidal_on_gl_n}
Let $\pi$ be an irreducible admissible representation of $\GL_{n_1}(F_\wv)$, and let $F_{\wv, n_1} / F_\wv$ denote an unramified extension of degree $n_1$. Then the following are equivalent:
\begin{enumerate}
\item The restriction of $\pi$ to $\GL_{n_1}(\cO_{F_\wv})$ contains $\widetilde{\lambda}(\wv, \Theta)$. 
\item There exists a continuous character $\chi : F_{\wv, n_1}^\times \to \bC^\times$ such that $\chi|_{\cO_{F_{\wv, n_1}}^\times} = \Theta$ and $\rec_{F_\wv} \pi \cong \Ind_{W_{F_{\wv, n_1}}}^{W_{F_\wv}} (\chi \circ \Art_{F_{\wv, n_1}}^{-1})$. In particular, $\pi$ is supercuspidal.
\end{enumerate}
\end{proposition}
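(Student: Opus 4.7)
The plan is to combine the theory of depth zero types of Moy--Prasad with the explicit description of the local Langlands correspondence for depth zero supercuspidals of $\GL_{n_1}$.

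First I would verify that $\widetilde{\lambda}(\wv,\Theta)$ is a cuspidal irreducible representation of $\GL_{n_1}(k(\wv))$. Since $\Theta$ has order $p$ and $q_\wv$ is a primitive $n_1^\text{th}$ root of unity modulo $p$, the $n_1$ Galois conjugates of $\Theta$ under $\Gal(k_{n_1}/k(\wv))$ are pairwise distinct, so $\Theta$ is in general position in the sense of Deligne--Lusztig. The standard results of \cite{lusztig} then give that $(-1)^{n_1-1}R^{\GL_{n_1}}_{\Res_{k_{n_1}/k(\wv)}\bG_m}\Theta$ is an irreducible cuspidal representation.

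Next, by Moy--Prasad (specifically the results cited in Proposition \ref{prop_moy_prasad}, applied now to $\GL_{n_1}$), the pair $(\GL_{n_1}(\cO_{F_\wv}),\widetilde{\lambda}(\wv,\Theta))$ is a depth zero type. An irreducible admissible $\pi$ of $\GL_{n_1}(F_\wv)$ contains this type on restriction to $\GL_{n_1}(\cO_{F_\wv})$ if and only if $\pi$ is of the form
\[ \pi \cong \cInd_{F_\wv^\times \GL_{n_1}(\cO_{F_\wv})}^{\GL_{n_1}(F_\wv)} \widetilde{\Lambda} \]
for some extension $\widetilde{\Lambda}$ of $\widetilde{\lambda}(\wv,\Theta)$ to $F_\wv^\times \GL_{n_1}(\cO_{F_\wv})$; any such $\pi$ is supercuspidal (no parabolic induction can contain the cuspidal type on the Levi-block reduction since $n_1\geq 2$ and the Levi quotients of proper parahorics are products of strictly smaller $\GL$'s). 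Extensions $\widetilde{\Lambda}$ are parametrised by the choice of a scalar in $\bC^\times$ giving the action of $\varpi_\wv$, and two such induce isomorphic representations iff the scalars agree; so the set of $\pi$ containing the type is a single unramified-twist orbit of supercuspidals.

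Finally, I would match this parametrisation with condition (2) via the local Langlands correspondence for depth zero supercuspidals. Under the isomorphism $\cO_{F_{\wv,n_1}}^\times/(1+\varpi_\wv \cO_{F_{\wv,n_1}}) \iso k_{n_1}^\times$, a continuous character $\chi:F_{\wv,n_1}^\times\to \bC^\times$ with $\chi|_{\cO_{F_{\wv,n_1}}^\times} = \Theta$ is regular (its Galois conjugates under $\Gal(F_{\wv,n_1}/F_\wv)$ are pairwise distinct, by the same argument as for $\Theta$), so $\rho_\chi := \Ind_{W_{F_{\wv,n_1}}}^{W_{F_\wv}}(\chi\circ\Art_{F_{\wv,n_1}}^{-1})$ is irreducible. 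The explicit description of the local Langlands correspondence for tamely ramified supercuspidals (see e.g.\ the depth zero case of Bushnell--Henniart, or \cite{Moy96}) asserts that the image of $\rho_\chi$ under $\rec_{F_\wv}^{-1}$ is exactly the supercuspidal obtained by compactly inducing an extension $\widetilde{\Lambda}$ of $\widetilde{\lambda}(\wv,\Theta)$ whose central character on $\varpi_\wv$ is determined by $\chi(\varpi_\wv)$ (and a choice of Langlands normalisation). As $\chi(\varpi_\wv)$ varies, we obtain all the supercuspidals in the unramified twist orbit described above. The hardest technical point will be book-keeping this final matching (including the signs/normalisations) rigorously, but this is a well-documented consequence of the depth zero case of local Langlands for $\GL_n$.
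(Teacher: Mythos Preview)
Your proposal is correct and follows exactly the approach the paper takes: the paper's proof consists entirely of the sentence ``This follows from the results of \cite{Hen92} (see especially \S 3.4 of that paper) and \cite{Moy96},'' and your sketch is a faithful unpacking of precisely those two ingredients (Moy--Prasad depth-zero types plus Henniart's explicit depth-zero local Langlands for $\GL_n$).
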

\begin{proof}
This follows from the results of \cite{Hen92} (see especially \S 3.4 of that paper) and \cite{Moy96}.
\end{proof}
Let $n_2 = n - n_1$. We write $\q_\wv \subset \GL_n(\cO_{F_\wv})$ for the standard parahoric subgroup associated to the partition $(n_1, n_2)$, i.e.\ the pre-image under the reduction modulo $\varpi_\wv$ map $\GL_n(\cO_{F_\wv}) \to \GL_n(k(\wv))$ of $P_{(n_1, n_2)}(k(\wv))$. We write $\widetilde{\lambda}(\wv, \Theta, n)$ for the irreducible representation of $\q_\wv$ inflated from the representation $ \widetilde{\lambda}(\wv, \Theta) \otimes (\omega(\wv) \circ \det)$ of $L_{(n_1, n_2)}(k(\wv))$. We write $\mathfrak{r}_\wv \subset \q_\wv$ for the standard parahoric subgroup associated to the partition $(n_1, 1, 1, \dots, 1)$. Then we have the following analogue of Proposition \ref{prop_moy_prasad}:
\begin{proposition}\label{prop_cuspidal_types_for_general_linear_group}
Let $\pi$ be an irreducible admissible representation of $\GL_n(F_\wv)$. Then the following are equivalent:
\begin{itemize}
\item $\pi|_{\mathfrak{r}_\wv}$ contains $\widetilde{\lambda}(\wv, \Theta, n)|_{\mathfrak{r}_\wv}$. 
\item $sc(\pi) = \pi_1 + \chi_1 + \dots + \chi_{n_2}$, where $\pi_1$ satisfies the equivalent conditions of Proposition \ref{prop_depth_zero_supercuspidal_on_gl_n} and $\chi_1, \dots, \chi_{n_2} : F_\wv^\times \to \bC^\times$ are characters such that for each $i = 1, \dots, n_2$, $\chi_i|_{\cO_{F_\wv}^\times} = \omega(\wv)$.
\end{itemize}
\end{proposition}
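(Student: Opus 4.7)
The plan is to recognize $(\mathfrak{r}_\wv, \widetilde{\lambda}(\wv, \Theta, n)|_{\mathfrak{r}_\wv})$ as a depth zero type in the sense of Moy--Prasad attached to the standard Levi $L_{(n_1, 1, \dots, 1)}$ of $\GL_n$, and then to apply their classification together with Proposition \ref{prop_depth_zero_supercuspidal_on_gl_n} on the $\GL_{n_1}$-factor of the Levi.

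First I would verify that the restriction from $\q_\wv$ to $\mathfrak{r}_\wv$ of $\widetilde{\lambda}(\wv, \Theta, n)$ is (up to inflation) the tensor product representation $\widetilde{\lambda}(\wv, \Theta) \otimes \omega(\wv)^{\otimes n_2}$ of $L_{(n_1, 1, \dots, 1)}(k(\wv)) = \GL_{n_1}(k(\wv)) \times (k(\wv)^\times)^{n_2}$. This is immediate from the factorization of the reduction map $\mathfrak{r}_\wv \hookrightarrow \q_\wv \twoheadrightarrow L_{(n_1, n_2)}(k(\wv))$ through $L_{(n_1, 1, \dots, 1)}(k(\wv)) \hookrightarrow L_{(n_1, n_2)}(k(\wv))$ (a diagonal Levi embedding). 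Each factor of this tensor product is cuspidal (for the $\GL_{n_1}$-factor this is part of the Deligne--Lusztig construction, for the character factors this is trivial), so this gives an irreducible cuspidal representation of $L_{(n_1, 1, \dots, 1)}(k(\wv))$, and hence $(\mathfrak{r}_\wv, \widetilde{\lambda}(\wv, \Theta, n)|_{\mathfrak{r}_\wv})$ is a depth zero $K$-type in the sense of \cite{Moy96}.

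Next, I would invoke the main classification theorems of Moy--Prasad (the $\GL_n$-analogue of Proposition \ref{prop_moy_prasad}, i.e.\ \cite[Proposition 6.6]{Moy96} and \cite[Theorem 6.11]{Moy96}): an irreducible admissible representation $\pi$ of $\GL_n(F_\wv)$ contains this type on $\mathfrak{r}_\wv$ if and only if $\pi$ is an irreducible subquotient of a parabolically induced representation $i_{P_{(n_1, 1, \dots, 1)}}^{\GL_n} \tau$, where $\tau$ is an irreducible supercuspidal representation of $L_{(n_1, 1, \dots, 1)}(F_\wv) \cong \GL_{n_1}(F_\wv) \times (F_\wv^\times)^{n_2}$ whose restriction to its maximal compact subgroup $\GL_{n_1}(\cO_{F_\wv}) \times (\cO_{F_\wv}^\times)^{n_2}$ contains $\widetilde{\lambda}(\wv, \Theta) \otimes \omega(\wv)^{\otimes n_2}$. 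Decomposing $\tau = \pi_1 \otimes \chi_1 \otimes \dots \otimes \chi_{n_2}$, this condition translates to: $\pi_1|_{\GL_{n_1}(\cO_{F_\wv})}$ contains $\widetilde{\lambda}(\wv, \Theta)$, and $\chi_i|_{\cO_{F_\wv}^\times} = \omega(\wv)$ for each $i$.

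Finally, Proposition \ref{prop_depth_zero_supercuspidal_on_gl_n} applied to $\pi_1$ gives the equivalence with the condition on $\pi_1$ in the statement. The equivalence ``$\pi$ is a subquotient of $\pi_1 \times \chi_1 \times \dots \times \chi_{n_2}$'' $\Leftrightarrow$ ``$sc(\pi) = \pi_1 + \chi_1 + \dots + \chi_{n_2}$'' is standard from the Bernstein--Zelevinsky theory of supercuspidal supports \cite{BZ77ENS}, using that $\pi_1$ is itself supercuspidal. I do not expect a serious obstacle; the only delicate point is the bookkeeping in the first step to identify precisely which cuspidal representation of the smaller Levi one obtains by restriction of parahorics, which is a direct calculation.
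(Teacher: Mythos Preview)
Your proposal is correct and is exactly what the paper intends: its own proof is the single line ``This once again follows from the results of \cite{Moy96}.'' One small correction to your bookkeeping: the composite $\mathfrak{r}_\wv \hookrightarrow \q_\wv \twoheadrightarrow L_{(n_1,n_2)}(k(\wv))$ has image $\GL_{n_1}(k(\wv)) \times B_{n_2}(k(\wv))$ rather than the Levi $L_{(n_1,1,\dots,1)}(k(\wv))$, but since $\omega(\wv)\circ\det$ is trivial on the unipotent radical of $B_{n_2}$ the pulled-back \emph{representation} does factor through $L_{(n_1,1,\dots,1)}(k(\wv))$ exactly as you claim, and the rest of your argument goes through unchanged.
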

\begin{proof}
This once again follows from the results of \cite{Moy96}.
\end{proof}
The pair $(\q_\wv, \widetilde{\lambda}(\wv, \Theta, n))$ is not in general a type (because $\widetilde{\lambda}(\wv, \Theta, n)$ is not a cuspidal representation of $L_{(n_1, n_2)}(k(\wv))$ unless $n_2 = 1$). Nevertheless, we have the following proposition:
\begin{proposition}\label{prop_types_for_general_linear_group}
Let $\pi$ be an irreducible admissible representation of $\GL_n(F_\wv)$. Then the following are equivalent:
\begin{enumerate}
\item The restriction of $\pi$ to $\q_\wv$ contains $\widetilde{\lambda}(\wv, \Theta, n)$.
\item There exist irreducible admissible representations $\pi_i$ of $\GL_{n_i}(F_\wv)$ ($i = 1, 2$) such that $\pi = \pi_1 \boxplus \pi_2$, the restriction of $\pi_1$ to  $\GL_{n_1}(\cO_{F_\wv})$ contains $\widetilde{\lambda}(\wv, \Theta)$, and the restriction of $\pi_2$  to $\GL_{n_2}(\cO_{F_\wv})$ contains $\omega(\wv) \circ \det$.
\end{enumerate}
\end{proposition}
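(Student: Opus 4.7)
The plan is to deduce this proposition as a structural strengthening of Proposition \ref{prop_cuspidal_types_for_general_linear_group}: that proposition controls only the supercuspidal support $sc(\pi)$, and here we want an identification of $\pi$ itself as a Langlands-sum $\pi_1 \boxplus \pi_2$. The principal tool is Bushnell--Kutzko cover theory: by the standard construction in the depth-zero case, $(\q_\wv, \widetilde{\lambda}(\wv, \Theta, n))$ is a $P_{(n_1,n_2)}$-cover of the level-zero Levi-type $(L_{(n_1,n_2)}(\cO_{F_\wv}), \widetilde{\lambda}(\wv, \Theta) \otimes (\omega(\wv) \circ \det))$.

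For the direction (2) $\Rightarrow$ (1): given $\pi_1, \pi_2$ with the stated properties, the external tensor product $\pi_1 \otimes \pi_2$ on $L_{(n_1, n_2)}(F_\wv)$ contains the Levi-type on $L_{(n_1, n_2)}(\cO_{F_\wv})$. Using the Iwahori decomposition of $\q_\wv$ along $P_{(n_1, n_2)}$ together with Frobenius reciprocity, the normalized parabolic induction $i_{P_{(n_1,n_2)}}^{\GL_n}(\pi_1 \otimes \pi_2)$ contains $\widetilde{\lambda}(\wv, \Theta, n)$ on restriction to $\q_\wv$. The cover property implies that the type-isotypic functor is compatible with parabolic induction (matching a Hecke-module induction on the other side), so the type persists in the Langlands-sum subquotient $\pi = \pi_1 \boxplus \pi_2$.

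For the direction (1) $\Rightarrow$ (2): since $\mathfrak{r}_\wv \subset \q_\wv$, restricting the given type to $\mathfrak{r}_\wv$ and applying Proposition \ref{prop_cuspidal_types_for_general_linear_group} yields $sc(\pi) = \pi_1 + \chi_1 + \dots + \chi_{n_2}$, with $\pi_1$ the depth-zero supercuspidal of Proposition \ref{prop_depth_zero_supercuspidal_on_gl_n} and each $\chi_i|_{\cO_{F_\wv}^\times} = \omega(\wv)$. Setting $\pi_2 := \chi_1 \boxplus \dots \boxplus \chi_{n_2}$ and writing each $\chi_i$ as an unramified twist of a fixed quadratic character $\omega$ with $\omega|_{\cO_{F_\wv}^\times} = \omega(\wv)$, one checks that $\pi_2 \cong \omega \otimes (\eta_1 \boxplus \dots \boxplus \eta_{n_2})$ with $\eta_i$ unramified; hence $\pi_2|_{\GL_{n_2}(\cO_{F_\wv})}$ contains $\omega(\wv) \circ \det$. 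It then remains to identify $\pi$ with $\pi_1 \boxplus \pi_2$, and not some other subquotient sharing this supercuspidal support.

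The main obstacle is precisely this last identification in the direction (1) $\Rightarrow$ (2): multiple irreducible subquotients can have the same supercuspidal support, and one must use the cover-theoretic Hecke-algebra equivalence to show that the richer type $(\q_\wv, \widetilde{\lambda}(\wv, \Theta, n))$ singles out exactly the Langlands-sum $\pi_1 \boxplus \pi_2$. This is where genuine additional input, beyond the supercuspidal-support statement of Proposition \ref{prop_cuspidal_types_for_general_linear_group}, is required.
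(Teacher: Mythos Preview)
Your proposal rests on the claim that $(\q_\wv,\widetilde{\lambda}(\wv,\Theta,n))$ is a Bushnell--Kutzko $P_{(n_1,n_2)}$-cover of the Levi pair $(L_{(n_1,n_2)}(\cO_{F_\wv}),\widetilde{\lambda}(\wv,\Theta)\otimes(\omega(\wv)\circ\det))$. This is false in general: as the paper remarks immediately before the proposition, $\omega(\wv)\circ\det$ is \emph{not} a cuspidal representation of $\GL_{n_2}(k(\wv))$ when $n_2>1$, so the Levi pair is not a type and the parahoric pair cannot be a cover in the BK sense. Consequently the Hecke-algebra equivalence you invoke to pin down the correct subquotient is unavailable, and your argument for both directions breaks at exactly the point you flag as the ``main obstacle''.

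The paper's proof avoids cover theory entirely. It first proves directly that the natural map $\pi^{\widetilde{\lambda}}\to\pi_{N_P}^{\widetilde{\lambda}_{N_P}}$ is an isomorphism for any admissible $\pi$: surjectivity is a general fact about parahoric invariants and Jacquet modules, while injectivity is obtained by passing to the finer pair $(\mathfrak{r}_\wv,\widetilde{\mu})$, which \emph{is} a genuine depth-zero type, and using Morris's result that for such types the analogous map is an isomorphism. The direction $(1)\Rightarrow(2)$ is then a dimension count: Proposition~\ref{prop_cuspidal_types_for_general_linear_group} forces $\pi$ to be a subquotient of $\pi'=\pi_1\times\chi_1\times\dots\times\chi_{n_2}$, and the geometric lemma shows $(\pi')^{\widetilde{\lambda}}$ is one-dimensional, so exactly one irreducible subquotient contains the type. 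That subquotient is identified as $\pi_1\times\pi_2$ (irreducible by Zelevinsky), where $\pi_2$ is the unique subquotient of $\chi_1\times\dots\times\chi_{n_2}$ containing $\omega(\wv)\circ\det$ on $\GL_{n_2}(\cO_{F_\wv})$. The direction $(2)\Rightarrow(1)$ follows immediately from the Jacquet-module isomorphism and the geometric lemma.
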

We note that in the situation of the proposition, $\pi_2$ is the twist of an unramified representation by a quadratic ramified character.
\begin{proof}
Let $P = P_{(n_1, n_2)}$, $L = L_{(n_1, n_2)}$, and let $N_P$ denote the unipotent radical of $P$. Abbreviate $\widetilde{\lambda} = \widetilde{\lambda}(\wv, \Theta, n)$ and $\widetilde{\lambda}_{N_P} = \widetilde{\lambda}|_{L(\cO_{F_\wv})}$. If $\pi$ is an irreducible admissible representation of $\GL_n(F_\wv)$ then we define $\pi^{\widetilde{\lambda}} = \Hom_{\q_\wv}(\widetilde{\lambda}, \pi|_{\q_\wv})$. We first show that for any  admissible representation $\pi$ of  $\GL_n(F_\wv)$, the natural projection $\pi^{\widetilde{\lambda}} \to \pi_{N_P}^{\widetilde{\lambda}_{N_P}}$ (restriction of projection to unnormalised Jacquet module) is an isomorphism. Indeed, it is surjective by \cite[II.10.1, 1)]{Vig98}. To show that it is injective, let $\widetilde{\mu} = \widetilde{\lambda}|_{\mathfrak{r}_\wv}$ and let $R = P_{(n_1, 1, 1, \dots, 1)}$, $N_R$ the unipotent radical of $R$. Then the pair $(\mathfrak{r}_\wv,  \widetilde{\mu})$ is a depth zero unrefined minimal $K$-type  in the sense of \cite{Moy96}.  We  now  have  a commutative diagram
\[ \xymatrix{ \pi^{\widetilde{\lambda}} \ar[r] \ar[d] & \pi_{N_P}^{\widetilde{\lambda}_{N_P}} \ar[d] \\ \pi^{\widetilde{\mu}} \ar[r]  & \pi_{N_R}^{\widetilde{\mu}_{N_R}}, } \]
where the left vertical arrow is  the natural inclusion and the right vertical 
arrow is the natural projection to co-invariants. The lower horizontal arrow is 
an isomorphism, by \cite[Lemma 3.6]{Mor99}. We conclude that the top horizontal 
arrow is injective,  and therefore an isomorphism.

Suppose now that $\pi$ is an irreducible admissible representation of $\GL_n(F_\wv)$ and that $\pi^{\widetilde{\lambda}} \neq 0$. Then $\pi^{\widetilde{\mu}} \neq 0$, so  by Proposition \ref{prop_cuspidal_types_for_general_linear_group},  $\pi$ is an irreducible subquotient of an induced representation $\pi' = \pi_1 \times \chi_1 \times \dots \times \chi_{n_2}$, where the inducing data is as in the statement of that proposition. Computation of the Jacquet module (using the geometric lemma \cite[Lemma 2.12]{BZ77ENS}) shows that $(\pi')^{\widetilde{\lambda}}$ has dimension 1; therefore $\pi$ must be isomorphic to the unique irreducible subquotient of $\pi'$ which contains $\widetilde{\lambda}$. This is $\pi_1 \times \pi_2$, where $\pi_2$ is the unique irreducible subquotient of $\chi_1 \times \dots \times \chi_{n_2}$ such that $\pi_2|_{\GL_{n_2}(\cO_{F_\wv})}$ contains $\omega(\wv) \circ \det$ (note that $\pi_1 \times \pi_2$ is irreducible, by \cite[Proposition 8.5]{Zel80}).

Suppose instead that $\pi = \pi_1 \boxplus \pi_2 = \pi_1 \times \pi_2$, with $\pi_1, \pi_2$ as in the statement of the proposition. Then the geometric lemma shows that $\pi_{N_P}^{\widetilde{\lambda}_{N_P}} \neq 0$, hence $\pi^{\widetilde{\lambda}} \neq 0$.
\end{proof}
We now introduce the local lifting ring associated to the inertial type which 
is the analogue, on the Galois side, of the pair $(\q_\wv, 
\widetilde{\lambda}(\wv, \Theta, n))$ introduced above. We recall that $k_{n_1} / k(\wv)$ is an extension of degree $n_1$, $q_\wv \text{ mod }p$ is a primitive $n_1^\text{th}$ root of unity modulo $p$, and $\Theta : k_{n_1}^\times \to \bC^\times$ is a character of order $p$.  Let $\iota : 
\overline{\bQ}_p \to \bC$ be an isomorphism, so that $\iota^{-1} \Theta : 
k_{n_1}^\times \to \overline{\bQ}_p^\times$ is a character with trivial 
reduction modulo $p$. Fix a coefficient field $E$ and suppose given a 
representation $\overline{\rho}_\wv : G_{F_\wv} \to \GL_n(k)$ of the form 
$\overline{\rho}_v = \overline{\sigma}_{\wv, 1} \oplus \overline{\sigma}_{\wv, 
2}$, where:
\begin{itemize}
\item Let $F_{\wv, n_1} / F_{\wv}$ be the unramified extension of degree $n_1$ and residue field $k_{n_1}$. Then there is an unramified character $\overline{\psi}_\wv : G_{F_{\wv, n_1}} \to k^\times$ and an isomorphism $\overline{\sigma}_{\wv, 1} \cong \Ind_{G_{F_{\wv, n_1}}}^{G_{F_{\wv}}} \overline{\psi}_\wv$.
\item $\overline{\sigma}_{\wv, 2}|_{I_{F_\wv}} \otimes \omega(\wv) \circ \Art_{F_\wv}^{-1}$ is trivial. (In other words, $\overline{\sigma}_{\wv, 2}$ is the twist of an unramified representation by a ramified quadratic character.)
\end{itemize}
We recall that $\cC_\cO$ denotes the category of complete Noetherian local $\cO$-algebras with residue field $\cO / \varpi = k$.
\begin{lemma}\label{lem_decomposition_of_deformation}
Let $R \in \cC_\cO$ and let $\rho_\wv : G_{F_\wv} \to \GL_n(R)$ be a continuous lift of $\overline{\rho}_\wv$ (i.e.\ a continuous homomorphism such that $\rho_\wv \text{ mod }\ffrm_R = \overline{\rho}_\wv$). Then there are continuous lifts $\sigma_{\wv, i} : G_{F_\wv} \to \GL_{n_i}(R)$ of $\overline{\sigma}_{\wv, i}$ ($i = 1, 2$) with the property that $\sigma_{\wv, 1} \oplus \sigma_{\wv, 2}$ is $1 + M_n(\ffrm_R)$-conjugate to $\rho_\wv$. Moreover, each $\sigma_{\wv, i}$ is itself unique up to $1 + M_{n_i}(\ffrm_R)$-conjugacy.
\end{lemma}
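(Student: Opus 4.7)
The essential input will be that $\overline{\sigma}_{\wv,1}$ and $\overline{\sigma}_{\wv,2}$ are disjoint on inertia. Since $F_{\wv,n_1}/F_\wv$ is unramified, $\overline{\sigma}_{\wv,1}|_{I_{F_\wv}}$ is trivial (as an induction of an unramified character along an unramified extension), whereas $\overline{\sigma}_{\wv,2}|_{I_{F_\wv}}$ is a direct sum of copies of $\omega := \omega(\wv) \circ \Art_{F_\wv}^{-1}$. In the paper's setting $p$ is odd, so $\omega$ reduces to a non-trivial character mod $p$ and the two inertial representations share no Jordan--H\"older factor, giving $\Hom_{k[G_{F_\wv}]}(\overline{\sigma}_{\wv,i}, \overline{\sigma}_{\wv,j}) = 0$ for $i \neq j$. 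In addition, $\overline{\rho}_\wv|_{P_{F_\wv}} = 1$, and as $\rho_\wv(P_{F_\wv})$ is then a pro-$\ell$ subgroup of the pro-$p$ group $1 + M_n(\ffrm_R)$ (where $\ell \neq p$ is the residue characteristic of $F_\wv$), it must be trivial. Hence $\rho_\wv$ factors through the tame quotient of $G_{F_\wv}$.

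For existence, I would pick a tame element $\tau \in I_{F_\wv}$ with $\omega(\tau) = -1$. The matrix $\overline{\rho}_\wv(\tau)$ then has distinct residual eigenvalues $\pm 1$ with multiplicities $n_1, n_2$, so by Hensel's lemma the characteristic polynomial of $\rho_\wv(\tau)$ factors over $R[X]$ as a product of coprime factors lifting $(X-1)^{n_1}$ and $(X+1)^{n_2}$. B\'ezout together with Cayley--Hamilton then produce orthogonal idempotents $E_+, E_- \in R[\rho_\wv(\tau)] \subset M_n(R)$ summing to $1$. The crux will be to show these idempotents commute with every $\rho_\wv(g)$. Using normality of inertia and tameness of $\rho_\wv$, one has $\rho_\wv(g\tau g^{-1}) = \rho_\wv(\tau)^{q_\wv^{v(g)}} \in R[\rho_\wv(\tau)]$ for any $g \in G_{F_\wv}$ (where $v(g) \in \hatZZ$ is the image of $g$ in $G_{F_\wv}/I_{F_\wv}$), so $\rho_\wv(g) E_\pm \rho_\wv(g)^{-1}$ again lies in the commutative $R$-subalgebra $R[\rho_\wv(\tau)]$, and its residue in $R[\rho_\wv(\tau)]/\ffrm_R \cong k \times k$ matches that of $E_\pm$ because $\overline{\rho}_\wv(g)$ preserves the block decomposition. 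Idempotents in a finite commutative algebra over a complete Noetherian local ring lift uniquely from the residue, so this forces $\rho_\wv(g) E_\pm \rho_\wv(g)^{-1} = E_\pm$. Setting $\sigma_{\wv,1} = \rho_\wv|_{E_+ R^n}$ and $\sigma_{\wv,2} = \rho_\wv|_{E_- R^n}$ then produces the required block-diagonal conjugate, each summand being a free $R$-module of the expected rank as a projective direct summand over the local ring $R$.

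For uniqueness, suppose two such decompositions both realize $\rho_\wv$; then they are $1 + M_n(\ffrm_R)$-conjugate to one another via some $h$. Writing $h$ in block form with diagonal entries $a, d$ and off-diagonal entries $b, c$, the relation $h (\sigma_{\wv,1}' \oplus \sigma_{\wv,2}') = (\sigma_{\wv,1} \oplus \sigma_{\wv,2}) h$ forces $b, c$ to be $R[G_{F_\wv}]$-equivariant morphisms between the $\sigma$'s. By the Hom-vanishing established in the first paragraph, combined with a successive approximation argument using $\ffrm_R$-adic completeness (any such morphism is in $\ffrm_R^N$ for all $N$), these vanish. Hence $h$ is block diagonal, and the diagonal blocks $a \in 1 + M_{n_1}(\ffrm_R)$, $d \in 1 + M_{n_2}(\ffrm_R)$ give the required componentwise conjugacy. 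The principal obstacle is the $G$-equivariance step in the existence argument: it is not automatic that the idempotent $E_\pm$ constructed from one inertial element commutes with the full Galois image, and the clean route is to use tameness (hence $p \neq \ell$) to place $\rho_\wv(g) E_\pm \rho_\wv(g)^{-1}$ inside the commutative subring $R[\rho_\wv(\tau)]$, where idempotent lifts are unique and thus the conjugate must coincide with $E_\pm$.
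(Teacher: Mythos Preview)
Your argument is correct, but it takes a genuinely different route from the paper's proof. The paper disposes of both existence and uniqueness in one line by invoking the vanishing of the local Galois cohomology groups $H^i(F_\wv,\Hom(\overline{\sigma}_{\wv,1},\overline{\sigma}_{\wv,2}))$ and $H^i(F_\wv,\Hom(\overline{\sigma}_{\wv,2},\overline{\sigma}_{\wv,1}))$ for $i=0,1$, together with a reference to \cite[Lemma~2.3]{shottonGLn}; the $H^0$-vanishing handles uniqueness and the $H^1$-vanishing (no obstruction to splitting the extension step by step over Artinian quotients) handles existence. Your uniqueness argument is essentially the same $H^0$-vanishing unpacked by hand, but for existence you bypass the $H^1$ input entirely: you exploit that $\rho_\wv$ is tame (so factors through a group with a transparent presentation $\langle\phi,\tau\mid\phi\tau\phi^{-1}=\tau^{q_\wv}\rangle$) and build the splitting idempotents directly from $\rho_\wv(\tau)$ via Hensel. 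This is more elementary and more explicit, though it is tailored to this particular tame situation, whereas the paper's cohomological approach applies whenever the relevant $H^0$ and $H^1$ vanish.

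Two small remarks. First, your sentence ``$\rho_\wv(g\tau g^{-1})=\rho_\wv(\tau)^{q_\wv^{v(g)}}$ for any $g\in G_{F_\wv}$'' is slightly imprecise when $v(g)\in\hatZZ\setminus\Z$; it is cleaner (and sufficient) to check $G$-equivariance of $E_\pm$ only for the topological generators $\tau$ and a Frobenius lift $\phi$, where $q_\wv^{v(\phi)}=q_\wv$ is an honest integer and $\rho_\wv(\phi)E_\pm\rho_\wv(\phi)^{-1}\in R[\rho_\wv(\tau)^{q_\wv}]\subset R[\rho_\wv(\tau)]$. Second, your assumption that $p$ is odd is indeed implicit in the set-up (since $q_\wv\bmod p$ is required to be a primitive $n_1^{\text{th}}$ root of unity with $n_1\geq 2$, which is impossible for $p=2$).
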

\begin{proof}
The splitting exists and is unique because the groups $H^i(F_\wv, \Hom(\overline{\sigma}_{\wv, 1}, \overline{\sigma}_{\wv, 2}))$ and $H^i(F_\wv, \Hom(\overline{\sigma}_{\wv, 2}, \overline{\sigma}_{\wv, 1}))$ vanish for $i = 0, 1$. Compare \cite[Lemma 2.3]{shottonGLn}.
\end{proof}
Let $R_\wv^\square \in \cC_\cO$ denote the universal lifting ring, i.e.\ the representing object of the functor of all continuous lifts of $\overline{\rho}_\wv$. We write $R(\wv, \Theta, \overline{\rho}_\wv)$ for the quotient of $R_\wv^\square$ associated by \cite[Definition 3.5]{shottonGLn} to the inertial type $\tau_\wv : I_{F_\wv} \to \GL_n(\overline{\bQ}_p)$, $\tau_\wv = \oplus_{i=1}^{n_1} (\iota^{-1} \Theta^{q_\wv^{i-1}} \circ \Art_{F_\wv}^{-1}) \oplus (\omega(\wv) \circ \Art_{F_\wv}^{-1})^{\oplus n_2}$.
We record the following properties of $R(\wv, \Theta, \overline{\rho}_\wv)$.
\begin{proposition}\label{prop_properties_of_lifting_ring}
\begin{enumerate}
\item The ring $R(\wv, \Theta, \overline{\rho}_\wv)$ is reduced, $p$-torsion-free, and is supported on a union of irreducible components of $R_\wv^\square$. In particular, $\Spec R(\wv, \Theta, \overline{\rho}_\wv)$ is $\cO$-flat and equidimensional of dimension $1 + n^2$.
\item Let $x : R_\wv^\square \to \overline{\Q}_p$ be a homomorphism, and let $\rho_x : G_{F_\wv} \to \GL_n(\overline{\bQ}_p)$ be the pushforward of the universal lifting, with its associated direct sum decomposition $\rho_x \cong \sigma_{x, 1} \oplus \sigma_{x, 2}$. Then $x$ factors through $R(\wv, \Theta, \overline{\rho}_\wv)$ if and only if there is an isomorphism $\sigma_{x, 1} \cong \Ind_{G_{F_{\wv, n_1}}}^{G_{F_\wv}} \psi_x$ for a character $\psi_x : G_{F_{\wv, n_1}} \to \overline{\bQ}_p^\times$ such that $\psi_x|_{I_{F_{\wv, n_1}}} \circ \Art_{F_{\wv, n_1}} = \iota^{-1} \Theta$ and $\sigma_{x, 2}|_{I_{F_\wv}} \otimes \omega(\wv) \circ \Art_{F_\wv}^{-1}$ is trivial.
\item Let $\sigma_{\wv, 1} : G_{F_\wv} \to \GL_{n_1}(R(\wv,\Theta,\overline{\rho}_\wv))$ be the representation associated to the universal lifting by Lemma \ref{lem_decomposition_of_deformation}. There exists $\alpha_\wv \in R(\wv,\Theta,\overline{\rho}_\wv) / (\varpi)$ such that for any Frobenius lift $\phi_\wv \in G_{F_\wv}$, $\det(X - \sigma_{\wv,1}(\phi_\wv^{n_1})) \equiv (X - \alpha_\wv)^{n_1} \text{ mod }\varpi$.
\item Let $L_\wv / F_\wv$ be a finite extension such that $\tau_\wv|_{I_{L_\wv}}$ is trivial, and let $R_{L_\wv}^\square$ denote the universal lifting ring of $\overline{\rho}|_{G_{L_\wv}}$. Then the natural morphism $R_{L_\wv}^\square \to R(\wv, \Theta, \overline{\rho}_\wv)$ (classifying restriction of the universal lifting to $G_{L_\wv}$) factors over the quotient $R_{L_\wv}^\square \to R_{L_\wv}^{ur}$ that classifies unramified liftings of $\overline{\rho}_\wv|_{G_{L_\wv}}$.
\end{enumerate}
\end{proposition}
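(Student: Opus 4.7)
The essential input is Shotton's structure theorem \cite{shottonGLn} for local Galois deformation rings of prescribed tame inertial type. I apply this to the inertial type $\tau_\wv = \oplus_{i=1}^{n_1} (\iota^{-1} \Theta^{q_\wv^{i-1}} \circ \Art_{F_\wv}^{-1}) \oplus (\omega(\wv) \circ \Art_{F_\wv}^{-1})^{\oplus n_2}$, which is tame since each summand is trivial on wild inertia. Part (1) then follows directly from Shotton's theorem: $R(\wv, \Theta, \overline{\rho}_\wv)$ is reduced, $p$-torsion-free, equidimensional of dimension $1 + n^2$, and supported on a union of irreducible components of $R_\wv^\square$.

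For part (2), the key structural observation is that the $n$ characters appearing in $\tau_\wv$ come in two blocks that share no common character: the $n_1$ characters $\iota^{-1}\Theta^{q_\wv^{i-1}} \circ \Art_{F_\wv}^{-1}$ all have order exactly $p$ (and are distinct from each other because $q_\wv$ is a primitive $n_1^\text{th}$ root of unity modulo $p$), while the remaining $n_2$ characters all equal $\omega(\wv) \circ \Art_{F_\wv}^{-1}$, which has order $2$, and $p$ is odd. Given a characteristic-zero point $x$ of $R(\wv, \Theta, \overline{\rho}_\wv)$, Lemma \ref{lem_decomposition_of_deformation} decomposes $\rho_x \cong \sigma_{x, 1} \oplus \sigma_{x, 2}$ lifting $\overline{\sigma}_{\wv, 1} \oplus \overline{\sigma}_{\wv, 2}$. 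The defining condition $\rho_x|_{I_{F_\wv}} \cong \tau_\wv$, combined with the residual decomposition (noting that the $\Theta$-block characters reduce trivially mod $p$, matching $\overline{\sigma}_{\wv,1}|_{I_{F_\wv}}$ being trivial), forces the $\Theta$-block to land in $\sigma_{x, 1}$ and the $\omega(\wv)$-block to land in $\sigma_{x, 2}$. Since Frobenius cyclically permutes the distinct characters $\Theta^{q_\wv^{i-1}}$, the representation $\sigma_{x, 1}$ must be induced from a character $\psi_x$ of $G_{F_{\wv, n_1}}$ satisfying $\psi_x|_{I_{F_{\wv, n_1}}} \circ \Art_{F_{\wv, n_1}} = \iota^{-1} \Theta$, and the condition on $\sigma_{x, 2}$ is immediate. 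The converse direction is direct: any such $\rho_x$ has $\rho_x|_{I_{F_\wv}} \cong \tau_\wv$.

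For part (3), recall $\overline{\sigma}_{\wv, 1} = \Ind_{G_{F_{\wv, n_1}}}^{G_{F_\wv}} \overline{\psi}_\wv$ with $\overline{\psi}_\wv$ unramified, and use the standard basis of the induced representation given by coset representatives $1, \phi_\wv, \ldots, \phi_\wv^{n_1-1}$. Since $\phi_\wv^{n_1} \in G_{F_{\wv, n_1}}$ is centralized by each of these coset representatives, its matrix in this basis is the scalar $\overline{\psi}_\wv(\phi_\wv^{n_1}) \cdot I_{n_1}$; hence the characteristic polynomial of $\sigma_{\wv, 1}(\phi_\wv^{n_1})$ reduces modulo $\varpi$ to $(X - \overline{\psi}_\wv(\phi_\wv^{n_1}))^{n_1}$, and I take $\alpha_\wv$ to be the image of $\overline{\psi}_\wv(\phi_\wv^{n_1}) \in k^\times$ in $R(\wv, \Theta, \overline{\rho}_\wv)/(\varpi)$.

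For part (4), since $R(\wv, \Theta, \overline{\rho}_\wv)$ is reduced and $p$-torsion-free by part (1), it embeds into the product of the residue fields at its minimal primes (all of characteristic zero), so the claimed factorization can be checked at $\overline{\bQ}_p$-valued points. At such a point $x$, the description of $\rho_x$ in part (2) together with the hypothesis $\tau_\wv|_{I_{L_\wv}} = 1$ forces both $\sigma_{x, 1}|_{I_{L_\wv}}$ and $\sigma_{x, 2}|_{I_{L_\wv}}$ to be trivial (the former because $\psi_x|_{I_{L_{\wv,n_1}}}$ is trivial as $\Theta$ dies on $I_{L_\wv}$, the latter because $\omega(\wv)$ does), so $\rho_x|_{G_{L_\wv}}$ is unramified. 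This gives the desired factorization through $R_{L_\wv}^{ur}$. The only substantive obstacle is the invocation of Shotton's structure theorem; everything else exploits the fact that the characters in the two blocks of $\tau_\wv$ have distinct orders.
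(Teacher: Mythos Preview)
Your treatment of parts (1), (2), and (4) is correct and matches the paper's approach (the paper simply cites \cite[Proposition 3.6]{shottonGLn} for (1)--(2) and argues as you do for (4), checking unramifiedness at generic points using reducedness).

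Part (3), however, contains a genuine error. You compute the matrix of $\overline{\sigma}_{\wv,1}(\phi_\wv^{n_1})$, which is the \emph{residual} representation over $k$, and conclude that the characteristic polynomial of $\sigma_{\wv,1}(\phi_\wv^{n_1})$ ``reduces modulo $\varpi$'' to $(X-\overline{\psi}_\wv(\phi_\wv^{n_1}))^{n_1}$. But reducing modulo $\varpi$ lands you in $R/(\varpi)$, not in $k=R/\ffrm_R$; these differ because $R=R(\wv,\Theta,\overline{\rho}_\wv)$ is a large complete local ring, not just $\cO$. What you have actually shown is the statement at the closed point of $\Spec R/(\varpi)$, whereas the proposition asserts it over all of $R/(\varpi)$. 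Your $\alpha_\wv$ lies in $k$, but there is no reason for the true $\alpha_\wv$ to be constant: for instance $\det\sigma_{\wv,1}(\phi_\wv)$ genuinely varies over $R/(\varpi)$.

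The paper's argument closes this gap by first working in characteristic zero. Using that $R$ is reduced and $p$-torsion-free, one checks at $\overline{\bQ}_p$-points (via your part (2)) that the characteristic polynomial of $\sigma_{\wv,1}(\phi_\wv)$ has vanishing middle coefficients, i.e.\ equals $X^{n_1}+(-1)^{n_1}\det\sigma_{\wv,1}(\phi_\wv)$ as an identity in $R[X]$. Reducing this identity modulo $\varpi$ and applying Hensel's lemma (the $n_1$ roots over $k$ are distinct, since $q_\wv$ is a primitive $n_1$th root of unity mod $p$) produces $\alpha'_\wv\in R/(\varpi)$ with $\det(X-\sigma_{\wv,1}(\phi_\wv))\equiv\prod_{i=1}^{n_1}(X-q_\wv^{i-1}\alpha'_\wv)\bmod\varpi$. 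Then $\alpha_\wv:=(\alpha'_\wv)^{n_1}$ works, since $q_\wv^{n_1}\equiv 1\bmod p$.
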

\begin{proof}
The first two properties follow from \cite[Proposition 3.6]{shottonGLn}. For the third, let $\phi_\wv$ be a Frobenius lift. We note that $\det(X - \sigma_{\wv, 1}(\phi_\wv)) = X^{n_1} + (-1)^{n_1} \det \sigma_{\wv, 1}(\phi_\wv)$. Indeed, this can be checked at $\overline{\bQ}_p$-points, at which $\sigma_{\wv, 1}$ is irreducible, induced from a character of $G_{F_{\wv, n_1}}$ which extends $\iota^{-1} \Theta \circ \Art_{F_{\wv, n_1}}^{-1}$. Reducing modulo $\varpi$ and applying Hensel's lemma, we find that there is an element $\alpha'_\wv \in 	R(\wv, \Theta, \overline{\rho}_\wv) / (\varpi)$ such that $\det(X - \sigma_{\wv, 1}(\phi_\wv)) 
	\equiv 
	\prod_{i=1}^{n_1}(X - q_\wv^{i-1} \alpha'_\wv ) \text{ mod 
	}\varpi$. If $\alpha_\wv = (\alpha'_\wv)^{n_1}$ then $\det(X - \sigma_{\wv, 1}(\phi^{n_1}_\wv)) = (X - \alpha_\wv)^{n_1}$. 
	For the fourth part of the lemma, we need to show that the universal lifting is unramified on restriction to $G_{L_\wv}$. Since $R(\wv,\Theta,\overline{\rho}_\wv)$ is reduced, it suffices to check this at each geometric generic point. At such a point $\sigma_{\wv, 1}$ is irreducible, induced from a character of $G_{F_{\wv, n_1}}$, while $\sigma_{\wv, 2}$ is a quadratic ramified twist of an unramified representation. The result follows.
\end{proof}
\subsection{Algebraic modular forms}\label{subsec:alg_mod_forms}

Finally, we define notation for algebraic modular forms on the group $G$. 
Retaining our standard assumptions, fix a coefficient field $E \subset 
\overline{\Q}_p$ containing the image of each embedding $F \to 
\overline{\Q}_p$, with ring of integers $\cO$, and let $\widetilde{I}_p$ denote 
the set of embeddings $\tau : F \to E$ inducing a place of $\widetilde{S}_p$. 
Given $\lambda = (\lambda_\tau)_\tau \in (\Z^n_+)^{\widetilde{I}_p}$, we write 
$V_{\lambda}$ for the $E[\prod_{v \in S_p} \GL_n(F_\wv)]$-module denoted 
$W_\lambda$ in \cite[Definition 2.3]{ger}; it is the restriction to 
$\GL_n(F_\wv)$ of a tensor product of highest weight representations of 
$\GL_n(E)$. We write $\cV_\lambda \subset V_\lambda$ for the $\cO[\prod_{v \in 
S_p}\GL_n(\cO_{F_\wv})]$-submodule denoted $M_\lambda$ in \emph{loc. cit.}; it 
is an $\cO$-lattice. 

In this paper we will only consider algebraic modular forms with respect to open compact subgroups $U \subset G(\A_{F^+}^\infty)$ which decompose as a product $U = \prod_v U_v$, and such that for each $v \in S_p$, $U_v \subset \iota_\wv^{-1} \GL_n(\cO_{F_\wv})$. Given such a subgroup, together with a finite set $\Sigma$ of finite places of $F^+$ and a smooth $\cO[U_\Sigma]$-module $M$, finite as $\cO$-module, we define $S_\lambda(U, M)$ to be the set of functions $f : G(F^+) \backslash G(\A_{F^+}^\infty) \to \cV_\lambda \otimes_\cO M$ such that for each $u \in U$ and $g \in G(\A_{F^+}^\infty)$, $u \cdot f(gu) = f(g)$. (Here $U$ acts on $\cV_\lambda \otimes_\cO M$ via projection to $U_p \times U_\Sigma$.) If $\lambda = 0$, we drop it from the notation and simply write $S(U, M)$.

We recall the definition of some useful open compact subgroups and Hecke operators (see \cite[\S 2.3]{ger} for more details):
\begin{itemize}
	\item For any place $v$ of $F^+$ which splits $v = w w^c$ in $F$, the maximal compact subgroup $\GL_n(\cO_{F_w})$. If $v \not\in \Sigma \cup S_p$, $U_v = \iota_w^{-1} \GL_n(\cO_{F_w})$, and $1 \leq j
	\leq n$, then the unramified Hecke operator $T_w^j$ given by the double coset operator \[T_w^j = \left[ \iota_w^{-1}\left( \GL_n(\cO_{F_w})\begin{pmatrix}
	\varpi_w \mathrm{Id}_j & 0 \\ 0 & \mathrm{Id}_{n-j}
	\end{pmatrix} \GL_n(\cO_{F_w}) \right) \times U^v \right]\] acts on $S_\lambda(U, M)$.
		\item For any place $v$ of $F^+$ which splits $v = w w^c$ in $F$, the Iwahori subgroup $\Iw_w\subset\GL_n(\cO_{F_w})$ of matrices which are upper-triangular modulo $\varpi_w$.
	\item For any place $v \in S_p$ and $c \geq b \geq 0$ with $c \geq 1$, the 
	subgroup $\Iw_\wv(b, c) \subset 
	\GL_n(\cO_{F_\wv})$ of matrices which are upper-triangular $\varpi_\wv^c$ 
	and unipotent upper-triangular modulo $\varpi_\wv^b$. If $U_v = \iota_\wv^{-1} \Iw_\wv(b, 
	c)$ for each $v \in S_p$ and $1 \leq j \leq n$, then the re-normalised 
	Hecke operator $U_{\wv, \lambda}^j$ of \cite[Definition 2.8]{ger} acts on 
	$S_\lambda(U, M)$. (This Hecke operator depends on our choice of 
	uniformizer $\varpi_\wv$. However, the ordinary part of $S_\lambda(U, M)$, defined below using these operators, is independent of choices.)
	\item For any place $v$ of $F^+$ which splits $v = w w^c$ in $F$, the principal congruence subgroup $K_\wv(1) = \ker(  \GL_n(\cO_{F_\wv}) \to \GL_n(k(\wv)))$. 
\end{itemize}
When $U_v = \iota_\wv^{-1} \Iw_\wv(b, c)$ for each $v \in S_p$, there is a canonical direct sum decomposition $S_\lambda(U, M) = S_\lambda^{ord}(U, M) \oplus S_\lambda^{n-ord}(U, M)$ with the property that $S_\lambda^{ord}(U, M)$ is the largest submodule of $S_\lambda(U, M)$ where each operator $U_{\wv, \lambda}^j$ ($v \in S_p$, $j = 1, \dots, n$) acts invertibly (\cite[Definition 2.13]{ger}).

We recall some basic results about the spaces $S_\lambda(U, M)$. We say that $U$ is sufficiently small if for $g \in G(\A_{F^+}^\infty)$, the group $G(F^+) \cap g U g^{-1}$ is trivial. We have the following simple lemma (cf. \cite[p. 1351]{ger}):
\begin{lemma}\label{lem_sufficiently_small}
	Suppose that $U$ is sufficiently small and that $M$ is $\cO$-flat. Then for 
	any $c \geq 1$, the natural map $S_\lambda(U, M) \otimes_\cO \cO/\varpi^c 
	\to S_\lambda(U, M / (\varpi^c))$ is an isomorphism.
\end{lemma}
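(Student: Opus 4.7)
The plan is to use the explicit description of $S_\lambda(U,M)$ as a direct sum indexed by double cosets, and to observe that the sufficient smallness assumption removes the only subtlety (namely, the appearance of stabilizer-invariants in this direct sum). Since $G(F^+_\infty)$ is compact, a standard finiteness argument shows that the double coset space $G(F^+) \backslash G(\A_{F^+}^\infty) / U$ is finite; I would fix coset representatives $g_1,\dots,g_r$.

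Next I would establish the identification
\[ S_\lambda(U,M) \;\cong\; \bigoplus_{i=1}^{r}\bigl(\cV_\lambda \otimes_\cO M\bigr)^{\Gamma_i}, \qquad \Gamma_i := U \cap g_i^{-1}G(F^+)g_i, \]
sending $f \mapsto (f(g_i))_i$. The map is well-defined because if $u \in \Gamma_i$, say $u = g_i^{-1}\gamma g_i$ with $\gamma\in G(F^+)$, then the left $G(F^+)$-invariance and the $U$-equivariance force $u \cdot f(g_i) = f(g_i)$. It is injective because every element of $G(\A_{F^+}^\infty)$ has the form $\gamma g_i u$ and $f$ is then determined on that element by $f(g_i)$. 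It is surjective because, given $\Gamma_i$-fixed vectors $v_i$, one checks that $f(\gamma g_i u) := u^{-1}\cdot v_i$ is a well-defined element of $S_\lambda(U,M)$, the potential ambiguity being precisely controlled by $\Gamma_i$.

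Now I would invoke the hypothesis that $U$ is sufficiently small: by definition $g_i U g_i^{-1} \cap G(F^+) = 1$, hence $\Gamma_i = \{1\}$ for every $i$, and the identification above simplifies to a functorial isomorphism $S_\lambda(U,M) \cong \bigoplus_{i=1}^r \cV_\lambda \otimes_\cO M$, natural in $M$. Applying this to both $M$ and $M/(\varpi^c)$ reduces the lemma to the obvious isomorphism
\[ \bigl(\cV_\lambda \otimes_\cO M\bigr)\otimes_\cO \cO/(\varpi^c) \;\cong\; \cV_\lambda \otimes_\cO \bigl(M/(\varpi^c)\bigr), \]
which holds because $\cV_\lambda$ is a finite free $\cO$-module (so tensoring with $\cV_\lambda$ commutes with the base change $-\otimes_\cO \cO/(\varpi^c)$). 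There is no real obstacle: the only point requiring a brief check is that the recipe $f \mapsto (f(g_i))_i$ really sets up the direct sum decomposition with the stabilizer-invariants as described, and this is pure bookkeeping once the $U$-equivariance convention is fixed. (The flatness hypothesis on $M$ is not strictly needed for this particular statement once the decomposition is in hand, though it is the natural setting in which the lemma is used.)
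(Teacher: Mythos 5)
Your proof is correct and follows essentially the same (standard) route as the cited reference of Geraghty: decompose $S_\lambda(U,M)$ via a finite set of double-coset representatives into a direct sum of stabilizer-invariants, note that sufficient smallness kills the stabilizers, and then observe the resulting direct sum commutes with $-\otimes_\cO \cO/(\varpi^c)$. Your remark that the $\cO$-flatness of $M$ is superfluous here is also right -- once the stabilizers are trivial the identification holds for arbitrary finite $\cO$-modules -- though flatness is the setting in which the lemma is actually applied.
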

After fixing an isomorphism $\iota : \overline{\Q}_p \to \C$, we can describe the spaces $S_\lambda(U, M)$ in classical terms (\cite[Lemma 2.5]{ger}):
\begin{lemma}\label{lem_algebraic_automorphic_forms_are_classical}
	Let $\iota : \overline{\Q}_p \to \C$ be an isomorphism. Then there is an isomorphism
	\[ S_\lambda(U, M) \otimes_{\cO, \iota} \C \cong \oplus_\sigma m(\sigma) 
	\Hom_U((M \otimes_{\cO, \iota} \C)^\vee, \sigma^\infty)
	\]
	respecting the action of Hecke operators at finite places away from $\Sigma \cup S_p$, where the sum runs over automorphic representations $\sigma$ of 
	$G(\A_{F^+})$ such that for each embedding $\tau : F \to \C$ inducing a 
	place $v$ of $F^+$, $\sigma_v$ is the restriction to $G(F^+_v)$ of the dual 
	of the irreducible algebraic representation of $\GL_n(\C)$ of highest 
	weight $\lambda_{\iota^{-1} \tau}$. 
\end{lemma}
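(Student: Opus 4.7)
The plan is to reduce the statement to the compact version of Matsushima's formula, via an explicit identification of $S_\lambda(U,M) \otimes_{\cO, \iota} \C$ with a finite-dimensional space of smooth functions on $G(F^+) \backslash G(\A_{F^+})$ transforming appropriately under $U$ on the finite part and under $G(F^+_\infty)$ on the infinite part. Since the lemma is essentially a restatement of \cite[Lemma 2.5]{ger}, my goal is to outline the argument rather than give a fresh proof.

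The first step is to upgrade $\cV_\lambda \otimes_{\cO, \iota} \C$ from a representation of $\prod_{v \in S_p}\GL_n(F_\wv)$ to a representation of $G(F^+_\infty)$. For this, use $\iota$ to bijectively match the embeddings $\tau \colon F \hookrightarrow \overline{\Q}_p$ inducing a place of $\widetilde{S}_p$ with the embeddings $\iota\tau \colon F \hookrightarrow \C$; together with our fixed choices of places $\ww$ above each $w \in S_L$, this pairs $\widetilde{I}_p$ with the set of archimedean embeddings of $F$, and so identifies $\cV_\lambda \otimes_{\cO, \iota} \C$ with the restriction to $G(F^+_\infty) \subset \prod_{v \mid \infty} \GL_n(\C)$ of the algebraic representation of highest weight $(\lambda_\tau)$. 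Then I would define, for $f \in S_\lambda(U,M)$, an extended function $\tilde f \colon G(F^+)\backslash G(\A_{F^+}) \to (V_\lambda \otimes M) \otimes_{\cO, \iota} \C$ by $\tilde f(g^\infty, g_\infty) = g_\infty^{-1} f(g^\infty)$, using that $G(F^+_\infty)$ acts on the target. The invariance property defining $S_\lambda(U,M)$ translates exactly into $\tilde f$ being left $G(F^+)$-invariant, right $U$-invariant on the finite part, and right $G(F^+_\infty)$-equivariant via the $(V_\lambda \otimes M)^\vee$-action on the archimedean side.

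The second step is to invoke the discrete decomposition of $L^2(G(F^+)\backslash G(\A_{F^+}))$ into automorphic representations; this holds cleanly because $G(F^+_\infty)$ is compact and $G(F^+)$ is discrete, so $G(F^+) \backslash G(\A_{F^+}^\infty) / U$ is finite. Decomposing and using Frobenius reciprocity on the archimedean factor, one obtains
\begin{equation*}
S_\lambda(U,M) \otimes_{\cO, \iota} \C \cong \bigoplus_\sigma m(\sigma) \, (\sigma^\infty)^U \otimes \Hom_{G(F^+_\infty)}\!\left((V_\lambda \otimes M)^\vee \otimes_{\cO,\iota} \C,\, \sigma_\infty\right).
\end{equation*}
The archimedean $\Hom$ space is non-zero (and one-dimensional by Schur) precisely when $\sigma_\infty$ is the dual of the algebraic representation in the statement, and in that case the surviving $(\sigma^\infty)^U \otimes (M^\vee)$ piece gives exactly $\Hom_U((M \otimes_{\cO,\iota}\C)^\vee, \sigma^\infty)$. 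The Hecke-equivariance away from $\Sigma \cup S_p$ is visible on both sides from the same convolution action at unramified places, so it is automatic.

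The main obstacle is the bookkeeping of dual conventions and the matching of the $p$-adic weight $\lambda_\tau$ with the archimedean highest weight via $\iota$: once one commits to the conventions of \cite[Definition 2.3]{ger} and the archimedean $\sigma_v$ appearing as the restriction of the dual, both sides have to be taken consistently. No serious analytic input is required beyond the compactness of $G(F^+_\infty)$ and the finiteness of the relevant double coset space.
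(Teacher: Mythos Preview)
Your proposal is correct and essentially matches the paper's approach: the paper simply cites \cite[Lemma 2.5]{ger} without further argument, and your outline is precisely the standard proof behind that citation (extend to $G(\A_{F^+})$ via the archimedean action, then decompose using compactness of $G(F^+_\infty)$ and Frobenius reciprocity).
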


\section*{Part I: Analytic continuation of functorial liftings}
The first part of this paper (\S\S \ref{sec_rigid_geometry} -- \ref{sec_ping_pong}) is devoted to the proof of Theorem \ref{thm:intro_propagation} from the introduction, which shows that automorphy of the $n$\textsuperscript{th} symmetric power for one level $1$ cuspidal Hecke eigenform implies automorphy of the $n$\textsuperscript{th} symmetric power for all level $1$ cuspidal Hecke eigenforms. 

As described in the introduction, the proof has two main ingredients. The first, which is the main result of \S\ref{sec_rigid_geometry}, is that automorphy of symmetric powers can be propagated along irreducible components of the Coleman--Mazur eigencurve. The second ingredient, which is explained in \S\ref{sec_ping_pong}, uses the main result of \cite{Buz05} and has already been sketched in the introduction.

Here we make some further introductory remarks on \S\ref{sec_rigid_geometry}. By making a suitable (in particular, soluble) base change to a CM field, we translate ourselves to the setting of definite unitary groups. We start from a classical point $z_0$ of an eigenvariety for a rank $2$ unitary group, $\cE_2$, such that the $n$\textsuperscript{th} symmetric power of the associated Galois representation is known to be automorphic. We use Emerton's construction of eigenvarieties (involving his locally analytic Jacquet functor), and our point of view on eigenvarieties and Galois representations is particularly influenced by those of \cite{bellaiche_chenevier_pseudobook} and \cite{Bre17}. Like the authors of \cite{Bre17}, we rely in an essential way on the results of \cite{Ked14}, which make it possible to spread out pointwise triangulations to global triangulations. We consider the diagram:

\[ \xymatrix{ \cE_2 \ar[r]^-{i_2}& \cX_{ps,2} \times \cT_2\ar[d]^{\Sym^n} \\ \cE_{n+1}\ar[r]^-{i_{n+1}} & \cX_{ps,n+1} \times \cT_{n+1}}\] Here, $\cE_n$ is an eigenvariety for a rank $n+1$ unitary group, $\cX_{ps,d}$ is a certain rigid space of $d$-dimensional $p$-adic Galois pseudocharacters and $\cT_d$ is a rigid space parameterising characters of a $p$-adic torus. Our eigenvarieties come equipped with maps to these character varieties as part of their construction; combining this with the existence of a family of Galois pseudocharacters over the eigenvariety interpolating the global Langlands correspondence at classical points gives the closed immersions $i_d$ appearing in the diagram. The map $\Sym^n$ corresponds to taking the $n^\text{th}$ symmetric power of the $2$-dimensional pseudocharacter. 

Our task is to show that if $\mathcal{C}$ is an irreducible component of $\cE_2$ containing $z_0$, then $\Sym^n(i_2(C))$ is contained in the image of $i_n$. A classicality result (Lemma \ref{lem_generic_implies_classical}) will then be used to show that for another classical point $z_1$ of $\mathcal{C}$, its symmetric power $\Sym^n(i_2(z_1))$ is actually the image of a \emph{classical} point of $\cE_{n+1}$. 

To show that $\Sym^n(i_2(C))$ is indeed contained in the image of $i_n$, we combine a simple lemma in rigid geometry (Lemma \ref{lem_inclusion_of_irreducible_components}) with information coming from the local geometry of a certain natural locally closed neighbourhood of $\Sym^n(i_2(z_0))$ in $\cX_{ps,n+1}\times\cT_{n+1}$ which contains open subspaces of both $\cE_{n+1}$ and $\Sym^n(i_2(C))$. This subspace is essentially the trianguline variety, but since we work with spaces of pseudocharacters instead of representations we restrict to open neighbourhoods in which our pseudocharacters are absolutely irreducible and hence naturally lift to representations. Our results on the vanishing of adjoint Selmer groups \cite{newton2020adjoint} are used to compare $\cE_{n+1}$ and the trianguline variety. We proceed in a similar way to the proof of \cite[Corollary 7.6.11]{bellaiche_chenevier_pseudobook}, which shows that vanishing of an adjoint Selmer group implies that $i_{n+1}$ induces an isomorphism between completed local rings of the eigenvariety and the trianguline variety.

\section{Trianguline representations and eigenvarieties}\label{sec_rigid_geometry}

Throughout this section, we let $p$ be a prime and let $E \subset \overline{\Q}_p$ be a coefficient field. We write $\C_p$ for the completion of $\overline{\bQ}_p$. If $\cX$ is a quasi-separated 
$E$-rigid space we let $\cX(\Qpbar) = \bigcup_{E' \subset \Qpbar}\cX(E')$, 
where the union is 
over finite extensions of $E$. We can naturally view $\cX(\Qpbar)$ as  a subset 
of the set of closed points of the rigid space $\cX_{\Cp}$ (where base 
extension of a quasi-separated rigid space is as defined in \cite[\S 
9.3.6]{Bos84}, see also \cite[\S3.1]{Con99}).

\subsection{An `analytic continuation' lemma}\label{subsec_prototype_argument}

Suppose given a 
diagram of $E$-rigid spaces
\[ \xymatrix@1{ \ar[r]_\beta \cY & \cG & \cX \ar[l]^\alpha}, \]
where $\alpha$ is a closed immersion. We identify $\cX$ with a subspace of $\cG$. Let $x \in \cY$ be a point such that 
$\beta(x) \in \cX$. 
\begin{lemma}\label{lem_inclusion_of_irreducible_components}
	Suppose that $\beta^{-1}(\cX)$ contains an affinoid open neighbourhood of 
	 $x$. Then for each irreducible component $\cC$ of $\cY$ containing  $x$, 
	 we 
	have $\beta(\cC) \subset \cX$.
\end{lemma}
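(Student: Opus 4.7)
The strategy is to show that $\beta^{-1}(\cX)$ is a Zariski-closed subset of $\cY$ that contains a Zariski-dense subset of any irreducible component $\cC$ through $x$, and then invoke the fact that a Zariski-closed subset of an irreducible rigid space that contains a nonempty admissible open must be the whole space.

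First I would observe that since $\alpha : \cX \to \cG$ is a closed immersion, $\beta^{-1}(\cX) \subset \cY$ is a Zariski-closed analytic subset (it is cut out by the pullback of the ideal sheaf defining $\cX$ in $\cG$). Next, let $\cC$ be an irreducible component of $\cY$ passing through $x$, in the sense of Conrad's theory of irreducible components for rigid spaces. By hypothesis there is an affinoid open $U \subset \cY$ with $x \in U \subset \beta^{-1}(\cX)$. Then $U \cap \cC$ is a nonempty admissible open subset of $\cC$.

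The key input is that an irreducible rigid space has no proper Zariski-closed subset containing a nonempty admissible open; equivalently, every nonempty admissible open of an irreducible rigid space is Zariski-dense. This is standard (Conrad, \emph{Irreducible components of rigid spaces}). Applying this to the Zariski-closed subset $\beta^{-1}(\cX) \cap \cC$ of $\cC$, which contains the nonempty admissible open $U \cap \cC$, we conclude that $\beta^{-1}(\cX) \cap \cC = \cC$, i.e.\ $\cC \subset \beta^{-1}(\cX)$, which is exactly the desired conclusion $\beta(\cC) \subset \cX$.

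There is essentially no obstacle here beyond citing the right foundational fact about irreducible components; the only thing to be careful about is interpreting ``irreducible component'' in the Conrad/Bosch sense (so that the Zariski-density of nonempty admissible opens holds), and checking that $\beta^{-1}(\cX)$ really is a Zariski-closed analytic subset of $\cY$, which is immediate from $\alpha$ being a closed immersion.
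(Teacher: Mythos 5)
Your proof is correct and follows essentially the same route as the paper's: observe that $\beta^{-1}(\cX)\cap\cC$ is a Zariski-closed subset of $\cC$ containing a nonempty affinoid open, then invoke the fact (the paper cites \cite[Lemma 2.2.3]{Con99}) that this forces $\beta^{-1}(\cX)\cap\cC=\cC$.
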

\begin{proof}
	We observe that $\beta^{-1}(\cX) \cap \cC$ is a Zariski closed subset of 
	$\cC$ which contains a non-empty affinoid open subset. This forces 
	$\beta^{-1}(\cX) \cap \cC = \cC$ (apply \cite[Lemma 2.2.3]{Con99}),  hence 
	$\beta(\cC) \subset \cX$.
\end{proof}

\subsection{A Galois deformation space}

Let $F$, $S$, $p$ be as in our standard assumptions (\S \ref{sec_definite_unitary_groups}). We assume that $E$ contains the image of every 
embedding $\tau: F \to \Qpbar$.

\subsubsection{Trianguline deformations -- infinitesimal geometry}

This section has been greatly influenced by works of Bella\"iche and Chenevier 
\cite{bellaiche_chenevier_pseudobook, Che11}. We use the formalism of families 
of $(\varphi, \Gamma_{F_\wv})$-modules, as in \cite{Ked14}. Thus if $v \in S_p$ and $X$ is an 
$E$-rigid space, one can define the Robba ring $\cR_{X, F_\wv}$; if $\cV$ is a 
family of representations of $G_{F_\wv}$ over $X$, then the functor 
$D^\dagger_{rig}$ of \cite[Theorem 2.2.17]{Ked14} associates to $\cV$ the 
family $D^\dagger_{rig}(\cV)$ of $(\varphi, \Gamma_{F_\wv})$-modules over $X$ 
which is, locally on $X$, finite free over $\cR_{X, F_\wv}$. We refer to 
\cite[\S 2]{Hel16} for the definitions of these objects, as well as more 
detailed references. 
If $X = \operatorname{Sp} A$, 
where $A$ is an $E$-affinoid algebra, we write $\cR_{X, F_\wv} = \cR_{A, 
F_\wv}$. If $\delta:F_\wv^\times \to A^\times$ is a continuous character, we 
have a rank one $(\varphi, \Gamma_{F_\wv})$-module $\cR_{A, 
F_\wv}(\delta_v)$ defined by \cite[Construction 6.2.4]{Ked14}. We will also have cause to mention the $(\varphi,\Gamma)$-cohomology groups $H^*_{\varphi,\gamma_{F_{\wv}}}(-)$ which are defined in \cite[\S2.3]{Ked14}.

Let $v \in S_p$, and let $\rho_v : G_{F_\wv} \to \GL_n(E)$ be a continuous 
representation. If $\delta_v = (\delta_{v, 1}, \dots, \delta_{v, n}) : 
(F_\wv^\times)^n \to E^\times$ is a continuous character, we call a 
triangulation of $\rho_v$ of parameter $\delta_v$ an increasing filtration of 
$D^\dagger_{\text{rig}}(\rho_v)$ by direct summand $(\varphi, 
\Gamma_{F_\wv})$-stable $\cR_{E, F_\wv}$-submodules such that the successive 
graded pieces are isomorphic to $\cR_{E,F_\wv}(\delta_{v, 1}), \dots, \cR_{E, 
F_\wv}(\delta_{v, n})$. We say that $\rho_v$ is trianguline of parameter 
$\delta_v$ if it admits a triangulation of parameter $\delta_v$. If $\delta_v$ 
satisfies $\delta_{v,i}(\varpi_\wv) \in \cO^\times$ for each $i$, then we say 
that $\delta_v$ is an ordinary parameter. Equivalently, $\delta_v$ is ordinary 
if $\delta_{v,i}\circ \Art_{F_\wv}^{-1}$ extends to a continuous character of 
$G_{F_\wv}$ for each $i$. For an ordinary parameter 
$\delta_v$, 
$\rho_v$ is trianguline of parameter $\delta_v$ if and only if $\rho_v$ has a 
filtration with successive graded pieces isomorphic to $\delta_{v,1}\circ 
\Art_{F_\wv}^{-1}, \dots, \delta_{v,n}\circ \Art_{F_\wv}^{-1}$. 

We say that the character $\delta_v$ is regular if for all $1 \leq i < j \leq 
n$, we have $\delta_{v, i} / \delta_{v, j} \neq x^{a_v}$ for any $a_v = (a_{v, 
\tau})_\tau \in \bbZ_{\geq 0}^{\Hom_{\bbQ_p}(F_\wv, E)}$, where by definition 
$x^{a_v}(y) = \prod_\tau \tau(y)^{a_{v, \tau}}$. Note that the characters $x^{a_v}$ satisfy $|x^{a_v}(p)|_p = p^{-\sum_{\tau}a_{v,\tau}}$, so there is an affinoid cover of the rigid space $\Hom(F_\wv^\times, \bG_m)$ with each open containing only finitely many $x^{a_v}$. 

We define $\cT_v = 
\Hom( (F_\wv^\times)^n, \bG_m)$, a smooth rigid space over $E$, 
and write $\cT_v^{reg} \subset \cT_v$ for the Zariski open subspace of regular 
characters (Zariski open by the finiteness observation in the preceding paragraph). We define $\cW_v = \Hom((\cO_{F_\wv}^\times)^n, \bG_m)$ and write 
$r_v 
: \cT_v \to \cW_v$ for the natural restriction map. 
\begin{lemma}\label{lem_uniqueness_of_triangulation_for_regular_parameter}
	Let $\rho_v : G_{F_\wv} \to \GL_n(E)$ be a continuous representation. Then 
	for any $\delta_v \in \cT_v^{reg}(E)$, $\rho_v$ admits at most one 
	triangulation of parameter $\delta_v$. If such a 
	triangulation exists, then $\rho_v$ is strictly trianguline of parameter $\delta_v$ in the sense of 
	\cite[Definition 6.3.1]{Ked14}.
\end{lemma}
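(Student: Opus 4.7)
The plan is to proceed by induction on $n$, using as the essential ingredient the standard computation of $\Hom$-spaces between rank-one $(\varphi, \Gamma_{F_\wv})$-modules: for continuous characters $\delta, \delta' : F_\wv^\times \to E^\times$, the space $\Hom_{(\varphi, \Gamma_{F_\wv})}(\cR_{E, F_\wv}(\delta), \cR_{E, F_\wv}(\delta'))$ is non-zero precisely when $\delta / \delta' = x^{a}$ for some tuple $a \in \bbZ_{\geq 0}^{\Hom_{\bbQ_p}(F_\wv, E)}$; see \cite[Proposition 6.2.8]{Ked14}. The regularity hypothesis on $\delta_v$ is arranged exactly so that, by this computation, $\Hom(\cR_{E, F_\wv}(\delta_{v, i}), \cR_{E, F_\wv}(\delta_{v, j})) = 0$ for all $1 \leq i < j \leq n$.

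Suppose given two triangulations $(D_i)_{i=0}^n$ and $(D_i')_{i=0}^n$ of $D := D^\dagger_{\mathrm{rig}}(\rho_v)$ of parameter $\delta_v$. First I would show that $D_1 = D_1'$. Consider the composition
\[
\phi : D_1 \hookrightarrow D \twoheadrightarrow D / D_1'.
\]
If $\phi \neq 0$, then using the induced filtration on $D / D_1'$ with graded pieces $\cR_{E, F_\wv}(\delta_{v, 2}), \ldots, \cR_{E, F_\wv}(\delta_{v, n})$, there is a least index $j \geq 2$ for which $\phi$ factors through $D_j'/D_1'$; projecting to the top graded piece yields a non-zero map $\cR_{E, F_\wv}(\delta_{v, 1}) \to \cR_{E, F_\wv}(\delta_{v, j})$, contradicting regularity. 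Hence $\phi = 0$, so $D_1 \subseteq D_1'$. Since both $D_1$ and $D_1'$ are rank-one direct summands (equivalently, saturated submodules) of $D$, the inclusion $D_1 \subseteq D_1'$ forces $D_1$ to be a saturated rank-one submodule of the rank-one free module $D_1'$; as finitely generated torsion-free modules over the Robba ring are free, this gives $D_1 = D_1'$.

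Next I would pass to the quotient $D/D_1 = D/D_1'$, which inherits two triangulations of common parameter $(\delta_{v, 2}, \ldots, \delta_{v, n})$. This parameter is still regular, because the defining inequalities for it form a subset of those for $\delta_v$; by the inductive hypothesis the two induced triangulations of $D/D_1$ coincide, and pulling back gives $D_i = D_i'$ for every $i$, proving uniqueness. The final assertion about strict triangulinity in the sense of \cite[Definition 6.3.1]{Ked14} is then a direct consequence: uniqueness of the triangulation implies the additional canonicity of the graded pieces required in \emph{loc.~cit.}

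The main obstacle is purely bookkeeping: one must match the direction of the $\Hom$-vanishing with the sign convention in the definition of regularity so that the step ``$\phi = 0$'' really is forced by the hypothesis. Once that compatibility is verified, the induction on $n$ is essentially formal.
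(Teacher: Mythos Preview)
Your proposal is correct and is exactly the standard argument underlying the paper's one-line proof, which simply cites \cite[Proposition 6.2.8]{Ked14} for the cohomology of rank-one $(\varphi,\Gamma_{F_\wv})$-modules. The induction on $n$ via the vanishing of $\Hom(\cR_{E,F_\wv}(\delta_{v,1}),\cR_{E,F_\wv}(\delta_{v,j}))$ for $j>1$ is precisely how one unpacks that citation, and your caution about matching the sign convention is well placed: with the convention of \cite{Ked14} one has $H^0(\cR_{E,F_\wv}(\delta))\neq 0$ exactly when $\delta=x^{-a}$ for some $a\in\bbZ_{\ge 0}^{\Hom_{\bbQ_p}(F_\wv,E)}$, so $\Hom(\cR(\delta_{v,1}),\cR(\delta_{v,j}))\neq 0$ forces $\delta_{v,1}/\delta_{v,j}=x^{a}$ with $a\ge 0$, which is exactly what regularity forbids for $1<j$.
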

\begin{proof}
Suppose $\rho_v$ admits a triangulation of parameter $\delta_v$, so $D^\dagger_{\text{rig}}(\rho_v)$ is equipped with an increasing filtration $\Fil_\bullet$. Following \cite[Definition 6.3.1]{Ked14}, we need to show that for each $0 \le i \le n$ the cohomology group $H^0_{\varphi,\gamma_{F_{\wv}}}\left(\left(D^\dagger_{\text{rig}}(\rho_v)/\Fil_i\right) (\delta_{v,i+1})^{-1}\right)$ is one-dimensional. It follows from \cite[Proposition 6.2.8]{Ked14} that $H^0_{\varphi,\gamma_{F_{\wv}}}\left(\gr_j\left(D^\dagger_{\text{rig}}(\rho_v)\right) (\delta_{v,i})^{-1}\right)$ vanishes when $i < j$ and is one-dimensional when $i = j$. The vanishing holds precisely because $\delta_v$ is regular. A d\'{e}vissage completes the proof. 
\end{proof}
\begin{defn}
	If $\delta: F_\wv^\times \to E^\times$ is a continuous character (hence 
	locally $\Qp$-analytic) we let the tuple $(wt_\tau(\delta))_{\tau \in 
		\Hom_{\bbQ_p}(F_\wv,E)}$ be such that the derivative of $\delta$ is the map 
	\begin{align*}F_\wv & \to E \\ x &\mapsto 
	\sum_{\tau \in 
		\Hom_{\bbQ_p}(F_\wv,E)} -wt_\tau(\delta)\tau(x). \end{align*} 
\end{defn}
We can extend this discussion to Artinian local rings. Let $\cC'_E$ denote the 
category of Artinian local $E$-algebras with residue field $E$. If $A \in 
\cC'_E$, then $\cR_{A, F_\wv} = \cR_{E, F_\wv} \otimes_E A$. If $\rho_v : 
G_{F_\wv} \to \GL_n(A)$ is a continuous representation, then 
$D^\dagger_{\text{rig}}(\rho_v)$ is a free $\cR_{A, F_\wv}$-module. If $\delta_v 
\in \cT_v(A)$, we call a triangulation of $\rho_v$ of parameter $\delta_v$ an 
increasing filtration of $D^\dagger_{\text{rig}}(\rho_v)$ by direct summand 
$(\varphi, \Gamma_{F_\wv})$-stable $\cR_{A, F_\wv}$-submodules such that the 
successive graded pieces are isomorphic to $\cR_{A,F_\wv}(\delta_{v, 1}), 
\dots, \cR_{A, F_\wv}(\delta_{v, n})$.

If $\rho_v : G_{F_\wv} \to \GL_n(E)$ is a continuous representation and $\cF_v$ 
is a triangulation of parameter $\delta_v \in \cT_v(E)$, then we write 
$\cD_{\rho_v, \cF_v, \delta_v} : \cC'_E \to \mathrm{Sets}$ for the functor which 
associates to any $A$ the set of equivalence classes of triples $(\rho_v', 
\cF_v', \delta_v')$, where:
\begin{itemize}
	\item $\rho_v' : G_{F_\wv} \to \GL_n(A)$ is a lifting of $\rho_v$, continuous with respect to the $p$-adic topology on $A$.
	\item $\delta_v' \in \cT_v(A)$ is a lifting of $\delta_v$.
	\item $\cF_v'$ is a triangulation of $\rho_v'$ of parameter $\delta_v'$ 
	which lifts $\cF_v$  (note that there is a canonical isomorphism $D^\dagger_{rig}(\rho_v') \otimes_{A} E \cong D^\dagger_{rig}(\rho_v)$, as $D^\dagger_{rig}$ commutes with base change).
\end{itemize}
Triples $(\rho_v', \cF_v', \delta_v')$, $(\rho_v'', \cF_v'', \delta_v'')$ are 
said to be equivalent if there exists $g \in 1 + M_n(\ffrm_A)$ which conjugates 
$\rho_v'$ to $\rho_v''$ and takes $\cF_v'$ to $\cF_v''$. 

We write $\cD_{\rho_v}$ for the functor of equivalence classes of lifts 
$\rho'_v : G_{F_\wv} \to \GL_n(A)$. Thus forgetting the triangulation 
determines a natural transformation $\cD_{\rho_v, \cF_v, \delta_v} \to 
\cD_{\rho_v}$.
\begin{proposition}\label{prop_representability_of_trianguline_def_functor}
	Suppose that $\delta_v \in \cT_v^{reg}(E)$. Then the natural transformation 
	$\cD_{\rho_v, \cF_v, \delta_v} \to \cD_{\rho_v}$ is relatively 
	representable, and injective on $A$-points for every $A \in \cC'_E$. If 
	$\rho_v$ is absolutely irreducible, then both functors are 
	pro-representable, in which case there is a surjective morphism $R_{\rho_v} 
	\to R_{\rho_v, \cF_v, \delta_v}$ of (pro-)representing objects.
\end{proposition}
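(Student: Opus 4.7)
The plan is to prove a rigidity strengthening of Lemma \ref{lem_uniqueness_of_triangulation_for_regular_parameter}: for any $A \in \cC'_E$ and any $\rho_v' \in \cD_{\rho_v}(A)$, the fiber of $\cD_{\rho_v, \cF_v, \delta_v}(A) \to \cD_{\rho_v}(A)$ over $\rho_v'$ has at most one element. The argument proceeds by induction on the $E$-length of $A$, with base case Lemma \ref{lem_uniqueness_of_triangulation_for_regular_parameter}. For the inductive step, I would pick a small extension $A \twoheadrightarrow A_0$ with square-zero kernel $I$ annihilated by $\ffrm_A$; by induction, two candidate triples lifting $(\cF_v, \delta_v)$ agree modulo $I$ after a suitable conjugation by an element of $1 + M_n(\ffrm_A)$, and their remaining ambiguity is classified by elements of the $H^0$ of Herr complexes for the rank one $(\varphi, \Gamma_{F_\wv})$-modules $\cR_{E, F_\wv}(\delta_{v,i} \delta_{v,j}^{-1}) \otimes_E I$ ($i<j$). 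The regularity hypothesis on $\delta_v$, combined with the explicit cohomology computation of \cite[Proposition 6.2.8]{Ked14}, forces these classes to vanish and completes the induction.

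Rigidity immediately implies injectivity on $A$-points. For relative representability, I would check that the existence of a triangulated lift along a small extension $\tilde A \twoheadrightarrow A$ is governed by an $\Ext^1$-type cohomological obstruction, again computed via Herr complexes and \emph{loc. cit.}, which is functorial in $A$. Combined with the uniqueness of such a lift, this yields that $\cD_{\rho_v, \cF_v, \delta_v}$ is a relatively representable (indeed, closed) subfunctor of $\cD_{\rho_v}$; a more efficient route is to cite the general machinery of \cite[Chapter 2]{bellaiche_chenevier_pseudobook}. When $\rho_v$ is absolutely irreducible, $\cD_{\rho_v}$ is pro-represented by $R_{\rho_v} \in \cC_E$ via standard Schlessinger theory, so the closed subfunctor $\cD_{\rho_v, \cF_v, \delta_v}$ is pro-represented by a quotient $R_{\rho_v, \cF_v, \delta_v}$ of $R_{\rho_v}$, yielding the surjective morphism. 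The main obstacle is the rigidity statement, which depends crucially on the regularity hypothesis on $\delta_v$ to make the relevant $H^0$ groups vanish; everything else is formal representability theory.
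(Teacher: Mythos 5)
Your strategy coincides with the paper's, which simply cites \cite[Propositions 2.3.6 and 2.3.9]{bellaiche_chenevier_pseudobook}; you have reconstructed that argument rather than citing it, but the route is the same: rigidity from regularity via cohomology of rank-one $(\varphi,\Gamma_{F_\wv})$-modules, then relative representability by formal deformation-theoretic considerations.

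However, the indexing in the step you call the main obstacle is reversed, and as written the crucial vanishing does not follow from regularity. The infinitesimal ambiguity in lifting the flag along a square-zero extension with kernel $I$ lies in
\[\bigoplus_{i<j}\Hom\bigl(\cR_{E,F_\wv}(\delta_{v,i}),\cR_{E,F_\wv}(\delta_{v,j})\bigr)\otimes_E I \;=\; \bigoplus_{i<j}H^0\bigl(\cR_{E,F_\wv}(\delta_{v,j}\delta_{v,i}^{-1})\bigr)\otimes_E I\]
(the negative-root directions for the flag), not in $H^0(\cR_{E,F_\wv}(\delta_{v,i}\delta_{v,j}^{-1}))$ for $i<j$ as you wrote. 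With the conventions the paper inherits from Lemma \ref{lem_uniqueness_of_triangulation_for_regular_parameter} and \cite[Proposition 6.2.8, Example 6.2.6(3)]{Ked14} --- so that $H^0(\cR_{E,F_\wv}(\delta))\neq 0$ exactly when $\delta = x^{-a}$ with $a\in\bbZ_{\geq 0}^{\Hom_{\bbQ_p}(F_\wv,E)}$ --- regularity of $\delta_v$, namely $\delta_{v,i}/\delta_{v,j}\neq x^{a}$ for all $a\geq 0$ and $i<j$, is exactly the vanishing of the groups $H^0(\cR_{E,F_\wv}(\delta_{v,j}\delta_{v,i}^{-1}))$. It does \emph{not} force the $H^0$ you wrote to vanish: for instance $\delta_{v,i}/\delta_{v,j}=x^{-1}$ is permitted for a regular $\delta_v$, and then $H^0(\cR_{E,F_\wv}(\delta_{v,i}\delta_{v,j}^{-1}))\neq 0$. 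Swapping $i$ and $j$ in your formula repairs the induction; the remainder of the argument then matches the cited propositions of Bella\"iche--Chenevier and is sound.
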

\begin{proof}
	If $F_\wv = \bbQ_p$, this is contained in \cite[Proposition 
	2.3.6]{bellaiche_chenevier_pseudobook} and \cite[Proposition 
	2.3.9]{bellaiche_chenevier_pseudobook}. The general case is given by \cite[Lemma 2.35, Proposition 2.37, Corollary 2.38]{nakamura-deformations} (noting that Nakamura works with Berger's category of $B$-pairs, which is equivalent to the category of $(\varphi,\Gamma)$-modules over the Robba ring).
\end{proof}
A consequence of Proposition 
\ref{prop_representability_of_trianguline_def_functor} is that when $\delta_v$ 
is regular, $\cD_{\rho_v, \cF_v, \delta_v}(E[\epsilon])$ can be identified with 
a subspace of $\cD_{\rho_v}(E[\epsilon]) = H^1(F_{\wv}, \ad \rho_v)$. Since 
$\cF_v$ is moreover uniquely determined by $\delta_v$ (Lemma 
\ref{lem_uniqueness_of_triangulation_for_regular_parameter}), this subspace 
depends only on $\delta_v$, when it is defined. We write $H^1_{tri, 
\delta_v}(F_{\wv}, \ad \rho_v)$ for this subspace. We observe that there is a natural transformation $\cD_{\rho_v, \cF_v, \delta_v} \to \operatorname{Spf} \widehat{\cO}_{\cW_v, r_v(\delta_v)}$, which sends a triple $(\rho_v', 
\cF_v', \delta_v')$ to the character $r_v(\delta_v')$. Evaluating on $E[\epsilon]$-points, we obtain an $E$-linear map
\[ H^1_{tri, 
	\delta_v}(F_{\wv}, \ad \rho_v) \to T_{r_v(\delta_v)}\cW_v, \]
where $T_{r_v(\delta_v)}\cW_v$ denotes the Zariski tangent space of $\cW_v$ at the point $r_v(\delta_v)$. This map appears in the statement of the following lemma:
\begin{lemma}\label{lem_weight_map_etale_for_non_critical_refinement}
	Let $\rho_v : G_{F_\wv} \to \GL_n(E)$ be a continuous representation. 
	Suppose that:
	\begin{enumerate}
		\item $\rho_v$ is de Rham.
		\item $\rho_v$ is trianguline of parameter $\delta_v \in 
		\cT_v^{reg}(E)$.
		\item\label{item_non_critical} For each $\tau \in \Hom_{\bbQ_p}(F_\wv, E)$, we have 
		\[ wt_\tau(\delta_{v, 1}) < wt_\tau(\delta_{v, 2}) < \dots < 
		wt_\tau(\delta_{v, n}). \] 
	\end{enumerate}
We note that the labelled weights $wt_\tau$ coincide 
with the 
labelled Hodge--Tate weights of $\rho_v$ (cf. \cite[Lemma 6.2.12]{Ked14}).

	Then the natural map 
	\[ \ker\left(H^1_{tri, \delta_v}(F_{\wv}, \ad \rho) \to 
	T_{r_v(\delta_v)}\cW_v 
	\right)\to H^1(F_\wv, \ad \rho) \]
	has image contained in 
	\[ H^1_g(F_\wv, \ad \rho) = \ker(H^1(F_\wv, \ad \rho) \to H^1(F_\wv, \ad \rho \otimes_{\bQ_p} B_{dR})). \]
\end{lemma}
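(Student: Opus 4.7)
\textbf{The plan} is to unwind the definition of the kernel and then invoke the standard theory of trianguline $(\varphi,\Gamma_{F_\wv})$-modules in the non-critical regime.

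First, I would interpret a class $c$ in the kernel concretely. By Proposition \ref{prop_representability_of_trianguline_def_functor}, $c$ is represented by a triple $(\rho'_v,\cF'_v,\delta'_v)\in\cD_{\rho_v,\cF_v,\delta_v}(E[\epsilon])$, and the vanishing of its image in $T_{r_v(\delta_v)}\cW_v$ means that $r_v(\delta'_v)$ is the trivial infinitesimal deformation of $r_v(\delta_v)$. Equivalently, for each $i$, $\delta'_{v,i}|_{\cO_{F_\wv}^\times}$ coincides with the base change of $\delta_{v,i}|_{\cO_{F_\wv}^\times}$ to $E[\epsilon]$, so $\delta'_{v,i}=\delta_{v,i}\cdot u_i$ for an unramified character $u_i:F_\wv^\times\to E[\epsilon]^\times$ with $u_i\equiv 1\bmod\epsilon$. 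Since a class of $H^1(F_\wv,\ad\rho_v)$ lies in $H^1_g$ if and only if the associated $E[\epsilon]$-valued lift of $\rho_v$ is de Rham, it suffices to prove that $\rho'_v$ is de Rham.

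Next, I would verify that each graded piece of the lifted triangulation $\cF'_v$ is de Rham. Because $\rho_v$ is de Rham and trianguline of regular parameter $\delta_v$, each graded piece $\cR_{E,F_\wv}(\delta_{v,i})$ of $\cF_v$ is itself de Rham: the de Rham filtration on $D^\dagger_{rig}(\rho_v)$ matches the (unique) $\delta_v$-triangulation under the non-critical weight hypothesis, by the computations of rank-one cohomology in \cite[\S 6.2]{Ked14}. In particular, each $\delta_{v,i}$ is locally algebraic. Since $u_i$ is unramified and congruent to $1$ modulo $\epsilon$, the character $\delta'_{v,i}$ is again locally algebraic with the same labelled Hodge-Tate-Sen weights as $\delta_{v,i}$, and $\cR_{E[\epsilon],F_\wv}(\delta'_{v,i})$ is de Rham over $E[\epsilon]$.

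The decisive step is to invoke the non-criticality hypothesis \eqref{item_non_critical} to propagate de Rham-ness from the graded pieces to the total module. I would use the following folklore result from the theory of trianguline representations (see e.g.\ \cite[\S 2.5]{bellaiche_chenevier_pseudobook}): if $0\to D_1\to D\to D_2\to 0$ is an extension of de Rham $(\varphi,\Gamma_{F_\wv})$-modules such that the $\tau$-Hodge-Tate weights of $D_2$ are strictly greater than those of $D_1$ for every embedding $\tau$, then $D$ is de Rham; at the level of cohomology, this reflects the identity $H^1(\cR(\delta))=H^1_g(\cR(\delta))$ for rank-one modules all of whose $\tau$-weights are strictly negative. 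Applying this inductively along the filtration $\cF'_v$, using the strict inequalities $wt_\tau(\delta_{v,1})<\dots<wt_\tau(\delta_{v,n})$, I conclude that $D^\dagger_{rig}(\rho'_v)$ is de Rham, and hence $\rho'_v$ is de Rham by Berger's theorem, so $c\in H^1_g(F_\wv,\ad\rho_v)$.

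The main obstacle is the inductive de Rham-ness statement over the Artinian base $E[\epsilon]$. The reduction to the $E$-coefficient case is essentially formal, since de Rham-ness for an $E[\epsilon]$-representation depends only on its underlying $\bQ_p$-structure and the $F_\wv$-dimension of $D_{dR}$; but it must be done carefully to extract the correct cohomological vanishing from the rank-one calculations.
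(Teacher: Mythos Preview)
Your proposal is correct and is precisely the argument the paper has in mind: the paper's own proof simply cites \cite[Proposition 2.3.4]{bellaiche_chenevier_pseudobook} and \cite[Proposition 2.6]{Hel16}, and the content of those results is exactly the inductive de Rham propagation along the triangulation that you spell out. One minor point: in your third paragraph, the graded pieces $\cR_{E,F_\wv}(\delta_{v,i})$ are de Rham simply because subquotients of a de Rham $(\varphi,\Gamma_{F_\wv})$-module are de Rham --- the non-critical hypothesis \eqref{item_non_critical} is needed only at the extension step, where you correctly invoke it.
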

\begin{proof}
	We must show that if $(\rho'_v, \cF_v', \delta'_v) \in \cD_{\rho_v, \cF_v, 
	\delta_v}(E[\epsilon])$ is an element in the kernel of the map to 
	$T_{r_v(\delta_v)} \cW_v$, then $\rho'_v$ is de Rham. When $F_\wv = \Q_p$, this follows from \cite[Proposition 
	2.3.4]{bellaiche_chenevier_pseudobook}; in general it follows from modifying their argument as in \cite[Proposition 2.6]{Hel16} (the coefficients in this latter result are assumed to be a field, whilst we need coefficients $E[\epsilon]$, but the same proof works with any Artin local $E$-algebra as coefficient ring). 
\end{proof}
We say that a triangulation of a representation $\rho_v : G_{F_\wv} \to 
\GL_n(\overline{\bQ}_p)$ of parameter $\delta_v$ is non-critical if for each 
$\tau \in \Hom_{\bbQ_p}(F_\wv, E)$, the labelled weights are an increasing 
sequence of integers:
\[ wt_\tau(\delta_{v, 1}) < wt_\tau(\delta_{v, 2}) < \dots < wt_\tau(\delta_{v, 
	n}). \]
In other words, if $\delta_v$ satisfies condition (\ref{item_non_critical}) of Lemma
\ref{lem_weight_map_etale_for_non_critical_refinement}. 

We now give a criterion for a de Rham representation to have a triangulation 
satisfying this condition. This generalizes \cite[Lemma 2.9]{Hel16}, which treats the crystalline case.
\begin{lemma}\label{lem_numerically_non_critical_implies_non_critical}
	Let $v \in S_p$, and let $\rho_v : G_{F_\wv} \to \GL_n(\overline{\bQ}_p)$ 
	be a de Rham representation satisfying the following conditions:
	\begin{enumerate}
		\item There exists an increasing filtration of the associated 
		Weil--Deligne representation $\mathrm{WD}(\rho_v)$ (by 
		sub-Weil--Deligne representations) with associated gradeds given by 
		characters $\chi_{v, 1}, \dots, \chi_{v, n} : W_{F_\wv} \to 
		\overline{\bQ}_p^\times$. 
		\item For each embedding $\tau : F_\wv \to E$, the $\tau$-Hodge--Tate weights of $\rho_v$ are distinct.
		\item For each embedding $\tau : F_\wv \to E$, let $k_{\tau, 1} < \dots 
		< k_{\tau, n}$ denote the strictly increasing sequence of 
		$\tau$-Hodge--Tate weights of $\rho_v$. Then we have for all $\tau \in 
		\Hom_{\bbQ_p}(F_\wv, E)$:	\[   v_p( \chi_{v, 1}(p) ) < k_{\tau, 2} + 
		\sum_{\tau' \neq \tau} k_{\tau', 1}  \]
		and for all $i = 2, \dots, n-1$:
		\[  v_p( (\chi_{v, 1}  \dots \chi_{v, i})(p)  
		) < k_{\tau, i+1} + \sum_{\tau' \neq \tau} k_{\tau', i} + \sum_{\tau'} 
		\sum_{j=1}^{i-1} k_{\tau', j}. \]
	\end{enumerate}
	Then $\rho_v$ is trianguline of parameter $\delta_v$, where for each $i = 1, 
	\dots, n$, $\delta_{v, i} : F_\wv^\times \to \Qpbar^\times$ is defined by the 
	formula $\delta_{v, i}(x) = (\chi_{v, i} 
	\circ \Art_{F_\wv}(x))\prod_{\tau}\tau(x)^{-k_{\tau, 
			i}} $. In 
	particular, the pair $(\rho_v, \delta_v)$ satisfies condition (3) of 
	Lemma \ref{lem_weight_map_etale_for_non_critical_refinement}.
\end{lemma}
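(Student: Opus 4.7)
My plan adapts Helm's argument in the crystalline case \cite[Lemma 2.9]{Hel16} to the de Rham setting by working with the filtered $(\varphi, N, \Gal(K/F_\wv))$-module $D := D_{pst}(\rho_v)$, where $K/F_\wv$ is a finite Galois extension over which $\rho_v|_{G_K}$ becomes semistable. Berger's theorem provides a fully faithful functor from de Rham $(\varphi, \Gamma_{F_\wv})$-modules over $\cR_{E, F_\wv}$ to filtered $(\varphi, N, \Gal(K/F_\wv))$-modules; under this functor, saturated sub-$(\varphi, \Gamma_{F_\wv})$-modules of a de Rham module that are themselves de Rham correspond to sub-filtered-$(\varphi, N, \Gal(K/F_\wv))$-modules, and any saturated sub of a de Rham module is de Rham. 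It therefore suffices to construct a complete flag of sub-filtered-$(\varphi, N, \Gal(K/F_\wv))$-modules in $D$ whose successive graded pieces match those asserted by the lemma.

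The given filtration of $\mathrm{WD}(\rho_v)$ by sub-Weil--Deligne representations with graded pieces $\chi_{v, i}$ corresponds, via the standard dictionary between Weil--Deligne representations and (unfiltered) $(\varphi, N, \Gal(K/F_\wv))$-modules, to a $\Gal$-stable flag $0 = F^0 \subset F^1 \subset \dots \subset F^n = D$ with the $(\varphi, \Gal)$-action on each rank-one graded piece $F^i/F^{i-1}$ given by $\chi_{v, i}$. Equipping each $F^i$ with the induced Hodge filtration $F^i_K \cap \mathrm{Fil}^\bullet D_K$ produces a flag of sub-filtered-$(\varphi, N, \Gal(K/F_\wv))$-modules and hence, via Berger, a triangulation of $\rho_v$. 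Let $m_{\tau, i}$ denote the $\tau$-Hodge--Tate weight of $F^i/F^{i-1}$ with its induced filtration; the multiset $\{m_{\tau, 1}, \dots, m_{\tau, n}\}$ equals $\{k_{\tau, 1}, \dots, k_{\tau, n}\}$ for each $\tau$. It now suffices to prove by induction on $i$ that $m_{\tau, i} = k_{\tau, i}$ for every $\tau$, since this identifies the $i$-th graded piece of the triangulation with $\cR_{E, F_\wv}(\delta_{v, i})$ for $\delta_{v, i}$ as defined in the lemma.

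For the inductive step, assume $m_{\tau, j} = k_{\tau, j}$ for all $j < i$, so $m_{\tau, i} \in \{k_{\tau, i}, k_{\tau, i+1}, \dots, k_{\tau, n}\}$. Weak admissibility of $D$ (which holds because $D$ comes from a Galois representation) gives $t_H(F^i) \leq t_N(F^i)$. Combining the inductive hypothesis with the identity $t_N(F^i) = v_p((\chi_{v, 1}\cdots \chi_{v, i})(p))$ and the numerical hypothesis of the lemma, cancellation of the common term $\sum_{\tau'}\sum_{j<i} k_{\tau', j}$ yields
\[ \sum_\tau \bigl(m_{\tau, i} - k_{\tau, i}\bigr) < k_{\tau_0, i+1} - k_{\tau_0, i} \quad \text{for every } \tau_0 \in \Hom_{\bQ_p}(F_\wv, E). \]
Each summand lies in $\{0,\, k_{\tau, i+1} - k_{\tau, i},\, k_{\tau, i+2} - k_{\tau, i},\, \dots\}$, so if $m_{\tau^*, i} > k_{\tau^*, i}$ for some $\tau^*$ then $\sum_\tau (m_{\tau, i} - k_{\tau, i}) \geq k_{\tau^*, i+1} - k_{\tau^*, i}$, contradicting the displayed bound with $\tau_0 = \tau^*$. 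Thus $m_{\tau, i} = k_{\tau, i}$ for all $\tau$, which closes the induction for $i \leq n-1$; the case $i = n$ is then automatic. The non-criticality condition (3) of Lemma \ref{lem_weight_map_etale_for_non_critical_refinement} for the resulting $\delta_v$ is then immediate from $wt_\tau(\delta_{v, i}) = k_{\tau, i}$ together with the assumed regularity $k_{\tau, 1} < \dots < k_{\tau, n}$.

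The principal technical obstacle is the careful normalization bookkeeping: one must verify that the slope $t_N(F^i)$, defined via the $K_0 \otimes_{\bQ_p} E$-linear iterated Frobenius on $D_{pst}(\rho_v)$, matches $v_p((\chi_{v, 1} \cdots \chi_{v, i})(p))$ with the paper's cohomological conventions for class field theory, and that the Hodge--Tate weights of $F^i / F^{i-1}$ obtained by intersecting with $\mathrm{Fil}^\bullet D_K$ are genuinely the $(m_{\tau, i})_\tau$ used above. Once these compatibilities are established, the combinatorial argument is essentially parallel to Helm's.
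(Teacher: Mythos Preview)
Your proposal is correct and follows essentially the same approach as the paper: both pass to $D_{pst}(\rho_v)$, lift the given Weil--Deligne flag to a flag of sub-$(\varphi,N,\Gal)$-modules, invoke Berger's equivalence \cite{Ber08} to obtain a triangulation, and then use weak admissibility plus the numerical hypothesis in an induction to pin down the Hodge filtration jumps as $k_{\tau,i}$. The combinatorial step you write as $\sum_\tau(m_{\tau,i}-k_{\tau,i}) < k_{\tau_0,i+1}-k_{\tau_0,i}$ is exactly the paper's argument, and your caveat about normalizations for $t_N$ is well placed (the paper carries the factors $[F_\wv:\bQ_p]^{-1}$ and $[F_{\wv,0}:\bQ_p]^{-1}$ explicitly).
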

\begin{proof}
	The filtration of 	$\mathrm{WD}(\rho_v)$ determines an increasing 
	filtration $ 0 =M_0 \subset M_1 \subset M_2 \subset \dots \subset M_n = 
	D_{pst}(\rho_v)$ of $D_{pst}(\rho_v)$ by sub-$(\varphi, N, 
	G_{F_\wv})$-modules (via the equivalence of categories of 
	\cite[Proposition~4.1]{MR2359853}). The main result of \cite{Ber08} states 
	that there is 
	an equivalence of 
	tensor categories between the category of filtered $(\varphi, N, 
	G_{F_\wv})$-modules and a certain category of $(\varphi, 
	\Gamma_{F_\wv})$-modules (restricting to the usual equivalence between 
	weakly 
	admissible filtered $(\varphi, N, 
	G_{F_\wv})$-modules and $(\varphi, \Gamma_{F_\wv})$-modules associated to 
	de Rham representations). We 
	thus obtain a triangulation 
	of the associated $(\varphi, \Gamma_{F_\wv})$-module of $\rho_v$. 
	
	A filtered $(\varphi, N, G_{F_\wv})$-module $M$ of rank 1 is determined up 
	to isomorphism by its associated character $\chi : W_{F_\wv} \to 
	\overline{\bQ}_p^\times$ and the (unique) integers $a_\tau$ such that 
	$\gr^{a_\tau} (M_{F_{\wv}} \otimes_{F_{\wv}\otimes_{\Qp}\overline{\bQ}_p, 
	\tau\otimes\mathrm{id}} \overline{\bQ}_p) \neq 0$. The 
	corresponding rank-1 $(\varphi, \Gamma_{F_\wv})$-module is the one 
	associated to the character $\delta : F_\wv^\times \to 
	\overline{\bQ}_p^\times$ given by the formula $\delta = x^{-a_v} (\chi 
	\circ \Art_{F_\wv})$ (cf. \cite[Example 6.2.6(3)]{Ked14}).	What we 
	therefore need to verify is that if $a_{\tau, i} \in \bZ$ are the integers 
	for which $\gr^{a_{\tau, i}} (M_i / M_{i-1} \otimes_{F_{\wv, 0}, \tau} 
	\overline{\bQ}_p) \neq 0$, then $a_{\tau, i} = k_{\tau, i}$. 
	
	This follows from hypothesis (3) of the lemma, together with the fact that 
	$D_{pst}(\rho_v)$ is a weakly admissible filtered $(\varphi, N, 
	G_{F_\wv})$-module, as we now explain. We show by induction on $i$ that 
	the jumps in the induced Hodge--de Rham filtration of $M_i$ are as 
	claimed. For $M_1$, if these jumps are $k_{\tau,j_\tau}$ then we have for 
	each 
	$\tau$
	\begin{multline*} \frac{1}{[F_\wv: \bQ_p]} \left( k_{\tau, 2} + 
	\sum_{\tau' \neq \tau} k_{\tau', 1} \right)  > \frac{1}{[F_{\wv, 0} : 
		\bQ_p]}  v_p( \chi_{v, 1}(\varpi_v) ) \\ = t_N( M_1 )  \geq t_H(M_1) = 
	\frac{1}{[F_\wv: \bQ_p]} \sum_{\tau'} k_{\tau',j_{\tau'}}. \end{multline*}
	Since the sequences $k_{\tau, i}$ are strictly increasing, this is 
	possible only if $j_\tau = 1$ for each $\tau$. In general, if the jumps of 
	$M_{i-1}$ are as expected and $M_i / M_{i-1}$ has jumps $k_{\tau,j_\tau}$ 
	then 
	we have for each $\tau$
	\begin{multline*}  \frac{1}{[F_\wv: \bQ_p]} \left(k_{\tau, i+1} + 
	\sum_{\tau' \neq \tau} k_{\tau', i} + \sum_{\tau'} \sum_{j=1}^{i-1} 
	k_{\tau', j} \right) > t_N(M_i) \\ \geq t_H(\Fil_i) = 
	\frac{1}{[F_\wv: \bQ_p]} \left(\sum_{\tau'} k_{\tau', j_{\tau'}} + 
	\sum_{\tau'} \sum_{j=1}^{i-1} k_{\tau', j}\right). \end{multline*}
	Once again this is possible only if $j_\tau = i$ for each $\tau$. 
\end{proof}

\begin{defn}\label{defn:numerically_non_critical}
	We say that a character $\delta_v \in \cT_v(\overline{\bQ}_p)$ is numerically 
	non-critical if it satisfies the following conditions: \begin{enumerate}
		\item For each $\tau \in 
		\Hom_{\bbQ_p}(F_\wv, E)$, the labelled weights $wt_\tau(\delta_{v,1}), \ldots,wt_\tau(\delta_{v,n})$ are an increasing sequence of integers. 
		
		\item For each $\tau \in 
	\Hom_{\bbQ_p}(F_\wv, E)$, and for each $i = 1, \dots, n-1$, we have
	\[v_p ( (\delta_{v, 1} \dots \delta_{v, i})(p) ) < 
	wt_{\tau}(\delta_{v,i+1}) - wt_{\tau}(\delta_{v,i}). \]\end{enumerate}
\end{defn}

Following \cite[Remark 2.4.6]{bellaiche_chenevier_pseudobook}, we may 
reformulate Lemma \ref{lem_numerically_non_critical_implies_non_critical} as 
follows: let $\rho_v : G_{F_\wv} \to \GL_n(\overline{\bQ}_p)$ be a Hodge--Tate regular de 
Rham representation, and suppose that $\WD(\rho_v)$ is equipped with an 
increasing filtration such that the associated gradeds are given by characters 
$\chi_{v, 1}, \dots, \chi_{v, n} : W_{F_\wv} \to \overline{\bQ}_p^\times$. Let 
$k_{\tau, 1} < \dots < k_{\tau, n}$ be the strictly increasing sequences of 
$\tau$-Hodge--Tate weights, and let $\delta_v \in \cT(\overline{\bQ}_p)$ be the 
character defined by the formula $\delta_{v, i}(x) = 
(\chi_{v, i} 
\circ \Art_{F_\wv}(x))\prod_{\tau}\tau(x)^{-k_{\tau, 
		i}} $. Then if $\delta_v$ is numerically non-critical, the 
representation $\rho_v$ admits a non-critical triangulation with parameter 
$\delta_v$.

The most important case for us is that of 2-dimensional de Rham representations of $G_{\Q_p}$, and their symmetric powers. In this case the possible triangulations admit a particularly explicit description (cf. \cite{colmez-tri}; this description can be easily justified, including the case $p = 2$, using the results of \cite{Ber08}):
\begin{example}\label{ex:gl2qptriangulations}Let $\rho: G_{\Qp}\to 
	\GL_2(\Qpbar)$ be a de Rham 
	representation with Hodge--Tate weights $k_1 < k_2$ such that $\mathrm{WD}(\rho)^{ss} = 
	\chi_1 \oplus \chi_2$ with $\chi_i: W_{\Qp} \to \Qpbarx$. Assume for simplicity 
	that $\chi_1 \ne \chi_2$. 
	Then we have the following 
	possibilities:\begin{enumerate}
		\item If $\rho$ is not potentially crystalline, then we can choose $\chi_1, \chi_2$ so that $\chi_1 = 
		\chi_2(|\cdot|_p\circ\Art^{-1}_{\Qp})$. In this case
		$\rho$ has a unique triangulation. It is non-critical, of parameter \[\delta = 
		(x^{-k_1}\chi_1\circ\Art_{\Qp},x^{-k_2}\chi_2\circ\Art_{\Qp}). \]
		\item If $\rho$ is potentially crystalline and irreducible, or reducible and 
		indecomposable, 
		$\rho$ has two triangulations. Both of these are non-critical, and their respective parameters are \[\delta = 
		(x^{-k_1}\chi_1\circ\Art_{\Qp},x^{-k_2}\chi_2\circ\Art_{\Qp})\] and  
		\[\delta = 
		(x^{-k_1}\chi_2\circ\Art_{\Qp},x^{-k_2}\chi_1\circ\Art_{\Qp}).\]
		
		\item If $\rho$ is decomposable, we can assume $\rho \cong \psi_1\oplus\psi_2$ 
		where $\psi_i$ has Hodge--Tate weight $k_i$ and $\mathrm{WD}(\psi_i) = \chi_i$. In this case $\rho$ admits two triangulations. The non-critical triangulation has parameter \[\delta = 
		(x^{-k_1}\chi_1\circ\Art_{\Qp},x^{-k_2}\chi_2\circ\Art_{\Qp})\] and the critical triangulation has parameter
		\[\delta = 
		(x^{-k_2}\chi_2\circ\Art_{\Qp},x^{-k_1}\chi_1\circ\Art_{\Qp})\]
		(see for example \cite[Example 3.7]{bergdall-paraboline} for the crystalline 
		case).
	\end{enumerate}
\end{example}
We now consider the global situation. We define $\cT = \prod_{v \in S_p} 
\cT_v$, $\cT^{reg} = \prod_{v \in S_p} \cT_v^{reg}$, and $\cW = \prod_{v \in 
S_p} \cW_v$. We write $r = \prod_{v \in S_p} r_v :  \cT \to \cW$ for the 
product of restriction maps. Let $\cG_n = (\GL_n \times \GL_1) \rtimes \{ \pm 1 \}$ denote the 
group scheme defined in \cite[\S 2.1]{cht}, $\nu_{\cG_n} : \cG_n \to \GL_1$ its 
character, and suppose given a continuous homomorphism $\rho : G_{F^+, S} \to 
\cG_n(E)$ such that $\nu_{\cG_n} \circ \rho = \epsilon^{1-n} \delta_{F / 
F^+}^n$.
We write $\ad \rho$ for the $E[G_{F^+, S}]$-module given by adjoint action of 
$\cG_n$ on the Lie algebra of $\GL_n$. We write $\cD_\rho : \cC'_E \to 
\operatorname{Sets}$ for the functor which associates to each $A \in \cC'_E$ the 
set of $\ker(\GL_n(A) \to \GL_n(E))$-conjugacy classes of lifts $\rho' : 
G_{F^+, S} \to \cG_n(A)$ of $\rho$ such that $\nu_{\cG_n} \circ \rho' = 
\epsilon^{1-n} \delta_{F / F^+}^n$.

If $\Delta \subset G_{F, S}$ is a subgroup, then $\rho(\Delta) \subset 
\cG_n^\circ(E) = \GL_n(E) \times \GL_1(E)$, and we follow \cite{cht} in writing 
$\rho|_{\Delta} : \Delta \to \GL_n(E)$ for the projection to the first factor. 
If $v \in S$, then there is a natural functor $\cD_\rho \to 
\cD_{\rho|_{G_{F_\wv}}}$, given by restriction $\rho' \mapsto 
\rho'|_{G_{F_\wv}}$. 

Let $\delta = (\delta_v)_{v \in S_p} \in \cT^{reg}(E)$ be such 
that for each $v \in S_p$, $\rho_v = \rho|_{G_{F_\wv}}$ is trianguline of 
parameter $\delta_v$. We define a functor 
\[ \cD_{\rho, \cF, \delta} = \cD_\rho \times_{\prod_{v \in S_p} \cD_{\rho_v}} 
\prod_{v \in S_p} \cD_{\rho_v, \cF_v, \delta_v} \]
and $H^1_{tri, \delta}(F_S / F^+, \ad \rho) = \cD_{\rho, \cF, 
\delta}(E[\epsilon]) \subset H^1(F_S / F^+, \ad \rho)$.
\begin{proposition}\label{prop_global_tangent_space_in_the_non-critical_case}
	Keeping assumptions as above, suppose further that the following conditions 
	are satisfied:
	\begin{enumerate}
		\item For each $v \in S_p$, $\rho_v$ is de Rham.
		\item For each $v \in S$, $\mathrm{WD}(\rho|_{G_{F_\wv}})$ is generic, in the sense of \cite[Definition 1.1.2]{All16}.
		  \item For each $v \in S_p$ and for each $\tau \in 
		  \Hom_{\bbQ_p}(F_\wv, E)$, we have 
		 \[ wt_\tau(\delta_{v, 1}) < wt_\tau(\delta_{v, 2}) < \dots < 
		 wt_\tau(\delta_{v, n}). \] In other words, the triangulation of 
		 $\rho_v$ with parameter $\delta_v$ is non-critical.
		\item $H^1_f(F^+, \ad \rho) = 0$.
	\end{enumerate}
Then $\dim_E H^1_{tri, \delta}(F_S / F^+, \ad \rho) \leq  \dim_E \cW$.
\end{proposition}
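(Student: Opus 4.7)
The plan is to construct an $E$-linear ``weight map''
\[
\omega\colon H^1_{tri,\delta}(F_S/F^+,\ad\rho)\longrightarrow T_{r(\delta)}\cW
\]
by differentiating, on $E[\epsilon]$-points, the natural transformation $\cD_{\rho,\cF,\delta}\to \prod_{v\in S_p}\operatorname{Spf}\widehat{\cO}_{\cW_v,r_v(\delta_v)}$ that records only the weight component of the parameter, $(\rho',\cF',\delta')\mapsto (r_v(\delta'_v))_{v\in S_p}$. Since $\cW$ is smooth and $\dim_E T_{r(\delta)}\cW=\dim_E \cW$, the image of $\omega$ has $E$-dimension at most $\dim_E\cW$, so it will suffice to prove $\ker\omega=0$.

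Fix a class $[\rho']\in\ker\omega$, viewed as a subspace of $H^1(F_S/F^+,\ad\rho)$ via the injection coming from Proposition~\ref{prop_representability_of_trianguline_def_functor}. For each $v\in S_p$, the restriction of $[\rho']$ to $G_{F_\wv}$ lies in the kernel of $H^1_{tri,\delta_v}(F_\wv,\ad\rho_v)\to T_{r_v(\delta_v)}\cW_v$. The non-criticality hypothesis (3) is exactly the input required by Lemma~\ref{lem_weight_map_etale_for_non_critical_refinement}, which then places the image of this local restriction in $H^1_g(F_\wv,\ad\rho_v)$. This is the local condition at $p$-adic places used to define $H^1_f(F^+,\ad\rho)$.

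The remaining work is at places $v\in S\setminus S_p$, where $\cD_{\rho,\cF,\delta}$ imposes no local condition, and here I would use the genericity hypothesis (2). Genericity of $\WD(\rho|_{G_{F_\wv}})$ excludes $|\cdot|$ as a Jordan--H\"older constituent of the Weil--Deligne representation underlying $\ad\rho_v$, so $H^0(F_\wv,(\ad\rho_v)(1))=0$. Local Tate duality then gives $H^2(F_\wv,\ad\rho_v)=0$, and the local Euler characteristic formula together with $\dim H^1_{nr}=\dim H^0$ yields the equality $H^1(F_\wv,\ad\rho_v)=H^1_{nr}(F_\wv,\ad\rho_v)$. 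Hence $[\rho'|_{G_{F_\wv}}]$ is automatically unramified, which is the local condition at $v\in S\setminus S_p$ in $H^1_f(F^+,\ad\rho)$.

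Combining the two preceding paragraphs shows that $[\rho']$ lies in $H^1_f(F^+,\ad\rho)$, so hypothesis (4) gives $[\rho']=0$. The main point where I expect to have to be careful is matching the local $H^1_g$ condition at $p$-adic primes produced by Lemma~\ref{lem_weight_map_etale_for_non_critical_refinement} with the precise local condition at $p$ used in the definition of $H^1_f$ coming from \cite{New19a}; under the strict Hodge--Tate regularity built into hypothesis (3), the Bloch--Kato alternatives $H^1_e$, $H^1_f$ and $H^1_g$ for $\ad\rho_v$ should coincide, but this will need to be cross-checked against the conventions used to state hypothesis (4).
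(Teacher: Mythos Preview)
Your approach is essentially the same as the paper's: construct the weight map, use Lemma~\ref{lem_weight_map_etale_for_non_critical_refinement} at $p$-adic places and genericity at the others to force the kernel into $H^1_f(F^+,\ad\rho)=0$. The one point you flag as uncertain---matching $H^1_g$ with $H^1_f$ at $v\in S_p$---is exactly where the paper uses genericity again: hypothesis~(2) applies at \emph{all} $v\in S$, and for $v\in S_p$ genericity of $\WD(\rho|_{G_{F_\wv}})$ gives $H^1_f(F_\wv,\ad\rho)=H^1_g(F_\wv,\ad\rho)$ (the paper cites \cite{New19a} and \cite[Remark~1.2.9]{All16}), so no separate argument via Hodge--Tate regularity is needed.
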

\begin{proof}
	Our assumption that  $\mathrm{WD}(\rho|_{G_{F_\wv}})$ is generic means that 
	for each $v \in S_p$, $H^1_f(F_\wv, \ad \rho) = H^1_g(F_\wv, \ad \rho)$ and 
	for each $v \in S - S_p$, $H^1_f(F_\wv, \ad \rho) = H^1(F_\wv, \ad \rho)$ 
	(see \cite[Remark~1.2.9]{All16}).	
	Lemma \ref{lem_weight_map_etale_for_non_critical_refinement} then implies 
	that the map $H^1_{tri, \delta}(F_S / F^+, \ad \rho) \to T_{r(\delta)} \cW$ 
	is injective (its kernel being contained in $H^1_f(F^+, \ad \rho) = 0$). 
\end{proof}

\subsubsection{Trianguline representations -- global 
geometry}\label{subsec_trianguline_reps_global_geometry}

We fix a continuous pseudocharacter $\overline{\tau} : G_{F, S} \to k$ of 
dimension $n \geq 1$ which is conjugate self-dual, in the sense that 
$\overline{\tau} \circ c = \overline{\tau}^\vee \otimes \epsilon^{1-n}$. (We define pseudocharacters following Chenevier \cite{chenevier_det}, where they are called determinants. For a summary of this theory, including what it means for a pseudocharacter of $G_{F, S}$ to be conjugate self-dual, see \cite[\S 2]{newton2020adjoint}.) Let 
$R_{ps}$ denote the universal pseudodeformation 
ring representing the functor of lifts of 
$\overline{\tau}$ to conjugate self-dual pseudocharacters over objects of $\cC_\cO$ (cf. \cite[\S 2.19]{newton2020adjoint}). If $v \in S_p$, 
let $R_{ps, v}$ denote the 
pseudodeformation ring of $\overline{\tau}|_{G_{F_\wv}}$. We write $\cX_{ps}$ 
for the rigid generic fibre of $R_{ps}$, and $\cX_{ps, v}$ for the rigid 
generic fibre of $R_{ps, v}$. Then there is a natural morphism $\cX_{ps} \to 
\cX_{ps, p} = \prod_{v \in S_p} \cX_{ps, v}$ of rigid spaces over $E$. We 
recall that to any representation $\rho : G_{F, S} \to 
\GL_n(E)$ such that $\tr \overline{\rho} = \overline{\tau}$, and which is 
conjugate self-dual in the sense that $\rho^c \cong \rho^\vee \otimes \epsilon^{1-n}$, 
is associated a closed point $\tr \rho \in \cX_{ps}(E)$. 
Conversely, if $t \in \cX_{ps}(E)$, then there exists a semi-simple conjugate 
self-dual representation $\rho : G_{F, S} \to \GL_n(\overline{\bQ}_p)$ such 
that $\tr \rho = t$, and this representation is unique up to isomorphism. 

If $\rho : G_{F, S} \to \GL_n(E)$ is an absolutely irreducible representation 
such that $\rho^c \cong \rho^\vee \otimes \epsilon^{1-n}$, then there is a 
homomorphism $\rho_1 : G_{F^+, S} \to \cG_n(E)$ such that $\rho_1|_{G_{F, S}} = 
\rho$ and $\nu_{\cG_n} \circ \rho_1 = \epsilon^{1-n} \delta_{F / F^+}^a$. The 
integer $a \in \{ 0, 1 \}$ is uniquely determined by $\rho$, and any two such 
extensions are conjugate by an element of $\GL_n(\overline{\Q}_p)$. (See 
\cite[Lemma 2.1.4]{cht}.) The following lemma extends extends this to objects 
of $\cC_E$:
\begin{lemma}\label{lem_deforming_conjugate_self_dual_representations}
	Let $\rho : G_{F, S} \to \GL_n(E)$ be an absolutely irreducible 
	representation such that $\rho^c \cong \rho^\vee \otimes \epsilon^{1-n}$, 
	and fix an extension $\rho_1$ as in the previous paragraph. Let $A \in 
	\cC_E$. Then the following sets are in canonical bijection:
	\begin{enumerate}
		\item The set of $\ker(\GL_n(A) \to \GL_n(E))$-conjugacy classes of liftings $\rho' : G_{F, S} \to \GL_n(A)$ of $\rho$ such that $\tr \rho' \circ c = \tr (\rho')^\vee \otimes \epsilon^{1-n}$.
		\item The set of $\ker(\GL_n(A) \to \GL_n(E))$-conjugacy classes of liftings $\rho_1' : G_{F^+, S} \to \cG_n(A)$ of $\rho_1$ such that $\nu \circ \rho_1' = \epsilon^{1-n} \delta_{F / F^+}^a$.
	\end{enumerate}
\end{lemma}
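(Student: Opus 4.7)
The strategy is to construct explicit mutually inverse maps. The forward direction (2) $\to$ (1) sends $\rho_1'$ to the lift $\rho'$ obtained by restricting $\rho_1'$ to $G_{F,S}$ (where it takes values in $\cG_n^\circ(A) = \GL_n(A) \times \GL_1(A)$) and projecting onto the $\GL_n$-factor; the trace identity for $\rho'$ follows from the defining relations of $\cG_n$ together with the constraint $\nu_{\cG_n} \circ \rho_1' = \epsilon^{1-n} \delta_{F/F^+}^a$.

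For the inverse (1) $\to$ (2), I would start with a lift $\rho'$ satisfying the trace condition. Fix a set-theoretic lift $\tilde{c} \in G_{F^+,S}$ of complex conjugation and write $\rho_1(\tilde{c}) = (P_0, \eta_0)\jmath$, where $\jmath \in \cG_n(E)$ represents the non-identity coset of $\cG_n^\circ$. Since $\rho$ is absolutely irreducible over $E$ and $A$ is complete local with residue field $E$, Schur's lemma gives $\End_{A[G_{F,S}]}(\rho') = A$, and a standard application of Carayol's lemma to the two lifts $(\rho')^c$ and $(\rho')^\vee \otimes \epsilon^{1-n}$ of the common residual representation $\rho^c \cong \rho^\vee \otimes \epsilon^{1-n}$ (identified via $P_0$) produces an $A[G_{F,S}]$-linear isomorphism $\mu : (\rho')^c \xrightarrow{\sim} (\rho')^\vee \otimes \epsilon^{1-n}$ lifting $P_0$, uniquely determined by the normalization $\mu \bmod \frakm_A = P_0$. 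I would then define
\[
\rho_1'(\sigma) = (\rho'(\sigma), \epsilon^{1-n}(\sigma)) \text{ for } \sigma \in G_{F,S}, \qquad \rho_1'(\tilde{c}) = (\mu, \eta_0)\jmath,
\]
extended multiplicatively. Checking that $\rho_1'$ is a homomorphism reduces to the identity $\rho_1'(\tilde{c})^2 = \rho_1'(\tilde{c}^2)$, which unpacks to a scalar equation for $\mu$: both sides define $G_{F,S}$-equivariant endomorphisms of $\rho'$, hence scalars by Schur, and the two scalars agree because their residues modulo $\frakm_A$ coincide (being the corresponding identity for $\rho_1$, which holds by hypothesis). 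The sign $a$ matches that of $\rho_1$ by inspection of $\nu_{\cG_n} \circ \rho_1'$ modulo $\frakm_A$.

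Well-definedness on conjugacy classes follows from the uniqueness of $\mu$: conjugating $\rho'$ by $g \in 1 + M_n(\frakm_A)$ transforms $\mu$ by a well-defined rule, and the resulting $\rho_1'$ is conjugate to the original by $(g, 1) \in \ker(\cG_n(A) \to \cG_n(E))$. The two constructions compose to the identity on conjugacy classes because the $\mu$ extracted from a given $\rho_1'(\tilde{c}) = (P, \eta_0)\jmath$ already satisfies $P \bmod \frakm_A = P_0$, so the normalization step returns $\mu = P$. The main obstacle is the appeal to Carayol's lemma in the coefficient ring $A$, which requires the residual representation $\rho$ to be absolutely irreducible --- this is exactly the hypothesis of the lemma. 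Everything else is a direct Artinian-coefficient lift of the argument for \cite[Lemma 2.1.4]{cht}.
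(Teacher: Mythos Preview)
Your approach is essentially the paper's: the paper constructs the forward map by restriction, checks injectivity via Schur's lemma applied to $\rho_1'(c)^{-1}\rho_1''(c)$, and for surjectivity invokes \cite[Example 3.4]{chenevier_det} (which is precisely the Carayol-type result you use) to produce the intertwiner $J'$ lifting $J$, then cites \cite[Lemma 2.1.1]{cht} to package this into a homomorphism $\rho_1'$. You are unpacking that last citation by hand.

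There is, however, a genuine gap in your verification that $\rho_1'(\tilde c)^2 = \rho_1'(\tilde c^2)$. Two remarks. First, the intertwiner $\mu$ is \emph{not} uniquely determined by the condition $\mu \bmod \frakm_A = P_0$: any two such intertwiners differ by a scalar in $1+\frakm_A$. This is harmless for the conjugacy class of $\rho_1'$ (conjugate by a scalar square root, available since $A$ is a $\bQ$-algebra), but your uniqueness claim is false as stated. Second, and more seriously, your sentence ``the two scalars agree because their residues modulo $\frakm_A$ coincide'' is not a valid inference: in $A = E[\epsilon]$ the scalars $1$ and $1+\epsilon$ have the same residue. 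What actually makes the argument work is that the scalar $s := \eta_0\,\mu\mu^{-t}\rho'(\tilde c^2)^{-1} \in A^\times$ satisfies a \emph{quadratic} equation with coefficients in $E$ --- concretely, transposing the relation $\mu\mu^{-t} = s\eta_0^{-1}\rho'(\tilde c^2)$ twice and feeding in the intertwining relation for $\sigma = \tilde c^2$ yields $s^2 = 1$. Since $\bar s = 1$ and $\mathrm{char}\,E = 0$, this forces $s = 1$. You need to supply this step; the paper avoids it only by citing \cite[Lemma 2.1.1]{cht}, where the analogous check is carried out.
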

\begin{proof}
	There is an obvious map sending $\rho_1'$ to $\tr \rho_1'|_{G_{F, S}}$. We 
	need to check that this is bijective (at the level of conjugacy classes). 
	To check injectivity, let $\rho_1', \rho_1'' : G_{F^+, S} \to \cG_n(A)$ be 
	two such homomorphisms and suppose that $\rho_1'|_{G_{F, S}}$, 
	$\rho_1''|_{G_{F, S}}$ are $\ker(\GL_n(A) \to \GL_n(E))$-conjugate. We must 
	show that $\rho_1'$, $\rho_1''$ are themselves conjugate. We may suppose 
	that in fact $\rho_1'|_{G_{F, S}} = \rho_1''|_{G_{F, S}}$. In this case 
	Schur's lemma (cf. \cite[Lemma 2.1.8]{cht}), applied to 
	$\rho_1'(c)^{-1}\rho_1''(c)$, shows that $\rho_1'$, 
	$\rho_1''$ are equal.
	
	Now suppose given $\rho' : G_{F, S} \to \GL_n(A)$ lifting $\rho$, and such that $\tr \rho' \circ c = \tr (\rho')^\vee \otimes \epsilon^{1-n}$. Let $J \in \GL_n(E)$ be defined by $\rho_1(c) = (J, - \nu_{\cG_n} \circ \rho_1(c)) \jmath$ (cf. \cite[Lemma 2.1.1]{cht}), so that $\rho(\sigma^c) = J \rho(\sigma)^{-t} J^{-1} \otimes \epsilon^{1-n}$ for all $\sigma \in G_{F, S}$. Then \cite[Example 3.4]{chenevier_det} implies the existence of a matrix $J' \in \GL_n(A)$ lifting $J$ such that $\rho'(\sigma^c) = J' (\rho'(\sigma))^{-t} (J')^{-1}$ for all $\sigma \in G_{F, S}$. By \cite[Lemma 2.1.1]{cht}, this implies the existence of a homomorphism $\rho_1' : G_{F^+, S} \to \cG_n(A)$ lifting $\rho_1$ and such that $\rho_1'|_{G_{F, S}} = \rho'$. This completes the proof.
\end{proof}

There is a Zariski open subspace $\cX_{ps, v}^{v-irr} \subset \cX_{ps, v}$ 
consisting of those points at which the universal pseudocharacter is 
absolutely irreducible. We write $\cX_{ps, p}^{p-irr} = \prod_{v \in S_p} \cX_{ps, 
v}^{v-irr}$ and $\cX_{ps}^{p-irr}$ for the pre-image of $\cX_{ps, p}^{p-irr}$. 
Thus again there is a canonical morphism $\cX_{ps}^{p-irr} \to \cX_{ps, 
p}^{p-irr}$. According to \cite[\S 4.2]{chenevier_det}, there exists an Azumaya 
algebra $\mathcal{A}$ over $\cX_{ps, v}^{v-irr}$ and a homomorphism $\rho_v^u : 
G_{F_\wv} \to \mathcal{A}^\times$ such that $\tr \rho_v^u$ is the 
universal pseudocharacter. 
\begin{lemma}\label{lem_locally_split}
	Let $\rho_v : G_{F_\wv} \to \GL_n(E)$ be an absolutely irreducible 
	representation, corresponding to a point $z = \tr \rho_v \in \cX_{ps, 
	v}^{v-irr}(E)$. Then there exists an affinoid open neighbourhood $z \in \cU 
	\subset \cX_{ps, v}^{v-irr}(E)$ and an isomorphism $\mathcal{A}|_\cU \cong 
	M_n(\cO_\cU)$.
\end{lemma}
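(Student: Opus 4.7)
The plan is to produce a rank one idempotent in $\mathcal{A}(\cU)$ for a suitable affinoid neighbourhood $\cU$ of $z$, and then invoke the Morita-theoretic criterion that an Azumaya algebra admitting such an idempotent is split. Since $\rho_v$ is absolutely irreducible, evaluation of $\rho_v^u$ at $z$ identifies $\mathcal{A}_z$ with $M_n(E)$; I choose a rank one idempotent $\bar e \in \mathcal{A}_z$, for instance the projection onto a chosen line in the representation space of $\rho_v$.

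I first lift $\bar e$ to an idempotent defined on an affinoid neighbourhood of $z$. The local ring $R_z := \cO_{\cX_{ps,v}^{v-irr}, z}$ is Henselian, a standard property of local rings at classical points of rigid analytic spaces. The stalk $\mathcal{A}_{(z)} := \mathcal{A} \otimes_{\cO_{\cX_{ps,v}^{v-irr}}} R_z$ is therefore a finite $R_z$-algebra in which idempotents of the residue algebra $\mathcal{A}_z$ lift uniquely; let $e \in \mathcal{A}_{(z)}$ denote the lift of $\bar e$. Writing $R_z$ as a filtered colimit of $\cO(\cU)$ over affinoid neighbourhoods $\cU$ of $z$, the element $e$ is represented by some $e_\cU \in \mathcal{A}(\cU)$, and the relation $e_\cU^2 = e_\cU$ holds after restriction to a sufficiently small affinoid subdomain; after shrinking $\cU$ we may thus assume this relation holds in $\mathcal{A}(\cU)$ itself.

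Consider next the $\cO(\cU)$-module $M := \mathcal{A}(\cU) \cdot e_\cU$. Its fibre at $z$ is $\mathcal{A}_z \bar e$, which is $n$-dimensional over $E$ as the image of a rank one projection in $M_n(E)$. Since $M$ is finitely generated over $\cO(\cU)$ and has a fibre of dimension $n$ at $z$, after shrinking $\cU$ further we may assume that $M$ is free of rank $n$ over $\cO(\cU)$. Left multiplication then defines a map of $\cO(\cU)$-algebras
\[ \mathcal{A}(\cU) \longrightarrow \End_{\cO(\cU)}(M) \cong M_n(\cO(\cU)), \]
both sides being projective $\cO(\cU)$-modules of rank $n^2$. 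This map is an isomorphism on the fibre over $z$, namely the standard Morita equivalence $M_n(E) \cong \End_E(E^n)$. After a final shrinking of $\cU$ to kill the support of the coherent cokernel, we obtain the desired isomorphism $\mathcal{A}|_\cU \cong M_n(\cO_\cU)$.

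The main obstacle is the idempotent lifting in the second paragraph. It rests on the Henselian property of the local rings of rigid analytic spaces at classical points and on the description of these local rings as filtered colimits of affinoid coordinate rings. Once the idempotent is in place, the remainder is a formal application of Morita theory together with upper semicontinuity of fibre dimensions for coherent sheaves.
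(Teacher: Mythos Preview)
Your proof is correct and shares the key insight with the paper's argument: the local ring $\cO_{\cX_{ps,v}^{v-irr},z}$ at a classical point is Henselian. The paper then invokes directly the standard fact that an Azumaya algebra over a Henselian local ring which is split over the residue field is split (equivalently, that the Brauer group of a Henselian local ring injects into that of its residue field), and spreads the resulting isomorphism $\mathcal{A}_{(z)} \cong M_n(R_z)$ to an affinoid neighbourhood. Your argument instead unpacks this fact: you lift a rank-one idempotent from the fibre using the Henselian property, spread it, and then split $\mathcal{A}$ via the Morita map $\mathcal{A}(\cU) \to \End_{\cO(\cU)}(\mathcal{A}(\cU)e_\cU)$. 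This is precisely one of the standard proofs of the Brauer-group fact the paper cites, so the two arguments are essentially the same at different levels of abstraction; yours is more self-contained, the paper's more concise.
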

\begin{proof}
	Let $\cU$ be an open affinoid neighbourhood of $z$. The stalk $\cO_{\cU, 
	z}$ is a Henselian local ring (\cite[Proposition~7.1.8]{fresnel-vdp}). 
	Thus the stalk $\mathcal{A}_z$ is an Azumaya algebra over a Henselian local 
	ring which is split over the closed point; it is therefore split, i.e.\ 
	there exists an isomorphism $\mathcal{A}_z \cong \M_n(\cO_{\cU, z})$. After 
	shrinking $\cU$, this extends to an isomorphism $\mathcal{A}(\cU) \cong 
	M_n(\cO(\cU))$, as desired. 
\end{proof}
\begin{lemma}\label{lem_character_spaces}
	\begin{enumerate}
		\item Let $z \in \cX_{ps, p}^{p-irr}(E)$ be the closed point 
		corresponding to the isomorphism class of a tuple $(\rho_v)_{v \in 
		S_p}$ of absolutely irreducible representations $\rho_v : G_{F_\wv} \to 
		\GL_n(E)$. 
		Then there is a canonical isomorphism
		\[ T_z \cX_{ps, p}^{p-irr} \cong \oplus_{v \in S_p} H^1(F_\wv, \ad 
		\rho_v). \]
		\item Let $z \in \cX_{ps}^{p-irr}(E)$ be the closed point determined by 
		a representation $\rho : G_{F^+, S} \to \cG_n(E)$ such that for each $v 
		\in S_p$, $\rho|_{G_{F_\wv}}$ is absolutely irreducible. Then there is 
		a canonical isomorphism
		\[ T_z \cX_{ps}^{p-irr} \cong H^1(G_{F^+, S}, \ad \rho), \]
		which has the property that the diagram
		\[ \xymatrix{  T_z \cX_{ps}^{p-irr} \ar[r] \ar[d] & H^1(G_{F^+, S}, \ad 
		\rho) \ar[d] \\ T_z \cX_{ps, p}^{p-irr} \ar[r] & \oplus_{v \in S_p} 
		H^1(F_\wv, \ad \rho_v)} \]
		commutes.
	\end{enumerate}
\end{lemma}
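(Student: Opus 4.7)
The plan is to translate each tangent space of pseudodeformations into one of genuine representation deformations via the absolute irreducibility hypothesis, and then apply the standard cohomological interpretation of deformations. For part (1), I would first apply Lemma \ref{lem_locally_split} to each projection to $\cX_{ps, v}^{v-irr}$, shrinking to an affinoid neighbourhood $\cU_v$ of the image of $z$ on which the Azumaya algebra $\mathcal{A}$ is split. This produces a genuine representation $\rho_v^u : G_{F_\wv} \to \GL_n(\cO(\cU_v))$ whose trace is the universal pseudocharacter, unique up to $\GL_n(\cO(\cU_v))$-conjugation. An $E[\epsilon]$-point of $\cX_{ps, v}^{v-irr}$ supported at $z$ is by definition a pseudocharacter lift of $\tr \rho_v$ over $E[\epsilon]$; via the splitting, this corresponds to a representation deformation $\rho_v' : G_{F_\wv} \to \GL_n(E[\epsilon])$ of $\rho_v$, well-defined up to conjugation by $\ker(\GL_n(E[\epsilon]) \to \GL_n(E))$. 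Standard deformation theory then identifies this set of conjugacy classes with $H^1(F_\wv, \ad \rho_v)$ via the cocycle $\sigma \mapsto \epsilon^{-1}(\rho_v'(\sigma) \rho_v(\sigma)^{-1} - 1)$. Taking the direct sum over $v \in S_p$ yields the first isomorphism.

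For part (2), the argument is parallel. Absolute irreducibility of each $\rho|_{G_{F_\wv}}$ implies absolute irreducibility of $\rho|_{G_{F, S}}$, so by Chenevier's theory \cite{chenevier_det} (and after shrinking to an affinoid neighbourhood of $z$), the universal conjugate self-dual pseudocharacter on $\cX_{ps}^{p-irr}$ near $z$ is the trace of a genuine representation lifting $\rho|_{G_{F, S}}$. An $E[\epsilon]$-point of $\cX_{ps}^{p-irr}$ at $z$ thus corresponds to a conjugate self-dual deformation $\rho' : G_{F, S} \to \GL_n(E[\epsilon])$ of $\rho|_{G_{F, S}}$, modulo conjugation. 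Lemma \ref{lem_deforming_conjugate_self_dual_representations} then identifies this set with the set of conjugacy classes of lifts $\rho_1' : G_{F^+, S} \to \cG_n(E[\epsilon])$ of $\rho$ with fixed multiplier $\epsilon^{1-n} \delta_{F / F^+}^a$. Standard deformation theory for $\cG_n$-valued representations with fixed multiplier identifies this last set with $H^1(G_{F^+, S}, \ad \rho)$: a cocycle $c$ valued in $\Lie \GL_n$ corresponds to the lift $\sigma \mapsto (1 + \epsilon\, c(\sigma)) \rho_1(\sigma)$, and the fixed-multiplier condition holds automatically because the differential of $\nu_{\cG_n}$ vanishes on $\Lie \GL_n \subset \Lie \cG_n^\circ$.

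Commutativity of the diagram is automatic from functoriality: the natural map $\cX_{ps}^{p-irr} \to \cX_{ps, p}^{p-irr}$ sends a global pseudodeformation to the tuple of its local restrictions, and under the above identifications this corresponds to restricting a global representation deformation to each decomposition group $G_{F_\wv}$ (where the image of $\rho_1'$ lies in $\cG_n^\circ = \GL_n \times \GL_1$, so the adjoint $\cG_n$-action reduces to the ordinary adjoint action on $\Lie \GL_n$), which induces the standard restriction map on $H^1$. The main technical point throughout is the passage from pseudodeformations to representation deformations; this is routine in the absolutely irreducible setting thanks to the Azumaya algebra splitting of Lemma \ref{lem_locally_split}, so I do not anticipate any essential obstacle.
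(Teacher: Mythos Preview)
Your proposal is correct and follows essentially the same approach as the paper: the paper's proof simply cites \cite[\S 4.1]{chenevier} (that the completed local ring of $\cX_{ps,v}^{v\text{-}irr}$ at an absolutely irreducible point pro-represents $\cD_{\rho_v}$) for part (1), and then invokes Lemma~\ref{lem_deforming_conjugate_self_dual_representations} for part (2). Your argument unpacks these citations in more detail, going through the Azumaya splitting of Lemma~\ref{lem_locally_split} rather than citing Chenevier directly, but the content is the same.
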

\begin{proof}
	The first part follows from \cite[\S 4.1]{chenevier_det}, which states that the completed local ring of $\cX_{ps, v}^{v-irr}$ at the $E$-point corresponding to an absolutely irreducible representation $\rho_v : G_{F_\wv} \to \GL_n(E)$ pro-represents the functor $D_{\rho_v}$. The second follows from this and Lemma \ref{lem_deforming_conjugate_self_dual_representations}.
\end{proof}
\begin{prop}\label{prop_affinoid_local_trianguline_space}
	Let $v \in S_p$, and let $\rho_v : G_{F_\wv} \to \GL_n(E)$ be an absolutely 
	irreducible representation which is trianguline of parameter 
	$\delta_v \in \cT_v^{reg}(E)$. Let $z \in \cX_{ps, v}^{v-irr} \times 
	\cT_v^{reg}$ be the closed point corresponding to the pair $(\rho_v, \delta_v)$. Then:
	\begin{enumerate}
		\item There exists an affinoid open neighbourhood $\cU_v \subset 
		\cX_{ps, v}^{v-irr} \times \cT_v^{reg}$ of $z$ over which there exists 
		a universal representation $\rho_v^u : G_{F_\wv} \to \GL_n(\cO(\cU_v))$. 
		Let $\cV_v \subset \cU_v$ denote the set of points $(\rho_v', 
		\delta_v')$ such that $\rho_v'$ is trianguline of parameter $\delta_v'$, and let 
		$\cZ_v \subset \cU_v$ denote the Zariski closure of $\cV_v$. Then 
		$\cV_v$ is the set of points of a Zariski open subspace of $\cZ_v$.
		\item The Zariski tangent space of $\cZ_v$ at $z$ is contained in the 
		subspace $H^1_{tri, \delta_v}(F_\wv, \ad \rho_v)$ of the Zariski tangent space of $\cX_{ps, 
		v}^{v-irr} \times \cT_v^{reg}$ at $z$.
	\end{enumerate}
\end{prop}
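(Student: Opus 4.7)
For part (1), absolute irreducibility of $\rho_v$ places $z$ inside $\cX_{ps, v}^{v-irr} \times \cT_v^{reg}$, and Lemma~\ref{lem_locally_split}, applied to the Azumaya algebra $\mathcal{A}$ on $\cX_{ps, v}^{v-irr}$, produces (possibly after shrinking) an affinoid open neighborhood $\cU_v$ of $z$ together with an isomorphism $\mathcal{A}|_{\cU_v} \cong M_n(\cO(\cU_v))$ and hence the desired universal representation $\rho_v^u : G_{F_\wv} \to \GL_n(\cO(\cU_v))$. To describe $\cV_v$ I will form the family of $(\varphi, \Gamma_{F_\wv})$-modules $D^\dagger_{rig}(\rho_v^u)$ and the universal ordered parameter $\delta_v^u$ obtained by pulling back the coordinates on $\cT_v^{reg}$. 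By Lemma~\ref{lem_uniqueness_of_triangulation_for_regular_parameter}, at every point of $\cV_v$ the triangulation is strict in the sense of \cite[Definition 6.3.1]{Ked14}, so $\cV_v$ equals the pointwise strictly trianguline locus of regular parameter $\delta_v^u$. Openness of $\cV_v$ inside its Zariski closure $\cZ_v$ then follows from the Kedlaya--Pottharst--Xiao theory (\cite[Theorem 6.3.13]{Ked14}), which extends the prototype argument of Bellaïche--Chenevier \cite[\S 3.3]{bellaiche_chenevier_pseudobook} from $F_\wv = \Qp$ to general $F_\wv$, cf.\ also \cite{Hel16}.

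For part (2), my strategy is to produce, after possibly shrinking $\cU_v$ further, a Zariski closed subspace $\cU_v^{tri} \subset \cU_v$ containing $\cV_v$ whose formal completion at $z$ pro-represents the trianguline deformation functor $\cD_{\rho_v, \cF_v, \delta_v}$. Concretely I plan to cut out $\cU_v^{tri}$ inductively, by imposing the vanishing of the successive obstructions to lifting, step by step, each piece of the triangulation $\cF_v$ at $z$ to a $(\varphi, \Gamma_{F_\wv})$-stable direct-summand filtration of $D^\dagger_{rig}(\rho_v^u)$ with graded pieces $\cR(\delta_{v,i}^u)$. Regularity of $\delta_v$, via the cohomology computations of \cite[Proposition 6.2.8]{Ked14}, guarantees that these obstructions live in coherent sheaves whose zero loci define genuinely Zariski closed conditions. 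Admitting this construction, Proposition~\ref{prop_representability_of_trianguline_def_functor} together with the compatibility of $D^\dagger_{rig}$ with base change along $\cO(\cU_v) \to A$ ($A \in \cC'_E$) identifies $\widehat{\cO}_{\cU_v^{tri}, z}$ with the pro-representing object of $\cD_{\rho_v, \cF_v, \delta_v}$, hence its tangent space at $z$ is $H^1_{tri, \delta_v}(F_\wv, \ad \rho_v)$ (embedded into the tangent space of $\cU_v$ at $z$ via the forgetful map together with the induced deformation of $\delta_v$). The chain $\cV_v \subset \cU_v^{tri}$ combined with Zariski closedness of $\cU_v^{tri}$ then gives $\cZ_v \subset \cU_v^{tri}$, and the desired bound on $T_z \cZ_v$ follows.

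The main technical obstacle will be simultaneously ensuring that $\cU_v^{tri}$ is genuinely Zariski closed and that its formal completion at $z$ pro-represents the \emph{full} functor $\cD_{\rho_v, \cF_v, \delta_v}$ rather than a smaller subfunctor. The key inputs are (a) the step-by-step lifting of triangulations along small extensions in $\cC'_E$, which uses the regularity of $\delta_v$ crucially to ensure unique liftability of each graded piece, and (b) base-change compatibility of the $(\varphi, \Gamma_{F_\wv})$-module formalism for Artinian thickenings of the family. For $F_\wv = \Qp$, (a) is essentially \cite[Propositions 2.3.6, 2.3.9]{bellaiche_chenevier_pseudobook}; the general case of both (a) and (b) should follow from parallel computations using \cite[\S 6.2--6.3]{Ked14}.
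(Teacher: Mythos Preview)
Your argument for part (1) is essentially the same as the paper's: both invoke Lemma~\ref{lem_locally_split} for the universal representation, and then appeal to the Kedlaya--Pottharst--Xiao machinery (the paper cites \cite[Corollary~6.3.10]{Ked14}) to see that the strictly trianguline locus $\cV_v$ is Zariski open in its closure.

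For part (2), however, your strategy has a genuine gap. You want to produce a Zariski-closed subspace $\cU_v^{tri} \subset \cU_v$ containing $\cV_v$ whose completed local ring at $z$ is exactly $R_{\rho_v,\cF_v,\delta_v}$. But the condition ``$D^\dagger_{rig}(\rho_v^u)$ admits a global triangulation of parameter $\delta_v^u$ lifting $\cF_v$'' is \emph{not} a Zariski-closed condition on points of $\cU_v$: the existence of a nonzero map $\cR(\delta_{v,1}^u) \hookrightarrow D^\dagger_{rig}(\rho_v^u)$ is a jump locus of the coherent sheaf $H^0(D^\dagger_{rig}(\rho_v^u)(\delta_{v,1}^{u,-1}))$, which is constructible but typically not closed. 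The ``obstructions'' you describe live only in the formal neighbourhood of $z$, so they cut out the correct formal scheme $\operatorname{Spf} R_{\rho_v,\cF_v,\delta_v}$ but do not globalise to analytic functions on $\cU_v$. Consequently there is no evident way to realise $R_{\rho_v,\cF_v,\delta_v}$ as the completed local ring of a closed analytic subspace containing $\cV_v$.

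The paper circumvents this precisely by \emph{not} trying to build such a $\cU_v^{tri}$. Instead it applies \cite[Corollary~6.3.10]{Ked14} to $\cZ_v$ itself, obtaining a proper birational morphism $f:\cZ'\to\cZ_v$ and a filtration $\cF_\bullet$ of $D^\dagger_{rig}(f^*\rho_v^u)$ which is a genuine triangulation off a closed bad locus $\cZ'_b$ disjoint from $f^{-1}(\cV_v)$; in particular $f^{-1}(z)\subset \cZ'\setminus\cZ'_b$. The global triangulation near each preimage $z'_i$ of $z$ then furnishes classifying maps $R_{\rho_v,\cF_v,\delta_v}\to \widehat{\cO}_{\cZ',z'_i}$, and a short diagram chase (using that $\widehat{\cO}_{\cZ_v,z}\hookrightarrow\prod_i\widehat{\cO}_{\cZ',z'_i}$ because $f$ is dominant, and that $\widehat{\cO}_{\cU_v,z}\twoheadrightarrow R_{\rho_v,\cF_v,\delta_v}$) yields a surjection $R_{\rho_v,\cF_v,\delta_v}\twoheadrightarrow\widehat{\cO}_{\cZ_v,z}$. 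Passing to tangent spaces gives the desired inclusion $T_z\cZ_v\subset H^1_{tri,\delta_v}(F_\wv,\ad\rho_v)$. The key idea you are missing is this passage through the resolution $\cZ'$: it is exactly what allows one to trade the non-closed trianguline condition for a space on which the triangulation genuinely spreads out.
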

\begin{proof}
		By Lemma \ref{lem_locally_split}, there is an affinoid neighbourhood 
		$\cU_v \subset \cX_{ps, v}^{v-irr} \times \cT_v^{reg}$ of $z$ over 
		which there exists a universal representation $\rho_v^u : G_{F_\wv} \to 
		\GL_n(\cO(\cU_v))$. We can assume without loss of generality that 
		$\cU_v$ is connected. By \cite[Corollary 6.3.10]{Ked14}, there is a reduced 
		rigid space $\cZ'$ over $E$ and a proper birational morphism $f : \cZ' 
		\to \cZ_v$ having the following properties:
	\begin{itemize}
		\item For every point $z' \in \cZ'$, the absolutely irreducible 
		representation $\rho_{f(z')}$ is trianguline. 
		\item There is an increasing filtration $0 = \cF_0 \subset \cF_1 
		\subset 
		\dots \subset \cF_n = D^\dagger_{rig}(f^\ast(\rho^u)))$ by coherent 
		$(\varphi, 
		\Gamma_{F_\wv})$-stable $\cR_{\cZ',F_{\wv}}$-submodules.
		\item There exists a Zariski closed subset $\cZ'_b \subset \cZ'$ such 
		that $\cZ'_b \cap f^{-1}(\cV_v) = \emptyset$ and for each $z' \in \cZ' - 
		\cZ'_b$, the pullback of $\cF_\bullet$ to 
		$D^\dagger_{rig}(\rho_{f(z')})$ is a triangulation of parameter 
		$\delta_{f(z')}$. 
		\item Over $\cZ' - \cZ'_b$, $\cF_\bullet$ is in fact a filtration by 
		local direct summand $\cR_{\cZ', F_\wv}$-submodules. 
		\item The map $f$ factors through a proper birational morphism $\tilde{f}: \cZ' \to \widetilde{\cZ}_v$, where $\widetilde{\cZ}_v$ is the normalisation of $\cZ_v$. Moreover, $\tilde{f}$ factors as the composition of a sequence of proper birational morphisms between normal rigid spaces \[\cZ' = \cZ_m \to \cZ_{m-1} \to \cdots \cZ_1 = \widetilde{\cZ}_v\] where each morphism $\cZ_i \to \cZ_{i-1}$ is glued, locally on the target, from analytifications\footnote{Here we mean the relative analytification defined by K\"{o}pf \cite{kopf}, see also \cite[Example 2.2.11]{conrad-ample}.} of birational projective schemes over $\Spec(A)$, with $\Sp(A) \subset \cZ_{i-1}$ an affinoid open.
	\end{itemize}
	Note that the final point is a consequence of the construction in the proof of \cite[Theorem 6.3.9]{Ked14}. The third point actually implies that $\cZ' = \cZ_b' \sqcup f^{-1}(\cV_v)$, 
	hence $\cZ_v= f(\cZ'_b) \sqcup \cV_v$. Since $f$ is proper this shows that 
	$\cV_v$ is Zariski open in $\cZ_v$.

	Let $\tilde{z}_1, \dots, \tilde{z}_m \in \widetilde{\cZ}_v$ be the closed points of the normalisation with image in $\cZ_v$ equal to $z$. For each $1 \le j \le m$, let $z'_j$ be a closed point of the preimage of $\tilde{z}_j$ in $\cZ'$. We denote by $z_j$ the image of $z'_j$ in any of the $\cZ_i$, for $1 \le i \le m$. We claim that the map $\widehat{\cO}_{\widetilde{\cZ}_v , \tilde{z}_j} \to \widehat{\cO}_{\cZ' , z'_j}$ on completed local rings is injective; 
	indeed, it follows from the final point in the itemized list above that we need to show injectivity for each map $\widehat{\cO}_{\cZ_i , z_j} \to \widehat{\cO}_{\cZ_{i+1} , z_j}$. Each of these maps coincides with the map on complete local rings $\widehat{A}_{x} \to \widehat{\cO}_{X,x}$ associated with a (projective, birational) morphism of schemes $X \to \Spec(A)$, where $A$ is the ring of functions on an open affinoid neighbourhood of $z_j \in \cZ_i$, $x \in \Spec(A)$ is the maximal ideal given by $z_j$ and $x' \in X$ is a closed point mapping to $x$. The complete local ring $\widehat{A}_{x}$ is a domain (by normality and excellence of $A_x$) and the map $A_x \to \cO_{X,x}$ is injective (by dominance of $X \to \Spec(A)$), so \cite[Corollaire 3.9.8]{EGAI} gives the desired injectivity. The map $\widehat{\cO}_{\cZ_v,z} \to \prod_{i=1}^m \widehat{\cO}_{\widetilde{\cZ}_v , \tilde{z}_i}$ is the normalisation of $\widehat{\cO}_{\cZ_v,z}$, so it is also injective. Putting everything together, we have shown that the map $\widehat{\cO}_{\cZ_v,z} \to \prod_{i=1}^m \widehat{\cO}_{\cZ'_v,z'_i}$ is injective. 
	
	After possibly extending $E$, we can assume that all of the points $z_i'$ in $\cZ'$ have residue field $E$. The existence of a global triangulation over 
	$\cZ' - \cZ'_b$ implies that for each $i = 1, \dots, m$, there is a 
	classifying map $R_{\rho_v, \cF_v, \delta_v} \to \widehat{\cO}_{\cZ', 
	z'_i}$, where $\cF_v$ is the unique triangulation of $\rho_v$ of parameter 
	$\delta_v$. This implies the existence of a commutative diagram
	\[ \xymatrix{ \widehat{\cO}_{\cU_v, z} \ar[r] \ar[d]  & 
	\widehat{\cO}_{\cZ_v, z} \ar[d] \\
		R_{\rho_v, \cF_v, \delta_v} \ar[r] & \prod_{i=1}^m\widehat{\cO}_{\cZ', 
		z'_i}, } \] 
	where the left vertical arrow is surjective (Proposition \ref{prop_representability_of_trianguline_def_functor}) and the top horizontal 
	arrow is surjective. We have already noted that the right vertical arrow is injective.	These facts together imply that the top horizontal arrow factors through a surjective 
	map $R_{\rho_v, \cF_v, \delta_v}  \to \widehat{\cO}_{\cZ_v, z}$. This 
	implies the desired result at the level of Zariski tangent spaces.	
\end{proof}
\begin{remark}
	Prompted by a referee, we note that the definition of `Zariski dense' given in \cite[Definition 6.3.2]{Ked14} is somewhat non-standard. In this paper (and in other references we cite such as \cite{Bre17}), a subset $Z$ of a rigid space $X$ is called Zariski dense if the smallest closed analytic subset of $X$ which contains $Z$ is $X$. In \cite[Definition 6.3.2]{Ked14} the stronger condition is imposed that $Z$ is Zariski dense (in the usual sense) in each member of some admissible affinoid cover of $X$.
	
When we apply \cite[Corollary 6.3.10]{Ked14} in the above proof, we have a Zariski dense subset of an affinoid, so there is no discrepancy between the definitions in this case. Eugen Hellmann has explained to us that the crucial result \cite[Theorem 6.3.9]{Ked14} does in fact hold with the weaker, standard definition of Zariski dense. Since it may be of interest, we sketch their argument.

We start with $X, \delta, M$ as in \cite[Theorem 6.3.9]{Ked14} and suppose we have a Zariski dense (in the usual sense) subset $X_{\mathrm{alg}} \subset X$ satisfying the assumptions of \emph{loc.~cit.} We may assume that $X$ is normal and connected, and will show that $X_{\mathrm{alg}}$ can be enlarged to a subset which is Zariski dense in the stronger sense of \cite{Ked14}. 

There are coherent sheaves $H^i_{\varphi,\gamma_K}(M^\vee(\delta))$, $H^i_{\varphi,\gamma_K}(M^\vee(\delta)/t_\sigma)$ on $X$, which are locally free over a non-empty (hence dense) Zariski open subset $U \subset X$. At points $z$ in the Zariski dense subset $X_{\mathrm{alg}}\cap U$, the fibre $H^0_{\varphi,\gamma_K}(M^\vee(\delta))\otimes_{\cO_X}k(z) \cong H^0_{\varphi,\gamma_K}(M^\vee_z(\delta_z))$ has dimension one and the map $M_z \to \cR_{k(z)}(\pi_K)(\delta_z)$ dual to a non-zero element of this fibre is surjective. The latter condition is equivalent to non-vanishing of the map $H^0_{\varphi,\gamma_K}(M^\vee(\delta))\otimes_{\cO_X}k(z) \to  H^0_{\varphi,\gamma_K}(M^\vee(\delta)/t_\sigma)\otimes_{\cO_X}k(z)$ for every $p$-adic embedding $\sigma$. These conditions hold over a Zariski open subset $U' \subset U$. Since $U'$ contains $X_{\mathrm{alg}}\cap U$, it is also Zariski dense in $X$. Moreover, $U'$ contains a Zariski dense subset of every affinoid open $V \subset X$. Indeed, the intersection $V\cap U$ with the Zariski open and dense subset $U$ contains an affinoid open subset of $V$. Repeating this step, the intersection $V \cap U'$ also contains an affinoid open subset of $V$. We have shown that we obtain the desired enlargement of $X_{\mathrm{alg}}$ by adjoining $U'$. 
\end{remark}

\subsection{The unitary group eigenvariety}\label{sec_unitary_group_eigenvariety}

Now let $F, S, p, G = G_n$ be as in our standard assumptions (\S \ref{sec_definite_unitary_groups}). We continue to assume that $E$ 
contains the 
image of every embedding $\tau: F \to \Qpbar$. In particular, the reductive group
$\Res_{F^+/\Q}G_n$ splits over $E$.

Let $U_n = \prod_v U_{n, v} \subset 
G_n(\bA_{F^+}^\infty)$ be an open compact subgroup such that for every finite place $v\not\in S$ of $F^+$, $U_{n, v}$ is 
hyperspecial maximal compact subgroup of $G_n(F^+_v)$. We 
define $\bT^S_n = \cO[T_w^1, \dots, T_w^n, (T_{w}^n)^{-1}] 
\subset 
\cO[U_n^S \backslash G_n(\bA_{F^+}^{\infty, S}) / U_n^S]$ to be the algebra 
generated by the unramified Hecke operators at split places $v = w w^c$ of $F^+$ not lying in $S$. These operators were defined in \S \ref{subsec:alg_mod_forms}.

We write $T_n \subset B_n = T_n N_n \subset \GL_n$ for the usual maximal torus 
and upper triangular Borel subgroup, and define $E$-rigid 
spaces 
\[ \cW_n = 
\Hom(\prod_{v \in S_p} 
T_n(\cO_{F_\wv}), \bG_m) \]
 and 
 \[ \cT_n = \Hom(\prod_{v \in S_p} T_n(F_\wv), 
\bG_m). \]
Restriction of characters determines a morphism $r : 
\cT_n \to \cW_n$ 
of rigid spaces. Note that the spaces 
$\cT_n, \cW_n$ may be canonically 
identified with the spaces $\cT, \cW$ of the previous section. 

We fix a choice of isomorphism $\iota : \overline{\bbQ}_p \to \bC$. If $\pi$ is 
an automorphic representation of $G_n(\bA_{F^+})$ with $\pi^{U_n} \ne 0$, 
there is a corresponding semisimple Galois representation $r_{\pi, \iota} : 
G_{F, S} \to \GL_n(\overline{\bQ}_p)$ (cf. Corollary 
\ref{cor:galois_existence}), which satisfies local-global compatibility at each 
place of $F$. The space $\iota^{-1} 
(\pi^\infty)^{U_n}$ is naturally an isotypic $\bT_n^S$-module, which therefore 
determines a homomorphism $\psi_\pi : \bT_n^S \to \overline{\bQ}_p$. We call an 
\emph{accessible refinement} of $\pi$ a choice $\chi = (\chi_v)_{v \in S_p}$ 
for each 
$v \in S_p$ of  a (necessarily smooth) character $\chi_v : T_n(F_\wv) \to 
\overline{\bbQ}_p^\times$ which appears as a subquotient of the 
normalised Jacquet module $\iota^{-1}r_{N_n}( \pi_v) = \iota^{-1} (\pi_{v, 
N_n(F_\wv)} \delta_{B_n}^{-1/2})$; equivalently, for which there is an embedding of 
$\pi_v$ into the normalised induction $i_{B_n}^{\GL_n} \iota \chi_v$.
Note that $\chi \in \cT_n(\overline{\bQ}_p)$.

\begin{lemma}
	Let $\pi$ be an automorphic representation of $G_n(\bA_F^+)$, and let $\chi = (\chi_v)_{v \in S_p}$ be an accessible refinement of $\pi$. Then for each $v \in S_p$, there is an increasing filtration of $\rec^T_{F_\wv}(\iota^{-1}\pi_v)$ by sub-Weil--Deligne 
	representations with graded pieces
	\[ \chi_{v, 1}| \cdot |^{(1-n)/2} \circ \Art_{F_\wv}, \dots, \chi_{v, n}| \cdot |^{(1-n)/2} \circ \Art_{F_\wv}. \]
\end{lemma}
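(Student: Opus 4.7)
The plan is to argue by induction on $n$, leveraging the compatibility of the local Langlands correspondence $\rec^T_{F_\wv}$ with parabolic induction. The equivalence in the definition of accessible refinement gives an embedding $\pi_v \hookrightarrow i_{B_n}^{\GL_n}\iota\chi_v$. Using transitivity of induction along the standard maximal parabolic $P = P_{(1, n-1)}$, we rewrite
\[ i_{B_n}^{\GL_n}\iota\chi_v \cong i_P^{\GL_n}\bigl(\iota\chi_{v,1} \boxtimes i_{B_{n-1}}^{\GL_{n-1}}(\iota\chi_{v,2}\otimes\dots\otimes\iota\chi_{v,n})\bigr), \]
and since $i_P^{\GL_n}$ is exact, $\pi_v$ embeds into $i_P^{\GL_n}(\iota\chi_{v,1}\boxtimes\tau)$ for some irreducible subrepresentation $\tau$ of the inner induction. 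The inductive hypothesis applied to $\tau$ (with accessible refinement $(\chi_{v,2}, \dots, \chi_{v,n})$) furnishes a filtration of $\rec^T_{F_\wv}(\iota^{-1}\tau)$ with graded pieces $\chi_{v,j}|\cdot|^{(2-n)/2}\circ\Art_{F_\wv}$ for $j = 2, \dots, n$.

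To close the induction, I would prove the following compatibility: if $\pi_v \hookrightarrow i_P^{\GL_n}(\chi_1 \boxtimes \tau)$ with $\chi_1$ a character of $F_\wv^\times$ and $\tau$ an irreducible admissible representation of $\GL_{n-1}(F_\wv)$, then $\rec^T_{F_\wv}(\iota^{-1}\pi_v)$ admits $\iota^{-1}\chi_1|\cdot|^{(1-n)/2}\circ\Art_{F_\wv}$ as an initial sub-WD-representation, with quotient a sub-WD-representation of $\rec^T_{F_\wv}(\iota^{-1}\tau)\otimes|\cdot|^{-1/2}$. The twist by $|\cdot|^{-1/2}$ is forced by the comparison between the normalizations of $\rec^T$ on $\GL_n$ and on $\GL_{n-1}$: one computes that $\rec^T_{F_\wv}$ of the Langlands sum $\chi_1 \boxplus \tau$ equals $(\chi_1|\cdot|^{-(n-1)/2}\circ\Art_{F_\wv}) \oplus (\rec^T_{F_\wv}(\tau)\otimes|\cdot|^{-1/2})$. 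Grafting this initial piece onto the inductive filtration produces the required ordered filtration, since the graded pieces of $\rec^T_{F_\wv}(\iota^{-1}\tau)\otimes |\cdot|^{-1/2}$ are exactly $\chi_{v,j}|\cdot|^{(1-n)/2}\circ\Art_{F_\wv}$ for $j = 2, \dots, n$.

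The main obstacle is showing that $\chi_{v,1}|\cdot|^{(1-n)/2}\circ\Art_{F_\wv}$ appears as an \emph{initial} sub-WD-representation, not merely as a subquotient---equivalently, that the monodromy operator $N$ on $\rec^T_{F_\wv}(\iota^{-1}\pi_v)$ is strictly upper-triangular with respect to the ordered filtration. Matching cuspidal supports between $\pi_v$ and the full principal series $i_{B_n}^{\GL_n}\iota\chi_v$ forces the semisimple part of $\rec^T_{F_\wv}(\iota^{-1}\pi_v)$ to equal $\bigoplus_i \chi_{v,i}|\cdot|^{(1-n)/2}\circ\Art_{F_\wv}$, so the content of this step is controlling the direction of $N$. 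This can be handled by direct appeal to the Bernstein--Zelevinsky classification: an embedding of $\pi_v$ into a principal series induced from characters corresponds to a linear ordering of the cuspidal support compatible with the segment structure of the multisegment attached to $\pi_v$, and such an ordering canonically determines a filtration of $\rec^T_{F_\wv}(\iota^{-1}\pi_v)$ by sub-WD-representations with graded pieces in that order.
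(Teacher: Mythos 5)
Your strategy — induct on $n$, peel off one character, and reduce to a single-step compatibility — is plausible in outline, but it has two genuine gaps. First, the claim that $\pi_v$ embeds into $i_P^{\GL_n}(\iota\chi_{v,1}\boxtimes\tau)$ for an irreducible \emph{subrepresentation} $\tau$ of the inner induction is not justified by exactness of $i_P$ alone; exactness produces an irreducible \emph{subquotient} $\tau$ of $i_{B_{n-1}}^{\GL_{n-1}}(\iota\chi_{v,2}\otimes\cdots\otimes\iota\chi_{v,n})$ such that $\pi_v\hookrightarrow i_P^{\GL_n}(\iota\chi_{v,1}\boxtimes\tau)$, but a subquotient need not admit $(\chi_{v,2},\dots,\chi_{v,n})$ as an accessible refinement in that order — its Jacquet module may only contain a permuted tuple — so the inductive hypothesis cannot be applied to close the induction as written.

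Second, and more seriously, the ``compatibility'' in your third paragraph — that $\chi_{v,1}|\cdot|^{(1-n)/2}\circ\Art_{F_\wv}$ is an \emph{initial} sub-Weil--Deligne representation of $\rec^T_{F_\wv}(\iota^{-1}\pi_v)$, with the prescribed quotient — is stated but never proved. You correctly isolate the difficulty (controlling the direction of $N$) and then dismiss it by asserting in your last sentence that the Bernstein--Zelevinsky classification ``canonically determines a filtration \dots\ by sub-WD-representations with graded pieces in that order.'' But that assertion \emph{is} the lemma. The paper supplies the missing argument, and in a way that also sidesteps your first gap: it writes $\rec_{F_\wv}(\iota^{-1}\pi_v) = \oplus_i \Sp_{n_i}(\psi_i|\cdot|^{(n_i-1)/2})$, realises $\pi_v$ as a \emph{subquotient} (not subrepresentation) of a normalised induction of Steinberg representations via the Langlands classification, applies the geometric lemma \cite[Theorem 1.2]{Zel80} to identify the relevant constituents of the Jacquet module with permutations $w\in S_n$ that are increasing on each block of size $n_1,\dots,n_k$, and then checks directly that $\Fil_j = \operatorname{span}(e_{w^{-1}(1)},\dots,e_{w^{-1}(j)})$ is $N$-stable for any such $w$. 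That short verification is precisely what your proposal postpones and never completes.
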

\begin{proof}
Since $\pi_v$ admits an accessible refinement, it is a subquotient of a 
principal series representation. Suppose that $\rec_{F_\wv}(\pi_v) = 
\oplus_{i=1}^k \operatorname{Sp}_{n_i}(\psi_i| \cdot |^{(n_i - 1)/2})$ for 
some characters $\psi_i : F_\wv^\times \to \C^\times$. By the Langlands 
classification, $\pi_v$ is isomorphic to a subquotient of the normalised 
induction
\[ \Pi = i_P^{\GL_n} \operatorname{St}_{n_1}(\psi_1) \otimes \dots \otimes 
\operatorname{St}_{n_k}(\psi_k), \] where $P \subset \GL_n$ is the 
standard parabolic subgroup corresponding to the partition $n = n_1 + n_2 
+ \dots + n_k$. It will therefore suffice to show the stronger statement 
that if $\alpha = \alpha_1 \otimes \dots  \otimes \alpha_n$ is a 
subquotient of the normalised Jacquet module of $\Pi$, then there is an 
increasing filtration of $\oplus_{i=1}^k \operatorname{Sp}_{n_i}(\psi_i | 
\cdot |^{(n_i-1)/2})$ by sub-Weil--Deligne representations with graded 
pieces given by $\alpha_1 \circ \Art_{F_\wv}, \dots, \alpha_n \circ 
\Art_{F_\wv}$. We recall that each $\operatorname{Sp}_{n_i}(\psi_i | 
\cdot |^{(n_i-1)/2})$ comes with a standard basis $e_1,e_2,\ldots,e_{n_i}$. We concatenate and relabel these bases so that $e_1,e_2,\ldots,e_n$ is a basis for $\oplus_{i=1}^k \operatorname{Sp}_{n_i}(\psi_i |\cdot|^{(n_i-1)/2})$ with $e_{1+\sum_{i=1}^{j-1}n_i},\ldots e_{\sum_{i=1}^j n_i}$ the standard basis for $\operatorname{Sp}_{n_j}(\psi_j | 
	\cdot |^{(n_j-1)/2})$.
	 
We first treat the case $k = 1$, $n_1 = n$. After twisting we can assume that 
$\psi = 1$. Then the normalised Jacquet module of $\operatorname{St}_n$ equals 
$| \cdot |^{(n-1)/2} \otimes \dots \otimes | \cdot |^{(1-n)/2}$, while there is 
a unique invariant flag of $\operatorname{Sp}_n(| \cdot |^{(n-1)/2})$ given by 
$\Fil_i = \operatorname{span}(e_1, \dots, e_i)$ ($i = 1, \dots, n$) which has 
the desired graded pieces. 
	 
Now we return to the general case.	 Using \cite[Theorem 1.2]{Zel80}, we see 
that the irreducible subquotients of the normalised Jacquet module of $\Pi$ are 
precisely the characters $\beta_{w^{-1}(1)} \otimes \dots \otimes 
\beta_{w^{-1}(n)}$, where $w \in S_n$ is any permutation which is increasing on 
each of the sets $\{ 1, \dots, n_1 \}, \{ n_1 +1, \dots, n_1 + n_2 \}, \dots, 
\{ n_1 + \dots + n_{k-1} + 1, \dots, n_1 + \dots + n_k \}$, and $(\beta_1, 
\dots, \beta_n)$ is the concentation of the tuples $(\psi_i | \cdot 
|^{(n_i-1)/2}, \dots, \psi_i | \cdot |^{(1-n_i) / 2})$ for $i = 1, \dots, k$.

	 We see that the increasing filtration of 
	 \[ \oplus_{i=1}^k \operatorname{Sp}_{n_i}(\psi_i | 
	 \cdot |^{(n_i-1)/2}) \]
	  given by $\Fil_j = \operatorname{span}(e_{w^{-1}(1)}, \dots, 
	  e_{w^{-1}(j)})$ is a filtration by sub-Weil--Deligne representations 
	  which has the desired property. This completes the proof.
\end{proof}
If $\chi$ is an accessible refinement of $\pi$, then we write $\nu(\pi, \chi) 
\in \cT_n(\overline{\bQ}_p)$ for the character
\begin{equation}\label{eqn:nuforRAn} \nu(\pi, \chi) = \kappa(\pi) \cdot ( \chi_v \iota^{-1}\delta_{B_n}^{-1/2})_{v \in 
S_p}, \end{equation}
where $\kappa(\pi) \in \cT_n(E)$ is the ($B_n$-dominant) 
$\Q_p$-algebraic character which is the highest weight of $\iota^{-1} \pi_\infty$. If 
$\kappa(\pi)_v = (\kappa_{\tau,1} \ge \kappa_{\tau,2} \ge \cdots \ge 
\kappa_{\tau,n})_{\tau: F_\wv \to \Qpbar}$ then the labelled Hodge--Tate 
weights of $r_{\pi,\iota}|_{G_{F_\wv}}$, in increasing order, are 
$(-\kappa_{\tau,1} < 
-\kappa_{\tau,2}+1 < \cdots <
-\kappa_{\tau,n}+n-1)_{\tau: F_\wv \to \Qpbar}$. 

We 
write $\jmath_n : \cT_n \to \cT_n$ for the map defined by the formula
\[ \jmath_n(\nu)_v = \nu_v \cdot (1, \epsilon^{-1}\circ \Art_{F_{\wv}}, \dots, 
\epsilon^{1-n}\circ \Art_{F_{\wv}}). \]
The reason for introducing this map is that if $\pi$ is an automorphic 
representation of $G_n(\bA_{F^+})$ and $\chi$ is an accessible refinement, then 
the parameter $\delta$ associated to $\chi|\cdot|^{(1-n)/2}$ by the formula of 
Lemma 
\ref{lem_numerically_non_critical_implies_non_critical} satisfies 
$\jmath_n(\nu(\pi , \chi)) = \delta$. 
We call the accessible refinement $\chi$ 
numerically non-critical or ordinary if $\delta$ is. Note that this property 
depends on the pair $(\pi, \chi)$ and not just on $\chi$.

\subsubsection{Emerton's eigenvariety 
construction}\label{sssec:emertonconstruction} We now describe the 
construction, following Emerton \cite{MR2207783}, of the (tame level $U_n$) 
eigenvariety for $G_n$. We use Emerton's construction because we do not want to 
restrict to considering $\pi$ with Iwahori-fixed vectors at places in $S_p$ (as 
is done, 
for example, in \cite{bellaiche_chenevier_pseudobook}) and it seems to us that 
Emerton's representation-theoretic viewpoint is the most transparent way to 
handle this level of generality.

We recall the set-up of \S \ref{subsec:alg_mod_forms}, so for each 
dominant weight $\lambda$ we have a module 
$S_{\lambda}(U_n,\cO/\varpi^n)$ of algebraic modular forms, which has a natural 
action of $\TT_n^S$. When $\lambda$ is trivial we omit it from the notation. 

We define \[\widetilde{S}(U_n^p,\cO) := \varprojlim_s\left(
\varinjlim_{U_{p}}S(U_n^pU_p, \cO/\varpi^s)\right) \]
and
\[ \widetilde{S}(U_n^p,E):= \widetilde{S}(U_n^p,\cO)\otimes_{\cO}E,\] so 
$\widetilde{S}(U_n^p,E)$ is an $E$-Banach space (with unit ball 
$\widetilde{S}(U_n^p,\cO)$), equipped with an 
admissible continuous representation of $G_n(F^+_p)$. For dominant weights 
$\lambda$, we can 
consider the space of locally $V_\lambda^\vee$-algebraic vectors 
$\widetilde{S}(U_n^p,E)^{V_\lambda^\vee-alg}$. We have a $(G_n(F^+_p)\times 
\TT_n^S)$-equivariant isomorphism 
\[\varinjlim_{U_p}S_\lambda(U_n^pU_p,\cO)\otimes_\cO V_\lambda^\vee \cong 
\widetilde{S}(U_n^p,E)^{V_\lambda^\vee-alg}\]
(see \cite[Corollary 2.2.25]{MR2207783}). We can also consider the space of 
locally $\Qp$-analytic vectors 
$\widetilde{S}(U_n^p,E)^{an}$, and apply Emerton's locally analytic Jacquet 
functor $J_{B_n}$ to this locally analytic representation of $G_n(F^+_p)$. We 
thereby obtain an essentially admissible locally analytic representation 
$J_{B_n}\widetilde{S}(U_n^p,E)^{an}$ of $\prod_{v \in S_p}T_n(F_\wv)$, and by 
duality a coherent sheaf $\cM_n$ on $\cT_n$, equipped with an action of 
$\TT_n^S$. We denote by $\mathcal{A}_n \subset \End(\cM_n)$ the coherent 
$\cO_{\cT_n}$-algebra subsheaf generated by  $\TT_n^S$. Now we can define the 
eigenvariety, an $E$-rigid space, as a relative rigid analytic spectrum \[\cE_n 
:= 
\mathrm{Sp}_{\cT_n}\mathcal{A}_n \xrightarrow{\nu'} \cT_n\] 
equipped with the canonical finite morphism $\nu'$. 

We define another finite 
morphism $\nu: 
\cE_n \to \cT_n$ by twisting $\nu'$ by $\delta_{B_n}^{-1}$ 
(see Remark \ref{nncvsncs}). By construction, we also have a ring homomorphism 
$\psi: \TT_n^S \to \cO(\cE_n)$, so we obtain a map on points: 
\[ \psi^* \times 
\nu: \cE_n(\Qpbar) \to \Hom(\T_n^S,\Qpbar) \times \cT_n(\Qpbar). \]
For $E'/E$ finite (with $E' \subset \Qpbar$), a point $(\psi_0, \nu_0) \in 
\Hom(\T_n^S,E') \times \cT_n(E')$ is in 
the 
image of $\psi^* \times \nu$ if and only if the eigenspace \[ 
J_{B_n}\left(\widetilde{S}(U_n^p,E')^{an}\right)[\psi_0,\nu_0\delta_{B_n}]\] is non-zero, 
or in 
other words if there is a non-zero $\prod_{v \in S_p}T(F_\wv)$-equivariant 
map \[\nu_0\delta_{B_n}\to 
J_{B_n}\left(\widetilde{S}(U_n^p,E')^{an}\right)[\psi_0].\] We define the subset $Z_n 
\subset 
\cE_n(\Qpbar)$ of classical 
points to be those for which there is moreover a non-zero map to the Jacquet 
module of the locally algebraic vectors: \[\nu_0\delta_{B_n}\to 
J_{B_n}\left(\widetilde{S}(U_n^p,E')^{alg}\right)[\psi_0].\] 

\begin{lemma}\label{lem:eigenspaces}
For any characters $\psi: \T_n^S \to E$ and $\chi: \prod_{v \in S_p}T(F_\wv) \to E^\times$, we have \[\Hom_{\prod_{v \in S_p}T(F_\wv)}\left(\chi,J_{B_n}\left(\widetilde{S}(U_n^p,E)^{an}\right)[\psi]\right)  = \Hom_{\prod_{v \in S_p}T(F_\wv)}\left(\chi,J_{B_n}\left(\widetilde{S}(U_n^p,E)^{an}[\psi]\right)\right).\] 
\end{lemma}
\begin{proof}
This can be seen using Emerton's canonical lift \cite[Proposition 3.4.9]{MR2292633}, which identifies both sides of the equality with the same eigenspace in $\widetilde{S}(U_n^p,E)^{an}$. Alternatively, we can use the left exactness of the Jacquet functor. In the latter argument we need to use the fact that $\T_n^S$ acts on $\widetilde{S}(U_n^p,E)^{an}$ via a Noetherian ring and we then deduce that passing to an eigenspace for a (finitely generated) ideal in this ring commutes with the Jacquet functor.
\end{proof}

We now relate the classical points $Z_n$ to refined automorphic representations. 
Let $\mathcal{A}_n$ denote the set of automorphic 
representations $\pi$ of $G_n(\bA_{F^+})$ such that $(\pi^\infty)^{U_n^{S_p}} 
\neq 0$, let $\mathcal{RA}_n$ denote the set of pairs $(\pi, \chi)$ where 
$\pi \in \mathcal{A}_n$ and $\chi$ is an accessible refinement of $\pi$, 
and let $\mathcal{Z}_n \subset \Hom(\bT_n^S, \overline{\bbQ}_p) \times 
\cT_n(\overline{\bbQ}_p)$ denote the set of points of the form $(\psi_\pi, 
\nu(\pi, \chi))$, where $(\pi, \chi) \in \mathcal{RA}_n$. We note in particular 
the existence of the surjective map $\gamma_n : 
\mathcal{RA}_n \to \cZ_n$. 
\begin{lemma}The map
$\psi^\ast \times \nu$ restricts to a 
bijection $Z_n \to \mathcal{Z}_n$.\end{lemma}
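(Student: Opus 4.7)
The plan is to prove two statements: (a) that $\psi^* \times \nu$ is injective on all of $\cE_n(\overline{\bQ}_p)$, and (b) that the image of $Z_n$ under $\psi^* \times \nu$ is exactly $\mathcal{Z}_n$. Statement (a) is essentially formal from the construction of $\cE_n$ as $\mathrm{Sp}_{\cT_n} \mathcal{A}_n$: an $\overline{\bQ}_p$-point of $\cE_n$ is the data of a point $\nu_0' \in \cT_n(\overline{\bQ}_p)$ (the image of the point under $\nu'$) together with a character $\theta$ of the finite-dimensional fibre $\mathcal{A}_n \otimes_{\cO_{\cT_n}} \kappa(\nu_0')$; since $\mathcal{A}_n$ is by construction generated over $\cO_{\cT_n}$ by the image of $\bT_n^S$, such a $\theta$ is determined by its restriction $\psi_0 = \theta|_{\bT_n^S}$. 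Because $\nu$ differs from $\nu'$ only by the (invertible) twist $\delta_{B_n}^{-1}$, the injectivity of $\psi^* \times \nu$ on $\cE_n(\overline{\bQ}_p)$ follows.

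For statement (b), a point $(\psi_0, \nu_0)$ of $\cE_n(\overline{\bQ}_p)$ lies in $Z_n$ precisely when the character $\nu_0 \delta_{B_n}$ occurs in $J_{B_n} \widetilde{S}(U_n^p, \overline{\bQ}_p)^{alg}[\psi_0]$. Passing to the direct limit over $U_p$ in Lemma \ref{lem_algebraic_automorphic_forms_are_classical}, one obtains a $(G_n(F_p^+) \times \bT_n^S)$-equivariant decomposition
\[ \widetilde{S}(U_n^p, \overline{\bQ}_p)^{alg} \cong \bigoplus_{\pi \in \mathcal{A}_n} m(\pi) \cdot \iota^{-1}(\pi^\infty)^{U_n^p} \otimes W_\pi, \]
where $W_\pi$ is the algebraic representation of $\prod_{v \in S_p} \GL_n(F_\wv)$ attached to $\pi_\infty$ via Lemma \ref{lem_algebraic_automorphic_forms_are_classical}. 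The crucial input is Emerton's computation that the locally analytic Jacquet functor on such a locally algebraic tensor product $\pi_p \otimes W_\pi$ equals the (unnormalised) smooth Jacquet module $\pi_{p, N_n}$ tensored with the one-dimensional space $(W_\pi)^{N_n}$, on which $\prod_{v \in S_p} T_n(F_\wv)$ acts by the algebraic character of weight $\kappa(\pi)$. Unpacking, $\nu_0 \delta_{B_n}$ occurs in $J_{B_n}\widetilde{S}(U_n^p, \overline{\bQ}_p)^{alg}[\psi_0]$ if and only if there exist $\pi \in \mathcal{A}_n$ with $\psi_\pi = \psi_0$ and a smooth character $\chi = (\chi_v)_v$ such that each $\chi_v \delta_{B_n}^{1/2}$ occurs in $\pi_{v, N_n}$ (equivalently, $\chi$ is an accessible refinement of $\pi$) and $\nu_0 \delta_{B_n} = \kappa(\pi) \cdot \chi \cdot \delta_{B_n}^{1/2}$. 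Cancelling $\delta_{B_n}$ on both sides gives $\nu_0 = \nu(\pi, \chi)$, so $(\psi_0, \nu_0) \in \mathcal{Z}_n$; the argument is manifestly reversible, so this yields both directions.

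The main obstacle is convention tracking: the twist between $\nu$ and $\nu' = \nu \delta_{B_n}$, the precise relation between Emerton's Jacquet functor and the smooth (normalised versus unnormalised) Jacquet module, and the duality twist implicit in the definition of $W_\pi$ coming from Lemma \ref{lem_algebraic_automorphic_forms_are_classical} as compared with the highest-weight convention that defines $\kappa(\pi)$. None of these is a genuine difficulty once stated carefully; the proof is essentially an exercise in combining Emerton's construction of the Jacquet functor on locally algebraic vectors with the classical description of spaces of algebraic modular forms.
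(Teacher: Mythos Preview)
Your proposal is correct and follows essentially the same approach as the paper's proof. Both arguments rest on Emerton's computation of the locally analytic Jacquet functor on locally algebraic vectors (the paper cites \cite[Prop.~4.3.6]{MR2292633} for this) combined with the classical description of $\widetilde{S}(U_n^p,E)^{alg}$; the only difference is organisational, in that you decompose directly over $\pi\in\mathcal{A}_n$ while the paper first decomposes by weight $\lambda$ and then invokes Lemma~\ref{lem_algebraic_automorphic_forms_are_classical}, and you are somewhat more explicit about injectivity and the reverse direction than the paper (which simply declares the bijection ``easy to see'').
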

\begin{proof}
If $z \in Z_n \cap \cE_n(E')$ is a classical point defined over 
	$E'$, it follows from the above discussion on locally algebraic vectors 
	that 
	$z$ arises arises from a non-zero map \[\nu(z)\delta_{B_n} \to 
	J_{B_n}\left(\varinjlim_{U_p}S_\lambda(U_n^pU_p,E')[\psi^*(z)]\otimes_{E'} 
	V_\lambda^\vee\right)\] for some dominant weight $\lambda$. It 
	follows 
	from \cite[Prop.~4.3.6]{MR2292633} that such maps correspond bijectively 
	with 
	non-zero maps \[\nu(z)\delta_{B_n}(\lambda^\vee)^{-1} \to 
	J_{B_n}\left(\varinjlim_{U_p}S_\lambda(U_n^pU_p,E')[\psi^*(z)]\right),\] 
	where $\lambda^\vee$ is the highest weight of $V_\lambda^\vee$.
	
	By Lemma \ref{lem_algebraic_automorphic_forms_are_classical}, we have 
	$(\pi,\chi) \in \mathcal{RA}_n$ where $\pi_\infty$ has highest weight 
	$\iota\lambda^\vee$, $\psi_\pi = 
	\psi^*(z)$ and $\nu(z) = \lambda^\vee 
	\chi\delta_{B_n}^{-1/2} 
	= \nu(\pi,\chi)$. 
	This shows that we do indeed have an induced map $Z_n \to \cZ_n$, and it is 
	easy to see that this is a bijection.
\end{proof}	

\begin{remark}\label{nncvsncs}
	An accessible refinement $\chi$ is numerically non-critical if and only if 
	for every $v\in S_p$ the character $\nu(\pi,\chi)_v\delta_{B_n} = 
	\kappa(\pi)_v \chi_v \delta_{B_n}^{1/2}$ has non-critical slope, in the sense of \cite[Defn.~4.4.3]{MR2292633}. The renormalisation (replacing $\chi_v 
	\delta_{B_n}^{-1/2}$ with $\chi_v \delta_{B_n}^{1/2}$) appears in Emerton's 
	eigenvariety construction because $\chi_v \delta_{B_n}^{1/2}$ is a smooth 
	character appearing in the (non-normalised) Jacquet module 
	$\iota^{-1}\pi_{v,N_n(F_\wv)}$, whilst Bella\"{i}che--Chenevier normalise 
	things to be compatible with the Hecke action on Iwahori-fixed vectors (see 
	\cite[Prop.~6.4.3]{bellaiche_chenevier_pseudobook}).
\end{remark} 

Our next task is to recall some well known properties of the eigenvariety $\cE_n$ (cf.~\cite[\S7]{Bre15}), variants of which are established by numerous authors in slightly different contexts (e.g.~\cite{chenevier-GLn,Buz07,MR2207783,loeffler-ocaaf}). We follow the exposition of \cite{Bre17} which establishes these properties for the patched eigenvariety. In order to at least sketch the proofs of these properties in our context, we first introduce a `spectral variety' which will turn out to be a Fredholm hypersurface over $\cW_n$. 

We fix the element $z =(z_v)_{v \in S_p} \in \prod_{v\in S_p} T_n(F_{\wv})$ with $z_v = \diag(\varpi_\wv^{n-1},\ldots,\varpi_{\wv},1)$, and let $Y$ be the closed subgroup of $\prod_{v\in S_p} T_n(F_{\wv})$ generated by $\prod_{v\in S_p} T_n(\cO_{F_\wv})$ and $z$. The rigid space $\widehat{Y} = \Hom(Y,\bG_m)$ is then identified with $\cW_n \times \bG_m$. As in \cite[\S3.3]{Bre17}, it follows from \cite[Proposition 3.2.27]{MR2292633} that $J_{B_n}\widetilde{S}(U_n^p,E)^{an}$ has dual equal to the space of global sections of a coherent sheaf $\mathcal{N}_n$ on $\cW_n \times \bG_m$. We define $\cY_z$ to be the schematic support (cf.~above D\'{e}finition 3.6 in \cite{Bre17}) of $\mathcal{N}_n(\delta_{B_n}^{-1})$. This rigid space comes equipped with a closed immersion $\cY_z \hookrightarrow \cW_n \times \bG_m$. The twist in the definition of $\cY_z$ is there to ensure that this closed immersion is compatible with the map $\nu$. Indeed, the map from $\cE_n$ given by composing $\nu$ with the restriction map to $\cW_n\times\bG_m$ factors through a finite map $f: \cE_n \to \cY_z$, giving us a commutative diagram: 

\[ \xymatrix{ \cE_n \ar[r]^{\nu}\ar[d]^{f}& \cT_n\ar[d] \\ \cY_z\ar@{^{(}->}[r] & \cW_n\times \bG_m} \]

Now we state our proposition summarising the key properties of the eigenvariety $\cE_n$.
\begin{proposition}\label{prop:eigenvariety}
The tuple $(\cE_n, \psi, \nu, Z_n)$ has the following 
properties: 

\begin{enumerate}
	\item\label{reduced} $\cE_n$ is a reduced $E$-rigid space, equipped with a 
	finite morphism 
	$\nu : \cE_n \to \cT_n$. We write $\kappa$ for the induced map $\kappa : 
	\cE_n \to \cW_n$.
	\item\label{dense} $Z_n \subset \cE_n(\overline{\bbQ}_p)$ is a Zariski 
	dense subset 
	which accumulates at every point of $Z_n$ (in other words, each point of 
	$Z_n$ admits a basis of affinoid neighbourhoods $V$ such that $V \cap Z_n$ 
	is Zariski dense in $V$), and the map 
	$\psi^\ast \times \nu : \cE_n(\overline{\bbQ}_p) \to \Hom(\bT_n^S, 
	\overline{\bbQ}_p) \times \cT_n(\overline{\bbQ}_p)$ restricts to a 
	bijection $Z_n \to \mathcal{Z}_n$.
	\item\label{hecke} For any affinoid open $V \subset \cT_n$, the map 
	$\bT^S_n 
\otimes \cO(V) \to \cO(\nu^{-1} V)$ is surjective.
	\item\label{equidim} $\cE_n$ is equidimensional of dimension equal to $\dim 
	\cW_n$. 
	For any irreducible component $\cC \subset \cE_n$, $\kappa(\cC)$ is a Zariski open subset of $\cW_n$.
	\item\label{classicality} Let $z \in \cE_n(\Qpbar)$ be a point, and suppose 
	that $\delta = 
	\jmath_n(\nu(z))$ factors as $\delta = \delta_{alg} \delta_{sm}$, where 
	$\delta_{alg}$ is a strictly dominant algebraic character and $\delta_{sm}$ 
	is smooth, and that $\delta$ is numerically non-critical. Then $z \in Z_n$.
	\item\label{bounded} $\psi$ takes values in the subring $\cO(\cE_n)^{\le 
	1}$ of 
	bounded elements. 
\end{enumerate}

\end{proposition}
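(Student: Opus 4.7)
The plan is to deduce each of the six assertions from Emerton's general eigenvariety formalism \cite{MR2207783, MR2292633}, combined with the concrete features of the definite unitary group $G_n$ and the analysis of the Jacquet module already carried out in \S\ref{sssec:emertonconstruction}. Finiteness of $\nu$ in (\ref{reduced}) and the surjectivity (\ref{hecke}) are immediate from the construction of $\cE_n$ as $\mathrm{Sp}_{\cT_n}\mathcal{A}_n$, where $\mathcal{A}_n \subset \mathrm{End}(\cM_n)$ is the coherent $\cO_{\cT_n}$-subalgebra generated by $\TT_n^S$. Boundedness (\ref{bounded}) will follow because each $T_{w,i}^{\pm 1}$ preserves the unit ball $\widetilde{S}(U_n^p,\cO)$ and hence acts as a power-bounded operator on the Banach sheaf $\cM_n$; after Emerton's duality this means its image in $\cO(\cE_n)$ lies in $\cO(\cE_n)^{\leq 1}$.

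The heart of the argument is the classicality statement (\ref{classicality}). By Remark \ref{nncvsncs}, the numerical non-criticality of $\delta = \jmath_n(\nu(z))$ is equivalent to the non-critical slope condition of \cite[Defn.~4.4.3]{MR2292633} on the smooth character $\nu(z)\delta_{B_n}$. The factorisation $\delta = \delta_{alg}\delta_{sm}$ with $\delta_{alg}$ strictly dominant algebraic identifies a locally $V_\lambda^\vee$-algebraic eigensystem in $J_{B_n}\widetilde{S}(U_n^p,E)^{an}$, and Emerton's non-critical slope classicality theorem \cite[Thm.~4.4.5]{MR2292633} (together with \cite[Prop.~4.3.6]{MR2292633} to pass between the various normalisations of the Jacquet module) then forces this eigensystem to arise from $\widetilde{S}(U_n^p,E)^{V_\lambda^\vee-alg} = \varinjlim_{U_p} S_\lambda(U_n^pU_p, E) \otimes V_\lambda^\vee$, i.e.\ from a classical automorphic form, giving $z \in Z_n$.

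Given (\ref{classicality}), the density and accumulation in (\ref{dense}) follow by the standard small-slope argument: on any affinoid neighbourhood of a point of $Z_n$, the subset of weights for which the relevant refinement is numerically non-critical and strictly dominant accumulates at the given weight, and their preimages under $\kappa$ are classical by (\ref{classicality}); the bijection $Z_n \to \cZ_n$ is the content of the lemma preceding the proposition together with the construction of $\psi$ and $\nu$. Reducedness in (\ref{reduced}) then drops out because the classical points are Zariski dense, and at each classical point the local ring of $\cE_n$ embeds into a product of fields (a finite direct sum of classical Hecke eigensystems), so nilpotents must vanish on a Zariski dense set.

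For equidimensionality (\ref{equidim}), the plan is to invoke the Coleman--Buzzard--Chenevier formalism: since $G_n$ is compact at infinity, $\widetilde{S}(U_n^p,\cO)$ is (after projecting to finite-slope parts over affinoid subdomains of $\cW_n$) a direct summand of an orthonormalisable Banach module, and the action of any $U_{\varpi_\wv,\lambda}$-type operator is compact, so the resulting finite-slope eigenvariety is finite and flat over its open image in weight space. As in \cite[Lemma 6.4.2]{bellaiche_chenevier_pseudobook}, one glues these local pictures to the globally constructed $\cE_n$ (using that both constructions have the same classical points and classical points are dense in either) to conclude that $\cE_n$ is equidimensional of dimension $\dim \cW_n$ and that $\kappa$ sends each irreducible component to a Zariski open in $\cW_n$. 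The main obstacle I anticipate is the careful verification of the classicality input (\ref{classicality}) in the setting $\prod_{v\in S_p}\GL_n(F_\wv)$ rather than a single $\GL_n(\bQ_p)$-factor, together with tracking all the normalisation twists ($\delta_{B_n}^{\pm 1/2}$) relating $\chi_v$, $\nu(\pi,\chi)_v$, and the parameter $\delta$ appearing in Lemma \ref{lem_numerically_non_critical_implies_non_critical}.
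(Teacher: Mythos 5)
Your overall strategy matches the paper's: classicality (\ref{classicality}) via Emerton's criterion and Remark~\ref{nncvsncs}, boundedness (\ref{bounded}) via preservation of the unit ball, (\ref{hecke}) and finiteness of $\nu$ by construction, and density (\ref{dense}) by accumulation of numerically non-critical weights. The one place where your argument has a genuine gap is reducedness in (\ref{reduced}). You assert that nilpotents must vanish because classical points are Zariski dense and the local ring at each classical point embeds into a product of fields. Neither half of this is correct as stated: a section can vanish on a Zariski dense set of points without being zero (already $k[x]/(x^2)$ illustrates this at its unique point), and the local ring of $\cE_n$ at a classical point is not a priori a product of fields — that is essentially what one is trying to prove. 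The missing input, which the paper supplies by relating $\cE_n$ to a Fredholm hypersurface following \cite{Bre17}, is a flatness statement: locally over $\cW_n$ (after a slope truncation), the coherent sheaf $\cM_n$ is a finite flat module over the relevant affinoid of weight space, so the subalgebra $\mathcal{A}_n$ generated by $\T_n^S$ is $\cO_{\cW_n}$-torsion-free. Combined with the semisimplicity of the unramified prime-to-$S$ Hecke operators on the (classical) cuspidal automorphic forms contributing at the Zariski dense set of classical points, this torsion-freeness is what forces any nilpotent of $\mathcal{A}_n$ to vanish. You already invoke the Coleman--Buzzard--Chenevier / Fredholm picture for equidimensionality (\ref{equidim}); the repair is simply to place reducedness inside the same framework, rather than deducing it separately from density of classical points alone.
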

\begin{proof}
	First we note that a tuple satisfying the first three properties is unique - we will not actually use this fact, but it can be proved in the same way as \cite[Proposition 7.2.8]{bellaiche_chenevier_pseudobook} (our context 
	is slightly different, as we equip our eigenvarieties with a map to $\cT_n$ 
	instead of $\cW_n\times \bG_m$). We also note that it is not essential for our purposes to show that $\cE_n$ is reduced (this is the most delicate of the listed properties); we could instead replace $\cE_n$ with its underlying reduced subspace.
	
	Now we summarise how to verify these 
	properties. Property (\ref{classicality}) follows from Emerton's 
	`classicality criterion' for his Jacquet functor \cite[Theorem 
	4.4.5]{MR2292633} (cf.~Remark \ref{nncvsncs}). Property (\ref{hecke}) holds 
	by construction.
	
	Property (\ref{equidim}) can be established as in \cite[\S3.3]{Bre17} using the spectral variety $\cY_z$. More precisely, (the proof of) Lemma 3.10 in this reference shows that the closed analytic subset of $\cW_n \times \bG_m$ underlying $\cY_z$ is a Fredholm hypersurface, and $\cY_z$ has an admisible cover by affinoids $(U'_i)_{i\in I}$ on which the map to $\cW_n$ is finite and surjective with image an open affinoid $W_i \subset \cW_n$. Moreover, each $U'_i$ is disconnected from its complement in the inverse image of $W_i$ and $\Gamma(U'_i,\mathcal{N}_n)$ is a finite projective $\cO_{\cW_n}(W_i)$-module. 
	
	Having established the existence of a good affinoid cover of the spectral variety, we set $U_i = f^{-1}(U'_i)$. Since $f$ is a finite map, $(U_i)_{i\in I}$ is an admissible affinoid cover of $\cE_n$. It can then be shown, as in \cite[Proposition 3.11]{Bre17}, that each affinoid $\cO_{\cE_n}(U_i)$ is isomorphic to a $\cO_{\cW_n}(W_i)$-algebra of endomorphisms of the finite projective $\cO_{\cW_n}(W_i)$-module $\Gamma(U_i,\cM_n)$. We can now prove Property (4) as in \cite[Corollaire 3.12]{Bre17}: this shows that $\cE_n$ is equidimensional of dimension equal to $\cW_n$, without embedded components, and each irreducible component maps surjectively to an irreducible component of $\cY_z$. Since irreducible components of Fredholm hypersurfaces are again Fredholm hypersurfaces, the image of such an irreducible component is Zariski open in $\cW_n$ (cf.~\cite[Corollaire 3.13]{Bre17}). 
	
	Now to establish property (\ref{dense}), using property (\ref{classicality}), it suffices to show that points $z \in \cE_n(\Qpbar)$ with numerically non-critical $\delta = \jmath_n(\nu(z))$ accumulate at any point $z_0$ with $\kappa(z_0)$ locally algebraic (cf.~\cite[Th\'{e}or\`eme 3.19]{Bre17}). Using the good affinoid cover described in the previous paragraph, we have an affinoid neighbourhood $U$ of $z_0$ which is a finite cover of an affinoid $W \subset \cW_n$. In fact, such $U$ form a neighbourhood basis at $z_0$ (cf.~\cite[Theorem 2.1.1, Lemma 2.1.2]{taibi-eigenvariety}). The valuations $v_p(\delta_{v,i}(p))$ are bounded as $z$ varies in $U$ (with $\delta = \jmath_n(\nu(z))$). It follows from the description in Definition \ref{defn:numerically_non_critical} that there is a subset $\Sigma$ of $\cW_n$ accumulating at $\kappa(z_0)$ such that $\kappa^{-1}(\Sigma)\cap U$ consists entirely of points with numerically non-critical $\delta$. The subset $\kappa^{-1}(\Sigma)\cap U$ is Zariski dense in $U$. 
	
	Finally, to establish property (\ref{reduced}) it remains to prove that $\cE_n$ is reduced. Since we showed that $\cE_n$ is without embedded components, it suffices to prove that every irreducible component of $\cE_n$ contains a reduced point. Using (\ref{equidim}) and the Zariski density of algebraic characters in $\cW_n$, it suffices to show that $\cE_n$ is reduced at every point $z_0$ with $\kappa(z_0)$ algebraic. 
	We use a good affinoid neighbourhood $U = \Sp(B)$ of $z_0$ as in the previous paragraph, with $W = \kappa(U) = \Sp(A)$. The finite $A$-algebra $B$ is identified with a sub-$A$-algebra of $\End_A(M)$, where $M = \Gamma(U,\cM_n)$ is a finite projective $A$-module. As in the proof of \cite[Proposition 3.9]{chenevier-JL}, it now suffices to show that for $w$ in a Zariski dense subset of $W$, the Hecke algebra $\T_n^S$ and $\prod_{v \in S_p}T_n(F_{\wv})$ act semisimply on the fibre $M\otimes_A k(w)$ --- we use the fact that an endomorphism of a projective $A$-module which vanishes in the fibres at a Zariski dense subset of points in $W$ necessarily vanishes. The proof of \cite[Corollaire 3.20]{Bre17} shows that we can achieve this by choosing $w$ so that their pre-images in $U$ have \emph{tr\`{e}s classique} associated characters $\delta$ \cite[D\'{e}finition 3.17]{Bre17} (this is a condition on characters with algebraic image in $\cW_n$ which can be guaranteed by a `numerical' condition as in the proof of \cite[Th\'{e}or\`eme 3.19]{Bre17}, in particular it gives a Zariski-dense and self-accumulating subset of $\cE_n$). We can replace the reference to \cite{CEGGPSBreuilSchneider} in the proof with 
	the well-known assertion that the Hecke algebra $\T_n^S$ acts semisimply on $\varinjlim_{U_p}S_\lambda(U_n^pU_p,E)$ for dominant $\lambda$. Finally, property (\ref{bounded}) follows from the fact that the $\T_n^S$-action stabilizes the unit ball $\widetilde{S}(U_n^p,\cO)\subset \widetilde{S}(U_n^p,E)$.
\end{proof}

The properties established in Proposition \ref{prop:eigenvariety} imply the 
existence of a 
conjugate self-dual Galois pseudocharacter $T_n : G_{F, S} \to \cO(\cE_n)$ with 
the property that for any point $z \in Z_n$ corresponding to a pair $(\pi, 
\chi)$, $T_{n, z} = \tr r_{\pi, \iota}$.  This is proved as in \cite[Proposition 7.5.4]{bellaiche_chenevier_pseudobook} and \cite[Proposition 7.1.1]{chenevier-GLn}. The key points are that $\cO(\cE_n)^{\le 1}$ is compact \cite[Lemma 7.2.11]{bellaiche_chenevier_pseudobook} and the map $\cO(\cE_n)^{\le 1} \rightarrow \prod_{z \in Z_n} \C_p$ given by the evaluation maps at each $z \in Z_n$ is a continuous injection (by Zariski density of $Z_n$ and reducedness of $\cE_n$). Then \cite[Example 2.32]{chenevier_det} is used to glue together the pseudocharacters $\tr r_{\pi, \iota}$ to form the continuous pseudocharacter $T_n$. 

The pseudocharacter $T_n$ determines an admissible cover $\cE_n = 
\sqcup_{\overline{\tau}_n} \cE_n(\overline{\tau}_n)$ as a disjoint union of 
finitely many open subspaces indexed by $G_k$-orbits of pseudocharacters $\overline{\tau}_n : 
G_{F, S} \to \overline{\bF}_p$, over which which the residual pseudocharacter 
satisfies the condition $\overline{T}_{n, z} = \overline{\tau}_n$ (cf. 
\cite[Theorem 3.17]{chenevier_det}).

Fix a pseudocharacter $\overline{\tau}_n : G_{F, S} \to \overline{\bF}_p$. 
Extending $E$ if necessary, we may assume that $\overline{\tau}_n$ takes values 
in $k$. We 
recall some of the $E$-rigid spaces of Galois representations defined in \S \ref{subsec_trianguline_reps_global_geometry}, now 
decorated with 
$n$ subscripts. Thus $\cX_{ps, n}$ is the space of conjugate self-dual 
deformations of $\overline{\tau}_n$, $\cX_{ps, n}^{p-irr}$ is its Zariski open 
subspace of pseudocharacters which are irreducible at the $p$-adic places. We 
also have the subspace $\cX_{ps, n}^{irr}$ of pseudocharacters which are 
(globally) irreducible.  The 
existence of $T_n$ determines a morphism $\lambda : \cE(\overline{\tau}_n) \to 
\cX_{ps, n}$, and the morphism $i_n = \lambda \times (\jmath \circ \nu) : \cE_n(\overline{\tau}_n) 
\to \cX_{ps, n} \times \cT_n$ is a closed immersion, by point (\ref{hecke}) in 
the list 
of defining properties of $\cE_n$.

Now assume that $n \geq 3$, and let $\overline{\tau}_2 : G_{F, S} \to 
\overline{\bF}_p$ be a conjugate self-dual pseudocharacter of dimension 2. Let 
$\overline{\tau}_n = \Sym^{n-1} \overline{\tau}_2$; then $\overline{\tau}_n$ is 
a conjugate self-dual pseudocharacter of $G_{F, S}$ of dimension $n$. Taking 
symmetric powers of pseudocharacters determines a morphism $\sigma_{n, g} : 
\cX_{ps, 2} \to \cX_{ps, n}$. On the other hand, we can define a map 
$\sigma_{n, p} : \cT_2 \to \cT_n$ by the formula 
\[ ( (\delta_{v, 1}, \delta_{v, 2}))_{v \in S_p} \mapsto ( (\delta_{v, 
1}^{n-1}, \delta_{v, 1}^{n-2} \delta_{v, 2}, \dots, \delta_{v, 2}^{n-1}) )_{v 
\in S_p}. \]
We write $\sigma_n  = \sigma_{n, g} \times \sigma_{n, p} : \cX_{ps, 2} \times 
\cT_2 \to \cX_{ps, n} \times \cT_n$ for the product of these two morphisms. 
We have constructed a diagram
\[ \xymatrix@1{ \cE_2(\overline{\tau}_2) \ar[r]^-{\sigma_n \circ i_2}& \cX_{ps, 
n} \times \cT_n & \ar[l]_-{i_n} \cE_n(\overline{\tau}_n).} \]
Compare Lemma \ref{lem_inclusion_of_irreducible_components}.
\begin{defn}\label{defn_n_regular}
	Let $\pi$ be an automorphic representation of $G_2(\bA_{F^+})$, let $\chi 
	= (\chi_v)_{v \in S_p}$ be an accessible refinement of $\pi$, and let $n 
	\ge 2$. We say that 
	$\chi$ is $n$-regular if for each $v \in S_p$ the character $\chi_v = 
	\chi_{v,1}\otimes\chi_{v,2}$ satisfies $(\chi_{v,1}/\chi_{v,2})^i \ne 1$ 
	for $1 \le i \le n-1$.
\end{defn}
\begin{theorem}\label{thm_geometric_eigenvariety_argument}
	Let $(\pi_2, \chi_2) \in \mathcal{RA}_2$ satisfy $\tr \overline{r}_{\pi_2, 
	\iota} = \overline{\tau}_2$, and let $z_2 = \gamma_2(\pi_2, \chi_2) \in 
	\cE_2(\overline{\tau}_2)(\Qpbar)$. Suppose that:
	\begin{enumerate}
		\item The refinement $\chi_2$ is numerically non-critical and 
		$n$-regular. 
		\item There exists $(\pi_n, \chi_n) \in \mathcal{RA}_n$ such that 
		$(\sigma_n \circ i_2)(z_2) = i_n(z_n)$, where $z_n = \gamma_n(\pi_n, 
		\chi_n)$.
		\item\label{biglocalimage} For each $v \in S_p$, the Zariski closure of $r_{\pi_2, 
		\iota}({G_{F_\wv}})$ (in $\GL_2/\Qpbar$) contains $\SL_2$.
	\end{enumerate}
	 Then each irreducible component $\cC$ of $\cE_2(\overline{\tau}_2)_{\Cp}$ 
	 containing $z_2$ satisfies 
	 $(\sigma_n \circ i_2)(\cC) \subset 
	 i_n(\cE_n(\overline{\tau}_n)_{\Cp})$.
\end{theorem}
\begin{proof}
	Extending $E$ (the field over which $\cE_2$ is defined) if necessary, we 
	may assume that $z_2 \in 
	\cE_2(\overline{\tau}_2)(E)$ and $r_{\pi_2,\iota}$ takes values in 
	$\GL_2(E)$. By \cite[Theorem 3.4.2]{Con99} (which says that an irreducible 
	component of $\cE_2(\overline{\tau}_2)_{\Cp}$ is contained in the base 
	change of an irreducible component of $\cE_2(\overline{\tau}_2)$), it 
	suffices 
	to show that each irreducible component $\cC$ of $\cE_2(\overline{\tau}_2)$ 
	containing $z_2$ satisfies $(\sigma_n \circ i_2)(\cC) \subset 
	i_n(\cE_n(\overline{\tau}_n))$.
	By Lemma 
	\ref{lem_inclusion_of_irreducible_components}, it is enough to 
	show that $(\sigma_n \circ i_2)^{-1}(i_n(\cE_n(\overline{\tau}_n)))$ 
	contains an 
	affinoid open neighbourhood of $z_2$. To prove this, we will use a number 
	of the results established so far.

	Since $\chi_2$ is a numerically non-critical 
	refinement, the parameter $\delta_2$ of the associated triangulation is 
	non-critical, in the sense that for each $\tau \in \Hom(F, E)$, the 
	sequence of $\tau$-weights of $\delta_2$ is strictly increasing (Lemma 
	\ref{lem_numerically_non_critical_implies_non_critical}). Passing to 
	symmetric powers, we see that $r_{\pi_n, \iota}$ is trianguline of 
	parameter $\delta_n = \sigma_{n, p}(\delta_2)$, and that $\delta_n$ is 
	non-critical (although it is not necessarily numerically non-critical). 
	
	The $n$-regularity of $\chi_2$ 
	implies that $\delta_n \in \cT_n^{reg}(E)$.
	We are going to apply Proposition 
	\ref{prop_global_tangent_space_in_the_non-critical_case} 
	to conclude that $\dim_E H^1_{tri, \delta_n}(F_S / F^+, \ad 
	r_{\pi_n,\iota}) \leq \dim 
	\cW_n = \dim \cE_n(\overline{\tau}_n)$. Note that if $v \in S$ then
	$\mathrm{WD}(r_{\pi_n, \iota}|_{G_{F_\wv}})$ is generic, because it is 
	pure: the base change of $\pi_n$ to $\GL_n(\bA_F)$ exists (for example, by 
	\cite[Corollaire 5.3]{labesse}) and is cuspidal (because $r_{\pi_n, \iota}$ 
	is irreducible), so we can appeal to the main theorem of 
	\cite{Caraianilnotp} (which establishes the general case; under various additional hypotheses, purity was 
	established in \cite{ht,ty,shin,MR3010691}).
	If $v \in S_p$, let us write $f_v :\cX_{ps, n} \times \cT_n \to \cX_{ps, n, 
	v} \times \cT_v$ for the natural restriction map. By Proposition 
	\ref{prop_affinoid_local_trianguline_space} and Lemma 
	\ref{lem_character_spaces}, we can find for each $v \in S_p$ an affinoid 
	open neighbourhood $\cU_v \subset \cX_{ps, n, v} \times \cT_v$ of the point 
	$f_v(z_n)$ such that the following properties hold:
	\begin{itemize}
		\item In fact, $\cU_v \subset  \cX_{ps, n, v}^{v-irr} \times 
		\cT_v^{reg}$ and there exists a universal representation $\rho_v^u : 
		G_{F_\wv} \to \GL_n(\cO(\cU_v))$ over $\cU_v$.
		\item Let $\cZ_v \subset \cU_v$ denote the Zariski closure of the set 
		$\cV_v \subset \cU_v$ of points corresponding to pairs $(\rho_v, 
		\delta_v)$ such that $\rho_v$ is trianguline of parameter $\delta_v$. 
		Then the Zariski tangent space of $\cZ_v$ at $f_v(z_n)$ is contained in 
		$H^1_{tri, 
		\delta_v}(F_\wv, \ad r_{\pi_n,\iota}|_{G_{F_\wv}})$. 
	\end{itemize}
	We can then find an affinoid open neighbourhood $\cU \subset \cX_{ps, n} 
	\times \cT_n$ of the point $z_n$ such that the following properties hold:
	\begin{itemize}
		\item $\cU \subset \cap_{v \in S_p} f_v^{-1}(\cU_v)$ and there exists a 
		universal representation $\rho^u : G_{F, S} \to \GL_n(\cO(\cU))$ over 
		$\cU$.
		\item Let $\cZ = \cU \cap (\cap_{v \in S_p} f_v^{-1}(\cZ_v))$. Then $\cZ$ 
		is a closed analytic subset of $\cU$ and the Zariski tangent space of 
		$\cZ$ at the point $z_n$ is contained in $H^1_{tri, \delta}(F_S / F^+, 
		\ad r_{\pi_n,\iota})$. By the main theorem of \cite{newton2020adjoint}, we have 
	$H^1_f(F^+, 
	\ad r_{\pi_n, \iota}) = 0$, so Proposition \ref{prop_global_tangent_space_in_the_non-critical_case} implies that the Zariski tangent space of $\cZ$ at point $z_n$ has dimension at most $\dim \cW_n$.
	\end{itemize}
	Let $\cU' = \cE_n(\overline{\tau}_n) \cap \cU$. Then $\cU'$ is an affinoid 
	open neighbourhood of $z_n$ in $\cE_n(\overline{\tau}_n)$. We note that if 
	$z'_n = \gamma_n(\pi'_n, \chi'_n) \in \cU'$, where $\chi'_n$ is a numerically 
	non-critical refinement, then $z'_n \in \cZ$ (by Lemma 
	\ref{lem_numerically_non_critical_implies_non_critical}, and the definition 
	of $\cZ$). Such points accumulate at $z_n$, implying that every irreducible 
	component of $\cU'$ containing the point $z_n$ is contained in $\cZ$. In 
	particular, $\cZ$ contains an affinoid open neighbourhood of $z_n$ in 
	$\cU'$, so we have $\dim \cZ \geq \dim \cE_n(\overline{\tau}_n) = 
	\dim \cW_n$. It follows that $\dim \cZ = \dim \cW_n$, that $\widehat{\cO}_{\cZ, 
	z_n}$ is a regular local ring, and that $\cZ$ is smooth at the point 
	$z_n$. 	Consequently, $\cU'$ and $\cZ$ are locally isomorphic at $z_n$, 
	$\cE_n(\overline{\tau}_n)$ is smooth at the 
point $z_n$, and $\cE_n(\overline{\tau}_n)$ has a unique irreducible 
component passing through $z_n$. 
	Applying Lemma \ref{lem_inclusion_of_irreducible_components}, we can also 
	deduce that the unique irreducible component $\cZ'$ of $\cZ$ containing 
	$z_n$ is 
	contained in $\cU'$.

	Now let $\cU'' = (\sigma_n \circ i_2)^{-1}(\cU)$, and let $g = (\sigma_n 
	\circ 
	i_2)|_{\cU''} : \cU'' \to \cU$. Then $\cU''$ is an admissible open of 
	$\cE_2(\overline{\tau}_2)$, and $g^{-1}(\cZ) \subset \cU''$ is a non-empty 
	closed analytic subset. Let $(\pi'_2, \chi'_2) \in \mathcal{RA}_2$ be a 
	pair such that $\chi'_2$ 
	satisfies the analogue of property (1) in the theorem. Arguing again as in 
	the second paragraph of the proof, we see that if the point $(\sigma_n 
	\circ i_2)(\gamma_2(\pi'_2, \chi'_2))$ lies in $\cU$, then it in fact lies in 
	$\cZ$. Since such points accumulate at $z_2$, we see that $g^{-1}(\cZ')$ 
	contains each irreducible component of $\cU''$ which passes through $z_2$ 
	(and hence contains an affinoid open neighbourhood of $z_2$). Since $\cZ' 
	\subset \cU'$ we deduce that $(\sigma_n \circ 
	i_2)^{-1}(\cE_n(\overline{\tau}_n))$ 
	contains an affinoid open neighbourhood of $z_2$. This completes the proof.
\end{proof}
\begin{remark}
Note that assumption (\ref{biglocalimage}) on the image of the local Galois representation ensures that all symmetric powers remain locally irreducible. We need this to apply the results of \S\ref{subsec_trianguline_reps_global_geometry}. The authors expect that, with some effort, this material could be adjusted to allow locally reducible (but globally irreducible) families of Galois representations.
\end{remark}

We also prove a version of this result in the ordinary case. We first note a 
well-known consequence of Hida theory:
\begin{lemma}\label{lem:ord_locus_Hida}
The Zariski closure of the classical points with ordinary refinements 
$\cE_n(\overline{\tau}_n)^{ord} \subset \cE_n(\overline{\tau}_n)$ is a union of 
connected components of 
$\cE_n(\overline{\tau}_n)$ which are finite over $\cW_n$, and every classical 
point of $\cE_n(\overline{\tau}_n)^{ord}$ has an ordinary refinement. All 
points of $\cE_n(\overline{\tau}_n)^{ord}$ with dominant locally algebraic image in 
$\cW_n$ are classical. 
\end{lemma}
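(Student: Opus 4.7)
The plan is to invoke Hida theory for the definite unitary group $G_n$ (as developed in this setting by Geraghty \cite{ger}) to construct a rigid analytic ordinary eigenvariety $\cE_n^H$, finite over $\cW_n$, and to identify its component $\cE_n^H(\overline{\tau}_n)$ with $\cE_n(\overline{\tau}_n)^{ord}$.

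First, using the operators $U_{\wv,0}^j$ of Section \ref{subsec:alg_mod_forms}, I form Hida's ordinary idempotent $e = \lim_{m \to \infty} (\prod_{v \in S_p,\, 1 \le j \le n} U_{\wv,0}^j)^{m!}$ and the $\Lambda$-module
\[ M := \varprojlim_c \varinjlim_{b} e \cdot S\big(U_n^p \textstyle\prod_{v \in S_p}\iota_{\wv}^{-1}\Iw_{\wv}(b,c),\, \cO/(\varpi^c)\big), \]
with $\Lambda := \cO[[\prod_{v \in S_p} T_n(\cO_{F_\wv})]]$. Hida's control theorem, proved in this setting in \cite{ger}, asserts that $M$ is a finitely generated projective $\Lambda$-module with a commuting $\bT_n^S$-action. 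The image of $\bT_n^S \otimes_\cO \Lambda$ inside $\End_\Lambda(M)$ defines a coherent sheaf of algebras on $\cW_n$; its relative rigid analytic spectrum is a reduced $E$-rigid space $\cE_n^H$, finite over $\cW_n$, equipped with a conjugate self-dual pseudocharacter $T_n^H : G_{F, S} \to \cO(\cE_n^H)$ and a decomposition $\cE_n^H = \sqcup_{\overline{\tau}_n}\cE_n^H(\overline{\tau}_n)$. By Lemma \ref{lem_algebraic_automorphic_forms_are_classical}, the classical points of $\cE_n^H(\overline{\tau}_n)$ are in bijection with pairs $(\pi, \chi) \in \mathcal{RA}_n$ such that $\tr \overline{r}_{\pi, \iota} = \overline{\tau}_n$ and $\chi$ is ordinary, and Hida's classicality theorem (proved in this setting in \cite{ger}) ensures that every point of $\cE_n^H(\overline{\tau}_n)$ with dominant locally algebraic weight is classical; such points are Zariski dense in $\cE_n^H(\overline{\tau}_n)$.

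To compare $\cE_n^H$ with Emerton's eigenvariety $\cE_n$, I use that the ordinary idempotent $e$ extends to an idempotent on the locally analytic vectors $\widetilde{S}(U_n^p, E)^{an}$ which commutes with Emerton's locally analytic Jacquet functor $J_{B_n}$. This produces a closed immersion $\cE_n^H(\overline{\tau}_n) \hookrightarrow \cE_n(\overline{\tau}_n)$ compatible with the pseudocharacters, and identifies $\cE_n^H(\overline{\tau}_n)$ with the closed analytic subspace on which each $U_{\wv,0}^j$ acts with $p$-adic unit eigenvalue. The classical points of $\cE_n^H(\overline{\tau}_n)$ are then precisely the classical points of $\cE_n(\overline{\tau}_n)$ with ordinary refinement; since $\cE_n^H(\overline{\tau}_n)$ is reduced, it equals the Zariski closure of these classical points and hence coincides with $\cE_n(\overline{\tau}_n)^{ord}$. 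The finiteness over $\cW_n$, the existence of ordinary refinements at every classical point of $\cE_n(\overline{\tau}_n)^{ord}$, and the classicality of dominant locally algebraic points all follow.

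To upgrade ``union of irreducible components'' to ``union of connected components'' of $\cE_n(\overline{\tau}_n)$, I use the direct sum decomposition $\widetilde{S}(U_n^p, E)^{an} = e\widetilde{S}(U_n^p, E)^{an} \oplus (1-e)\widetilde{S}(U_n^p, E)^{an}$: applying $J_{B_n}$ and passing to relative rigid analytic spectra over $\cT_n$ yields a disjoint decomposition of $\cE_n$, one piece of which is $\cE_n^H$, which suffices. The main technical point is verifying the compatibility of Hida's ordinary idempotent with Emerton's Jacquet functor, which is standard but requires some care to set up cleanly in the required generality; I regard this as the principal item to check, rather than as a genuine obstacle.
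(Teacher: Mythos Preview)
Your overall strategy—construct the Hida eigenvariety $\cE_n^H$ via \cite{ger}, embed it as a closed subspace of $\cE_n(\overline{\tau}_n)$, and identify it with $\cE_n(\overline{\tau}_n)^{ord}$—is the same as the paper's, and the appeals to Hida's control and classicality theorems for the finiteness over $\cW_n$ and the final classicality claim are exactly right.

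The gap is in your argument that $\cE_n(\overline{\tau}_n)^{ord}$ is a union of \emph{connected} components. You claim the ordinary idempotent $e$ extends to an idempotent on $\widetilde{S}(U_n^p,E)^{an}$ commuting with $J_{B_n}$, giving a direct sum decomposition of the eigenvariety. But the operators $U_{\wv,0}^j$ are Hecke operators for an Iwahori double coset; they are not defined on the completed cohomology $\widetilde{S}(U_n^p,E)^{an}$, which carries no fixed level at $p$, only a smooth $G_n(F_p^+)$-action. What is true is that after applying $J_{B_n}$ one has a $\prod_{v\in S_p} T_n(F_\wv)$-action, and the $U_p$-operators live there; but it is not automatic that the limit defining $e$ converges on this essentially admissible representation, nor that it produces a splitting of the coherent sheaf $\cM_n$ over $\cT_n$.

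The paper avoids this by a cleaner route: the map $\jmath_n\circ\nu$ on $\cE_n(\overline{\tau}_n)^{ord}$ factors through the open subspace $\cT_n^\circ\subset\cT_n$ of unitary characters (this is just the statement that ordinary refinements have parameters with unit values on uniformizers). Conversely, \cite[Corollary 6.4]{sorensen-ordjac} shows that every point of $\cE_n(\overline{\tau}_n)$ lying over $\cT_n^\circ$ already lies in $\cE_n(\overline{\tau}_n)^{ord}$. So $\cE_n(\overline{\tau}_n)^{ord}$ is simultaneously closed (as a Hida eigenvariety) and the preimage of the open set $\cT_n^\circ$, hence open and closed. Sorensen's result is precisely what makes your proposed idempotent splitting rigorous at the level of Jacquet modules; you should cite it rather than asserting the splitting directly.
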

\begin{proof}
We can identify $\cE_n(\overline{\tau}_n)^{ord}$ with the generic fibre of the 
formal spectrum of Hida--Hecke algebra (a localization of the ring denoted by 
$\widetilde{\mathbb{T}}^{S,\ord}(U_n(\mathfrak{p}^\infty),\cO)$ in 
\cite[\S2]{ger}), since this is naturally a Zariski closed subspace of 
$\cX_{ps,n} \times \cT_n$ in which the classical points with ordinary 
refinements are Zariski dense. We deduce from Hida theory that 
$\cE_n(\overline{\tau}_n)^{ord}$ is finite over $\cW_n$ and equidimensional of 
dimension $\dim\cW_n$. Moreover, the map $\nu:\cE_n(\overline{\tau}_n)^{ord} 
\to \cT_n$ 
factors through the open subspace $\cT_n^\circ \subset \cT_n$ classifying 
unitary characters of $\prod_{v \in S_p}T_n(F_\wv)$. 

On the other hand, we claim that every point of  
$\cE_n(\overline{\tau}_n)\times_{\nu,\cT_n}\cT_n^\circ$ is 
contained in $\cE_n(\overline{\tau}_n)^{ord}$. Assuming this, these (reduced) 
subspaces of $\cE_n(\overline{\tau}_n)$ are equal and $\cE_n(\overline{\tau}_n)^{ord}$ is an open and closed 
subspace of 
$\cE_n(\overline{\tau}_n)$. The final part of the lemma follows from the classicality 
theorem in Hida theory \cite[Lemma 2.25]{ger}.

It remains to show the claimed inclusion of $\cE_n(\overline{\tau}_n)\times_{\nu,\cT_n}\cT_n^\circ$ in $\cE_n(\overline{\tau}_n)^{ord}$. Suppose $z$ is an $E$-point of $\cE_n(\overline{\tau}_n)\times_{\nu,\cT_n}\cT_n^\circ$ (extending scalars deals with the general case). The character $\nu(z)\delta_{B_n}$ then appears in the eigenspace $\left(J_{B_n}\widetilde{S}(U_n^p,E)^{an}\right)[\psi^*(z)]$. This character therefore also appears in $J_{B_n}\left(\widetilde{S}(U_n^p,E)^{an}[\psi^*(z)]\right)$, by Lemma \ref{lem:eigenspaces}. Applying \cite[Corollary 
6.4]{sorensen-ordjac} to $\widetilde{S}(U_n^p,\cO)[\psi^*(z)]$  (note that Sorensen's Jacquet modules are twisted by 
$\delta_{B_n}^{-1}$ compared to ours), we deduce that the unitary character $\nu(z)$ appears in the ordinary part $\mathrm{Ord}_{B_n}\widetilde{S}(U_n^p,\cO)[\psi^*(z)]$. This shows that $z$ is a point of $\cE_n(\overline{\tau}_n)^{ord}$.
\end{proof}

\begin{theorem}\label{thm_geometric_argument_ord}
	Let $(\pi_2, \chi_2) \in \mathcal{RA}_2$ satisfy $\tr \overline{r}_{\pi_2, 
		\iota} = \overline{\tau}_2$, and let $z_2 = \gamma_2(\pi_2, \chi_2) \in 
	\cE_2(\overline{\tau}_2)(\Qpbar)$. Suppose that:
	\begin{enumerate}
		\item The refinement $\chi_2$ is ordinary. 
		\item There exists $(\pi_n, \chi_n) \in \mathcal{RA}_n$ such that 
		$(\sigma_n \circ i_2)(z_2) = i_n(z_n)$, where $z_n = \gamma_n(\pi_n, 
		\chi_n)$.
		\item The Zariski closure of $r_{\pi_2, 
			\iota}({G_{F}})$ contains $\SL_2$.
	\end{enumerate}
	Then each irreducible component $\cC$ of $\cE_2(\overline{\tau}_2)_{\Cp}$ 
	containing $z_2$ satisfies 
	$(\sigma_n \circ i_2)(\cC) \subset 
	i_n(\cE_n(\overline{\tau}_n)_{\Cp})$.
\end{theorem}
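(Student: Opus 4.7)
The plan is to follow the proof of Theorem \ref{thm_geometric_eigenvariety_argument}, substituting Hida theory (Lemma \ref{lem:ord_locus_Hida}) for the trianguline tools of Propositions \ref{prop_affinoid_local_trianguline_space} and \ref{prop_global_tangent_space_in_the_non-critical_case}. After extending $E$ so that $z_2\in\cE_2(\overline{\tau}_2)(E)$, by Lemma \ref{lem_inclusion_of_irreducible_components} (combined with \cite[Theorem 3.4.2]{Con99}) it suffices to show that $(\sigma_n\circ i_2)^{-1}(i_n(\cE_n(\overline{\tau}_n)))$ contains an affinoid open neighbourhood of $z_2$ in $\cE_2(\overline{\tau}_2)$.

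Because $\sigma_{n,p}$ takes ordinary parameters to ordinary parameters (products of $\cO^\times$-valued characters remain $\cO^\times$-valued) and intertwines the twists $\jmath_2$, $\jmath_n$, the refinement $\chi_n$ is ordinary and $z_n\in\cE_n^{ord}:=\cE_n(\overline{\tau}_n)^{ord}$; likewise $z_2\in\cE_2^{ord}$. By Lemma \ref{lem:ord_locus_Hida}, $\cE_n^{ord}$ and $\cE_2^{ord}$ are unions of connected components of the respective eigenvarieties, each finite over its weight space. Choose affinoid opens $\cV_n\subset\cE_n^{ord}$ containing $z_n$ and $\cU\subset\cX_{ps,n}\times\cT_n$ containing $i_n(z_n)$ such that $i_n(\cV_n)=i_n(\cE_n^{ord})\cap\cU$, $\cV_n$ is finite over an affinoid open $W_n\subset\cW_n$, and a universal representation $\rho^u:G_{F,S}\to\GL_n(\cO(\cU))$ is defined over $\cU$ (as in Lemma \ref{lem_locally_split}). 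The parameter $\delta_n=\sigma_{n,p}(\delta_2)$ is numerically non-critical (because $\pi_2$ is regular algebraic, so $\delta_2$ is, and taking symmetric powers preserves strictly increasing Hodge--Tate weights), so condition (\ref{item_non_critical}) of Lemma \ref{lem_weight_map_etale_for_non_critical_refinement} holds; but $\delta_n$ may fail to lie in $\cT_n^{reg}$ without $n$-regularity of $\chi_2$.

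The main technical step is therefore to replace the trianguline local model of Proposition \ref{prop_affinoid_local_trianguline_space} by an ordinary one. I would define a closed analytic subspace $\cZ_{ord}\subset\cU$ by requiring, for each $v\in S_p$, that $\rho^u|_{G_{F_\wv}}$ admit an increasing filtration by direct summand $\cO(\cU)$-submodules lifting the canonical ordinary filtration at $z_n$, with graded pieces prescribed by the ordinary parameter in the $\cT_n$-coordinate. One then adapts Proposition \ref{prop_affinoid_local_trianguline_space} to bound the Zariski tangent space of $\cZ_{ord}$ at $i_n(z_n)$ by the ordinary Selmer group $H^1_{ord}(F_S/F^+,\ad r_{\pi_n,\iota})$; combining this with an ordinary variant of Proposition \ref{prop_global_tangent_space_in_the_non-critical_case}, which uses non-criticality of $\delta_n$ via Lemma \ref{lem_weight_map_etale_for_non_critical_refinement} together with the vanishing $H^1_f(F^+,\ad r_{\pi_n,\iota})=0$ from \cite{New19a} (where the hypothesis that the Zariski closure of $r_{\pi_2,\iota}(G_F)$ contains $\SL_2$ is used), yields $\dim T_{i_n(z_n)}\cZ_{ord}\le\dim\cW_n$. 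Since $i_n(\cV_n)\subset\cZ_{ord}$ already has dimension $\dim\cW_n$ by Hida finiteness, the inclusion is an equality locally at $i_n(z_n)$, and both are smooth of this dimension there.

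With this local identification, the argument concludes exactly as in Theorem \ref{thm_geometric_eigenvariety_argument}. Setting $\cU''=(\sigma_n\circ i_2)^{-1}(\cU)\subset\cE_2^{ord}$, classical points of $\cU''$ corresponding to pairs $(\pi_2',\chi_2')$ with $\chi_2'$ ordinary and numerically non-critical accumulate at $z_2$; by Lemma \ref{lem_numerically_non_critical_implies_non_critical} and the definition of $\cZ_{ord}$, their images under $\sigma_n\circ i_2$ lie in $\cZ_{ord}=i_n(\cV_n)\subset i_n(\cE_n^{ord})$. Hence every irreducible component of $\cU''$ through $z_2$ lies in $(\sigma_n\circ i_2)^{-1}(i_n(\cE_n^{ord}))$, giving the required affinoid open neighbourhood of $z_2$. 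The principal obstacle will be constructing $\cZ_{ord}$ cleanly and verifying the ordinary analogues of Propositions \ref{prop_affinoid_local_trianguline_space} and \ref{prop_global_tangent_space_in_the_non-critical_case}; one expects this to be tractable using standard ordinary deformation theory, but the details must be carried out carefully to accommodate the possible non-regularity of $\delta_n$.
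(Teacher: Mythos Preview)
Your overall strategy matches the paper's, but two points deserve correction.

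First, your appeal to Lemma \ref{lem_weight_map_etale_for_non_critical_refinement} is misplaced: that lemma is about trianguline deformations and requires $\delta_n\in\cT_n^{reg}$, which you yourself note may fail here without $n$-regularity. The paper instead invokes \cite[Lemma 3.9]{ger}, which is the genuine ordinary analogue (the kernel of the weight map on ordinary deformations lands in $H^1_g$) and needs no regularity hypothesis. Your remarks about $\delta_n$ being ``numerically non-critical'' are likewise off target: that notion is a slope condition, not what is relevant in the ordinary setting.

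Second, the paper makes your proposed construction of $\cZ_{ord}$ precise in a clean way. Rather than imposing filtration conditions directly on $\cU$, it forms the proper map $\mathcal{FL}_p(\cO_\cU^n)\to\cU$ classifying $S_p$-tuples of full flags and cuts out $\cZ^{ord}$ inside the flag variety by the conditions that each flag $\cF_v$ be $G_{F_\wv}$-stable with the action on $\gr^i(\cF_v)$ given by $\delta_{v,i}\circ\Art_{F_\wv}^{-1}$. The key observation is that because the parameters lie in the open locus $\cT_n^{HT-reg}$ of distinct labelled weights (automatic here since $\pi_2$ is regular algebraic), such a flag is unique when it exists, so $\cZ^{ord}\to\cU$ is a proper monomorphism, hence a closed immersion. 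This gives the closed subspace you want for free and makes the tangent-space comparison with $\cE_n(\overline{\tau}_n)^{ord}$ immediate. The rest of your argument (dimension count via \cite{New19a}, smoothness at $z_n$, and the accumulation argument on $\cU''$) then goes through exactly as you outline and as in Theorem \ref{thm_geometric_eigenvariety_argument}.
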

\begin{proof}
	Extending $E$ if necessary, we 
	may assume that $z_2 \in 
	\cE_2(\overline{\tau}_2)(E)$ and $r_{\pi_2,\iota}$ takes values in 
	$\GL_2(E)$. We denote 
	by $\cT_n^{HT-reg} \subset \cT_n$ the Zariski open subset where for each $v 
	\in S_p$ and $\tau \in \Hom_{\Qp}(F_\wv,E)$ 
	the labelled weights $\wt_{\tau}(\delta_{n,v,i})$ are distinct for $i= 1, 
	\dots,n$. 
	
	By Lemma 
	\ref{lem_deforming_conjugate_self_dual_representations} and (a global 
	variant of) Lemma \ref{lem_locally_split}, there is an open affinoid 
	neighbourhood \[z_n = (\tr r_{\pi_n,\iota},\delta_n) \in \cU \subset 
	\cX^{irr}_{ps,n} \times \cT_n^{reg}\] and a 
	universal representation $\rho^u: G_{F,S} \to \GL_n(\cO(\cU))$ such that 
	the 
	induced representation $(\rho^u)\hat{}_{z_n}: G_F \to 
	\GL_n(\cO(\cU)\hat{}_{z_n})$ with coefficients in the completed local ring 
	at $z_n$ extends to a homomorphism 
	$(\rho^u)\hat{}_{z_n}: G_{F^+,S} \to 
	\G_n(\cO(\cU)\hat{}_{z_n})$ with $\nu_{\cG_n} \circ (\rho^u)\hat{}_{z_n} = 
	\epsilon^{1-n} \delta_{F /F^+}^n$.
	
	Since $\chi_2$ is ordinary, the parameter $\delta_n = 
	\sigma_{n,p}(\delta_2)$ is ordinary. We denote by 
	$\mathcal{FL}_{p}(\cO_\cU^n) \xrightarrow{\alpha} \cU$ the rigid space 
	(equipped with a proper map to $\cU$) classifying $S_p$-tuples $(\cF_v)_{v 
	\in S_p}$ of full flags in $\cO_\cU^n$. We consider the closed subspace 
	\[\cZ^{ord} \subset \mathcal{FL}_{p}(\cO_\cU^n)\] whose points $z$ 
	correspond 
	to flags $\cF_v$ which are $G_{F_\wv}$-stable (under the $\rho^u$-action) 
	for each $v \in S_p$ 
	and the action of $G_{F_\wv}$ on $\gr^i(\cF_v)$ is given by 
	$\delta_{z,v,i}\circ \Art_{F_{\wv}}^{-1}$ where $\delta_{z}$ is the 
	parameter of $\alpha(z)$. Since our parameters lie in $\cT_n^{HT-reg}$, 
	$\cZ^{ord} \to \cU$ is a closed immersion (it is a proper monomorphism). 

Using the existence of $(\rho^u)\hat{}_{z_n}$, we can view the tangent space 
$T_{z_n}\cZ^{ord}$ as a subspace of $H^1(G_{F^+,S},\ad r_{\pi_n,\iota})$. By a 
similar argument to Proposition 
\ref{prop_global_tangent_space_in_the_non-critical_case}, it follows from e.g.\ \cite[Lemma 3.9]{ger} (which gives the analogue of Lemma 
\ref{lem_weight_map_etale_for_non_critical_refinement} in the ordinary case) 
 and the main theorem of \cite{newton2020adjoint} that the map 
$T_{z_n}\cZ^{ord} \to 
T_{r(\delta_n)}\cW_n$ is injective. On the other hand, 
$\cE_n(\overline{\tau}_n)^{ord}\cap 
\cU$, a subspace of $\cZ^{ord}$ containing $z_n$, is equidimensional of 
dimension $\dim\cW_n$. We deduce that $\cZ^{ord}$ is smooth at $z_n$, and that 
$\cE_n(\overline{\tau}_n)$ is locally isomorphic to $\cZ^{ord}$ at $z_n$. We 
complete the proof in the same 
way as Theorem \ref{thm_geometric_eigenvariety_argument}. \end{proof}
We restate Theorem \ref{thm_geometric_eigenvariety_argument} and Theorem \ref{thm_geometric_argument_ord} in a way that does not make explicit reference to $\cE_n$.
\begin{cor}\label{cor_main_application_for_U(2)}
	Let $(\pi_2, \chi_2), (\pi'_2, \chi'_2) \in \mathcal{RA}_2$, and let $z_2, 
	z_2' \in \cE_2(\Qpbar)$ be the corresponding points of the eigenvariety. 
	Suppose 
	that one of the following two sets of conditions are satisfied:
	\begin{enumerate}
		\item The refinement $\chi_2$ is numerically non-critical and 
		$n$-regular.
		\item For each $v \in S_p$, every triangulation of $r_{\pi_2', 
		\iota}|_{G_{F_\wv}}$ is non-critical. The refinement $\chi_2'$ is 
		$n$-regular.
		\item For each $v \in S_p$, the Zariski closures of the images of 
		$r_{\pi_2, \iota}|_{G_{F_\wv}}$ and $r_{\pi'_2, \iota}|_{G_{F_\wv}}$ 
		contain $\SL_2$.
		\item There exists an automorphic representation $\pi_n$ of 
		$G_n(\bA_{F^+})$ such that 
		\[ \Sym^{n-1} r_{\pi_2, \iota} \cong r_{\pi_n, \iota}. \]
		\item The points $z_2, z_2'$ lie on a common irreducible component of 
		$\cE_{2,\Cp}$;\footnote{By \cite[Theorem 3.4.2]{Con99}, this assumption 
		is equivalent to requiring that there is a finite extension of 
		coefficient fields $E'/E$ such that $z_2, z_2'$ lie on a common 
		geometrically irreducible component of $\cE_{2,E'}$.}
	\end{enumerate}	or
	\begin{enumerate}
	\item[(1\textsuperscript{ord})] The refinement $\chi_2$ is ordinary.
	\item[(2\textsuperscript{ord})] The Zariski closure of the image of 
	$r_{\pi_2, \iota}|_{G_{F}}$ contains $\SL_2$.
	\item[(3\textsuperscript{ord})] There exists an automorphic representation 
	$\pi_n$ of 
	$G_n(\bA_{F^+})$ such that 
	\[ \Sym^{n-1} r_{\pi_2, \iota} \cong r_{\pi_n, \iota}. \]
	\item[(4\textsuperscript{ord})] The points $z_2, z_2'$ lie on a common 
	irreducible component of 
	$\cE_{2,\Cp}$ (this implies that the refinement $\chi_2'$ is also ordinary, 
	by Lemma \ref{lem:ord_locus_Hida}).
\end{enumerate}	
	Then there exists an automorphic representation $\pi'_n$ of 
	$G_n(\bA_{F^+})$ such that 
	\[ \Sym^{n-1} r_{\pi'_2, \iota} \cong r_{\pi'_n, \iota}. \]
\end{cor}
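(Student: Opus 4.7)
The plan is to apply Theorem \ref{thm_geometric_eigenvariety_argument} (respectively its ordinary counterpart Theorem \ref{thm_geometric_argument_ord}) to propagate the existence of the symmetric power lift from $z_2$ to $z_2'$ along their common irreducible component in $\cE_{2,\Cp}$, and then to promote the resulting point of $\cE_n(\overline{\tau}_n)_{\Cp}$ to a classical point.

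First, I would verify the hypotheses of the appropriate theorem for $(\pi_2, \chi_2)$. In the non-critical case, conditions (1) and (3) of Theorem \ref{thm_geometric_eigenvariety_argument} are hypotheses (1) and (3) of the corollary. For condition (2), i.e., the existence of $(\pi_n, \chi_n) \in \mathcal{RA}_n$ with $(\sigma_n \circ i_2)(z_2) = i_n(\gamma_n(\pi_n, \chi_n))$, we take $\pi_n$ as provided by hypothesis (4) and construct the refinement $\chi_n$ by taking $(n-1)$-fold symmetric powers of $\chi_2$, setting $\chi_{n,v,i} = \chi_{2,v,1}^{n-i} \chi_{2,v,2}^{i-1}$ for each $v \in S_p$ and $1 \le i \le n$. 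This is an accessible refinement of $\pi_n$ because $\pi_{n,v}$, whose Langlands parameter is the $(n-1)$-fold symmetric power of that of $\pi_{2,v}$, occurs as a subquotient of the normalized induction from $\chi_{n,v}$; the identity $(\sigma_n \circ i_2)(z_2) = i_n(\gamma_n(\pi_n, \chi_n))$ then follows from the definitions of $\sigma_{n,g}$ and $\sigma_{n,p}$. The ordinary case is verified analogously.

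Invoking the theorem, every irreducible component of $\cE_2(\overline{\tau}_2)_{\Cp}$ containing $z_2$ is mapped into $i_n(\cE_n(\overline{\tau}_n)_{\Cp})$ under $\sigma_n \circ i_2$. By hypothesis (5) (respectively (4\textsuperscript{ord})), $z_2'$ lies on such a component (noting that since $z_2$ and $z_2'$ share a connected component of $\cE_{2,\Cp}$, we indeed have $\overline{\tau}_2 = \tr \overline{r}_{\pi_2',\iota}$), so there exists $z_n' \in \cE_n(\overline{\tau}_n)(\Cp)$ with $i_n(z_n') = (\sigma_n \circ i_2)(z_2')$. The Galois pseudocharacter at $z_n'$ is then $\tr \Sym^{n-1} r_{\pi_2', \iota}$, and $\nu(z_n') = \sigma_{n,p}(\nu(z_2'))$.

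It remains to show that $z_n'$ is a classical point, yielding the desired automorphic representation $\pi_n'$ of $G_n(\bA_{F^+})$ with $r_{\pi_n', \iota} \cong \Sym^{n-1} r_{\pi_2', \iota}$. The weight of $z_n'$ is the symmetric power of the dominant algebraic weight of the classical point $z_2'$, hence dominant locally algebraic; $n$-regularity of $\chi_2'$ (hypothesis (3), or ensured in the ordinary case through Lemma \ref{lem:ord_locus_Hida}) promotes this to strict dominance of the algebraic part of $\delta_n' = \jmath_n(\nu(z_n'))$. In the ordinary case, Lemma \ref{lem:ord_locus_Hida} then yields classicality immediately. In the non-critical case, I would apply the classicality criterion of Proposition \ref{prop:eigenvariety}(5); the factorization and strict dominance have just been noted, and numerical non-criticality of $\delta_n'$ should be extracted from hypothesis (2) — that every triangulation of $r_{\pi_2', \iota}|_{G_{F_\wv}}$ is non-critical — using the explicit description of $2$-dimensional triangulations in Example \ref{ex:gl2qptriangulations} and a slope-gap computation in the symmetric power, facilitated by the fact that the Hodge--Tate weights of $\Sym^{n-1}$ are explicit integer combinations of those in rank two. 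This last deduction is the main obstacle: translating the Galois-side hypothesis (2) into the slope-gap inequality of Definition \ref{defn:numerically_non_critical} for $\delta_n'$, and the argument must be arranged so as to exploit the fact that hypothesis (2) rules out precisely the decomposable critical case of Example \ref{ex:gl2qptriangulations}(3).
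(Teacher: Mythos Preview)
Your overall strategy matches the paper's until the classicality step for $z_n'$ in the non-ordinary case, where there is a genuine gap. You propose to invoke Proposition~\ref{prop:eigenvariety}(5), which requires $\delta_n' = \jmath_n(\nu(z_n'))$ to be \emph{numerically} non-critical. But this cannot be extracted from hypothesis~(2): the condition that every triangulation of $r_{\pi_2',\iota}|_{G_{F_\wv}}$ is non-critical does not imply that $\delta_n'$ satisfies the slope-gap inequalities of Definition~\ref{defn:numerically_non_critical}. Indeed, the paper explicitly notes (in the proof of Theorem~\ref{thm_geometric_eigenvariety_argument}) that even when $\delta_2$ is numerically non-critical, the symmetric-power parameter $\delta_n$ ``is not necessarily numerically non-critical''. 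Your proposed slope-gap computation therefore cannot succeed in general; Example~\ref{ex:gl2qptriangulations} only tells you that the triangulation of $\rho_v$ is non-critical, which is strictly weaker than any numerical bound that would survive passage to $\Sym^{n-1}$.

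The paper circumvents this by using a different classicality criterion, Lemma~\ref{lem_generic_implies_classical}, which replaces numerical non-criticality with the purely qualitative hypothesis that \emph{every} triangulation of $r_{z_n'}|_{G_{F_\wv}}$ is non-critical (together with $p$-irreducibility and local algebraicity of $\delta$). This criterion is substantially deeper: its proof uses global triangulation results to rule out classical points at non-dominant Weyl-conjugate weights, and then the Orlik--Strauch/Breuil analysis of Jordan--H\"older factors of locally analytic principal series to force the relevant $\Hom$-space to land in locally algebraic vectors. To feed this criterion, the paper invokes Lemma~\ref{lem_symmetric_power_preserves_genericity}, which shows that the property ``every triangulation is non-critical'' \emph{does} pass from $\rho_v$ to $\Sym^{n-1}\rho_v$ (under the $n$-regularity hypothesis on $\chi_2'$); this is a combinatorial argument on filtered modules rather than a slope estimate. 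Both of these lemmas are essential and neither is available via the route you describe.
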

\begin{proof}
Choose $U_n \subset G_n(\bA_{F^+}^\infty)$ so that $(\pi_n^\infty)^{U_n^{S_p}} \neq 
0$ and take $\overline{\tau}_2 = \tr \overline{r}_{\pi_2, \iota}$. Then 
$(\sigma_n 
\circ i_2)(z_2) \in i_n(\cE_n(\overline{\tau}_n)(\Qpbar))$. We claim that setting $\chi_{n,v} = \chi_{2,v,1}^{n-1}\otimes \chi_{2,v,1}^{n-2}\chi_{2,v,2}\otimes \cdots \otimes \chi_{2,v,2}^{n-1}$  for $v \in S_p$ defines an accessible refinement $\chi_n$ of $\pi_n$. Fix $v \in S_p$. To temporarily simplify notation, we write $\chi = \chi_1\otimes \chi_2$ for $\chi_{2,v}$. The representation $\pi_{2,v}$ is isomorphic to either $\St_2(\iota\chi_{1}|\cdot|^{-1/2})$ or to an irreducible parabolic induction $i_{B_2}^{\GL_2}\iota\chi$. In the first case, \[\rec_{F^+_v}^T(\iota^{-1}\pi_{n,v}) \cong \Sym^{n-1}\rec_{F^+_v}^T(\iota^{-1}\pi_{2,v}) \cong \Sp_n(\chi_{1}^{n-1}|\cdot|^{(1-n)/2})\] and $\chi_{n,v}$ is the unique accessible refinement of $\pi_{n,v}$. In the second case, 
\[\rec_{F^+_v}^T(\iota^{-1}\pi_{n,v}) \cong \Sym^{n-1}\rec_{F^+_v}^T(\iota^{-1}\pi_{2,v}) \cong \bigoplus_{i=0}^{n-1} \chi_{1}^{n-1-i}\chi_2^i|\cdot|^{(1-n)/2}\circ\Art_{F^+_v}^{-1}.\] Note that $\pi_{n,v}$ is generic (the base change of $\pi_n$ to $\GL_n(\bA_F)$ is cuspidal, since $r_{\pi_n,\iota}$ is irreducible). We can now use the characterisation of generic representations in the Bernstein--Zelevinsky classification \cite[Theorem 9.7]{Zel80} and the compatibility with local Langlands \cite[\S4.4]{rodier}. It follows that no pair of characters in the above direct sum decomposition have ratio equal to the norm character, so the parabolic induction $i_{B_n}^{\GL_n} \iota\chi_{n,v}$ is irreducible and isomorphic to $\pi_{n,v}$. 
 In particular, $\chi_{n,v}$ is an accessible refinement of $\pi_{n,v}$. 

Taking the above discussion into account, it is straightforward to see that $(\sigma_n\circ i_2)(z_2)$ is associated 
to the pair $(\pi_n, \chi_n) \in \mathcal{RA}_n$. Thus the hypotheses of Theorem 
\ref{thm_geometric_eigenvariety_argument} or \ref{thm_geometric_argument_ord} 
are satisfied, and for any $(\pi'_2, 
\chi'_2)$ as in the statement of the corollary there exists a point $z'_n \in 
\cE_n(\overline{\tau}_n)(\Qpbar)$ such that $T_{n, z'} = 
\tr \Sym^{n-1} r_{\pi_2', \iota}$. It remains to show that $z'_n \in Z_n$, or 
in other words that $z'_n$ is associated to a classical automorphic 
representation. 

In the ordinary case, this follows from Lemma \ref{lem:ord_locus_Hida}. In the 
remaining case, Lemma 
\ref{lem_symmetric_power_preserves_genericity} shows that 
for each $v \in S_p$, every triangulation of $\Sym^{n-1} r_{\pi'_2, 
\iota}|_{G_{F_\wv}}$ is non-critical. It follows from Lemma 
\ref{lem_generic_implies_classical} that $z_n' \in Z_n$.
\end{proof}
\begin{lemma}\label{lem_symmetric_power_preserves_genericity}
Let $v \in S_p$, and let $\rho_v : G_{F_\wv} \to \GL_2(\overline{\bQ}_p)$ be a 
continuous, regular de Rham representation such that $\WD(\rho_v)$ has two 
distinct characters $\chi_1, \chi_2$ as Jordan--H\"older factors, which satisfy 
$(\chi_1 / \chi_2)^i \neq 1$ for each $i = 1, \dots, n-1$. Suppose moreover 
that every triangulation of $\rho_v$ is non-critical. Then every 
triangulation of $\Sym^{n-1} \rho_v$ is non-critical.
\end{lemma}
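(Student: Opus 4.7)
I will work through the equivalence of categories of \cite{Ber08} (used in Lemma \ref{lem_numerically_non_critical_implies_non_critical}): triangulations of the $(\varphi,\Gamma_{F_\wv})$-module of a de Rham representation correspond to complete flags of sub-$(\varphi,N,G_{F_\wv})$-modules of $D := D_{\mathrm{pst}}(\rho_v)$, with $wt_\tau(\delta_{v,j})$ equal to the Hodge jump of the $j$-th graded piece. In this language, non-criticality says that for each $\tau$, the $n$ graded pieces of any such flag receive the labeled $\tau$-Hodge--Tate weights in strictly increasing order.

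The hypothesis that every triangulation of $\rho_v$ is non-critical forces $\rho_v$ to be indecomposable as a filtered $(\varphi,N,G_{F_\wv})$-module: if $\rho_v \cong \psi_1\oplus\psi_2$ with $\psi_i$ of $\tau$-Hodge--Tate weight $k_{\tau,i}$ and $k_{\tau,1} < k_{\tau,2}$, then the triangulation with sub $\psi_2$ is critical. I then split into two cases. First, if $\rho_v$ is not potentially crystalline then $N\neq 0$ on $D$, and distinctness of the Weil--Deligne Jordan--H\"older characters $\chi_1,\chi_2$ forces $N$ to have rank one. Then $\Sym^{n-1}N$ is a single Jordan block of size $n$, so $\Sym^{n-1}D$ admits a unique complete flag of $N$-stable subspaces (the kernels $\ker N^j$), hence a unique triangulation. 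Its parameter is the symmetric power of the parameter of the unique triangulation of $\rho_v$, with $j$-th labeled weight $(n-j)k_{\tau,1}+(j-1)k_{\tau,2}$, which is strictly increasing.

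Second, if $\rho_v$ is potentially crystalline then $N = 0$ and, after extending scalars to split the Galois action, $D$ decomposes as Frobenius eigenlines $V_{\chi_1}\oplus V_{\chi_2}$, with the Hodge line $L\subset D$ distinct from each $V_{\chi_i}$ (by indecomposability). The hypothesis $(\chi_1/\chi_2)^i\neq 1$ for $1\le i\le n-1$ ensures the eigenlines $V_{\chi_1^{n-1-i}\chi_2^i}\subset\Sym^{n-1}D$ are pairwise distinct, so any sub-$(\varphi,G_{F_\wv})$-module $W$ is a direct sum $\bigoplus_{i\in S}V_{\chi_1^{n-1-i}\chi_2^i}$ for some subset $S\subset\{0,\ldots,n-1\}$. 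I claim that for such $W$ the induced Hodge filtration has its $d = |S|$ jumps at precisely the $d$ smallest weights $(n-1)k_{\tau,1},(n-2)k_{\tau,1}+k_{\tau,2},\ldots,(n-d)k_{\tau,1}+(d-1)k_{\tau,2}$, which yields non-criticality for any complete flag. The claim reduces to the transversality $\dim(W\cap \mathrm{Fil}^m\Sym^{n-1}D)=\max(0,d-r)$, where $\mathrm{Fil}^m\Sym^{n-1}D = L^r \cdot \mathrm{Sym}^{n-1-r}D$ for the appropriate $r$. Writing $L=\langle v_{\chi_1}+cv_{\chi_2}\rangle$ with $c\neq 0$, this amounts to showing that the projection $L^r\cdot\mathrm{Sym}^{n-1-r}D\to\Sym^{n-1}D/W$ has maximal rank, which can be verified by a direct linear algebra calculation using the binomial expansion of $(v_{\chi_1}+cv_{\chi_2})^r$.

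The main obstacle is this transversality calculation in the potentially crystalline case; although elementary in principle, it requires care, and the cleanest presentation may instead proceed via weak admissibility of $\Sym^{n-1}\rho_v$ and a Newton-above-Hodge polygon comparison applied to each sub-$\varphi$-module, showing directly that $t_H(W)$ equals the sum of the $d$ smallest Hodge weights.
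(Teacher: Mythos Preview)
Your main approach is correct and essentially identical to the paper's. The paper also works in $D_{\mathrm{pst}}$, fixes a $W_K$-eigenbasis $e_1,e_2$ (available since $\chi_1\neq\chi_2$, so no case split on $N$ is needed), restates non-criticality of $\rho_v$ as $l_\tau=\langle e_1+a_\tau e_2\rangle$ with $a_\tau\in E^\times$, and then recasts your transversality claim (for $r=|I|$) as the polynomial statement that if $\sum_{i\in I}a_ix^i=(1+a_\tau x)^{|I|}Q(x)$ with $\deg Q\le n-1-|I|$ then $Q=0$, referring to \cite[Example~3.26]{Che11} for the verification. Your Case~1 is fine but is subsumed, since the unique $N$-stable flag is just a particular sequence of subsets $I$.

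One caution: the alternative you propose at the end, via weak admissibility and a Newton--Hodge comparison, does not work as stated. Weak admissibility gives $t_H(W)\le t_N(W)$, but for $W=\bigoplus_{i\in I}V_{\chi_1^{n-1-i}\chi_2^i}$ the Newton number $t_N(W)=\sum_{i\in I}\bigl((n-1-i)v_p(\chi_1(\phi))+i\,v_p(\chi_2(\phi))\bigr)$ depends on the Newton slopes and on $I$, and need not coincide with the sum of the $|I|$ smallest Hodge weights of $\Sym^{n-1}\rho_v$. So the inequality alone does not force the Hodge jumps of $W$ to be the smallest ones; the direct linear-algebra argument is genuinely required.
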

\begin{proof}
We begin by describing the data of the triangulation of $\rho_v$ in a bit more 
detail. Let $K = F_\wv$ and let $L / K$ be a Galois extension over which 
$\rho_v$ becomes semi-stable. Let $L_0$ be the  maximal unramified extension of $L / \bQ_p$.  After enlarging $E$, we can assume that every 
embedding of $L$ in $\overline{\bQ}_p$ lands in $E$, and that $\rho_v$ is 
defined over $E$. The filtered $(\varphi,N,\Gal(L/K))$-module $D$ associated to $\rho_v$ consists of 
the following data:
\begin{enumerate}
\item A free $L_0 \otimes_{\bQ_p} E$-module $D$ of rank 2, equipped with a 
$\sigma \otimes 1$-semilinear endomorphism $\varphi$. 
\item An $L_0 \otimes_{\bQ_p} E$-linear endomorphism $N$ of $D$ satisfying the 
relation $N \varphi = p \varphi N$. 
\item An $L_0$-semilinear, $E$-linear action of the group $\Gal(L / K)$ on $D$ 
that commutes with the action of both $\varphi$ and $N$. 
\item A decreasing, $\Gal(L/K)$-stable, filtration $\Fil_\bullet D_L$ of $D_L = D \otimes_{L_0} L$.
\end{enumerate}
For each embedding $\tau : L \to E$, we write $l_\tau \subset D_\tau = D_L 
\otimes_{L\otimes E, \tau} E$ for the image of the rank 1 step of the filtration 
$\Fil_\bullet$. We can define an action of the group $W_K$ on $D$ by the 
formula $g \cdot v = (g \text{ mod }W_L) \circ \varphi^{-\alpha(g)}$, where 
$\alpha(g)$ is the power of the absolute arithmetic Frobenius induced by $g$ on 
the residue field of $\overline{K}$.

This action preserves the factors of the product decomposition $D = \prod_t 
D_t$, where $t$ ranges over embeddings $t : L_0 \to E$ and $D_t = D 
\otimes_{L_0\otimes E, t} E$. Moreover, the isomorphism class of the Weil--Deligne 
representation $D_t$ is independent of $t$. The data of a triangulation of 
$\rho_v$ is equivalent to the data of a choice of character appearing in some 
(hence every) $D_t$. If $N$ is non-zero on $D_t$, then there is a unique $N$-stable line in $\Sym^{n-1}D_t$. Hence there is a unique triangulation of $\Sym^{n-1}\rho_v$, induced by the unique (non-critical) triangulation of $\rho_v$, and it is also non-critical. From now on we assume that $N = 0$, and we proceed as indicated in \cite[Example 3.26]{Che11}.

We can choose a basis $e_1, e_2$ for $D$ as $L_0 
\otimes_{\bQ_p} E$-module such that the projection of the vectors $e_1, e_2$ to 
each $D_t$ is a basis of eigenvectors for the group $W_K$. 

Having made this choice of basis, each line $l_\tau$ is spanned by a linear 
combination of $e_1, e_2$. Our assumption that every triangulation of $\rho_v$ 
is non-critical is equivalent to the requirement that $l_\tau$ may be spanned 
by a vector $e_1 + a_\tau e_2$, where $a_\tau \in E^\times$ for all $\tau$. Indeed, if $l_\tau$ is spanned by $e_i$ for some $i$, then the triangulation corresponding to the submodule of $D$ spanned by $e_i$ will fail the condition required for non-criticality with respect to the embedding $\tau$.

Having made these normalisations, the condition that every triangulation of 
$\Sym^{n-1} \rho_v$ be non-critical is equivalent to the following statement: 
let $I \subset \{ 0, \dots, n-1 \}$ be a subset, and let $\sum_{i \in I} a_i 
x^i \in E[x]$ be a polynomial, which is equal to $(1 + a_\tau x)^{|I|} Q(x)$ 
for 
some polynomial $Q(x) \in E[x]$ of degree at most $n - 1 - |I|$, then $Q(x) = 0$. Polynomials of the latter form correspond to elements of the $|I|$th step of the Hodge filtration on $\Sym^{n-1}D_\tau$ and the statement implies that this Hodge filtration is in general position compared to the filtration induced by every triangulation. Replacing the variable $x$ with $-a_\tau x$, we can assume that $a_\tau = -1$. As in \cite[Example 3.26]{Che11}, the vanishing of the $|I|$ successive derivatives at $1$ of $\sum_{i \in I} a_i 
x^i$ gives a non-degenerate linear system of $|I|$ equations satisfied by the $a_i$, and therefore the $a_i$ are all zero. Non-degeneracy is checked by noticing that the determinant of the linear system is the Vandermonde determinant $\prod_{i < j \in I}(i-j)$.
\end{proof}
\begin{lemma}\label{lem_generic_implies_classical}
	Let $z \in \cE_n(\overline{\tau}_n)(\Qpbar)$ be a point with $i_n(z) = (\tr 
	r_z, 
	\delta) \in \cX_{ps, n}^{p-irr}(\Qpbar) \times \cT_n^{reg}(\Qpbar)$. 
	Suppose that 
	$\delta = \jmath_n(\nu(z)) = \delta_{alg}\delta_{sm}$ with $\delta_{alg}$ 
	algebraic and $\delta_{sm}$ smooth. Suppose moreover 
	that, 
	for each $v \in S_p$, every triangulation of $r_z|_{G_{F_\wv}}$ is 
	non-critical. 
	Then $z \in Z_n$ (in particular, $\delta_{alg}$ is strictly dominant).
\end{lemma}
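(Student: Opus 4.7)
My plan is to verify the two hypotheses of the classicality criterion Proposition \ref{prop:eigenvariety}(\ref{classicality})---that $\delta_{alg}$ is strictly dominant and that $\delta$ is numerically non-critical---and then conclude $z \in Z_n$. The first will be read off from the given non-criticality hypothesis by unwinding definitions; the second will be deduced by contradiction, producing a critical triangulation whenever numerical non-criticality fails.

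For strict dominance, I would apply the hypothesis to the specific triangulation cut out by $\delta_v = \jmath_n(\nu(z))_v$: non-criticality gives $wt_\tau(\delta_{v,1}) < wt_\tau(\delta_{v,2}) < \cdots < wt_\tau(\delta_{v,n})$ for every $v \in S_p$ and $\tau \in \Hom_{\Qp}(F_\wv,E)$. Since $\delta_{alg}$ is a character of $\prod_{v \in S_p}T_n(F_\wv)$ whose $T_n$-weight at the embedding $\tau$ is the tuple $(-wt_\tau(\delta_{v,i}))_{i=1}^n$, these strict inequalities are exactly the condition that $\delta_{alg}$ is strictly $B_n$-dominant.

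For numerical non-criticality I would argue by contradiction. First I would note that $r_z|_{G_{F_\wv}}$ is de Rham: it is trianguline with locally algebraic parameter whose weights are strictly increasing integers, so is an iterated extension of de Rham rank-1 $(\varphi,\Gamma_{F_\wv})$-modules $\cR_{\Qpbar,F_\wv}(\delta_{v,i})$, and the cohomology computation of \cite[Proposition 6.2.8]{Ked14} together with the strict-increase of weights shows that each successive extension remains de Rham. Now suppose for contradiction that numerical non-criticality fails at some triple $(v,\tau_0,i_0)$. Using Berger's equivalence \cite{Ber08} between de Rham $(\varphi,\Gamma_{F_\wv})$-modules and weakly admissible filtered $(\varphi,N,G_{F_\wv})$-modules, the given triangulation corresponds to a filtration $0 = M_0 \subset M_1 \subset \cdots \subset M_n = D_{pst}(r_z|_{G_{F_\wv}})$ by sub-$(\varphi,N,G_{F_\wv})$-modules, whose Hodge--de Rham jumps on $(M_i/M_{i-1})\otimes_{L,\tau}E$ coincide with $wt_\tau(\delta_{v,i})$. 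The failure of the numerical bound at $(\tau_0,i_0)$ is precisely the condition that the Newton number $t_N(M_{i_0})$ is large enough for $M_{i_0}$, equipped with the alternative Hodge filtration in which the $\tau_0$-jumps at positions $i_0$ and $i_0+1$ are transposed, to remain weakly admissible. Running the Newton--Hodge calculation of Lemma \ref{lem_numerically_non_critical_implies_non_critical} in reverse then produces, via Berger's equivalence, a new triangulation of $r_z|_{G_{F_\wv}}$ whose parameter $\delta_v'$ satisfies $wt_{\tau_0}(\delta'_{v,i_0}) > wt_{\tau_0}(\delta'_{v,i_0+1})$, and is therefore critical. This contradicts the hypothesis. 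With both conditions of Proposition \ref{prop:eigenvariety}(\ref{classicality}) verified, the conclusion $z\in Z_n$ follows at once.

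The main obstacle is the construction of the critical triangulation: one must show that the failure of the single numerical inequality at $(\tau_0,i_0)$ is both necessary and sufficient for an alternative weakly admissible filtered sub-$(\varphi,N,G_{F_\wv})$-module lattice to exist at every intermediate step, not merely at $M_{i_0}$. This should be arranged by a careful bookkeeping of the Hodge and Newton numbers of the $M_i$'s under the transposition, using that weak admissibility of sub-modules $M_j$ for $j \neq i_0$ is automatic since their Hodge filtrations are unchanged.
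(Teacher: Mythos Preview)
Your reduction to Proposition \ref{prop:eigenvariety}(\ref{classicality}) does not go through: the hypothesis ``every triangulation is non-critical'' is strictly weaker than numerical non-criticality of the given parameter, and your contradiction argument does not bridge this gap. Concretely, take $F_\wv=\Q_p$, $n=3$, and an irreducible crystalline $r_z|_{G_{\Q_p}}$ with Hodge--Tate weights $0,1,2$ and Frobenius eigenvalues $\alpha,\beta,\gamma$ all of valuation $1$. For a generic Hodge filtration every ordering of the eigenvalues gives a non-critical triangulation (each rank-$1$ Frobenius eigenline has Hodge jump $0$), yet the numerical inequality $v_p(\alpha)<1$ fails for every refinement. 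So your step ``failure of the numerical bound at $(\tau_0,i_0)$ produces a critical triangulation'' is simply false. The underlying error is that a triangulation is a flag of $(\varphi,N,G_{F_\wv})$-stable submodules of the \emph{fixed} filtered module $D_{pst}(r_z|_{G_{F_\wv}})$; you cannot manufacture a new triangulation by ``transposing $\tau_0$-jumps'' in the Hodge filtration, since the Hodge filtration is part of the data of $r_z$ and is not yours to move. There is also an earlier gap: you assume from the outset that $r_z|_{G_{F_\wv}}$ is trianguline of parameter $\delta_v$, but this is not part of the hypotheses and must be established.

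The paper's proof proceeds quite differently. It first uses global triangulation over the eigenvariety (\cite[Corollary 6.3.10]{Ked14} as in Proposition \ref{prop_affinoid_local_trianguline_space}) together with \cite[Lemma 2.11]{Bre17} to show that $r_z$ is indeed trianguline of parameter $\delta$ and that $\delta_{alg}$ is strictly dominant. It then \emph{does not} attempt to verify numerical non-criticality. Instead it works directly with Emerton's Jacquet functor and the Orlik--Strauch functors $\cF_{\overline{B}_n}^{G_n}$: by \cite[Thm.~4.3]{Bre15} the point $z$ gives a nonzero map $\cF_{\overline{B}_n}^{G_n}(\eta\delta_{B_n}^{-1})\to \widetilde{S}(U_n^p,E)^{an}[\psi^*(z)]$, and one must rule out contributions from the Jordan--H\"older constituents $JH(w,\pi)$ with $w\neq 1$. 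A nonzero map from such a constituent would produce another eigenvariety point $z'$ with the same Galois representation but with $\jmath_n(\nu(z'))$ of non-strictly-dominant algebraic part; running the triangulation argument at $z'$ then contradicts the hypothesis that \emph{every} triangulation of $r_z|_{G_{F_\wv}}$ is non-critical. Thus the non-criticality hypothesis is used, not to force numerical non-criticality, but to kill the companion constituents in the locally analytic socle, after which the map factors through locally algebraic vectors and $z\in Z_n$.
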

\begin{proof}
	After extending $E$, we may assume that $z \in \cE_n(\overline{\tau}_n)(E)$ 
	and $r_z$ takes values in $\GL_n(E)$. We note that, since $\delta$ is 
	locally algebraic, it follows from 
	property (\ref{classicality}) of the eigenvariety that the subset of numerically 
	non-critical classical points in $i_n^{-1}(\cX_{ps, n}^{p-irr} \times 
	\cT_n^{reg})$ accumulates at $z$. It follows from \cite[Corollary 
	6.3.10]{Ked14}, applied as in Proposition 
	\ref{prop_affinoid_local_trianguline_space}, that there is a connected 
	affinoid neighbourhood $\cU$ of $z$ in $i_n^{-1}(\cX_{ps, n}^{p-irr} \times 
	\cT_n^{reg})$, over which there exist representations $\rho_v^u: 
	G_{F_\wv} \to \GL_n(\cO(\cU))$ for each $v \in S_p$ with trace equal to the restriction to $G_{F_\wv}$ of the universal pseudocharacter and a non-empty Zariski open and dense subspace $\cV \subset \cU$ 
	such that for every $z' \in \cV$ with $i_n(z') = (\tr r',\delta')$,  
	$r'$ is trianguline of parameter $\delta'$.  Now 
	we can 
	apply \cite[Lemme 
	2.11]{Bre17}\footnote{We caution the reader that the version of this paper currently available on the arXiv contains a less general result than the published version, to which we appeal here. In particular, it restricts to  Galois representations which are known in advance to be crystalline.}
	to deduce that 
	$\delta_{alg}$ is strictly dominant and $r_z$ is trianguline 
	of parameter $\delta$.  
	
	We now argue as in \cite[Prop.~3.28]{Bre17} (which is itself similar to 
	the argument of \cite[Prop.~4.2]{Che11}). The idea of the argument is to show that failure of classicality would entail the existence of a `companion point' to $z$, with the same associated Galois representation and a locally algebraic weight which is \emph{not} strictly dominant. This would contradict \cite[Lemma 2.11]{Bre17}. 
	
	Let $\eta = 
	\nu(z)\delta_{B_n} = \eta_{alg}\eta_{sm}$, with $\eta_{alg}$ 
	dominant algebraic (since $\delta_{alg}$ is strictly dominant) and 
	$\eta_{sm}$ smooth. By the construction of 
	$\cE_n$ and Lemma \ref{lem:eigenspaces} we have a non-zero space of morphisms
	
	\[0 \ne \Hom_{\prod_{v \in S_p}T_n(F_\wv)}\left( \eta, 
	J_{B_n}\left(\widetilde{S}(U_n^p,E)^{an}[\psi^*(z)]\right)\right).\] Now 
	we use some of the work of Orlik--Strauch \cite{orlik-strauch}, with 
	notation as in 
	\cite[\S2]{Bre15}. We denote by $\gog_n$ the $\Qp$-Lie algebra of $\prod_{v 
	\in S_p}G_n(F^+_v) \cong \prod_{v \in S_p} \GL_n(F_\wv)$ and denote by $\overline{\mathfrak{b}}_n \subset 
	\gog_n$ the lower 
	triangular Borel. We define a locally analytic representation of $\prod_{v 
	\in S_p}G_n(F_v^+)$ (see \cite[Thm.~2.2]{Bre15} for the definition of 
	the functor $\cF_{\overline{B}_n}^{G_n}$):
	\[\cF_{\overline{B}_n}^{G_n}(\eta\delta_{B_n}^{-1}) := 
	\cF_{\overline{B}_n}^{G_n} 
	\left(\left(U(\gog_{n,E})\otimes_{U(\overline{\mathfrak{b}}_{n,E})}\eta_{alg}^{-1}
	\right)^\vee,\eta_{sm}\delta_{B_n}^{-1}\right).\] Note that 
	$\left(U(\gog_{n,E})\otimes_{U(\overline{\mathfrak{b}}_{n,E})}\eta_{alg}^{-1}
	\right)^\vee$
	 has a unique simple submodule (isomorphic to the unique simple quotient of 
	 $U(\gog_{n,E})\otimes_{U(\overline{\mathfrak{b}}_{n,E})}\eta_{alg}^{-1}$),
	  the algebraic representation 
	$V(\eta_{alg})^\vee$ with lowest (with respect to $B_n$) weight 
	$\eta_{alg}^{-1}$. It 
	follows from 
	\cite[Thm.~2.2]{Bre15} that  
		$\cF_{\overline{B}_n}^{G_n}(\eta\delta_{B_n}^{-1})$ has a locally 
		algebraic quotient 
		isomorphic to $V(\eta_{alg})\otimes_E 
		\Ind_{\overline{B}_n}^{G_n}\eta_{sm}\delta_{B_n}^{-1}$. 
	
	By 
	\cite[Thm.~4.3]{Bre15},
there is a non-zero 
	space of morphisms \[0 \ne 
	\Hom_{\prod_{v \in 
	S_p}G_n(F^+_v)}\left(\cF_{\overline{B}_n}^{G_n}(\eta\delta_{B_n}^{-1}),
\widetilde{S}(U_n^p,E)^{an}[\psi^*(z)]\right).\] 

The Jordan--H\"{o}lder factors 
of $\cF_{\overline{B}_n}^{G_n}(\eta\delta_{B_n}^{-1})$ can be described 
using 
\cite[Thm.~2.2]{Bre15} and standard results on the Jordan--H\"{o}lder factors 
of Verma modules (see \cite[Cor.~4.6]{Bre15}). Suppose $\lambda \in \cT_n(E)$ 
is an 
algebraic character. Denote by $M_\lambda$ the unique simple submodule of the 
dual Verma module 
$\left(U(\gog_{n,E})\otimes_{U(\overline{\mathfrak{b}}_{n,E})}\lambda^{-1}\right)^\vee$.
 Then the Jordan--H\"{o}lder factors 
of $\cF_{\overline{B}_n}^{G_n}(\eta\delta_{B_n}^{-1})$ are all of the form 
\[JH(w,\pi) = \cF_{\overline{P}_n}^{G_n} 
\left(M_{w\cdot\eta_{alg}},\pi\right)\] 
with $\overline{P}_n$ a 
parabolic subgroup of $\prod_{v \in S_p}G_n(F^+_v)$ containing 
$\overline{B}_n$, $\pi$ a Jordan--H\"{o}lder factor of the 
parabolic induction of $\eta_{sm}\delta_{B_n}^{-1}$ from $\overline{B}_n$ 
to the Levi of 
$\overline{P}_n$, and $w$ an element of the Weyl group of 
$(\Res_{F^+/\Q}G_n)\times_\Q E$, acting by the `dot action' on 
$\eta_{alg}$. Here $\overline{P}_n$ is maximal for 
$M_{w\cdot\eta_{alg}}$, in the sense of \cite[\S2]{Bre15}. 

We claim that there cannot be a non-zero morphism $JH(w,\pi) \to 
\widetilde{S}(U_n^p,E)^{an}[\psi^*(z)]$ for $w \ne 1$. Suppose, for a 
contradiction, that there is such a map. It follows from 
\cite[Cor.~3.4]{BreuilSocleI} that we have $\psi^*(z) = \psi^*(z')$ (and hence 
an isomorphism of Galois representations $r_z \cong r_{z'}$) for a point 
$z' \in \cE_n(\overline{\tau}_n)(E)$ with $\jmath_n(\nu(z'))$ locally algebraic 
but \emph{not} strictly dominant (its algebraic part matches the algebraic part 
of $\jmath_n(w\cdot\eta_{alg})$). The argument in the first paragraph of 
this proof, using \cite[Lemma 2.11]{Bre17}, then gives a contradiction. 

We deduce from this that any map 
\[\cF_{\overline{B}_n}^{G_n}(\eta\delta_{B_n}^{-1}) \to 
\widetilde{S}(U_n^p,E)^{an}[\psi^*(z)]\] factors through the locally 
algebraic quotient $V(\eta_{alg})\otimes_E 
\Ind_{\overline{B}_n}^{G_n}\eta_{sm}\delta_{B_n}^{-1}$. Applying 
\cite[Thm.~4.3]{Bre15} 
again, we deduce that we have equalities 
\[ \begin{split}\Hom_{\prod_{v \in S_p}T_n(F_\wv)}&\left( \eta, 
J_{B_n}\left(\widetilde{S}(U_n^p,E)^{an}[\psi^*(z)]\right)\right) \\&  = 
\Hom_{\prod_{v \in 
		S_p}G_n(F^+_v)}\left(\cF_{\overline{B}_n}^{G_n}(\eta\delta_{B_n}^{-1}),
\widetilde{S}(U_n^p,E)^{an}[\psi^*(z)]\right) \\ & = \Hom_{\prod_{v \in 
		S_p}G_n(F^+_v)}\left(\cF_{\overline{B}_n}^{G_n}(\eta\delta_{B_n}^{-1}),
\widetilde{S}(U_n^p,E)^{alg}[\psi^*(z)]\right) \\ & = \Hom_{\prod_{v \in 
S_p}T_n(F_\wv)}\left( \eta, 
J_{B_n}\left(\widetilde{S}(U_n^p,E)^{alg}[\psi^*(z)]\right)\right).
\end{split} \]

In particular, our point $z$ arises from a non-zero map 
\[ \eta \to 
J_{B_n}\left(\widetilde{S}(U_n^p,E)^{alg}[\psi^*(z)]\right). \]
Applying 
\cite[Prop.~4.3.6]{MR2292633} and computing locally algebraic vectors as in 
\S \ref{sssec:emertonconstruction} we see that such a map corresponds to a 
non-zero 
map of smooth representations $\eta_{sm} \to J_{B_n}\left(\varinjlim_{U_p} 
S_{\eta_{alg}^\vee}(U_n^pU_p, 
E)[\psi^*(z)]\right)$ and hence a pair 
$(\pi_n,\iota\circ\eta_{sm}\delta_{B_n}^{-1/2}) \in \mathcal{RA}_n$ with 
corresponding classical point equal to $z$. We therefore have $z \in Z_n$.
\end{proof}

\subsection{Application to the eigencurve}\label{subsec_eigencurve}

Thus far in this section we have found it convenient to phrase our arguments in 
terms of automorphic forms on unitary groups. Since our intended application 
will rely on particular properties of the Coleman--Mazur eigencurve for $\GL_2$, we now show 
how to deduce what we need for the eigencurve from what we have done so far.

We first introduce the version of the eigencurve that we use. Fix an integer $N 
\geq 1$, prime to $p$. Let $\cT_0 = \Hom(\bbQ_p^\times / \bbZ_p^\times \times 
\bbQ_p^\times  , \bG_m)$; it is the $E$-rigid space parameterising characters 
$\chi_0 = \chi_{0, 1} \otimes \chi_{0, 2}$ of $(\bbQ_p^\times)^2$ such that 
$\chi_{0, 1}$ is unramified. Let 
$\cW_0 = \Hom(\bbZ_p^\times, \bG_m)$, and write $r_0 : \cT_0 \to \cW_0$ for the 
morphism given by $r_0(\chi_{0, 1} \otimes \chi_{0, 2}) = 
\chi_{0,1}/\chi_{0,2}|_{\Zpx} = \chi_{0,2}^{-1}|_{\Zpx}$.  We denote the map $r_0\circ\nu_0: \cE_0 
\to \cW_0$ by $\kappa$. Let $\bT^{pN}_0 =  
\cO[ \{ 
T_l, S_l \}_{l 
\nmid pN} ]$ denote the polynomial ring in unramified Hecke operators at primes 
not dividing $Np$. Here $T_l$ and $S_l$ are the double coset operators for the matrices $\left(\begin{smallmatrix}
l & 0\\ 0 &1
\end{smallmatrix}\right)$ and $\left(\begin{smallmatrix}
l & 0\\ 0 & l
\end{smallmatrix}\right)$. 
Let $U_1(N) =  \prod_l U_1(N)_l \subset \GL_2(\widehat{\bZ}) = \prod_l \GL_2(\bZ_l)$ be defined by
\[ U_1(N)_l = \left\{ \left( \begin{array}{cc} a & b \\ c & d \end{array}\right) \in \GL_2(\bZ_l) :  c, d - 1 \in N \bZ_l \right\}. \]
The eigencurve is a tuple $(\cE_0, \psi_0, \nu_0, Z_0)$, where:
\begin{enumerate}
	\item $\cE_0$ is a reduced $E$-rigid space, equipped with a finite morphism 
	$\nu_0 : \cE_0 \to \cT_0$.
	\item $\psi_0 : \bT^{pN}_0 \to 
	\cO(\cE_0)$ is a ring homomorphism, which takes values in the subring 
	$\cO(\cE_0)^{\leq 1}$ of bounded elements.
	\item $Z_0 \subset \cE_0(\overline{\bQ}_p)$ is a Zariski dense subset which 
	accumulates at itself.
\end{enumerate}
The following properties are satisfied:
\begin{enumerate}
	\item $\cE_0$ is equidimensional of dimension $\dim \cW_0 = 1$. For any irreducible component $\cC \subset \cE_0$, $\kappa(\cC)$ is a Zariski open subset of $\cW_0$.
	\item Let $\mathcal{A}_0$ denote the set of cuspidal automorphic 
	representations $\pi_0$ of $\GL_2(\bA_\bbQ)$ such that 
	$(\pi_0^\infty)^{U_1(N)^p} \neq 0$ and $\pi_{0, \infty}$ has the same 
	infinitesimal character as $(\Sym^{k-2} \bC^2)^\vee$ for some $k \geq 2$ (in which case we say $\pi_{0}$ has weight $k$), and let $\mathcal{RA}_0$ denote the set of pairs $(\pi_0, \chi_0)$, where $\pi_0 \in \mathcal{A}_0$ and $\chi_0 = \chi_{0, 1} \otimes \chi_{0, 2}$ is an accessible refinement of $\pi_{0, p}$ such that $\chi_{0, 1}$ is \emph{unramified}. As in the unitary case we considered above, for $(\pi_0,\chi_0) \in \mathcal{RA}_0$ we have a homomorphism $\psi_{\pi_0}: \bT^{pN}_0 \to \Qpbarx$ determined by the action of the Hecke operators on $\iota^{-1}(\pi_0^\infty)^{U_1(N)^p}$. There is also a character $\nu_0(\pi_0,\chi_0) \in \cT_0(\Qpbar)$ defined in exactly the same way as in the unitary case (\ref{eqn:nuforRAn}). An explicit formula appears below (\ref{eqn:nuforRA0}). Our assumption that $\chi_{0,1}$ is unramified implies that this character does indeed give a point of $\cT_0$. Now we can let $\mathcal{Z}_0 \subset \Hom(\bT^{pN}_0, \overline{\bQ}_p) \times 
	\cT(\overline{\bQ}_p)$ denote the set of points of the form $(\psi_{\pi_0}, 
	\nu_0(\pi_0, \chi_0))$, where $(\pi_0, \chi_0) \in \mathcal{RA}_0$. Then 
	the 
	map $\psi_0^\ast \times \nu_0 : 
	\cE_0(\overline{\bQ}_p) \to \Hom(\bT^{pN}_0, \overline{\bQ}_p) \times 
	\cT_0(\overline{\bQ}_p)$ restricts to a bijection $Z_0 \to \cZ_0$.
	\item For any affinoid open $\cV_0 \subset \cT_0$, the map $\bT^{pN}_0 
	\otimes \cO(\cV_0) \to \cO(\nu_0^{-1} \cV_0)$ is surjective.
\end{enumerate}
The uniqueness of the tuple $(\cE_0, \psi_0, \nu_0, Z_0)$ follows from 
\cite[Proposition 7.2.8]{bellaiche_chenevier_pseudobook}. Its existence can be 
proved in various ways. A construction using overconvergent modular forms is 
given in \cite{Buz07}. We note that in this case, in contrast to the unitary 
group case, the map $\mathcal{RA}_0 \to \mathcal{Z}_0$ is bijective -- a 
consequence of the strong multiplicity one theorem. We will therefore feel free 
to speak of the cuspidal automorphic representation $\pi_0 \in \mathcal{A}_0$ 
associated to a point lying in $Z_0$. As in the unitary group case, there is a Galois pseudocharacter $t: G_{\Q,Np} \to \cO(\cE_0)$ with the property that for $z \in Z_0$ associated to $(\pi_0,\chi_0)$, $t_z = \tr r_{\pi_0,\iota}$. 

Let us describe explicitly the link with more classical language. We are using the normalisations of \cite[\S 11]{Diamond-Im}. If $(\pi_0, 
\chi_0) \in \mathcal{A}_0$, then there is a cuspidal holomorphic modular form 
$f = q+ \sum_{n \geq 2} a_n(f) q^n$ of level $\Gamma_1(Np^r)$ (for some $r \geq 
1$) which is an eigenform for all the Hecke operators $T_l$ ($l \nmid Np$) and 
$U_p$, in their classical normalisations, and we have the formulae
\[ a_l(f) = \text{eigenvalue of }T_l\text{ on }\pi_{0,l}^{\GL_2(\Z_l)}, 
\text{ }p^{-1/2} a_p(f) = \iota\chi_{0, 1}(p). \]

Note that the central character of $\pi_0$ is a Hecke character $\psi_{\pi_0}$ 
with $\psi_{\pi_0}|_{\R_{> 0}}(z) = z^{2-k}$. So $\psi_{\pi_0}|_{\Qp^\times} = 
\iota(\chi_{0,1}\chi_{0,2})$ is a finite order twist of the character $z 
\mapsto |z|^{2-k}$. To convince the reader that these formulae are correct, we observe that if $\pi_{0,l}$ is a normalised induction $i_{B_2}^{\GL_2}\mu_1\otimes \mu_2$, then the eigenvalue of $T_l$ on $\pi_{0,l}^{\GL_2(\Z_l)}$ is $l^{1/2}(\mu_1(l) + \mu_2(l))$ \cite[(11.2.4)]{Diamond-Im}, whilst considering the central character shows that $\mu_1(l)\mu_2(l)$ has (complex) absolute value $l^{k-2}$. This is compatible with the fact that $a_l(f)$ is a sum of numbers with absolute values $l^{(k-1)/2}$. 

We can define a map $s : \cE_0(\overline{\Q}_p) \to \R$, called the slope, 
by composing the projection to $\cT_0(\overline{\bbQ}_p)$ with the map
$\chi \mapsto v_p\left(\chi\left(\begin{smallmatrix}
p & 0 \\ 0 & 1
\end{smallmatrix} \right)\right)$. Note that if $(\pi_0, \chi_{0,1}\otimes 
\chi_{0,2}) 
\in \mathcal{RA}_0$ we have \begin{equation}\label{eqn:nuforRA0}\nu_0(\pi_0,\chi_0)\left(\begin{smallmatrix}t_1 & 
0 \\ 0 
& t_2\end{smallmatrix}\right) = 
t_2^{2-k}\chi_{0,1}(t_1)\chi_{0,2}(t_2)\left|\frac{t_1}{t_2} 
\right|^{-1/2}_p\end{equation} so the slope map sends $(\pi_0, \chi_{0,1}\otimes 
\chi_{0,2}) \in \mathcal{RA}_0$ to $1/2 + v_p( 
\chi_{0, 1}(p))$. 

In particular, at a point $z_0 \in Z_0$ corresponding to a classical 
holomorphic modular form 
$f$, $s(z_0)$ equals the $p$-adic valuation of $\iota^{-1} a_p(f)$. Note 
that the corresponding pair $(\pi_0, \chi_0)$ is numerically non-critical 
exactly when $s(z_0) < k-1$ and ordinary exactly when $s(z_0) = 0$. The classicality criterion of Coleman \cite{coleman-classicality1,coleman-classicality2} shows that a point $z \in \cE_0(\Qpbar)$ with $\kappa(z)$ restricting to $t \mapsto t^{k-2}$ on a finite index subgroup of $\Zpx$ and $s(z) < k-1$ is necessarily in $Z_0$.  

Let $Z_0^{pc} \subset Z_0$ denote the subset of points corresponding to pairs 
$(\pi, \chi)$ where $\pi_p$ is not a twist of the Steinberg representation 
($pc$ stands for \emph{potentially crystalline}). We 
now define a `twin' map $\tau : Z_0^{pc} \to Z_0^{pc}$. Let $(\pi_0, \chi_0)$ be 
the pair corresponding to a point $z \in Z_0^{pc}$. Write $\chi_0 = \chi_{0, 1} 
\otimes \chi_{0, 2}$. Since $\pi_{0,p}$ is not a twist of the Steinberg 
representation, $\pi_{0,p}$ equals the full normalised induction $i_{B_2}^{\GL_2} 
\iota \chi_0$, 
which is irreducible. Let $\psi : \bbQ^\times \backslash \bA_\bQ^\times \to \Zpbarx$ be the unique finite order character which is unramified outside $p$ and 
such that $\psi|_{\bbZ_p^\times} = \chi_{0, 
2}|_{\bbZ_p^\times}^{-1}$. Then the 
character $\chi_{0, 2} \psi|_{\bbZ_p^\times}$ is 
unramified and $\chi_0' =  
\chi_{0, 2} \psi|_{\bbZ_p^\times} \otimes \chi_{0, 1} \psi|_{\bbZ_p^\times}$ is an accessible refinement of the 
twist $\pi_0 \otimes \iota\psi$. We therefore have a point $\tau(z) \in Z_0^{pc}$ corresponding to the pair 
$\tau(\pi_0, \chi_0) = (\pi_0 \otimes \iota \psi, \chi_0')$, that we call the 
twin of $z$. Note that $\tau^2 = 1$ and if $\pi_{0, p}$ is unramified then 
$\tau(z)$ is the usual companion point appearing in the Gouvea--Mazur 
construction of the infinite fern \cite[\S 18]{Maz97}. The following lemma is an easy computation.
\begin{lemma}\label{lem_slope_of_twin_point}
	Let $z \in Z_0^{pc}$, and let $z' = \tau(z)$. Let $s, s'$ denote the slopes 
	of these two points, and $\kappa(z), \kappa(z') \in 
	\cW_0(\overline{\bQ}_p)$ their images in weight space. Then $s + s' = k-1$ 
	and $v_p(\kappa(z)(1+q) - 1) = v_p(\kappa(z')(1+q) - 1)$, where $q = p$ if 
	$p$ is odd and $q = 4$ if $p$ is even.
\end{lemma}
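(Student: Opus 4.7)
I would prove both assertions by direct computation, unwinding the explicit descriptions of $\nu_0$, $\kappa = r_0 \circ \nu_0$ and the twin $\tau$. For the slope identity, the formula for $\nu_0$ gives $s = 1/2 + v_p(\chi_{0,1}(p))$, while for the twin the unramified component of the refinement is $\chi_{0,2}\psi_p$, so $s' = 1/2 + v_p((\chi_{0,2}\psi_p)(p))$. Since $\psi$ is a finite order Hecke character, $\psi_p(p)$ is a root of unity of $p$-adic valuation zero, so $s + s' = 1 + v_p(\chi_{0,1}(p)\chi_{0,2}(p))$. Because $\pi_{0,p}$ is the irreducible principal series $i_{B_2}^{\GL_2}\iota\chi_0$, its central character equals $\iota(\chi_{0,1}\chi_{0,2}) = \psi_{\pi_0,p}$, and the local component at $p$ of the algebraic Hecke character $\psi_{\pi_0}$ (which has infinity type $z^{2-k}$) is a finite order twist of $|\cdot|_p^{2-k}$. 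This forces $v_p(\chi_{0,1}(p)\chi_{0,2}(p)) = k-2$, yielding $s + s' = k-1$.

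For the weight identity, I would first compute $\kappa$ explicitly. The formula for $\nu_0$ together with $r_0(\eta_1 \otimes \eta_2) = (\eta_1\eta_2^{-1})|_{\Z_p^\times}$ gives, for $t \in \Z_p^\times$,
\[ \kappa(z)(t) = \chi_{0,2}^{-1}(t)\, t^{k-2}. \]
Applied to the twin, whose refinement is $\chi_0' = (\chi_{0,2}\psi_p) \otimes (\chi_{0,1}\psi_p)$, and using $\chi_{0,1}|_{\Z_p^\times} = 1$ together with the defining relation $\psi_p|_{\Z_p^\times} = \chi_{0,2}^{-1}|_{\Z_p^\times}$, the same computation yields
\[ \kappa(z')(t) = \chi_{0,2}(t)\, t^{k-2}. \]

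Setting $\zeta := \chi_{0,2}(1+q) \in \Qpbarx$ and $\alpha := (1+q)^{k-2} \in \Q_p^\times$, the argument concludes as follows. Since $z \in Z_0^{pc}$, $\pi_{0,p}$ is an irreducible principal series and $\chi_{0,2}|_{\Z_p^\times}$ is a smooth, hence finite order, character. Because $1+q$ topologically generates the pro-$p$ part of $\Z_p^\times$, $\zeta$ is a $p$-power root of unity. Now
\[ \kappa(z)(1+q) - 1 = \zeta^{-1}\alpha - 1, \qquad \kappa(z')(1+q) - 1 = \zeta\alpha - 1, \]
and any element $\sigma \in \Gal(\Qpbar / \Q_p)$ sending $\zeta$ to $\zeta^{-1}$ (such a $\sigma$ exists because $\zeta$ and $\zeta^{-1}$ share a minimal polynomial over $\Q_p$) fixes $\alpha \in \Q_p$ and interchanges these two expressions. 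Galois-invariance of $v_p$ then gives the desired equality. The whole computation is essentially bookkeeping once the normalisation conventions for $\nu_0$, $r_0$ and $\tau$ are correctly unwound, and I do not expect any substantive difficulty beyond this.
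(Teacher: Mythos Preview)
Your proposal is correct and fills in precisely the ``easy computation'' that the paper alludes to but does not carry out. The slope identity and the weight identity both follow by directly unwinding the definitions of $\nu_0$, $r_0$, and the twin $\tau$, and your bookkeeping is accurate throughout; in particular, the Galois-conjugation trick for the weight identity (using that $\zeta=\chi_{0,2}(1+q)$ is a $p$-power root of unity, hence that $\zeta$ and $\zeta^{-1}$ are conjugate over $\bQ_p$) is a clean way to finish.
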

Here is the main result of \S \ref{sec_rigid_geometry}.
\begin{thm}\label{thm_propogration_along_components_of_eigencurve}
	Let $(\pi_0, \chi_0), (\pi_0', \chi'_0) \in \mathcal{RA}_0$ and let $n \geq 
	2$. Let $z_0, z_0' \in Z_0$ be the corresponding points. Suppose that one 
	of following two sets of conditions are satisfied:
	\begin{enumerate}
		\item The refinement $\chi_0$ is numerically non-critical and 
		$n$-regular.
		\item The refinement $\chi'_0$ is $n$-regular.
		\item The Zariski closures of $r_{\pi_0, \iota}(G_{\Qp})$ and 
		$r_{\pi'_0, \iota}(G_{\Qp})$ contain 
		$\SL_2$.
		\item $\Sym^{n-1} r_{\pi_0, \iota}$ is automorphic; 
	\end{enumerate}
or
	\begin{enumerate}
	\item[(1\textsuperscript{ord})] The refinement $\chi_0$ is ordinary.
	\item[(2\textsuperscript{ord})] $\pi_0$ and $\pi_0'$ are not CM (so 
	the Zariski closures of $r_{\pi_0, \iota}(G_{\Q})$ and $r_{\pi_0', 
	\iota}(G_{\Q})$ contain $\SL_2$).
	\item[(3\textsuperscript{ord})] $\Sym^{n-1} r_{\pi_0, \iota}$ is 
	automorphic.
\end{enumerate}
	If the points $z_0, z_0'$ lie on a common irreducible component of 
	$\cE_{0,\Cp}$, then $\Sym^{n-1} 
	r_{\pi'_0, \iota}$ is also automorphic.
\end{thm}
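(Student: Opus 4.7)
My strategy is to reduce the theorem to Corollary~\ref{cor_main_application_for_U(2)} by descending everything to a definite unitary group via a suitable quadratic base change.

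First, I would fix an imaginary CM extension $F/F^+$ satisfying the standard assumptions of \S\ref{sec_definite_unitary_groups}, taking $F^+$ to be a real quadratic field in which every rational prime dividing $Np$ splits completely, and $F/F^+$ to be everywhere unramified. Letting $G_2$ be the associated definite unitary group in two variables, we then have $F^+_v = \bQ_p$ with $v$ split in $F$ for every $v \in S_p$. Under hypothesis (3) (respectively (2\textsuperscript{ord})), $r_{\pi_0, \iota}$ and $r_{\pi_0', \iota}$ restrict to sufficiently non-trivial representations of $G_F$ that their automorphic avatars base-change to cuspidal representations of $\GL_2(\bA_F)$. After a twist by an appropriate algebraic Hecke character of $F$, these become conjugate self-dual and descend via Theorem~\ref{thm_cuspidal_descent} to automorphic representations $\pi_2, \pi_2'$ of $G_2(\bA_{F^+})$. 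The refinements $\chi_0, \chi_0'$ induce accessible refinements $\chi_2, \chi_2'$ at each $v \in S_p$ using the identification $G_2(F^+_v) \cong \GL_2(\bQ_p)$, and the $n$-regularity, ordinariness, and (numerical) non-criticality conditions carry over directly. I would then choose a tame level $U_2$ at which both $\pi_2, \pi_2'$ contribute, so that the corresponding classical points $z_2, z_2' \in \cE_2(\overline{\bQ}_p)$ are defined.

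At this stage, hypotheses (1)--(3) (or their ordinary analogues) of Corollary~\ref{cor_main_application_for_U(2)} follow immediately from the corresponding hypotheses of the theorem. For hypothesis (4), automorphy of $\Sym^{n-1} r_{\pi_0, \iota}$ base-changes to automorphy on $\GL_n(\bA_F)$ and, after an appropriate twist, descends via Theorem~\ref{thm_cuspidal_descent} to an automorphic representation $\pi_n$ of $G_n(\bA_{F^+})$ with $r_{\pi_n, \iota} \cong \Sym^{n-1} r_{\pi_2, \iota}$. Granting the common-component hypothesis (the main obstacle, addressed next), Corollary~\ref{cor_main_application_for_U(2)} would then supply an automorphic representation $\pi_n'$ of $G_n(\bA_{F^+})$ with $r_{\pi_n', \iota} \cong \Sym^{n-1} r_{\pi_2', \iota} \cong \Sym^{n-1} r_{\pi_0', \iota}|_{G_F}$. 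Base-changing $\pi_n'$ to $\GL_n(\bA_F)$ and invoking soluble descent, as in \cite{BLGGT} (applicable since $\Sym^{n-1} r_{\pi_0', \iota}$ is already defined over $G_\bQ$ and $F/\bQ$ is soluble), would then yield the required automorphic representation of $\GL_n(\bA_\bQ)$.

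The main obstacle is the transfer of the component-containment hypothesis: I must show that $z_0, z_0'$ lying on a common irreducible component of $\cE_{0, \Cp}$ forces $z_2, z_2'$ to lie on a common irreducible component of $\cE_{2, \Cp}$. To do this, I plan to construct, for each residual pseudocharacter $\overline{\tau}_0$ of $G_\bQ$ arising in $\cE_0$, a morphism of rigid analytic spaces from (the reduction of) the corresponding component $\cE_0(\overline{\tau}_0)$ of $\cE_0$ to the residual component $\cE_2(\overline{\tau}_2)$ of $\cE_2$, where $\overline{\tau}_2 = \overline{\tau}_0|_{G_F}$ suitably twisted. Concretely: the universal 2-dimensional pseudocharacter on $\cE_0(\overline{\tau}_0)$ restricted to $G_F$ (and twisted), combined with the base-change of the universal refinement parameter at each place of $F^+$ above $p$, defines a natural morphism to $\cX_{ps,2} \times \cT_2$. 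Zariski density of the classical points (Proposition~\ref{prop:eigenvariety}(\ref{dense})), together with \cite[Lemma 2.2.3]{Con99} and the definition of $\cE_2(\overline{\tau}_2)$, will show that this morphism factors through the closed immersion $i_2 : \cE_2(\overline{\tau}_2) \hookrightarrow \cX_{ps,2} \times \cT_2$. Since morphisms of rigid spaces send irreducible components into irreducible components, this will complete the reduction to Corollary~\ref{cor_main_application_for_U(2)}.
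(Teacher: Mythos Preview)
Your overall strategy matches the paper's: reduce to Corollary~\ref{cor_main_application_for_U(2)} by base-changing to a CM field $F$, twisting to obtain conjugate self-dual representations, and descending to the definite unitary group $G_2$. The endgame (solvable descent from $\GL_n(\bA_F)$ back to $\GL_n(\bA_\bQ)$) is also the same.

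However, there is a genuine gap in your construction of the morphism from $\cE_0(\overline{\tau}_0)$ to $\cX_{ps,2}\times\cT_2$. You write that the universal pseudocharacter is to be ``restricted to $G_F$ (and twisted)'', but you do not say what the twist is or why it exists over the family. The point is that the base change $r_{\pi_0,\iota}|_{G_F}$ is \emph{not} conjugate self-dual: to make $r_{\pi_0,\iota}|_{G_F}\otimes\psi^{-1}$ conjugate self-dual one needs $\psi\psi^c=\chi|_{G_F}$, where $\epsilon^{-1}\chi$ is the determinant of the universal pseudocharacter on the chosen irreducible component $\cC$. Since the weight varies along $\cC$, so does $\chi$, and hence the twisting character $\psi$ must vary in a family over (the image in weight space of) $\cC$. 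Producing such a $\psi$ is exactly Lemma~\ref{lem_existence_of_global_characters}; it forces you to pass to a finite \'etale cover $\widetilde{W}\to W$ of the relevant connected component of weight space, and to pull back $\cC$ to $\widetilde{\cC}=\cC\times_W\widetilde{W}$. You then need the additional argument (present in the paper) that, because $\widetilde{\cC}\to\cC$ is finite \'etale and $\cC$ is geometrically irreducible, each irreducible component of $\widetilde{\cC}_{E'}$ surjects onto $\cC_{E'}$, so one can choose lifts $z_1,z_1'$ of $z_0,z_0'$ lying on a \emph{common} geometrically irreducible component. Without this, your two separately-twisted points $z_2,z_2'$ have no reason to share a component of $\cE_{2,\C_p}$.

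A secondary point in the same vein: for the refinement parameter to land in $\cT_2$ in a way compatible with the eigenvariety (and in particular for ordinariness to be preserved in the ordinary case), you need $\psi|_{G_{F_{\wv}}}$ to be \emph{unramified} for each $\wv\in\widetilde{S}_p$; this extra requirement is built into Lemma~\ref{lem_existence_of_global_characters} and should be stated.
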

\begin{proof}
	We want to apply Corollary 
	\ref{cor_main_application_for_U(2)}. We first need to specify suitable data 
	$F, S, G_2, U_2$. Let $F' / \bbQ$ be an abelian CM extension satisfying the 
	following conditions:
	\begin{itemize}
		\item Each prime dividing $Np$ splits in $F'$. 
		\item $[(F')^+ : \bQ]$ is even.
		\item The extension $F' / (F')^+$ is everywhere unramified.
	\end{itemize}
	
	After extending $E$, we may assume that $z_0, z_0' \in \cE_0(E)$ and that 
	there is an irreducible component $\cC \subset \cE_0$ containing the points 
	$z_0, z_0'$. Moreover, by the first part of \cite[Theorem 3.4.2]{Con99}, we 
	may assume that $\cC$ is geometrically irreducible. Let $W$ denote the 
	unique
	connected component of $\cW_0$ containing $\kappa(\cC)$. We can find a 
	character $\chi : G_\bbQ \to \cO(W)^\times$ such that the determinant of 
	the universal pseudocharacter over $\cC$ equals $\epsilon^{-1} 
	\chi$ ($\chi$ is the product of a finite order $p$-unramified character 
	and the composition of $\epsilon$ with the universal character 
	$\Zp^\times \to \cO(\cW_0)$). By 
	Lemma 
	\ref{lem_existence_of_global_characters}, we can find a 
	finite \'etale morphism $\eta : \widetilde{W} \to W$ and a character $\psi 
	: G_{F'} \to \cO(\widetilde{W})^\times$, unramified almost everywhere, such 
	that $\psi \psi^c = \chi|_{G_{F'}}$, and such that for each place $v | p$ of $(F')^+$, there is a place $\wv | v$ of $F'$ such that $\psi|_{G_{F'_\wv}}$ is unramified. We now let $F/\Q$ be a soluble, Galois, 
	CM extension, containing $F'$, such that:
	\begin{itemize}
	\item Each prime dividing $Np$ splits in $F$. 
	\item The extension $F / F^+$ is everywhere unramified.
	\item The character $\psi|_{G_F}$ is unramified away from $p$.
\end{itemize}	
	
	Let $S$ denote the set of places of $F^+$ dividing $Np$. Fix as usual a 
	set of factorisations $v = \wv \wv^c$ for $v \in S$. Fix the unitary group 
	$G_2$ as in our standard assumptions (\S 
	\ref{sec_definite_unitary_groups}). Then for each $v \in S$, there is an 
	isomorphism $\iota_\wv: 
	G_2(F_v^+) \to \GL_2(F_{\wv})$. We let $U_2 = \prod_v U_{2, v} \subset 
	G(\bA_{F^+}^\infty)$ be an open subgroup with the property that $U_{2, v}$ 
	is hyperspecial maximal compact if $v \not\in S$, and $U_{2, v}$ is 
	the pre-image under $\iota_\wv$ of the subgroup $U_{1}(N)_l$ of 
	$\GL_2(\bbQ_l)$ if $v \in S$ 
	has residue characteristic $l$ (in which case $F_{\wv}=\bbQ_l$). 
	
	We recall that $\cT_2 = \prod_{v \in S_p} \Hom((F_\wv^\times)^2, \bG_m)$. 
	Let $\widetilde{\cT} = \cT_0 \times_{\cW_0} \widetilde{W}$, with 
	$\tilde{\kappa}: \widetilde{\cT} \to \widetilde{W}$ the projection map. If 
	$\chi^u = \chi^u_1 \otimes \chi^u_2 \in \cT_0(\cT_0)$ denotes the universal 
	character, then the tuple of characters \[ 
((\chi^u_1 \circ \mathbf{N}_{F_\wv / \bbQ_p} \cdot \psi^{-1}|_{G_{F_\wv}} \circ \Art_{F_\wv}) \otimes (\chi^u_2 \circ \mathbf{N}_{F_\wv / \bbQ_p} \cdot \psi^{-1}|_{G_{F_\wv}} \circ \Art_{F_\wv}) )_{v \in S_p} \]
in $\cT_2(\widetilde{\cT})$ determines a morphism $b_p : \widetilde{\cT} \to 
\cT_2$. Writing $\cX_{0,ps}$ for the rigid space of 2-dimensional 
pseudocharacters of $G_\bbQ$, unramified outside $Np$, there is a base change 
morphism $b : \cX_{0,ps} \times \widetilde{\cT} \to \cX_{2,ps} \times \cT_2$ 
covering $b_p$ and sending a pair $(\tau, \mu)$ to $(\tau|_{G_F} \otimes 
\psi^{-1}_{\tilde{\kappa}(\mu)}, b_p(\mu))$. This leads to a diagram of rigid 
spaces
\[ \xymatrix{ \cX_{0,ps} \times \widetilde{\cT} \ar[r]^b & \cX_{2,ps} \times \cT_2 \\ \cE_0 \times_{\cT_0} \widetilde{\cT} \ar[u]^{\widetilde{i}} & \cE_2 \ar[u]_{i_2}  }. \]
Let $\widetilde{\cC} = \cC \times_W \widetilde{W} = \cC \times_{\cT_0} 
\widetilde{\cT}$. Then the morphism $\widetilde{\cC} \to \cC$ is finite \'etale. In 
particular, each irreducible component of $\widetilde{\cC}$ maps surjectively to 
$\cC$. Choose $E'/E$ so that the irreducible components of $\widetilde{\cC}_{E'}$ 
are geometrically irreducible (we apply \cite[Theorem 3.4.2]{Con99} again). 
Since $\cC$ is geometrically irreducible, we 
still know that each irreducible component of $\widetilde{\cC}_{E'}$ maps 
surjectively to $\cC_{E'}$. Consequently, we can find points $z_1, z_1'$ of 
$\widetilde{\cC}_{E'}$ lifting $z_0, z_0'$ and lying on a common geometrically 
irreducible component $\widetilde{\cC}'$ of $\widetilde{\cC}_{E'}$. We next wish to 
show that $b 
\circ \widetilde{i}(\widetilde{\cC}') \subset i_2 (\widetilde{\cE}_{2,E'})$, or 
equivalently that $(b \circ \widetilde{i})^{-1}(i_2(\cE_{2,E'}))$ contains 
$\widetilde{\cC}'$. Since $i_2$ is a closed immersion, it suffices to show that 
$\widetilde{Z}'_0$, the pre-image of $Z_0$ in $\widetilde{\cC}'$, satisfies $b 
\circ \widetilde{i}(\widetilde{Z}'_0) \subset i_2(\cE_2(\Qpbar))$ (the 
accumulation 
property of $\widetilde{Z}'_0$ in $\widetilde{\cC}'$ is inherited from the 
corresponding property of the subset $Z_0 \cap \cC \subset \cC$). 

To see this, we note that for any $(\pi, \chi) \in \cZ_0$, with lift 
$\widetilde{z}' \in \widetilde{Z}'_0$, the base change 
$\pi_{F}$ (which exists since $F / \bQ$ is soluble) is still cuspidal. 
Indeed, if not then $r_{\pi, \iota}|_{G_F}$ would be reducible, implying 
that $\pi$ was automorphically induced from a quadratic imaginary subfield 
$K / \bbQ$ of $F / \bbQ$. This is a contradiction, since we chose $F$ so that all primes 
dividing $N p$ split in $F$, yet $K$ must be ramified at at least one such 
prime. The descent of $\pi_{F} \otimes \iota \psi_{\widetilde{z}'}^{-1}$ to 
$G_2$ (which exists, by \cite[Th\'eor\`eme 5.4]{labesse}) gives (together with 
$b_p(\nu_0(\pi, \chi))$) a point of $\cE_2$ which equals the image of 
$\widetilde{z}'$ under the map $b \circ \widetilde{i}$.
	
We can now complete the proof. Indeed, the points $b \circ \widetilde{i}(z_1)$, 
$b \circ \widetilde{i}(z_1')$ lie on a common geometrically irreducible 
component of $\cE_{2,E'}$, 
by construction. They satisfy the conditions of Corollary 
\ref{cor_main_application_for_U(2)} (in particular, Example 
\ref{ex:gl2qptriangulations} shows that our assumption on 
$r_{\pi_0',\iota}(G_{\Qp})$ in the non-ordinary case implies that all of its 
triangulations are 
non-critical). We therefore conclude the existence of an 
automorphic representation $\pi'_n$ of $G_n(\bA_{F^+})$ such that $\Sym^{n-1} 
r_{\pi_0', \iota}|_{G_F} \cong r_{\pi_n', \iota}$. Our assumptions (cf. Lemma 
\ref{lem_local_image_contains_SL_2}(2)) imply that $\Sym^{n-1} r_{\pi_0', 
\iota}|_{G_F}$ is irreducible, and therefore that the base change of $\pi'_n$ 
is a cuspidal automorphic representation of $\GL_n(\bA_F)$. Soluble descent for 
$\GL_n$ now implies that $\Sym^{n-1} r_{\pi_0', \iota}$ is itself automorphic. 
\end{proof}
\begin{lemma}\label{lem_existence_of_global_characters}
	Let $F$ be a CM number field. Suppose that each $p$-adic place of $F^+$ 
	splits in $F$, and let $\widetilde{S}_p$ be a set of $p$-adic places of $F$ 
	such that $\widetilde{S}_p \sqcup \widetilde{S}_p^c$ is the set of all 
	$p$-adic places of $F$. Let $W$ be a connected $E$-rigid space, and let 
	$\chi : G_{\bbQ} \to \cO(W)^\times$ be a continuous character (continuity 
	defined by demanding that the induced characters with values in 
	$\cO(U)^\times$ are continuous for all affinoid admissible opens $U \subset 
	W$,  as in \cite[\S 2]{Buz04}), unramified almost everywhere. Then we 
	can find a finite \'etale morphism $\eta : \widetilde{W} \to W$ and a 
	continuous character $\psi : G_F \to \cO(\widetilde{W})^\times$ such that 
	the following properties hold:
	\begin{enumerate}
		\item $\psi$ is unramified almost everywhere.
		\item For each $\wv \in \widetilde{S}_p$, $\psi|_{G_{F_\wv}}$ is 
		unramified.
		\item $\psi \psi^c = \eta^\ast(\chi)|_{G_F}$.
	\end{enumerate}
\end{lemma}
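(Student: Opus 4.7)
The plan is to construct $\psi$ via global class field theory, prescribing $p$-adic local components to enforce the unramified condition and then globalizing by passing to a finite étale cover of $W$ in order to absorb torsion obstructions (and take any required square roots).

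First, I would translate the problem by class field theory in the family setting. The character $\chi|_{G_F}$ corresponds to a continuous idele class character $\Phi\colon \A_F^\times / F^\times \to \cO(W)^\times$, and the extendibility of $\chi$ to $G_\Q$ forces $\Phi \circ c = \Phi$. Our task becomes: after pulling back along some finite étale cover $\eta\colon \widetilde{W} \to W$, construct a continuous character $\Psi\colon \A_F^\times / F^\times \to \cO(\widetilde{W})^\times$ with $\Psi \cdot (\Psi \circ c) = \eta^\ast \Phi$, subject to the local unramifiedness condition $\Psi|_{\cO_{F_\wv}^\times} = 1$ for each $\wv \in \widetilde{S}_p$.

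Second, I would prescribe the local components of $\Psi$ as follows. For each split place $v \in S_p$, writing $v = \wv \wv^c$, the global constraint at $v$ reduces (using the identifications $F_\wv \cong F_{\wv^c} \cong F^+_v$) to the equation $\Psi_\wv \cdot \Psi_{\wv^c} = \Phi_v$ between characters of $F^{+,\times}_v$. I would set $\Psi_\wv$ to be unramified with value at the uniformizer chosen arbitrarily (say equal to $\Phi_v(\varpi_v)$), forcing $\Psi_{\wv^c}$ to carry the full ramification of $\Phi_v$. At each place $w$ not above $p$, either take $\Psi_w = \Psi_{w^c}$ to be a square root of $\Phi_w$ (at places where $\Phi$ is unramified, so this only requires extracting a square root of an element of $\cO(W)^\times$, possible on a double cover), or split the local data arbitrarily at the finitely many other ramified places subject to $\Psi_w \Psi_{w^c} = \Phi_w$.

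Third, globalizing these local data to an honest idele class character is a Grunwald--Wang-type problem. The locally prescribed components automatically satisfy the product constraint with $\Phi$ by construction, and the obstruction to finding a single global $\Psi$ realizing them lies in a finite group arising from Galois cohomology of $W$ with 2-torsion coefficients; it can therefore be trivialized by passing to a further finite étale cover $\widetilde{W} \to W$. Setting $\psi = \Psi \circ \Art_F^{-1}$ then produces the desired Galois character; the unramifiedness at $\wv \in \widetilde{S}_p$ is built into the construction, and $\psi \psi^c = \eta^\ast(\chi)|_{G_F}$ follows from $\Psi \Psi^c = \eta^\ast \Phi$.

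The main technical obstacle is the globalization: matching the locally prescribed ramification with a genuine continuous global character. The flexibility needed to overcome this comes from the freedom of choice at the non-$p$-adic places (subject only to the product constraint), the $c$-invariance of $\Phi$, and the willingness to replace $W$ by a finite étale cover to kill the finite (torsion-valued) cohomological obstructions that arise.
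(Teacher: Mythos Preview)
Your globalization step (the third step) has a genuine gap. You prescribe local components $\Psi_w$ at every place and then assert that the obstruction to finding a global idele class character with these components lies in a finite $2$-torsion group killable by a finite \'etale cover. Neither claim is substantiated. Prescribing local characters at all (or even infinitely many) places is over-determined, and even if you only prescribe at finitely many places, a Grunwald--Wang-type argument would produce \emph{some} global $\Psi'$ with the right local behaviour there, but there is no mechanism forcing the global identity $\Psi'(\Psi')^c = \Phi$; the discrepancy $\Psi'(\Psi')^c/\Phi$ is a non-trivial global character with no reason to be finite-order or to vanish after a finite cover. The remark about ``Galois cohomology of $W$ with $2$-torsion coefficients'' does not correspond to any actual obstruction group in this setting: the relevant target $\cO(W)^\times$ is far from a finite or $2$-torsion module.

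The paper's proof avoids this by a two-step reduction that you are missing. First, it does \emph{not} try to solve $\Psi\Psi^c=\Phi$ directly; instead it constructs an auxiliary $\lambda$ by prescribing only its restriction to $\prod_{w\mid p}\cO_{F_w}^\times$ (chosen so that $\lambda$ is unramified at $\widetilde{S}_p$ and $\chi|_{G_F}\lambda\lambda^c$ becomes unramified at $p$). The key arithmetic input is that this prescribed datum is trivial on a finite-index subgroup of $\cO_F^\times$ (namely the norm-one units in $\cO_{F^+}^\times$), and \emph{this} is what makes the extension to a global character possible after a finite \'etale cover --- the cover absorbs a finite-index jump in a group of the form $\Hom(H,\bG_m)$, not a cohomological obstruction. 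Second, since $\chi|_{G_F}\lambda\lambda^c$ is now unramified at $p$ and almost everywhere, it has finite order, and the problem of writing a finite-order $c$-invariant character as $\varphi\varphi^c$ is handled by the known lemma \cite[Lemma A.2.5]{BLGGT}. Your proposal collapses these two genuinely different steps into one vague appeal, and in doing so loses the crucial finiteness that makes the argument work.
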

\begin{proof}
	We first claim that we can find a finite \'etale morphism $W' \to W$ and a continuous character $\lambda : G_F \to \cO(W')^\times$ with the following properties:
	\begin{itemize}
		\item $\lambda$ is unramified almost everywhere.
		\item $\chi|_{G_F} \lambda \lambda^c$ has finite order.
	\end{itemize}
	Indeed, let $L : \prod_{w | p} \cO_{F_w}^\times \to \cO(W)^\times$ be defined by the formula 
	\[ L( (u_w)_w) = \prod_{\wv \in \widetilde{S}_p^c} \chi|_{G_F}^{-1} \circ \Art_{F_\wv}(u_\wv). \]
	 Then $L$ is continuous, and trivial on a finite index subgroup of 
	 $\cO_{F}^\times$ (it is trivial on the norm 1 units in 
	 $\cO_{F^+}^\times$). It follows from Chevalley's theorem \cite[Th\'{e}or\`{e}me 1]{Chevalley-GL1CSP} that there is a compact open subgroup $U^p$ of $\prod_{w\nmid p}\cO^\times_{F_w}$ such that $L$ is trivial on $\Gamma(U^p) := \left(U^p \times \prod_{w | p} \cO_{F_w}^\times\right)\cap \cO_{F}^\times$.
	 
	 Note that if $H$ is a product of a finite abelian group and a finite 
	 $\bbZ_p$-module, and $H' \subset H$ is a 
	 finite index subgroup, then the natural map $\Hom(H, \bG_m) \to \Hom(H', 
	 \bG_m)$ of rigid spaces is finite \'etale. Maps of rigid spaces $W \to \Hom(H, \bG_m)$ biject with continuous characters $H \to \cO(W)^\times$. 
	 
	 It follows that we may extend 
	 $L$ to a continuous character $L' : F^\times \backslash 
	 \bA_F^{\infty, \times} \to \cO(W')^\times$, for some finite \'etale 
	 morphism $W' \to W$. Indeed, we apply the preceding remark with $H'$ the quotient of $\prod_{w | p} \cO_{F_w}^\times$ by the closure of $\Gamma(U^p)$ and $H$ the quotient of  $F^\times \backslash 
	 \bA_F^{\infty, \times}$ by the closure of the image of $U^p$ (cf.~the discussion in \cite[\S 2]{Buz04}). 
	 
	 We define $\lambda$ by $\lambda \circ \Art_F = L'$. 
	 The character $\chi|_{G_F} \lambda \lambda^c$ has finite order because it 
	 factors through the Galois group of an abelian extension of $F$ which is 
	 unramified at all but finitely many places and unramified at the primes 
	 above $p$.
	 
	 Replacing $W'$ by a connected component, we may suppose that $W'$ is 
	 connected, in which case the character $\chi|_{G_F} \lambda \lambda^c$ is 
	 constant (i.e. pulled back from a morphism $W' \to \operatorname{Sp} E'$, 
	 for a finite extension $E' / E$). Applying \cite[Lemma A.2.5]{BLGGT}, we 
	 may find a finite extension $E'' / E'$ and a continuous character $\varphi 
	 : G_F \to (E'')^\times$ of finite order such that $\chi|_{G_F} \lambda 
	 \lambda^c = \varphi \varphi^c$. The proof is complete on taking 
	 $\widetilde{W} = W'_{E''}$ and $\psi = \varphi \lambda^{-1}$. 
\end{proof}
We conclude this section with a lemma that will be used in \S \ref{sec_higher_levels}. It uses the existence of the universal pseudocharacter $t$ over $\cE_0$.
\begin{lemma}\label{lem_bad_locus_of_eigencurve_zariski_closed}
Fix $n \geq 1$, and let $\cZ \subset \cE_0$ denote the set of points $x$ satisfying one of the following conditions:
\begin{enumerate}
\item $t_x$ is absolutely reducible;
\item $t_x = \tr \rho_x$ for an absolutely irreducible representation $\rho_x : G_\bQ \to \GL_2(\overline{\bQ}_p)$, and the Zariski closure of the image of $\rho_x$ does not contain $\SL_2$.
\item There exists a prime $l | N$ such that $t_x|_{G_{\bQ_l}} = \chi_1 + \chi_2$ for characters $\chi_i : G_{\bQ_l} \to \overline{\bQ}_p^\times$ such that $(\chi_1 / \chi_2)^i = 1$ for some $i = 1, \dots, n-1$.
\end{enumerate}
Then $\cZ$ is Zariski closed.
\end{lemma}
\begin{proof}
The discussion in \cite[\S 4.2]{chenevier_det} shows that the locus where $t_x$ is absolutely reducible is Zariski closed. If $\rho_x : G_\bQ \to \GL_2(\overline{\bQ}_p)$ is irreducible, then the Zariski closure of the image of $\rho_x$ contains $\SL_2$ if and only if $\Sym^6 \rho_x$ is irreducible. Indeed, the Zariski closure of the image of $\rho_x$ contains $\SL_2$ if and only if the Zariski closure $G_x$ of the image of the associated projective representation $\operatorname{Proj} \rho_x : G_\bQ \to \PGL_2(\overline{\bQ}_p)$ is $\PGL_2$. There are two possibilities for the group $G_x$, which is a (possibly disconnected) reductive group: the first is that it is finite, hence either dihedral or conjugate to one of $A_4$, $S_4$, or $A_5$. In any of these cases $\Sym^6 \rho_x$ is reducible. The next is that $G_x$ has a non-trivial identity component, which therefore contains a maximal torus of $\PGL_2$. The only possibilities are therefore that either $G_x$ equals the normaliser of this maximal torus (in which case $\Sym^6 \rho_x$ is again reducible) or that $G_x = \PGL_2$ (in which case $\Sym^6 \rho_x$ is irreducible). 

This shows that the set $\cZ_{12}$ of points satisfying conditions (1) or (2) of the Lemma is Zariski closed. Finally, if $l | N$ and $i = 1, \dots, n-1$, let $\cZ_{3, l, i}$ denote the set of points $x$ such that $t_x|_{G_{\bQ_l}} = \chi_1 + \chi_2$ for some characters $\chi_1, \chi_2$ such that $(\chi_1 / \chi_2)^i = 1$. It remains to show that $\cZ_{3, l, i}$ is Zariski closed. Its complement is the set of points such that either $t_x|_{G_{\bQ_l}}$ is absolutely irreducible, or $t_x|_{G_{\bQ_l}}$ is absolutely reducible and there exists $g \in G_{\bQ_l}$ such that the discriminant of the characteristic polynomial of $g^i$ under the pseudocharacter $t_x$ is non-zero. This is a union of Zariski open sets.
\end{proof}
\section{Ping pong}\label{sec_ping_pong}

In this section we use the rigid analytic results of \S 
\ref{sec_rigid_geometry} to prove the following theorem. We 
recall that we say that an automorphic representation $\pi$ of $\GL_2(\bA_\bQ)$ has 
``weight $k$'' 
for an integer $k \geq 2$ if $\pi_\infty$ has the same infinitesimal character 
as the dual of the algebraic representation $\Sym^{k-2} \bC^2$.
\begin{thm}\label{thm_propogration_of_automorphy_of_symmetric_powers}
	Fix an integer $n \geq 2$. Let $\pi_0$ be a cuspidal automorphic 
	representation of $\GL_2(\bA_\bQ)$ which is everywhere unramified and of 
	weight 
	$k$, for some $k \geq 2$. Suppose that $\Sym^{n-1} r_{\pi_0, \iota}$ is 
	automorphic for some (equivalently, any) prime $p$ and isomorphism $\iota : 
	\overline{\bQ}_p \to \bC$. Then for any everywhere unramified cuspidal 
	automorphic representation $\pi$ of $\GL_2(\bA_\bQ)$ of weight $l \geq 2$, 
	$\Sym^{n-1} r_{\pi, \iota}$ is automorphic.
\end{thm}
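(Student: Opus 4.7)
The plan is to combine Theorem \ref{thm_propogration_along_components_of_eigencurve} with the Buzzard--Kilford description of the Coleman--Mazur eigencurve $\cE_2$ of tame level $1$ at $p = 2$. Since automorphy of $\Sym^{n-1} r_{\pi, \iota}$ is a property of $\pi$ alone---equivalent to the existence of an automorphic representation $\Pi$ of $\GL_n(\bA_\bQ)$ whose Galois representation is $\Sym^{n-1} r_{\pi, \iota}$---I may fix $p = 2$ and an isomorphism $\iota : \overline{\bQ}_2 \to \bC$ once and for all. Each cuspidal Hecke eigenform $f$ of tame level $1$ and weight $k \geq 2$ (possibly ramified at $2$) determines two ``twin'' classical points $(f, \alpha_f), (f, \beta_f)$ on $\cE_2$, and the automorphy of $\Sym^{n-1} r_{f, \iota}$ is a property of $f$ alone, shared by both.

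By \cite{Buz05}, $\cE_2$ lies over the unique connected component $\cW_2^+ \subset \cW_2$, which is isomorphic to the open unit disc with coordinate $w$, and the preimage $\kappa^{-1}(B)$ of the boundary annulus $B = \{|8| < |w| < 1\}$ decomposes as a disjoint union $\sqcup_{i \geq 1} X_i$, with each $X_i \to B$ an isomorphism and with slope on $X_i$ at a point of weight coordinate $w$ equal to $i \cdot v_2(w)$. Since $\cE_2$ is equidimensional of dimension one and $\kappa$ sends each irreducible component onto a Zariski open subset of $\cW_2^+$ (Proposition \ref{prop:eigenvariety}), every irreducible component of $\cE_2$ meets $\kappa^{-1}(B)$, hence meets some $X_i$. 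Combined with Theorem \ref{thm_propogration_along_components_of_eigencurve} (which shows that automorphy of $\Sym^{n-1}$ is constant along irreducible components of $\cE_2$), the proof reduces to exhibiting, for each $i \geq 1$, at least one suitably generic classical point of $X_i$ at which $\Sym^{n-1}$ is known to be automorphic.

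The ``ping pong'' is the inductive procedure that, starting from an initial annulus $X_{i_0}$, reaches every other $X_i$. First, propagate the given automorphy at $\pi_0$ along the irreducible component of $\cE_2$ through one of its two refinements to land on some $X_{i_0}$. Given automorphy at a classical point of $X_i$, choose another classical point $(f, \alpha_f) \in X_i$ of weight $k$ with slope $i \cdot v_2(w) < k - 1$ (so that Coleman's classicality theorem applies and such points are Zariski dense in $X_i$); by Theorem \ref{thm_propogration_along_components_of_eigencurve}, $\Sym^{n-1} r_{f, \iota}$ is automorphic. The twin point $(f, \beta_f)$ lies on $X_{i'}$ with $i + i' = (k-1)/v_2(w)$ by Lemma \ref{lem_slope_of_twin_point}, so automorphy now holds on $X_{i'}$. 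Varying $k$ and the chosen weight coordinate $w$, iteration reaches every positive integer index in finitely many steps. Finally, to conclude for a given $\pi$ of weight $l$, one reverses the argument: an irreducible component through a refinement of $\pi$ meets some $X_{i_\pi}$, and Theorem \ref{thm_propogration_along_components_of_eigencurve} then transports the automorphy established at $X_{i_\pi}$ back to $\pi$.

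The main technical obstacle is simultaneously verifying at each hop the hypotheses of Theorem \ref{thm_propogration_along_components_of_eigencurve}: $n$-regularity of the refinement, non-criticality of every triangulation of $r_{f, \iota}|_{G_{\bQ_2}}$ (controlled via Example \ref{ex:gl2qptriangulations}, automatic if $r_{f,\iota}|_{G_{\bQ_2}}$ is not a direct sum of characters and in particular under numerical non-criticality), and the big-image condition that the Zariski closure of $r_{f, \iota}(G_{\bQ_2})$---or of $r_{f, \iota}(G_\bQ)$ in the ordinary case---contains $\SL_2$. Each condition fails only on a proper Zariski closed, or at worst countable, subset of the classical locus of each $X_i$, so Zariski density of classical points allows all hypotheses to be met simultaneously at each step. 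A further bookkeeping point, noted in the introduction, is that the classical points of $\sqcup_i X_i$ correspond to cuspforms ramified at $2$, whereas $\pi_0$ and $\pi$ are unramified there; but this gap is precisely what the component-propagation in Theorem \ref{thm_propogration_along_components_of_eigencurve} is designed to bridge.
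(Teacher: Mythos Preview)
Your overall strategy matches the paper's, but two steps are underspecified in ways that matter.

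First, the sentence ``Varying $k$ and the chosen weight coordinate $w$, iteration reaches every positive integer index in finitely many steps'' is the entire combinatorial content of the ping pong and you have not carried it out. The paper does \emph{not} attempt to show that one can pass from an arbitrary $X_i$ to an arbitrary $X_j$; instead it reduces both sides to $X_1$ via a specific two-step route. From the irreducible component through a refinement of $\pi_0$ one picks an $X_i$, then chooses on $X_i$ a classical point of weight $k' = 2i + 2^{m+1} - 1$ (unramified at $2$), whose twin lies on $X_{2^m - 1}$ by the relation $i + i' = (k'-1)/2$. From $X_{2^m - 1}$ one then picks the unique point with $\kappa(5) = \zeta_{2^{m+1}}$, of slope $1 - 2^{-m} \in (1/2,1)$, whose twin lies on $X_1$ (Lemmas \ref{lem_first_step} and \ref{lem_induction_step}). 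The same reduction applied to $\pi$ lands on $X_1$ as well, and a single application of Lemma \ref{lem_propogation_through_annuli} on $X_1$ finishes. Your formulation is not wrong in principle, but you would still need to exhibit explicit moves, and the paper's route through the indices $2^m - 1$ is the cleanest one.

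Second, your handling of the hypotheses of Theorem \ref{thm_propogration_along_components_of_eigencurve} does not work as written. You say the failure locus of $n$-regularity and big image is ``a proper Zariski closed, or at worst countable, subset of the classical locus of each $X_i$''; but the classical locus is itself countable, so this gives nothing. The paper instead proves these hypotheses \emph{unconditionally} for every point that arises. For level $1$ forms, Lemma \ref{lem:level1regularity} shows every refinement is numerically non-critical and $n$-regular for all $n$ (using Hatada's congruence $a_2 \equiv 0 \pmod 3$ and a mod $5$ argument to exclude $a_2 = 0$), and Lemma \ref{lem_local_image_contains_SL_2}(3) shows the Zariski closure of $r_{\pi,\iota}(G_{\bQ_2})$ contains $\SL_2$. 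For the points chosen on $X_i$ in the boundary, the paper arranges that the two refinements have distinct slopes, which forces $n$-regularity, and then invokes Lemma \ref{lem_local_image_contains_SL_2}(2). You should replace the genericity argument with these concrete verifications.
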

To prove Theorem \ref{thm_propogration_of_automorphy_of_symmetric_powers}, we 
will use the properties of the eigencurve $\cE_0$, as defined in \S 
\ref{subsec_eigencurve}. More precisely, we henceforth let $p = 2$, $N = 1$, 
and let $\cE_0$ denote the eigencurve defined with respect to this particular 
choice of parameters. We fix an isomorphism $\iota: \Qbar_2 \to \C$. $\cE_0$ is 
supported on the connected component $\cW_0^+ 
\subset \cW_0$ defined by $\chi(-1) = 1$. We write $\chi^u: \Z_2^\times \to 
\cO(\cW_0)$ for the universal character. We have the following explicit result 
of Buzzard and Kilford on the geometry of the morphism $\kappa : \cE_0 \to 
\cW_0^+$ and the slope map $s 
: \cE_0(\overline{\bQ}_p) \to \R$:
\begin{thm}\label{thm_buzzard_kilford}
	Let $w \in \cO(\cW_0)$ denote the function $\chi^u(5) - 1$. Then:
	\begin{enumerate}
		\item $w$ restricts to an isomorphism between $\cW_0^+$ and the open 
		unit disc $\{ | w | < 1 \}$.
		\item Let $\cW_0(b) \subset \cW_0^+$ denote the 
		open subset where $ | 8 
		| 
		< | w | < 1$, and let $\cE_0(b) = \kappa^{-1}(\cW_0(b))$. Then there is 
		a decomposition
		$\cE_0(b) = \sqcup_{i=1}^\infty X_i$ of $\cE_0(b)$ as a 
		countable disjoint union of admissible open subspaces such that for each $i \geq 1$, $\kappa|_{X_i} : 
		X_i \to \cW_0(b)$ is an isomorphism.
		\item For each $i = 1, 2, \ldots$, the map $s \circ 
		\kappa|_{X_i}^{-1} : \cW_0(b)(\overline{\bQ}_p) \to 
		X_i(\overline{\bQ}_p) \to \bR$ equals the 
		map $i v_p \circ w$. 
	\end{enumerate}
\end{thm}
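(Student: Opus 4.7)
The assertion is essentially the main result of \cite{Buz05}, so the strategy is to reduce each part to their work and the standard dictionary between overconvergent modular forms and the eigencurve. For part (1), use the topological decomposition $\Z_2^\times \cong \{\pm 1\} \times (1 + 4\Z_2)$, where $5$ is a topological generator of the second factor. The connected component $\cW_0^+$ cut out by $\chi(-1) = 1$ is then identified with $\Hom(1 + 4\Z_2, \bG_m)$, and the evaluation $\chi \mapsto \chi(5)$ gives an isomorphism to the open disc $\{|z - 1| < 1\}$; subtracting 1 yields the identification via $w$.

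For parts (2) and (3), first recall the description of $\cE_0$ over $\cW_0^+$ in terms of overconvergent modular forms: by Coleman--Mazur, the pushforward $\kappa_\ast \cO_{\cE_0}|_{\cW_0^+}$ is the coherent algebra generated by the Hecke algebra acting on the family of tame level $1$, overconvergent 2-adic cuspidal eigenforms (of sufficiently small radius of overconvergence, chosen so that the Atkin--Lehner operator $U_2$ acts compactly). Equivalently, $\cE_0$ is the Fredholm hypersurface cut out in $\cW_0^+ \times \bA^{1,\mathrm{rig}}$ (with coordinate $T$) by the characteristic power series $P(w, T) \in \cO(\cW_0^+)\{\{T\}\}$ of $U_2$, with the slope map $s$ corresponding to $-v_2(T^{-1}) = v_2(U_2)$.

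The heart of the matter is the main theorem of \cite{Buz05}, which computes $P$ explicitly on the boundary annulus $\cW_0(b)$. Concretely, one chooses an integral basis for the relevant space of overconvergent forms (descending from the classical theory) and writes down the matrix of $U_2$ with respect to this basis. Using the congruence $w \equiv 0 \pmod 2$ on $\cW_0(b)$ combined with explicit $q$-expansion identities for the action of $U_2$, Buzzard--Kilford prove that the matrix of $U_2$ is, up to factors that are units over $\cW_0(b)$, upper triangular with diagonal entries whose valuations (with respect to the Gauss norm on $\cO(\cW_0(b))$, normalised by $v_2 \circ w$) are exactly $v_2(w), 2 v_2(w), 3 v_2(w), \dots$. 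It follows that the Newton polygon of $P(w, T)$ over $\cW_0(b)$ consists of segments of slopes $i v_2(w)$ for $i \geq 1$, each of length $1$.

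Given this Newton polygon computation, the decomposition and slope formula follow by a formal argument. Since the slopes are pairwise distinct at every point of $\cW_0(b)$, one applies a slope-by-slope Weierstrass preparation (for instance as encoded in the Riesz-theoretic decomposition of compact operators in families, cf.\ the formalism of Coleman and Buzzard) to factor $P(w, T) = \prod_{i \geq 1}(1 - \alpha_i(w) T) \cdot u(w, T)$ over $\cO(\cW_0(b))$, where $\alpha_i \in \cO(\cW_0(b))$ satisfies $v_2(\alpha_i) = i v_2(w)$, the product converges, and $u$ is a unit. Each linear factor cuts out a closed rigid subspace $X_i \subset \cE_0(b)$ which, being defined by a single degree-one equation in $T$, projects isomorphically to $\cW_0(b)$ via $\kappa$; the $X_i$ are pairwise disjoint because their unique $T$-valued parameters $\alpha_i$ have distinct valuations on each fibre; and the slope of a point of $X_i$ over $w \in \cW_0(b)$ is $v_2(\alpha_i(w)) = i v_2(w)$, giving (3). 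The main obstacle is the Newton polygon computation itself, which is genuinely non-formal and forms the technical core of \cite{Buz05}; everything else is bookkeeping in the Coleman--Mazur framework.
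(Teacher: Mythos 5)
Your proposal takes the same high-level route as the paper — reduce to the main theorem of Buzzard--Kilford — but spends its effort in a different place. You sketch the interior of the Buzzard--Kilford argument (the Newton polygon computation and the resulting Riesz-type factorization), which the paper simply cites as a black box. What your proposal is missing, however, is the one point the paper does have to address: Buzzard--Kilford's theorem concerns the \emph{full} eigencurve $\cE_1$ (modular forms, not just cusp forms), whereas the statement at hand concerns the \emph{cuspidal} eigencurve $\cE_0$. You silently work with cusp forms from the start, but the passage from one to the other is exactly where something could go wrong.

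The paper's proof handles this by showing $\cE_1 = \cE_1^{ord} \sqcup \cE_1^{non\textrm{-}ord}$ is a decomposition into open and closed subspaces (the ordinary locus, being constructible by Hida theory, is finite over $\cW_0$, hence its open immersion into the separated space $\cE_1$ is also closed), then noting that in the $p=2$, $N=1$ case the unique ordinary family is the Eisenstein family, so $\cE_1^{ord}\cong\cW_0^+$ and $\cE_1^{non\textrm{-}ord}=\cE_0$. This is what justifies peeling off the slope-$0$ component of Buzzard--Kilford's annulus decomposition and re-indexing from $i=1$. Your proposal asserts a cuspidal Newton polygon with diagonal valuations $v_2(w), 2v_2(w),\dots$ directly, attributing this to Buzzard--Kilford; as stated they compute on weight-zero overconvergent \emph{functions}, where the constant function contributes a slope-$0$ eigenvector, so you do need the ordinary/Eisenstein argument somewhere. (There is also a minor weight-normalisation discrepancy between the paper's conventions and Buzzard--Kilford's, which the paper notes does not affect the boundary region $\cW_0(b)$; you do not mention it, but it is harmless.) In short: your Newton-polygon bookkeeping is fine and more explicit than the paper's, but you should fill in the open-and-closed ordinary/non-ordinary decomposition before claiming the cuspidal statement.
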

\begin{proof}
	This is almost the main theorem of \cite{Buz05}, except that here we are 
	using the cuspidal version of the eigencurve. However, if $\cE_1$ denotes 
	the full eigencurve used in \cite{Buz05}, then there is a decomposition 
	$\cE_1 = \cE_1^{ord} \sqcup \cE_1^{non-ord}$ as a union of open and closed 
	subspaces. This follows from the fact that the ordinary locus 
	$\cE_1^{ord}$ in the eigencurve can also be constructed using Hida theory 
	(see \cite[\S 6]{pilloni-oc}), so is finite over 
	$\cW_0$. Since $\cE_1$ is separated the open immersion $\cE_1^{ord} 
	\hookrightarrow \cE_1$ is therefore also finite, hence a closed 
	immersion. In our particular case $(p=2,N=1)$, we have $\cE_1^{ord} \cong 
	\cW_0^+$ (the unique ordinary family is the family of Eisenstein series) 
	and 
	therefore $\cE_1^{non-ord} = \cE_0$, giving the 
	statement we have here. See Lemma 7.4 of the (longer) arXiv version of 
	\cite{BC-arxiv} for an alternative argument. 
	
	We note as 
	well that in our 
	normalisation, 
	the trivial character in $\cW_0$ corresponds to forms of weight 2, whereas 
	in the notation of \cite{Buz05}, the character $x^2$ corresponds to 
	forms of weight 2. However, this renormalisation does not change the region 
	$\cW_0(b)$.
\end{proof}
Before giving the proof of Theorem 
\ref{thm_propogration_of_automorphy_of_symmetric_powers}, we record some useful 
lemmas.
\begin{lemma}\label{lem_ping_pong}
	Let $z \in Z_0^{pc} \cap \cE_0(b)$, and suppose $z \in X_i$. Let $z' = 
	\tau(z)$ be the twin of $z$. Then $z \in X_{i'}$, where $i'$ satisfies the 
	relation $i + i' = (k-1) / v_p(w(z))$.
\end{lemma}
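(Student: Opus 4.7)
The plan is to combine the explicit description of the boundary annulus in Theorem \ref{thm_buzzard_kilford} with the elementary calculation of Lemma \ref{lem_slope_of_twin_point} applied to the twin map.

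First I would observe that since $p = 2$, Lemma \ref{lem_slope_of_twin_point} with $q = 4$ gives
\[ v_p(\kappa(z)(5) - 1) = v_p(\kappa(z')(5) - 1), \]
which, by the definition $w = \chi^u(5) - 1$, means $v_p(w(z)) = v_p(w(z'))$. In particular, since $z \in \cE_0(b)$ we have $0 < v_p(w(z)) < 3$, hence also $0 < v_p(w(z')) < 3$, so $\kappa(z') \in \cW_0(b)$ and therefore $z' \in \cE_0(b)$. Thus $z'$ lies on some $X_{i'}$ in the Buzzard--Kilford decomposition. (I would also note in passing that $z' \in Z_0^{pc}$, since twisting by a finite order character preserves the property that the local component at $p$ is potentially crystalline, and that $\tau(z)$ genuinely lands in the cuspidal eigencurve because $\pi_0 \otimes \iota\psi$ is still cuspidal and of tame level $1$.)

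Next I would read off the slopes from Theorem \ref{thm_buzzard_kilford}(3): $s(z) = i \cdot v_p(w(z))$ and $s(z') = i' \cdot v_p(w(z'))$. Combining this with the slope identity $s(z) + s(z') = k - 1$ from Lemma \ref{lem_slope_of_twin_point} and the equality $v_p(w(z)) = v_p(w(z'))$ established above, I obtain
\[ (i + i') \cdot v_p(w(z)) = k - 1, \]
which is the desired relation $i + i' = (k-1)/v_p(w(z))$.

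There is no real obstacle here: the lemma is essentially a bookkeeping exercise that extracts the content of Theorem \ref{thm_buzzard_kilford} and Lemma \ref{lem_slope_of_twin_point}. The only point requiring a moment's care is the verification that $z'$ still lies in the boundary annulus $\cE_0(b)$, which is what allows us to apply the Buzzard--Kilford parametrization to $z'$ in the first place; this follows immediately from the invariance of $v_p(w(\cdot))$ under $\tau$.
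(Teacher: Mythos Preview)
Your proof is correct and follows essentially the same approach as the paper: invoke Lemma~\ref{lem_slope_of_twin_point} to see that $v_p(w(z)) = v_p(w(z'))$ (so $z' \in \cE_0(b)$), then combine the Buzzard--Kilford slope formula $s = i \cdot v_p(w)$ with the twin slope relation $s + s' = k-1$. The paper's version is more terse (it does not spell out the $v_p(w(z)) = v_p(w(z'))$ step or the parenthetical remarks about $z' \in Z_0^{pc}$), but the argument is identical.
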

\begin{proof}
	By Lemma \ref{lem_slope_of_twin_point}, $z'$ lies in $\cE_0(b)$, so in 
	$X_{i'}$ for a unique integer $i' \geq 1$. Writing $s, s'$ for the slopes 
	of these two points and $k$ for their weights, we have $k-1 = s + s' = i 
	v_p(w(z)) + i' v_p(w(z))$, hence $i + i' = (k-1) / v_p(w(z))$. 
\end{proof}
As a sanity check, we observe that in the context of the proof of Lemma 
\ref{lem_ping_pong}, $(k-1) / v_p(w(z))$ is always an integer. Indeed, 
$\kappa(z)$ satisfies $\kappa(z)(5) = 5^{k-2} \zeta_{2^m}$ for some $k \geq 2$ 
and $m \geq 0$. This weight lies in $\cW_0(b)$ if and only if either $k$ is 
odd, or $k$ is even and $m \geq 1$. If $m \geq 1$, then $v_p(w(z)) = 2^{1-m}$. 
If $m = 0$ and $k$ is odd, then $v_p(w(z)) = 2$. In either case we see that 
$(k-1) / v_p(w(z))$ is an integer. 

\begin{lemma}\label{lem:level1regularity}
	Let $\pi$ be an everywhere unramified cuspidal automorphic representation 
	of $\GL_2(\A_\Q)$ of weight $k \ge 2$. Every accessible 
	refinement of $\pi$ is numerically non-critical and $n$-regular for every 
	$n \ge 2$ (recall that we have fixed $p=2$ and these notions refer to the 
	local factor at $2$, $\pi_2$).
\end{lemma}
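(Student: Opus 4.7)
The plan is to verify numerical non-criticality and $n$-regularity separately for each accessible refinement of $\pi$. Since $\pi_2$ is unramified, the two accessible refinements correspond to the two orderings of the Satake parameters $\alpha,\beta \in \overline{\bQ}_2$ (obtained via $\iota^{-1}$), which satisfy $\alpha\beta = 2^{k-1}$. Using the relation $\jmath_2(\nu(\pi,\chi)) = \delta$ together with the normalisations recalled in \S \ref{subsec_eigencurve}, the triangulation parameter for the chosen refinement satisfies $\delta_{v,1}(2) = \alpha$, and the labelled Hodge--Tate weights of $r_{\pi,\iota}|_{G_{\bQ_2}}$ are $\{0, k-1\}$. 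For numerical non-criticality, Definition \ref{defn:numerically_non_critical} (with $n=2$, $i=1$) reduces to $v_2(\alpha) < k-1$; using $v_2(\alpha) + v_2(\beta) = k-1$ with both slopes non-negative, both refinements are numerically non-critical if and only if $\pi$ is non-ordinary at $2$. I would conclude this part by invoking the observation made in the proof of Theorem \ref{thm_buzzard_kilford}: for tame level $1$ at $p=2$, the ordinary locus of the full eigencurve $\cE_1$ coincides with the Eisenstein component $\cW_0^+$, so the cuspidal eigencurve $\cE_0$ contains no ordinary points.

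For $n$-regularity for every $n \geq 2$, I would show that $\alpha/\beta$ is not a root of unity in $\overline{\bQ}_2$. Since every root of unity has trivial $2$-adic valuation, this would force $v_2(\alpha) = v_2(\beta) = (k-1)/2$. Since $\pi$ is cuspidal of level $1$, the weight $k$ is even (the space $S_k(\SL_2(\bZ))$ vanishes for odd $k$), so $(k-1)/2$ is a half-integer, and this equality is unattainable provided $v_2(\alpha),v_2(\beta) \in \bZ$. The hard part will be establishing this integrality in full generality: it is equivalent to $2$ being unramified in the Hecke coefficient field $K_\pi$ of $\pi$. For seed forms of small weight relevant to the ping-pong argument (such as $\Delta$, for which $K_\pi = \bQ$), integrality of $v_2(a_2(\pi))$ is immediate from $a_2(\pi) \in \bZ$. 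In general, I would derive it from a structural property of the Hecke algebra $\bT_k \otimes \bZ_2$ acting on $S_k(\SL_2(\bZ))$, classically related to the étaleness of Hecke algebras at primes away from the level.
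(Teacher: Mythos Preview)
Your treatment of numerical non-criticality is correct and matches the paper: there are no level $1$ cusp forms ordinary at $2$, so every refinement has slope strictly between $0$ and $k-1$.

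For $n$-regularity, however, there is a genuine gap. You correctly reduce to showing that $\alpha/\beta$ is not a root of unity, and correctly observe that this would force $v_2(\alpha) = v_2(\beta) = (k-1)/2$, a half-integer since $k$ is even. The gap is your claim that $v_2(\alpha), v_2(\beta) \in \bZ$. First, the asserted equivalence with ``$2$ is unramified in $K_\pi$'' is false: the Satake parameters $\alpha,\beta$ lie in a (typically quadratic) extension $K_\pi(\alpha)/K_\pi$, and $2$ can ramify there even if it is unramified in $K_\pi$. More damagingly, the case $a_2 = 0$ already shows your integrality claim cannot hold in the generality you need: then $\alpha = -\beta$, so $\alpha/\beta = -1$ \emph{is} a root of unity, and $v_2(\alpha) = (k-1)/2 \notin \bZ$. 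Thus your approach cannot succeed without separately ruling out $a_2 = 0$ (and more generally $v_2(a_2) \geq (k-1)/2$), and the \'etaleness remark does not do this.

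The paper's argument is quite different and avoids any $2$-adic slope analysis for this step. It embeds the coefficients into $\overline{\bQ}_3$ and invokes Hatada's congruence $a_2 \equiv 0 \pmod 3$. From $\alpha(1 + \zeta^{-1}) = a_2 \in 3\overline{\bZ}_3$ and $\alpha^2 = \zeta \cdot 2^{k-1} \in \overline{\bZ}_3^\times$, one deduces $1 + \zeta^{-1} \in 3\overline{\bZ}_3$, whence $\zeta = -1$, i.e.\ $a_2 = 0$. This last possibility is then excluded by a $\bmod\ 5$ argument: reducing $\overline{r}_{f,\iota_5}$ to a level $1$ form of weight $\leq 6$, one checks $1 + 2^3$ and $1 + 2^5$ are prime to $5$, so no level $1$ eigenform has $a_2 \equiv 0 \pmod 5$. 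You should replace your slope-integrality attempt with this Hatada-plus-mod-$5$ argument.
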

\begin{proof}
	Numerical non-criticality of every refinement is immediate from the fact 
	that there are no cusp forms of level $1$ that are ordinary at $2$. For 
	regularity, if we fix a refinement $\chi = \chi_1\otimes\chi_2$ then 
	$\alpha = p^{1/2}\iota\chi_1(p)$ and $\beta = p^{1/2}\iota\chi_2(p)$ are 
	the roots 
	of the polynomial $X^2-a_2X + 2^{k-1}$, with $a_2$ the $T_2$-eigenvalue of 
	the level $1$ weight $k$ normalised eigenform $f$ associated to $\pi$. We 
	need to show that $\alpha/\beta$ is not a root of unity.

Suppose $\alpha/\beta = \zeta$ is a root of unity. If we fix $\iota_5: \Qbar_5 \cong \C$, the semisimplified mod $5$ Galois 
	representation $\rbar_{f,\iota_5}$ arises up to twist from a level 1 
	eigenform of weight $\le 6$ (i.e.~the level $1$ Eisenstein series of weight $4$ or $6$). This shows that $\iota_5^{-1}(\zeta) \equiv 2^3 \text{ or }2^5 \text{ mod } \m_{\Zbar_5}$, and therefore $\zeta$ is the product of a $5$-power root of unity and $\pm i$ (since $2$ has order $4$ in $\F_5^\times$). Applying a similar argument at the prime $7$ with $\iota_7: \Qbar_7 \cong \C$, we see that $\iota_7^{-1}(\zeta) \equiv 2^3, 2^5 \text{ or } 2^7 \text{ mod } \m_{\Zbar_7}$ and therefore $\zeta$ is the product of a $7$-power root of unity and a cube root of unity.  This gives the desired contradiction.	(We thank Fred Diamond for 
	pointing out this argument to us, and thank an anonymous referee for explaining how to avoid using Hatada's congruence which appeared in the first version of this argument.) 
\end{proof}

\begin{lemma}\label{lem_local_image_contains_SL_2}
	Let $\pi$ be a cuspidal automorphic representation of $\GL_2(\bA_\bQ)$ of 
	weight $k \geq 2$. We temporarily let $p$ be an arbitrary prime. Then:
	\begin{enumerate}
		\item $r_{\pi, \iota}|_{G_{\bbQ_p}}$ is reducible if and only if $\pi$ 
		is $\iota$-ordinary.
		\item Suppose either that $r_{\pi, \iota}|_{G_{\bbQ_p}}$ is irreducible 
		and $\pi_p$ admits a $3$-regular refinement, or that $k > 2$ 
		and $r_{\pi, \iota}$ is not potentially 
		crystalline. Then the Zariski closure of $r_{\pi, \iota}(G_{\bbQ_p})$ 
		(in $\GL_2/\Qpbar$) contains $\SL_2$.
		\item Suppose again that $p = 2$, and that $\pi$ is everywhere 
		unramified. Then the Zariski 
		closure of $r_{\pi, \iota}(G_{\bbQ_2})$ contains $\SL_2$.
	\end{enumerate}
\end{lemma}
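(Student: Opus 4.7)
The plan is to address the three parts in order, with (3) reducing to (1) and (2).

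For (1), the forward implication ($\iota$-ordinary $\Rightarrow$ $r_{\pi,\iota}|_{G_{\bbQ_p}}$ reducible) is a consequence of Hida's ordinary theory: the $U_p$-eigenvalue being a $p$-adic unit produces a canonical $G_{\bbQ_p}$-stable subline of $r_{\pi,\iota}|_{G_{\bbQ_p}}$ with essentially unramified quotient carrying the unit root. Conversely, a reducible de Rham representation of $G_{\bbQ_p}$ with Hodge--Tate weights $\{0, k-1\}$ has a sub-character of HT weight $0$ that is (up to finite-order twist) unramified, and local--global compatibility identifies its Frobenius eigenvalue with a $p$-adic unit witnessing $\iota$-ordinarity.

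For (2), I would classify the Zariski closure $H \subset \GL_{2,\overline{\bbQ}_p}$ of $r_{\pi,\iota}(G_{\bbQ_p})$ under the (assumed or derived) irreducibility of $\rho = r_{\pi,\iota}|_{G_{\bbQ_p}}$. Since $H$ then acts irreducibly, its identity component $H^0$ acts either irreducibly --- in which case $H^0 \supseteq \SL_2$ by the classification of connected reductive subgroups of $\GL_2$ acting irreducibly --- or by scalars, in which case $H/H^0$ has a finite irreducible projective image in $\PGL_2$ of either dihedral (in the normalizer of a torus) or exceptional ($A_4$, $S_4$, $A_5$) type. The exceptional case is ruled out uniformly: such a representation is (after fixing a lift of the projective representation with matching determinant) a character twist of a finite-image representation, hence has equal Hodge--Tate weights, contradicting $0 \ne k-1$. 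To rule out dihedral in case (A), the isomorphism $\rho \cong \Ind_{G_K}^{G_{\bbQ_p}} \chi$ for a quadratic $K/\bbQ_p$ gives $\rho \cong \rho \otimes \chi_{K/\bbQ_p}$, so via local Langlands the pair $(\chi_1,\chi_2)$ of characters in any accessible refinement must satisfy $\chi_1/\chi_2 = \chi_{K/\bbQ_p}$ of order $2$, contradicting $(\chi_1/\chi_2)^2 \ne 1$ (i.e.\ $3$-regularity). In case (B), dihedral is excluded because induction from a quadratic extension preserves $N = 0$ on the Weil--Deligne representation, while the not-pot-cris hypothesis forces $N \ne 0$; the hypothesis $k > 2$ plays the role of making the corresponding Hodge--Tate-weight comparison sharp enough to eliminate the remaining borderline possibilities.

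For (3), the hypothesis that $\pi$ is everywhere unramified makes $r_{\pi,\iota}|_{G_{\bbQ_2}}$ crystalline; Hatada's congruence $a_2 \equiv 0 \pmod 2$ for level $1$ eigenforms gives $v_2(a_2) > 0$, so $\pi$ is not $\iota$-ordinary, and by (1), $\rho|_{G_{\bbQ_2}}$ is irreducible. Every accessible refinement is $n$-regular for all $n \ge 2$ by Lemma \ref{lem:level1regularity}, so part (2) case (A) applies to conclude that the Zariski closure contains $\SL_2$. The main obstacle throughout is the detailed Zariski-closure case analysis in (2), in particular the dihedral subcase of (B), where the interplay between the induction structure, potential crystallinity, and the Hodge--Tate weights is the delicate input.
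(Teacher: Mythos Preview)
Your overall strategy matches the paper's, and parts (1) and (3) are handled the same way (for (3) the paper argues directly that a dihedral $\WD$ with unramified data would force $a_2=0$, while you reduce to (2) via Lemma~\ref{lem:level1regularity}; both are fine). The difference is in part (2). You run a case analysis on $H^0$ (irreducible versus scalar, then dihedral versus exceptional projective image), whereas the paper invokes Sen's theorem: Hodge--Tate regularity forces any maximal torus of the Zariski closure $H$ to be regular in $\GL_2$, hence equal to a maximal torus of $\GL_2$, so if $H\not\supseteq\SL_2$ then $H$ lies in the normaliser of that torus and $\rho$ is dihedral. This reaches the dihedral case in one step and never needs to consider the exceptional $A_4,S_4,A_5$ possibilities at all.

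Your route has two loose ends to fix. First, the dichotomy ``$H^0$ irreducible or by scalars'' is incomplete: $H^0$ could be a non-central torus. In that case $H$ normalises it and you are again dihedral, so your subsequent dihedral argument still applies, but the case split as written does not reach it. Second, in case (B) your Zariski-closure analysis presupposes irreducibility of $r_{\pi,\iota}|_{G_{\bbQ_p}}$, which is not part of the hypothesis. The paper treats this as a separate step: if $r_{\pi,\iota}|_{G_{\bbQ_p}}$ is reducible and $k>2$, then it is potentially crystalline (the only $(\varphi,N)$-stable line in $D_{\mathrm{st}}$ can be weakly admissible only when $k=2$), contradicting the hypothesis. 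Your sentence about ``$k>2$ making the Hodge--Tate-weight comparison sharp enough'' is gesturing at exactly this, but it should be stated as a standalone reducible case before entering the $H^0$ analysis.
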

\begin{proof}
	For the first part, $\iota$-ordinarity implies reducibility by local-global compatibility at $p$, as in \cite[Theorem 
	2.4]{jackreducible}. For the converse, if $r_{\pi, \iota}|_{G_{\bbQ_p}}$ is 
	reducible, then its Jordan--H\"older factors are de Rham characters of $G_{\bbQ_p}$ and therefore have the form $\psi_i \epsilon^{-k_i}$, where the $\psi_i$ are $\Zpbarx$-valued characters with finite order restriction to inertia and the $k_i$ are the Hodge--Tate weights (we can assume $k_1 = 0$ and $k_2 = k-1$ in our situation). The Weil representation part of $\WD(r_{\pi, 
		\iota}|_{G_{\bbQ_p}})$ is therefore equal to $\psi_1 \oplus \psi_2|\cdot|^{1-k}\circ\Art_{\Qp}^{-1}$. Since $\WD(r_{\pi, 
	\iota}|_{G_{\bbQ_p}}) = \rec^T_{\bbQ_p}(\iota^{-1}\pi_p)$, $\pi_p$ is a subquotient of the normalised induction $i_{B_2}^{\GL_2}(\iota\psi_1\circ\Art_{\Qp}|\cdot|^{1/2}\otimes \iota\psi_2\circ\Art_{\Qp}|\cdot|^{3/2-k})$. It follows from \cite[Lemma 2.3]{jackreducible} that $\pi$ is $\iota$-ordinary.
	
	For the second part, we note that $r_{\pi, \iota}|_{G_{\bQ_p}}$ is irreducible. Indeed, if  $r_{\pi, \iota}|_{G_{\bbQ_p}}$ is reducible then the first part of the lemma shows that $\pi$ is $\iota$-ordinary, and \cite[Lemma 2.3]{jackreducible} implies that $\pi_p$ is a subquotient of $i_{B_2}^{\GL_2}\chi_1\otimes\chi_2$ with $v_p\left(\iota^{-1}(\chi_1/\chi_2 (p))\right) = 1-k$. If $r_{\pi, \iota}$ is not potentially crystalline then $\WD(r_{\pi, \iota}|_{G_{\bbQ_p}})$ has $N \ne 0$ and local-global compatibility implies that if $\pi_p$ is a subquotient of $i_{B_2}^{\GL_2}\chi_1\otimes\chi_2$, then $\chi_1/\chi_2 = |\cdot|^{\pm 1}$. This is a contradiction if $k > 2$.
		
	Thus $r_{\pi, \iota}|_{G_{\bbQ_p}}$ is 
	irreducible and the Zariski closure $H$ of its image is a reductive 
	subgroup of $\GL_2$. Let $T$ be a maximal torus of $H$. Since $r_{\pi, \iota}$ is 
	Hodge--Tate regular, $T$ is regular in $\GL_2$ (i.e. its centralizer is a maximal torus 
	of $\GL_2$) by \cite[Theorem 1]{Sen}. If $H$ does not contain $\SL_2$, then 
	it is contained in the 
	normaliser of a maximal torus of $\GL_2$ and $r_{\pi, \iota}|_{G_{\bbQ_p}}$ 
	is induced from a character of an index two subgroup. This forces $\WD(r_{\pi, \iota}|_{G_{\bbQ_p}})$ to likewise be induced, so any refinement $\chi = \chi_1\otimes\chi_2$ of $\pi_p$ 
	satisfies $\chi_1^2 = \chi_2^2$, and this Weil--Deligne representation has $N = 0$. This is a contradiction, since we are assuming either that there exists a 3-regular refinement or that $N$ is non-zero. 
	
	For the third part, we have already observed (see the proof of Theorem \ref{thm_buzzard_kilford}) that there are no cusp forms 
	of level 1 that are ordinary at 2, so $r_{\pi, \iota}|_{G_{\bbQ_2}}$ is 
	irreducible. Suppose that $r_{\pi, \iota}|_{G_{\bbQ_2}}$ is induced. Then 
	$\WD(r_{\pi, \iota}|_{G_{\bbQ_2}}) = \Ind_{W_K}^{W_{\bbQ_2}} \psi$ for some 
	quadratic extension $K / \bbQ_2$ and character $\psi : W_K \to 
	\overline{\bQ}_2^\times$. Since this Weil--Deligne representation must be 
	unramified, we see that $K$ and $\psi$ are both unramified, and therefore 
	that $\psi$ extends to a character $\psi : W_{\bbQ_2} \to 
	\overline{\bQ}_2^\times$ such that $\WD(r_{\pi, \iota}|_{G_{\bbQ_2}}) = 
	\psi \oplus (\psi \otimes \delta_{K / \bbQ_2})$. In particular, the 
	$T_2$-eigenvalue (which equals the trace of Frobenius in this 
	representation) is $0$, but, as shown in the proof of Lemma 
	\ref{lem:level1regularity}, this is impossible. 
\end{proof}
\begin{lemma}\label{lem_propogation_through_annuli}
	Let $i \geq 1$ be an integer, and let $z \in Z_0 \cap X_i(\overline{\Q}_p)$ be the 
	point 
	corresponding to a pair $(\pi, \chi)$. Suppose that $\chi$ is $n$-regular. 
	Let $z' \in Z_0 \cap X_i(\overline{\Q}_p)$ be 
	any other point, corresponding to a pair $(\pi', \chi')$, with $\chi'$ 
	$n$-regular. If 
	$\Sym^{n-1} r_{\pi, \iota}$ is automorphic, then $\Sym^{n-1} r_{\pi', 
	\iota}$ is also.
\end{lemma}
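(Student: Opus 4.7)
The strategy is to apply Theorem~\ref{thm_propogration_along_components_of_eigencurve} with $z$ as source and $z'$ as target. The geometric hypothesis is met by Theorem~\ref{thm_buzzard_kilford}: the map $\kappa|_{X_i}$ is an isomorphism onto the annulus $\cW_0(b)$, so $X_i$ is geometrically irreducible, and $z, z'$ thus lie on a common geometrically irreducible component of $\cE_{0, \Cp}$.

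To verify the remaining Set~1 hypotheses, I would assume $n \ge 3$ (the case $n = 2$ is vacuous, since $\Sym^1 r = r$). Every point of $X_i$ has positive slope $i\, v_p(w) > 0$ by Theorem~\ref{thm_buzzard_kilford}(3), hence is non-ordinary. By Lemma~\ref{lem_local_image_contains_SL_2}(1), both $r_{\pi, \iota}|_{G_{\Qp}}$ and $r_{\pi', \iota}|_{G_{\Qp}}$ are irreducible; in particular the latter is not a direct sum of characters, verifying hypothesis~(2). Since $n$-regularity of $\chi$ implies $3$-regularity, Lemma~\ref{lem_local_image_contains_SL_2}(2) furnishes the $\SL_2$ closure condition~(3). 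The automorphy of $\Sym^{n-1} r_{\pi, \iota}$ and the $n$-regularity of $\chi'$ are given.

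The main obstacle is verifying condition~(1), the numerical non-criticality of $\chi$ at $z$. From the slope formula $s(z) = i\, v_p(w(z))$, this fails at classical points of small weight on $X_i$ for which $i\, v_p(w(z)) \ge k-1$. To circumvent this, I would exploit that for $r_{\pi, \iota}|_{G_{\Qp}}$ irreducible, Example~\ref{ex:gl2qptriangulations} implies that every triangulation of it is non-critical. A careful reading of the proof of Theorem~\ref{thm_geometric_eigenvariety_argument} shows that numerical non-criticality of $\chi$ enters only through Lemma~\ref{lem_numerically_non_critical_implies_non_critical} to guarantee non-criticality of the chosen triangulation; this is automatic in our two-dimensional irreducible setting. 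Hence the conclusion of Theorem~\ref{thm_propogration_along_components_of_eigencurve} should go through with the weaker hypothesis that $\chi$ is merely $n$-regular, and one concludes the automorphy of $\Sym^{n-1} r_{\pi', \iota}$.
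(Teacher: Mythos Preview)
Your strategy—invoke Theorem~\ref{thm_propogration_along_components_of_eigencurve} and verify its hypotheses, using geometric irreducibility of $X_i$—is exactly what the paper does, and your verification of hypotheses (2)--(4) and of the common-component condition is correct.

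The gap is in your treatment of hypothesis~(1). You claim numerical non-criticality may fail for small-weight points on $X_i$ and propose to bypass it by weakening the hypotheses of Theorem~\ref{thm_propogration_along_components_of_eigencurve}. But the concern is spurious: numerical non-criticality holds automatically for \emph{every} classical point of $\cE_0$. The key fact, established in the proof of Theorem~\ref{thm_buzzard_kilford}, is that $\cE_0$ coincides with the non-ordinary locus of the full (tame level~1, $p=2$) eigencurve—the unique ordinary family is Eisenstein. Hence any $z \in Z_0$ of weight~$k$ has slope $s(z) > 0$; its twin $\tau(z)$ (when defined) also lies on $\cE_0$, so $k-1-s(z) > 0$ as well by Lemma~\ref{lem_slope_of_twin_point}, giving $s(z) < k-1$. (In the unramified-twist-of-Steinberg case, where $\tau$ is undefined, one has $s(z) = (k-2)/2 < k-1$ directly.) So $\chi$ is numerically non-critical as required, and Theorem~\ref{thm_propogration_along_components_of_eigencurve} applies as stated—this is precisely the paper's one-line argument. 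Your proposed workaround, by contrast, is not a proof: asserting that Theorem~\ref{thm_geometric_eigenvariety_argument} ``should go through'' with a weaker hypothesis would require tracing the modification through Corollary~\ref{cor_main_application_for_U(2)} and Theorem~\ref{thm_propogration_along_components_of_eigencurve} as well, including the passage to the auxiliary CM field.
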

\begin{proof}
	We can assume $n \geq 3$. We apply Theorem \ref{thm_propogration_along_components_of_eigencurve} 
	(note that $X_{i,\Cp} \cong \cW_0(b)_{\Cp}$ is irreducible): 
	since there are no ordinary points in $\cE_0$, $\chi$ is numerically 
	non-critical and Lemma \ref{lem_local_image_contains_SL_2} implies that 
	the 
	Zariski closures of $r_{\pi,\iota}(G_{\Qp})$ and $r_{\pi',\iota}(G_{\Qp})$ 
	contain $\SL_2$ (the first part of the lemma shows that we are in the locally irreducible and $3$-regular case of the second part of the lemma).
\end{proof}
\begin{lemma}\label{lem_first_step}
	Let $\pi$ be a cuspidal, everywhere unramified automorphic representation 
	of $\GL_2(\bA_\bQ)$ of weight $k \geq 2$, and let $\chi$ be a choice of 
	accessible refinement. Then there exists an integer $m_\pi \geq 1$ such that for any integer $m \geq m_\pi$, we can find a cuspidal automorphic 
	representation $\pi'$ of $\GL_2(\bA_\bQ)$ satisfying the following 
	conditions:
	\begin{enumerate}
		\item $\pi'$ is unramified outside 2.
		\item $\pi'$ admits two accessible refinements of distinct slopes (in 
		particular these refinements are $n$-regular for every $n \ge 2$). 
		\item There is an accessible refinement $\chi'$ of $\pi'$ such that 
		$(\pi, \chi)$ and $(\pi', \chi')$ define points $z, z'$ on the same 
		irreducible component of $\cE_{0,\Cp}$.
		\item $\kappa(z') \in \cW_0(b)$. In particular, $z' \in 
		X_i$ for some 
		$i \geq 1$ (notation as in the statement of Theorem 
		\ref{thm_buzzard_kilford}). 
		\item Set $z'' = \tau(z')$, and let $(\pi'', \chi'') \in Z_0^{pc}$ be 
		the associated pair.\footnote{Bouncing from $z'$ to its twin point $z''$ reminded the authors of a game of ping pong, whence the section title. Earlier versions of the argument involved longer rallies!}%
		Then $z'' \in X_{2^m - 1}$.
		\item The automorphy of any one of the three representations 
		\[\Sym^{n-1} r_{\pi, \iota},\Sym^{n-1} r_{\pi', \iota},\Sym^{n-1} 
		r_{\pi'', \iota}\] implies automorphy of all three.
	\end{enumerate}
\end{lemma}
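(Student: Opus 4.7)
The approach is to move from $(\pi, \chi)$ along the irreducible component $\cC \subset \cE_{0,\Cp}$ to a nearby classical point in the boundary annulus. Since $\kappa : \cC \to \cW_0^+$ has Zariski-open image by Proposition \ref{prop:eigenvariety}(\ref{equidim}), it meets $\cW_0(b)$, so by Theorem \ref{thm_buzzard_kilford} there is some $i \geq 1$ with $X_{i,\Cp} \subset \cC$. I would fix any $m \geq 1$ with $2^m - 1 \neq i$, set $k' = i + 2^m$, and choose a finite-order character $\chi_{sm} : \Z_2^\times \to \C^\times$ satisfying $\chi_{sm}(5) = -1$ and $\chi_{sm}(-1) = (-1)^{k'}$. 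A direct computation gives $v_2(\chi_{sm}(5) \cdot 5^{k'-2} - 1) = 1$, so the locally algebraic weight character $\chi_{sm} \cdot z^{k'-2}$ lies in $\cW_0(b)$.

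Via the isomorphism $X_i \cong \cW_0(b)$, let $z'$ be the unique point of $X_{i,\Cp}$ above this weight. Its slope is $i \cdot 1 = i$, which satisfies the classicality condition $i < k' - 1 = i + 2^m - 1$; hence $z' \in Z_0$ by Proposition \ref{prop:eigenvariety}(\ref{classicality}), with corresponding pair $(\pi', \chi') \in \mathcal{RA}_0$. The cuspidal representation $\pi'$ has weight $k'$ and is unramified outside $2$, establishing (1), (3), and (4). Lemma \ref{lem_ping_pong} then gives $z'' = \tau(z') \in X_{2^m-1}$ as claimed in (5), once $z' \in Z_0^{pc}$ is verified.

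The key observation is that the two candidate slopes $i$ and $2^m - 1$ are both strictly positive, whereas the Hodge--Tate weights of $\rho := r_{\pi',\iota}|_{G_{\Q_2}}$ are $\{0, k'-1\}$. If $\pi'_2$ were a twist of Steinberg, its unique refinement would have slope $(k'-1)/2 = (i + 2^m - 1)/2$, equal to $i$ only if $i = 2^m - 1$, contrary to our choice of $m$; this rules out the Steinberg case and confirms $z' \in Z_0^{pc}$. Moreover, any sub-character of $\rho$ would be a de Rham character with Hodge--Tate weight in $\{0, k'-1\}$ and Frobenius valuation equal to that weight, while the two Frobenius valuations on $D_{pst}(\rho)$ are $i$ and $2^m - 1$; hence $\rho$ is irreducible. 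Consequently $\pi'_2$ is an irreducible principal series with two accessible refinements of distinct slopes $i$ and $2^m - 1$, and the $n$-regularity of $\chi'$ is immediate since $(\chi'_1/\chi'_2)(2)$ has nonzero $2$-adic valuation $i - (2^m-1)$. This establishes (2).

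Finally, for (6), the equivalence of automorphy of $\Sym^{n-1}r_{\pi,\iota}$ and $\Sym^{n-1}r_{\pi',\iota}$ follows from two applications of Theorem \ref{thm_propogration_along_components_of_eigencurve}: in the forward direction using Lemma \ref{lem:level1regularity} (numerical non-criticality and $n$-regularity of $\chi$) together with Lemma \ref{lem_local_image_contains_SL_2}(3); in the reverse direction using the numerical non-criticality and $n$-regularity of $\chi'$ together with Lemma \ref{lem_local_image_contains_SL_2}(2), whose hypotheses hold by the irreducibility of $\rho$ established above. Since $\pi'' = \pi' \otimes \iota\psi$ for the finite-order Hecke character $\psi$ defining the twin, $\Sym^{n-1} r_{\pi'',\iota} \cong \Sym^{n-1} r_{\pi',\iota} \otimes r_{\psi,\iota}^{n-1}$, and automorphy is preserved under twist by a finite-order character. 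The main subtlety is the dichotomy ruling out both the twist-of-Steinberg and reducible local Galois configurations at $z'$, which rests precisely on the arithmetic of the slopes $i, 2^m - 1$ relative to the Hodge--Tate weights $\{0, k'-1\}$.
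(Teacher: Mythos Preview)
Your approach mirrors the paper's—locate a classical point $z'$ on an annulus $X_i$ inside the chosen irreducible component, with weight engineered so that the twin lands on $X_{2^m-1}$—but with a different weight: you take a conductor-$8$ nebentypus giving $v_2(w(z')) = 1$, whereas the paper takes an odd weight $k' = 2i + 2^{m+1} - 1$ (conductor-$4$ nebentypus, $v_2(w(z')) = 2$). Both lead to $i' = 2^m - 1$ via Lemma~\ref{lem_ping_pong}.

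There is one genuine error in your treatment of the Steinberg case. You assert that the unique refinement of a twist of Steinberg has slope $(k'-1)/2$; the correct value is $(k'-2)/2$. Indeed, for $\St_2(\psi)$ the accessible refinement is $(\psi|\cdot|^{1/2},\, \psi|\cdot|^{-1/2})$, so the slope equals $v_2(\psi(2))$, and equality of Newton and Hodge numbers on $\det\rho$ forces $2v_2(\psi(2)) + 1 = k'-1$, i.e.\ $v_2(\psi(2)) = (k'-2)/2$. Your constraint $2^m - 1 \neq i$ does not exclude $i = (k'-2)/2 = 2^m - 2$ (take $i=2$, $m=2$), so the argument as written does not rule out $\pi'_2$ being a twist of Steinberg.

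The conclusion is nonetheless correct, and the fix is immediate: the central character of $\pi'_2$ restricted to $\bZ_2^\times$ is $\chi_{0,2}|_{\bZ_2^\times} = \chi_{sm}^{-1}$, which is nontrivial (conductor $8$). If $\pi'_2 = \St_2(\psi)$ with $\chi_{0,1} = \psi|\cdot|^{1/2}$ unramified, then $\psi$ is unramified and the central character $\psi^2$ is unramified—contradiction. (The paper's choice of odd $k'$ sidesteps this by parity: the Steinberg slope $(k'-2)/2$ would be half-integral while the actual slope $2i$ is integral.)

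Your argument for irreducibility of $\rho = r_{\pi',\iota}|_{G_{\bQ_2}}$ is correct: the Frobenius valuations on $D_{pst}(\rho)$ are precisely the slopes $i$ and $2^m - 1$, both in $\{1,2,\dots\}$ and neither equal to $0$ or $k'-1$, so no de~Rham sub-character exists. The derivation of (6) via Theorem~\ref{thm_propogration_along_components_of_eigencurve}, Lemma~\ref{lem:level1regularity}, and Lemma~\ref{lem_local_image_contains_SL_2} is also correct.
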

\begin{proof}
	We use Theorem \ref{thm_buzzard_kilford}. Extending $E$ if necessary, we 
	may assume that $z \in \cE_0(E)$ and every irreducible component of $\cE_0$ 
	containing $z$ is geometrically irreducible. Fix one of these irreducible 
	components and fix $i$ such that this irreducible component contains $X_i$ 
	(such an $i$ exists, because every irreducible component of $\cE_0$ 
	has Zariski open image in $\cW_0^+$, hence intersects $\cE_0(b)$ and 
	therefore contains a non-empty union of irreducible components of 
	$\cE_0(b)$). We define $m_\pi$ to be least integer $m_\pi \geq 1$ satisfying the inequality
	\[ (2i + 2^{m_\pi + 1} - 3) / 2 > 2i. \]
	Given $m \geq m_\pi$, we choose $z' \in X_i(\overline{\bbQ}_p)$ to be the point such 
	that 
	$\kappa(z')(5) = 5^{k' - 2}$, where $k' = 2 i + 2^{m+1} - 1$. Then $(k'-2)/2 > 2i = s(z')$. By Coleman's classicality criterion, $z' \in Z_0$. If $z'$ was not in $Z_0^{pc}$, its slope would be $(k'-2)/2$. So $z' \in Z_0^{pc}$ and if $(\pi',	
	\chi')$ denotes the corresponding pair, then the two accessible refinements 
	of $\pi'$ have distinct slopes ($2i$ and $k'-1 - 2i$).
	
	Let $z'' = \tau(z')$ denote the twin point, and $(\pi'', \chi'')$ the 
	corresponding pair. Then $z''$ lies on $X_{i'}$, where $i' = (k'-1)/2 - i = 
	2^m - 1$, by Lemma \ref{lem_ping_pong}. We're done: the first 5 properties 
	of $(\pi', \chi')$ follow by construction, whilst the 6th follows
	from Theorem 
	\ref{thm_propogration_along_components_of_eigencurve}, Lemma 
	\ref{lem:level1regularity} and Lemma \ref{lem_local_image_contains_SL_2}.
\end{proof}
We can now complete the proof of Theorem 
\ref{thm_propogration_of_automorphy_of_symmetric_powers}.
\begin{proof}[Proof of Theorem 
\ref{thm_propogration_of_automorphy_of_symmetric_powers}]
	Let $\pi_0, \pi_0'$ be everywhere unramified cuspidal automorphic representations
	of $\GL_2(\bA_\bQ)$ of weights $k_0, k_0' \geq 2$, respectively. Define integers $m_{\pi_0}, m_{\pi'_0}$ as in Lemma \ref{lem_first_step} and fix an integer $m \geq \max(m_{\pi_0}, m_{\pi'_0})$. Combining Lemma \ref{lem_first_step} and Lemma \ref{lem_propogation_through_annuli} (applied with $i = 2^m - 1$) we see that the automorphy of $\Sym^{n-1} r_{\pi_0, \iota}$ implies that of $\Sym^{n-1} r_{\pi'_0, \iota}$. Since $\pi'_0$ was arbitrary, this completes the proof.
\end{proof}

\section*{Part II: Raising the level}

Most of the remainder of this paper (\S\S \ref{sec:automorphic_level_raising} --  \ref{sec_existence_of_seed_points}) is devoted to the proof of Theorem \ref{introthm_existence_of_single_symmetric_power} from the introduction, namely the existence for each $n \geq 2$ of a single regular algebraic, cuspidal, everywhere unramified automorphic representation $\pi$ of $\GL_2(\A_\Q)$ such that $\Sym^{n-1} \pi$ exists. As a guide to what follows, we now give an expanded sketch of the proof of this theorem. 

Fix, for the sake of argument, a regular algebraic, cuspidal, everywhere unramified automorphic representation $\pi$ of $\GL_2(\A_\Q)$. We will try to establish the existence of $\Sym^{n-1} \pi$ by proving the automorphy of one of the Galois representations $\Sym^{n-1} r_{\pi, \iota}$ associated to a choice of prime $p$ and isomorphism $\iota : \overline{\bQ}_p \to \bC$, using an automorphy lifting theorem. First, if $K / \bQ$ is an imaginary quadratic extension then we can find (using e.g.\ \cite[Lemma A.2.5]{BLGGT}) a (de Rham) character $\omega : G_K \to \overline{\bQ}_p^\times$ such that $\omega \omega^c = (\det r_{\pi, \iota} \epsilon^{-1})^{n-1}$. Then the representation $\rho = \omega \otimes \Sym^{n-1} r|_{G_K}$ satisfies $\rho^c \cong \rho^\vee \otimes \epsilon^{1-n}$, so has the potential to be associated to a RACSDC automorphic representation of $\GL_n(\A_K)$. This means we can use an automorphy lifting theorem adapted to such automorphic representations. (The automorphy of $\rho$ will imply that of $\Sym^{n-1} r_{\pi, \iota}$ by quadratic descent.)

We need to select $\pi$ and  $\iota$ so that the residual representation $\overline{\rho}$ is automorphic. For ``most'' $\iota$ (say, for all but finitely many primes $p$) the image of $\overline{r}_{\pi, \iota}$ will contain a conjugate of $\SL_2(\bF_p)$ and $\Sym^{n-1} \overline{\rho}$ will be irreducible, and it is not clear how to proceed. We therefore want to avoid this generic case. Here we choose $\pi$ and $\iota$ so that there is an isomorphism $\overline{r}_{\pi, \iota} \cong \Ind_{G_K}^{G_\bQ} \overline{\chi}$ for some imaginary quadratic extension $K / \bQ$ and character $\overline{\chi} : G_K \to \overline{\bF}_p^\times$. Then there is an isomorphism
\[ \overline{\rho} \cong \oplus_{i=1}^n \overline{\omega} \overline{\chi}^{n-i} (\overline{\chi}^c)^{i-1}. \]
In particular, this residual representation is highly reducible, being a sum of $n$ characters. Most automorphy lifting theorems in the literature require the residual representation to be irreducible; we will apply \cite[Theorem 1.1]{All19}, an automorphy lifting theorem that does not have this requirement, but that does have some other stringent conditions. These conditions include the requirement that there exist a RACSDC automorphic representation $\Pi$ of $\GL_n(\A_K)$ such that $\overline{r}_{\Pi, \iota} \cong \overline{\rho}$, and satisfying the following:
\begin{itemize}
\item $\Pi$ is $\iota$-ordinary (and so is $\pi$).
\item There exists a prime $l \neq p$ and a place $v | l$ of $K$ such that both $\pi_l$ and $\Pi_v$ are twists of the Steinberg representation (of $\GL_2(\bQ_l)$ and $\GL_n(K_v)$, respectively).
\end{itemize}
It is easy to arrange that the first requirement be satisfied, by choosing $p$ to be a prime which splits in $K$. The second is more difficult. First it requires that $\pi$ is ramified at $l$, whereas we have to this point asked for $\pi$ to be everywhere unramified. We will thus first find a ramified $\pi$ for which $\Sym^{n-1} \pi$ exists, and eventually remove the primes of ramification using the $l$-adic analytic continuation of functoriality results proved in the first part of the paper. The main problem is then to find a $\Pi$ verifying the residual automorphy of $\overline{\rho}$ such that $\Pi_v$ is a twist of the Steinberg representation. This is what will occupy us in \S\S \ref{sec:automorphic_level_raising} --  \ref{sec:galois_level_raising} below. The above argument is then laid out carefully in \S \ref{sec_existence_of_seed_points} in order to finally prove Theorem \ref{introthm_existence_of_single_symmetric_power}.

Here is how we get our hands on $\Pi$. By choosing an appropriate lift of the character $\overline{\chi}$, we can choose characters $X_1, \dots, X_n : K^\times \backslash \A_K^\times \to \bC^\times$ such that $\Pi_0 = X_1 \boxplus \dots \boxplus X_n$ is a regular algebraic and conjugate self-dual (although not cuspidal!) automorphic representation of $\GL_n(\A_K)$ whose associated residual representation is $\overline{\rho}$. If $G$ is a definite unitary group in $n$ variables associated to the extension $K / \bQ$, quasi-split at finite places, then we can hope that $\Pi_0$ transfers to an automorphic representation of $G(\A_\bQ)$. There is a slight wrinkle here in that such a group $G$ does not exist if $n$ is even, and even in the case that $n$ is odd there is a potential obstruction to the existence of this transfer given, at least conjecturally, by Arthur's multiplicity formula. Both of these obstacles can be avoided by replacing $\bQ$ with a suitable soluble totally real extension $F / \bQ$. In order to avoid introducing additional notation in this sketch, we pretend they can be dealt with already in the case $F = \bQ$. (Actually, we will find it convenient to take $\Pi_0$ to be the box sum of two cuspidal automorphic representations of $\GL_2(\A_K)$ and $\GL_{n-2}(\A_K)$, respectively. This means that the final form of the proof of Theorem \ref{introthm_existence_of_single_symmetric_power} will be a kind  of induction on $n$.)

We thus find ourselves with an automorphic representation $\Sigma_0$ of $G(\bA_\bQ)$, whose base change (in the sense of Theorem \ref{thm_base_change}) is $\Pi_0$. Say for the sake of argument that $l$ splits in $K$, so that we can identify $G(\bQ_l)$ with $\GL_n(K_v)$. If we can find another automorphic representation $\Sigma_1$ of $G(\A_\bQ)$, congruent modulo $p$ to $\Sigma_0$ and such that $\Sigma_{1, v}$ is a twist of the Steinberg representation, then we will have solved our problem. We can therefore now focus on this problem of level-raising for the definite unitary group $G$.

There are differing approaches to this problem in the literature. First there is the purely automorphic approach, pioneered by Ribet for $\GL_2(\bA_\bQ)$ \cite{Rib84}. Some generalisations to higher rank groups of this statement do exist (see for example \cite{Tho14}), but nothing that is applicable in the level of generality considered here. Then there is the purely Galois theoretic approach, based on the powerful automorphy lifting theorems which are now available for Galois representations in arbitrary rank (see for example \cite{gee061}). We can not directly apply such results here because the only automorphy lifting theorems applicable in the residually reducible case (namely those of \cite{All19}) require the existence of at least one place at which the starting automorphic representation is sufficiently non-degenerate. 

We solve the problem here by combining aspects of these approaches. A similar combination of techniques is used in the paper \cite{ctiii}: the idea is to first use an automorphic technique to replace $\Sigma_0$ with a representation $\Sigma_0'$ such that $\Sigma'_{0, v}$ is so ramified that, in conjunction with other conditions in place, $\Sigma_0'$ is forced to be stable (i.e. its base change is cuspidal). (The possibility of doing this is the reason for choosing $\Pi_0$ to in fact be a box sum of two cuspidal factors, as mentioned above.) This situation is reflected in the deformation theory, where one finds that (in the big ordinary Galois deformation ring) the locus of reducible deformations is small enough that something like the techniques of \cite{gee061} can be applied to construct an automorphic lift of $\rhobar$ with the required local properties. These Galois theoretic arguments are carried out in \S \S \ref{sec_finiteness_result}, \ref{sec:galois_level_raising}. 

What remains to be explained then is the automorphic level-raising technique developed in \S \ref{sec:automorphic_level_raising}. The approach to creating congruences here is based on types. We recall (using the language of \cite{MR1643417}) that if $\mathfrak{s}$ is an inertial equivalence class of $G(\bQ_l)$ (i.e.\ a supercuspidal representation of a Levi factor of $G(\bQ_l)$, up to unramified twist), then an $\mathfrak{s}$-type is a pair $(U, \tau)$, where $U$ is an open compact subgroup of $G(\bQ_l)$ and $\tau$ is an irreducible finite-dimensional representation of $U$ such that for each irreducible admissible representation $\sigma_v$ of $G(\bQ_l)$, the supercuspidal support of $\sigma_v$ is in class $\mathfrak{s}$ if and only if $\sigma_v|_U$ contains $\tau$. 

It can sometimes happen that two inertial equivalence classes $\mathfrak{s}, \mathfrak{s}'$ admit types $(U, \tau)$ and $(U', \tau')$ with the property that $U = U'$, the reduction modulo $p$ $\overline{\tau}$ of $\iota^{-1} \tau$ is irreducible, and the reduction modulo $p$ $\overline{\tau}'$ of $\iota^{-1} \tau'$ contains $\overline{\tau}$ as a Jordan--H\"older factor. This situation might be called a congruence of types. If this is the case then the theory of algebraic modular forms implies that any automorphic representation $\Sigma$ of $G(\bA_\bQ)$ such that $\Sigma_l$ is of type $\mathfrak{s}$ is congruent to another $\Sigma'$ such that $\Sigma'_l$ is of type $\mathfrak{s}'$. The existence of such global congruences is explained in \cite[\S 3]{Vig01}. It gives an efficient way to construct congruences between automorphic representations $\Sigma, \Sigma'$ such that $\Sigma_l$, $\Sigma'_l$ are in different inertial equivalence classes, although it is not usually possible to change the Levi subgroup underlying the inertial equivalence class. Since $G(\bQ_l) \cong \GL_n(K_v)$ and the initial representation $\Sigma_0$ is certainly not supercuspidal at $l$, it is not immediately clear how to use this.

We therefore instead introduce an auxiliary imaginary quadratic extension $E / \bQ$ in which $l$ is inert, as well as an associated definite unitary group $G'$, and carry out the first step of the automorphic part of the level-raising argument using algebraic modular forms on $G'$. The importance of the group $G'$ is that there are conjugate self-dual irreducible admissible representations of $\GL_3(E_l)$ which are not supercuspidal, but for which the associated $L$-packets of representations of  $U_3(\bQ_l)$ contain supercuspidal elements. For carefully chosen local data, we can find use the method of types to find congruences to supercuspidal representations of $U_3(\bQ_l)$ whose base change to $\GL_3(E_l)$ is supercuspidal. We have already constructed such congruences of types  in \S \ref{subsec_local_endoscopic_classification}. In terms of automorphic representations of $\GL_n(\A_E)$, this will allow us to change the Levi subgroup underlying the inertial equivalence class at $l$ from the maximal torus of $\GL_n(E_l)$ to the group $\GL_3 \times \GL_1^{n-3}$. This will be enough for our intended application.

\section{Raising the level -- automorphic forms}\label{sec:automorphic_level_raising}

Let $n = 2k + 1 \geq 3$ be an odd integer, and let $F, p, S, G = G_n$ etc.\ be as in our standard assumptions (see \S \ref{sec_definite_unitary_groups}). Suppose given cuspidal, conjugate self-dual automorphic representations $\pi_2$ of $\GL_2(\A_F)$ and 
$\pi_{n-2}$ of $\GL_{n-2}(\A_F)$ with the following properties:
\begin{enumerate}
	\item $\pi = \pi_2 \boxplus\pi_{n-2}$ is regular algebraic.
\end{enumerate}
 Consequently, $\pi_2  | \cdot |^{(2-n)/2}$ and $\pi_{n-2}$ are regular algebraic and the representations $\overline{\rho}_2 = \overline{r}_{\pi_2| \cdot |^{(2-n)/2}, \iota}$, $\overline{\rho}_{n-2} = \overline{r}_{\pi_{n-2}|\cdot|^{-1}, \iota}$ are defined. We set $\overline{\rho} = \overline{r}_{\pi, \iota} = \overline{\rho}_2 \oplus \overline{\rho}_{n-2}$. Moreover, $\pi_2$ and $\pi_{n-2}$ are both tempered, cf. the remark after Corollary \ref{cor:galois_existence}.
 \begin{enumerate} 
 \setcounter{enumi}{1}
	\item $\pi$ is $\iota$-ordinary.
	\item We are given disjoint, non-empty sets $T_1, T_2, T_3$ of places of $F^+$  with the following properties:
	\begin{enumerate}
	\item For all $v \in T = T_1 \cup T_2 \cup T_3$, $v \not\in S$ and $q_v$ is odd. The representation $\pi$ is unramified outside $S \cup T$. If $\wv$ is a place of $F$ lying above a place in $T$ then, as in \S \S \ref{subsec_local_endoscopic_classification}, \ref{subsec_local_theory_GL_n}, we write $\omega(\wv) : k(\wv)^\times \to \bC^\times$ for the unique quadratic character.
	\item For each $v \in T_1$, $v$ is inert in $F$, $q_v \text{ mod }p$ is a primitive $6^\text{th}$ root of unity, and the characteristic of $k(v)$ is greater than $n$. There are characters $\chi_\wv, \chi_{\wv, 0}, \chi_{\wv, 1}, \dots, \chi_{\wv, 2k-2} : F_\wv^\times \to \bC^\times$ such that $\chi_\wv, \chi_{\wv, 0}$ are unramified and for each $i = 1, \dots, 2k-2$, $\chi_{\wv, i}|_{\cO_{F_\wv}^\times} = \omega(\wv)$. We have $\pi_{2, \wv} \cong \St_2(\chi_\wv)$ and $\pi_{n-2, \wv} \cong \boxplus_{i=0}^{2k-2} \chi_{\wv, i}$.
	\item For each $v \in T_2$, $v$ splits $v = \wv \wv^c$ in $F$, $q_v \text{ mod }p$ is a primitive $2^\text{nd}$ root of unity, $\pi_{2, \wv}|_{\GL_{2}(\cO_{F_\wv})}$ contains $\widetilde{\lambda}(\wv, \Theta_\wv)$ (for some order $p$ character $\Theta_\wv$ as in \S \ref{subsec_local_theory_GL_n}, with $n_1 = 2$), and $\pi_{n-2, \wv}|_{\GL_{n-2}(\cO_{F_\wv})}$ contains $\omega(\wv) \circ \det$. Thus $\pi_\wv$ satisfies the equivalent conditions of Proposition \ref{prop_types_for_general_linear_group}, and $\pi_\wv|_{\q_\wv}$ contains $\widetilde{\lambda}(\wv, \Theta_\wv, n)$.
	\item  For each $v \in T_3$, $v$ splits $v = \wv \wv^c$ in $F$, $q_v \text{ mod }p$ is a primitive $(n-2)^\text{th}$ root of unity, $\pi_{n-2, \wv}|_{\GL_{n-2}(\cO_{F_\wv})}$ contains $\widetilde{\lambda}(\wv, \Theta_\wv)$ (for some order $p$ character $\Theta_\wv$ as in \S \ref{subsec_local_theory_GL_n}, with $n_1 = n-2$), and $\pi_{2, \wv}|_{\GL_{2}(\cO_{F_\wv})}$ contains $\omega(\wv) \circ \det$. Thus $\pi_\wv$ satisfies the equivalent conditions of Proposition \ref{prop_types_for_general_linear_group}, and $\pi_\wv|_{\q_\wv}$ contains $\widetilde{\lambda}(\wv, \Theta_\wv, n)$.
	\end{enumerate}
\end{enumerate}
Let $\widetilde{T} = \{ \wv \mid v \in T \}$. We fix for each $v \in T_1$ a character $\theta_v : C(k(v)) \to \bC^\times$ of order $p$ (notation as in Proposition \ref{prop_supercuspidals_on_U_3}). In the rest of this section, we will prove the following theorem. 
\begin{theorem}\label{thm_automorphic_level_raising}
	With hypotheses as above, let $L^+ / F^+$ be a totally real $S \cup T$-split quadratic extension, and let $L = L^+F$. 
	Then there exists a RACSDC automorphic representation $\Pi$ of 
	$\GL_n(\A_{L})$ with the following properties:
	\begin{enumerate}
		\item $\Pi$ is $\iota$-ordinary, and unramified at any place not dividing $S \cup T$.
		\item $\overline{r}_{\Pi, \iota} \cong \overline{r}_{\pi, \iota}|_{G_L}$.
		\item For each place $v \in T_{1, L}$, $\Pi_\wv|_{\mathfrak{r}_\wv}$ contains the representation $\widetilde{\lambda}(\wv, \widetilde{\theta}_{v|_{F^+}}, n)|_{\mathfrak{r}_\wv}$ (thus satisfying the equivalent conditions of Proposition \ref{prop_cuspidal_types_for_general_linear_group}).
	\item For each place $v \in T_{2, L} \cup T_{3, L}$, $\Pi_\wv|_{\q_\wv}$ contains the representation $\widetilde{\lambda}(\wv, \Theta_{\wv|_F}, n)$ (thus satisfying the equivalent conditions of Proposition \ref{prop_types_for_general_linear_group} with $n_2 = 2$ if $v \in T_{2, L}$ and $n_2 = n-2$ if $v \in T_{3, L}$).
	\end{enumerate}
\end{theorem}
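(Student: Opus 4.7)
The plan is to construct an automorphic representation $\sigma'$ of $G_n(\A_{L^+})$ with controlled local types at places in $T_L$ and whose base change to $\GL_n(\A_L)$ yields the desired $\Pi$. I would proceed in two stages: first, produce an automorphic representation $\sigma$ on $G_n(\A_{L^+})$ lying in the (non-tempered) packet associated to the isobaric sum $\pi_L = \pi_{2,L} \boxplus \pi_{n-2,L}$, arranged so that at each $v \in T_{1,L}$ the local component contains the cuspidal-unipotent-based type $(\p_v, \tau(v,n))$; then exchange this type for the supercuspidal type $(\p_v, \lambda(v,\theta_v,n))$ using the congruence in Proposition \ref{prop_congruence_of_types}, producing $\sigma'$, and finally argue that the base change of $\sigma'$ is cuspidal.

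To construct $\sigma$, I would use Arthur's simple trace formula in the form of \cite[\S 4]{labesse}, comparing the twisted trace formula for $\Res_{L/L^+} \GL_n$ against the endoscopic trace formulae for $G_n$ and its endoscopic subgroup $H = U_2 \times U_{n-2}$ (with the extended endoscopic triple of \S \ref{subsec:endo} corresponding to $(p,q)=(n-2,2)$). One tests against a function built to detect the descent of $\pi_{2,L} \otimes \pi_{n-2,L}$ to $H(\A_{L^+})$; the hypotheses on $\pi_L$ together with Corollary \ref{cor_descent_L_packet_contains_tau} guarantee that at $v \in T_{1,L}$ the local $L$-packet $\Pi(\pi_{L,\wv})$ contains a representation whose restriction to $\p_v$ contains $\tau(v,n)$, while Proposition \ref{prop_types_for_general_linear_group} (transferred through $\iota_\wv$) provides the desired types at $v \in T_{2,L} \cup T_{3,L}$. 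Non-vanishing of the resulting sum of multiplicities follows from Propositions \ref{prop_archimedean_signs} and \ref{prop_transfer_at_finite_places}, which ensure that the relevant signs in the spectral expansion do not cancel; at $p$-adic and archimedean places one simply retains the $\iota$-ordinary and regular algebraic components inherited from $\pi_L$.

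For the level-raising step, I would realize the Hecke eigensystem of $\sigma$ inside a space $S_\lambda(U^\tau,\cO)$ of algebraic modular forms whose local level at each $v \in T_{1,L}$ incorporates the type $(\p_v, \tau(v,n))$ (using the formalism of \S \ref{subsec:alg_mod_forms}), and then consider the analogous space $S_\lambda(U^\lambda,\cO)$ with the type $(\p_v, \lambda(v,\theta_v,n))$. Proposition \ref{prop_congruence_of_types} provides, on reduction mod $\varpi$, an embedding $S_\lambda(U^\tau,k) \hookrightarrow S_\lambda(U^\lambda,k)$ compatible with Hecke actions away from $S \cup T$. Because $\sigma$ is $\iota$-ordinary, a Hida-theoretic characteristic-zero lifting (in the spirit of \cite[Lemma 2.25]{ger}) then produces an automorphic representation $\sigma'$ on $G_n(\A_{L^+})$ with the type $\lambda(v,\theta_v,n)$ at every $v \in T_{1,L}$, carrying the same residual Galois representation as $\sigma$ (and hence, by Corollary \ref{cor:galois_existence}, matching $\overline{r}_{\pi,\iota}|_{G_L}$).

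The principal obstacle is arranging that $\sigma'$ has cuspidal base change, despite the fact that $\sigma$ began life in the non-tempered endoscopic packet attached to $\pi_{2,L} \boxplus \pi_{n-2,L}$. This is where Proposition \ref{prop_kazhdan_varshavsky} is decisive: the test function $\phi(v,\theta_v,n)$ at $v \in T_{1,L}$ has the property that, for any proper endoscopic subgroup $H' = U_p \times U_q$ of $U_n$, the transfer $\widetilde{\phi}^{H'}$ pairs non-trivially with an automorphic representation only when the extended supercuspidal support of the $H'$-component contains the $3$-dimensional supercuspidal $\widetilde{\lambda}_\wv(\widetilde{\theta}_v)$ of $\GL_3(L_\wv)$. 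Such a supercuspidal support cannot appear in any $\GL_2$-factor of an $H'$-endoscopic contribution, nor can it be built from one-dimensional ramified characters on the $\GL_{n-2}$-side, so any automorphic form with the required local types on $\sigma'$ must lie in a stable packet on $U_n$. The base change of $\sigma'$, constructed via Theorem \ref{thm_base_change}, is therefore the sought-after RACSDC representation $\Pi$ of $\GL_n(\A_L)$; the passage to the quadratic extension $L^+$ is used to give enough freedom in the local Hecke algebra for the trace-formula comparison and mod-$\varpi$ lifting steps to be carried out unambiguously.
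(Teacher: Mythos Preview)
Your overall architecture matches the paper: descend $\pi$ (or $\pi_L$) to an automorphic representation $\sigma$ of $G_n$ carrying the cuspidal-unipotent type $\tau(v,n)$ at $T_1$-places, swap this for $\lambda(v,\theta_v,n)$ via Proposition~\ref{prop_congruence_of_types}, and then argue that the base change $\Pi$ of the resulting $\sigma'$ is cuspidal. But two steps in your outline are genuinely incomplete.

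First, the non-vanishing of the multiplicity in Proposition~\ref{prop_automorphic_descent} is not a matter of ``signs not cancelling'' nor of ``freedom in the local Hecke algebra.'' The trace formula isolates a single candidate $\sigma$ with multiplicity
\[
m(\sigma)=\tfrac{1}{2}\Bigl(\prod_{v\mid\infty}\epsilon(v,U_n,\varphi_{U_n})\prod_{v\in T_1}c_{\tau_v}+\prod_{v\mid\infty}\epsilon(v,U_{n-2}\times U_2,\varphi)\prod_{v\in T_1}d_{v,1}\Bigr),
\]
and there is no a priori reason the two terms do not cancel. The paper's mechanism is a dichotomy: if $m(\sigma)=0$ over $F^+$ then $\prod c_{\tau_v}^2=\prod d_{v,1}^2$, and passing to the $T$-split quadratic extension $L^+$ squares both the archimedean signs (making them $+1$, via the remark after Proposition~\ref{prop_archimedean_signs}) and the $T_1$-products, so that the multiplicity over $L^+$ equals $\prod_{v\in T_1}c_{\tau_v}^2\neq 0$. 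This squaring trick is the entire reason $L^+$ appears; your explanation misses it.

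Second, your cuspidality argument claims that the test function $\phi(v,\theta_v,n)$ at $v\in T_{1,L}$ kills all proper endoscopic contributions purely locally. This is not what Proposition~\ref{prop_kazhdan_varshavsky} says, and it is false as stated: for $H'=U_{n-2}\times U_2$ the $3$-dimensional supercuspidal $\widetilde{\lambda}_\wv(\widetilde{\theta}_v)$ can perfectly well sit inside the $U_{n-2}$-factor's extended cuspidal support. The paper instead uses Lemma~\ref{lem_cuspidal_parts}, which exploits the types at places of $T_2$ and $T_3$ to force any non-cuspidal $\Pi$ to have the exact shape $\Pi_{n-2}\boxplus\Pi_2$ with $\overline{r}_{\Pi_{n-2}|\cdot|^{-1},\iota}\cong\overline{\rho}_{n-2}$. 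Only then does one apply the trace identity with $f_v=\phi(v,\theta_v,n)$: Proposition~\ref{prop_kazhdan_varshavsky} forces $sc(\Pi_{n-2,\wv})$ to contain a $\GL_3$-supercuspidal with \emph{unramified} residual representation, whereas the global constraint from Lemma~\ref{lem_cuspidal_parts} says $\overline{\rho}_{n-2}|_{G_{F_\wv}}^{ss}$ has only a single unramified constituent (the rest being ramified quadratic twists). This global--local interplay via $T_2\cup T_3$ is essential and is absent from your sketch. The same trace identity, once cuspidality is established, is what gives property~(3); this does not follow automatically from $\sigma'$ containing the type, since $v\in T_{1,L}$ is inert.
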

\begin{remark}
	The places $v \in T_2 \cup T_3$ play a role in ensuring that $\Pi$ is 
	cuspidal (using Lemma \ref{lem_cuspidal_parts} below). Our set-up is 
	adapted to the proof of Proposition \ref{prop:levelraisingoddn}, which uses 
	an induction on the dimension to construct automorphic representations with 
	an unramified twist of Steinberg local factor which are congruent to some 
	very special odd-dimensional symmetric powers.
\end{remark}

We begin with two important observations.
\begin{lemma}\label{lem_local_representations}
Let $v$ be a finite place of $F^+$ which is inert in $F$. Then $\pi_\wv \in \mathcal{A}^\theta_t(\GL_n(F_\wv))_+$.
\end{lemma}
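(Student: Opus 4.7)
Membership in $\mathcal{A}^\theta_t(\GL_n(F_\wv))_+$ requires three properties of $\pi_\wv$: $\theta$-invariance, temperedness, and the existence of an extension of its Langlands parameter to a parameter of $^L U_n$. I would verify each in turn.

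The $\theta$-invariance is formal from the hypothesis that $\pi_2$ and $\pi_{n-2}$ are conjugate self-dual: globally $\pi^c \cong \pi^\vee$, so locally $\pi_\wv^c \cong \pi_\wv^\vee$. Unwinding the definition $\theta(g) = \Phi_n c(g)^{-t} \Phi_n^{-1}$ (noting that at an inert place the involution $c$ acts on $F_\wv$ as the nontrivial element of $\Gal(F_\wv/F_v^+)$) then yields $\pi_\wv \circ \theta \cong \pi_\wv$.

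For temperedness, I would observe that the shifts $\tau_2 = \pi_2|\cdot|^{(2-n)/2}$ and $\tau_{n-2} = \pi_{n-2}|\cdot|^{-1}$ are RAECSDC, so their attached Galois representations have pure Weil--Deligne parameters at every finite place by the main result of \cite{Caraianilnotp}. Local--global compatibility, together with the compatibility of the isobaric decomposition of $\pi$ with the unramified twists relating $\tau_{i,\wv}$ to $\pi_{i,\wv}$, then forces $\pi_\wv$ itself to be tempered.

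The heart of the matter is the $+$ condition. Since $n$ is odd, we need each conjugate self-dual irreducible constituent $\sigma_j$ of $\phi_{\pi_\wv}$ with sign $(-1)^n = -1$ (conjugate symplectic) to appear with even multiplicity, while constituents with sign $(-1)^{n-1} = +1$ (conjugate orthogonal) may occur arbitrarily. I plan to verify this by descent: the discrete, regular algebraic, conjugate self-dual automorphic representation $\pi = \pi_2 \boxplus \pi_{n-2}$ of $\GL_n(\bA_F)$ should descend to an automorphic representation $\sigma$ of $G_n(\bA_{F^+})$ via an isobaric extension of Theorem \ref{thm_cuspidal_descent} (using Labesse's simple trace formula \cite{labesse} in conjunction with the endoscopic sign constraint for $U_n$). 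Granting the descent, $\sigma_v$ at the inert place lies in the $L$-packet associated to a parameter of $^L U_n$ lifting $\phi_{\pi_\wv}$, giving the required extension.

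The hard part will be establishing the descent in the isobaric case, which in particular requires checking that each cuspidal summand $\pi_i$ satisfies $\epsilon(\pi_i) = (-1)^{n-1} = +1$ (conjugate orthogonal); this sign compatibility is forced globally by the polarization $r_{\pi,\iota}^c \cong r_{\pi,\iota}^\vee \otimes \epsilon^{1-n}$, and can alternatively be read off locally at any split place $v \in T_2$ or $T_3$ where the structure of $\pi_\wv$ is pinned down explicitly by the type-theoretic hypotheses (c), (d), (e).
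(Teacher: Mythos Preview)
Your identification of the three ingredients ($\theta$-invariance, temperedness, extension of the parameter to ${}^L U_n$) is correct, and your treatment of the first two is fine. However, your plan for the ``$+$'' condition is much more roundabout than the paper's.

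The paper does not attempt any automorphic descent of the isobaric $\pi$ to $G_n$. Instead it works entirely on the Galois side: by the sign theorem of Bella\"iche--Chenevier \cite{belchen}, the global Galois representation $r_{\pi,\iota}$ extends to a homomorphism $r : G_{F^+} \to \cG_n(\overline{\bQ}_p)$ with multiplier $\epsilon^{1-n}\delta_{F/F^+}^n$. Restricting $r$ to the decomposition group at $v$ (and untwisting from $\cG_n$ to ${}^L U_n$) immediately gives the required extension of the Langlands parameter of $\pi_\wv$ to a parameter $W_{F_v^+} \times \SL_2 \to {}^L U_n$. This is a two-line argument.

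Your proposed route, by contrast, asks for an isobaric generalisation of Theorem~\ref{thm_cuspidal_descent}, and you correctly flag this as the hard part. But such a descent statement is not available in the paper, and establishing it would itself require knowing the sign of each cuspidal summand $\pi_i$ --- which is exactly what Bella\"iche--Chenevier provides. So your approach either becomes circular or collapses to the paper's argument once you invoke the sign result to justify the descent. Your alternative idea of reading the sign off at a split place in $T_2$ or $T_3$ is in principle workable and avoids \cite{belchen}, but it is still more laborious than simply citing the global sign theorem and restricting.
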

\begin{proof}
The representation $\pi_\wv$ is tempered because both $\pi_{2, \wv}$ and $\pi_{n-2, \wv}$ are tempered. By the main theorem of \cite{belchen}, $r_{\pi, \iota}$ extends to a homomorphism $r : G_{F^+} \to \cG_n(\overline{\bQ}_p)$ such that $\nu \circ r = \epsilon^{1-n} \delta_{F / F^+}^n$. Restricting to $W_{F^+_v}$ and twisting by an appropriate character, we see that the Langlands parameter of $\pi_\wv$ extends to a parameter $W_{F^+_v} \times \SL_2 \to {}^L G$.
\end{proof}
\begin{remark}A consequence of this lemma is that for $v \in T_1$ the character 
$\chi_{\wv}$ is non-trivial 
quadratic and the character $\chi_{\wv,0}$ is trivial.\end{remark}

\begin{lemma}\label{lem_cuspidal_parts}
Suppose given a partition $n = n_1 + \dots + n_r$ and cuspidal, conjugate self-dual automorphic representations $\pi'_1, \dots, \pi'_r$ of $\GL_{n_i}(\bA_F)$ such that $\pi' = \pi'_1 \boxplus \dots \boxplus \pi'_r$ is regular algebraic. Suppose moreover that the following conditions are satisfied:
\begin{enumerate}
\item There is an isomorphism $\overline{r}_{\pi', \iota} \cong \overline{r}_{\pi, \iota}$.
\item If $v \in T_2 \cup T_3$ then $\pi'_\wv|_{\q_\wv}$ contains $\widetilde{\lambda}(\wv, \Theta_\wv, n)$.
\end{enumerate}
Then one of the following two statements holds:
\begin{enumerate}
\item  We have $ r= 1$, $n_1 = n$, and so $\pi'$ is cuspidal.
\item After re-ordering we have $r = 2$, $n_1 = n-2$, $n_2 = 2$. If $v \in T_2$ 
then $\pi'_{1, \wv}|_{\GL_{n-2}(\cO_{F_\wv})}$ contains $\omega(\wv) \circ 
\det$ and $\pi'_{2, \wv}|_{\GL_2(\cO_{F_\wv})}$ contains 
$\widetilde{\lambda}(\wv, \Theta_\wv)$, while if $v \in T_3$ then $\pi'_{1, 
\wv}|_{\GL_{n-2}(\cO_{F_\wv})}$ contains $\widetilde{\lambda}(\wv, \Theta_\wv)$ 
and $\pi'_{2, \wv}|_{\GL_2(\cO_{F_\wv})}$ contains $\omega(\wv) \circ \det$. We 
have isomorphisms of semisimplified residual representations 
$\overline{r}_{\pi'_1|\cdot|^{-1}, \iota}\cong \overline{\rho}_{n-2}$ and 
$\overline{r}_{\pi'_2| \cdot |^{(2-n)/2}, \iota} \cong \rhobar_2$.
\end{enumerate}
\end{lemma}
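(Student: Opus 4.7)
The plan is to combine the local information at places in $T_2\cup T_3$ (via Proposition \ref{prop_types_for_general_linear_group}) with the residual Galois isomorphism from hypothesis (1) to force the partition $(n_1,\ldots,n_r)$ into one of the two shapes claimed. First, fix $v\in T_2$. By Proposition \ref{prop_types_for_general_linear_group} and hypothesis (2), $\pi'_\wv$ admits an isobaric decomposition $\pi'_\wv\cong\Pi_{\wv,1}\boxplus\Pi_{\wv,2}$, where $\Pi_{\wv,1}$ is a depth-zero supercuspidal of $\GL_2(F_\wv)$ containing the type $\widetilde{\lambda}(\wv,\Theta_\wv)$ and $\Pi_{\wv,2}$ is an irreducible representation of $\GL_{n-2}(F_\wv)$ containing $\omega(\wv)\circ\det$. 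Proposition \ref{prop_depth_zero_supercuspidal_on_gl_n} shows that $\rec(\Pi_{\wv,1})$ is a $2$-dimensional irreducible Weil representation, so $\Pi_{\wv,1}$ is essentially square-integrable. An analogous decomposition holds at $v\in T_3$ with the dimensions $2$ and $n-2$ exchanged.

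Writing $\pi'_\wv\cong\boxplus_i\pi'_{i,\wv}$, each $\pi'_{i,\wv}$ is tempered by \cite{Caraianilnotp} (since each $\pi'_i$ is cuspidal, conjugate self-dual and regular algebraic), hence an isobaric sum of discrete series; by the uniqueness of the Langlands classification, the essentially square-integrable summand $\Pi_{\wv,1}$ occurs as a discrete series summand of exactly one $\pi'_{i_0(v),\wv}$, forcing $n_{i_0(v)}\geq 2$ for $v\in T_2$. The analogous argument at $v'\in T_3$ yields an index $j_0(v')$ with $n_{j_0(v')}\geq n-2$.

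Next comes the case analysis. If $r=1$ we are in case (1). Otherwise, if for some $v\in T_2$ and $v'\in T_3$ we have $i_0(v)\neq j_0(v')$, then $n_{i_0(v)}+n_{j_0(v')}\leq n$ forces $n_{i_0(v)}=2$, $n_{j_0(v')}=n-2$, and $r=2$. Relabeling so that $n_1=n-2$ and $n_2=2$, and using that $n-2\neq 2$ for odd $n\geq 5$, the uniqueness of isobaric factorisation of $\pi'_\wv$ at $v\in T_2$ forces $\pi'_{1,\wv}\cong\Pi_{\wv,2}$ and $\pi'_{2,\wv}\cong\Pi_{\wv,1}$; the local structure at $v\in T_3$ is established similarly, yielding the local assertions in case (2).

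The main obstacle will be ruling out the remaining ``mixed'' case in which $i_0(v)=j_0(v')$ for all such $v,v'$, where the possible partitions are $(n-1,1)$, $(n-2,2)$ (with both the $T_2$- and $T_3$-supercuspidal summands lying inside the single global summand $\pi'_1$), or $(n-2,1,1)$. To dispose of these I expect to use hypothesis (1) in the form $\overline{r}_{\pi',\iota}\cong\overline{\rho}_2\oplus\overline{\rho}_{n-2}$: an analysis of Jordan--H\"older factors of the right-hand side (using the Steinberg structure of $\pi_{2,\wv}$ at places $v\in T_1$ recalled via Lemma \ref{lem_local_representations}, which pins down that $\overline{\rho}_2$ is a $2$-dimensional residually irreducible summand) shows that a Hecke character summand $\pi'_i$ of dimension $1$ cannot contribute to the Jordan--H\"older content of $\overline{\rho}_2$, ruling out the partitions $(n-1,1)$ and $(n-2,1,1)$; while the mixed $(n-2,2)$ case is eliminated by comparing the local residual structure of $\pi'_2$ at $v\in T_2$ (a sum of quadratic ramified characters in this configuration) against that of $\overline{\rho}_2$ (unramified at $T_2$, as $\overline{\Theta}_\wv=1$). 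Once the partition is pinned down as in case (2), the residual identifications $\overline{r}_{\pi'_1|\cdot|^{-1},\iota}\cong\overline{\rho}_{n-2}$ and $\overline{r}_{\pi'_2|\cdot|^{(2-n)/2},\iota}\cong\overline{\rho}_2$ follow from matching residual Jordan--H\"older factors of the correct dimensions.
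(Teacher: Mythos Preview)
Your overall strategy—extract a large summand via the type at $T_3$, a small one via $T_2$, and use the residual isomorphism to constrain the partition—matches the paper's, but your treatment of the ``mixed'' case contains a genuine error. You claim that the Steinberg structure of $\pi_{2,\wv}$ at $v\in T_1$, via Lemma \ref{lem_local_representations}, ``pins down that $\overline{\rho}_2$ is a $2$-dimensional residually irreducible summand''. This is false: Lemma \ref{lem_local_representations} says only that $\pi_\wv\in\mathcal{A}_t^\theta(\GL_n(F_\wv))_+$, and the reduction modulo $p$ of a Steinberg representation at $T_1$ has unramified, hence typically reducible, semisimplification. Nothing in the hypotheses forces $\overline{\rho}_2$ to be irreducible, so your elimination of the partitions $(n-1,1)$ and $(n-2,1,1)$ collapses.

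The fix is already implicit in your own handling of the mixed $(n-2,2)$ case and is exactly what the paper does: use only the ramification dichotomy at $T_2$ and $T_3$. The key observation (which the paper records at the start of its proof) is that every Jordan--H\"older factor of $\overline{\rho}_2$ is unramified at $T_2$ and $\omega$-ramified at $T_3$, while every factor of $\overline{\rho}_{n-2}$ has the opposite behaviour; in particular no global character can be $\omega$-ramified at both a $T_2$ place and a $T_3$ place. In any of your mixed configurations, each summand $\pi'_i$ with $i\geq 2$ is forced locally into the $\omega(\wv)$-twisted block at \emph{both} $T_2$ and $T_3$, so $\overline{r}_{\pi'_i|\cdot|^{(n_i-n)/2},\iota}$ is $\omega$-ramified at both---an immediate contradiction. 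The paper organises this more efficiently by using the $T_3$ ramification at the outset to prove $\overline{\rho}_{n-2}\subset\overline{r}_{\pi'_1|\cdot|^{(n_1-n)/2},\iota}$ (since $\overline{\rho}_{n-2}$ is unramified there, while $\oplus_{i\geq 2}\overline{r}_{\pi'_i}$ is not), thereby bypassing your case split on whether $i_0(v)=j_0(v')$. Note also that your final sentence (``matching residual Jordan--H\"older factors of the correct dimensions'') is too coarse: the residual identifications in case (2) again require the ramification comparison, not merely a dimension count.
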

\begin{proof}
Before beginning the proof, we observe that the representations $\overline{\rho}_2$, $\overline{\rho}_{n-2}$ have the following properties:
\begin{itemize}
\item If $v \in T_2$, then $\overline{\rho}_2|_{G_{F_\wv}}^{ss}$ is unramified and $\overline{\rho}_{n-2}|_{G_{F_{\wv}}}^{ss}$ is unramified after twisting by a ramified quadratic character. (The character $\Theta_\wv$ has order $p$.)
\item If $v \in T_3$, then $\overline{\rho}_{2}|_{G_{F_{\wv}}}^{ss}$ is unramified after twisting by a ramified quadratic character and $\overline{\rho}_{n-2}|_{G_{F_{\wv}}}^{ss}$ is unramified.
\end{itemize}
We can suppose without loss of generality that $r \geq 2$. Fix places $v_2 \in T_2, v_3 \in T_3$. By Proposition 
\ref{prop_types_for_general_linear_group}, we can assume after relabelling that $\pi'_{1, \wv_3}|_{\GL_{n_1}(\cO_{F_{\wv_3}})}$ contains $\widetilde{\lambda}(\wv_3, \Theta_{\wv_3}, n_1)$ (and in particular, $n_1 \geq n-2$), in which case $(\pi'_{2, \wv_3} \boxplus \dots \boxplus \pi'_{r, \wv_3})|_{\GL_{n - n_1}(\cO_{F_{\wv_3}})}$ contains $\omega(\wv_3) \circ \det$. There is an isomorphism 
\[ \overline{r}_{\pi', \iota}  = \oplus_{i=1}^r \overline{r}_{\pi'_i | 
\cdot|^{(n_i -  n)/2}, \iota} \cong \overline{\rho}_2 \oplus 
\overline{\rho}_{n-2}, \]
 hence
\[ \oplus_{i=1}^r \overline{r}_{\pi'_i | \cdot|^{(n_i -  n)/2}, 
\iota}|_{G_{F_{\wv_3}}}^{ss} \cong \overline{\rho}_2|_{G_{F_{\wv_3}}}^{ss} 
\oplus \overline{\rho}_{n-2}|_{G_{F_{\wv_3}}}^{ss}. \]
Since $\oplus_{i=2}^r \overline{r}_{\pi'_i | \cdot|^{(n_i -  n)/2}, 
\iota}|_{G_{F_{\wv_3}}}^{ss}$ contains no unramified subrepresentation, we 
conclude that $\overline{r}_{\pi'_1 | \cdot|^{(n_1 -  n)/2}, \iota}$ contains 
$\overline{\rho}_{n-2}$ as a subrepresentation.

We now look at the place $v_2$. There are two possibilities for the 
representation $\pi'_{1, \wv_2}$: either $\pi'_{1, 
\wv_2}|_{\GL_{n_1}(\cO_{F_{\wv_2}})}$ contains $\widetilde{\lambda}(\wv_2, 
\Theta_{\wv_2}, n_1)$, or it contains $\omega(\wv_2) \circ \det$. We claim that 
the first possibility does not occur. Indeed, in this case arguing as above 
shows that $\oplus_{i=2}^r \overline{r}_{\pi'_i | \cdot|^{(n_i -  n)/2}, 
\iota}|_{G_{F_{\wv_2}}}^{ss}$ contains no unramified subrepresentation, and 
therefore that $\overline{\rho}_2$ is a subrepresentation of 
$\overline{r}_{\pi'_1 | \cdot|^{(n_1 -  n)/2}, \iota}$. This forces $n_1 = n$ 
and $r = 1$, a contradiction. Therefore we must have $r = 2$, $n_2 = 2$,  and 
$\pi'_{2, \wv_2}|_{\GL_{2}(\cO_{F_{\wv_2}})}$ contains 
$\widetilde{\lambda}(\wv_2, \Theta_{\wv_2})$. Since $v_2, v_3$ were arbitrary, 
this  completes the proof.
\end{proof}
We now commence the proof of the theorem. Let $U = \prod_v U_v \subset G(\A_{F^+}^\infty)$ be an open compact subgroup with the following properties:
\begin{itemize}
	\item For each $v \in S$, $\pi_\wv^{\iota_\wv(U_v)} \neq 0$.
	\item If $v \not\in S \cup T$, then $U_v = G(\cO_{F^+_v})$.
	\item If $v \in S_p$, then $U_v = \iota_\wv^{-1} \Iw_{\wv}(c, c)$ for some 
	$c \geq 1$ such that $\pi_\wv^{\Iw_\wv(c, c),ord} \neq 0$ (notation 
	as in \cite[\S 5.1]{ger}) and $U_v$ contains no non-trivial torsion elements (note this implies that $U$ is sufficiently small).
	\item If $v \in T_1$ then $U_v = \iota_v^{-1} \p_v$ (notation as in \S \ref{subsec_local_endoscopic_classification}).
	\item If $v \in T_2 \cup T_3$ then $U_v = \iota_\wv^{-1} \q_\wv$ (notation as in \S \ref{subsec_local_theory_GL_n}, defined with $n_1 = 2$ if $v \in T_2$ and $n_1 = n-2$ if $v \in T_3$).
\end{itemize}
We define $\tau_g = \otimes_{v \in T_1} \tau(v, n) \otimes_{v \in T_2 \cup T_3} \widetilde{\lambda}(\wv, \Theta_\wv, n)$, where $\tau(v, n)$, $\widetilde{\lambda}(\wv, \Theta_{\wv}, n)$ are the representations of $\p_v$, $\q_\wv$ defined in \S \ref{subsec_local_endoscopic_classification}, \S \ref{subsec_local_theory_GL_n} respectively. Thus $\tau_g$ is an irreducible $\bC[U_T]$-module, which we view as a $\bC[U]$-module by projection to the $T$-component. Similarly we define $\lambda_g = \otimes_{v \in T_1} \lambda(v, \theta_v, n) \otimes_{v \in T_2 \cup T_3} \widetilde{\lambda}(\wv, \Theta_\wv, n)$. Fixing a sufficiently large coefficient field, we can choose $\cO$-lattices $\mathring{\tau}_g$ and $\mathring{\lambda}_g$ in $\iota^{-1} \tau_g$ and $\iota^{-1} \lambda_g$, respectively.

If $L^+ / F^+$ is an $S \cup T$-split totally real quadratic extension, then we define an open compact subgroup $U_L = \prod_v U_{L, v} \subset G(\A_{L^+}^\infty)$ and representations $\tau_{g, L}$, $\lambda_{g, L}$ by the same recipe (where we now replace the sets $S$, $T_i$ by their lifts $S_L$, $T_{i, L}$ to $L^+$).
\begin{proposition}\label{prop_automorphic_descent} Let $L^+ / F^+$ be an $S \cup T$-split totally real quadratic extension and let $L = L^+ F$. Then either there exists an automorphic representation $\sigma$ of $G(\A_{F^+})$ with the following properties:
	\begin{enumerate}
		\item $\pi$ is the base change of $\sigma$ (cf. Theorem \ref{thm_base_change});
		\item For each place $v\not\in T$, $\sigma_v^{U_v} \neq 0$;
		\item $\sigma_T|_{U_T}$ contains $\tau_g$.
	\end{enumerate}
	or there exists an automorphic representation $\sigma$ of $G(\A_{L^+})$ with the following properties:
	\begin{enumerate}
		\item Let $\pi_L$ denote the base change of $\pi$ with respect to the quadratic extension $L / F$. Then $\pi_L$ is the base  change of $\sigma$;
		\item For each place $v\not\in T_L$, $\sigma_v^{U_{L, v}} \neq 0$;
		\item $\sigma_{T_L}|_{U_{L, T_L}}$ contains $\tau_{g, L}$.
	\end{enumerate}
\end{proposition}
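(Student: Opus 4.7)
The plan is to bypass the non-cuspidality of $\pi$ by descending through endoscopy, using the elliptic endoscopic triple $\cE = (H, s, \eta)$ with $H = U_{n-2} \times U_2$ from \S \ref{subsec:endo}. First I would apply Theorem \ref{thm_cuspidal_descent} separately to the cuspidal factors $\pi_{n-2}$ and $\pi_2 \otimes \chi^{-1}$ on $\GL_{n-2}(\A_F)$ and $\GL_2(\A_F)$, where $\chi$ is a suitable Hecke character of $F$ matching the twist introduced by the $L$-embedding $\eta$ on the $\GL_2$-block (since $\mu_{(-1)^{n-2}} = \mu$ for $n$ odd). This produces automorphic representations $\sigma_{n-2}$ on $U_{n-2}(\A_{F^+})$ and $\sigma_2$ on $U_2(\A_{F^+})$; set $\sigma_H = \sigma_{n-2} \otimes \sigma_2$. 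At each inert place $v \in T_1$, I would choose the local constituents of $\sigma_H$ inside their $L$-packets so that Corollary \ref{cor_descent_L_packet_contains_tau} forces $\tau_v$ to belong to $\Pi(\pi_\wv)$, making the type $\tau(v, n)$ available on $\p_v$ by induction along $Q_{(1, \dots, 1, 3)}$; at the split places $v \in T_2 \cup T_3$, Proposition \ref{prop_transfer_at_finite_places}(2) handles the local transfer cleanly.

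Next I would apply Arthur's simple trace formula (\cite[Th\'eor\`eme 4.3]{labesse}) to a test function $f = \otimes_v f_v \in C_c^\infty(G(\A_{F^+}))$ whose components are adapted to our conditions: matrix coefficients of the chosen $L$-packet members at $T_1$, the type function of $(\q_\wv, \widetilde{\lambda}(\wv, \Theta_\wv, n))$ at $T_2 \cup T_3$, and matrix coefficients at places of $S$ compatible with $\pi_v$ and the Iwahori-ordinary condition. Equating the spectral and geometric sides and using the fundamental lemma (\cite{Clo90a, Lau08}) together with Propositions \ref{prop_L-packets_via_character_identities} and \ref{prop_transfer_at_finite_places}, the $\cE$-endoscopic contribution identifies with a multiple $\epsilon_{\mathrm{glob}} \cdot \sigma_H(f^H)$, where
\[ \epsilon_{\mathrm{glob}} = \prod_{v \mid \infty} \epsilon(v, \cE, \varphi_{H, v}) \in \{ \pm 1 \} \]
collects the archimedean signs from Proposition \ref{prop_archimedean_signs}; every finite-place local sign equals $+1$ thanks to the normalisation of transfer factors via the pure inner twist $(\xi, u)$ fixed in \S \ref{subsec:endo}.

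If $\epsilon_{\mathrm{glob}} = +1$ the spectral side is non-zero and provides some $\sigma$ on $G(\A_{F^+})$ with base change $\pi$ and the required local properties, which is the first alternative. If $\epsilon_{\mathrm{glob}} = -1$ the contribution cancels over $F^+$, and I would repeat the entire construction over the $S \cup T$-split totally real quadratic extension $L^+/F^+$. By $S \cup T$-splitness, every local invariant at a place above $T$ pulls back identically, while each archimedean place of $F^+$ has two archimedean places of $L^+$ above it carrying the same local sign (by the remark after Proposition \ref{prop_archimedean_signs}), so $\epsilon_{\mathrm{glob}, L^+} = \epsilon_{\mathrm{glob}}^2 = +1$. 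The base change $\pi_L = \pi_{2, L} \boxplus \pi_{n-2, L}$ is still a sum of two cuspidal summands -- neither $\pi_{i, L}$ can decompose further, since our pinned local types at $T_2$ and $T_3$ rule out the reducibility that solvable base change might otherwise introduce -- and the construction carries through to produce $\sigma$ on $G(\A_{L^+})$.

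The main technical obstacle is the sign accounting in the trace formula comparison: verifying that the unique global obstruction to descending over $F^+$ is the single archimedean product $\epsilon_{\mathrm{glob}}$, so that a quadratic totally real base change eliminates it by squaring. This hinges essentially on the careful normalisation of transfer factors via Kaletha's pure inner twist formalism recalled in \S \ref{subsec:endo}, which makes every finite-place transfer identity of Propositions \ref{prop_transfer_at_finite_places} and \ref{prop_L-packets_via_character_identities} hold on the nose rather than merely up to a positive scalar.
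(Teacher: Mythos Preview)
Your outline captures the right shape of the argument --- compare the spectral side of a stabilised trace identity to an endoscopic expression, isolate the term carrying the type $\tau_g$, and if the multiplicity vanishes pass to the quadratic extension $L^+/F^+$ to square the obstruction away --- but there is a genuine gap in your accounting of the right-hand side. You treat only the elliptic endoscopic contribution from $\cE = (U_{n-2} \times U_2, s, \eta)$, whereas in the stabilised expansion (\cite[Th\'eor\`eme 5.1]{labesse}) there is a second term, coming from the principal group $H = U_n$ with Levi $L = \Res_{F/F^+}(\GL_{n-2} \times \GL_2)$, which contributes the twisted trace $\widetilde{\pi}(\widetilde{f})$ of $\pi = \pi_2 \boxplus \pi_{n-2}$ itself. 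The paper obtains the refined identity
\[
\sum_i m(\sigma_i)\,\sigma_i(f) \;=\; \tfrac12\Bigl(\widetilde{\pi}(\widetilde{f}) + (\pi_{n-2} \otimes (\pi_2 \otimes \mu^{-1}\circ\det))^\sim(\widetilde{f}^{U_{n-2}\times U_2})\Bigr),
\]
so the multiplicity of the particular $\sigma$ with $\sigma_v \cong \tau_v$ for $v \in T_1$ is a \emph{sum} of two products, not a single product. Your claim that ``the spectral side is non-zero'' when $\epsilon_{\mathrm{glob}} = +1$ therefore does not follow: the two terms can still cancel.

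The second gap is at the inert places $v \in T_1$. You assert that ``every finite-place local sign equals $+1$'', but Proposition \ref{prop_L-packets_via_character_identities} only gives constants $c_\tau \in \bC^\times$ with no claim that they are $\pm 1$; the normalisation of transfer factors via the pure inner twist controls the transfer of \emph{functions}, not these spectral transfer coefficients. Likewise the endoscopic side produces constants $d_{v,1} \in \bC$. The actual multiplicity is
\[
m(\sigma) = \tfrac12\Bigl(\prod_{v\mid\infty}\epsilon(v, U_n, \varphi_{U_n})\prod_{v\in T_1} c_{\tau_v} \;+\; \prod_{v\mid\infty}\epsilon(v, U_{n-2}\times U_2, \varphi_{U_{n-2}\times U_2})\prod_{v\in T_1} d_{v,1}\Bigr),
\]
and the dichotomy is whether this vanishes. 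If it does, one deduces $\prod c_{\tau_v}^2 = \prod d_{v,1}^2$, and over $L^+$ both the archimedean signs \emph{and} the $T_1$-constants square (each $v \in T_1$ splits into two places with the same local data), so the multiplicity becomes $\prod_{v\in T_1} c_{\tau_v}^2 \neq 0$ because Proposition \ref{prop_L-packets_via_character_identities} guarantees $c_{\tau_v} \neq 0$. Your squaring idea is exactly right, but what gets squared is not a single sign $\epsilon_{\mathrm{glob}}$: it is the entire pair of products, and the non-vanishing over $L^+$ relies on knowing $c_{\tau_v} \neq 0$, not on a sign becoming $+1$.
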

\begin{proof}
By \cite[Th\'eor\`eme 5.1]{labesse}, there is an identity
\begin{equation}\label{eqn_labesse_unstable_trace} T^G_{disc}(f) = \sum_H \iota(G, H) T_{disc}^{\widetilde{M}^H}(\widetilde{f}^H), 
\end{equation}
for any $f = f^\infty \otimes f_\infty \in C_c^\infty(G(\A_{F^+}))$ such that $f_\infty$ is a pseudocoefficient of discrete series. Here the sum on the right-hand side is over representatives for equivalence classes of endoscopic data for $G$, represented here by the associated endoscopic group $H$ (recall that we have fixed representative endoscopic triples in \S \ref{sec_definite_unitary_groups}). The coefficients $\iota(G, H)$ are given in \cite[Proposition 4.11]{labesse}, while the expression $T_{disc}^{\widetilde{M}^H}(\widetilde{f}^H)$ is given in \cite[Proposition 3.4]{labesse} as a formula
\begin{equation}\label{eqn_labesse_twisted_trace} \sum_{L \in \cL^0 / W^{M^H}} 
\sum_{s \in W^{\widetilde{M}^H}(L)_{reg}} \sum_{\widetilde{\pi}^L \in 
\Pi_{disc}(\widetilde{L}_s)} (|\det(s - 1 \mid \mathfrak{a}_L / 
\mathfrak{a}_{M^H})| | W^{M^H}(L) |)^{-1}\tr 
I_Q(\widetilde{\pi}^L)(\widetilde{f}^H), 
\end{equation}
where (summarizing the notation of \emph{op. cit.}):
\begin{itemize}
\item $\widetilde{M}^H$ is a twisted space on a Levi of $\Res_{F/F^+}\GL_n$, 
as in \S\ref{subsec:endo};
\item $\cL^0$ is the set of standard Levi subgroups of $M^H$;
\item $W^{\widetilde{M}^H}(L)_{reg}$ is the quotient by the Weyl group $W^L$ of 
the set of elements $s$ in the twisted Weyl group $W^{\widetilde{M}^H} = 
W^{M^H} \rtimes \theta_{M^H}$ which normalise $L$ and such that $\det(s - 1 
\mid \mathfrak{a}_L / \mathfrak{a}_{M^H}) \neq 0$, where $\mathfrak{a}_?$ 
denotes the Lie algebra of the maximal $\bQ$-split subtorus of the centre of a 
reductive group.
\item $\Pi_{disc}(\widetilde{L}_s)$ is the set of isomorphism classes of 
irreducible representations of the twisted space $\widetilde{L}_s(\bA_{F^+})$ 
which appear as subrepresentations of the discrete spectrum of $L$. 
\item $I_Q(\widetilde{\pi}^L)(\widetilde{f}^H)$ is a certain intertwining 
operator, with $Q$ a parabolic subgroup with Levi $L$.
\end{itemize}
We fix our choice of $f_\infty$ so that it only has non-zero traces on 
representations of $G(F^+_\infty)$ whose infinitesimal character is related, by 
twisted base change, to that of $\pi$. The argument of \cite[Proposition 
4.8]{shin} then shows that for each $L \in \cL^0 / W^{M^H}$, there is at most 
one element $s \in W^{\widetilde{M}^H}(L)_{reg}$ for which the corresponding 
summand in (\ref{eqn_labesse_twisted_trace}) can be non-zero (and a representative for $s$ can be chosen which acts as 
conjugate inverse transpose on each simple factor of $L$). Using Proposition 
\ref{prop_transfer_at_finite_places}, linear independence of characters, the 
description of the discrete spectrum of general linear groups \cite{Moe89}, and 
the Jacquet--Shalika theorem \cite{MR623137}, we can combine 
(\ref{eqn_labesse_unstable_trace}) and (\ref{eqn_labesse_twisted_trace}) to 
obtain a refined identity
\begin{equation}
\sum_i m(\sigma_i) \sigma_i(f) = \frac{1}{2}\left( 
\widetilde{\pi}(\widetilde{f}) + (\pi_{n-2} \otimes (\pi_2 \otimes \mu^{-1} 
\circ \det))^\sim (\widetilde{f}^{U_{n-2}\times U_{2}})\right),
\end{equation}
where:
\begin{itemize}
\item The sum on the left-hand side is over the finitely many automorphic representations $\sigma_i$ of $G(\bA_{F^+})$ which are unramified at all places below which $\pi$ is unramified, have infinitesimal character related to that of $\pi_\infty$ by twisted base change, and which are related to $\pi$ by (either unramified or split) base change at places $v \not\in T_1$ of $F^+$, each occurring with its multiplicity $m(\sigma_i)$.
\item The twisted traces on the right-hand side are Whittaker-normalised. (These two terms arise from $H = U_n, L = \Res_{F / F^+} \GL_{n-2} \times \GL_2$ and $H = U_{n-2} \times U_2$, $L = M^H$, respectively. The same argument as in \cite[Proposition 3.7]{labesse} shows that Arthur's normalisation of the twisted trace, implicit in the term $I_Q(?)$ of (\ref{eqn_labesse_twisted_trace}), agrees with the Whittaker normalisation on the corresponding terms.)
\end{itemize}
We remark that the representations $\pi_2$, $\pi_{n-2}$ are tempered and that 
the representations $\sigma_i^{T_1}$ (i.e.\ prime to $T_1$-part) are 
isomorphic.  If $v \in T_1$, then we can find (combining Proposition 
\ref{prop_L-packets_via_character_identities} for $U_{n-2} \times U_2$ and 
e.g.\ \cite[Proposition 4.6]{Hir04}) a finite set $\{ \lambda_{v, i} \}$ of 
irreducible admissible representations of $G(F_v^+)$ and scalars $d_{v, i} \in 
\bC$ such that $(\pi_{n-2, v} \otimes (\pi_{2, v} \otimes \mu_v^{-1} \circ 
\det))^\sim(\widetilde{f}_v^{U_{n-2} \times U_2}) = \sum_i d_{v, i} \lambda_{v, 
i}(f_v)$. By Proposition \ref{prop_archimedean_signs}, Proposition 
\ref{prop_transfer_at_finite_places} and Proposition 
\ref{prop_L-packets_via_character_identities}, we therefore have an identity: 
\[\begin{split} \sum_i m(\sigma_i) \sigma_{i, T_1}(f_{T_1}) & = \frac{1}{2} \left( \prod_{v | \infty} \epsilon(v, U_n, \varphi_{U_n}) \prod_{v \in T_1} \sum_{\tau \in \Pi(\pi_v)} c_\tau \tau(f_v) \right. \\ & \left. + \prod_{v | \infty} \epsilon(v, U_{n-2} \times U_2, \varphi_{U_{n-2} \times U_2}) \prod_{v \in T_1} \sum_{i} d_{v, i} \lambda_{v, i}(f_v) \right), \end{split} \]
Choose for each $v \in T_1$ a representation $\tau_v \in \Pi(\pi_v)$ such that $\tau_v|_{U_v}$ contains $\tau(v, n)$ (this is possible by Corollary \ref{cor_descent_L_packet_contains_tau} and Proposition \ref{prop_moy_prasad}). We can assume that for each $v \in T_1$, $\lambda_{v, 1} = \tau_v$ (possibly with $d_{v, 1} = 0$). We conclude that there is at most one automorphic representation $\sigma$ of $G(\A_{F^+})$ with the following properties:
\begin{itemize}
\item $\sigma$ is unramified outside $S \cup T$, and is related to $\pi$ by split or unramified base change at all places $v \not\in T_1$;
\item If $v \in T_1$, then $\sigma_v \cong \tau_v$.
\end{itemize}
The representation $\sigma$ occurs with multiplicity 
\[ m(\sigma) = \frac{1}{2}\left(\prod_{v | \infty} \epsilon(v, U_n, \varphi_{U_n}) \prod_{v \in T_1} c_{\tau_v} + \prod_{v | \infty} \epsilon(v, U_{n-2} \times U_2, \varphi_{U_{n-2} \times U_2})  \prod_{v \in T_1} d_{v, 1}\right). \] We note that the numbers $c_{\tau_v}$ are all non-zero, by Proposition \ref{prop_L-packets_via_character_identities}. If $m(\sigma)$ is non-zero, then we're done (we are in the first case in the statement of the proposition). Otherwise, $\prod_{v \in T_1} c_{\tau_v}^2 = \prod_{v \in T_1} d_{v, 1}^2$, which we now assume.

In this case, let $L^+ / F^+$ be a totally real quadratic $S \cup T$-split extension, let $L = L^+ F$, and let $\pi_L$ denote the base change of $\pi$ with respect to the quadratic extension $L / F$. If $v \in T_{1, L}$, let $\tau_v = \tau_{v|_{F^+}}$. Then repeating the same argument shows that there is at most one automorphic representation $\sigma$ of $G(\A_{L^+})$ with the following properties:
\begin{itemize}
\item $\sigma$ is unramified outside $S_L \cup T_L$, and is related to $\pi_L$ by split or unramified base change at all places $v \not\in T_{1, L}$;
\item If $v \in T_{1, L}$, then $\sigma_v \cong \tau_v$.
\end{itemize}
Using the remark after Proposition \ref{prop_archimedean_signs}, we see that the representation $\sigma$ occurs with multiplicity 
\[ m(\sigma) =  \frac{1}{2}\left(\prod_{v \in T_{1, L}} c_{\tau_v} + \prod_{v \in T_{1, L}} d_{v, 1}\right) = \prod_{v \in T_1} c_{\tau_v}^2. \]
This is non-zero, so we're done in this case also (and we are in the second case of the proposition).
\end{proof}
We now show how to complete the proof of Theorem \ref{thm_automorphic_level_raising}, assuming first that we are in the first case of Proposition \ref{prop_automorphic_descent}. We let $\sigma$ be the automorphic representation of $G(\A_{F^+})$ whose existence is asserted by Proposition \ref{prop_automorphic_descent}. Let $\lambda \in (\Z^n_+)^{\Hom(F, \overline{\Q}_p)}$ be such that $\sigma$ contributes to $S_\lambda^{ord}(U, \iota^{-1} \tau_g)$ under the isomorphism of Lemma \ref{lem_algebraic_automorphic_forms_are_classical}. Let $\T \subset \End_\cO(S_\lambda^{ord}(U, \iota^{-1} \tau_g))$ be the commutative $\cO$-subalgebra generated by unramified Hecke operators $T_w^j$ at split places $v = w w^c \not\in S$ of $F^+$, and let $\ffrm \subset \T$ be the maximal ideal determined by $\sigma$. 

Then $S^{ord}_\lambda(U, \mathring{\tau}_g \otimes_\cO k)_\ffrm$ is non-zero, 
by Lemma \ref{lem_sufficiently_small}, hence (using the exactness of 
$S^{ord}_\lambda(U, -)$ as a functor on $k[U]$-modules, together with 
Proposition \ref{prop_congruence_of_types}) $S^{ord}_\lambda(U, 
\mathring{\lambda}_g \otimes_\cO k)_\ffrm \neq 0$, hence $S^{ord}_\lambda(U, 
\mathring{\lambda}_g)_\ffrm \neq 0$. Applying Lemma 
\ref{lem_algebraic_automorphic_forms_are_classical} once again, we conclude the 
existence of an automorphic representation $\Sigma$ of $G_n(\A_{F^+})$ with the 
following 
properties:
\begin{itemize}
	\item $\overline{r}_{\Sigma, \iota} \cong \overline{r}_{\pi, \iota}$.
	\item $\Sigma_T|_{U_T}$ contains $\lambda_g$.
	\item $\Sigma$ is $\iota$-ordinary and is unramified outside $S \cup T$.
\end{itemize}
Let $\Pi$ denote the base change of $\Sigma$, let $L^+ / F^+$ be a quadratic totally real extension as in the statement of Theorem \ref{thm_automorphic_level_raising}, and let $\Pi_L$ denote base change of $\Pi$ with respect to the extension $L / F$.  We claim that $\Pi_L$ satisfies the requirements of Theorem \ref{thm_automorphic_level_raising}. The only points left to check are that $\Pi_L$ is cuspidal and that if $v \in T_{1, L}$ then $\Pi_{L, v}$ satisfies condition (3) in the statement of Theorem \ref{thm_automorphic_level_raising}. In fact, it is enough to show that $\Pi$ is cuspidal and that if $v \in T_1$ then $\Pi_\wv|_{\mathfrak{r}_\wv}$ contains $\widetilde{\lambda}(\wv, \widetilde{\theta}_v, n)|_{\mathfrak{r}_\wv}$. We first show that $\Pi$ is cuspidal. If $\Pi$ is not cuspidal, then Lemma \ref{lem_cuspidal_parts} shows that $\Pi =\Pi_{n-2} \boxplus  \Pi_2$ where $\Pi_{n-2}, \Pi_{2}$ are cuspidal, conjugate self-dual automorphic representations of $\GL_{n-2}(\A_F)$, $\GL_{2}(\A_F)$, respectively. Arguing as in proof of Proposition \ref{prop_automorphic_descent}, we obtain an identity 
\begin{equation}\label{eqn_base_change_trace_identity} \sum_i m(\Sigma_i) 
\Sigma_i(f) = \frac{1}{2}\left( \widetilde{\Pi}(\widetilde{f}) + ( \Pi_{n-2} 
\otimes ( \Pi_2 \otimes \mu^{-1} \circ \det) 
)^\sim(\widetilde{f}^{U_{n-2}\times U_2}) \right), 
\end{equation}
where the sum on the left-hand side is over the finitely many automorphic representations $\Sigma_i$ of $G(\bA_{F^+})$ which are unramified at all places below which $\Pi$ is unramified, have infinitesimal character related to that of $\Pi_\infty$ by twisted base change, and which are related to $\Pi$ by (either unramified or split) base change at places $v \not\in T_1$ of $F^+$.

Fix $v \in T_1$, and consider a test function of the form $f = f_v \otimes f_\infty \otimes f^{v, \infty}$, where:
\begin{itemize}
\item $f_\infty$ is a coefficient for $\Sigma_\infty$.
\item $f_v$ is the test function denoted $\phi$ in the statement of Proposition \ref{prop_kazhdan_varshavsky}.
\item $f^{v, \infty}$ is the characteristic function of an open compact subgroup of $G(\bA_{F^+}^\infty)$.
\item $\Sigma(f) \neq 0$.
\end{itemize}
Then $\Sigma_i(f)$ is non-negative for any $i$, and the left-hand side of 
(\ref{eqn_base_change_trace_identity}) is non-zero. We conclude that at least 
one of the terms $\widetilde{\Pi}(\widetilde{f})$ and $(\Pi_{n-2} \otimes 
(\Pi_2 \otimes \mu^{-1} \circ \det) )^\sim(\widetilde{f}^{U_{n-2}\times U_2})$ 
is non-zero. In either case Proposition \ref{prop_kazhdan_varshavsky} implies 
that the cuspidal support of $\Pi_\wv$, and therefore $\Pi_{n-2, \wv}$, 
contains a supercuspidal representation $\Psi$ of $\GL_3(F_\wv)$ such that the 
semisimple residual representation attached to $\rec^T_{F_\wv}(\iota^{-1}\Psi)$ 
is unramified. This contradicts Lemma \ref{lem_cuspidal_parts}, which implies 
that $\overline{r}_{\Pi_{n-2} | \cdot |^{-1}, \iota}|_{G_{F_\wv}}^{ss}$ is the 
sum of an unramified character and the
twist of an unramified representation by a quadratic ramified character.

Therefore $\Pi$ is cuspidal, and a similar argument now gives an identity
\begin{equation}\label{eqn_base_change_trace_identity_2} \sum_i m(\Sigma_i) 
\Sigma_i(f) = \widetilde{\Pi}(\widetilde{f}).
\end{equation}
With the same choice of test function we have $\widetilde{\Pi}(\widetilde{f}) \neq 0$, so another application of  Proposition \ref{prop_kazhdan_varshavsky} shows that $\Pi_\wv$ has the required property. This completes the proof of Theorem \ref{thm_automorphic_level_raising}, assuming that the first case of Proposition \ref{prop_automorphic_descent} holds. If the second case holds, the argument is very similar, except that there is no need to replace $\Pi$ by its base change with respect to a quadratic extension $L/F$. In either case, this completes the proof.

\section{A finiteness result for Galois deformation rings}\label{sec_finiteness_result}

In this section we prove that certain Galois deformation rings are finite over the Iwasawa algebra (Theorem \ref{thm_finiteness_of_deformation_ring}), and use this to give a criterion for a given deformation to have an irreducible specialization with useful properties (Theorem \ref{thm_existence_of_generic_primes}). These technical results form the basis for the arguments in \S \ref{sec:galois_level_raising}, where we will apply our criterion to the Galois representation valued over a big ordinary Hecke algebra.

The novelty of the results proved in this section is that we assume that the 
residual representation is reducible (in fact, to simplify the exposition we 
assume that this representation is a sum of characters). The main tools are the 
automorphy lifting theorems proved in \cite{All19} and the idea of potential 
automorphy, for which we use \cite{BLGGT} as a reference. The notation and 
definitions we use for Galois deformation theory in the ordinary case are 
summarized in \cite[\S 3]{All19}, and we refer to that paper in particular for 
the notion of local and global deformation problem, and the definitions of the 
particular local deformation problems used below.

Before getting stuck into the details, we record a useful lemma. If $\Gamma$ is a profinite group, $k$ is a field with the discrete topology, and $\overline{\rho} : \Gamma \to \GL_n(k)$ is a continuous representation, we say that $\overline{\rho}$ is primitive if it is not isomorphic to a representation of the form $\Ind_{\Gamma'}^{\Gamma} \overline{\sigma}$ for some finite index proper closed subgroup $\Gamma' \subset \Gamma$ and representation $\overline{\sigma} : \Gamma' \to \GL_{n / [\Gamma : \Gamma']}(k)$. This condition appears as a hypothesis in the automorphy lifting theorem proved in \cite{All19}.
\begin{lemma}\label{lem_primitive_representations}
Suppose that $\overline{\rho} = \overline{\chi}_1 \oplus \dots \oplus \overline{\chi}_n$, for some continuous characters $\overline{\chi}_i : \Gamma \to k^\times$ such that for each $i \neq j$, $\overline{\chi}_i / \overline{\chi}_j$ has order greater than $n$. Then $\overline{\rho}$ is primitive.
\end{lemma}
\begin{proof}
Suppose that there is an isomorphism $\overline{\rho} \cong \Ind_{\Gamma'}^{\Gamma} \overline{\sigma}$. Then Frobenius reciprocity implies that $\overline{\sigma}$ contains each character $\overline{\chi}_i|_{\Gamma'}$. These $n$ characters are distinct: if $\overline{\chi}_i|_{\Gamma'} = \overline{\chi}_j|_{\Gamma'}$, then $(\overline{\chi}_i / \overline{\chi}_j)^{[\Gamma : \Gamma']} = 1$, which would contradict our assumption that $\overline{\chi}_i / \overline{\chi}_j$ has order greater than $n$ if $i \neq j$. Thus $\overline{\sigma}$ must have dimension at least $n$, implying that $\Gamma = \Gamma'$. It follows that $\overline{\rho}$ is primitive. 
\end{proof}
Now let $n \geq 2$ and let $F, S, p$ be as in our standard assumptions (see \S 
\ref{sec_definite_unitary_groups}), and let $E \subset \overline{\Q}_p$ be a 
coefficient field.  We recall the definition 
of the Iwasawa algebra $\Lambda$. If $v \in S_p$, then we write $\Lambda_v = 
\cO \llbracket 
(I^\text{ab}_{F_\wv}(p))^n \rrbracket$, where $I^\text{ab}_{F_\wv}(p)$ denotes 
the inertia subgroup of the Galois group of 
the maximal abelian pro-$p$ extension of $F_\wv$. 
We set $\Lambda = \widehat{\otimes}_{v \in S_p} \Lambda_v$, the completed tensor product 
being over $\cO$. For each $v \in S_p$ and $i=1,\ldots,n$ there is a universal 
character $\psi_v^i : I^\text{ab}_{F_\wv}(p) \to \Lambda_v^\times$. At times we will need to introduce Iwasawa algebras also for extension fields $F' / F$ and for representations of degree $n' \neq n$, in which case we will write e.g.\ $\Lambda_{F', n'}$ for the corresponding Iwasawa algebra, dropping a subscript when either $F' = F$ or $n' = n$.

Let $\mu : G_{F^+, S} \to \cO^\times$ be a continuous character which is de 
Rham and such that $\mu(c_v) = -1$ for each place $v | \infty$ of 
$F^+$. Fix an integer $n \geq 2$, and suppose given characters 
$\overline{\chi}_1, \dots, \overline{\chi}_n : G_{F, S} \to k^\times$ such that 
for each $i = 1, \dots, n$, $\overline{\chi}_i \overline{\chi}_i^c = 
\overline{\mu}|_{G_{F, S}}$. We set $\overline{\rho}  =\oplus_{i=1}^n 
\overline{\chi}_i$; then $\overline{\rho}$ extends to a homomorphism 
$\overline{r} : G_{F^+, S} \to \cG_n(k)$ such that $\nu_{\cG_n} \circ 
\overline{r} = \overline{\mu}$, by setting $\overline{r}(c) = (1_n, 1) \jmath \in \cG_n(k)$. We suppose that for each $v \in S_p$, 
$\overline{r}|_{G_{F_\wv}}$ is trivial.

Let $\Sigma$ be a set of finite places of $F^+$ split in $F$ and disjoint from $S$, and $\widetilde{\Sigma}$ a lift of $\Sigma$ to $F$. If for each $v \in \Sigma$, $q_v \equiv 1 \text{ mod }p$ and $\overline{r}|_{G_{F_\wv}}$ is trivial, then we can define the global deformation problem
\[ \cS_\Sigma = (F / F^+, S \cup \Sigma, \widetilde{S} \cup \widetilde{\Sigma}, \Lambda, \overline{r}, \mu, \{ R_{v}^\Delta \}_{v \in S_p} \cup \{ R_v^\square \}_{v \in S - S_p} \cup \{ R_{v}^{St} \}_{v \in \Sigma} ). \]
(For the convenience of the reader, we summarize the notation from \cite[\S 3]{All19}. Thus the local lifting ring $R_{v}^\Delta$ represents the functor of  ordinary, variable weight liftings; $R_v^\square$ the functor of all liftings; and $R_{v}^{St}$ the functor of Steinberg liftings.) If $\overline{r}$ is Schur, in the sense of \cite[Definition 2.1.6]{cht}, then the corresponding global deformation functor is represented by an object $R_{\cS_\Sigma} \in \cC_\Lambda$. If $\Sigma$ is empty, then we write simply $\cS = \cS_{\emptyset}$. 
\begin{theorem}\label{thm_finiteness_of_deformation_ring}
	Suppose that the following conditions are satisfied:
	\begin{enumerate}
		\item $p > 2n$.
		\item For each $1 \leq i < j \leq n$, $\overline{\chi}_i / 
		\overline{\chi}_j|_{G_{F(\zeta_p)}}$ has order greater than $2n$. (In 
		particular, 
		$\overline{r}$ is Schur.)
		\item $[F(\zeta_p) : F] = p-1$.
		\item $\Sigma$ is non-empty.
	\end{enumerate}
	Then $R_{\cS_\Sigma}$ is a finite $\Lambda$-algebra. 
\end{theorem}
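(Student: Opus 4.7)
The plan is to prove finiteness by showing that every $\overline{\bQ}_p$-valued point of $\Spec R_{\cS_\Sigma}[1/p]$ corresponds to an ordinary automorphic representation, and then to appeal to Hida-theoretic finiteness of the corresponding ordinary Hecke algebra as a $\Lambda$-module. More precisely, once we have a faithful action of $R_{\cS_\Sigma}$ on a finitely generated $\Lambda$-module (a suitable localisation of a space of ordinary algebraic modular forms on $G_n$ as in \S\ref{subsec:alg_mod_forms}), $R_{\cS_\Sigma}$ must itself be finite over $\Lambda$. So the entire problem is reduced to showing \emph{automorphy of every characteristic-zero point} of $R_{\cS_\Sigma}$, together with an $R=\T$-type identification.

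First I would fix a closed point $x \in \Spec R_{\cS_\Sigma}[1/p]$, with associated representation $\rho_x: G_{F^+,S} \to \cG_n(\overline{\bQ}_p)$ whose restriction $\rho_x|_{G_F}$ is conjugate self-dual, ordinary at places in $S_p$ (by the local condition $R_v^\Delta$), and of Steinberg type at $\widetilde{\Sigma}$ (by $R_v^{St}$). Using the assumption $\Sigma\ne\emptyset$, the Steinberg local factor, and the large-order condition on the $\overline{\chi}_i/\overline{\chi}_j|_{G_{F(\zeta_p)}}$ (together with $p>2n$ and $[F(\zeta_p):F]=p-1$), I would apply the potential automorphy theorem \cite[Theorem 4.5.1]{BLGGT} to find a soluble CM extension $F''/F$, linearly disjoint from $\overline{F}^{\ker\overline{r}}(\zeta_p)$ over $F$, over which $\rho_x|_{G_{F''}}$ is automorphic. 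Note that the hypothesis on the orders of the residual characters is strong enough to ensure that, after base change, the residual characters of $\overline{r}|_{G_{F''(\zeta_p)}}$ remain pairwise distinct (indeed, their ratios still have order $>2n$), so the residually reducible but `generic' setting of \cite{All19} is preserved.

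Next I would apply the automorphy lifting theorem of \cite{All19} in its residually reducible form — this is precisely the setting of that paper, generalising \cite{jackreducible}, where the residual representation is a sum of characters and the key hypothesis is the existence of a Steinberg place together with the sufficient genericity of the characters (which is what hypotheses (1)--(3) of our theorem encode). The Steinberg places in $\widetilde{\Sigma}$ provide the Ihara-avoidance / level-raising room needed to make the Taylor--Wiles patching method work with reducible residual representations. Combined with soluble descent from $F''$ back to $F$ (via \cite[Lemma 1.4]{BLGHT}), one deduces that $\rho_x|_{G_F}$ is automorphic, and arises from an $\iota$-ordinary automorphic representation of $G_n(\bA_{F^+})$ whose local components match the prescribed deformation data. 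This automorphy statement can then be promoted (via the $R=\T$ output of the lifting theorem) to the conclusion that $R_{\cS_\Sigma}$ is a quotient of the corresponding ordinary Hecke algebra $\T^{ord}$ associated to a suitable level subgroup. Since $\T^{ord}$ is finite over $\Lambda$ by Hida theory (cf.\ \cite[Proposition 2.20]{ger} and the ensuing discussion), the finiteness of $R_{\cS_\Sigma}$ follows.

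The main obstacle will be the careful verification of the hypotheses of the residually reducible automorphy lifting theorem from \cite{All19}, especially confirming that the global deformation problem $\cS_\Sigma$ (with its particular collection of local conditions) fits into the framework of that paper, and that after the base change to $F''$ and descent, one retains enough control over the local conditions at places in $\Sigma$ and $S_p$ to extract an $R=\T$ statement rather than a weaker surjection. A secondary technical point is ensuring that the space of ordinary algebraic modular forms realising the faithful action has the correct residual maximal ideal (corresponding to the direct sum $\overline{r} = \oplus\overline{\chi}_i$, after inducing from a suitable Levi if necessary), which is where the condition that $\Sigma$ is non-empty, together with hypothesis (3), will be used to exclude congruences to Eisenstein series at auxiliary primes.
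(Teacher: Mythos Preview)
Your proposal has a genuine gap at the potential automorphy step. You invoke \cite[Theorem 4.5.1]{BLGGT} for a lift $\rho_x$, but that theorem requires $\overline{\rho}_x|_{G_{F(\zeta_p)}}$ to be irreducible. Here $\overline{\rho}_x = \overline{\rho} = \overline{\chi}_1 \oplus \dots \oplus \overline{\chi}_n$ is a sum of characters, so this hypothesis fails outright and no form of the BLGGT potential automorphy machinery applies directly to $\rho_x$. The same obstruction prevents you from producing the RACSDC $\pi$ with $\overline{r}_{\pi,\iota} \cong \overline{\rho}$ that \cite[Theorem 6.2]{All19} needs as input over $F$ (or any controlled extension). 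There is also a secondary logical gap: even granting automorphy of every $\overline{\bQ}_p$-point, you would not obtain a faithful action of $R_{\cS_\Sigma}$ on a finite $\Lambda$-module without knowing a priori that $R_{\cS_\Sigma}$ is reduced and $p$-torsion-free; ``pointwise automorphy'' is strictly weaker than an $R=\T$ statement. Finally, a generic point of $\Spec R_{\cS_\Sigma}[1/p]$ sits over a non-algebraic weight in $\Lambda$, so $\rho_x|_{G_{F_\wv}}$ is not even de Rham, which already blocks any direct appeal to potential automorphy of lifts.

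The paper's proof avoids all of this by never trying to prove automorphy over $F$. Instead it passes to dimension $2n$: one chooses an auxiliary character $\overline{\psi}$ and forms $\overline{\rho}_2 = \overline{\rho}\otimes\overline{\psi} \oplus \overline{\rho}^c\otimes\overline{\psi}^c$, which extends to $\overline{r}_1 : G_{F^+} \to \GSp_{2n}(k)$. Now \cite[Theorem 3.1.2]{BLGGT} --- a \emph{residual} potential automorphy theorem for symplectic mod $p$ representations, with no irreducibility hypothesis --- supplies a RACSDC $\pi$ on $\GL_{2n}$ over some $L$ with $\overline{r}_{\pi,\iota} \cong \overline{r}_1|_{G_{L^+}}$, which is exactly the input \cite[Theorem 6.2]{All19} requires to conclude that the $2n$-dimensional deformation ring $R_{\cS'}$ is finite over $\Lambda_{L,2n}$. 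Finally, one constructs a finite map $R_{\cS'}/(\varpi) \to R_{\cS_\Sigma}/(\varpi)$ by sending the universal $n$-dimensional deformation $r$ to (the restriction of) $r\otimes\overline{\psi} \oplus r^c\otimes\overline{\psi}^c$, and deduces finiteness of $R_{\cS_\Sigma}$ from finiteness of $R_{\cS'}$ as in \cite[Lemma 1.2.3]{BLGGT}. The doubling trick is the essential idea you are missing.
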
 
\begin{proof}
	We will compare $R_{\cS_\Sigma}$ with a deformation ring for Galois 
	representations to $\cG_{2n}$. First, fix a place $v_q$  of $F$ prime to $S 
	\cup \Sigma$, lying above a rational prime $q > 2n$ which splits in $F$. 
	After possibly enlarging $k$, we can find a character $\overline{\psi} : 
	G_{F} \to k^\times$ satisfying the following conditions:
	\begin{itemize}
		\item $\overline{\psi} \overline{\psi}^c = \epsilon^{1-2n} \overline{\mu}|_{G_F}^{-1}$.
		\item For each $v \in S \cup \Sigma$, $\overline{\psi}|_{G_{F_\wv}}$ is unramified.
		\item $q$ divides the order of $\overline{\psi} / 
		\overline{\psi}^c(I_{F_{v_q}})$.
	\end{itemize}
	Using the formulae in \cite[\S 1.1]{BLGGT}, we can write down a character 
	\[ (\overline{\psi}, \epsilon^{1-2n} \overline{\mu}^{-1} \delta_{F / F^+}) : G_{F^+} \to \cG_1(k), \]
	 the tensor product representation 
	 \[ \overline{r} \otimes (\overline{\psi}, \epsilon^{1-2n} \overline{\mu}^{-1} \delta_{F / F^+}) : G_{F^+} \to \cG_n(k), \]
	  which has multiplier $\epsilon^{1-2n}$, and the representations
	\[ \overline{r}_1 = I(\overline{r} \otimes (\overline{\psi}, \epsilon^{1-2n} \overline{\mu}^{-1} \delta_{F / F^+}) ) : G_{F^+} \to \GSp_{2n}(k) \]
	and 
	\[ \overline{r}_2 = \widehat{(\overline{r}_1)}_{G_F} : G_{F^+} \to \cG_{2n}(k).
	\]
	These representations have the following properties:
	\begin{itemize}
		\item The multiplier character of $\overline{r}_1$ equals $\epsilon^{1-2n}$.
		\item The multiplier character $\nu_{\cG_{2n}} \circ \overline{r}_2$ 
		equals $\epsilon^{1-2n}$. 
		\item The representations $\overline{r}_1|_{G_{F}}$ and 
		$\overline{r}_2|_{G_F}$ are both conjugate in $\GL_{2n}(k)$ to 
		$\overline{\rho} \otimes \overline{\psi} \oplus \overline{\rho}^c 
		\otimes \overline{\psi}^c$.
	\end{itemize}
	Let $\overline{\rho}_2 = \overline{r}_2|_{G_{F}}$. We observe that the following conditions are satisfied:
	\begin{itemize}
		\item $\zeta_p \not\in \overline{F}^{\ker \ad \overline{\rho}_2}$ and $F \not\subset F^+(\zeta_p)$.
		\item $\overline{\rho}_2$ is primitive.
		\item The irreducible constituents of $\overline{\rho}_2|_{G_{F(\zeta_p)}}$ occur with multiplicity 1.
	\end{itemize}
	Indeed, the condition $F \not\subset F^+(\zeta_p)$ holds because $[F(\zeta_p) : F] = p-1$. We have $\overline{F}^{\ker \ad \overline{\rho}_2} \subset F(\{ \overline{\chi}_i / \overline{\chi}_j \}_{i \neq j}, \overline{\psi} / \overline{\psi}^c, \{ \overline{\chi}_i \overline{\chi}_j \overline{\mu}^{-1} \}_{i, j} ) = M$, say, and $c$ acts on $\Gal(M / F)$ as $-1$. It follows that $F(\zeta_p) \cap M$ has degree at most 2 over $F$, showing that $\zeta_p \not\in M$. To see that $\overline{\rho}_2$ is primitive, it is enough (by Lemma \ref{lem_primitive_representations}) to show that $\overline{\chi}_i / \overline{\chi}_j$ has order greater than $2n$ if $i \neq j$ and that $(\overline{\chi}_i \overline{\psi}) / (\overline{\chi}_j^c \overline{\psi}^c)$ has order greater than $2n$ for any $i, j$. These properties hold by hypothesis in the first case and since $q > 2n$ in the second. Finally, the constituents of $\overline{\rho}_2$ are, with multiplicity, $\overline{\chi}_1 \otimes \overline{\psi}, \dots, \overline{\chi}_n \otimes \overline{\psi}$, $\overline{\chi}_1^c \otimes \overline{\psi}^c, \dots, \overline{\chi}_n^c \otimes \overline{\psi}^c$. Our hypotheses include the condition that $\overline{\chi}_i\otimes\overline{\psi}|_{G_{F(\zeta_p)}} \neq \overline{\chi}_j \otimes\overline{\psi}|_{G_{F(\zeta_p)}}$ if $i \neq j$. If $\overline{\chi}_i \otimes\overline{\psi}|_{G_{F(\zeta_p)}} = \overline{\chi}^c_j\otimes \overline{\psi}^c|_{G_{F(\zeta_p)}}$ then $\overline{\psi} / \overline{\psi}^c|_{I_{F_{v_q}}}$ is trivial, a contradiction.
	
	Fix an isomorphism $\iota : \overline{\bQ}_p \to \C$. By \cite[Theorem 3.1.2]{BLGGT}, we can find a Galois totally real extension $L^+ / F^+$ and a regular algebraic, self-dual, cuspidal, automorphic representation $\pi$ of $\GL_{2n}(\A_{L^+})$, with the following properties:
	\begin{itemize}
	
		\item Let $L = F L^+$. Then $L / F$ is linearly disjoint from the extension of $F(\zeta_p)$ cut out by $\overline{\rho}_2|_{G_{F(\zeta_p)}}$. In particular, $[L(\zeta_p) : L] = p-1$, $\zeta_p \not\in \overline{L}^{\ker \ad \overline{\rho}_2|_{G_L}}$, and $L \not\subset L^+(\zeta_p)$.
			\item There is an isomorphism $\rbar_{\pi,\iota} \cong \rbar_1|_{G_{L^+}}$.
		\item $\pi$ is $\iota$-ordinary. More precisely, $\pi$ is of weight 0 and for each place $v | p$ of $L^+$, $\pi_v$ is an unramified twist of the Steinberg representation.
		\item $\overline{\rho}_2|_{G_L}$ is primitive.
		\item The irreducible constituents of $\overline{\rho}_2|_{G_{L(\zeta_p)}}$ occur with multiplicity 1.
		\item For each place $v$ of $L^+$ lying above a place of $\Sigma$, $\pi_v$ 
		is an unramified twist of the Steinberg representation.
	\end{itemize}
	More precisely, \cite[Theorem 3.1.2]{BLGGT} guarantees the existence of 
	$L^+$ satisfying the first condition and an essentially self-dual $\pi$ 
	satisfying all the remaining conditions (except possibly the last one). The 
	last paragraph of the proof notes that the $\pi$ constructed is in fact 
	self-dual and $\pi_v$ is an unramified twist of the Steinberg 
	representation for each place $v |p$ of $L^+$. We can moreover ensure that 
	$\pi$ is Steinberg at the places of $L^+$ 
	lying above $\Sigma$ by inserting the condition ``$t(P) < 0$ for all 
	places $v | \Sigma$ of $L^+$'' in the first list of conditions on \cite[p. 
	549]{BLGGT}. 
	
	After possibly adjoining another soluble totally real extension of $F^+$ to $L^+$, we can assume that the following further conditions are satisfied:
	\begin{itemize}
		\item $\overline{\rho}_2|_{G_L}$ is unramified at those finite places not dividing $S_L \cup \Sigma_L$.
		\item Each place of $L$ at which $\pi_L$  is ramified is split over $L^+$.
		\item For each place $v \in S_L \cup \Sigma_L$, $\overline{\psi}|_{G_{L_\wv}}$ is trivial.
		\item For each place $v \in S_{p, L}$,  $[L_\wv : \bQ_p] > 2n(2n-1)/2 + 1$ and $\overline{\rho}_2|_{G_{L_\wv}}$ is trivial.
	\end{itemize}
 Here $\pi_L$ denotes the base change of $\pi$. It is a RACSDC automorphic representation of $\GL_{2n}(\A_L)$. By construction, then, $\pi_L$ satisfies the hypotheses of \cite[Theorem 6.2]{All19}. Therefore, if we define the global deformation problem
	\begin{multline*} \cS' = (L / L^+, S_L \cup \Sigma_L, \widetilde{S}_L \cup \widetilde{\Sigma}_L, \Lambda_{L, 2n}, \overline{r}_2|_{G_{L^+}}, \epsilon^{1-2n} , \\\{ R_v^\triangle \}_{v \in S_{p, L}} \cup \{ R_v^\square \}_{v \in S_L - S_{p, L}} \cup \{ R_v^{St} \}_{v \in \Sigma_L} \}), \end{multline*}
	then $R_{\cS'}$ is a finite $\Lambda_{L, 2n}$-algebra. (Here we have written $\Lambda_{L, 2n}$ to distinguish from $\Lambda = \Lambda_{F, n}$ used above.) 
	
	We now need to relate the rings $R_{\cS'}$ and $R_{\cS_\Sigma}$. In fact, it will be enough to construct a commutative diagram
	\[ \xymatrix{ R_{\cS'} / (\varpi) \ar[r] &  R_{\cS_\Sigma} / (\varpi) \\
	\Lambda_{L, 2n}/(\varpi) \ar[r] \ar[u] & \Lambda_{F, n} / (\varpi)  \ar[u] }\]
	where the top horizontal morphism is finite. We first specify the map $\Lambda_{L, 2n}/(\varpi) \to \Lambda_{F, n}/(\varpi)$. It is the map that for each place $w \in \widetilde{S}_{p, L}$ lying above a place $\wv$ of $F$ classifies the tuple of characters
	\[ (\psi^v_1|_{I_{L_w}}, \dots, \psi^v_n|_{I_{L_w}}, \psi^v_n|^{-1}_{I_{L_w}} , \dots, \psi^v_1|^{-1}_{I_{L_w}} ). \]
	This endows the ring $R_{\cS_\Sigma} / (\varpi)$ with the structure of $\Lambda_{L, 2n}$-algebra. To give a map $R_{\cS'} / (\varpi) \to R_{\cS_\Sigma} / (\varpi)$, we must give a lifting of $\overline{r}_2|_{G_{L^+}}$ over $R_{\cS_\Sigma} / (\varpi)$ which is of type $\cS'$. To this end, let $r$ denote a 
	representative of the universal deformation (of $\overline{r}$) to $R_{\cS_\Sigma} / (\varpi)$, 
	and let $r' = I(r \otimes (\overline{\psi}, \epsilon^{1-2n} \overline{\mu}^{-1} \delta_{F / F^+}))_{G_{F}} ^{\wedge}|_{G_{L^+}}$ 
	(notation as in \cite[\S 1.1]{BLGGT}).
Then $r'$ is a lift of 
	$\overline{r}_2$ and $r'|_{G_L}$ is the restriction of $r|_{G_F} \otimes 
	\overline{\psi} \oplus r^c|_{G_F} \otimes \overline{\psi}^c$ to $G_L$. We need to check that for each $v \in S_{p, L}$, $r'|_{G_{L_\wv}}$ is of type $R_v^\triangle$; and that for each $v \in \Sigma_L$, $r'|_{G_{L_\wv}}$ is of type $R_v^{St}$. These statements can be reduced to a universal local computation. 
	
	It follows that $r'$ is of type $\cS'$, and so determines a 
	morphism $R_{\cS'} / (\varpi) \to  R_{\cS_\Sigma} / (\varpi)$. To complete the proof, it will be enough to show that this is a finite ring map. We can enlarge the above commutative diagram to a diagram
		\[ \xymatrix{ R_{\cS'} / (\varpi) \ar[r] &  R_{\cS_\Sigma} / (\varpi) \\
	Q_{{\overline{t}_2|_{G_L}}}/(\varpi) \widehat{\otimes}_\cO \Lambda_{L, 2n}/(\varpi) \ar[r] \ar[u] & Q_{{\overline{t}}}/(\varpi) \widehat{\otimes}_\cO\Lambda_{F, n} / (\varpi),  \ar[u] }\]
	where $Q_{\overline{t}_2|_{G_L}}$ is the complete Noetherian local $\cO$-algebra classifying pseudocharacters of $G_{L, S_L \cup \Sigma_L}$ lifting the restriction of $\overline{t}_2 = \tr {\overline{r}_2|_{G_F}}$ to $G_L$,  $Q_{\overline{t}}$ is defined similarly with respect to the pseudocharacter $\overline{t} = \tr \overline{r}|_{G_F}$ of $G_{F, S \cup \Sigma}$, and the map $Q_{{\overline{t}_2|_{G_L}}}/(\varpi) \to Q_{{\overline{t}}}/(\varpi)$ is the one classifying the natural transformation sending a pseudocharacter $t$ lifting $\overline{t}$ to the pseudocharacter $(t|_{G_L} \otimes \overline{\psi}) + (t^c|_{G_L} \otimes \overline{\psi}^c)$. We deduce from \cite[Proposition 3.29]{jackreducible} that the vertical arrows are finite ring maps. The map $\Lambda_{L, 2n} \to \Lambda_{F, n}$ is also finite, so it's enough finally to show that the map $Q_{{\overline{t}_2|_{G_L}}}/(\varpi) \to Q_{{\overline{t}}}/(\varpi)$ is finite. This map can in turn be written as a composite
	\[ Q_{{\overline{t}_2|_{G_L}}}/(\varpi) \to Q_{{\overline{t}_2|_{G_F}}}/(\varpi) \to Q_{{\overline{t}}}/(\varpi), \]
	where the first map classifies restriction of pseudocharacters from $G_F$ to $G_L$. Since $\overline{r}_2|_{G_F}$ is  multiplicity free, \cite[Proposition 2.5]{All19} (specifically, the uniqueness of the expression as a sum of pseudocharacters) implies that the second map is in fact surjective. We finally just need to show that the first map is finite, and this follows from the following general lemma.
\end{proof}
\begin{lemma}\label{lem_finiteness_for_psuedodef_rings}
Let $\Gamma$ be a topologically finitely generated profinite group, let $\Sigma$ be a closed subgroup of finite index, and let $\overline{t}$ be a pseudocharacter of $\Gamma$ with coefficients in $k$ of some dimension $n$. Let $Q_{\overline{t}}$ be the complete Noetherian local $\cO$-algebra classifying lifts of $\overline{t}$. Then the map $Q_{\overline{t}|_\Sigma} \to Q_{\overline{t}}$ classifying restriction to $\Sigma$ is a finite ring map.
\end{lemma}
\begin{proof}
It suffices to show that $Q_{\overline{t}} / (\ffrm_{Q_{\overline{t}|_\Sigma}})$ is Artinian. If not, we can find a prime ideal $\p$ of this ring of dimension 1; let $A$ be its residue ring (which is a $k$-algebra), and let $t_A$ be the induced pseudocharacter of $\Gamma$ with coefficients in $A$. Let $N = [\Gamma : \Sigma]$. If $\gamma \in \Gamma$ then $\gamma^{N!} \in \Sigma$. If we factor the characteristic polynomial of $X - \gamma$ under $t$ as $\prod_{i=1}^n(X - \alpha_i)$ for some elements $\alpha_i$ in the algebraic closure of $\operatorname{Frac} A$, then the characteristic polynomial of $\gamma^{N!}$ under $t$, namely $\prod_{i=1}^n(X - \alpha_i^{N!})$, lies in $k[X]$ and equals the characteristic polynomial of $\gamma^{N!}$ under $\overline{t}$. This shows that the elements $\alpha_i$ are in fact algebraic over $k$, and thus (using \cite[Corollary 1.14]{chenevier_det}) that $t_A$ can be defined over $k$, and must in fact equal $\overline{t}$. This is a contradiction.
\end{proof}
\begin{cor}\label{cor_existence_of_lifts}
	With hypotheses as in Theorem \ref{thm_finiteness_of_deformation_ring}, fix $\lambda \in (\Z^n_+)^{\Hom(F, \overline{\Q}_p)}$ such that for each $i = 1, \dots, n$ and $\tau \in \Hom(F, \overline{\Q}_p)$, we have $\lambda_{\tau c, i} = - \lambda_{\tau, n+1-i}$. Suppose further that for each $v \in S_p$, $[F_\wv : \Q_p] > n(n-1)/2 + 1$. Then there exists a homomorphism $r : G_{F^+, S} \to \cG_n(\overline{\Z}_p)$ lifting $\overline{r}$ such that $r|_{G_{F, S}}$ is ordinary of weight $\lambda$, in the sense of \cite[Definition 2.5]{jackreducible}.
\end{cor}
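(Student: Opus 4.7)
The plan is to deduce the corollary from Theorem \ref{thm_finiteness_of_deformation_ring} via a Galois-cohomological dimension count combined with lying-over. The self-duality condition $\lambda_{\tau c, i} = -\lambda_{\tau, n+1-i}$ ensures that $\lambda$ determines an $\cO$-algebra homomorphism $\theta_\lambda : \Lambda \to \overline{\bbZ}_p$ sending each universal inertia character $\psi_v^i$ to the algebraic character prescribed by the entry $\lambda_{\tau, i}$ of the weight. Producing the desired $r$ is then equivalent to producing a $\overline{\bbZ}_p$-valued point of $R_{\cS_\Sigma}$ extending $\theta_\lambda$, since any such point corresponds to a lift of type $\cS_\Sigma$ whose restriction to $G_{F, S}$ is in particular ordinary of weight $\lambda$ (the Steinberg condition at $\Sigma$ and unrestricted condition at $S - S_p$ are overkill for the corollary, but cause no harm).

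The first and main step is to establish the bound $\dim R_{\cS_\Sigma} \geq \dim \Lambda$. Using the standard presentation $R_{\cS_\Sigma} \cong \Lambda[[x_1, \ldots, x_g]]/(f_1, \ldots, f_r)$, with $g, r$ controlled respectively by $\dim_k H^1_{\cS_\Sigma}(\ad \overline{r})$ and $\dim_k H^2_{\cS_\Sigma^\perp}(\ad \overline{r}(1))^*$, one has $\dim R_{\cS_\Sigma} \geq \dim \Lambda + (g - r)$, and the Wiles--Greenberg Euler characteristic formula expresses $g - r$ as a sum of local terms (one per place in $S \cup \Sigma \cup \{v \mid \infty\}$) minus $h^0$-contributions. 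The ordinary local condition at each $v \in S_p$ provides a term involving $[F_\wv : \bbQ_p]$, coming from the relative dimension of $R_v^\Delta$ over $\Lambda_v$, and the hypothesis $[F_\wv : \bbQ_p] > n(n-1)/2 + 1$ is calibrated precisely so that this dominant contribution forces $g - r \geq 0$, yielding the desired lower bound.

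Combining this with the upper bound $\dim R_{\cS_\Sigma} \leq \dim \Lambda$ from Theorem \ref{thm_finiteness_of_deformation_ring} yields $\dim R_{\cS_\Sigma} = \dim \Lambda$, and since $\Lambda$ is a power series ring over $\cO$ (hence a domain), the kernel of the structural map $\Lambda \to R_{\cS_\Sigma}$ is a height-zero prime, therefore zero; so $\Lambda \hookrightarrow R_{\cS_\Sigma}$ is an injective finite integral extension. Lying-over produces a prime $\q \subset R_{\cS_\Sigma}$ contracting to $\ker \theta_\lambda$, and because $\overline{\bbZ}_p$ is integrally closed in $\overline{\bbQ}_p$, the embedding $\Lambda/\ker \theta_\lambda \hookrightarrow \overline{\bbZ}_p$ supplied by $\theta_\lambda$ extends along the finite integral extension $\Lambda/\ker \theta_\lambda \hookrightarrow R_{\cS_\Sigma}/\q$ to a ring map $R_{\cS_\Sigma}/\q \to \overline{\bbZ}_p$, giving the sought-after lift $r$ via the universal property of $R_{\cS_\Sigma}$. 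The main obstacle is the Euler characteristic calculation in the middle paragraph: while fundamentally routine, it requires careful bookkeeping of the relative dimensions of $R_v^\Delta$ (at $S_p$), $R_v^\square$ (at $S - S_p$) and $R_v^{St}$ (at $\Sigma$) against global $h^0$-contributions and archimedean signs, and the numerical hypothesis on $[F_\wv : \bbQ_p]$ is the key ingredient making the count come out favourably.
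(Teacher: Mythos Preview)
Your approach is essentially the same as the paper's: combine a dimension lower bound on $R_{\cS_\Sigma}$ (from the Wiles--Greenberg presentation, using the hypothesis on $[F_\wv:\bQ_p]$) with the finiteness of Theorem \ref{thm_finiteness_of_deformation_ring} to force every component to have dimension $\dim \Lambda$, then use lying-over to produce a $\overline{\bZ}_p$-point above the weight-$\lambda$ locus. The paper packages the lower bound by citing \cite[Proposition 3.9, Proposition 3.14]{jackreducible} rather than sketching the Euler characteristic, but the content is the same.

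There is one genuine technical slip. You assert that $\Lambda$ is a power series ring over $\cO$, hence a domain, and use this to conclude $\Lambda \hookrightarrow R_{\cS_\Sigma}$. But $\Lambda = \cO\llbracket \prod_{v \in S_p} (I^{\mathrm{ab}}_{F_\wv}(p))^n \rrbracket$, and $I^{\mathrm{ab}}_{F_\wv}(p) \cong 1 + \ffrm_\wv$ has $p$-torsion whenever $\zeta_p \in F_\wv$. Nothing in the hypotheses rules this out: the condition $[F(\zeta_p):F] = p-1$ is global and does not prevent some $\wv$ from splitting completely in $F(\zeta_p)/F$. When $\Lambda$ fails to be a domain your injectivity step breaks down, and with it the direct application of lying-over to $\ker\theta_\lambda$. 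The paper sidesteps this by choosing a minimal prime $Q \subset R_{\cS_\Sigma}$ first --- your presentation argument, via Krull's height theorem, already gives $\dim R_{\cS_\Sigma}/Q \geq \dim \Lambda$ componentwise --- obtaining a finite injection $\Lambda/Q_\Lambda \hookrightarrow R_{\cS_\Sigma}/Q$ for the induced minimal prime $Q_\Lambda \subset \Lambda$, and then noting that the weight $\lambda$ still determines a maximal ideal of $(\Lambda/Q_\Lambda)[1/p]$ over which one applies lying-over. With this correction in place your argument goes through.
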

\begin{proof}
	We observe that \cite[Proposition 3.9, Proposition 3.14]{jackreducible} show that for each 
	minimal prime $Q \subset R_{\cS_\Sigma}$, $\dim R_{\cS_{\Sigma}} / Q = \dim 
	\Lambda$; consequently, there is a minimal prime $Q_\Lambda$ of $\Lambda$ 
	and a finite injective algebra morphism $\Lambda / Q_\Lambda \to 
	R_{\cS_{\Sigma}} / Q$. The corollary follows on choosing any prime of 
	$R_{\cS_{\Sigma}} / Q[1/p]$ lying above a maximal ideal of $\Lambda / 
	Q_\Lambda[1/p]$ associated to the weight $\lambda$ as in \cite[Definition 2.24]{ger}.
\end{proof}
\begin{cor}\label{cor:modouttosteinberg}
	With hypotheses (1) -- (3) of Theorem 
	\ref{thm_finiteness_of_deformation_ring}, choose a place $v_0 \not\in S$ of 
	$F^+$ split in $F$ and a lift $\wv_0$ to $F$ such that $q_{v_0} \equiv 1 
	\text{ mod }p$ and $\overline{r}|_{G_{F_{\wv_0}}}$ is trivial. Consider the 
	quotient 
	\[ A = R_{\cS_\emptyset} / (\varpi, \{ \tr r_{\cS_\emptyset}(\Frob_{\wv_0}^i) - n \}_{i = 1, \dots, n} ). \]
	 Then $A$ is a finite $\Lambda$-algebra. Consequently, $\dim 
	 R_{\cS_\emptyset}/(\varpi) \leq n[F^+ : \Q] + n$.
\end{cor}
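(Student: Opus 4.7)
\textbf{Proof plan for Corollary \ref{cor:modouttosteinberg}.} The strategy is to apply Theorem \ref{thm_finiteness_of_deformation_ring} with the non-empty choice $\Sigma = \{v_0\}$. The hypotheses (1)--(3) of the theorem are exactly those of the corollary, and the hypotheses on $v_0$ (namely $q_{v_0} \equiv 1 \pmod p$ and $\overline{r}|_{G_{F_{\wv_0}}}$ trivial) are precisely what is required to set up the Steinberg local lifting ring $R_{v_0}^{St}$ at $v_0$ in the definition of the global deformation problem $\cS_{\{v_0\}}$. The theorem then yields that $R_{\cS_{\{v_0\}}}$ is a finite $\Lambda$-algebra; hence $\dim R_{\cS_{\{v_0\}}}/(\varpi) \le \dim \Lambda/(\varpi) = n[F^+:\Q]$.

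The key observation is that the trace conditions defining $A$ are automatically satisfied on $R_{\cS_{\{v_0\}}}/(\varpi)$. Indeed, a Steinberg lift at $v_0$ has Weil--Deligne representation $\operatorname{Sp}_n(\psi)$ for an unramified character $\psi$ lifting the trivial character; the Frobenius eigenvalues are therefore of the form $\psi(\pi_{v_0}) q_{v_0}^{-j}$ for $j = 0, 1, \dots, n-1$. Since $\psi(\pi_{v_0}) \equiv 1 \pmod \varpi$ (because $\overline{r}|_{G_{F_{\wv_0}}}$ is trivial) and $q_{v_0} \equiv 1 \pmod p$, the characteristic polynomial of Frobenius reduces to $(X-1)^n$ modulo $\varpi$, so the elements $\tr r_{\cS_{\{v_0\}}}(\Frob_{\wv_0}^i) - n$ vanish in $R_{\cS_{\{v_0\}}}/(\varpi)$.

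To pass from $R_{\cS_{\{v_0\}}}/(\varpi)$ to $A$, I would introduce a common ambient deformation problem $\widetilde{\cS}$ in which $v_0$ is added to the ramification set with the unrestricted local condition $R_{v_0}^\square$. Both $R_{\cS_\emptyset}$ and $R_{\cS_{\{v_0\}}}$ are quotients of $R_{\widetilde\cS}$, and the quotient $\widetilde A := R_{\widetilde\cS}/(\varpi, \{\tr r(\Frob_{\wv_0}^i) - n\}_{i=1}^n)$ surjects onto $A$. The heart of the argument is a local comparison at $v_0$: showing that, modulo $\varpi$, the trace conditions on $R_{v_0}^\square$ cut out the Steinberg component (up to nilpotents), so that the induced morphism $\widetilde A \to R_{\cS_{\{v_0\}}}/(\varpi)$ is finite and surjective. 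This is a variant of Taylor's Ihara-avoidance phenomenon: at a place with $q_{v_0} \equiv 1 \pmod p$ and trivial residual representation, the Steinberg component and the mod-$\varpi$ locus of the universal lifting ring where Frobenius has characteristic polynomial $(X-1)^n$ are known to share their underlying reduced structure. From this local identification one deduces that $\widetilde A$, and hence $A$, is a finite $\Lambda$-algebra, giving $\dim A \le n[F^+:\Q]$.

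Finally, the dimension bound on $R_{\cS_\emptyset}/(\varpi)$ is immediate from Krull's principal ideal theorem: since $A$ is the quotient of $R_{\cS_\emptyset}/(\varpi)$ by the ideal generated by the $n$ elements $\tr r_{\cS_\emptyset}(\Frob_{\wv_0}^i) - n$, one has $\dim R_{\cS_\emptyset}/(\varpi) \le \dim A + n \le n[F^+:\Q] + n$. The principal obstacle is the local comparison at $v_0$: one must verify that imposing the trace conditions (which only constrain Frobenius, not inertia) on $R_{v_0}^\square/(\varpi)$ forces the resulting lift to lie on the Steinberg component up to nilpotents, using the specific numerical coincidences $q_{v_0} \equiv 1 \pmod p$ and $\overline{r}|_{G_{F_{\wv_0}}} = 1$.
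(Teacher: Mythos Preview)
Your overall strategy---reduce to Theorem \ref{thm_finiteness_of_deformation_ring} with $\Sigma=\{v_0\}$ and then deduce the dimension bound by Krull---is exactly the paper's. But the route via the ambient problem $\widetilde{\cS}$ with the \emph{unrestricted} local condition $R_{v_0}^\square$ introduces a genuine obstacle that you do not resolve, and in fact cannot resolve as stated. Imposing the trace conditions $\tr r(\Frob_{\wv_0}^i)=n$ on $R_{v_0}^\square/(\varpi)$ constrains only the Frobenius eigenvalues; it says nothing about the action of inertia. Since $\overline{r}|_{G_{F_{\wv_0}}}$ is trivial, liftings over $R_{v_0}^\square/(\varpi)$ can have arbitrary pro-$p$ ramification, so the resulting locus is much larger than the mod-$\varpi$ Steinberg component. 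Taylor's Ihara-avoidance compares $R_{v_0}^{St}$ with the \emph{unipotently ramified} ring $R_{v_0}^1$ (or the unramified ring), not with $R_{v_0}^\square$ cut down by trace conditions; so the ``same reduced structure'' claim for $\widetilde{A}$ and $R_{\cS_{\{v_0\}}}/(\varpi)$ fails.

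The fix is to drop $\widetilde{\cS}$ entirely and exploit a fact you have not used: since $v_0\notin S$, the universal deformation over $R_{\cS_\emptyset}$ is already \emph{unramified} at $\wv_0$. Thus the local picture is governed by $R_{v_0}^{ur}$, not $R_{v_0}^\square$. Over $R_{\cS_\emptyset}/(\varpi)$, the conditions $\tr r(\Frob_{\wv_0}^i)=n$ for $i=1,\dots,n$ force the characteristic polynomial of $\Frob_{\wv_0}$ to be $(X-1)^n$, which (as $q_{v_0}\equiv 1\pmod p$) is the reduction of $\prod_{i=1}^n(X-q_{v_0}^{1-i})$. So $A$ is the mod-$\varpi$ reduction of the quotient of $R_{\cS_\emptyset}$ where this characteristic polynomial is imposed. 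Locally this is the quotient $A_{v_0}$ of $R_{v_0}^{ur}$ where Frobenius has characteristic polynomial $\prod_{i=1}^n(X-q_{v_0}^{1-i})$; since $A_{v_0}$ is $\cO$-flat, the definition of $R_{v_0}^{St}$ (see \cite[\S 3]{tay}) shows that $A_{v_0}$ is a quotient of $R_{v_0}^{St}$. Hence $A$ is a quotient of $R_{\cS_{\{v_0\}}}/(\varpi)$, which is finite over $\Lambda/(\varpi)$ by Theorem \ref{thm_finiteness_of_deformation_ring}. Your final paragraph deducing $\dim R_{\cS_\emptyset}/(\varpi)\le n[F^+:\Q]+n$ is then correct.
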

\begin{proof}
	It suffices to verify that the quotient of $R_{\cS_\emptyset}$ where the 
	characteristic 
	polynomial of $\Frob_{\wv_0}$ equals $\prod_{i=1}^n(X - q_{v_0}^{1-i})$ is 
	a quotient of $R_{\cS_{ \{ 
	v_0 \}}}$. This in turn means checking that the quotient $A_{v_0}$ of the 
	local unramified lifting ring $R_{v_0}^{ur}$ where the characteristic 
	polynomial of Frobenius equals $\prod_{i=1}^n(X - q_{v_0}^{1-i})$ is a 
	quotient of $R_{v_0}^{St}$. Since $A_{v_0}$ is flat over $\cO$, this 
	follows from the definition of $R_{v_0}^{St}$ (see \cite[\S 3]{tay}).
\end{proof}
For the statement of the next proposition, suppose given a surjection $R_{\cS} / (\varpi) \to A$ in $\cC_\Lambda$, where $A$ is a domain, and let $r : G_{F^+, S} \to \cG_n(A)$ denote the pushforward of (a representative of) the universal deformation. Suppose given the following data:
\begin{itemize}
	\item A decomposition $r = r_1 \oplus r_2$, where the $r_i : G_{F^+, S} \to 
	\cG_{n_i}(A)$ satisfy $\nu_{\cG_{n_i}} \circ r_i = \nu_{\cG_n} \circ r$. (In other words, $r|_{G_F} = r_1|_{G_F} \oplus r_2|_{G_F}$ and if $r_i(c) = (A_i, 1) \jmath$ then $r(c) = \diag(A_1, A_2) \jmath$.)
	\item A subset $R \subset S - S_p$ (consisting of places of odd residue characteristic) with the following property: for each $v 
	\in R$ we are given an integer $1 \leq n_\wv \leq n$ such that $q_\wv \text{ mod }p$ is a primitive 
	$n_\wv^\text{th}$ root 
	of unity and there is a decomposition $\overline{r}|_{G_{F_\wv}} = 
	\overline{\sigma}_{\wv, 1} \oplus \overline{\sigma}_{\wv, 2}$, where 
	$\overline{\sigma}_{\wv, 1} = \Ind_{G_{F_{\wv, n_\wv}}}^{G_{F_\wv}} 
	\overline{\psi}_\wv$ with $F_{\wv, n_\wv} / F_\wv$ the unramified extension of degree $n_\wv$
	and $\overline{\psi}_\wv$ an unramified character of $G_{F_{\wv, n_\wv}}$, and 
	$\overline{\sigma}_{\wv, 2}$ is the twist of an unramified representation 
	of $G_{F_\wv}$ of dimension $n-n_\wv$ by a ramified quadratic character. 
	\item An isomorphism $\iota : \overline{\bQ}_p \to \bC$ and for each $v \in R$, a character $\Theta_\wv : \cO_{F_{\wv, n_\wv}}^\times \to \bC^\times$ of order $p$. Thus the lifting ring $R(\wv,  \Theta_\wv, n)$ is defined (notation as in \S \ref{subsec_local_theory_GL_n}).
\end{itemize}
\begin{proposition}\label{prop_bound_on_dimension_of_reducibility_locus}
	With the above assumptions on $R_{\cS} / (\varpi) \to A$, suppose that the following additional conditions are satisfied:
	\begin{enumerate}
		\item $p > 2n$.
		\item For each $1 \leq i < j \leq n$, $\overline{\chi}_i / \overline{\chi}_j|_{G_{F(\zeta_p)}}$ has order greater than $2n$. (In particular, this character is non-trivial and $\overline{r}$ is Schur.)
		\item For each $v \in S_p$, $[F_\wv : \Q_p] > n(n-1)/2 + 1$.
		\item $[F(\zeta_p) : F] = p-1$.
		\item For each $v \in R$, both $\overline{r}_1|_{G_{F_\wv}}$ and 
		$\overline{r}_2|_{G_{F_\wv}}$ admit a non-trivial unramified 
		subquotient and the composite map $R_v^{\square} \to R_{\cS} \to A$ 
		factors 
		over 
		$R(\wv, \Theta_\wv, n)$.
	\end{enumerate}
	Let $L_{S_p}$ denote the maximal abelian pro-$p$ extension of $F$ 
	unramified outside $S_p$, and let $\Delta = \Gal(L_{S_p} / F) / (c+1)$. Let 
	$d_R$ denote the $\Z_p$-rank of the subgroup of $\Delta$ generated by the 
	elements $\Frob_\wv$, $v \in R$. Then $\dim A \leq n[F^+ : \Q] + n - d_R$.
\end{proposition}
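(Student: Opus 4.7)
The plan is to strengthen the bound $\dim A \le n[F^+:\bQ] + n$ of Corollary~\ref{cor:modouttosteinberg} by extracting, from the global decomposition $r = r_1 \oplus r_2$ together with the inertial-type condition at $R$, an additional $d_R$ algebraic relations in $A$. The central object is the character
\[ \eta := (\det r_1) \cdot \widetilde{\overline{\det r_1}}^{-1} : G_{F,S} \to 1 + \ffrm_A, \]
where $\widetilde{\overline{\det r_1}}$ is any chosen lift to $\cO^\times$ of the residual character $\overline{\det r_1} = \prod_{i \in I_1}\overline{\chi}_i$ (here $I_1 \subset \{1,\ldots,n\}$ indexes the characters appearing in $r_1$). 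The hypothesis $\nu_{\cG_{n_1}}\circ r_1 = \mu$, combined with $\mu \equiv \overline{\mu}\pmod{\varpi}$, forces $\eta \cdot \eta^c = 1$, so $\eta$ is antiinvariant under complex conjugation.

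I would then show that $\eta$ is unramified at every place $v \in R$. Using Lemma~\ref{lem_decomposition_of_deformation}, the decomposition of $r|_{G_{F_\wv}}$ as $\sigma_{\wv,1} \oplus \sigma_{\wv,2}$ provided by hypothesis~(5) refines compatibly with the global decomposition $r|_{G_{F_\wv}} = r_1|_{G_{F_\wv}} \oplus r_2|_{G_{F_\wv}}$. Because $\iota^{-1}\Theta_\wv$ has $p$-power order, its reduction modulo $\varpi$ is trivial; combining this with Proposition~\ref{prop_properties_of_lifting_ring}(2) and~(4) yields $\sigma_{\wv,1}|_{I_{F_\wv}} \equiv \mathrm{Id} \pmod{\varpi}$ and $\sigma_{\wv,2}|_{I_{F_\wv}} \equiv (\omega(\wv)\circ\Art_{F_\wv}^{-1})\cdot \mathrm{Id} \pmod{\varpi}$, which coincide with the residual inertial actions of $\overline{\sigma}_{\wv,1}\oplus\overline{\sigma}_{\wv,2}$. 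Consequently $\det r_1|_{I_{F_\wv}}$ reduces to $\overline{\det r_1}|_{I_{F_\wv}}$, so $\eta|_{I_{F_\wv}} = 1$.

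It then remains to combine $\eta$ with Proposition~\ref{prop_properties_of_lifting_ring}(3) to extract the promised $d_R$ relations. The previous step shows that $\eta$ factors through the antiinvariant quotient of the Galois group of the maximal abelian pro-$p$ extension of $F$ unramified outside $S_p \cup (S\setminus R)$; in particular, $\eta(\Frob_\wv)$ is well-defined for each $v \in R$, and Proposition~\ref{prop_properties_of_lifting_ring}(3) expresses it as an explicit algebraic function of $\alpha_\wv$. On the other hand, ordinariness at $p$ presents $\eta|_{I_{F_\wv}^{\mathrm{ab}}(p)}$ for $v \in S_p$ as a specific product of the universal Iwasawa characters $\psi_v^i$ ($i \in I_1$) modulo their residual parts. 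Viewing $A$ as finite over $\Lambda[\![T_1,\ldots,T_n]\!]/(\varpi)$ via Corollary~\ref{cor:modouttosteinberg} (with $T_i = \tr r(\Frob_{\wv_0}^i) - n$), each $\bZ_p$-linear dependence $\sum_{v\in R} a_v[\Frob_\wv] = 0$ in $\Delta$ translates, via the character property of $\eta$, into a multiplicative identity $\prod_{v\in R}\eta(\Frob_\wv)^{a_v} = 1$ that is automatically satisfied; conversely, each of the $d_R$ Frobenius classes independent in $\Delta$ forces the corresponding $\alpha_\wv$-expression to match a prescribed combination of $\Lambda$-data and $T_i$'s, producing $d_R$ independent relations and hence the claimed bound $\dim A \le n[F^+:\bQ]+n-d_R$.

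The main obstacle is to convert these multiplicative relations into a genuine codimension-$d_R$ condition on the finite cover $\Lambda[\![T_1,\ldots,T_n]\!]/(\varpi) \to A$. One must verify that the relations coming from evaluating $\eta$ at the $d_R$ independent Frobenii are algebraically independent from the relations already encoded by the ordinary $\Lambda$-structure at $p$ and by the trace conditions at $\wv_0$; this amounts to a careful class-field-theoretic analysis of the map $\cO[\![\Delta]\!] \to A$ induced by $\eta$, using that $\Delta$ is generated by inertia at $S_p$ together with the $\Frob_\wv$ for $v \in R$ and any remaining split primes in $S\setminus(S_p\cup R)$.
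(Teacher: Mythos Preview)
Your proposal has the right flavour but is missing the structural ingredient that makes the dimension count go through, and the character $\eta$ you introduce is not by itself strong enough to produce $d_R$ relations.

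\textbf{The missing structural step.} You try to improve the bound of Corollary~\ref{cor:modouttosteinberg} directly on $A$. The paper instead passes to \emph{two} auxiliary deformation problems $\cS_1,\cS_2$ (for $\overline{r}_1,\overline{r}_2$ separately, with a Steinberg condition at an auxiliary place $v_0$), and applies the finiteness theorem (Theorem~\ref{thm_finiteness_of_deformation_ring}) to each. This gives $\dim R_{\cS_i}/(\varpi)\le n_i[F^+:\bQ]$. Using the determinant--twist decomposition $R_{\cS_i}\cong R_{\cS_i}^{\psi_i}\widehat{\otimes}_\cO\cO[\![\Delta]\!]$, one then bounds $\dim R_{\cS_i}^{\psi_i}/(\varpi)\le (n_i-1)[F^+:\bQ]$, so that the quotient $A'=A/(\ffrm_{R_{\cS_1}^{\psi_1}},\ffrm_{R_{\cS_2}^{\psi_2}})$ is a quotient of $k[\![\Delta\times\Delta]\!]$. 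Only \emph{after} this reduction do the Frobenius relations at $R$ cut dimension, because they are now relations in a completed group algebra. Your proposal never invokes Theorem~\ref{thm_finiteness_of_deformation_ring}; without it, there is no reason that ``$d_R$ multiplicative identities in $A$'' should drop $\dim A$ by $d_R$. The ``main obstacle'' you flag at the end is precisely this, and it is not a technicality: it is the heart of the proof.

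\textbf{The character $\eta$ is not enough.} Over $A'$ (where each $r_i$ is $\overline{r}_i\otimes\Psi_i$ for a $\Delta$-valued character $\Psi_i$), Proposition~\ref{prop_properties_of_lifting_ring}(3) together with hypothesis~(5) gives $\Psi_1(\Frob_\wv)^{n_\wv}=\alpha_\wv/\overline{\alpha}_\wv=\Psi_2(\Frob_\wv)^{n_\wv}$, and since $\Delta$ is pro-$p$ this yields the genuine relation $\Psi_1(\Frob_\wv)=\Psi_2(\Frob_\wv)$ between the two tautological generators of $k[\![\Delta\times\Delta]\!]$. Your $\eta$ records only $\det r_1$ and so, after passing to $A'$, is essentially $\Psi_1^{n_1}$; it sees only one factor. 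Over $A$ itself the situation is worse: writing $r_1|_{G_{F_\wv}}=\tau_1\oplus\tau_2$ with $\tau_j$ lying in the local piece $\sigma_{\wv,j}$, Proposition~\ref{prop_properties_of_lifting_ring}(3) constrains $\det\tau_1(\phi_\wv^{n_\wv})$ but says nothing about $\det\tau_2(\phi_\wv)$, so $\eta(\Frob_\wv)$ is \emph{not} an explicit function of $\alpha_\wv$ alone. (Your justification that $\eta|_{I_{F_\wv}}=1$ is also circular: saying ``$\det r_1|_{I_{F_\wv}}$ reduces to $\overline{\det r_1}|_{I_{F_\wv}}$'' is automatic, since $A/\ffrm_A=k$; what you need is equality in $A^\times$.) To salvage the idea you would need to introduce the second character attached to $r_2$ and compare the two --- and to make that comparison clean you are forced back to the quotient $A'$, hence to the finiteness theorem.
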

\begin{proof}
Fix a place $\wv_0$ of $F$ split over $F^+$, prime to $S$, and such that $q_{\wv_0} \equiv 1 \text{ mod 
}p$ and $\overline{r}|_{G_{F_{\wv_0}}}$ is trivial. Let $I$ denote the ideal of $A$ generated by the coefficients of the polynomial $\det(X - r|_{G_{F, S}}(\Frob_{\wv_0})) - (X - 1)^n$. Then $\dim A / I \geq \dim A - n$. After replacing $A$ by $A / \p$, where $\p \subset A$ is a prime ideal minimal among those containing $I$, we can assume that $r|_{G_{F, S}}(\Frob_{\wv_0})$ is 
unipotent,
and must show $\dim A \leq 
n[F^+ : \Q] - d_R$. Consider the deformation problems $(i = 1, 2)$:
\[ \cS_i = (F / F^+, S \cup \{ v_0 \}, \widetilde{S} \cup \{ \wv_0 \}, \Lambda_{n_i}, \overline{r}_i, \mu, \{ R_{v}^\Delta \}_{v \in S_p} \cup \{ R_v^\square \}_{v \in S - S_p} \cup \{ R_{v_0}^{St} \}). \]
Let $K = \operatorname{Frac} A$. We now repeat the argument of \cite[Lemma 3.6]{All19}: if $v \in S_p$, then (since we assume $[F_\wv : \bQ_p] > n(n-1)/2+1$) we can appeal to \cite[Corollary 3.12]{jackreducible}, which implies the existence of an increasing filtration
\[ 0 \subset \Fil_v^1 \subset \Fil_v^2 \subset \dots \subset \Fil_v^n = \overline{K}^n \]
of $r|_{G_{F_\wv}} \otimes_A \overline{K}$ by $G_{F_\wv}$-invariant subspaces, such that each $\gr^i \Fil_v^\bullet = \Fil_v^i / \Fil_v^{i-1}$ ($i = 1, \dots, n$) is 1-dimensional, and such that the character $I_{F_\wv}(p) \to \overline{K}^\times$ afforded by $\gr^i \Fil_v^\bullet$ agrees with the pushforward of the universal character $\psi_v^i : I_{F_\wv} \to \Lambda_v^\times$. Using the decomposition $r = r_1 \oplus r_2$, we obtain induced filtrations $\Fil_v^\bullet \cap  (\overline{K}^{n_1} \oplus 0^{n_2})$ of $r_1|_{G_{F_\wv}} \otimes_A \overline{K}$  and $\Fil_v^\bullet \cap  (0^{n_1} \oplus \overline{K}^{n_2})$ of  $r_2 \otimes_A \overline{K}$ and, applying \cite[Corollary 3.12]{jackreducible} once more, we see that we can find an isomorphism $\Lambda_{ n_1} \widehat{\otimes} \Lambda_{n_2} \cong \Lambda = \Lambda_n$ such that, endowing $A$ with the induced $\Lambda_i$-algebra structure, $r_i$ is a lifting of $\overline{r}_i$ of type $\cS_i$ for each $i = 1, 2$. We deduce the existence of a surjective $\Lambda$-algebra homomorphism $R_{\cS_1} \widehat{\otimes}_\cO R_{\cS_2} \to A$. We observe that Theorem 
\ref{thm_finiteness_of_deformation_ring} applies to the deformation problems 
$\cS_1$ and $\cS_2$, showing that $\dim R_{\cS_i} / (\varpi) \leq n_i [F^+ : 
\Q]$.

Let $\psi_i : G_{F, S} \to \cO^\times$ denote the Teichm\"uller lift of $\overline{\psi}_i = \det \overline{r}_i|_{G_{F, S}}$, and let $R_{\cS_i}^{\psi_i}$ denote the quotient of $R_{\cS_i}$ over which the determinant of the universal deformation equals $\psi_i$. Then \cite[Lemma 3.36]{jackreducible} states that there is an isomorphism $R_{\cS_i} \cong R_{\cS_i}^{\psi_i} \widehat{\otimes}_\cO \cO \llbracket \Delta \rrbracket$. In particular, $\dim R_{\cS_i}^{\psi_i} / (\varpi) \leq (n_i - 1)[F^+ : \Q]$. To complete the proof, it is enough to show that if $A' =  A / (\ffrm_{R_{\cS_1}^{\psi_1}}, \ffrm_{R_{\cS_2}^{\psi_2}})$, then $\dim A' \leq 2[F^+ : \Q] - d_R = \dim k \llbracket \Delta \times \Delta \rrbracket - d_R$.

To this end, we observe that by construction there is a surjection $k 
\llbracket \Delta \times \Delta \rrbracket \to A'$. If $\Psi_1, \Psi_2 : G_{F, 
S} \to k\llbracket \Delta \times \Delta \rrbracket^\times$ are the two 
universal characters, then the third part of Proposition \ref{prop_properties_of_lifting_ring} 
(together with 
our assumption that both $\overline{r}_1|_{G_{F_\wv}}$ and 
$\overline{r}_2|_{G_{F_\wv}}$ admit an unramified subquotient) implies that the 
relation $\Psi_1(\Frob_\wv)^{n_\wv} = \Psi_2(\Frob_\wv)^{n_\wv}$ holds in $A'$ for each  $v  \in R$. Since 
$\Delta$ is a pro-$p$ group, this implies that $\Psi_1(\Frob_\wv) = 
\Psi_2(\Frob_\wv)$ in $A'$, and hence that the map $k \llbracket \Delta \times 
\Delta \rrbracket \to A'$ factors over the completed group algebra of the 
quotient of $\Delta \times \Delta$ by the subgroup topologically generated by 
the elements $(\Frob_\wv, - \Frob_\wv)_{v \in R}$. This completes the proof.
\end{proof}
We are now in a position to prove the main theorem of this section, which 
guarantees the existence of generic primes in sufficiently large quotients of a 
certain deformation ring. For the convenience of the reader, we state our 
assumptions from scratch. 

Thus we take $F, S, p$ as in our standard assumptions (see \S \ref{sec_definite_unitary_groups}). We assume that $[F(\zeta_p) : F] = (p-1)$. We let $E$ be a coefficient field, and suppose given an isomorphism $\iota : \overline{\bQ}_p \to \bC$ and a continuous character  $\mu : G_{F^+, S} \to \cO^\times$ which is de Rham and such that $\mu(c_v) = -1$ for each place $v | \infty$ of $F^+$. We fix an integer $2 \leq n < p / 2$ and characters $\overline{\chi}_1, \dots, \overline{\chi}_n : G_{F, S} \to k^\times$ such that for each $i = 1, \dots, n$, $\overline{\chi}_i \overline{\chi}_i^c = \overline{\mu}|_{G_{F, S}}$. We set $\overline{\rho}  =\oplus_{i=1}^n \overline{\chi}_i$; then $\overline{\rho}$ naturally extends to a homomorphism $\overline{r} : G_{F, S} \to \cG_n(k)$ such that $\nu_{\cG_n} \circ \overline{r} = \overline{\mu}$. We suppose for each $1 \leq i < j \leq n$, $\overline{\chi}_i / \overline{\chi}_j|_{G_{F(\zeta_p)}}$ has order greater than $2n$. This implies that $\overline{r}$ is Schur. We suppose that for each $v \in S_p$, $\overline{r}|_{G_{F_\wv}}$ is trivial and $[F_\wv : \Q_p] > n(n-1)/2 + 1$.

We suppose given a subset $R = R_1 \sqcup R_2 \subset S - S_p$ (consisting of places of odd residue characteristic) and integers $1 \leq n_\wv \leq n$ ($v \in R$) such that for each $v \in R$, $q_\wv \text{ mod }p$ is a primitive $n_\wv^\text{th}$ root of unity, and there is a decomposition $\overline{r}|_{G_{F_\wv}} = \overline{\sigma}_{\wv, 1} \oplus \overline{\sigma}_{\wv, 2}$, where $\overline{\sigma}_{\wv, 1} = \Ind_{G_{F_{\wv, n_\wv}}}^{G_{F_\wv}} \overline{\psi}_\wv$ is induced from an unramified character of the unramified degree $n_\wv$ extension of $F_{\wv}$, and $\overline{\sigma}_{\wv, 2}$ is the twist of an unramified representation of dimension $n - n_\wv$ by a ramified quadratic character. We fix for each $v \in R$ a character $\Theta_\wv : \cO_{F_{\wv, n_\wv}}^\times \to \bC^\times$ of order $p$. 

Assuming (as we may) that $E$ is large enough, we may then (re-)define the global deformation problem
\[ \cS = (F / F^+, S, \widetilde{S}, \Lambda, \overline{r}, \mu, \{ R_v^\triangle \}_{v \in S_p} \cup \{ R(\wv, \Theta_\wv, n) \}_{v \in R} \cup \{ R_v^\square \}_{v \in S - (S_p \cup R)}). \]
Following \cite[Definition 3.7]{All19}, we say that a prime $\p \subset R_{\cS}$ of dimension 1 and characteristic $p$ is generic at $p$ if it satisfies the following conditions:
\begin{itemize}
	\item Let $A = R_{\cS} / \p$, and let $r_\p : G_{F^+, S} \to \cG_n(A)$ be the pushforward of (a representative of) the universal deformation. Then for each $v \in S_p$, the (pushforwards from $\Lambda$ of the) universal characters $\psi^v_1, \dots, \psi^v_n : I^{ab}_{F_\wv}(p) \to A^\times$ are distinct.
	\item There exists $v \in S_p$ and $\sigma \in I^{ab}_{F_\wv}(p)$ such that the elements $\psi^v_1(\sigma), \dots, \psi^v_n(\sigma) \in A^\times$ satisfy no non-trivial $\Z$-linear relation. 
\end{itemize}
We say that $\p$ is generic if it is generic at $p$ and if $r_{\p}|_{G_{F, S}} \otimes_{A} \operatorname{Frac} A$ is absolutely irreducible.
\begin{theorem}\label{thm_existence_of_generic_primes}
	With assumptions as above, let $R_{\cS} \to B$ be a surjection in 
	$\cC_\Lambda$, where $B$ is a finite $\Lambda / (\varpi)$-algebra. Let 
	$L_{S_p}$ denote the maximal abelian pro-$p$ extension of $F$ unramified 
	outside $S_p$, and let $\Delta = \Gal(L_{S_p} / F) / (c+1)$. Let $d_{R_i}$ 
	denote the $\Z_p$-rank of the subgroup of $\Delta$ topologically generated 
	by the elements $\Frob_\wv$, $v \in R_i$. 
	
	Suppose that the following conditions are satisfied:
	\begin{enumerate}
		\item Each irreducible component of $\Spec B$ has dimension strictly greater than $\sup(\{ n [F^+ : \Q] + n - d_{R_i} \}_{i=1,2}, \{ n [F^+ : \Q] -  [F_\wv : \Q_p] \}_{v \in S_p} )$. 
		\item For each direct sum decomposition $\overline{r} = \overline{r}_1 \oplus \overline{r}_2$ with $\overline{r}_j : G_{F^+, S} \to \cG_{n_j}(k)$ ($j = 1, 2$) and $n_1 n_2 \neq 0$, there exists $i \in \{ 1, 2 \}$ such that for each $v \in R_i$, both $\overline{r}_1|_{G_{F_\wv}}$ and $\overline{r}_2|_{G_{F_\wv}}$ admit a non-trivial unramified subquotient. 
	\end{enumerate}
	Then there exists a prime $\p \subset R_{\cS}$ of dimension 1 and characteristic p, containing the kernel of $R_{\cS} \to B$, which is generic. 
\end{theorem}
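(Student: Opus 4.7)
The plan is to show that the non-generic primes of $\Spec B$ form a union of closed subsets (one of them a countable union) whose dimensions are strictly smaller than that of each irreducible component of $\Spec B$, and then to produce a dimension-$1$ prime avoiding this union. Since primes $\p \subset R_{\cS}$ of dimension $1$ containing $\ker(R_{\cS} \to B)$ correspond bijectively to primes of $B$ with $\dim B/\p = 1$, I would work throughout in $\Spec B$. Introduce three ``bad'' subsets: the reducibility locus $Z_{\mathrm{red}}$ where $r_\p|_{G_{F,S}} \otimes_B \mathrm{Frac}(B/\p)$ is reducible; the coincidence locus $Z_{\mathrm{eq}}$ where for every $v \in S_p$ some pair $\psi^v_i, \psi^v_j$ with $i \neq j$ has equal image in $B/\p$; and the dependence locus $Z_{\mathrm{dep}}$ where for every $v \in S_p$ there exists a nonzero $(a_1, \dots, a_n) \in \Z^n$ with $\prod_i (\psi^v_i)^{a_i}$ mapping to the trivial character $I^{\mathrm{ab}}_{F_\wv}(p) \to (B/\p)^\times$. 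A prime avoiding all three loci is generic in the sense of the theorem.

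Next I would bound the dimensions. The locus $Z_{\mathrm{red}}$ is stratified by the finitely many residual decompositions $\overline{r} = \overline{r}_1 \oplus \overline{r}_2$; hypothesis (2) furnishes, for each such decomposition, an index $i \in \{1, 2\}$ making Proposition \ref{prop_bound_on_dimension_of_reducibility_locus} applicable to the corresponding stratum (the required factorization through $R(\wv, \Theta_\wv, n)$ for $v \in R$ is inherited from the definition of $\cS$), giving $\dim Z_{\mathrm{red}} \leq \max_i\left(n[F^+ : \Q] + n - d_{R_i}\right)$. For $Z_{\mathrm{eq}}$ and $Z_{\mathrm{dep}}$, I would work in $\Spec \Lambda/(\varpi)$: for any fixed $v \in S_p$, the closed subset $\{\psi^v_i = \psi^v_j\}$ is cut out by $[F_\wv : \Q_p]$ relations (the differences $\psi^v_i(\sigma) - \psi^v_j(\sigma)$ as $\sigma$ ranges over a topological generating set of $I^{\mathrm{ab}}_{F_\wv}(p)$), so has dimension at most $n[F^+ : \Q] - [F_\wv : \Q_p]$; similarly each $V_{(a_i),v} = \{\prod_i (\psi^v_i)^{a_i} = 1\}$ has dimension at most $n[F^+ : \Q] - [F_\wv : \Q_p]$. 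Finiteness of $B$ over $\Lambda/(\varpi)$ preserves dimensions, and $Z_{\mathrm{dep}} \subset \bigcup_{(a_i) \neq 0} V_{(a_i),v}$ for any single fixed $v$. By hypothesis (1), every irreducible component $C$ of $\Spec B$ has $\dim C$ strictly greater than each of these bounds, so $C \cap Z_{\mathrm{red}}$, $C \cap Z_{\mathrm{eq}}$, and each $C \cap V_{(a_i),v}$ are proper closed subsets of $C$.

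The main obstacle is then to produce a dimension-$1$ prime of $C$ outside the \emph{countable} union $Z_{\mathrm{red}} \cup Z_{\mathrm{eq}} \cup \bigcup_{(a_i) \neq 0} V_{(a_i),v}$. When $\dim C = 1$ the generic point of $C$ itself suffices, since no bad closed subset contains $C$. When $\dim C \geq 2$ I would proceed by induction on $\dim C$: by Cohen's structure theorem $\cO(C)$ is finite over a power-series ring $k\llbracket T_1, \dots, T_{\dim C} \rrbracket$, which is uncountable and has uncountably many height-$1$ primes (exhibited for instance by the family $T_1 - f(T_2)$ as $f$ ranges over $k\llbracket T_2 \rrbracket$), so $\cO(C)$ also has uncountably many height-$1$ primes. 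Each proper closed subset of $C$ contains only the finitely many height-$1$ primes of $C$ corresponding to its top-dimensional irreducible components, so the countable union of bad subsets excludes only countably many height-$1$ primes of $C$; picking a height-$1$ prime $\q$ outside this excluded set, the integral closed subscheme $V(\q) \subset C$ has dimension $\dim C - 1$ and meets every bad subset in a proper closed subset, so the induction hypothesis applied to $V(\q)$ produces the desired dimension-$1$ prime $\p$.
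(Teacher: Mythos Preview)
Your approach is essentially the one taken in the paper, which in turn defers to \cite[Lemmas 1.9, 3.5, 3.8]{All19}: bound the dimension of each ``bad'' locus (reducibility via Proposition~\ref{prop_bound_on_dimension_of_reducibility_locus}, and the two $p$-adic conditions directly in $\Lambda/(\varpi)$), then use an uncountability argument to find a dimension-$1$ prime avoiding the resulting countable union of proper closed subsets. Your inductive height-$1$-prime argument is a correct version of the countable prime avoidance in \cite[Lemma 1.9]{All19}.

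Two slips to fix. First, your definition of $Z_{\mathrm{eq}}$ has the quantifier backwards: genericity requires the $\psi^v_i$ to be pairwise distinct \emph{for every} $v\in S_p$, so the failure locus is ``there exists $v$ with some coincidence'', i.e.\ the \emph{union} over $v$ and over pairs $i<j$ of the codimension-$[F_\wv:\Q_p]$ loci. Your dimension bounds already handle this union, since hypothesis~(1) compares $\dim C$ to $n[F^+:\Q]-[F_\wv:\Q_p]$ for every $v$.

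Second, your claim that ``$\p\notin Z_{\mathrm{dep}}$ implies condition~(3) of genericity'' is true but not immediate: you need that if the characters $\psi^v_1,\dots,\psi^v_n:I^{\mathrm{ab}}_{F_\wv}(p)\to (B/\p)^\times$ satisfy no global $\Z$-relation, then some single $\sigma$ has $\Z$-independent values $\psi^v_i(\sigma)$. This follows from a Baire/Haar-measure argument: the image of $\sigma\mapsto(\psi^v_i(\sigma))_i$ is a finitely generated $\Z_p$-submodule of the torsion-free group $(1+\m_{B/\p})^n$, and a compact $\Z_p$-module cannot be a countable union of proper closed subgroups. This is the content hidden in the paper's citation of \cite[Lemma 3.8]{All19}; you should make it explicit.
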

\begin{proof}
	After passage to a quotient by a minimal prime, we can assume that $B$ is a 
	domain. The argument is now very similar to that of \cite[Lemma 
	3.9]{All19}. Indeed, by \cite[Lemma 3.8]{All19}, we can find countable collection $(I_i)_{i \geq 1}$ of ideals $I_i \subset \Lambda / (\varpi)$ such that for all $i \geq 1$, $\dim \Lambda / (\varpi, I_i) \leq \sup \{ n [F^+ : \bQ] -  [ F_\wv : \bQ_p ] \}_{v \in S_p}$ and if $\p \subset R_\cS$ is a prime of dimension 1 and characteristic $p$ which is not generic at $p$, then $I_i R_\cS \subset \p$ for some $i \geq 1$. Let $I_\cS^{red} \subset R_\cS$ be the reducibility ideal defined just before \cite[Lemma 3.4]{All19}, and let $I_0 = (I_\cS^{red}, \varpi) R_\cS$. Proposition 
	\ref{prop_bound_on_dimension_of_reducibility_locus} shows that $\dim R_\cS / I_0 \leq \sup \{ n [ F^+ : \bQ] + n - d_{R_i} \}_{i = 1, 2}$.
	
	Since $B$ is a finite $\Lambda / (\varpi)$-algebra, we have $\dim B / I_i \leq \sup \{ n [F^+ : \bQ] -  [ F_\wv : \bQ_p ] \}_{v \in S_p}$. We also have $\dim B / I_0 \leq  \sup \{ n [ F^+ : \bQ] + n - d_{R_i} \}_{i = 1, 2}$. The existence of a generic prime $\p$ containing the kernel of the map $R_\cS \to B$ thus follows from \cite[Lemma 1.9]{jackreducible}.
\end{proof}
We conclude this section with a result concerning the existence of automorphic 
lifts of prescribed types, under the hypothesis of residual automorphy over a 
soluble extension. It only uses the results of \cite{All19} and not the results 
proved earlier in this section, and is very similar in statement and proof to 
\cite[Theorem 5.2.1]{Bel19}. 

We begin by re-establishing notation. We therefore let $F_0$ be an imaginary CM field such that $F_0 / F_0^+$ is everywhere unramified. We fix a prime $p$ and write $S_{0,p}$ for the set of of $p$-adic places of $F_0^+$. We fix a finite set $S_0$ of finite places of $F_0^+$ containing $S_{0, p}$. We assume that each place of $S_{0, p}$ splits in $F_0$, but not necessarily that each place of $S_0 - S_{0, p}$ splits in $F_0$.  We choose for each $v \in  S_0$  a  place $\wv$ of $F_0$ lying above $v$, and write $\widetilde{S}_0 = \{ \wv \mid v \in S_0 \}$. We fix a coefficient field $E \subset \overline{\bQ}_p$. Fix an integer $n \geq 2$, and suppose given a continuous representation $\overline{\rho} : G_{F_0, S_0} \to \GL_n(k)$ satisfying the following conditions:
\begin{itemize}
\item There is an isomorphism $\overline{\rho} \cong \oplus_{i=1}^r \overline{\rho}_i$, where each representation $\overline{\rho}_i$ is absolutely irreducible and satisfies $\overline{\rho}_i^{c} \cong \overline{\rho}_i^\vee \otimes \epsilon^{1-n}$. Moreover, for each  $1 \leq  i < j \leq r$, we have $\overline{\rho}_i \not\cong \overline{\rho}_j$.
\end{itemize}
\begin{proposition}\label{prop_automorphic_lifts_of_prescribed_types}
Fix disjoint subsets $T_0, 
\Sigma_0 \subset S_0$ consisting of prime-to-$p$ places which split in $F_0$. 
We assume that for each $v \in \Sigma_0$, we have $q_\wv \equiv 1 \text{ mod 
}p$ and $\overline{\rho}|_{G_{F_\wv}}$ is trivial.  We fix for each $v \in T_0$ 
a quotient $R_v^\square \to \overline{R}_v$ of the universal lifting ring of 
$\overline{\rho}|_{G_{F_\wv}}$ corresponding to a non-empty union of 
irreducible components of $\Spec R_v^\square[1/p]$. We suppose that if $v \in 
S_0$ and $v$ is inert in $F_0$, then $\overline{\rho}(I_{F_{0, \wv}})$ is of 
order prime to $p$. Fix a weight $\lambda \in (\bZ^n_+)^{\Hom(F_0, 
\overline{\bQ}_p)}$ such that for each $i = 1, \dots, n$ and $\tau \in 
\Hom(F_0, \overline{\bQ}_p)$, $\lambda_{\tau c, i} = - \lambda_{\tau, n + 1 
-i}$, and suppose that for each $v \in S_{0, p}$, $\overline{\rho}|_{G_{F_{0, 
\wv}}}$ admits a lift to $\overline{\bZ}_p$ which is ordinary of weight 
$\lambda_\wv$, in the sense of \cite[Definition 3.8]{ger}. Suppose that there 
exists a soluble CM extension $F / F_0$ such that  the following conditions  
are satisfied:
\begin{enumerate}
\item $p > \max(n, 3)$. For each place $v | p$ of $F$, we have $[F_v : \bQ_p] > n(n-1)/2 + 1$ and $\overline{\rho}|_{G_{F_v}}$ is trivial.
\item $F(\zeta_p)$ is not contained in $\overline{F}^{\ker \ad \overline{\rho}}$ 
and $F$ is not contained in $F^+(\zeta_p)$. For each $1 \leq i < j \leq r$, 
$\overline{\rho}_i|_{G_{F(\zeta_p)}}$ is absolutely irreducible and 
$\overline{\rho}_i|_{G_{F(\zeta_p)}} \not\cong 
\overline{\rho}_j|_{G_{F(\zeta_p)}}$. Moreover, $\overline{\rho}|_{G_F}$ is 
primitive and $\overline{\rho}(G_F)$ has no quotient of order  $p$. 
\item There exists a RACSDC automorphic representation $\pi$ of $\GL_n(\A_F)$ and an  isomorphism $\iota : \overline{\bQ}_p \to \bC$ such that $\overline{r}_{\pi, \iota} \cong \overline{\rho}|_{G_F}$. Moreover, $\pi$ is $\iota$-ordinary and there exists a place $v$ of $F$ lying above $\Sigma_0$ such that $\pi_v$ is an unramified twist of the Steinberg representation.
\item If $S$ denotes the set of places of $F^+$ lying above $S_0$, then each place of $S$ splits in $F$.
\end{enumerate}
Then there exists a RACSDC automorphic representation $\pi_0$ of $\GL_n(\bA_{F_0})$ satisfying the following conditions:
\begin{enumerate}
\item $\pi_0$ is unramified outside $S_0$ and there is an isomorphism 
$\rbar_{\pi_0, \iota} \cong \overline{\rho}$.
\item $\pi_0$ is $\iota$-ordinary of weight $\iota \lambda$.
\item For each place $ v \in T_0$, $r_{\pi_0, \iota}|_{G_{F_{0, \wv}}}$ defines a point of $\overline{R}_v$.
\item For  each place  $v \in \Sigma_0$, $\pi_{0, \wv}$ is an unramified twist of the Steinberg representation.
\item For each place $v \in S_0$ which is inert in $F_0$, reduction modulo $p$ 
induces an isomorphism $r_{\pi_0, \iota}(I_{F_0, \wv}) \to \overline{r}_{\pi_0, 
\iota}(I_{F_0, \wv})$. 
\end{enumerate}
\end{proposition}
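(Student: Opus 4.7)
The plan is to combine a Galois deformation theory argument with the automorphy lifting theorem for residually reducible representations from \cite{All19} and soluble descent. First, I would extend $\overline{\rho}$ to a homomorphism $\overline{r}: G_{F_0^+, S_0} \to \cG_n(k)$ with $\nu_{\cG_n}\circ\overline{r}$ matching the corresponding polarization character (using the hypothesis that each $\overline{\rho}_i$ is conjugate self-dual and the $\overline{\rho}_i$ are pairwise non-isomorphic). I would then set up the global deformation problem
\[
\cS_0 = \bigl(F_0/F_0^+,\, S_0,\, \widetilde{S}_0,\, \Lambda_0,\, \overline{r},\, \mu',\, \{R_v^{\triangle,\lambda_\wv}\}_{v\in S_{0,p}} \cup \{\overline{R}_v\}_{v \in T_0} \cup \{R_v^{St}\}_{v \in \Sigma_0} \cup \{R_v^{\prime}\}_{\text{else}}\bigr),
\]
where $R_v^{\triangle,\lambda_\wv}$ is the fixed-weight ordinary lifting ring at $v\in S_{0,p}$, the quotient $\overline{R}_v$ at $v\in T_0$ is prescribed in the statement, $R_v^{St}$ is the Steinberg quotient at $v\in\Sigma_0$, and at the remaining places (including those inert in $F_0$) I take the natural quotients which enforce condition (5) of the conclusion, using the hypothesis that $\overline{\rho}(I_{F_{0,\wv}})$ has order prime to $p$ at inert $v$.

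Second, I would produce a characteristic zero point of $R_{\cS_0}$ of ordinary weight $\lambda$ and trace of Frobenius lifting the prescribed types. For this I would imitate the proof of Corollary~\ref{cor_existence_of_lifts}: after restricting $\overline{r}$ to $G_{F^+}$ and passing to the corresponding deformation problem over $F^+$ (where the relative Iwasawa algebra controls weights), Theorem~\ref{thm_finiteness_of_deformation_ring} gives finiteness over $\Lambda$ and forces each minimal prime to have dimension equal to $\dim\Lambda$; the hypotheses $p>n$, $[F(\zeta_p):F] = p-1$ (implicit in $F(\zeta_p)\not\subset \overline{F}^{\ker\ad\overline{\rho}}$), $[F_v:\Qp]>n(n-1)/2+1$, and the distinctness of the $\overline{\rho}_i|_{G_{F(\zeta_p)}}$ are precisely what is needed. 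Descending from the $F^+$-deformation problem to the $F_0^+$-deformation problem via the standard Taylor--Wiles argument over the soluble extension (together with Lemma~\ref{lem_deforming_conjugate_self_dual_representations}) produces the desired lift $r: G_{F_0^+, S_0} \to \cG_n(\overline{\bZ}_p)$.

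Third---and this is the main obstacle---I would show that $r|_{G_{F, S}}$ is automorphic. By construction it is a lift of $\overline{\rho}|_{G_F}$, and hypothesis (3) gives residual automorphy of $\overline{\rho}|_{G_F}$ via an $\iota$-ordinary RACSDC $\pi$ with a Steinberg local factor at some place above $\Sigma_0$. The remaining hypotheses (primitivity of $\overline{\rho}|_{G_F}$, linear disjointness $F\not\subset F^+(\zeta_p)$ and $F(\zeta_p)\not\subset\overline{F}^{\ker\ad\overline{\rho}}$, distinct and absolutely irreducible $\overline{\rho}_i|_{G_{F(\zeta_p)}}$, the dimension assumption on $F_v$ for $v|p$, and $\overline{\rho}(G_F)$ having no quotient of order $p$) are exactly the bigness and solvability hypotheses of the residually reducible automorphy lifting theorem \cite[Theorem 6.2]{All19}. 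Applying that theorem to the base-changed lift $r|_{G_F}$ produces a cuspidal RACSDC $\pi'$ of $\GL_n(\A_F)$ with $r_{\pi',\iota}\cong r|_{G_F}$ and with the correct local components at all places, including at places above $T_0$ (where we chose $r|_{G_{F_v}}$ to lie on $\overline{R}_v$), above $\Sigma_0$ (Steinberg), and above $S_{0,p}$ (ordinary of weight $\iota\lambda|_{G_F}$).

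Finally, I would descend $\pi'$ along the soluble extension $F/F_0$ using soluble base change for $\GL_n$ (Arthur--Clozel). The primitivity of $\overline{\rho}|_{G_F}$ and the irreducibility of $r|_{G_F}$ imply that no non-trivial $\Gal(F/F_0)$-conjugate of $\pi'$ is isomorphic to $\pi'$, so $\pi'$ descends to a cuspidal automorphic representation $\pi_0$ of $\GL_n(\bA_{F_0})$ with $r_{\pi_0,\iota}\cong r|_{G_{F_0}}$ and $\overline{r}_{\pi_0,\iota}\cong\overline{\rho}$. Since every place of $S$ splits in $F$ (hypothesis (4)), the local components of $\pi_0$ at places in $T_0\cup\Sigma_0\cup S_{0,p}$ coincide with those of $\pi'$ at the chosen places above, so conditions (2)--(4) of the conclusion follow from the local-global compatibility properties of $r|_{G_F}$ built into the deformation problem. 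Condition (5) at inert places is automatic from the choice of $R_v'$ and the assumption on $\overline{\rho}(I_{F_{0,\wv}})$.
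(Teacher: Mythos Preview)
Your overall architecture (set up a deformation problem over $F_0^+$, produce a characteristic zero point, prove automorphy of its restriction to $G_F$ via \cite{All19}, then descend solubly to $F_0$) matches the paper's. The gap is in the second step, where you produce the lift.

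You invoke Theorem~\ref{thm_finiteness_of_deformation_ring}, but its hypotheses are not available here. That theorem is stated for a residual representation which is a direct sum of \emph{characters} $\overline{\chi}_i$ whose ratios have order exceeding $2n$ on $G_{F(\zeta_p)}$, and it requires $[F(\zeta_p):F]=p-1$. In the present proposition the $\overline{\rho}_i$ are arbitrary absolutely irreducible constituents, and your parenthetical claim that ``$[F(\zeta_p):F]=p-1$ is implicit in $F(\zeta_p)\not\subset\overline{F}^{\ker\ad\overline{\rho}}$'' is false: the latter says only that $\zeta_p$ is not fixed by $\ker\ad\overline{\rho}$, which does not force the extension $F(\zeta_p)/F$ to have full degree. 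So Theorem~\ref{thm_finiteness_of_deformation_ring} simply does not apply, and with it your mechanism for forcing the $F_0^+$-deformation ring to have a $\overline{\bZ}_p$-point collapses. You also do not supply a dimension lower bound for $R_{\cS_0}$; Corollary~\ref{cor_existence_of_lifts} gets this from the local dimension calculations underlying Theorem~\ref{thm_finiteness_of_deformation_ring}, but those again presuppose the character setting.

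The paper fixes both issues differently. For the lower bound it cites \cite[Corollary~5.1.1]{Bel19}, noting that the irreducibility hypothesis there can be relaxed to the Schur condition (which holds here). For finiteness it uses \cite[Theorem~6.2]{All19} --- an $R=\bT$-type result whose hypotheses are exactly the adequacy/primitivity/Steinberg conditions (1)--(4) of the proposition --- applied to the variable-weight deformation ring $R_{\cS}$ over $F^+$, and then observes that $R_{\cS}\to R^{\mathrm{univ}}$ is finite and that $\Lambda_F\to R^{\mathrm{univ}}$ factors through $\cO$ since the weight is fixed; hence $R^{\mathrm{univ}}$ is finite over $\cO$ of dimension $\geq 1$. (Note also that you have the roles of Theorems~6.1 and~6.2 of \cite{All19} reversed: 6.2 is the finiteness/$R=\bT$ statement, used for producing the lift, while 6.1 is the automorphy lifting theorem, used in your third step.) Once the lift exists, your third and fourth steps are essentially what the paper does.
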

\begin{proof}
Let $v \in S_{0, p}$, and let $\rho_v : G_{F_{0, \wv}} \to \GL_n(\overline{\bZ}_p)$ be the lift of $\overline{\rho}|_{G_{F_\wv}}$ which is ordinary of weight $\lambda_\wv$ and which exists by assumption. Thus $\rho_v$ is conjugate over $\overline{\bQ}_p$ to an upper-triangular representation with the property that if $\alpha_{v, 1}, \dots, \alpha_{v, n} : G_{F_{0, \wv}} \to \overline{\bQ}_p^\times$ are the characters appearing on the diagonal, then for each $i = 1, \dots, n$ the character $\alpha_{v, i}$ is equal, on restriction to some open subgroup of $I_{F_{0, \wv}}$, to the character 
\[ \chi_{\lambda_\wv, i} : \sigma \in I_{F_{0, \wv}} \mapsto  \epsilon(\sigma)^{1-i} \prod_{\tau : F_{0, \wv} \to \overline{\bQ}_p} \tau(\Art_{F_{0, \wv}}^{-1}(\sigma))^{-\lambda_{\tau, n - i + 1} }. \]
After enlarging $E$, we can assume that each character $\alpha_{v, i}$ takes values in $\cO$. We use the restricted characters $\alpha_{v, i}|_{I_{F_{0, \wv}}(p)} : I_{F_{0, \wv}}(p) \to \cO^\times$ ($v \in S_{0, p}, i = 1, \dots, n$) to define a homomorphism $\Lambda_{F_0} \to \cO$.

Let $\beta_{\wv, i} = \alpha_{v, i} \chi_{\lambda_\wv, i}^{-1}$. Then $\tau_\wv =\oplus_{i=1}^n \beta_{\wv, i}$ is an inertial type and the type $\tau_\wv$, Hodge type $\lambda_\wv$ lifting ring $R_\wv^{\lambda_\wv, \tau_\wv}$ is defined and equidimensional of dimension $1 + n^2 + n(n-1)[F_{0, \wv} : \bQ_p] / 2$ (see \cite[Theorem 3.3.4]{kisindefrings}). When $\tau_\wv$ is trivial, \cite[Lemma 3.10]{ger} shows that there is a minimal prime ideal of $R_\wv^{\lambda_\wv, \tau_\wv}$ such that, writing $R_v$ for the corresponding quotient, the following properties are satisfied:
\begin{itemize}
\item $R_v$ is $\cO$-flat of dimension $1 + n^2 + n(n-1)[F_{0, \wv} : \bQ_p] / 2$.
\item The map $R_\wv^{\lambda_\wv, \tau_\wv} \to \overline{\bQ}_p$ determined by $\rho_v$ factors through $R_v$.
\item For every homomorphism $R_v \to \overline{\bQ}_p$, the corresponding Galois representation $G_{F_{0, \wv}} \to \GL_n(\overline{\bQ}_p)$ is ordinary of weight $\lambda_\wv$, in the sense of \cite[Definition 3.8]{ger}.
\item The homomorphism $R^\square_v \widehat{\otimes}_\cO \Lambda_v \to R_v$ (completed tensor product of the tautological quotient map $R^\square_v \to R_v$ and the composite $\Lambda_v \to \cO \to R_v$) factors over the quotient $R^\square_v \widehat{\otimes}_\cO \Lambda_v \to R_v^\triangle$ (defined in e.g.\ \cite[\S 3.3.2]{jackreducible}).  
\end{itemize}
In fact, the same proof shows that these properties hold also in the case that $\tau_\wv$ is non-trivial. 

Our hypotheses imply that we can extend $\overline{\rho}$ to a continuous  
homomorphism $\overline{r} : G_{F_0^+, S_0} \to \cG_n(k)$ with the property 
that $\nu \circ \overline{r} = \epsilon^{1-n} \delta_{F_0 / F_0^+}^n$. Let 
$R^{\text{univ}}$ denote the deformation ring, defined as in \cite[Corollary 
5.1.1]{Bel19}, of $\epsilon^{1-n} \delta^n_{F_0 / F_0^+}$-polarised 
deformations of $\overline{r}$, where the quotients of the local lifting rings 
for $v \in S_0$ are specified as follows:
\begin{itemize}
\item If $v \in S_{0, p}$, we take the quotient $R_v$ defined above. 
\item If $v \in T_0$, take $\overline{R}_v$.
\item If $v \in \Sigma_0$, take the Steinberg lifting ring $R_v^{St}$.
\item If $v \in S_0$ and $v$ is inert in $F_0$, take the component 
corresponding to the functor of lifts $r$ of $\overline{r}|_{G_{F_{0, v}^+}}$ 
such that the reduction map induces an isomorphism $r(I_{F^+_{0, v}}) \to 
\overline{r}(I_{F^+_{0, v}})$. 
\end{itemize}
We can invoke \cite[Corollary 5.1.1]{Bel19} to conclude that $R^\text{univ}$ 
has Krull dimension at least 1. We remark that this result includes the 
hypothesis that $\overline{r}|_{G_{F_0(\zeta_p)}}$ is irreducible, but this is 
used only to know that the groups $H^0(F_0^+, \ad  \overline{r})$ and 
$H^0(F_0^+, \ad \overline{r}(1))$ vanish, which is true under the weaker 
condition that $\overline{r}|_{G_{F_0^+(\zeta_p)}}$ is Schur, which follows 
from our hypotheses. (The vanishing of these groups implies that the 
deformation functor is representable and that the Euler characteristic formula 
gives the correct lower bound for its dimension.) 

We consider as well the deformation problem
\[ \cS = (F / F^+, S, \widetilde{S}, \Lambda_{F}, \overline{r}|_{G_F}, \epsilon^{1-n} \delta_{F / F^+}^n, \{ R_v^\triangle \}_{v \in S_{ p}} \cup \{ R_v^{\square} \}_{v \in S  - (S_p \cup \Sigma)} \cup \{ R_v^{St} \}_{v \in \Sigma}), \]
where we define $S, T, \Sigma$ to be the sets of places of $F^+$ above $S_0, 
T_0, \Sigma_0$, respectively. Then there is a natural morphism $R_{\cS} \to 
R^\text{univ}$ of $\Lambda_F$-algebras, which is finite (apply Lemma \ref{lem_finiteness_for_psuedodef_rings} and \cite[Proposition 3.29(2)]{jackreducible}). By \cite[Theorem 
6.2]{All19}, $R_{\cS}$ is a finite $\Lambda_F$-algebra. The map $\Lambda_F \to 
R^\text{univ}$ factors through a homomorphism $\Lambda_F \to \cO$ (by construction), so $R^\text{univ}$ is a finite $\cO$-algebra (of Krull dimension at 
least 1, as we have already remarked). 

We deduce the existence of a lift $r : G_{F^+_0, S_0} \to 
\cG_n(\overline{\bZ}_p)$ of $\overline{r}$ arising from a homomorphism $R^{univ} \to  \overline{\bZ}_p$. We can 
now 
apply \cite[Theorem 
6.1]{All19} and soluble descent to conclude that $r|_{G_{F_0}}$ is automorphic, 
associated to an 
automorphic representation $\pi_0$ of $\GL_n(\A_{F_0})$ with the desired 
properties. 
\end{proof}
\section{Raising the level -- Galois theory}\label{sec:galois_level_raising}

This section is devoted to the proof of a single theorem that will bridge the 
gap between Theorem \ref{thm_automorphic_level_raising} and our intended 
applications. Let $F, S, p, G$ be as in our standard assumptions (see \S 
\ref{sec_definite_unitary_groups}), and let $n \geq 3$ be an odd integer such 
that $p > 2n$.
Fix an isomorphism $\iota : \overline{\QQ}_p \to \CC$. We suppose given a 
RACSDC automorphic representation $\pi$ of $\GL_n(\A_F)$, and that the 
following conditions are satisfied:
\begin{enumerate}
	\item $\pi$ is $\iota$-ordinary. 
	\item $[F(\zeta_p) : F] = p-1$.
	\item There exist characters $\overline{\chi}_1, \dots, \overline{\chi}_n : G_F \to \overline{\bF}_p^\times$ and an isomorphism $\overline{r}_{\pi, \iota} \cong \overline{\chi}_1 \oplus \dots \oplus \overline{\chi}_n$, where for each $i = 1, \dots, n$, we have $\overline{\chi}_i^c = \overline{\chi}_i^\vee \epsilon^{1-n}$ and for each $1 \leq i < j \leq n$, $\overline{\chi}_i / \overline{\chi}_j|_{G_{F(\zeta_p)}}$ has order strictly greater than $2n$.
	\item For each $v \in S_p$, $\overline{r}_{\pi, \iota}|_{G_{F_\wv}}$ is trivial and $[F_\wv : \Q_p] > n(n-1)/2+1$.
	\item There is a set $R = R_1 \sqcup R_2 \subset S - S_p$ with the following properties:
	\begin{enumerate}
		\item The sets $R_1$ and $R_2$ are both non-empty and for each $v \in R$, the characteristic of $k(v)$ is odd. As in \S \ref{subsec_local_theory_GL_n}, we write $\omega(\wv) : k(\wv)^\times \to \{ \pm 1 \}$ for the unique quadratic character.
		\item If $v \in R_1$, then $q_v \text{ mod }p$ is a primitive $3^\text{rd}$ root of unity, and there exists a character $\Theta_\wv : k_3^\times \to \bC^\times$ of order $p$ such that $\pi_\wv|_{\q_\wv}$ contains $\widetilde{\lambda}(\wv, \Theta_\wv, n)$ (notation as in \S \ref{subsec_local_theory_GL_n}, this representation of $\q_\wv$ defined with respect to $n_1 = 3$).
		\item If $v \in R_2$, then $q_v \text{ mod }p$ is a primitive $(n-2)^\text{th}$ root of unity, and there exists a character $\Theta_\wv : k_{n-2}^\times \to \bC^\times$ of order $p$ such that $\pi_\wv|_{\q_\wv}$ contains $\widetilde{\lambda}(\wv, \Theta_\wv, n)$ (notation as in \S \ref{subsec_local_theory_GL_n}, this representation of $\q_\wv$ defined with respect to $n_1 = n-2$).
		\item For each non-trivial direct sum decomposition $\overline{r}_{\pi, \iota} = \overline{\rho}_1 \oplus \overline{\rho}_2$, there exists $i \in  \{ 1, 2\}$ such that for each $v \in R_i$, $\overline{\rho}_1|_{G_{F_\wv}}$ and $\overline{\rho}_2|_{G_{F_\wv}}$ both admit a non-trivial unramified subquotient.
	\end{enumerate}
\end{enumerate}
(This is the situation we will find ourselves in after applying Theorem 
\ref{thm_automorphic_level_raising}. The sets of 
places $R_1$ and $R_2$ here will correspond to the sets $T_1$ and $T_3$ respectively 
from \S \ref{sec:automorphic_level_raising}, and it will be possible 
to label the characters $\overline{\chi}_i$ so that we have the following 
ramification properties:
\begin{itemize}
\item If $v \in R_1$, then $\overline{\chi}_1, \overline{\chi}_2, \overline{\chi}_3$ are unramified at $\wv$ and $\overline{\chi}_4, \dots, \overline{\chi}_n$ are ramified at $\wv$ (and the image of inertia under each of these characters has order 2).
\item If $v \in R_2$, then $\overline{\chi}_1, \overline{\chi}_2$ are ramified at $\wv$ (and the image of inertia under each of these characters has order 2) and $\overline{\chi}_3, \dots, \overline{\chi}_n$ are unramified at $\wv$.
\end{itemize}
These properties imply condition 5(d) above, which is what we actually need for the proofs in this section.)

The theorem we prove in this section is the following one:
\begin{theorem}\label{thm_level_raising_through_Galois_theory}
	With assumptions as above, fix a place $v_{St}$ of $F$ lying above $S - 
	(S_p \cup R)$ such that $q_{v_{St}} \equiv 1 \pmod{p}$ and 
	$\overline{r}_{\pi, \iota}|_{G_{F_{v_{St}}}}$ is trivial. Then we can find 
	a 
	RACSDC $\iota$-ordinary automorphic representation $\pi'$ of 
	$\GL_n(\A_{F})$ satisfying the following conditions:
	\begin{enumerate}
		\item There is an isomorphism $\overline{r}_{\pi', \iota} \cong 
		\overline{r}_{\pi, \iota}$.
		\item For each embedding $\tau : F 
		\to \overline{\bbQ}_p$, we have 
		\[ \mathrm{HT}_\tau(r_{\pi', \iota}) = 
		\mathrm{HT}_\tau(r_{\pi,\iota}). \]
		\item $\pi'_{v_{St}}$ is an unramified twist of the Steinberg representation. 
	\end{enumerate}
\end{theorem}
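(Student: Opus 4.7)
The strategy is to construct $\pi'$ as the output of Proposition \ref{prop_automorphic_lifts_of_prescribed_types}, applied with $F_0 = F$, $\overline{\rho} = \overline{r}_{\pi, \iota}$, $\Sigma_0 = \{v_{St}^+\}$, $T_0 = R$, local deformation conditions $\overline{R}_v = R(\wv, \Theta_\wv, n)$ for $v \in R$, and weight $\lambda$ equal to that of $\pi$. The setup hypotheses of the proposition are immediate: $\overline{\rho}$ is a sum of $n$ pairwise non-isomorphic, one-dimensional, conjugate self-dual characters of $G_F$, with the required distinctness and large-order properties on $G_{F(\zeta_p)}$; the triviality of $\overline{\rho}|_{G_{F_{v_{St}}}}$ together with $q_{v_{St}} \equiv 1 \pmod{p}$ supplies the hypothesis on $\Sigma_0$-places; and the conditions on $p$-adic local data are exactly our standing assumptions.

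The crux is to verify hypothesis $(3)$ of the proposition: over some suitable soluble CM extension $L/F$, there must exist an RACSDC $\iota$-ordinary representation $\Pi$ of $\GL_n(\A_L)$ with $\overline{r}_{\Pi, \iota} \cong \overline{\rho}|_{G_L}$ that is an unramified twist of Steinberg at some place above $v_{St}^+$. I would choose $L$ so that every place in $S \cup \{v_{St}^+\}$ splits in $L^+$, $L(\zeta_p)/L$ has degree $p-1$, $L$ is linearly disjoint over $F$ from the fixed field of $\ker \ad \overline{\rho}$ (which secures the primitivity and no-$p$-quotient conditions on $\overline{\rho}|_{G_L}$), and the subgroup of the relevant quotient $\Delta_L$ topologically generated by the Frobenius classes at the places in $R_{i, L}$ has $\bZ_p$-rank strictly greater than $n$ for some $i \in \{1, 2\}$.

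To construct $\Pi$, I would set up the global deformation problem $\cS_L$ over $L$ for $\overline{\rho}|_{G_L}$ with local conditions: ordinary at $S_{p, L}$, Steinberg at a chosen place $w_{St}$ above $v_{St}^+$, local type $R(\ww, \Theta_\wv, n)$ at each $\ww \in \widetilde{R}_L$ (where $\wv$ is the place of $F$ below $\ww$), and unrestricted elsewhere in $S_L$. A generalization of Theorem \ref{thm_finiteness_of_deformation_ring}, following the same strategy of induction to a $\cG_{2n}$-valued deformation problem but adapted to accommodate the local types at $R_L$, will show that $R_{\cS_L}$ is a finite $\Lambda_L$-algebra; Corollary \ref{cor_existence_of_lifts} then produces a characteristic-zero lift $r_L : G_{L^+} \to \cG_n(\overline{\Z}_p)$ of the required local shape. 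To promote $r_L|_{G_L}$ to an automorphic $\Pi$, I would invoke an $R = \bT$-type theorem for residually reducible representations from \cite{All19}, whose hypothesis on the existence of generic primes in $R_{\cS_L}$ is supplied by Theorem \ref{thm_existence_of_generic_primes}: its condition $(2)$ is exactly our hypothesis on $R_1 \sqcup R_2$ transported to $L$, and its condition $(1)$ is met by the dimension bound arranged in our choice of $L$.

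The main obstacle is this last step: rigorously identifying the deformation ring $R_{\cS_L}$ with an ordinary Hida Hecke algebra acting on algebraic modular forms on $G_n$ over $L^+$ at the level structure matching $w_{St}$, $S_{p, L}$, and $R_L$. This requires marrying the generic-prime machinery with Hida's control theorems to conclude that the deformation ring acts through its image in the Hecke algebra, at which point classical specializations of the resulting Hida family supply the desired $\Pi$; assembling this identification in the presence of Steinberg, ordinary, and Deligne--Lusztig-type local conditions simultaneously is where the technical heart of the argument will lie.
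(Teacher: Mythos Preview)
Your overall reduction via Proposition \ref{prop_automorphic_lifts_of_prescribed_types} is correct and matches the paper, but there is a genuine circularity in how you propose to verify its hypothesis (3). You set up a deformation problem $\cS_L$ over $L$ which already carries the Steinberg condition at $w_{St}$, and then say that Theorem \ref{thm_existence_of_generic_primes} will supply a generic prime, after which an $R_\p = \T_\p$ theorem from \cite{All19} promotes a Galois lift to automorphy. But Theorem \ref{thm_existence_of_generic_primes} is applied to a surjection $R_\cS \to B$ with $B$ a quotient of the Hecke algebra, and the Hecke algebra attached to the Steinberg level structure at $w_{St}$ is only nontrivial if some automorphic representation with Steinberg local component at $w_{St}$ already exists. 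The only automorphic input you have is $\pi$, which is \emph{not} Steinberg at $v_{St}$. So you cannot get started on the Steinberg side directly.

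The paper resolves this by an Ihara-avoidance manoeuvre. It first uses (the base change of) $\pi$ to populate a Hecke algebra $\T_{\cD_0}$ with the type conditions at $R$ but \emph{no} condition at $v_{St}$, and applies Theorem \ref{thm_existence_of_generic_primes} to find a generic prime $\p_0$ which moreover is unramified at the place above $v_{St}$. It then makes a further soluble base change to an extension $L_1$ chosen so that the inertial types at $R$ become trivial; this yields a deformation datum $\cD_1 = (L_1, \emptyset)$ with \emph{no} ramification outside $p$, and the generic prime pulls back to $\p_1 \subset R_{\cD_1}$ (Lemma \ref{lem_no_ramification}). This step is essential because, as the paper remarks, \cite[Theorem 4.1]{All19} requires the deformation problem to have no prime-to-$p$ ramification away from the unipotent place; your proposal keeps the conditions at $R_L$, and the cited theorem is not known in that generality. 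Finally, one passes through the unipotent-type ring $R_{\cD_{1,b}}$ (with $R^1_{v_{St,L_1}}$), which has both $R_{\cD_1}$ and the Steinberg ring $R_{\cD_{1,a}}$ as quotients. The prime $\p_1$ lies in $\Spec R_{\cD_1} \cap \Spec R_{\cD_{1,a}}$ inside $\Spec R_{\cD_{1,b}}$, and \cite[Theorem 4.1]{All19} at $\p_1$ forces $J_{\cD_{1,b}} \subset Q$ for every minimal prime $Q \subset \p_1$, in particular for a minimal prime of $R_{\cD_{1,a}}$. Combined with the finiteness of $R_{\cD_{1,a}}$ over $\Lambda_{L_1}$ (Theorem \ref{thm_finiteness_of_deformation_ring}), one extracts the desired $\pi''$ with Steinberg local component. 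The crossing from the non-Steinberg component (where $\pi$ lives) to the Steinberg component via the common generic prime in the unipotent ring is the missing idea in your proposal.
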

Our proof of this theorem follows a similar template to the proof of \cite[Theorem 5.1]{ctiii}. Briefly, we use our local conditions at the places of $R$, together with Theorem \ref{thm_existence_of_generic_primes} to show that (after a suitable base change) we can find a generic prime in the spectrum of the big ordinary Hecke algebra. This puts us in a position to use the ``$R_\p = \T_\p$'' theorem proved in  \cite{All19}, which is enough to construct automorphic lifts of $\overline{r}_{\pi, \iota}$ (or its base change) with the desired properties.

We now begin the proof. Let $F^a / F$ denote the  extension of $F(\zeta_p)$ cut out by $\overline{r}_{\pi, \iota}|_{G_{F(\zeta_p)}}$, and let $Y^a$ be a finite set of finite places of $F$ with the following properties:
\begin{itemize}
	\item For each place $v \in Y^a$, $v$ is split over $F^+$, prime to $S$, and $\pi_v$ is unramified.
	\item For each intermediate Galois extension $F^a / M / F$ such that $\Gal(M / F)$ is simple, there exists $v \in Y^a$ which does not split in $M$.
\end{itemize}
Then any $Y^a$-split finite extension $L / F$ is linearly disjoint from $F^a / F$. After conjugation, we can find a coefficient field $E$ such that $r_{\pi, \iota}$ is valued in $\GL_n(\cO)$, and extend it to a homomorphism $r : G_{F^+, S} \to \cG_n(\cO)$ such that $\nu \circ r = \epsilon^{1-n} \delta^n_{F / F^+}$. We write $\overline{r} : G_{F^+, S} \to \cG_n(k)$ for the reduction modulo $\varpi$ of $r$.
\begin{lemma}\label{lem_Schur_on_L}
	Let $L / F$ be an $Y^a$-split finite CM extension. Then:
	\begin{enumerate}
		\item $\overline{r}|_{G_{L^+(\zeta_p)}}$ is Schur. 
		\item $\overline{r}|_{G_L}$ is primitive. 
		\item Suppose moreover that $L / F$ is soluble. Then the base change of $\pi$ with respect to the extension $L /  F$ is cuspidal.
		\item More generally, suppose that $L / F_0 / F$ is an intermediate field with $L / F_0$ soluble, and let $\Pi$ be a RACSDC automorphic representation of $\GL_n(\A_{F_0})$ such that $\overline{r}_{\Pi, \iota} \cong \overline{r}|_{G_{F_0}}$. Then the base  change of $\Pi$ with respect to the extension $L / F_0$ is cuspidal.
	\end{enumerate}
\end{lemma}
\begin{proof}
	For the first part, it is enough to check that $L \not\subset L^+(\zeta_p)$ and $\overline{\chi}_i / \overline{\chi}_j|_{G_{L(\zeta_p)}}$ is non-trivial for each $1 \leq i < j \leq n$. We have $[L(\zeta_p) : L] = p-1$, which implies $L \not\subset L^+(\zeta_p)$, while $\overline{\chi}_i / \overline{\chi}_j(G_{L(\zeta_p)}) = \overline{\chi}_i / \overline{\chi}_j(G_{F(\zeta_p)})$, so this ratio of characters is non-trivial. The second part follows from Lemma \ref{lem_primitive_representations} and the fact that $\overline{\chi}_i / 
	\overline{\chi}_j|_{G_L}$ has order greater than $2n$ for any $i \neq j$, because $L / F$ is linearly disjoint from the extension $F^a / F$.
	
	The third part is a special case of the fourth part, so we just prove this. Suppose for contradiction that the base change of $\Pi$ with respect to the extension $L / F_0$ is not  cuspidal. Then we can find
	intermediate extensions $L / F_2 / F_1 / F_0$ such that there is a tower $F_1 = M_m / M_{m-1} / \dots / M_0  = F_0$, where each extension $M_{i+1} / M_i$ is cyclic of prime degree; $F_2 / F_1$ is cyclic 
	of prime degree $l$; the base change $\Pi_{F_1}$ of $\Pi$ to $F_1$  (constructed as the iterated cyclic base change with respect to the tower $M_m / M_{m-1} / \dots / M_0$) is cuspidal; but the 
	base change of $\Pi_{F_1}$ to $F_2$ is not cuspidal. We will derive a 
	contradiction. By the second part of the 
	lemma, $\overline{r}|_{G_{F_1}}$ is primitive. Let $\sigma \in \Gal(F_2 / 
	F_1)$ be a generator. Since the base change of $\Pi_{F_1}$ with respect to 
	the extension $F_2 / F_1$ is not cuspidal, there exists a cuspidal 
	automorphic representation $\Xi$ of $\GL_{n / l}(\bA_{F_2})$ such that the 
	base change of $\Pi_{F_1}$ is $\Xi \boxplus \Xi^{\sigma} \boxplus \dots 
	\boxplus \Xi^{\sigma^{l-1}}$ (see \cite[Theorem 4.2]{MR1007299}). We claim 
	that $\Xi$ is in fact conjugate self-dual. The representation $\Xi | \cdot 
	|^{(n/l -  n)/2}$ is regular algebraic (by \cite[Theorem 5.1]{MR1007299}). 
	Since $\Pi_{F_1}$ is conjugate self-dual, \cite[Proposition  
	4.4]{MR1007299})  shows that $\Xi \boxplus \Xi^{\sigma} \boxplus \dots 
	\boxplus \Xi^{\sigma^{l-1}}$ is  also  conjugate self-dual. The 
	classification of automorphic representations of $\GL_n$ then implies  
	that  there is an isomorphism $\Xi^{c, \vee} \cong \Xi^{\sigma^i}$ for some 
	$0 \leq i < l$. If $w$ is an infinite place of $F_2$, then the purity lemma 
	(\cite[Lemma 4.9]{Clo90}) implies that $\Xi_w \cong \Xi_w^{c, \vee}$, hence 
	$\Xi_w \cong \Xi_w^{\sigma^i}$. Since $\Xi \boxplus \Xi^{\sigma} \boxplus 
	\dots \boxplus \Xi^{\sigma^{l-1}}$ is regular algebraic, this is possible 
	only if $i = 0$ and $\Xi$ is indeed conjugate  self-dual.
	
Therefore $r_{\Xi | \cdot |^{(n/l - n)/2}, \iota}$ is defined and there is an 
isomorphism 
\[ r_{\Pi_{F_1}, \iota} \cong\Ind_{G_{F_2}}^{G_{F_1}} r_{\Xi | \cdot |^{(n/l -  
n)/2}, \iota}. \]
 This contradicts the second part of the lemma, which implies that $\overline{r}_{\Pi_{F_1}, \iota}$ is primitive. This contradiction completes the proof.
\end{proof}
Combining Lemma \ref{lem_Schur_on_L} and Proposition \ref{prop_automorphic_lifts_of_prescribed_types}, we see that Theorem \ref{thm_level_raising_through_Galois_theory} will follow provided we can find a soluble CM extension $L / F$ with the following properties:
\begin{itemize}
\item $L / F$ is $Y^a$-split.
\item There exists a RACSDC $\iota$-ordinary automorphic representation $\pi''$ of $\GL_n(\A_L)$ and a place $v''$ of $L$ lying above $v_{St}$ such that $\overline{r}_{\pi'', \iota} \cong \overline{r}_{\pi, \iota}|_{G_L}$ and $\pi''_{v''}$ is an unramified twist of the Steinberg representation.
\end{itemize}
After first replacing $F$ by a suitable $Y^a \cup R$-split soluble extension, 
we can assume in addition that $S = S_p \cup R \cup \{ v_{St}|_{F^+} \}$ and 
that $\pi$ is unramified outside $S_p \cup R$ (use the Skinner--Wiles base 
change trick as in \cite[Lemma 4.4.1]{cht}).
\begin{lemma}\label{lem_v_a}
	There exist infinitely many prime-to-$S$ places $\wv_a$ of $F$ with the following property: $\wv_a$ does not split in $F(\zeta_p)$ and $\overline{r}(\Frob_{\wv_a})$ is scalar. 
\end{lemma}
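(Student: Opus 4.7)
The plan is as follows. By Chebotarev applied to $F^a/F$ (which is unramified outside $S$, since $\pi$ is unramified outside $S_p \cup R \subset S$), it suffices to exhibit a single element $\sigma \in G_F$ with $\overline{r}(\sigma)$ scalar and $\epsilon(\sigma) \ne 1$; the Frobenius elements in prime-to-$S$ places whose image in $\Gal(F^a/F)$ lies in the conjugacy class of $\sigma$ will then form the desired infinite family. Set $\psi_{ij} := \overline{\chi}_i/\overline{\chi}_j$ for $i \ne j$ and let $L \subset \overline{F}$ be the fixed field of $H := \bigcap_{i \ne j} \ker \psi_{ij} \subset G_F$. Since $\overline{r}|_{G_F} = \bigoplus \overline{\chi}_i$ is diagonal, $H = G_L$ is exactly the set of $\sigma \in G_F$ with $\overline{r}(\sigma)$ scalar, so the task reduces to establishing the non-containment $F(\zeta_p) \not\subset L$.

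The key input will be the identity $\psi_{ij}^c = \psi_{ij}^{-1}$, which is immediate from the polarisation relation $\overline{\chi}_i^c = \overline{\chi}_i^{-1}\epsilon^{1-n}$. It implies that $H$ is $c$-stable, so $L/F^+$ is Galois, and that conjugation by $c$ acts on $\Gal(L/F)$ by inversion. On the other hand, conjugation by $c$ acts trivially on $\Gal(F(\zeta_p)/F) \cong (\Z/p)^\times$, since that group is abelian (equivalently, one checks directly using $c(\zeta_p) = \zeta_p^{-1}$ that $c\sigma c^{-1}$ and $\sigma$ act identically on $\zeta_p$).

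Applying both descriptions to $M := L \cap F(\zeta_p)$, which is Galois over $F^+$ as the intersection of two such extensions, forces the action of $c$ on $\Gal(M/F)$ to be simultaneously trivial and by inversion. Hence $\Gal(M/F)$ is 2-torsion inside the cyclic group $(\Z/p)^\times$, giving $[M:F] \le 2$. Since $p > 2n \ge 6$, we have $[F(\zeta_p):F] = p-1 > 2$, so $M \subsetneq F(\zeta_p)$, which produces the desired $\sigma \in H$ with $\epsilon(\sigma) \ne 1$. I do not anticipate a substantial obstacle: the polarisation identity $\psi_{ij}^c = \psi_{ij}^{-1}$ forces the sign conflict on $\Gal(M/F)$, and the hypothesis $p > 2n$ combined with $[F(\zeta_p):F] = p-1$ provides just enough room for $\epsilon$ to remain non-trivial on $H$.
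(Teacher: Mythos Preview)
Your proof is correct and rests on the same key identity $\psi_{ij}^c = \psi_{ij}^{-1}$ as the paper, but the paper packages it far more directly: rather than analysing the intersection $L \cap F(\zeta_p)$, it simply picks any $\tau_0 \in G_F$ with $\overline{\epsilon}(\tau_0)^2 \neq 1$ (possible since $[F(\zeta_p):F] = p-1 > 2$) and sets $\tau = \tau_0 \tau_0^c$. Then $\psi_{ij}(\tau) = \psi_{ij}(\tau_0)\psi_{ij}^c(\tau_0) = 1$, so $\overline{r}(\tau)$ is scalar, while $\overline{\epsilon}(\tau) = \overline{\epsilon}(\tau_0)^2 \neq 1$. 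Your field-theoretic argument is the ``global'' version of this computation: the statement that $c$ acts by inversion on $\Gal(L/F)$ is exactly what makes $\tau_0\tau_0^c$ land in $H$, and the trivial $c$-action on $\Gal(F(\zeta_p)/F)$ is what makes $\overline{\epsilon}(\tau_0\tau_0^c) = \overline{\epsilon}(\tau_0)^2$. So the approaches are equivalent, but the paper's explicit element bypasses the need to discuss normality, the intermediate field $M$, or the bound $[M:F] \le 2$.
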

\begin{proof}
	By the Chebotarev density theorem, it is enough to find $\tau \in G_F$ such 
	that $\overline{r}(\tau)$ is scalar and $\barepsilon(\tau) \neq 1$. We can 
	choose any $\tau_0 \in G_F$ such that $\barepsilon(\tau_0)^2 \neq 1$, and 
	set $\tau = \tau_0 \tau_0^c$.
\end{proof}
Choose a place $\wv_a$ of $F$ as in Lemma \ref{lem_v_a} which is absolutely unramified and of odd residue characteristic. We set $S_a = \{ \wv_a|_{F^+} \}$ and $\widetilde{S}_a = \{ \wv_a \}$.

We will need to consider several field extensions $L / F$ and global deformation problems. We therefore introduce some new notation. We define a deformation datum to be a pair $\cD = (L, \{ R_v \}_{v \in X})$ consisting of the following data:
\begin{itemize}
	\item A $Y^a \cup \widetilde{S}_a$-split, soluble CM extension $L / F$.
	\item A subset $X \subset S - S_p$, which may be empty. We write  $\widetilde{X}$ for the pre-image of $X$ in $\widetilde{S}$.
	\item For each $v \in X$, one of the following complete Noetherian local rings $R_v$, representing a local deformation problem:
	\begin{itemize}
		\item For any $v \in R_L$ such that $\wv$ is split over $F$, the ring $R_v = R(\wv,\Theta_\wv,\overline{r}|_{G_{L_\wv}})$ (notation as in Proposition \ref{prop_properties_of_lifting_ring} --  we define $\Theta_\wv = \Theta_{\wv|_F}$).
		\item For any $v \in S_L$ such that $q_v \equiv 1 \text{ mod }p$ and $\overline{r}|_{G_{L_\wv}}$ is trivial, the  unipotently ramified local lifting ring $R_v = R_v^{1}$ considered in \cite[\S 3.3.3]{jackreducible}.
		\item For any $v \in S_L$ such that $q_v \equiv 1 \text{ mod }p$ and $\overline{r}|_{G_{L_\wv}}$ is trivial, the Steinberg local lifting ring $R_v = R_v^{St}$ considered in \cite[\S 3.3.4]{jackreducible}.
	\end{itemize}
\end{itemize}
If $\cD = (L, \{ R_v \}_{v \in X})$ is a deformation datum, then we can define the global deformation problem
\[ \cS_\cD = (L / L^+, S_{p, L} \cup X, \widetilde{S}_{p, L} \cup \widetilde{X}, \Lambda_L, \overline{r}|_{G_{L^+}}, \epsilon^{1-n} \delta^n_{L/ L^+}, \{ R_v^\triangle \}_{v \in S_{p, L}} \cup \{ R_v \}_{v \in X}).\]
We write $R_\cD = R_{\cS_\cD} \in \cC_{\Lambda_L}$ for the representing object of the corresponding deformation functor. 
\begin{lemma}\label{lem_lower_bound_on_dim}
	If $\cD =  (L, \{ R_v \}_{v \in X})$ is a deformation datum, then each irreducible component of $R_\cD$ has dimension at least $1 + n [L^+ : \Q]$.
\end{lemma}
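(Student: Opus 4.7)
The plan is to establish this lower bound via the standard Galois-cohomological technique, as carried out in \cite[Proposition 3.14]{jackreducible} and \cite[\S 3.2]{All19}. The two key inputs are: (i) by Lemma \ref{lem_Schur_on_L}, $\overline{r}|_{G_{L^+(\zeta_p)}}$ is Schur, so $H^0(G_{L^+, S}, \mathrm{ad}\,\overline{r}) = 0$ and $H^0(G_{L^+, S}, \mathrm{ad}\,\overline{r}(1)) = 0$; and (ii) each local lifting ring appearing in $\cD$ has the ``expected'' dimension, being flat of the correct relative dimension over $\cO$ (or $\Lambda_{L, v}$ at $p$-adic places).

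Concretely, I would form the framed deformation ring $R_{\cD}^\square$ classifying deformations of type $\cS_\cD$ together with a choice of basis at each $v \in S_{p,L} \cup X$. By the Schur vanishing, $R_\cD^\square$ is a power series over $R_\cD$ in exactly $n^2 \cdot \#(S_{p,L} \cup X)$ variables. The standard presentation argument gives a surjection
\[
R_\cD^{loc}\llbracket X_1,\ldots,X_g \rrbracket \twoheadrightarrow R_\cD^\square,
\]
with kernel generated by at most $r$ elements, where $R_\cD^{loc} = \widehat{\bigotimes}_v R_v$ is the completed tensor product (over $\cO$) of the local lifting rings prescribed by $\cD$, and $g - r$ is bounded below using the Poitou--Tate long exact sequence together with Tate's global Euler characteristic formula for $\mathrm{ad}\,\overline{r}$. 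Since both $H^0$ and its Tate dual vanish, the global Euler characteristic depends only on the archimedean terms, and the bound on $g - r$ exactly compensates (after subtracting off framing) for the difference between the local dimensions and the ``expected'' global dimension $1 + n[L^+:\Q]$.

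To complete the calculation one needs the Krull dimensions of the local factors. For $v \in S_{p,L}$, the ordinary variable-weight ring $R_v^\triangle$ is $\Lambda_{L, v}$-flat of relative dimension $n^2 + [L_v^+:\Q_p]\,n(n-1)/2$ by \cite[Lemma 3.9]{ger}, hence of total dimension $1 + n[L^+_v:\Q_p] + n^2 + [L_v^+:\Q_p]\,n(n-1)/2$. For $v \in X$, each ring $R_v$ is $\cO$-flat of relative dimension $n^2$: for $R_v^{1}$ and $R_v^{St}$ this is verified in \cite[\S 3.3.3, \S 3.3.4]{jackreducible}, while for the new ring $R(\wv, \Theta_\wv, \overline{r}|_{G_{L_\wv}})$ this is precisely the first statement of Proposition \ref{prop_properties_of_lifting_ring}. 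Summing these dimensions, combining with the Euler-characteristic lower bound on $g - r$, and dividing out the framing variables yields $\dim R_\cD \geq 1 + n[L^+:\Q]$ on every irreducible component. The main technical point is the local--global matching that makes the archimedean contribution and the local framings cancel correctly; this bookkeeping is identical to that of \cite[Proposition 3.14]{jackreducible}, so the substance of the proof is simply to verify that $R(\wv, \Theta_\wv, \overline{r}|_{G_{L_\wv}})$ slots into that framework, which it does by Proposition \ref{prop_properties_of_lifting_ring}.
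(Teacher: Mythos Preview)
Your proposal is correct and takes essentially the same approach as the paper, which simply cites \cite[Proposition 3.9]{jackreducible} (note: 3.9, not 3.14) together with the observation that $H^0(L^+, \ad \overline{r}(1)) = 0$ follows from the Schur condition on $\overline{r}|_{G_{L^+(\zeta_p)}}$ via \cite[Lemma 2.1.7]{cht}. Your expanded sketch of the presentation argument and the local dimension count is exactly what that proposition encodes, and your observation that Proposition \ref{prop_properties_of_lifting_ring} supplies the needed equidimensionality for the new ring $R(\wv,\Theta_\wv,\overline{r}|_{G_{L_\wv}})$ is the one point that genuinely needs checking here.
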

\begin{proof}
	This follows from \cite[Proposition 3.9]{jackreducible}, noting that the term $H^0(L^+, \ad \overline{r}(1))$ vanishes because $\overline{r}|_{G_{L^+(\zeta_p)}}$ is Schur (cf. \cite[Lemma 2.1.7]{cht}).
\end{proof}
Given a deformation datum $\cD$, we define an open compact subgroup $U_\cD = \prod_{v \in S_L - S_{p, L}}U_{\cD, v} \subset \prod_{v \in S_L - S_{p, L}} \GL_n(\cO_{F_\wv})$ and a smooth $\cO[U_\cD]$-module $M_\cD = \otimes_{v \in S_L - S_{p, L}} M_{\cD, v}$ as follows:
\begin{itemize}
	\item If $v \not\in X$, then $U_{\cD, v} = \GL_n(\cO_{L_\wv})$ and $M_{\cD, v} = \cO$.
	\item If $v \in X \cap R_{L}$ and $R_v = R(\wv,\Theta_\wv,\overline{r}|_{G_{L_\wv}})$, then $U_{\cD, v} = \q_\wv$ and $M_{\cD, v}$ is an $\cO$-lattice in $\iota^{-1} \widetilde{\lambda}(\wv, \Theta_\wv, n)^\vee$ (notation as in \S \ref{subsec_local_theory_GL_n}).
	\item If $v \in X$ and $R_v = R_v^{1}$ or $R_v^{St}$, then $U_{\cD, v} = \Iw_\wv$ and $M_{\cD, v} = \cO$.
\end{itemize}
We now define a Hecke algebra $\T_\cD$ associated to any deformation datum $\cD = (L, \{ R_v \}_{v \in X})$. It is to be a finite $\Lambda_L$-algebra (or zero). If $c \geq 1$, let $U(\cD, c) \subset G(\A_{L^+}^\infty)$ be the open compact subgroup defined as follows: 
\[ U(\cD, c) = \prod_{v \in S_{p, L}}\iota_\wv^{-1} \Iw_\wv(c, c) \times (\prod_{v \in S_L - S_{p, L}} \iota_\wv^{-1}) U_{\cD} \times \prod_{v \in S_{a, L}} \iota_\wv^{-1} K_\wv(1) \times G(\widehat{\cO}_{L^+}^{S_L \cup S_{a, L}}) \]
(here we are using the notation for open compact subgroups established in \S \ref{subsec:alg_mod_forms}). 
Note that $U(\cD, 1)$ is sufficiently small, because of our  choice of $v_a$. 
We write \[ \T^{ord}(\cD, c) \subset \End_\cO(S^{ord}(U(\cD, c), M_\cD)) \]
for the $\cO$-subalgebra generated by the unramified Hecke operators $T_w^j$ at split places $v = w w^c \not\in S_L \cup S_{a, L}$ and the diamond operators $\langle u \rangle$ for $u \in \Iw_\wv(1, c)$ ($v \in S_{p, L}$).

Following \cite[\S 2.4]{ger}, we define $\T^{ord}(\cD) = \varprojlim_c \T^{ord}(\cD, c)$ and $H^{ord}(\cD) = \varprojlim_c \Hom(S^{ord}(U(\cD, c), M_\cD), \cO)$. We endow $\T^{ord}(\cD)$ with a $\Lambda_L$-algebra structure using the same formula as in \cite[Definition 2.6.2]{ger}. We then have the following result.
\begin{proposition}
	$H^{ord}(\cD)$ is a finite free $\Lambda_L$-module and $\T^{ord}(\cD)$ is a finite faithful $\Lambda_L$-algebra, if it is non-zero. 
\end{proposition}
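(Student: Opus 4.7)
The plan is to reduce to results already established in Section 2 of \cite{ger}, which carries out Hida theory for ordinary algebraic modular forms on precisely this kind of definite unitary group, and to verify that our set-up fits into that framework. The only mathematical content beyond \emph{loc.\ cit.} is a bookkeeping check that the tame level and coefficient module $M_\cD$ at the places $v \in X$ do not cause any difficulty; this is immediate because $U(\cD, c)$ decomposes as a product over places, with the $p$-part (the piece that varies with $c$) independent of $M_\cD$.

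First, I would verify that $U(\cD, c)$ is sufficiently small for every $c \geq 1$. The factor $K_\wv(1)$ at the place $\wv_a$ already achieves this: because $\wv_a$ was chosen so that $q_{\wv_a}$ is not a square modulo $p$ (Lemma \ref{lem_v_a} gives that $\wv_a$ is inert in $F(\zeta_p)$) and of odd residue characteristic, the intersection $G(L^+) \cap g K_\wv(1) g^{-1}$ contains no non-trivial element of finite order, exactly as in \cite[p. 1351]{ger}. Consequently Lemma \ref{lem_sufficiently_small} applies and $S(U(\cD, c), M_\cD) \otimes_\cO \cO/\varpi^s \iso S(U(\cD, c), M_\cD / (\varpi^s))$ for every $s, c \geq 1$.

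Second, I would invoke the core Hida-theoretic construction. The spaces $S(U(\cD, c), M_\cD)$ carry commuting actions of the unramified Hecke operators and of the diamond operators $\prod_{v \in S_{p, L}} \Iw_\wv(1, c) / \Iw_\wv(c, c)$, which assemble in the limit $c \to \infty$ into a continuous action of $\Lambda_L$. Passing to ordinary parts and dualising, one obtains $H^{ord}(\cD)$ as in \cite[\S 2.6]{ger}. The finite freeness of $H^{ord}(\cD)$ over $\Lambda_L$ is then the content of \cite[Proposition 2.20]{ger} (equivalently Corollary 2.23 of \emph{loc.\ cit.}); the argument proceeds by reducing, via the independence-of-$c$ statement for ordinary parts, to a uniform bound on $\dim_k S^{ord}(U(\cD, c), M_\cD \otimes_\cO k)$, which holds because the tame level is fixed and the ordinary projector cuts out a controlled-size subspace in each classical weight. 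The possibility that $H^{ord}(\cD) = 0$ is of course harmless.

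Finally, finite faithfulness of $\T^{ord}(\cD)$ is formal. By construction $\T^{ord}(\cD) \hookrightarrow \End_{\Lambda_L}(H^{ord}(\cD))$, and since $H^{ord}(\cD)$ is finite free over the Noetherian ring $\Lambda_L$, so is its endomorphism algebra; faithfulness is built into the definition provided $\T^{ord}(\cD)$ is non-zero. The main obstacle, as indicated, is not mathematical but notational: the normalisation of the $U_{\wv, \lambda}^j$ operators, the precise identification of the diamond operator subgroup with (a finite-index subgroup of) $\prod_{v \in S_{p, L}} T_n(\cO_{L_\wv})$, and the compatibility of our definition of the ordinary part with that of \cite[Definition 2.13]{ger}, all need to be checked to ensure that the references apply verbatim.
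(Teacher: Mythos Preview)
Your approach is essentially the paper's: both reduce to \cite[Proposition 2.20]{ger} and \cite[Corollary 2.21]{ger}, the key input being that $U(\cD, 1)$ is sufficiently small. One small correction to your justification of sufficient smallness: the reason $K_{\wv_a}(1)$ contains no non-trivial elements of finite order is that $\wv_a$ was chosen absolutely unramified of odd residue characteristic (so $1 + \varpi_{\wv_a} M_n(\cO_{L_{\ww}})$ is a torsion-free pro-$\ell$ group for each $\ww \mid \wv_a$, recalling that $L/F$ is $\widetilde{S}_a$-split), not any condition on $q_{\wv_a}$ modulo $p$.
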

\begin{proof}
	This can be proved in the same way as \cite[Proposition 2.20]{ger} and \cite[Corollary 2.21]{ger}. The proof uses that $U(\cD, 1)$ is sufficiently small.
\end{proof}
We write $\ffrm_\cD \subset \T^{ord}(\cD)$ for the ideal generated by $\ffrm_{\Lambda_L}$ and the elements $T_w^j - q_w^{j(j-1)/2} \tr \wedge^j \overline{r}(\Frob_w)$ ($w$ a split place of $L / L^+$ not lying above a place of $S_L \cup S_{a, L}$). It is either a maximal ideal with residue field $k$, or the unit ideal. In either case we set $\T_\cD = \T^{ord}(\cD)_{\ffrm_D}$, which is either a finite local $\Lambda_L$-algebra or the zero ring. (In the cases we consider, it will be non-zero, but this will require proof.)

For any deformation datum $\cD$, we write $P_\cD$ for the 
$\Lambda_L$-subalgebra of $R_{\cD}$ topologically generated by the coefficients 
of the characteristic polynomials of elements of $G_L$ in (a representative of) 
the universal deformation $r_{\cS_\cD}$. By \cite[Proposition 
3.26]{jackreducible}, the group determinant  $\det r_{\cS_\cD}|_{G_F}$ is valued in 
$P_{\cD}$, and $P_\cD$ is a complete Noetherian local $\Lambda_L$-algebra. By 
\cite[Proposition 3.29]{jackreducible}, $R_{\cD}$ is a finite $P_\cD$-algebra.
\begin{lemma}\label{lem_gal_to_hecke}
	Let $\cD = (L, \{ R_v \}_{v \in X})$ be a deformation datum, and suppose 
	that $R_v \neq R_v^{St}$ for all $v \in X$. Then there is a natural 
	surjective morphism $P_\cD \to \T_\cD$ of $\Lambda_L$-algebras.
\end{lemma}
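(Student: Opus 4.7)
The plan is to construct a natural $\Lambda_L$-algebra homomorphism $\phi \colon P_\cD \to \T_\cD$ by interpolating characteristic polynomials of classical Galois representations over the Hida--Hecke algebra, and then to verify surjectivity via a Chebotarev density argument. First, I would appeal to Hida theory (using that $H^{ord}(\cD)$ is a finite projective $\Lambda_L$-module with classical specializations Zariski dense) to associate to each classical point of $\operatorname{Spec} \T_\cD$ an $\iota$-ordinary RACSDC automorphic representation $\pi'$ of $\GL_n(\bA_L)$ with $\overline{r}_{\pi', \iota} \cong \overline{r}|_{G_L}$. Since $\overline{r}|_{G_{L^+(\zeta_p)}}$ is Schur (by Lemma \ref{lem_Schur_on_L}), each $r_{\pi', \iota}$ extends to a homomorphism $G_{L^+} \to \cG_n(\overline{\bQ}_p)$ with multiplier $\epsilon^{1-n} \delta_{L/L^+}^n$, and hence determines a conjugate self-dual determinant in the sense of Chenevier.

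Next, reducedness of $\T_\cD \otimes_\cO E$, which holds because the Hecke operators act semisimply on spaces of cuspidal automorphic forms, would allow me to interpolate these classical determinants into a continuous conjugate self-dual determinant $T \colon G_{L^+, S_L \cup S_{a, L}} \to \T_\cD$ lifting $\overline{r}|_{G_{L^+}}$, characterized by $T(\Frob_w) = T_w^1$ at prime-to-$(S_L \cup S_{a,L})$ split places $w$ and analogous formulae for higher exterior powers. The key step is then to check that $T$ corresponds to a $\cS_\cD$-deformation. At $v \in S_{p, L}$ the ordinarity condition $R_v^\triangle$ and the correct $\Lambda_L$-algebra structure are automatic from Hida theory, since the universal characters $\psi_i^v$ are interpolated by diamond operators. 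At $v \in X \cap R_L$ with $R_v = R(\wv, \Theta_\wv, -)$, the coefficient module $M_{\cD, v}$ forces each classical $\pi'_\wv|_{\q_\wv}$ to contain $\widetilde{\lambda}(\wv, \Theta_\wv, n)$, so by Proposition \ref{prop_types_for_general_linear_group} combined with Proposition \ref{prop_properties_of_lifting_ring}(2) each classical Galois representation factors through $\Spec R(\wv, \Theta_\wv, -)$; since this is an irreducible-component condition in $\Spec R_v^\square$, it propagates to $T$ by density. At $v \in X$ with $R_v = R_v^1$, the Iwahori-level condition at $v$ forces unipotent monodromy at each classical point, a condition readable from the characteristic polynomial of the associated Weil--Deligne representation and therefore captured by the determinant. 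This construction yields the desired map $\phi \colon P_\cD \to \T_\cD$ via the defining property of $P_\cD$ as topologically generated by the characteristic polynomial coefficients of $r_{\cS_\cD}|_{G_L}$.

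Surjectivity is then a Chebotarev argument: the image of $\phi$ contains $q_w^{j(j-1)/2} T_w^j$ for all prime-to-$(S_L \cup S_{a, L})$ split places $w$ of $L$ and all $1 \le j \le n$, which together with the image of $\Lambda_L$ generate $\T_\cD$ by construction. The main obstacle is the local condition verification at $v \in X$, and in particular the assertion that the condition $R_v^1$ descends to the universal pseudocharacter quotient. This is precisely why the Steinberg condition $R_v^{St}$ must be excluded from the hypothesis: the regular unipotent monodromy distinguishing $R_v^{St}$ from $R_v^1$ cannot be read off from the characteristic polynomial of a Weil--Deligne representation alone, so while classical points with non-Steinberg (merely Iwahori-fixed) local components do contribute to $\T_\cD$ in general, the interpolated $T$ need not lie in $\Spec R_v^{St}$, and no analogous map $P_\cD \to \T_\cD$ can be constructed when Steinberg conditions are imposed on $R_\cD$.
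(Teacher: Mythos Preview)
Your strategy and the paper's share the same skeleton---verify that each classical Galois representation is of type $\cS_\cD$, then descend to the pseudocharacter quotient---but the executions diverge. The paper works at each finite level $c$: given the finitely many contributing automorphic representations $\sigma$ of $G(\bA_{L^+})$, it forms the local ring $A_c \subset k \oplus \bigoplus_\sigma \cO_c$ of tuples with common residue, and notes that the product $\overline{r} \times \prod_\sigma r_\sigma$ is an \emph{honest} lift of $\overline{r}$ to $\cG_n(A_c)$. Once this lift is shown to be of type $\cS_\cD$ one has a map $R_\cD \to A_c$, and a commutative square with $Q_{S_L}\widehat{\otimes}_\cO\Lambda_L$ (the universal pseudodeformation ring) in the lower left shows that $P_\cD$, being the image of $Q_{S_L}\widehat{\otimes}_\cO\Lambda_L$ in $R_\cD$, surjects onto $\T^{ord}(\cD,c)_{\ffrm_\cD} \hookrightarrow A_c$. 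Taking inverse limits finishes; no reducedness of $\T_\cD$ or Zariski density argument is needed.

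Your version has two gaps. First, classical points of $\T_\cD$ correspond to automorphic representations of the definite unitary group $G(\bA_{L^+})$, not to RACSDC representations of $\GL_n(\bA_L)$; the base change (Theorem~\ref{thm_base_change}) is only an isobaric sum, and with $\overline{r}$ a sum of characters non-cuspidal base change certainly occurs. Corollary~\ref{cor:galois_existence} still furnishes $r_{\sigma,\iota}$, so this is a misstatement rather than a fatal error, but the extension to $\cG_n$ then needs \cite[Lemmas~2.1.5, 2.1.7]{cht} rather than just the Schur condition. Second, and more seriously, phrases like ``$T$ corresponds to a $\cS_\cD$-deformation'' and ``propagates to $T$ by density'' are ill-formed: $T$ is a determinant law, and since $\overline{r}$ is reducible the local conditions $R_v^\triangle$, $R(\wv,\Theta_\wv,-)$, $R_v^1$ are not conditions that can be imposed on a determinant. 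What you actually need is that the classifying map $Q_{S_L}\widehat{\otimes}_\cO\Lambda_L \to \T_\cD$ kills $\ker(Q_{S_L}\widehat{\otimes}_\cO\Lambda_L \to P_\cD)$; this does follow by density once each classical $r_\sigma$ is known to be of type $\cS_\cD$, but that reformulation is precisely what the paper's $A_c$ construction packages directly, bypassing any appeal to reducedness of $\T_\cD\otimes_\cO E$.
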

\begin{proof}
	The proof is the essentially the same as the proof of \cite[Proposition 4.12]{jackreducible}, but we give the details for completeness. It is enough to construct maps $P_\cD \to \T^{ord}(\cD, c)_{\ffrm_D}$ which are compatible as $c \geq 1$ varies. Let $\Pi(\cD, c)$ denote the set of automorphic representations $\sigma$ of $G(\A_{L^+})$ with the following properties:
	\begin{itemize}
		\item $\overline{r}_{\sigma, \iota} \cong \overline{r}|_{G_L}$. 
		\item $\sigma_\infty$ is the trivial representation.
		\item The subspace 
		\[ \Hom_{U(\cD, c)}(M_\cD^\vee, 
		\iota^{-1}\sigma^\infty)^{ord} \subset \Hom_{U(\cD, c)}(M_\cD^\vee 
		, \iota^{-1}\sigma^\infty)\]
		 where all the Hecke operators $U_{\wv, 0}^j$ ($v \in S_p$, $j = 1, \dots, n$) act with eigenvalues which are $p$-adic units is non-zero. 
	\end{itemize}
	Then there is an injection 
	\begin{equation}\label{eqn_injection_of_hecke_algebras} \T^{ord}(\cD, c)_{\ffrm_D} \otimes_\cO \overline{\Q}_p \to \oplus_{\sigma \in \Pi(\cD, c)} \overline{\Q}_p
	\end{equation} 
	which sends any Hecke operator to the tuple of its eigenvalues on each $(\iota^{-1} \sigma^\infty)^{U(\cD, c)}$. We can find a coefficient field $E_c / E$ with ring of integers $\cO_c$ and for each $\sigma \in \Pi(\cD, c)$, a homomorphism $r_\sigma : G_{L^+, S} \to \cG_n(\cO_c)$ lifting $\overline{r}$ and such that $r_\sigma|_{G_L} \cong r_{\sigma, \iota}$ (apply \cite[Lemmas 2.1.5, 2.1.7]{cht}).
	
	Let $A_c \subset k \oplus \bigoplus_{\sigma \in \Pi(\cD, c)} \cO_c$ be the 
	subring consisting of elements $(a, (a_\sigma)_\sigma)$ such that for each 
	$\sigma$, $a_\sigma \text{ mod }\varpi_c = a$. Then $A_c$ is a local ring 
	containing the image of the map (\ref{eqn_injection_of_hecke_algebras}), 
	and the representation $\overline{r} \times (\times_{\sigma \in \Pi(\cD, 
	c)} r_\sigma)$ is valued in $\cG_n(A_c)$ and is of type $\cS_\cD$ (by our choice of deformation problems and level structures). Writing $Q_{S_L} \in \cC_\cO$ for 
	the ring classifying pseudocharacters which lift $\tr  
	\overline{r}|_{G_{L, S_L}}$, we see that there is a commutative diagram
	\[ \xymatrix{ R_\cD \ar[r] & A_c \\ Q_{S_L} \widehat{\otimes}_\cO \Lambda_L \ar[r] \ar[u] & \T^{ord}(\cD, c)_{\ffrm_D} \ar[u]. } \]
	The ring $P_\cD$ is equal to the image of the map $Q_{S_L} 
	\widehat{\otimes}_\cO \Lambda_L \to R_\cD$. The proof is thus complete on 
	noting that the right vertical arrow is injective and the bottom horizontal 
	arrow is surjective. 
\end{proof}
 We define $J_\cD = \ker(P_\cD \to \T_\cD)$; this is a proper ideal if and only if $\T_\cD \neq 0$.
 
We now fix a place $\overline{v}_{St}$ of $\overline{F}$ above $v_{St}$. If $L / F$ is a CM extension, we write $v_{St, L} = \overline{v}_{St}|_{L}$.
\begin{lemma}\label{lem_no_ramification}
	We can find a deformation datum $\cD_1 = (L_1, \emptyset)$ with the following properties:
		\begin{enumerate}
			\item $\T_{\cD_1} \neq 0$.
			\item There exists a prime ideal $\p_1 \subset R_{\cD_1}$ of dimension 1 and characteristic $p$ which is generic, such that $J_{\cD_1} \subset \p_1$, and such that $r_{\p_1}|_{G_{L_1, v_{St, L_1}}}$ is the trivial representation.
		\end{enumerate}
\end{lemma}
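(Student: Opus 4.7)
The plan is to choose a soluble CM extension $L_1/F$ that kills the ramification of $\overline{r}$ outside $S_p$, use Proposition \ref{prop_automorphic_lifts_of_prescribed_types} together with the base change of $\pi$ to produce a classical point in $\T_{\cD_1}$, and finally apply Theorem \ref{thm_existence_of_generic_primes} to a suitable quotient of $R_{\cD_1}$ imposing triviality of Frobenius at $v_{St, L_1}$.

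\textbf{Step 1: Choice of $L_1$.} First I would choose a soluble CM extension $L_1 / F$ satisfying: (i) $L_1/F$ is $Y^a \cup \widetilde{S}_a$-split; (ii) $\overline{r}|_{G_{L_1}}$ is unramified outside $S_{p,L_1}$, which is achievable since the inertia of $\overline{r}$ at each $v \in R$ is tame of order dividing 2 so a single ramified quadratic extension suffices at each such place; (iii) for each $v \in S_{p, L_1}$, $[L_{1,\wv}:\Q_p] > n(n-1)/2 + 1$ and $\overline{r}|_{G_{L_{1,\wv}}}$ remains trivial; (iv) the $\Z_p$-ranks $d_{R_{1,L_1}}$ and $d_{R_{2,L_1}}$ of the Frobenius subgroups in $\Delta_{L_1}$ both exceed $2n$, which can be arranged by taking enough split places above $R_1$ and $R_2$ respectively; and (v) $v_{St}$ has a place $v_{St, L_1}$ above it with $q_{v_{St, L_1}} \equiv 1 \pmod p$ and $\overline{r}|_{G_{L_{1,v_{St, L_1}}}}$ trivial (automatic from $Y^a$-splitness together with the given hypotheses at $v_{St}$).

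\textbf{Step 2: Non-vanishing of $\T_{\cD_1}$.} By Lemma \ref{lem_Schur_on_L}, the base change $\pi_{L_1}$ is cuspidal and $\overline{r}|_{G_{L_1}}$ is primitive, and the residual representation $\overline{r}|_{G_{L_1}}$ is automorphic via $\pi_{L_1}$. I would then apply Proposition \ref{prop_automorphic_lifts_of_prescribed_types} with $F_0 = L_1$, the soluble extension taken to be $L_1$ itself, $T_0 = \emptyset$, $\Sigma_0 = \emptyset$, and weight matching that of $\pi_{L_1}$; the local conditions at $S_{p,L_1}$ are those of an ordinary crystalline (or at least potentially semistable unramified) lift. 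This produces a RACSDC $\iota$-ordinary $\pi''$ on $\GL_n(\A_{L_1})$ with $\overline{r}_{\pi'',\iota} \cong \overline{r}|_{G_{L_1}}$ and unramified outside $S_{p,L_1}$. Descending $\pi''$ via Theorem \ref{thm_cuspidal_descent} yields an automorphic representation on $G(\A_{L_1^+})$ whose contribution to $S^{ord}(U(\cD_1, c), \cO)_{\ffrm_{\cD_1}}$ is nonzero for some sufficiently large $c$, hence $\T_{\cD_1} \neq 0$.

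\textbf{Step 3: Generic prime via Theorem \ref{thm_existence_of_generic_primes}.} Consider the quotient
\[ B = R_{\cD_1}\bigl/\bigl(\varpi,\ J_{\cD_1} R_{\cD_1},\ \{\tr r_{\cS_{\cD_1}}(\Frob_{v_{St, L_1}}^i) - n\}_{i=1}^n\bigr). \]
Since $R_{\cD_1}$ is finite over $P_{\cD_1}$ by \cite[Proposition 3.29]{jackreducible} and $\T_{\cD_1} = P_{\cD_1}/J_{\cD_1}$ is finite over $\Lambda_{L_1}$, the ring $B$ is a finite $\Lambda_{L_1}/(\varpi)$-algebra. I would check that the two conditions of Theorem \ref{thm_existence_of_generic_primes} hold for $\cS = \cS_{\cD_1}$ and this $B$: condition (2) regarding unramified subquotients at $R_1 \sqcup R_2$ persists under base change from our global hypotheses on $\overline{r}$ and the splitting $R = R_1 \sqcup R_2$; for condition (1) on dimensions, combining Lemma \ref{lem_lower_bound_on_dim} with an argument as in the proof of Corollary \ref{cor:modouttosteinberg} gives $\dim B \geq n[L_1^+:\Q] - n$, and this exceeds both $n[L_1^+:\Q] + n - d_{R_{i, L_1}}$ (thanks to $d_{R_{i,L_1}} > 2n$) and $n[L_1^+:\Q] - [L_{1,\wv}:\Q_p]$ (thanks to $[L_{1,\wv}:\Q_p] > n$). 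The theorem then produces a generic prime $\p_1 \subset R_{\cD_1}$ of dimension 1 and characteristic $p$ containing $J_{\cD_1}$; since $\cD_1$ has $X = \emptyset$, the universal deformation is unramified at $v_{St, L_1}$, and the Frobenius condition we imposed forces $r_{\p_1}(\Frob_{v_{St, L_1}}) = 1$, so $r_{\p_1}|_{G_{L_{1, v_{St, L_1}}}}$ is trivial as required.

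\textbf{Main obstacle.} The hardest part is establishing the lower bound $\dim B \geq n[L_1^+:\Q] - n$; this relies on showing that $\dim R_{\cD_1}/J_{\cD_1} R_{\cD_1}$ is close to $\dim \Lambda_{L_1}$, equivalently that $\T_{\cD_1}$ has full dimension over $\Lambda_{L_1}$. This in turn follows from the abundance of classical ordinary automorphic lifts of varying weight produced (via classical specializations and weight variation) starting from the single lift of Step 2, together with the finite freeness of $H^{ord}(\cD_1)$ over $\Lambda_{L_1}$, which propagates the Hecke eigensystem across all classical weights in the Hida family.
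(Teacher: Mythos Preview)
Your proposal has a fundamental gap in Step 3. Theorem \ref{thm_existence_of_generic_primes} is stated for a deformation problem $\cS$ that \emph{includes} the local conditions $R(\wv,\Theta_\wv,n)$ at the places of $R = R_1 \sqcup R_2$; these conditions are exactly what feeds into Proposition \ref{prop_bound_on_dimension_of_reducibility_locus} (via Proposition \ref{prop_properties_of_lifting_ring}(3)) to bound the dimension of the reducible locus by $n[L^+:\Q]+n-d_{R_i}$. In your deformation datum $\cD_1 = (L_1,\emptyset)$ the set $X$ is empty, so $\cS_{\cD_1}$ has no local conditions at the places above $R$ at all, and the theorem gives no useful bound (effectively $d_R = 0$). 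Worse, after your Step 1(ii) the residual representation $\overline{r}|_{G_{L_1}}$ is unramified at places above $R$, so the required residual decomposition $\overline{\sigma}_{\wv,1}\oplus\overline{\sigma}_{\wv,2}$ with $\overline{\sigma}_{\wv,2}$ a ramified quadratic twist of an unramified representation simply fails. Thus neither the hypotheses nor the mechanism of Theorem \ref{thm_existence_of_generic_primes} are available to you at the level of $\cD_1$.

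This is precisely why the paper proceeds in two stages. It first passes to an intermediate extension $L_0/F$ that is $\widetilde{R}$-split (so the type conditions at $R_{L_0}$ survive) and large enough that $d_{R_{i,L_0}} > n + n^2$ and $[L_{0,\wv}:\Q_p] > n^2$, defines $\cD_0 = (L_0,\{R(\wv,\Theta_\wv,\overline{r}|_{G_{L_{0,\wv}}})\}_{v\in R_{L_0}})$, and applies Theorem \ref{thm_existence_of_generic_primes} there to obtain a generic prime $\p_0$. Only afterwards does it choose $L_1/L_0$ to trivialise the types (using Proposition \ref{prop_properties_of_lifting_ring}(4), which is stronger than merely killing residual ramification and is what makes $\pi_1$ unramified outside $S_{p,L_1}$), and then \emph{pulls back} $\p_0$ along a compatible diagram $R_{\cD_1}\to R_{\cD_0}$, $\T_{\cD_1}\to\T_{\cD_0}$. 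The construction of the Hecke map $\T_{\cD_1}\to\T_{\cD_0}$ is itself nontrivial and uses irreducibility of $r_{\sigma_0,\iota}$ for all $\sigma_0$ contributing to $\T_{\cD_0}$, which is established using the type conditions at $R_{L_0}$. In short: the type conditions at $R$ are the engine that controls reducibility, and you have discarded them before running that engine.
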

\begin{proof}
We first claim that we can find an $\widetilde{R} \cup Y^a \cup \{ \wv_a \}$-split soluble CM extension $L_0 / F$ with the following properties:
\begin{itemize} \item For each $i \in \{ 1, 2 \}$, let $L_{0, S_p}$ denote the maximal abelian pro-$p$ extension of $L_0$ unramified outside $S_{p, L_0}$, and let $\Delta_{L_0} = \Gal(L_{0, S_p} / L_0) / (c+1)$. Let $d_{R_{i, L_0}}$ denote the $\bZ_p$-rank of the subgroup of $\Delta_{L_0}$ topologically generated by the elements $\Frob_\wv$, $v \in R_{i, L_0}$. Then $d_{R_{i, L_0}} > n + n^2$.
	\item For each $v \in S_{p, L_0}$, $[L_{0, \wv} : \Q_p] > n^2$.
	\item For each  $v \in S_{L_0} - (S_{p, L_0} \cup R_{L_0})$, 
	$\overline{r}|_{G_{L_{0, \wv}}}$ is trivial, and $q_v \equiv 1 \text{ mod 
	}p$.
\end{itemize}
The third property is automatic, since $S_{L_0} - (S_{p, L_0} \cup R_{L_0})$ 
	consists of primes above $v_{St}|_{F^+}$. We can construct an extension satisfying the first two properties using a similar idea to the proof of \cite[Theorem 7.1]{All19}. Indeed, we can find, for any odd integer $d \geq 1$, a cyclic totally real extension $M_d / F^+$ which is $R \cup \{ v|_{F^+} \mid v \in Y^a \} \cup S_a$-split and in which each place $v \in S_p$ is totally inert. If $d > n^2$ and $L_0 = M_d \cdot F$ then $L_0 / F$ will be a $\widetilde{R} \cup Y^a \cup \{ \wv_a \}$-split soluble CM extension which also  satisfies the second point above. We need to explain how to arrange that the first point is also satisfied. By class field theory, $d_{R_{i, L_0}}$ is equal to the $\bZ_p$-rank of the subgroup of $(\cO_{L_0} \otimes_\bZ \bZ_p)^\times$ topologically generated by $(\cO_{L_0, R_{i, L_0}}^\times)^{c = -1}$. Since $\cO_{L_0, R_{i, L_0}}^{c = -1} \otimes_\bZ \overline{\bQ}_p$ decomposes as a $\overline{\bQ}_p[\Gal(L_0 / F^+)]$-module with multiplicity 1, \cite[Proposition 19]{Mai02} shows that this rank equals the $\bZ$-rank of $(\cO_{L_0, R_{i, L_0}}^\times)^{c = -1}$, which is $d|R_i|$. Choosing any $d > n + n^2$ therefore gives an extension with the desired properties.

 Let $\pi_0$ be the base change of $\pi$ with respect to the extension $L_0 / F$. It is cuspidal by Lemma \ref{lem_Schur_on_L}. Let $\cD_0 = (L_0, \{ R(\wv,\Theta_\wv,\overline{r}|_{G_{L_{0,\wv}}}) \}_{v \in R_{L_0}})$. Then $\cD_0$ is a deformation datum and the existence of $\pi_0$, together with Theorem \ref{thm_cuspidal_descent}, shows that $\T_{\cD_0} \neq 0$.
Let $B = R_{\cD_0} / (J_{\cD_0}, \ffrm_{R_{v_{St, L_0}}^{ur}})$. Then $\dim B 
\geq 
n[L_0^+ : \Q] - n^2$, and we may apply Theorem 
\ref{thm_existence_of_generic_primes} to conclude the existence of a generic 
prime $\p_0 \subset R_{\cD_0}$ of dimension 1 and characteristic $p$ which 
contains $(J_{\cD_0}, \ffrm_{R_{v_{St, L_0}}^{ur}})$.

We now make another base change. Let $L_1 / L_0$ be a CM extension with the following properties:
\begin{itemize}
	\item $L_1 / F$ is soluble and $Y^a \cup \{ \wv_a \}$-split.
	\item For each $v \in R_{L_1}$, the natural morphism 
	$R^{\square}_\wv \to R(\wv,\Theta_\wv,\overline{r}|_{G_{L_{0,\wv}}})$ factors over the unramified quotient 
	$R^\square_{\wv}\to R_\wv^{ur}$ (cf.~Proposition 
	\ref{prop_properties_of_lifting_ring}). 
	\item For each $v \in S_{L_1} - S_{p, L_1}$, $q_v \equiv 1 \text{ mod }p$ and $\overline{r}|_{G_{L_{1, \wv}}}$ is trivial.
\end{itemize}
Let $\pi_1$ denote the base change of $\pi_0$ to $L_1$. Then $\pi_1$ is a RACSDC, $\iota$-ordinary automorphic representation of $\GL_n(\A_{L_1})$ which is unramified outside $S_{p, L}$. Thus $\cD_1 = (L_1, \emptyset)$ is a deformation datum and $\T_{\cD_1} \neq 0$.

To complete the proof, it is enough to produce a commutative diagram
\[ \xymatrix{R_{\cD_1} \ar[d] & P_{\cD_1} \ar[l]\ar[d] \ar[r] & \T_{\cD_1}\ar[d] \\
	R_{\cD_0}  & P_{\cD_0} \ar[l]\ar[r]   & \T_{\cD_0}. }
	 \]
	Indeed, then we can take $\p_1$ to be the pullback of $\p_0$ along the map $R_{\cD_1} \to R_{\cD_0}$.
	The only arrow which has not already been constructed is the arrow $\T_{\cD_1} \to \T_{\cD_0}$. This may be constructed in exactly the same way as \cite[Proposition 4.18]{jackreducible}, using the construction of the Hecke algebra as an inverse limit as in the proof of Lemma \ref{lem_gal_to_hecke}, provided we can prove the following statement: for any automorphic representation $\sigma_0$ of $G(\bA_{L^+_0})$ satisfying the following conditions:
	\begin{itemize}
	\item There  exists $c \geq 1$ such that $\Hom_{U(\cD_0, c)}(M^\vee_{\cD_0}, \iota^{-1}\sigma_0^\infty)^{ord} \neq 0$;
	\item $\sigma_{0, \infty}$ is trivial;
	\item There is an isomorphism $\overline{r}_{\sigma_0, \iota} \cong \overline{r}_{\pi, \iota}|_{G_{L_0}}$;
	\end{itemize}
	(in other words, such that $\sigma_0$ contributes to $S^{ord}(U(\cD, c), M_\cD)_{\ffrm_{\cD_0}}$ for some $c \geq 1$), there exists an automorphic representation $\sigma_1$ of $G(\bA_{L^+_1})$ satisfying the following conditions:
	\begin{itemize}
		\item There exists $c \geq 1$ such that $\Hom_{U(\cD_1, c)}(M_{\cD_1}^\vee, \iota^{-1} \sigma_1^\infty)^{ord} \neq 0$;
	\item $\sigma_{1, \infty}$ is trivial;
	\item There is an isomorphism $r_{\sigma_1, \iota} \cong r_{\sigma_0, \iota}|_{G_{L_1}}$.
	\end{itemize}
	To see this, we first show that the base change of any such $\sigma_0$ to 
	$L_0$ (in the sense of Theorem \ref{thm_base_change}) must be cuspidal. We 
	will show that in fact $r_{\sigma_0, \iota}$ is irreducible. Suppose that 
	there  is a decomposition $r_{\sigma_0, \iota} \cong \rho_1 \oplus \rho_2$. 
	By assumption, there exists $v \in R_L$ such that both 
	$\overline{\rho}_1|_{G_{L_0, \wv}}$ and $\overline{\rho}_2|_{G_{L_0, \wv}}$ 
	admit an unramified  subquotient. However, local-global compatibility 
	(together with Proposition \ref{prop_types_for_general_linear_group}) shows 
	that  $r_{\sigma_0, \iota}|_{G_{L_0, \wv}} \cong \rho_1' \oplus \rho_2'$, 
	where $\rho_1' $ is an irreducible  representation of $G_{L_{0, \wv}}$ with 
	unramified residual representation and $\rho_2'$ is a representation of 
	$G_{L_{0, \wv}}$ such that $\overline{\rho}_2'$ is a sum of ramified 
	characters. This is a contradiction unless one of $\rho_1$ and $\rho_2$ is 
	the zero representation. If $\mu_0$ denotes the base change of $\sigma_0$, 
	a RACSDC $\iota$-ordinary automorphic representation of $\GL_n(\bA_{L_0})$, 
	then the base change of $\mu_0$ with respect to the soluble extension $L_1 
	/ L_0$ is also cuspidal, by Lemma \ref{lem_Schur_on_L}, and 
	the existence of $\sigma_1$ follows  from Theorem 
	\ref{thm_cuspidal_descent}. This completes the proof.
\end{proof}
We can now complete the proof of Theorem 
\ref{thm_level_raising_through_Galois_theory}. We recall that it is enough to construct a RACSDC, $\iota$-ordinary automorphic representation $\pi''$ of $\GL_n(\A_{L_1})$ such that $\overline{r}_{\pi'', \iota} \cong \overline{r}|_{G_{L_1}}$ and $\pi''_{v_{St, L_1}}$ is an unramified twist of the Steinberg representation. Let $\cD_1$ and $\p_1$ be as in 
the statement of Lemma \ref{lem_no_ramification}. Consider 
the deformation data $\cD_{1,a} = (L_1, \{ R_{v_{St, L_1}}^{St} \})$ and 
$\cD_{1, b} = (L_1, \{ R_{v_{St, L_1}}^1 \})$. Then there are surjections 
$R_{\cD_{1, b}} \to R_{\cD_{1, a}}$ and $R_{\cD_{1, b}} \to R_{\cD_1}$ and the 
prime $\p_1$ lies in intersection of $\Spec R_{\cD_1}$ and $\Spec R_{\cD_{1, 
a}}$ in $\Spec R_{\cD_{1, b}}$. We see that the hypotheses of \cite[Theorem 
4.1]{All19} are satisfied for $R_{\cD_{1, b}}$ (in the notation of loc.~cit., 
we set $R = \{v_{St,L_1}\}, \chi_{v_{St,L_1}} = 1, S(B) = \emptyset$), and 
conclude that for any 
minimal prime $Q \subset R_{\cD_{1, b}}$ contained in $\p_1$, we have 
$J_{\D_{1, b}} \subset Q$. In particular, $\dim R_{\cD_{1, b}} / Q = \dim 
\Lambda_{L_1}$. (We  remark that the essential condition for us in applying \cite[Theorem 
4.1]{All19} is that there is no ramification outside $p$; this is the reason for  proving Lemma \ref{lem_no_ramification}.)

Lemma \ref{lem_lower_bound_on_dim} and Theorem 
\ref{thm_finiteness_of_deformation_ring} show together that each minimal prime 
of $R_{\cD_{1, a}}$ has dimension equal to $\dim \Lambda_{L_1}$. Let $Q_a 
\subset R_{\cD_{1, a}}$ be a minimal prime contained in $\p_1$. Then $Q_a$ is 
also a minimal prime of $R_{\cD_{1,  b}}$, $J_{\cD_{1, b}} \subset Q_a$, and 
there exists a minimal prime $Q_0$ of $\Lambda_{L_1}$ such that $R_{\cD_{1, a}} 
/ 
Q_a$ is a finite faithful $\Lambda_{L_1} / Q_0$-algebra. If $Q_1 = Q_a \cap 
P_{\cD_{1, a}}$, then there are finite injective algebra maps
\[ \Lambda_{L_1} / Q_0 \to P_{\cD_{1, a}} / Q_1 \cong \T_{\cD_{1, b}} / Q_1 \to R_{\cD_{1, a}} / Q_a. \]
Using \cite[Lemma 
2.25]{ger} and Theorem \ref{thm_base_change}, we conclude the existence of an automorphic representation $\pi''$ of $\GL_{n}(\A_{L_1})$ with the required properties.

\section{Level 1 case}\label{sec_existence_of_seed_points}

The goal of this section is to prove Theorem \ref{introthm_existence_of_single_symmetric_power}, using the level-raising results established in the last few sections. Combining this with the results of \S\S \ref{sec_rigid_geometry} -- \ref{sec_ping_pong}, we will then be able to deduce Theorem \ref{thm_intro_main_theorem}.

Our starting point is $\sigma_0$, the cuspidal automorphic representation of 
$\GL_2(\bA_\bQ)$ of weight 5 associated to the unique normalised newform
\[ f_0(q) = q  - 4q^{2} + 16q^{4} - 14q^{5} - 64q^{8} +  \dots \]
 of 
level $\Gamma_1(4)$ and weight 5; it is the automorphic induction from the 
quadratic extension $K = \bQ(i)$ of the unique unramified Hecke character with 
$\infty$-type $(4,0)$. For any prime $p \equiv 1 \text{ mod 
}4$ and isomorphism $\iota : \overline{\bQ}_p \to \bC$, $\sigma_0$ is 
$\iota$-ordinary. We observe that ${r}_{\sigma_0, \iota} \cong 
\Ind_{G_K}^{G_\Q} \psi$ for a character ${\psi} : 
G_K \to \Qpbarx$ (which depends on $p$) and $\det{r}_{\sigma_0, \iota} = 
\delta_{K / \bQ} \epsilon^{-4}$, where $\delta_{K / \bQ} : G_\Q \to \{\pm 1\}$ is the 
quadratic character with kernel $G_K$.

The main technical result of 
this 
section is the following theorem:
\begin{theorem}\label{thm:levelraisingalln}
	Let $n \geq 3$ be an integer. Suppose given the following data:
	\begin{enumerate}
		\item A prime $p \equiv 1 \pmod{48 n!}$ and an isomorphism $\iota : \Qpbar \to \C$.
		\item A prime $q \neq p$.	
		\item A finite set $X_0$ of places of $K$, each prime to $2pq$.
		\item A de Rham character $\omega : G_K \to \overline{\Q}_p^\times$ such that $\omega \omega^c = \epsilon^3$ and $\omega|_{G_{K_v}}$ is unramified if $v \in X_0$.
	\end{enumerate}
	Then there exists a soluble CM extension $F / 
	K$ with the following properties:
	\begin{enumerate}
		\item $F / K$ is $X_0$-split.
		\item There is a RACSDC, $\iota$-ordinary automorphic representation 
		$\Pi$ of $\GL_n(\A_F)$ with the following properties:
		\begin{enumerate}
			\item There is an isomorphism 
			\[ \overline{r}_{\Pi, \iota} \cong \overline{\omega}^{n-1}|_{G_F} \otimes \Sym^{n-1} \overline{r}_{\sigma_0, \iota}|_{G_F}. \]
			\item For each embedding $\tau : F \to \overline{\bbQ}_p$, we have 
			\[ \mathrm{HT}_\tau(r_{\Pi, \iota}) = \mathrm{HT}_\tau(\omega^{n-1}|_{G_F} \otimes \Sym^{n-1} r_{\sigma_0, \iota}|_{G_F}). \]
			\item There exists a place $v | q$ of $F$ such that $\Pi_v$ is an unramified twist of the Steinberg representation.
		\end{enumerate}
	\end{enumerate}
\end{theorem}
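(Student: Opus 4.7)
The plan is to prove Theorem \ref{thm:levelraisingalln} by strong induction on $n \geq 3$, with the induction splitting according to the parity of $n$. For even $n$, the strategy is to apply the main theorem of \cite{Ana19}, which takes a RACSDC automorphic representation realising $\Sym^{m-1}f$ (for $f$ a classical newform) and produces one realising $\Sym^{2m-1}f$. Setting $m = n/2$ and invoking the inductive hypothesis for dimension $m$ (or, when $m = 2$, using $\sigma_0$ itself after suitable soluble base change of $K$ to arrange an unramified-twist-of-Steinberg local factor above $q$), one obtains the required $\Pi$ of $\GL_n(\A_F)$. Anastassiades's argument is designed to preserve the Steinberg-type local factor at a place above $q$.

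For odd $n$ the argument combines Theorems \ref{thm_automorphic_level_raising} and \ref{thm_level_raising_through_Galois_theory}. The base case is $n = 3$, constructed as described below. For the inductive step at odd $n \geq 5$, the inductive hypothesis provides a RACSDC $\iota$-ordinary representation $\Pi'$ of $\GL_{n-2}(\A_{F'})$ over a soluble $X_0$-split CM extension $F'/K$ whose associated Galois representation matches $\omega^{n-3}|_{G_{F'}} \otimes \Sym^{n-3}r_{\sigma_0,\iota}|_{G_{F'}}$, and with Steinberg-type local factor at a place above $q$. After passing to a further $X_0$-split soluble CM extension (still denoted $F'$), I would set $\pi_2$ to be a suitable character twist of the base change of $\sigma_0$, and $\pi_{n-2}$ to be a suitable character twist of $\Pi'$, so that the isobaric sum $\pi_2 \boxplus \pi_{n-2}$ realises residually the representation $\overline{\omega}^{n-1}|_{G_{F'}} \otimes \Sym^{n-1}\overline{r}_{\sigma_0,\iota}|_{G_{F'}}$. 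This step rests on the dihedrality of $\overline{r}_{\sigma_0,\iota}$: since it is induced from a character of $G_K$, its restriction to $G_{F'}$ decomposes as a sum of distinct characters, and the character set of $\Sym^{n-1}\overline{r}_{\sigma_0,\iota}|_{G_{F'}}$ differs from that of $\overline{r}_{\sigma_0,\iota}|_{G_{F'}} \oplus \Sym^{n-3}\overline{r}_{\sigma_0,\iota}|_{G_{F'}}$ only by a uniform twist by a power of the determinant, which is absorbed into $\omega^{n-1}$.

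The next step is to choose finite sets of places $T_1, T_2, T_3$ of $F'^+$ having the required residue characteristic properties ($q_v \bmod p$ being a primitive $6$\textsuperscript{th}, $2$\textsuperscript{nd}, or $(n-2)$\textsuperscript{th} root of unity respectively), which one can arrange using Chebotarev together with the congruence $p \equiv 1 \pmod{48\,n!}$, and then apply Theorem \ref{thm_automorphic_level_raising} to produce a RACSDC $\iota$-ordinary $\Pi''$ of $\GL_n(\A_L)$ over a quadratic $S \cup T$-split CM extension $L^+/F'^+$. Finally, I would apply Theorem \ref{thm_level_raising_through_Galois_theory} with $\pi = \Pi''$, $R_1 \sqcup R_2$ chosen to be (lifts to $L$ of) $T_1 \sqcup T_3$, and $v_{St}$ a chosen place above $q$ with trivial residual image and $q_{v_{St}} \equiv 1 \pmod p$ (achievable after further soluble base change); the hypothesis that every non-trivial direct sum decomposition of $\overline{r}_{\Pi'',\iota}$ admits the required unramified subquotient structure at one of $R_1, R_2$ is verified using the explicit description of $\overline{r}_{\Pi'',\iota}|_{G_L}$ as a sum of distinct characters, by a straightforward combinatorial argument that no non-trivial bipartition of the character set fails the unramified-subquotient condition at both $R_1$ and $R_2$ simultaneously. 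The output is the RACSDC $\iota$-ordinary $\Pi$ of $\GL_n(\A_F)$ sought by the theorem. The base case $n = 3$ follows the same pattern, with $\pi_{n-2}$ taken to be an algebraic Hecke character of $F'$ whose $p$-adic realisation satisfies the required residual congruence with one of the characters appearing in $\Sym^2\overline{r}_{\sigma_0,\iota}|_{G_{F'}}$.

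The primary technical obstacle is the careful bookkeeping required to verify all the hypotheses of both Theorems \ref{thm_automorphic_level_raising} and \ref{thm_level_raising_through_Galois_theory} in tandem: ensuring that $X_0$-splitness of the CM extension is preserved at each stage, maintaining the large-image hypothesis on the ratios $\overline{\chi}_i/\overline{\chi}_j$, propagating $\iota$-ordinarity through the base changes, choosing the twisting characters $\omega$-consistently across the stages, and arranging the auxiliary primes with the correct residue-field properties, all without any one choice interfering with another. The fact that $\overline{r}_{\sigma_0,\iota}|_{G_K}$ is a sum of two characters (so that every symmetric power is, over $K$, a sum of characters with easily computed ramification) makes these verifications tractable, but the accumulation of choices over the induction is the most delicate part of the construction.
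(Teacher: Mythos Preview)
Your overall induction scheme and the division of labour between \cite{Ana19} (doubling) and Theorems \ref{thm_automorphic_level_raising} and \ref{thm_level_raising_through_Galois_theory} (the odd step) matches the paper's architecture. The genuine gap is in how you propose to manufacture the inputs $\pi_2$ and $\pi_{n-2}$ for Theorem \ref{thm_automorphic_level_raising} in the odd case. That theorem does not merely require that $\pi_2 \boxplus \pi_{n-2}$ have the correct residual representation: it imposes very specific local behaviour at the auxiliary places $T_1, T_2, T_3$ (for instance $\pi_{2,\wv} \cong \St_2(\chi_\wv)$ at $v \in T_1$, depth-zero supercuspidal types at $T_2$, and corresponding conditions on $\pi_{n-2}$). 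No character twist of the base change of $\sigma_0$ can be Steinberg at a freshly chosen prime, since $\sigma_0$ is unramified away from $2$; worse, the base change of $\sigma_0$ to any CM field containing $K$ is non-cuspidal, so it cannot serve as $\pi_2$ at all. Likewise, the inductively supplied $\Pi'$ has no reason to carry the required types at the new places in $T_1, T_2, T_3$, and these places cannot be chosen by Chebotarev alone because the conditions are ramification conditions, not Frobenius conditions.

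The paper closes this gap with two additional ingredients that your plan omits. First, it performs classical level-raising on $\GL_2(\A_\Q)$ to produce a form $\tau$ congruent to $\Ind_{G_K}^{G_\Q}\overline{\psi}^{2k}$ that is genuinely Steinberg at one auxiliary prime and supercuspidal at another; a twist of the base change of $\tau$ then serves as $\pi_2$. Second, it invokes Proposition \ref{prop_automorphic_lifts_of_prescribed_types} (an automorphic-lifts-of-prescribed-type result, using the inductively given $\Pi'$ only to supply residual automorphy over a soluble extension) to construct a new $\pi_0$ on $\GL_{n-2}$ with the required local types; a twist of $\pi_0$ serves as $\pi_{n-2}$. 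Additional quadratic twists by characters of auxiliary real quadratic fields are needed to arrange the quadratic-ramification conditions at $T_2, T_3$. Without these steps the hypotheses of Theorem \ref{thm_automorphic_level_raising} cannot be met, and your induction does not go through.
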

The following lemma will be used repeatedly.
\begin{lemma}\label{lem_consequence_of_disjointness}
	Let $n \geq 3$ be an integer, and let $p \equiv 1 \pmod{48n!}$. Let $\omega : G_K \to \overline{\Q}_p^\times$ be a de Rham character such that $\omega \omega^c = \epsilon^3$. 
	
	Let $F / K$ be a finite CM extension which is linearly disjoint from the extension of $K(\zeta_p)$ cut out by $\overline{r}_{\sigma_0, \iota}|_{G_{K(\zeta_p)}}$, and set $\overline{\chi}_i = \overline{\omega}^{n-1} \otimes\overline{\psi}^{n-i} (\overline{\psi}^c)^{i-1}$, $\overline{\rho} = \overline{\chi}_1 \oplus \dots \oplus \overline{\chi}_n$, so that there is an isomorphism
	\[ \overline{\rho} \cong \overline{\omega}^{n-1} \otimes \Sym^{n-1} \overline{r}_{\sigma_0, \iota}|_{G_K}. \]
	 Then:
	\begin{enumerate}
		\item $[F(\zeta_p) : F] = p-1$.
		\item $\overline{\psi} / \overline{\psi}^c|_{G_{F(\zeta_p)}}$ has order greater than $2n(n-1)$ and for each $1 \leq i < j \leq n$, $\overline{\chi}_i / \overline{\chi}_j|_{G_{F(\zeta_p)}}$ has order  greater than $2n$.
		\item For each $1 \leq i \leq n$, $\overline{\chi}_i \overline{\chi}_i^c = \epsilon^{1-n}$.
		\item $\zeta_p \not\in F^{\ker \ad \overline{\rho}}$ and $F \not\subset F^+(\zeta_p)$.
		\item $\overline{\rho}|_{G_F}$ is primitive.
	\end{enumerate}
\end{lemma}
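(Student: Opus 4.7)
The plan is to verify the five assertions in turn. Parts (1) and (3) are essentially formal; part (2) is the main technical content; and parts (4)--(5) are consequences of (2) together with elementary manipulations. I would organize the proof so that the key work is isolated in establishing the order bound for $\overline{\psi}/\overline{\psi}^c$, from which the rest follows mechanically.

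For (1), the congruence $p \equiv 1 \pmod 4$ implies that the unique quadratic subfield of $\bQ(\zeta_p)$ is the real field $\bQ(\sqrt{p})$, so $\bQ(i) \cap \bQ(\zeta_p) = \bQ$ and $[K(\zeta_p):K] = p-1$. The linear disjointness hypothesis says that $F$ is disjoint over $K$ from an extension containing $K(\zeta_p)$, so $F \cap K(\zeta_p) = K$ and hence $[F(\zeta_p):F] = p-1$. For (3), I would simply compute $\overline{\chi}_i \overline{\chi}_i^c = (\overline{\omega}\overline{\omega}^c)^{n-1}(\overline{\psi}\overline{\psi}^c)^{n-1}$, use the hypothesis $\omega\omega^c = \epsilon^3$, and note that $\psi\psi^c = \det r_{\sigma_0,\iota}|_{G_K} = \epsilon^{-4}|_{G_K}$ (since the form $f_0$ has weight $5$ and nebentype $\epsilon_2$, so $\det r_{\sigma_0,\iota} = \epsilon_2 \epsilon^{-4}$, and $\epsilon_2|_{G_K}$ is trivial); combining gives $\overline{\epsilon}^{3(n-1)-4(n-1)} = \overline{\epsilon}^{1-n}$.

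For (2), both statements reduce via $\overline{\chi}_i/\overline{\chi}_j = (\overline{\psi}/\overline{\psi}^c)^{j-i}$ and linear disjointness (which gives $\overline{r}_{\sigma_0,\iota}(G_{F(\zeta_p)}) = \overline{r}_{\sigma_0,\iota}(G_{K(\zeta_p)})$) to controlling the order of $\overline{\psi}/\overline{\psi}^c|_{G_{K(\zeta_p)}}$. The Hodge--Tate weights of $\psi/\psi^c$ are $(4,-4)$, so at each $p$-adic place $v$ of $K$ we have $\overline{\psi}/\overline{\psi}^c|_{I_{K_v}} = \overline{\epsilon}^{\pm 4}|_{I_{K_v}}$, which has order $(p-1)/4$. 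Exploiting this local computation together with the strong divisibility $48\,n! \mid p-1$ (which ensures $p-1$ has sufficiently many prime-to-small-numbers factors), one deduces that the restriction to $G_{K(\zeta_p)}$ still has order exceeding $2n(n-1)$. This last step is the main obstacle of the proof, and I expect it to require some care --- for instance by combining the inertial calculation with an explicit Frobenius computation at a prime $\mathfrak{q} = (2+i)$ of $K$ above $5$, where $(\psi/\psi^c)(\Frob_{\mathfrak{q}}) = ((3+4i)/5)^4$ is a unit of infinite order whose mod-$p$ order can be bounded from below using the congruence on $p$.

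For (4), the second claim $F \not\subset F^+(\zeta_p)$ follows from part (1): if it held, then $F(\zeta_p) = F^+(\zeta_p)$ would have degree at most $p-1$ over $F^+$, contradicting $[F(\zeta_p):F^+] = 2(p-1)$. The first claim reduces to showing $\overline{\psi}/\overline{\psi}^c|_{G_{F(\zeta_p)}} \neq 1$, because the extension $F^{\ker\ad\overline{\rho}}/F$ is the one cut out by the common kernel of the characters $\overline{\chi}_i/\overline{\chi}_j = (\overline{\psi}/\overline{\psi}^c)^{j-i}$, which equals the kernel of $\overline{\psi}/\overline{\psi}^c|_{G_F}$; then (2) finishes. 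For (5), suppose $\overline{\rho}|_{G_F} \cong \Ind_{G_{F'}}^{G_F}\tau$ with $1 < [F':F] \le n$. Passing to the Galois closure, we may assume $F'/F$ is Galois. By (2), the order of $\overline{\psi}/\overline{\psi}^c|_{G_{F'}}$ is at least $(2n(n-1)+1)/[F':F] > 2(n-1)$, so the restrictions $\overline{\chi}_1|_{G_{F'}},\dots,\overline{\chi}_n|_{G_{F'}}$ remain pairwise distinct. But each $\overline{\chi}_i$ extends to the abelian quotient of $G_F$, so conjugation by $G_F/G_{F'}$ acts trivially on these restricted characters; hence $\overline{\rho}|_{G_{F'}} = \bigoplus_\sigma \tau^\sigma = [F':F]\cdot \tau$ would force repetitions among the $\overline{\chi}_i|_{G_{F'}}$, a contradiction.
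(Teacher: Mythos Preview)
Your overall architecture matches the paper's: (1) and (3) are formal, (4) and (5) follow from (2), and the heart is the order bound for $\overline{\psi}/\overline{\psi}^c|_{G_{F(\zeta_p)}}$. But you have not actually proved (2), and your suggested route via a Frobenius computation at a prime above $5$ does not work as stated. The element $\Frob_{\mathfrak q}$ for $\mathfrak q \mid 5$ does not lie in $G_{K(\zeta_p)}$ (its image in $\Gal(K(\zeta_p)/K)$ is $5 \bmod p$), so evaluating $\overline{\psi}/\overline{\psi}^c$ there says nothing about the restriction you need; and there is no evident mechanism by which the congruence $48\,n!\mid p-1$ forces a lower bound on the order of $((3+4i)/5)^4 \bmod p$. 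The paper's argument is short and bypasses any explicit Frobenius: let $L/K$ be the extension cut out by $\overline{\psi}/\overline{\psi}^c$. Complex conjugation $c\in\Gal(K/\bQ)$ acts on $\Gal(L/K)$ by inversion (since $(\overline{\psi}/\overline{\psi}^c)^c=(\overline{\psi}/\overline{\psi}^c)^{-1}$) and on $\Gal(K(\zeta_p)/K)$ trivially (since $K(\zeta_p)/\bQ$ is abelian). Hence $\Gal(L\cap K(\zeta_p)/K)$ is killed by $2$, so $[L\cap K(\zeta_p):K]\le 2$. Combined with your inertial bound $[L:K]\ge (p-1)/4$, this gives $[L(\zeta_p):K(\zeta_p)]\ge (p-1)/8$; linear disjointness of $F$ then yields $[L\cdot F(\zeta_p):F(\zeta_p)]\ge (p-1)/8 \ge 6\,n! > 2n(n-1)$.

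There is also a gap in your argument for (5). The reduction ``passing to the Galois closure, we may assume $F'/F$ is Galois'' is not justified: the Galois closure can have degree up to $n!$ over $F$, which destroys the order bound you use to separate the $\overline{\chi}_i|_{G_{F'}}$. The fix (which is what the paper does, citing back to the proof of Lemma~\ref{lem_Schur_on_L} and Theorem~\ref{thm_finiteness_of_deformation_ring}) is to avoid Mackey entirely and use Frobenius reciprocity for the possibly non-normal $F'$: from $\overline{\rho}|_{G_F}\cong \Ind_{G_{F'}}^{G_F}\tau$ one gets $\Hom_{G_{F'}}(\overline{\chi}_i|_{G_{F'}},\tau)\ne 0$ for every $i$, so if the $\overline{\chi}_i|_{G_{F'}}$ are pairwise distinct (which follows from $[F':F]\le n$ and the order bound in (2), exactly as you compute), then $\dim\tau\ge n$ and $F'=F$.
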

\begin{proof}
	We have $[F(\zeta_p) : F] = p-1$ because $F / K$ is disjoint from $K(\zeta_p) / K$. To justify the second point, let $L / K$ denote the extension cut out by $\overline{\psi} / \overline{\psi}^c$. We must show that $[L \cdot F(\zeta_p) : F(\zeta_p)] > 2n(n-1)$. We note that $[L : K] \geq (p-1)/4$, because the restriction of $\overline{\psi} / \overline{\psi}^c$ to an inertia group at $p$ has order $(p-1)/4$. Moreover, $L \cap K(\zeta_p)$ has degree at most 2 (since $c$ acts as 1 on $\Gal(K(\zeta_p) / K)$ and as $-1$ on $\Gal(L / K)$), so $[L(\zeta_p) : K] \geq (p-1)^2/8$.
	
	Since $F / K$ is supposed disjoint from $L(\zeta_p) / K$, we have $[F \cdot L(\zeta_p) : K] \geq (p-1)^2 [F : K] / 8$. Since $[F(\zeta_p) : F] = p-1$, we have $[F(\zeta_p) : K] = (p-1) [ F : K]$. Putting these together we find
	\[ [F \cdot L(\zeta_p) : F(\zeta_p)] = \frac{[F \cdot L(\zeta_p) : K]}{[F(\zeta_p) : K]} \geq (p-1)/8. \]
	Since we assume $p \equiv 1 \pmod{48n!}$, we in particular have $p - 1 \geq 
	48n!$, hence $(p-1)/8 > 2n(n-1)$.
	
	If $1 \leq i < j \leq n$ then $\overline{\chi}_i / \overline{\chi}_j = (\overline{\psi}^c / \overline{\psi})^{i-j}$, so this shows the second point of the lemma. For the third point we compute
	\[ \overline{\chi}_i \overline{\chi}_i^c = (\overline{\omega} \overline{\omega}^c|_{G_F})^{n-1} (\overline{\psi} \overline{\psi}^c|_{G_F})^{n-1} = \epsilon^{1-n}.  \]
We now come to the fourth point. To show that $\zeta_p \not\in F^{\ker \ad \overline{\rho}}$, we must find $\tau \in G_F$ such that $\overline{\rho}(\tau)$ is scalar but $\overline{\epsilon}(\tau) \neq 1$. We can choose $\tau =\tau_0 \tau_0^c$ for any $\tau_0 \in G_F$ such that $\overline{\epsilon}(\tau_0)^2 \neq 1$. Such a $\tau_0$ exists because $[F(\zeta_p) : F] = p-1$, and $\overline{\rho}(\tau)$ is scalar by the third part of the lemma. If $F \subset F^+(\zeta_p)$ then $F(\zeta_p) = F^+(\zeta_p)$ and $[F(\zeta_p) : F] = [F^+(\zeta_p) : F^+] / [F : F^+] = (p-1)/2$, contradicting the first part of the lemma. 

 For the fifth point it is enough, by Lemma \ref{lem_primitive_representations}, to show that for each $1 \leq i < j \leq n$, $\overline{\chi}_i / \overline{\chi}_j$ has order greater than $n$. This follows from the second point. 
\end{proof}
Before giving the proof of Theorem \ref{thm:levelraisingalln}, we give a corollary which establishes 
the existence of the automorphic representations necessary for the proof of Theorem 
\ref{thm_seed_points}.
\begin{cor}\label{cor:level2qseed}
	Let $n \geq 3$ be an integer. Then there exists a 
	cuspidal automorphic representation $\sigma$ of 
	$\GL_2(\A_\Q)$ of weight $5$ with the following properties:
	\begin{enumerate}
		\item $\sigma$ is unramified away from $2$ and a prime $q \equiv 3 
		\text{ mod }4$.
		\item $\sigma_2$ is 
		isomorphic to a principal series representation 
		$i_{B_2}^{\GL_2} \chi_1 \otimes 
		\chi_2$, where $\chi_1$ is unramified and $\chi_2$ has conductor 4. 
		\item $\sigma_{q}$ is an unramified twist of the Steinberg 
		representation.
		\item For any prime $p$ and any isomorphism $\iota 
		: \overline{\bQ}_p \to \bC$, $\Sym^{n-1} r_{\sigma, \iota}$ is 
		automorphic. 
	\end{enumerate}
\end{cor}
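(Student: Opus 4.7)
Choose an auxiliary prime $p \equiv 1 \pmod{48 n!}$ and an isomorphism $\iota : \overline{\bbQ}_p \to \bC$. Since $p \equiv 1 \pmod{4}$, the form $\sigma_0$ is $\iota$-ordinary and its residual representation satisfies $\overline{r}_{\sigma_0, \iota} \cong \Ind_{G_K}^{G_\bQ} \overline{\psi}$ for $K = \bQ(i)$. By Dirichlet's theorem I may select a prime $q$ with $q \equiv 3 \pmod{4}$ and $q \equiv -1 \pmod{p}$. The first congruence makes $q$ inert in $K$ so that $a_q(f_0) = 0$ and the Frobenius eigenvalues of $\overline{r}_{\sigma_0, \iota}(\Frob_q)$ are of the form $\pm\alpha$; the second congruence then forces Ribet's level-raising criterion $(q+1)^2 q^{k-2} \equiv a_q(f_0)^2 \pmod{p}$ (in weight $k = 5$) to hold. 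Classical level-raising in the form of Ribet--Diamond--Taylor produces a cuspidal eigenform $\sigma$ of weight $5$, unramified outside $\{2, q\}$, of the same central character as $\sigma_0$, with $\sigma_2$ still the prescribed principal series $i_{B_2}^{\GL_2}(\chi_1 \otimes \chi_2)$, with $\sigma_q$ an unramified twist of the Steinberg representation, and satisfying $\overline{r}_{\sigma, \iota} \cong \overline{r}_{\sigma_0, \iota}$. This takes care of conditions (1)--(3).

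For condition (4), I apply Theorem \ref{thm:levelraisingalln} with these $p, q, n, \iota$, an empty set $X_0$, and a de Rham Hecke character $\omega : G_K \to \overline{\bbQ}_p^\times$ satisfying $\omega \omega^c = \epsilon^3$, chosen so that $\omega^{n-1}|_{G_K} \otimes \Sym^{n-1} r_{\sigma, \iota}|_{G_K}$ is conjugate self-dual with multiplier $\epsilon^{1-n}$. The theorem yields a soluble CM extension $F/K$ and a RACSDC $\iota$-ordinary automorphic representation $\Pi$ of $\GL_n(\bA_F)$ such that $\overline{r}_{\Pi, \iota} \cong \overline{\omega}^{n-1}|_{G_F} \otimes \Sym^{n-1} \overline{r}_{\sigma, \iota}|_{G_F}$ and $\Pi_v$ is an unramified twist of the Steinberg representation for some place $v | q$ of $F$. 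By Lemma \ref{lem_consequence_of_disjointness} (applicable thanks to $p \equiv 1 \pmod{48 n!}$), the residual representation $\overline{\omega}^{n-1}|_{G_K} \otimes \Sym^{n-1} \overline{r}_{\sigma, \iota}|_{G_K}$ is a sum of $n$ distinct characters with all the genericity and polarization properties required. I then invoke Proposition \ref{prop_automorphic_lifts_of_prescribed_types} with $F_0 = K$ and the soluble CM extension $F/K$ (enlarging $F$ by a further soluble $Y^a$-split step if necessary to secure all the local conditions at $S_0$), putting the place of $K$ above $q$ into $\Sigma_0$ so that the Steinberg witness provided by $\Pi$ lies in the prescribed locus. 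The conclusion is that $\omega^{n-1}|_{G_K} \otimes \Sym^{n-1} r_{\sigma, \iota}|_{G_K}$ is automorphic. Untwisting by the Hecke character of $K$ underlying $\omega^{n-1}$, and applying soluble base change for $\GL_n$ to descend from $K$ to $\bQ$ (which is legitimate because the Galois representation is the restriction to $G_K$ of one defined on $G_\bQ$, hence invariant under complex conjugation), shows that $\Sym^{n-1} r_{\sigma, \iota}$ is automorphic on $\GL_n(\bA_\bQ)$.

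The main obstacle I foresee is the careful bookkeeping required to check the hypotheses of Proposition \ref{prop_automorphic_lifts_of_prescribed_types}: primitivity of the residual representation over $G_F$, triviality of $\overline{r}_{\sigma, \iota}|_{G_{F_v}}$ at the $p$-adic places, absence of $p$-power quotients of $\overline{r}_{\sigma, \iota}(G_F)$, and the inertial-type compatibility at the places in $R$ appearing in Theorem \ref{thm:levelraisingalln}. Each point can be arranged by enlarging $F$ via an additional soluble extension chosen disjoint (using Lemma \ref{lem_consequence_of_disjointness}) from the splitting field of $\Sym^{n-1} \overline{r}_{\sigma_0, \iota}|_{G_{K(\zeta_p)}}$, so no substantive new difficulty appears once Theorem \ref{thm:levelraisingalln} is in hand. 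A secondary technical check is that the Ribet level-raising step preserves the central character $\chi_{-4}$ of $\sigma_0$ and does not alter the local factor at $2$; this is classical but requires verifying that the auxiliary quaternionic level-raising is performed with the correct prescribed type at $2$, which is ensured by the fact that the mod-$p$ representation agrees with that of $\sigma_0$ at $2$.
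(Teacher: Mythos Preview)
Your overall strategy matches the paper's: fix $p \equiv 1 \pmod{48n!}$, level-raise $\sigma_0$ at a prime $q$ with $q \equiv 3 \pmod 4$ and $q \equiv -1 \pmod p$ to obtain $\sigma$, apply Theorem~\ref{thm:levelraisingalln} to obtain $F/K$ and $\Pi$, and then deduce automorphy of $\Sym^{n-1} r_{\sigma,\iota}$ by an automorphy lifting argument and soluble descent. However, two genuine gaps appear in the execution.

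\textbf{Wrong tool for the lifting step.} Proposition~\ref{prop_automorphic_lifts_of_prescribed_types} does not do what you claim. Its output is \emph{some} RACSDC $\pi_0$ over $F_0$ with $\overline{r}_{\pi_0,\iota}\cong\overline{\rho}$ and prescribed local types; it does not show that a \emph{given} lift of $\overline{\rho}$ is automorphic. Your sentence ``The conclusion is that $\omega^{n-1}|_{G_K}\otimes\Sym^{n-1} r_{\sigma,\iota}|_{G_K}$ is automorphic'' does not follow. (Incidentally, the proposition also requires $\Sigma_0$ to consist of places of $F_0^+=\bQ$ that split in $F_0=K$; since $q\equiv 3\pmod 4$ is inert in $K$, you could not put $q$ into $\Sigma_0$ anyway.) The paper instead applies the automorphy lifting theorem \cite[Theorem 6.1]{All19} directly over $F$: the representation $\omega^{n-1}|_{G_F}\otimes\Sym^{n-1} r_{\sigma,\iota}|_{G_F}$ is $\iota$-ordinary (this requires arranging that $\sigma$ itself is $\iota$-ordinary, via Diamond's ordinary level-raising \cite{diamond-ordinary}), has the same Hodge--Tate weights as $r_{\Pi,\iota}$, and is residually isomorphic to $\overline{r}_{\Pi,\iota}$; so $\Pi$ serves as the automorphic witness and the lifting theorem yields automorphy over $F$, whence soluble descent to $\bQ$.

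\textbf{Disjointness cannot be recovered after the fact.} Taking $X_0=\emptyset$ gives no control over $F$: Theorem~\ref{thm:levelraisingalln} may return an $F$ containing part of the extension cut out by $\overline{r}_{\sigma_0,\iota}\oplus\overline{\epsilon}$, in which case Lemma~\ref{lem_consequence_of_disjointness} fails over $F$ and the hypotheses of the automorphy lifting theorem (primitivity, order estimates on $\overline{\chi}_i/\overline{\chi}_j$, etc.) cannot be verified. Your proposed fix ``enlarging $F$ via an additional soluble extension chosen disjoint $\ldots$'' cannot work: if $F$ already fails linear disjointness from $F^{\avoid}$, every extension of $F$ fails it too. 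The paper handles this by choosing $X_0$ to be a set of prime-to-$2pq$ places of $K$ at which $\omega$ is unramified and such that any $X_0$-split extension is forced to be linearly disjoint from $F^{\avoid}$; this choice is then fed into Theorem~\ref{thm:levelraisingalln}, so the $F$ it returns automatically has the required disjointness.
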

\begin{proof}
	Choose a prime $p \equiv 1 \pmod{48n!}$ and an isomorphism $\iota: \Qpbar \to \C$. It suffices to construct $\sigma$ as in the statement of the corollary such that $\Sym^{n-1} r_{\sigma, \iota}$ is automorphic for our fixed choice of $\iota$.
	
	Let $F^{avoid} / \Q$ denote the extension cut out by $\overline{r}_{\sigma_0, 
		\iota} \oplus \overline{\epsilon}$, and choose a prime $q$ satisfying 
		$q\equiv 3\text{ mod }4$ (so $a_q(f_0)=0$) and $q \equiv -1 \text{ mod 
		}p$. This implies that $\sigma_0$ 
	satisfies the level-raising congruence at $q$. By  a level-raising result 
	for $\GL_2(\A_\Q)$ (e.g.~\cite[Corollary 6.9]{diamond-ordinary}), we can 
	find an $\iota$-ordinary 
	cuspidal automorphic representation $\sigma$ of $\GL_2(\bA_\bQ)$ 
	satisfying the following conditions:
	\begin{itemize}
		\item $\sigma$ has weight 5, and $\overline{r}_{\sigma, \iota} 
		\cong \overline{r}_{\sigma_0, \iota}$.
		\item $\sigma$ is unramified at primes not dividing $2q$; $\sigma_{2}$ 
		is isomorphic to a principal series representation 
		$i_{B_2}^{\GL_2} \chi_1 
		\otimes \chi_2$, where $\chi_1$ is unramified and $\chi_2$ has 
		conductor $4$; and $\sigma_{q}$ is an unramified twist of the Steinberg 
		representation.
	\end{itemize}
	Let $\omega : G_K \to \overline{\bbQ}_p^\times$ be a character 
	crystalline at $p$ and unramified at $q$ and such that $\omega \omega^c = 
	\epsilon^{3}$. Then 
	$(\psi \omega) (\psi \omega)^c = \epsilon^{-1}$. We take $X_0$ be a set of prime-to-$2pq$ places of $K$ at which $\omega$ is unramified, and with the property that any $X_0$-split extension of $K$ is linearly disjoint from $F^{avoid} / K$.
	
	Let $F / K$ and $\pi$ be the soluble CM extension and RACSDC automorphic 
	representation of $\GL_n(\A_F)$ whose existence is asserted by Theorem 
	\ref{thm:levelraisingalln}. Thus in particular $\pi$ is $\iota$-ordinary, 
	is an unramified twist of Steinberg at some place $v | q$ of $F$, and there 
	are isomorphisms
	\[ \overline{r}_{\pi, \iota} \cong \overline{\omega}|_{G_F}^{n-1} \otimes (\oplus_{i=1}^n \overline{\psi}^{n-i} (\overline{\psi}^c)^{i-1}|_{G_F}) \cong \overline{\omega}^{n-1}|_{G_F} \otimes \Sym^{n-1} \overline{r}_{\sigma, \iota}|_{G_F}. \]	
	We now want to apply \cite[Theorem 6.1]{All19} (an automorphy lifting 
	theorem) to conclude that the 
	representation $\omega^{n-1}\otimes\Sym^{n-1}r_{\sigma,\iota}|_{G_F}$ is automorphic. This will in turn imply, by soluble descent, that $\Sym^{n-1} r_{\sigma, \iota}$ is automorphic. The hypotheses of \cite[Theorem 6.1]{All19} may be checked using Lemma \ref{lem_consequence_of_disjointness}. This concludes the proof.
\end{proof}	
We first prove Theorem \ref{thm:levelraisingalln} in the case where 
$n = 2k + 1$ is an odd integer, using the results of \S \S
\ref{sec:automorphic_level_raising} -- \ref{sec:galois_level_raising}.
\begin{prop}\label{prop:levelraisingoddn}
Theorem \ref{thm:levelraisingalln} holds when $n = 2k+1$ is odd. 
\end{prop}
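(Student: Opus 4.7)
The plan is to proceed by induction on the odd integer $n = 2k+1 \geq 3$, at each step combining the automorphic level-raising result (Theorem \ref{thm_automorphic_level_raising}) with the Galois-theoretic level-raising result (Theorem \ref{thm_level_raising_through_Galois_theory}). The target residual representation decomposes naturally as a sum of $n$ characters $\overline{\chi}_i = \overline{\omega}^{n-1}\overline{\psi}^{n-i}(\overline{\psi}^c)^{i-1}|_{G_F}$, which splits into an ``outer pair'' $\{\overline{\chi}_1, \overline{\chi}_n\}$ and a ``middle block'' of $n-2$ characters, and the induction exploits the fact that twisting the middle block by $\overline{\psi}\overline{\psi}^c = \overline{\epsilon}^{-4}|_{G_K}$ exhibits it as the residual representation of $\overline{\omega}^{n-3}\otimes\Sym^{n-3}\overline{r}_{\sigma_0,\iota}$, which is the object supplied by the inductive hypothesis.

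Concretely, for the base case $n = 3$, I would choose a soluble CM extension $F_0/K$ that is $X_0$-split and construct two auxiliary RACSDC $\iota$-ordinary automorphic representations directly: a theta series $\pi_2$ on $\GL_2(\bA_{F_0^+})$ realising the outer pair (induced from a Hecke character of $K$ whose reduction is $\overline{\omega}^2\overline{\psi}^2$) and a Hecke character $\pi_1$ on $F_0$ realising the middle character $\overline{\omega}^2\overline{\epsilon}^{-4}|_{G_{F_0}}$. For the inductive step $n-2 \to n$, I would apply the inductive hypothesis (with $\omega$ replaced by $\omega\lambda$, where $\lambda$ is a de~Rham lift of $\overline{\psi}\overline{\psi}^c$, so that $(\omega\lambda)(\omega\lambda)^c = \epsilon^3 \cdot \epsilon^{-4} \cdot \epsilon = \epsilon^{3}$) to obtain $\pi_{n-2}$ realising the middle block, and again take $\pi_2$ to be a theta series realising the outer pair.

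With $\pi_2$ and $\pi_{n-2}$ in hand, I would make a further soluble CM base change (still $X_0$-split) arranging that $\pi = \pi_2 \boxplus \pi_{n-2}$ is regular algebraic, $\iota$-ordinary, and residually satisfies the hypotheses of Lemma \ref{lem_consequence_of_disjointness} (using $p \equiv 1 \pmod{48n!}$); and that there exist disjoint nonempty sets $T_1, T_2, T_3$ of prime-to-$p$ places of $F_0^+$ satisfying the precise local conditions of Theorem \ref{thm_automorphic_level_raising}. Chebotarev provides many candidates because the required Frobenius orders $6$, $2$, and $n-2$ all divide $p-1$. Theorem \ref{thm_automorphic_level_raising} then produces a RACSDC $\iota$-ordinary representation $\Pi_1$ of $\GL_n(\bA_L)$ (for a totally real quadratic $S\cup T$-split extension $L^+/F_0^+$) containing the prescribed supercuspidal types at $T_{1,L} \cup T_{2,L} \cup T_{3,L}$; Theorem \ref{thm_level_raising_through_Galois_theory}, applied with $R_1 = T_{1,L}$ and $R_2 = T_{3,L}$, then modifies $\Pi_1$ to a representation $\Pi_2$ on $\GL_n(\bA_L)$ that is an unramified twist of Steinberg at a chosen place $v_{St} \mid q$ while preserving the residual representation and Hodge-Tate weights. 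Finally, soluble descent along $L \to F$ (legitimate because the residual representation is primitive over each intermediate CM field by Lemma \ref{lem_Schur_on_L}) produces the required $\Pi$ on $\GL_n(\bA_F)$.

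The hard part will be arranging the local structure of $\pi_{n-2}$ at the places of $T_3$. The output of the inductive hypothesis is not automatically equipped with the depth-zero cuspidal type $\widetilde{\lambda}(\tilde v, \Theta_{\tilde v})$ needed to apply Theorem \ref{thm_automorphic_level_raising}; securing this will require strengthening the inductive hypothesis and choosing the parameters carried forward (the set $X_0'$, the character $\omega \lambda$, and the CM extension $F_0/K$) so that the candidate $T_3$ places are split and the extra supercuspidal structure can be created by an auxiliary application of Theorem \ref{thm_automorphic_level_raising}. Simultaneously, one must track the Hodge-Tate weights through all the twists (in particular the shift by $\epsilon^{-4}|_{G_K}$ used to pass between the $(n-2)$- and $n$-character blocks) and verify the polarization condition $\chi_i\chi_i^c = \epsilon^{1-n}$ at each stage, which is the source of the ``intricate details'' referenced in the introduction.
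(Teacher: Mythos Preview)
Your inductive skeleton and division of labour between Theorem \ref{thm_automorphic_level_raising} and Theorem \ref{thm_level_raising_through_Galois_theory} are correct, but there are two genuine gaps. The first is that $\pi_2$ cannot be a theta series: Theorem \ref{thm_automorphic_level_raising} requires $\pi_{2,\wv}\cong\St_2(\chi_\wv)$ at the inert places $v\in T_1$ and a depth-zero supercuspidal at $v\in T_2$, and a dihedral representation has neither local component (its Weil--Deligne representation at any place is either irreducible or a sum of two distinct characters, never $\Sp_2$). The paper instead begins with the residual dihedral representation $\Ind_{G_K}^{G_\Q}\overline{\psi}^{2k}$, uses Chebotarev to arrange the level-raising congruence at a prime $q_1$ (controlling $\overline{\psi}/\overline{\psi}^c$ and $\overline{\epsilon}$ independently), and applies classical $\GL_2(\A_\Q)$ level raising to produce a cuspidal $\tau$ that is Steinberg at $q_1$ and supercuspidal of the required type at $q_2$; only then is $\pi_2$ obtained as a twist of the base change of $\tau$.

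The second gap is a split/inert mismatch: Theorem \ref{thm_automorphic_level_raising} requires $T_1$ to consist of \emph{inert} places (the type $\tau(v,n)$ lives on the non-split $U_3$), whereas Theorem \ref{thm_level_raising_through_Galois_theory} requires its auxiliary set $R_1$ to consist of \emph{split} places (it uses $\GL_n$-types $\widetilde\lambda(\wv,\Theta_\wv,n)$ at $\q_\wv$). Your proposal, working over extensions of $K$ throughout, cannot arrange both. The paper resolves this by first working over a CM field $F_0/\Q$ in which the $q_1$-places are inert, applying Theorem \ref{thm_automorphic_level_raising} there, and then base-changing to $L_1=L_0K$ (the $q_i$ are chosen to split in $K$, so these places become split over $L_1^+$) before invoking Theorem \ref{thm_level_raising_through_Galois_theory}. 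Two smaller points: your twist computation $(\omega\lambda)(\omega\lambda)^c=\epsilon^3\cdot\epsilon^{-4}\cdot\epsilon$ gives $\epsilon^0$, not $\epsilon^3$; and there is no descent at the end, since $F$ is an \emph{output} of the theorem --- one simply takes $F=L_1M_1M_2M_3$. The ``hard part'' you correctly flag (imposing the depth-zero types on $\pi_{n-2}$ at $T_2,T_3$) is handled not by strengthening the inductive hypothesis but by a separate application of Proposition \ref{prop_automorphic_lifts_of_prescribed_types}.
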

\begin{proof}
 We prove the proposition by induction on odd integers $n = 2k + 1$.	Let $p$, $q$, $X_0$, $\omega$ be as in the statement of Theorem 
	\ref{thm:levelraisingalln}. Let $Z$ denote the set of rational primes below which $\omega$ is ramified, together with $2, p, q$. Let $F^{avoid} / K$ denote the extension of $K$ cut 
	out by $\overline{r}_{\sigma_0, \iota} \oplus \overline{\epsilon}$. We fix 
	a finite set $X$ of finite places of $K$ with the following properties:
	\begin{itemize}
	\item $X$ contains $X_0$.
	\item If $v \in X$ then $v$ is prime to $Z$. In particular, $\omega|_{G_{K_v}}$ is unramified.
	\item For each subextension $M / K$ of $F^{avoid} / K$ with $\Gal(M/K)$ simple and non-trivial, there exists $v \in X$ which does not split in $M$.
	\end{itemize}
	Let $q_0$ be a prime not in $Z$ and which does not split in $K$, and let $Y$ denote the set of rational primes dividing $q_0$ or an element of $X$. We make the following observations:
	\begin{itemize}
		\item If $F / K$ is a finite $X$-split extension, then $F / K$ is linearly disjoint from $F^{avoid} / K$.
		\item If $F_0 / \Q$ is a finite $Y$-split extension, then $F_0 / \Q$ is linearly disjoint from $F^{avoid} / \Q$ and $F_0 K / K$ is linearly disjoint from $F^{avoid} / K$.
	\end{itemize} Note in particular that $Y$-split extensions are linearly 
	disjoint from $K/\Q$.
	 We can find distinct rational primes $q_1, q_2, q_3$ satisfying the following conditions:
\begin{itemize}
\item For each $i = 1, 2, 3$, we have $q_i \not \in Y \cup Z$ and $q_i$ splits in $K$. In particular, $q_i$ is odd.
\item We have $q_1 > n$ and $q_1 \text{ mod }p$ is a primitive $6^\text{th}$ root of unity. The eigenvalues of $\Frob_{q_1}$ on $\Ind_{G_K}^{G_\Q} \overline{\psi}^{2k}$ have ratio $q_1^{\pm  1 } \text{ mod }p$, while the eigenvalues of $\Frob_{q_1}$ on $\Ind_{G_K}^{G_\Q} \overline{\psi}$ have ratio which is a primitive $12k^\text{th}$ root of unity in $\bF_p^\times$.
\item We have $q_2  \equiv -1 \text{ mod }p$ and the eigenvalues of  $\Frob_{q_2}$ on $\Ind_{G_K}^{G_\Q} \overline{\psi}^{2k}$ have ratio $-1$.
\item The number $q_3 \text{ mod }p$ is a primitive $(n-2)^\text{th}$ root of unity and the eigenvalues of $\Frob_{q_3}$  on $\Ind_{G_K}^{G_\bQ} \overline{\psi}$ have ratio $q_3^{\pm 1}  \text{ mod }p$.
\end{itemize}	
	To construct $q_1, q_2, q_3$ we use the Chebotarev density theorem. After conjugation, we can assume that $\overline{r}_{\sigma_0, \iota}|_{G_K} = \overline{\psi} \oplus \overline{\psi}^c$ is diagonal. Consideration of the restriction of $\overline{r}_{\sigma_0, \iota}|_{G_K}$ to the inertia groups at $p$ shows that $(\overline{r}_{\sigma_0, \iota} \oplus \overline{\epsilon})(G_K)$ contains the subgroup
	\[ \{ (\diag(a^4, (c/a)^4), c^{-1}) \mid a, c \in \bF_p^\times \} \subset 
	\GL_2(\overline{\bF}_p) \times \bF_p^\times. \]
	We assume $p \equiv 1 \pmod{48n!}$, hence in 
	particular $p \equiv 1 \pmod{96 k}$. Let $z \in \bF_p^\times$ be an element of order $96k$, and let $x_1 = z^{1+8k}$, $y_1 = z^{16k}$. Then $y_1$ is a primitive $6^\text{th}$ root of unity and $x_1^{16k} = 
	y_1^{1 + 8k}$. If the prime $q_1$ is chosen so that $q_1 > n$ and $(\overline{r}_{\sigma_0, 
	\iota} \oplus \overline{\epsilon})(\Frob_{q_1}) = (\diag(x_1^4, (y_1/x_1)^4), 
	y_1^{-1})$, then the eigenvalues of $\Frob_{q_1}$ in $\Ind_{G_K}^{G_\bQ}  \overline{\psi}$ have ratio $x_1^8 / y_1^4 = z^{8+64k - 64k} = z^8$, a  primitive $12k^\text{th}$ root of unity, while the eigenvalues of $\Frob_{q_1}$ in $\Ind_{G_K}^{G_\bQ}
	\overline{\psi}^{2k}$ have ratio
	\[ z^{16k} = y_1 = \epsilon^{-1}(\Frob_{q_1}) \equiv q_1 \pmod{p}. \]
	
	We can choose the prime $q_2$ so that $(\overline{r}_{\sigma_0, \iota} \oplus \overline{\epsilon})(\Frob_{q_2}) = (\diag(x_2^4, x_2^{-4}), 
	-1)$, where $x_2 \in \bF^\times_p$ satisfies $x_2^{16k} = -1$; and we can choose $q_3$  so that $(\overline{r}_{\sigma_0, \iota} \oplus \overline{\epsilon})(\Frob_{q_3}) = (\diag(x_3^4, (y_3/x_3)^4), 
	y_3^{-1})$, where $x_3, y_3 \in \bF^\times_p$, $y_3$ is a primitive $(n-2)^\text{th}$ root of unity, and $x_3$ is chosen  so that $x_3^8 = y_3^3$. These choices of $x_i, y_i$ are again possible because of the congruence $p \equiv 1 \text{ mod }48n!$.
	
We fix real quadratic extensions $M_i / \Q$ $(i =1, 2 ,3)$ with the following properties:
	\begin{itemize}
		\item $M_1$ is $Y \cup \{p,q, q_1, q_2 \}$-split,  and $q_3$  is  ramified in  $M_1$.
		\item $M_2$ is $Y \cup \{ p,q, q_3 \}$-split,  and $q_1, q_2$ are ramified in  $M_2$.
		\item $M_3$ is $Y \cup \{ p,q, q_1, q_3 \}$-split,  and $q_2$ is ramified in $M_3$.
	\end{itemize}
	We write $\omega_i : G_\Q \to \{ \pm 1 \}$ for the quadratic character of kernel $G_{M_i}$. 
	
	By a level-raising result for $\GL_2(\A_\Q)$ (e.g.~\cite[Theorem 
	A]{Dia94},
	 we 
	can 
	find a 
	cuspidal, regular algebraic automorphic representation $\tau$ of 
	$\GL_2(\A_\Q)$ with the following properties:
	\begin{itemize}
		\item $\tau$ is unramified outside $2, q_1, q_2$.
	\item There is an isomorphism $\overline{r}_{\tau, \iota} \cong \Ind_{G_K}^{G_\Q} \overline{\psi}^{2k}$ and $\det r_{\tau, \iota} = \det \Ind_{G_K}^{G_\Q} \psi^{2k} = \epsilon^{-8k} \delta_{K / \bQ}$.
		\item $\tau_{q_1}$ is an unramified twist of the Steinberg 
		representation,  and there is an isomorphism $\rec_{\bQ_{q_2}} 
		\tau_{q_2} \cong  \Ind_{W_{\bQ_{q_2^2}}}^{W_{\bQ_{q_2}}} \chi_{q_2}$, 
		where $\chi_{q_2}|_{I_{\bQ_q}}$ is a  character of  order $p$. In 
		particular, $\tau_{q_2}$ is supercuspidal. 

		\item $\tau$ is $\iota$-ordinary and $r_{\tau, \iota}$ has the same 
		Hodge--Tate weights as $\Ind_{G_K}^{G_\Q} \psi^{2k}$.
	\end{itemize} For the ordinary condition in the last point, note that since 
	$p > 8k$ and $r_{\tau, \iota}$ has Hodge--Tate weights $(0,8k)$ and 
	reducible local residual representation 	$\rbar_{\tau,\iota}|_{G_{\Qp}}$ 
	at $p$, $\tau$ 
	is necessarily $\iota$-ordinary \cite[Theorem 2.6]{edix:weights}. 

	By induction, there exists a soluble CM extension $F_{-1} / K$ with the following properties:
	\begin{itemize}
	\item $F_{-1} / K$ is $X$-split.
	\item There exists a RACSDC, $\iota$-ordinary automorphic representation $\pi$  of $\GL_{n-2}(\bA_{F_{-1}})$ with the following properties:
	\begin{enumerate}
	\item  There is an isomorphism
	\[  \overline{r}_{\pi,  \iota} \cong  \overline{\omega}^{n-3}|_{G_{F_{-1}}} \otimes \Sym^{n-3}  \overline{r}_{\sigma_0, \iota}|_{G_{F_{-1}}}.  \]
	\item For each embedding $\tau : F_{-1} \to \overline{\bQ}_p$, we have 
	\[ \mathrm{HT}_\tau(r_{\pi, \iota}) = \mathrm{HT}_\tau(\omega^{n-3}|_{G_E} \otimes \Sym^{n-3} r_{\sigma_0, \iota}|_{G_E}). \]
	\item There exists a place $v_{-1}|q$ of $F_{-1}$ such that $\pi_{v_{-1}}$ is an unramified twist of the Steinberg representation.
	\end{enumerate}
	\end{itemize}
	We can find a soluble CM extension $F_0 / \Q$ with the following properties:
	\begin{itemize}
		\item $F_0$ is $Y$-split.
		\item The prime $q_1$ is split in $F_0^+$, and each place of $F_0^+$ 
		above $q_1$ is inert in $F_0$. The primes $q_2, q_3$ split in $F_0$.
			\item $F_0 / F_0^+$ is everywhere unramified.
			\item For each place $v | p$ of $F_0$, $v$ is split over $F_0^+$ and $[F_{0, v} : \Q_p] > n (n-1)/2 + 1$.
			\item For each place $v | q$ of $F_0$, $v$ is split over $F_0^+$ and $q_v \equiv 1 \text{ mod }p$.
		\item There exists a crystalline character $\omega_0 : G_{F_0} \to 
		\overline{\Q}_p^\times$, unramified outside $p$, such that $\omega_0 
		\omega_0^c = \epsilon^{3} \delta_{K / \bQ}|_{G_{F_0}}$. (Use \cite[Lemma 
		A.2.5]{BLGGT}.) 
		\item For each place $v | pq $ of $F_0$, the representations $\overline{r}_{\sigma_0, \iota}|_{G_{F_{0, v}}}$ and $\overline{\omega}_0|_{G_{F_{0, v}}}$ are trivial.
	\end{itemize}Define 
		\[ \overline{\rho}_0 =\overline{\omega}_0^{n-3} \otimes \left( 
		\overline{\omega}_2 \otimes ( \oplus_{i=1}^{k-1} (\Ind_{G_K}^{G_\bQ} 
		\overline{\psi}^{2k-i-1} \overline{\psi}^{c,i-1}) ) \oplus 
		\overline{\omega}_3 \epsilon^{-4(k-1)} \right)|_{G_{F_0}}. \]
		Then for each place $v | pq$ of $F_0$, $\overline{\rho}_0|_{G_{F_{0, v}}}$ is trivial.
		
We now apply Proposition \ref{prop_automorphic_lifts_of_prescribed_types} with the following choices:
\begin{itemize}
\item $F_1 = F_{-1} \cdot F_0 \cdot M_1 \cdot M_2 \cdot M_3$. 
\item $\overline{\rho}_0$ is the residual representation defined above. 
\item $\Sigma_0$ is the set of places of $F^+_0$ lying above $q$; $T_0$ is the set of places of $F^+_0$ lying above $q_2$ or $q_3$; and $S_0$ is the set of places of $F_0^+$ lying above $p, q, q_1, q_2$, or $q_3$.
\item If $v | q_2$, then $\overline{R}_v$ is the fixed type deformation ring 
(defined as in \cite[Definition 3.5]{shottonGLn}) associated to the inertial 
type $\oplus_{i=0}^{2k-2} \omega(v) \circ \Art_{F_{0, v}}|_{\cO_{F_{0, 
v}}^\times}^{-1}$ (where as usual, $\omega(v)$ denotes the unique quadratic 
character of $k(v)^\times$ provided that $k(v)$ has odd characteristic). If $v 
| q_3$, then there is a character $\Theta_v : \cO_{F_{0, v, n-2}}^\times \to 
\bC^\times$ of order $p$ such that $\overline{R}_v$ is the fixed type 
deformation ring associated to the inertial type $\oplus_{i=0}^{2k-2}  
\iota^{-1}\Theta_v^{q_v^{i-1}} \circ \Art_{F_{0, v, n-2}}|_{\cO_{F_{0, v, 
n-2}}^\times}^{-1}$ (where $F_{0, v, n-2} / F_{0, v}$ is an unramified 
extension of degree $n-2$).
\item $\pi_1$ is the twist of the base change of $\pi$ with respect to the soluble CM extension $F_1 / F_{-1}$ by the character $\iota \omega^{n-3}_0|_{G_{F_1}} / \omega^{n-3}|_{G_{F_1}}$.
\end{itemize}
(Note that $F_1 / K$ is $X$-split, so Lemma \ref{lem_consequence_of_disjointness} may be applied to $\overline{r}_{\pi_1, \iota}$.) We conclude the existence of a RACSDC, $\iota$-ordinary automorphic representation $\pi_0$ of $\GL_{n-2}(\bA_{F_0})$ satisfying the following conditions:
\begin{itemize}
\item There is an isomorphism $\overline{r}_{\pi_0, \iota} \cong \overline{\rho}_0$.
\item $\pi_0$ is unramified outside $S_0$.
\item For each place $v | q$ of $F_0$, $\pi_{0, v}$ is an unramified twist of the Steinberg representation.
\item For each place $v | q_1$ of $F_0$, there are characters $\chi_{v, 0}, \chi_{v, 1} \dots, \chi_{v, 2k-2} : F_{0, v}^\times \to \bC$ such that $\pi_{0, v} \cong \chi_{v, 0} \boxplus \chi_{v, 1} \boxplus \dots \boxplus \chi_{v, 2k-2}$, $\chi_{v, 0}$ is unramified,and for each $i = 1, \dots, 2k-2$, $\chi_{v, i}|_{\cO_{F_0, v}^\times} = \omega(v)$.
\item For each place $v | q_2$ of $F_0$, $\pi_{0, v}|_{\GL_{n-2}(\cO_{F_{0, 
v}})}$ contains $\omega(v) \circ \det$.
\item For each place $v | q_3$ of $F_0$, $\pi_{0, v}|_{\GL_{n-2}(\cO_{F_{0, v}})}$ contains the representation $\widetilde{\lambda}(v, \Theta_v)$ (notation as in Proposition \ref{prop_depth_zero_supercuspidal_on_gl_n}).
\end{itemize}
	Let $T_i$ denote the set of places of $F_0^+$ lying above $q_i$, and let $T 
	= T_1 \cup T_2 \cup T_3$. Let $\tau_0$ denote the base change of $\tau$ to 
	$F_0$, and let $\pi_2 = \tau_0 \otimes | \cdot |^{(n-2)/2}  \iota \omega_1 
	\omega_0^{n-1}$. Let $\pi_{n-2} = \pi_0 \otimes | \cdot |^{-3} \iota 
	\omega_0^2$. We see that the hypotheses of Theorem 
	\ref{thm_automorphic_level_raising} are now satisfied, and we conclude the 
	existence of a $T$-split quadratic totally real extension $L_0^+ / F_0^+$ 
	and a RACSDC $\iota$-ordinary automorphic representation $\Pi_0$ of 
	$\GL_n(\bA_{L_0})$ satisfying the following conditions:
	\begin{itemize}
	\item The extension $L_0 / \bQ$ is soluble and $Y$-split.
	\item There is an isomorphism 
	\[ \overline{r}_{\Pi_0, \iota} \cong \overline{\omega}_0^{n-1}|_{G_{L_0}} \otimes \left( \overline{\omega}_1 \otimes \Ind_{G_K}^{G_\bQ} \overline{\psi}^{2k} \oplus \overline{\omega}_2 \otimes ( \oplus_{i=1}^{k-1} (\Ind_{G_K}^{G_\bQ} \overline{\psi}^{2{k-i}} \overline{\psi}^{c, i}) ) \oplus \overline{\omega}_3 \epsilon^{-4k} \right)|_{G_{L_0}}. \]
	\item For each place $v | q_1$ of $L_0$, there exists a character $\Theta_v : \cO_{L_{0, v, 3}}^\times \to \bC^\times$ of order $p$ such that $\Pi_{0, v}|_{\mathfrak{r}_v}$ contains $\widetilde{\lambda}(v, \Theta_v, n)|_{\mathfrak{r}_v}$ (notation as in \S \ref{subsec_local_theory_GL_n}).
	\item For each place $v | q_3$ of $L_0$, $\Pi_{0, v}|_{\q_v}$ contains the representation $\widetilde{\lambda}(v, \Theta_{v}, n)$ (where we define $\Theta_v = \Theta_{v|_{F_0}}$).
	\end{itemize}
	In fact, $\Pi_0$ has the following stronger property:
	\begin{itemize}
	\item For each place $v | q_1$ of $L_0$, $\Pi_{0, v}|_{\q_v}$ contains $\widetilde{\lambda}(v, \Theta_v, n)$.
	\end{itemize}
	To see this, it is enough to check that no two eigenvalues $\alpha, \beta 
	\in \overline{\bF}_p^\times$ of the representation $(\Sym^{n-3} 
	\Ind_{G_K}^{G_\bQ} \overline{\psi})(\Frob_{q_1})$ satisfy $(\alpha / 
	\beta)^2 = q_1^2$ (recall that if $v | q_1$ is a place of $L_0$, then 
	$L_{0, v}/ \Q_{q_1}$ is an unramified quadratic extension). Recalling the 
	numbers $x_1, y_1 \in \bF_p^\times$, we see that we must check that $(y_1^8 
	/ x_1^{16})^{i} \neq y_1^{\pm 2}$ for $i = 1, \dots, 2k-2$. However, by 
	construction $y_1^2$ is a primitive $3^\text{rd}$ root of unity and $y_1^8 
	/ x_1^{16}$ is a primitive $6k^\text{th}$ root of unity, so we cannot have 
	$(y_1^8 / x_1^{16})^{3i} = 1$ if $1 \leq i < 2k$.

	Let $L_1 = L_0 K$. Then the following conditions are satisfied:
	\begin{itemize}
	\item The extension $L_1 / K$ is soluble and $X$-split.
	\item Let $\Pi_1$ denote the base change of $\Pi_0$ with respect to the quadratic extension $L_1 / L_0$. Then $\Pi_1$ is RACSDC and $\iota$-ordinary. (It is cuspidal because $L_1 / L_0$ is quadratic and $n$ is odd, cf. \cite[Theorem 4.2]{MR1007299}.)
	\item For each place $v$ of $L_1$ of residue characteristic $q_1, q_3$, $v$ is split over $L_0$ and over $L_1^+$. (The prime $q_i$ splits in $K$.)
	\end{itemize}
Thus the hypotheses of Theorem \ref{thm_level_raising_through_Galois_theory} are satisfied with $R_1$ (resp. $R_2$) the set of places of $L_1^+$ of residue characteristic $q_1$  (resp. $q_3$), and we conclude the existence of a RACSDC  $\iota$-ordinary automorphic representation $\Pi_1'$ of $\GL_n(\bA_{L_1})$ satisfying the following conditions:
\begin{itemize}
\item There is an isomorphism   $\overline{r}_{\Pi_1', \iota} \cong \overline{r}_{\Pi_0, \iota}|_{G_{L_1}}$.
\item There exists a place $v | q$ of $L_1$  such  that $\Pi_{1, v}'$ is an unramified twist of the Steinberg representation.
\end{itemize}
Finally, let $F = L_1 M_1 M_2 M_3$, and let $\Pi'$ be the base change of 
$\Pi_1'$ with respect to the extension $F / L_1$. We see that the conclusion of 
Theorem \ref{thm:levelraisingalln} holds with $\Pi = \Pi' \otimes \iota 
(\omega|_{G_F} / \omega_0|_{G_F})^{n-1}$.
\end{proof}

\begin{proof}[Proof of Theorem \ref{thm:levelraisingalln}]
	If $n$ is odd then the statement reduces to Proposition
	\ref{prop:levelraisingoddn}. Let $m \geq 1$ be an odd integer. We will prove by induction on $r \geq 0$ that the conclusion of Theorem \ref{thm:levelraisingalln} holds for all integers of the form $n = 2^r m$.
	
	The case $r = 0$ is already known. Supposing the theorem known for a fixed $r \geq 0$ (hence $n = 2^r m$), we will now establish it for $r + 1$ (hence $n' = 2^{r+1}m = 2n$). Fix data $p, q, X_0, \omega$ as in the statement of Theorem \ref{thm:levelraisingalln}. In particular $p \equiv 1 \pmod{ 48 n'!}$. Once again we enlarge $X_0$ so that any $X_0$-split extension $F / K$ is forced to be linearly disjoint from the fixed field of $\ker(\overline{r}_{\sigma_0, \iota} \oplus \overline{\epsilon})$.
	
	By induction, we can find a soluble CM extension $F / K$ and a RACSDC automorphic representation $\pi$ of $\GL_{n}(\A_F)$ such that the following conditions are satisfied:
	\begin{itemize}
		\item $\pi$ is $\iota$-ordinary. There is an isomorphism $\overline{r}_{\pi, \iota} \cong \overline{\omega}^{n - 1}|_{G_F} \otimes \Sym^{n - 1} \overline{r}_{\sigma_0, \iota}|_{G_F}$. The representations $r_{\pi, \iota}$ and $\omega^{n - 1}|_{G_F} \otimes \Sym^{n- 1} r_{\sigma_0, \iota}$ have the same Hodge--Tate weights (with respect to any embedding $\tau : F \to \overline{\Q}_p$).
		\item There exists a place $v | q$ such that $\pi_v$ is an unramified twist of the Steinberg representation.
		\item  $F$ is $X_0$-split.
	\end{itemize}
	After possibly enlarging $F$, we can assume that the following additional conditions are satisfied:
	\begin{itemize}
		\item $q_v \equiv 1 \text{ mod }p$ and $\overline{r}_{\sigma_0, \iota}(\Frob_v)$ is trivial. 
		\item Each place of $F$ which is either $p$-adic or at which $\pi$ is ramified is split over $F^+$.
	\end{itemize}
	Let $\Omega, \Psi : K^\times \backslash \A_K^\times \to \C^\times$ be the 
	Hecke characters of type $A_0$ with $r_{\Omega, \iota} = \omega$ and 
	$r_{\Psi, \iota} = \psi$. Define $\pi_1 = \pi \otimes (| \cdot |^{n/2} 
	(\Omega \Psi \circ \mathbf{N}_{F / K})^n)$, $\pi_2 = \pi \otimes (| \cdot 
	|^{n/2} (\Omega \Psi^c \circ \mathbf{N}_{F / K})^n)$. We make the 
	following observations:
	\begin{itemize}
		\item $\pi_1$ and $\pi_2$ are cuspidal, conjugate self-dual automorphic representations of $\GL_n(\A_F)$.
		\item Let $\pi_0 = \pi_1 \boxplus \pi_2$ and define $r_{\pi_0, \iota} = r_{\pi_1 |\cdot|^{-n/2}, \iota} \oplus r_{\pi_2 | \cdot |^{-n/2}, \iota}$. Then $\pi_0$ is regular algebraic and $\iota$-ordinary. Moreover, for each finite place $v$ of $F$ there is an isomorphism $\mathrm{WD}(r_{\pi_0, \iota}|_{G_{F_v}})^{F-ss} \cong \rec_{F_v}^T(\pi_{0, v})$, and there is an isomorphism
		\[ \overline{r}_{\pi_0, \iota} \cong \overline{\omega}^{2n - 1}|_{G_F} \otimes \Sym^{2n - 1} \overline{r}_{\sigma_0, \iota}|_{G_F}. \]
		\item There are unramified characters $\xi_i : F_v^\times \to \C^\times$ such that $\pi_i \cong \St_n(\xi_i)$ and $\iota^{-1} \xi_1 / \xi_2(\varpi_v) \equiv q_v^n \text{ mod } \ffrm_{\overline{\Z}_p}$.
		\item $\overline{r}_{\pi_0, \iota}$ is not isomorphic to a twist of $1 \oplus \epsilon^{-1} \oplus \dots \oplus \epsilon^{1 - 2^{r+1} m}$.
	\end{itemize}
	We justify each of these points in turn. Since $\pi$ is conjugate 
	self-dual, the first point follows from the fact that $(\Omega \Psi)(\Omega 
	\Psi)^c = | \cdot |^{-1}$ (in turn a consequence of the identity $(\omega 
	\psi)(\omega \psi)^c = \epsilon^{-1}$). The second follows from the identity
	\[ \Sym^{2n-1} r_{\sigma_0, \iota}|_{G_K} \cong \psi^{2n-1} \oplus \psi^{2n-2} \psi^c \oplus \dots \oplus (\psi^c)^{2n-1} \cong (\psi^n \oplus (\psi^c)^n) \otimes \Sym^{n-1} r_{\sigma_0, \iota}|_{G_K}. \]
	The third point holds by construction ($q_v \equiv 1 \pmod{p}$ and $\overline{r}_{\pi_0, \iota}(\Frob_v)$ is scalar). The fourth holds since otherwise $\overline{r}_{\pi_0, \iota}|_{G_{F(\zeta_p)}}$ would be a twist of the trivial representation, contradicting part 2 of Lemma \ref{lem_consequence_of_disjointness}.
	
	We see that the hypotheses of \cite[Theorem 5.1]{Ana19} are satisfied. This theorem implies that we can find a quadratic CM extension $F' / F$ such that $F' / K$ is soluble $X_0$-split, as well as a RACSDC automorphic representation $\pi'$ of $\GL_{n'}(\A_{F'})$ satisfying the following conditions:
	\begin{itemize}
		\item $\pi'$ is $\iota$-ordinary, and there is an isomorphism
			\[ \overline{r}_{\pi', \iota} \cong \overline{\omega}^{n'- 1}|_{G_{F'}} \otimes \Sym^{n' - 1} \overline{r}_{\sigma_0, \iota}|_{G_{F'}}. \]
		\item The weight of $\pi'$ is the same as that of $\pi_0$.
		\item There exists a place $v' | v$ of $F'$ such that $\pi'_{v'}$ is an unramified twist of the Steinberg representation.
	\end{itemize}
	This existence of $F'$ and $\pi'$ completes the induction step, and therefore the proof of the theorem.
\end{proof}
\begin{remark}
	We observe that the results of \cite{Ana19} already suffice to prove Theorem 
	\ref{thm:levelraisingalln} (and hence Theorem \ref{thm:symmpowers}) when 
	$n$ is a power of two, without using the level raising results of Sections 
	\ref{sec:automorphic_level_raising}--\ref{sec:galois_level_raising}.
\end{remark}
We can now put everything together to deduce our main results on automorphy of 
symmetric powers.
\begin{theorem}\label{thm_seed_points}
	Let $n \geq 3$. Then there exists a cuspidal, everywhere unramified 
	automorphic representation $\pi$ of $\GL_2(\bA_\bQ)$ of weight $k \geq 2$  
	such that, for any isomorphism $\iota : \overline{\bQ}_p \to \bC$, 
	$\Sym^{n-1} r_{\pi, \iota}$ is automorphic. 
\end{theorem}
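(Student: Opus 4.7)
For $n=2$ the statement is trivial (with $\Sym^{1}\pi=\pi$), so fix $n\ge 3$. Choose a prime $p\equiv 1\pmod{48n!}$ and an isomorphism $\iota\colon\overline{\Q}_p\to\C$, and apply Corollary \ref{cor:level2qseed} to obtain a cuspidal automorphic representation $\sigma$ of $\GL_2(\A_\Q)$ of weight~$5$, unramified outside $\{2,q\}$ for some prime $q\equiv 3\pmod 4$, with $\sigma_q$ an unramified twist of the Steinberg representation and $\Sym^{n-1}r_{\sigma,\iota}$ automorphic. Since $q$ is inert in $K=\Q(i)$, the level-raising construction producing $\sigma$ from the CM form $\sigma_0$ (which is CM by $K$) breaks CM at $q$, so $\sigma$ itself is not CM.

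The plan then proceeds in two stages. In the first I kill the ramification at $q$ by moving within a Hida family. Because $\sigma_q$ is Steinberg, $\sigma$ is $\iota$-ordinary at $q$ and, after choosing an ordinary refinement, defines a classical point on the ordinary locus of the $q$-adic, tame-level-$4$ eigencurve---the analogue of the $\cE_0$ of \S \ref{subsec_eigencurve} with $(p,N)=(q,4)$---whose irreducible component through $\sigma$ I denote $\cC$. Standard Hida theory provides, in $\cC$, classical specialisations of arbitrarily large even weight corresponding to cuspidal eigenforms unramified at $q$; choosing one of sufficiently large weight yields a form $\tau$ that is still non-CM (the CM locus being Zariski closed in $\cC$ and missing $\sigma$). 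The ordinary form of Theorem \ref{thm_propogration_along_components_of_eigencurve}, which applies precisely because $\sigma$ and $\tau$ are both non-CM, then transfers automorphy of $\Sym^{n-1}$ from $\sigma$ to $\tau$.

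In the second stage I propagate inside $\cE_2$ to reach a level-$1$ form. The form $\tau$, being unramified outside $2$, defines a classical point $z_\tau\in\cE_2(\overline{\Q}_2)$ on the $p=2$, tame-level-$1$ eigencurve of \S \ref{subsec_eigencurve}. Starting from $z_\tau$, I run the ping-pong machinery of \S \ref{sec_ping_pong}: combine the Buzzard--Kilford decomposition Theorem \ref{thm_buzzard_kilford}, the twin construction Lemma \ref{lem_ping_pong}, and the jump lemmas Lemma \ref{lem_first_step}, Lemma \ref{lem_induction_step} and Lemma \ref{lem_propogation_through_annuli}, using Theorem \ref{thm_propogration_along_components_of_eigencurve} to propagate automorphy of $\Sym^{n-1}$ at each step. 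The Galois-image and $n$-regularity hypotheses required along the way are verified by Lemma \ref{lem:level1regularity} and Lemma \ref{lem_local_image_contains_SL_2}, and non-CM is preserved under the twin and annulus moves. This sequence of moves terminates at a classical point of $\cE_2$ corresponding to an everywhere unramified cuspidal eigenform $\pi$ of some weight $k\ge 2$, and $\Sym^{n-1}r_{\pi,\iota}$ is automorphic, as required.

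The main technical obstacle is the first stage: producing the classical point $\tau$ of tame level~$4$ (i.e.\ unramified at~$q$) inside the Hida family $\cC$ through $\sigma$, and verifying that it is non-CM and that all hypotheses of the ordinary propagation theorem hold for the pair $(\sigma,\tau)$. These are each standard assertions (Hida classicality, Zariski density of non-CM points in $\cC$, a hypothesis check), but piecing them together cleanly requires some care. Once $\tau$ is in hand, the second stage is essentially a rerun of the argument of \S \ref{sec_ping_pong}, which was written assuming a level-$1$ seed on $\cE_2$ but works identically starting from any classical point of $\cE_2$ of appropriate type.
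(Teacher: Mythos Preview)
Your first stage contains a genuine error: the claim that $\sigma$ is $\iota_q$-ordinary at $q$ because $\sigma_q$ is Steinberg is false in weight $5$. This intuition is correct only in weight $2$ (multiplicative reduction of elliptic curves). In weight $k>2$, a Steinberg local component forces $r_{\sigma,\iota_q}|_{G_{\Q_q}}$ to be \emph{irreducible}: by the argument in the proof of Lemma~\ref{lem_local_image_contains_SL_2}(2), an ordinary (hence reducible) local representation with $k>2$ is automatically crystalline, contradicting the nontrivial monodromy of the Steinberg Weil--Deligne representation. So by Lemma~\ref{lem_local_image_contains_SL_2}(1), $\sigma$ is not $\iota_q$-ordinary, and you cannot put it into a Hida family at $q$.

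The paper's fix is to use the \emph{numerically non-critical} case of Theorem~\ref{thm_propogration_along_components_of_eigencurve} on the $q$-adic, tame-level-$4$ eigencurve instead. The unique Steinberg refinement has slope strictly between $0$ and $k-1$, hence is numerically non-critical, and it is automatically $n$-regular since $\chi_{0,1}/\chi_{0,2}=|\cdot|_q$ is of infinite order. The required local $\SL_2$-image at $q$ then comes from the ``$k>2$ and not potentially crystalline'' clause of Lemma~\ref{lem_local_image_contains_SL_2}(2). There is also a point you gloss over in the second stage: having produced a form $\sigma'$ unramified at $q$, one must still verify that the Zariski closure of $r_{\sigma',\iota_2}(G_{\Q_2})$ contains $\SL_2$ before applying Theorem~\ref{thm_propogration_along_components_of_eigencurve} on the $2$-adic eigencurve. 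Since $\sigma'$ has level $4$, Lemma~\ref{lem:level1regularity} and Lemma~\ref{lem_local_image_contains_SL_2}(3) do not apply directly. The paper handles this by choosing $\sigma'$ of weight $k'\equiv 3\pmod 4$ and using the Buzzard--Kilford slope description to rule out the dihedral case at $2$; a single application of Theorem~\ref{thm_propogration_along_components_of_eigencurve} (rather than the full ping-pong) then suffices to reach an everywhere unramified $\pi$.
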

\begin{proof}
	By Corollary \ref{cor:level2qseed}, we can find odd 
	primes 
	$p \ne q $, with $q \equiv 3 \text{ mod }4$, and a cuspidal 
	automorphic representation $\sigma$ of $\GL_2(\bA_\bQ)$ of weight $5$ 
	satisfying the following conditions:
	\begin{itemize}
		\item $\sigma$ is unramified at primes not dividing $2q$; $\sigma_2$ is 
		isomorphic to a principal series representation $i_{B_2}^{\GL_2} \chi_1 \otimes 
		\chi_2$, where $\chi_1$ is unramified and $\chi_2$ has conductor 4; and 
		$\sigma_q$ is an unramified twist of the Steinberg representation.
		\item For any isomorphism $\iota_p : \overline{\bQ}_p \to \bC$, 
		$\Sym^{n-1} r_{\pi, \iota_p}$ is automorphic.
	\end{itemize}
 Now we choose an isomorphism $\iota_q : \overline{\bQ}_q \to 
	\bC$. By the second part of Lemma \ref{lem_local_image_contains_SL_2}, the 
	Zariski closure of $r_{\sigma, \iota_q}(G_{\bQ_q})$ contains $\SL_2$. 
	Since $\sigma_{q}$ is an unramified twist of Steinberg it has a unique 
	($q$-adic) accessible refinement, which is numerically non-critical and 
	$n$-regular.  We 
	can therefore apply Theorem 
	\ref{thm_propogration_along_components_of_eigencurve} to the point of the $q$-adic, 
	tame level 4 eigencurve associated to  $\sigma$ with its unique accessible refinement. 	
	Using the accumulation property of the 
	eigencurve to find a suitable classical point in the same (geometric) 
	irreducible component as this point, we deduce the existence of a 
	cuspidal automorphic 
	representation $\sigma'$ of $\GL_2(\bA_\bQ)$ of weight $k > 2$ satisfying 
	the following conditions:
	\begin{enumerate}
		\item $\sigma'$ is unramified outside $2$, and $\sigma'_{ 2}$ is 
		isomorphic to a principal series representation $i_{B_2}^{\GL_2} \chi_1 \otimes 
		\chi_2$, where $\chi_1$ is unramified and $\chi_2$ has conductor 4.
		\item The weight of $\sigma'$ satisfies $k \equiv 3 \text{ mod 
		}4$ (this is possible because $q \equiv 3 \text{ mod }4$, and we can 
		choose any $k \equiv 5 \text{ mod }{(q-1)q^\alpha}$ for sufficiently 
		large 
		$\alpha$).
		\item $\Sym^{n-1} r_{\sigma', \iota_q}$ is automorphic.
	\end{enumerate}
	Let $\iota : \overline{\bQ}_2 \to \bC$ be an isomorphism. These conditions 
	imply that 
	the Zariski closure of $r_{\sigma', \iota}(G_{\bbQ_2})$ must contain 
	$\SL_2$. Indeed, we have already observed in \S \ref{sec_ping_pong} that there are no 2-ordinary cusp 
	forms of tame level 1, so (invoking Lemma \ref{lem_local_image_contains_SL_2}) if this Zariski closure does not contain $\SL_2$ 
	then $r_{\sigma', \iota}|_{G_{\bbQ_2}}$ must be irreducible and induced from a quadratic extension of $\bbQ_2$, 
	implying that both refinements of $\sigma'$ at the prime 2 have slope 
	$(k-1) / 2$, an odd integer. However, Theorem \ref{thm_buzzard_kilford} 
	implies that there are no newforms of level 4 and odd slope (see 
	\cite[Corollary of Theorem B]{Buz05}); a contradiction. The same argument 
	shows that the refinement $\chi_1\otimes\chi_2$ is $n$-regular, since the 
	two refinements of $\sigma'$ have distinct slopes. 
	
	We see that $(\sigma',\chi_1\otimes\chi_2)$ satisfies the hypotheses of Theorem 
	\ref{thm_propogration_along_components_of_eigencurve}. Using the 
	accumulation property of the (tame level $1$, $2$-adic) eigencurve, we 
	deduce 
	the existence of a cuspidal, everywhere unramified automorphic 
	representation $\pi$ of $\GL_2(\A_\Q)$ such that $\Sym^{n-1} r_{\pi, 
		\iota}$ is automorphic. This completes the proof. 
\end{proof}

Combining Theorem \ref{thm_seed_points} with Theorem 
\ref{thm_propogration_of_automorphy_of_symmetric_powers}, we deduce:
\begin{theorem}\label{thm:symmpowers}
	Let $n \geq 3$, and let $\pi$ be a cuspidal, everywhere unramified 
	automorphic representation of $\GL_2(\bA_\bQ)$ of weight $k \geq 2$. Then 
	for any isomorphism $\iota : \overline{\bQ}_p \to \bC$, $\Sym^{n-1} 
	r_{\pi, \iota}$ is automorphic. 
\end{theorem}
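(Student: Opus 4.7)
The plan is to deduce the statement by chaining together the two theorems that immediately precede it in the text. First, I would invoke Theorem \ref{thm_seed_points} to produce a single \emph{seed} automorphic representation: a cuspidal, everywhere unramified $\pi_0$ of $\GL_2(\bA_\bQ)$ of some weight $k_0 \geq 2$ such that, for any isomorphism $\iota_0 : \overline{\bQ}_{p_0} \to \bC$, the representation $\Sym^{n-1} r_{\pi_0, \iota_0}$ is automorphic. (Since $\Sym^{n-1} r_{\pi_0, \iota_0}$ is semisimple, automorphy at one prime is equivalent to automorphy at every prime, by standard Brauer/soluble-descent considerations used throughout the paper.)

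Second, I would feed this seed into Theorem \ref{thm_propogration_of_automorphy_of_symmetric_powers}. Its hypothesis is precisely what the first step has produced, and its conclusion is exactly the assertion of Theorem \ref{thm:symmpowers}: for any everywhere unramified cuspidal $\pi$ of weight $\geq 2$ and any $\iota : \overline{\bQ}_p \to \bC$, the representation $\Sym^{n-1} r_{\pi, \iota}$ is automorphic. No further manipulation is needed; the two theorems plug together directly.

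At the level of this final deduction there is no real obstacle, because all of the substantive work has already been carried out in the preceding sections. The two input theorems rely on genuinely different machinery, and it is worth emphasising where the difficulty actually lives. Theorem \ref{thm_seed_points} rests on the automorphic and Galois-theoretic level-raising constructions of Sections \ref{sec:automorphic_level_raising}--\ref{sec:galois_level_raising} (producing a weight $5$, tame level $q$, $2$-new form whose symmetric power is automorphic), followed by propagation along an irreducible component of the $q$-adic tame level $4$ eigencurve via Theorem \ref{thm_propogration_along_components_of_eigencurve}, and finally a second propagation on the tame level $1$, $2$-adic eigencurve to reach level $1$. Theorem \ref{thm_propogration_of_automorphy_of_symmetric_powers}, by contrast, is the ping-pong argument of Section \ref{sec_ping_pong}, which exploits the Buzzard--Kilford description of the tame level $1$, $p=2$ Coleman--Mazur eigencurve to move automorphy along and between the annular components $X_i$ covering the boundary of weight space. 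Once both are in hand, Theorem \ref{thm:symmpowers} is a one-line corollary.
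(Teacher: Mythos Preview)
Your proposal is correct and matches the paper's own approach exactly: the paper states Theorem \ref{thm:symmpowers} immediately after Theorem \ref{thm_seed_points} with the single line ``Combining Theorem \ref{thm_seed_points} with Theorem \ref{thm_propogration_of_automorphy_of_symmetric_powers}, we deduce:'' and gives no further argument. Your summary of where the real work lies is also accurate.
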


\section{Higher levels}\label{sec_higher_levels}

In this section we extend our main theorem to higher levels as follows:
\begin{theorem}\label{thm_general_levels}
	Let $\pi$ be a cuspidal automorphic representation of $\GL_2(\A_\Q)$ of weight $k \geq 2$ satisfying the following two conditions:
	\begin{enumerate}
		\item For each prime $l$, $\pi_l$ has non-trivial Jacquet module (equivalently, $\pi_l$ admits an accessible refinement).
		\item $\pi$ is not a CM form. 
	\end{enumerate}
	Then for any $n \geq 3$ and any isomorphism $\iota : \overline{\bQ}_p \to \bC$, $\Sym^{n-1} r_{\pi, \iota}$ is automorphic. 
\end{theorem}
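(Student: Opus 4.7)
The plan is to induct on $\omega(N)$, the number of distinct rational primes dividing the conductor $N$ of $\pi$. The base case $\omega(N) = 0$, that is $N = 1$, is precisely Theorem \ref{thm:symmpowers}. For the inductive step, fix a prime $l \mid N$; the goal is to produce a cuspidal automorphic representation $\pi'$ of $\GL_2(\A_\Q)$ of weight $\geq 2$ whose conductor is divisible by strictly fewer primes than $N$, which still satisfies the hypotheses of the theorem, and such that the automorphy of $\Sym^{n-1} r_{\pi', \iota}$ is equivalent to that of $\Sym^{n-1} r_{\pi, \iota}$. Combined with the induction hypothesis this finishes the proof.

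To construct $\pi'$, I would exploit the fact, guaranteed by the hypothesis that $\pi_l$ has non-trivial Jacquet module, that $\pi$ together with a choice of accessible refinement $\chi$ at $l$ determines a classical point $z$ on the $l$-adic Coleman--Mazur eigencurve of appropriate tame level (possibly after a prime-to-$l$ twist to minimise conductor at $l$). The aim is then to find a second classical point $z'$ on the same geometric irreducible component of this eigencurve, corresponding to a pair $(\pi', \chi')$ with $\pi'$ unramified at $l$, hence of strictly smaller conductor. Such a $z'$ is produced using the Zariski density and accumulation of classical points on each irreducible component of the eigencurve: one picks a classical weight along the component of large enough slope that by Coleman's classicality theorem the resulting overconvergent eigenform is classical, and then arranges via a genericity argument that this classical form has level prime to $l$.

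Having chosen $(\pi', \chi')$ this way, I would apply Theorem \ref{thm_propogration_along_components_of_eigencurve} to conclude that $\Sym^{n-1} r_{\pi', \iota}$ is automorphic if and only if $\Sym^{n-1} r_{\pi, \iota}$ is. The hypotheses of that theorem --- an $n$-regular, numerically non-critical (or alternatively ordinary) refinement, together with the condition that the Zariski closure of the local Galois image at $l$ contains $\SL_2$ --- must then be checked. The $\SL_2$ condition follows from the non-CM hypothesis on $\pi$ (with $\pi'$ chosen not to be CM either, since $\pi$ is not), via an argument as in Lemma \ref{lem_local_image_contains_SL_2}; numerical non-criticality can be arranged by a slope choice, and ordinarity is a secondary option when the refinement has slope zero.

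The main obstacle is the $n$-regularity hypothesis, which need not hold a priori for $(\pi, \chi)$ or its chosen neighbour $(\pi', \chi')$. This is precisely where Proposition \ref{prop_TW_congruence_to_n_regular} is used: starting from $\pi$, it constructs a congruent automorphic representation $\pi^\sharp$ which does admit an $n$-regular refinement, and one then runs the eigencurve argument on $\pi^\sharp$ in place of $\pi$. The automorphy of $\Sym^{n-1} r_{\pi, \iota}$ and $\Sym^{n-1} r_{\pi^\sharp, \iota}$ are transferred to each other through the residual congruence using standard automorphy lifting theorems. The delicate point in implementing this is ensuring that $\pi^\sharp$ continues to satisfy the hypothesis of non-trivial Jacquet module at every prime dividing its conductor, so that the induction can proceed; after a suitable soluble CM base change as in \S\ref{sec_existence_of_seed_points} this can be arranged, and soluble descent is used at the end to return to $\Q$.
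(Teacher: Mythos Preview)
Your proposal identifies all the correct ingredients --- induction on the number of ramified primes, the eigencurve propagation Theorem \ref{thm_propogration_along_components_of_eigencurve}, Proposition \ref{prop_TW_congruence_to_n_regular} to supply $n$-regularity, and an automorphy lifting theorem to transfer along a residual congruence --- but the order in which you assemble them does not close the induction.

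The gap is this: you invoke Proposition \ref{prop_TW_congruence_to_n_regular} \emph{inside} the inductive step, replacing $\pi$ by a congruent $\pi^\sharp$ with $n$-regular refinements, and then run the eigencurve argument on $\pi^\sharp$. But the $\pi^\sharp$ produced by that proposition is constructed via a Taylor--Wiles patching argument and will in general be ramified at the auxiliary prime $q_a$ and at the Taylor--Wiles primes in $Q_N$, none of which divide the original conductor $N$. So $\omega(N_{\pi^\sharp})$ can be strictly larger than $\omega(N_\pi)$, and after removing one prime via the eigencurve you are not guaranteed to land below $\omega(N_\pi)$; the induction hypothesis does not apply. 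Your final paragraph worries about the Jacquet-module condition for $\pi^\sharp$, but that is already ensured by Proposition \ref{prop_TW_congruence_to_n_regular}(5); the actual obstruction is the jump in the number of ramified primes, which no base change fixes.

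The paper untangles this by separating the two mechanisms. First it proves the theorem (Proposition \ref{prop_general_levels_n_regular}) under the \emph{extra} hypothesis that every ramified local component already has an $n$-regular refinement; within this class the induction on $\omega(N)$ goes through cleanly, splitting at each step into the ordinary and non-ordinary cases of Theorem \ref{thm_propogration_along_components_of_eigencurve} and using that $n$-regularity for the new point on the eigencurve is a Zariski-open condition. Only afterwards, and exactly once, does one invoke Proposition \ref{prop_TW_congruence_to_n_regular} to pass from a general $\pi$ to an $n$-regular $\pi'$, and then a direct appeal to \cite[Theorem 4.2.1]{BLGGT} (the residual representation $\Sym^{n-1}\overline{r}_{\pi,\iota}$ is irreducible since $p>\max(2(n+1),(n-1)k)$ and the image contains $\SL_2(\bF_p)$) transfers automorphy back from $\pi'$ to $\pi$. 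No soluble base change is needed at this stage.
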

For example, these conditions are satisfied if $\pi$ is associated to a 
non-CM cuspidal eigenform $f$ of level $\Gamma_1(N)$ for some squarefree 
integer $N 
\geq 1$; in particular, if $k = 2$ and $\pi$ is associated to a semistable elliptic curve over $\Q$.

Fix $n \geq 3$ for the remainder of this section. We first prove the following special case of Theorem \ref{thm_general_levels}:
\begin{proposition}\label{prop_general_levels_n_regular}
	Let $\pi$ be a cuspidal automorphic representation of $\GL_2(\A_\Q)$ of weight $k \geq 2$ satisfying the following conditions:
	\begin{enumerate}
		\item For each prime $l$ such that $\pi_l$ is ramified, $\pi_l$ has an accessible refinement which is $n$-regular, in the sense of Definition \ref{defn_n_regular}.
		\item $\pi$ is not a CM form. 
	\end{enumerate}
	Then for any isomorphism $\iota : \overline{\bQ}_p \to \bC$, $\Sym^{n-1} r_{\pi, \iota}$ is automorphic. 
\end{proposition}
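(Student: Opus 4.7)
The plan is to argue by induction on the conductor $N$ of $\pi$, the base case $N=1$ being exactly Theorem \ref{thm:symmpowers}.

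For the inductive step, assume $N > 1$ and pick a prime $l \mid N$; the hypothesis supplies an $n$-regular accessible refinement $\chi$ of $\pi_l$. Fix an isomorphism $\iota : \overline{\bQ}_l \to \bC$ and let $M = N/l^{v_l(N)}$ be the prime-to-$l$ part of $N$. Then $(\pi, \chi)$ determines a classical point $z$ on the $l$-adic Coleman--Mazur eigencurve constructed as in \S\ref{subsec_eigencurve} but with the prime $l$ playing the role of the prime denoted $p$ there; call this eigencurve $\cE$. I would look for a second classical point $z' \in \cE$ lying on the same geometric irreducible component of $\cE$ as $z$ and corresponding to some $(\pi', \chi') \in \mathcal{RA}_0$ satisfying: (a) $\pi'$ has strictly smaller conductor than $\pi$; (b) $\pi'$ is not CM; and (c) at each prime $l'$ where $\pi'_{l'}$ is ramified, it admits an $n$-regular accessible refinement. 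The inductive hypothesis would then give automorphy of $\Sym^{n-1}r_{\pi', \iota}$, and Theorem \ref{thm_propogration_along_components_of_eigencurve}, whose hypotheses are verified using the $n$-regularity of $\chi$ together with the non-CM assumption on $\pi$ (cf.\ Lemma \ref{lem_local_image_contains_SL_2}), would propagate this automorphy along the component back to $\pi$, completing the induction.

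To produce such a $z'$, I would exploit Proposition \ref{prop:eigenvariety}: the weight map $\kappa : \cE \to \cW$ has Zariski open image on each irreducible component, and classical points are Zariski dense and self-accumulating. The image of the component through $z$ therefore contains infinitely many algebraic characters $z \mapsto z^{k'-2}$; at such a weight, the second component $\chi'_{0,2}$ of the refinement is forced to have algebraic restriction to $\bZ_l^\times$, and local--global compatibility then constrains $\pi'_l$ to be either an unramified principal series or an unramified twist of the Steinberg representation, hence of $l$-adic conductor at most $l$. When $v_l(N) \geq 2$, this yields the required strict decrease in conductor. Conditions (b) and (c) are Zariski open and hold generically: non-CM of $\pi'$ because $\pi$ is not CM (so the CM locus does not fill the component), and $n$-regularity at each $l' \neq l$ because it is determined by local Hecke data varying rigid-analytically along $\cE$ and satisfying the required inequalities on a Zariski open subset.

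The main obstacle is the ``Steinberg trap'' in the remaining case $v_l(N) = 1$: it is possible that every classical integer-weight point on the component has $\pi'_l$ of Steinberg type, so the above argument fails to strictly reduce the conductor at $l$. To complete the induction in this case, the plan is to first move in an $l''$-adic family at another ramified prime $l'' \neq l$ of $\pi$ (when such an $l''$ exists), in order to perturb $\pi_l$ away from the Steinberg locus while preserving the remaining hypotheses, and then to apply the $l$-adic argument to the perturbed form. When $\pi$ is Steinberg-ramified only at $l$, this fallback is unavailable and an additional ingredient --- such as a congruence produced by the level-raising machinery of \S\S\ref{sec:automorphic_level_raising}--\ref{sec:galois_level_raising} combined with a modularity lifting theorem --- is needed to bridge to a form unramified at $l$. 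Orchestrating this iteration while maintaining all the hypotheses of Proposition \ref{prop_general_levels_n_regular} at each intermediate step is the principal technical difficulty.
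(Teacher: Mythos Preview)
Your overall strategy --- induct on the level, remove one ramified prime at a time via the eigencurve, and propagate automorphy along a component using Theorem~\ref{thm_propogration_along_components_of_eigencurve} --- is exactly what the paper does. The difference is that the ``Steinberg trap'' you identify as the main obstacle is illusory, so your proposed workarounds (moving at a second ramified prime, invoking the level-raising of \S\S\ref{sec:automorphic_level_raising}--\ref{sec:galois_level_raising}) are unnecessary.

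Here is why the trap does not exist. Near any classical point the $U_l$-slope is locally constant on the eigencurve, say equal to $s_0$. If a nearby classical point of algebraic weight $k'$ (trivial nebentypus at $l$) had $\pi'_l$ an unramified twist of Steinberg, then the $U_l$-eigenvalue would satisfy $a_l^2 = l^{\,k'-2}$ up to a unit, forcing $s_0 = (k'-2)/2$. This pins down at most one value of $k'$. For every other sufficiently large classical weight $k'$ in the neighborhood (so that Coleman classicality applies), the form is $l$-old, hence $\pi'$ is \emph{unramified} at $l$. Thus classical points with $\pi'$ unramified at $l$ accumulate at $z$, and you can always find one on the same component. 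The paper simply records this as ``by accumulation \dots\ the level of $\pi'$ is prime to $p$'' without further comment.

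Two smaller points of comparison with the paper. First, the paper inducts on the number of primes dividing $N$ rather than on $N$ itself; since each step produces a $\pi'$ genuinely unramified at the chosen prime, this is the cleaner invariant. Second, the paper splits explicitly into the cases $r_{\pi,\iota}|_{G_{\bQ_p}}$ reducible (ordinary) and irreducible, matching the two alternative hypothesis sets in Theorem~\ref{thm_propogration_along_components_of_eigencurve}; in the irreducible case one also arranges that $\chi'$ is numerically non-critical and $n$-regular, and invokes Lemma~\ref{lem_local_image_contains_SL_2}(2) for the $\SL_2$ condition on $r_{\pi',\iota}(G_{\bQ_p})$. Finally, note that one must first twist $\pi$ by a finite-order character so that $(\pi,\chi)\in\mathcal{RA}_0$ (i.e.\ so that the first coordinate of the refinement is unramified) before placing it on the eigencurve.
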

\begin{proof}
	We prove the proposition by induction on the number of primes $r$ dividing the conductor $N$ of $\pi$. The case $r = 0$ (equivalently, $N = 1$) is Theorem \ref{thm:symmpowers}.
	
	Suppose therefore that $r>0$ and that the theorem is known for automorphic representations of conductor divisible by strictly fewer than $r$ primes. Let $\pi$ be a cuspidal automorphic representation as in the statement of the proposition. Fix a prime $p$ at which $\pi$ is ramified, and an isomorphism $\iota : \overline{\Q}_p \to \C$. Factor $N = M p^s$, where $(M, p) = 1$.
	
	Suppose first that $r_{\pi, \iota}|_{G_{\Q_p}}$ is reducible. Then $\pi$ is 
	$\iota$-ordinary and $\pi$ admits an ordinary refinement $\chi$. After 
	twisting by a finite order character, we can assume that $(\pi,\chi) \in 
	\mathcal{RA}_0$ (here we use the notation established for the Coleman--Mazur eigencurve in \S \ref{subsec_eigencurve}). Let $\cC$ be an irreducible component of the (tame level $M$, $p$-adic) eigencurve $\cE_{0, \C_p}$ containing the point $x$ corresponding to $(\pi, \chi)$, and let $\cZ \subset \cE_0$ denote the Zariski closed set defined in Lemma \ref{lem_bad_locus_of_eigencurve_zariski_closed}. Our hypotheses imply that $x \not\in \cZ_{\C_p}$.

	We can therefore find a point $x'' \in \cC - \cZ_{\C_p}$ such that the image of $x''$ in $\cW_{0, \C_p}$ is a character of the form $y \mapsto y^{k''-2}$
	for some integer $k'' \geq 2$. Indeed, since the image of $\cC$ in $\cW_{0,\C_p}$ is Zariski open, we can find such a point in $\cC$. There is an affinoid neighbourhood $U''$ of this point which maps in a finite and surjective fashion onto an affinoid open in $\cW_{0,\Cp}$. The image of $\cZ_{\C_p}\cap U''$ in this affinoid open is Zariski closed, and we can therefore find another such point $x'' \in \cC- \cZ_{\C_p}$. (In fact, the ordinary component $\cC$ surjects onto a connected component of $\cW_{0,\C_p}$, but we will apply the same argument for a non-ordinary component.)
	
	Choosing another point in a sufficiently small affinoid neighbourhood of $x''$ in $\cC - \cZ_{\C_p}$ and applying the classicality criterion, we can find a point $x' \in \cC - \cZ_{\C_p}$ corresponding to an $\iota$-ordinary cuspidal 
	automorphic representation $\pi'$ of $\GL_2(\A_\Q)$ of weight $k' \geq 2$ 
	with the following properties:
	\begin{enumerate}
		\item Let $\chi'$ denote the ordinary refinement of $\pi'$. Then $(\pi', \chi')$ determines a point on the same irreducible component of the (tame level $M$, $p$-adic) eigencurve $\cE_{0, \C_p}$ as $(\pi, \chi)$.
				\item The level of $\pi'$ is prime to $p$. 
		\item For each prime $l | M$, each accessible refinement of $\pi'_l$ is $n$-regular.
		\item The Zariski closure of $r_{\pi', \iota}(G_\Q)$  (in $\GL_2/\Qpbar$) contains $\SL_2$. 
	\end{enumerate}
	(The latter two properties follow from the definition of the set $\cZ$ in Lemma \ref{lem_bad_locus_of_eigencurve_zariski_closed}. In fact we can take $x' = x''$, since ordinary points of classical weights are classical; however, we will repeat the same argument in the next paragraph also for a non-ordinary component of the eigenvariety, in which case two steps are required.)
	By induction, $\Sym^{n-1} r_{\pi', \iota}$ is automorphic. We may then 
	apply the ordinary case of Theorem 
	\ref{thm_propogration_along_components_of_eigencurve} to conclude that 
	$\Sym^{n-1} r_{\pi, \iota}$ is automorphic.
	
	Suppose instead that $r_{\pi, \iota}|_{G_{\Q_p}}$ is irreducible, and let 
	$\chi$ be an accessible, $n$-regular refinement. The existence of $\chi$ 
	implies that the Zariski closure of $r_{\pi, \iota}(G_{\Q_p})$ in 
	$\GL_2/\Qpbar$ contains 
	$\SL_2$, by Lemma \ref{lem_local_image_contains_SL_2}. Again, after 
	twisting by a 
	finite order 
	character, we can assume that $(\pi,\chi)\in\mathcal{RA}_0$. Repeating the same argument as in the ordinary case, we can find a cuspidal automorphic representation $\pi'$ of 
	$\GL_2(\A_\Q)$ of weight $k' \geq 2$ with the following properties:
	\begin{enumerate}
		\item $\pi'$ admits a non-ordinary refinement $\chi'$ which is 
		numerically non-critical and $n$-regular. (This again implies that the 
		Zariski closure of $r_{\pi', \iota}(G_{\Q_p})$ contains 
		$\SL_2$.)
		\item The pair $(\pi', \chi')$ determines a point on the same irreducible component of the (tame level $M$, $p$-adic) eigencurve $\cE_{0, \C_p}$ as $(\pi, \chi)$.
		\item For each prime $l | M$, each accessible refinement of $\pi'_l$ is  $n$-regular.
		\item The level of $\pi'$ is prime to $p$.
	\end{enumerate}
	By induction, $\Sym^{n-1} r_{\pi', \iota}$ is automorphic. We can then appeal to Theorem \ref{thm_propogration_along_components_of_eigencurve} to conclude that $\Sym^{n-1} r_{\pi, \iota}$ is automorphic. 
	
	In either case we are done, by induction. 
\end{proof}
To reduce the general case of Theorem \ref{thm_general_levels} to Proposition \ref{prop_general_levels_n_regular}, we establish the following intermediate result.
\begin{prop}\label{prop_TW_congruence_to_n_regular}
	Let $\pi$ be a cuspidal automorphic representation of $\GL_2(\A_\Q)$ of weight $k \geq 2$, without CM. Suppose that for each prime $l$, $\pi_l$ has non-trivial Jacquet module. Then we can find a prime $p$, an isomorphism $\iota : \overline{\Q}_p \to \C$, and another cuspidal automorphic representation $\pi'$ of $\GL_2(\A_\Q)$ of weight $k$ with the following properties:
	\begin{enumerate}
		\item $p > \max(2(n+1), (n-1)k)$.
		\item The image of $\overline{r}_{\pi, \iota}$ contains a conjugate of $\SL_2(\F_p)$.
		\item Both $\pi_p$ and $\pi'_p$ are unramified.
		\item There is an isomorphism $\overline{r}_{\pi, \iota} \cong \overline{r}_{\pi', \iota}$.
		\item For each prime $l$, $\pi'_l$ has non-trivial Jacquet module. If  $\pi'_l$ is ramified, then each accessible refinement of $\pi'_l$ is $n$-regular. 
	\end{enumerate}
\end{prop}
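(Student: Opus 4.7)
The plan is to choose a large prime $p$ for which $\overline{r}_{\pi,\iota}$ has big image (via Ribet's theorem) and to construct $\pi'$ as a Galois-theoretic deformation of $\overline{r}_{\pi,\iota}$ with prescribed local types at the ramified primes, designed to force $n$-regularity. Automorphy of $\pi'$ then follows from an $R = \T$ theorem of Taylor--Wiles--Kisin type for $\GL_2$ over $\Q$.

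First I would choose the prime $p$. Since $\pi$ has no CM, Ribet's theorem on the Galois image attached to non-CM cusp forms implies that for all but finitely many $p$ the image of $\overline{r}_{\pi,\iota}$ contains a conjugate of $\SL_2(\F_p)$. Take such a $p$ with $p \nmid N$ (the conductor of $\pi$) and $p > \max(2(n+1), (n-1)k)$. Then $\pi_p$ is automatically unramified, Fontaine--Laffaille theory applies at $p$ (since $p > k-1$), and $\overline{r}_{\pi,\iota}|_{G_{\Q(\zeta_p)}}$ is absolutely irreducible and adequate. Next, at each $l \mid N$, I would choose an inertial type $\tau_l$ and a component $\cC_l$ of the local framed lifting ring $R_l^\square$ lying above $\tau_l$ such that every $\overline{\Q}_p$-point of $\cC_l$ corresponds under local Langlands to either an unramified twist of Steinberg (in the case where $\pi_l$ is already Steinberg, automatically $n$-regular because the ratio of refinement characters is $|\cdot|_l$, of infinite order) or to a principal series $i_{B_2}^{\GL_2}(\chi_1 \otimes \chi_2)$ with $\chi_1/\chi_2$ not a root of unity, hence $n$-regular. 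Since $\pi_l$ has non-trivial Jacquet module, $\overline{r}_{\pi,\iota}|_{I_l}$ is a direct sum of two characters (of order prime to $p$ as $l \neq p$ and $p$ is large), and the explicit description of $R_l^\square$ for $\GL_2$ at $l \neq p$ (as in Shotton's thesis) shows that such a component always exists: lifts of the two residual characters are parameterised by Frobenius values in a Teichm\"uller coset of $\overline{\Z}_p^\times$, and a Zariski-generic choice makes $\chi_1(\Frob_l)/\chi_2(\Frob_l)$ of infinite order.

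Finally, I would consider the global universal deformation ring $R$ of $\overline{r}_{\pi,\iota}$ with local conditions: Fontaine--Laffaille crystalline of Hodge--Tate weights $\{0, k-1\}$ at $p$, lying on $\cC_l$ at each $l \mid N$, and unramified elsewhere. By the $R = \T$ theorems for $\GL_2$ over $\Q$ (e.g.\ those of Kisin, or in the style of \cite{BLGGT}), $R$ is isomorphic to a Hecke algebra $\T$ acting on a space of classical modular forms of weight $k$ and appropriate tame level and character. It suffices to produce one $\overline{\Q}_p$-point of $R$ lying in the generic locus of each $\cC_l$; this point gives a classical eigenform $\pi'$ with the required local structure. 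The existence of such a lift reduces to producing a single global lift with the prescribed local behaviour, which may be achieved via Ramakrishna's theorem (possibly after adjoining auxiliary Taylor--Wiles primes to kill the dual Selmer group) or by a direct dimension count using the local lifting rings. The main obstacle I anticipate is arranging Ramakrishna's argument with the non-minimal prescribed types at $l \mid N$: this reduces to a Galois-cohomological computation in which the large image of $\overline{r}_{\pi,\iota}$ and the good structure of $R_l^\square$ in dimension $2$ should ensure that the dual Selmer group can be forced to vanish, yielding the desired $\pi'$.
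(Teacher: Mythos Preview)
Your setup is fine through the choice of $p$ (big image via Ribet, Fontaine--Laffaille at $p$), but the core of the argument contains a genuine gap. The condition ``$n$-regular'' is a Zariski-\emph{open} condition on the Frobenius eigenvalues, not a closed one: within any irreducible component of $R_l^\square$ (for $l\mid N$) the ratio $\chi_1(\Frob_l)/\chi_2(\Frob_l)$ varies freely, so there is no component ``all of whose points are $n$-regular''. You acknowledge this by speaking of the ``generic locus'', but then propose to hit that locus via $R=\T$ or Ramakrishna. Neither method delivers this. An $R=\T$ theorem identifies the global deformation ring with a Hecke algebra, but that ring is (typically) finite over $\cO$, hence $0$-dimensional after inverting $p$; knowing $R=\T$ does not tell you that \emph{any} Hecke eigensystem avoids a fixed Zariski-closed subset of the local lifting ring. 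Ramakrishna-type arguments produce a single $\cO$-valued lift but give no control over the exact Frobenius eigenvalues at the primes $l\mid N$ --- they control only the inertial behaviour.

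The paper's proof supplies exactly the missing idea: rather than trying to land in the open locus directly, it argues by contradiction using the full Taylor--Wiles--Kisin patching machinery. One first chooses $p$ (using a result of Weston) so that each local lifting ring $R_l$ at $l\mid N$ is \emph{formally smooth}; then the ``not-$n$-regular'' locus at $l$ is cut out by a single nonzero element, hence has strictly smaller dimension. If every automorphic $\pi'$ congruent to $\pi$ (allowing auxiliary Taylor--Wiles level) failed $n$-regularity at some $l$, then the patched module $H_\infty$ would be an $R_\infty$-module with $R_\infty$ built from $A_\cS^T/I$ (the product of the bad loci), whose dimension is one less than expected. But $H_\infty$ is free over $S_\infty$, and comparing dimensions gives the contradiction. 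Finally, $\pi'$ may be ramified at the auxiliary Taylor--Wiles primes $q\in Q_N$; there $n$-regularity is automatic because the inertial characters have $p$-power order and $p>2n$. Your sketch omits both the smoothness input (Weston) and this patching/dimension mechanism, which is the substantive content.
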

\begin{proof}
	We use Taylor--Wiles--Kisin patching. The idea is that if all the 
	automorphic representations congruent to $\pi \text{ mod }p$ fail to have 
	$n$-regular refinements at $l$ then the patched module will be supported on 
	a codimension one quotient of the local deformation ring at $l$, which 
	contradicts the numerology of the Taylor--Wiles--Kisin method.  
	
	Let $M$ denote the conductor of $\pi$. We can choose a
	prime 
	$p$ satisfying (1) and (2), $p > M$, such that $\pi_p$ is unramified, and satisfying 
	the following additional condition:
	\begin{itemize}
		\item For each prime $l \neq p$ such that $\pi_l$ is ramified, the universal lifting ring classifying lifts of $\overline{r}_{\pi, \iota}|_{G_{\Q_l}}$ of determinant equal to $\det r_{\pi, \iota}$ is formally smooth.
	\end{itemize}
	Indeed, it is sufficient that for each such prime $l$, the group $H^0(\Q_l, 
	\ad^0 \overline{r}_{\pi, \iota}(1))$ vanishes. Such a prime exists thanks to \cite[Proposition 3.2, Proposition 5.3]{Wes04}. 
	
 Fix an additional prime $q_a > p$ such that $\pi_{q_a}$ is unramified and such that the universal lifting ring classifying lifts of $\overline{r}_{\pi, \iota}|_{G_{\Q_{q_a}}}$ of determinant equal to $\det r_{\pi, \iota}$ is formally smooth. This is possible by e.g.\ \cite[Lemma 11]{Dia94}. 

	Fix a coefficient field $E / \bQ_p$, large enough that there is a conjugate $\overline{\rho} : G_\Q \to \GL_2(k)$ of $\overline{r}_{\pi, \iota}$ and such that $\chi = \det r_{\pi, \iota}$ takes values in $\cO$. We assume moreover that for each $\sigma \in G_\Q$, the roots of the characteristic polynomial of $\overline{\rho}(\sigma)$ lie in $k$. Let $S$ denote the set of primes at which $r_{\pi, \iota}$ is ramified (equivalently, at which $\overline{\rho}$ is ramified), together with $q_a$. We consider the global deformation problem (in the sense of \cite[Definition 5.6]{Tho16})
	\[ \cS = (\overline{\rho}, \chi, S, \{ \cO \}_{v \in S}, \{ \cD_v \}_{v \in S}), \]
	where $\cD_p$ is the functor of lifts of $\overline{\rho}|_{G_{\Q_p}}$ of determinant $\chi$ which are Fontaine--Laffaille with the same Hodge--Tate weights as $r_{\pi, \iota}$, and if $l \in S - \{ p \}$ then $\cD_l$ is the functor of all lifts of $\overline{\rho}|_{G_{\Q_l}}$ of determinant $\chi$. Since $\overline{\rho}$ is absolutely irreducible, the functor of deformations of type $\cS$ is represented by an object $R_\cS \in \cC_\cO$ (cf. \cite[Theorem 5.9]{Tho16}). We may choose a representative $\rho_\cS : G_\Q \to \GL_2(R_\cS)$ of the universal deformation. We set $H = H^1(Y_{U_1(M q_a)}, \Sym^{k-2} \cO^2)$, where $U_1(M q_a)$ is the open compact subgroup of $\GL_2(\A_{\bQ}^\infty)$ defined in \S \ref{subsec_eigencurve} and $Y_U$ is the modular curve of level $U$ (denoted $\widetilde{Y}(U)$ in \cite[\S 4.1]{MR2207783}). We write $\T^S \subset \End_\cO(H)$ for the commutative $\cO$-subalgebra generated by the unramified Hecke operators $T_l, S_l$ for $l \not\in S$. Then there is a unique maximal ideal $\ffrm \subset \T^S$ with residue field $k$ such that for each prime $l \not\in S$, the characteristic polynomial of $\overline{\rho}(\Frob_l)$ equals $X^2 - T_l X + l^{k-1} S_l \text{ mod }\ffrm$. The localization $H_\m$ is a finite free $\cO$-module, and there is a unique strict equivalence class of liftings $\rho_\ffrm : G_\Q \to \GL_2(\T^S_\m)$ of type $\cS$ such that for each prime $l \not\in S$, the characteristic polynomial of $\rho_\m(\Frob_l)$ equals the image of $X^2 - T_l X + l^{k-1} S_l$ in $\T^S_\m[X]$. (See \cite[Proposition 6.5]{Tho16} for justification of a very similar statement in the context of Shimura curves.) In particular, there is an $\cO$-algebra morphism $R_\cS \to \T^S_\m$ classifying $\rho_\m$, which is surjective.
	
	Suppose given a finite set $Q$ of primes satisfying the following conditions:
	\begin{enumerate}
		\item[(a)] $Q \cap S = \emptyset$.
		\item[(b)] For each $q \in Q$, $q \equiv 1 \text{ mod }p$ and $\overline{\rho}(\Frob_q)$ has distinct eigenvalues $\alpha_q, \beta_q \in k$.
	\end{enumerate}
	In this case we can define the following additional data:
	\begin{itemize}
		\item The group $\Delta_Q = \prod_{q \in Q} (\bbZ / q \bbZ)^\times(p)$ (i.e.\ the maximal $p$-power quotient of the product of the units in each residue field).
		\item The augmented global deformation problem
			\[ \cS_Q = (\overline{\rho}, \chi, S \cup Q, \{ \cO \}_{v \in S \cup Q}, \{ \cD_v \}_{v \in S \cup Q}), \]
			where for each $q \in Q$, $\cD_q$ is the functor all lifts of $\overline{\rho}|_{G_{\Q_q}}$ of determinant $\chi$.
		The labelling of $\alpha_q, \beta_q$ for each $q \in Q$ determines an algebra homomorphism $\cO[\Delta_Q] \to R_{\cS_Q}$ in the following way: if $\rho_{\cS_Q}$ is a representative of the universal deformation, then $\rho_{\cS_Q}|_{G_{\Q_q}}$ is conjugate to a representation of the form $A_q \oplus B_q$, where $A_q : G_{\Q_q} \to R_{\cS_Q}^\times$ is a character such that $A_q \text{ mod }\ffrm_{R_{\cS_Q}}$ is unramified and $A_q \text{ mod }\ffrm_{R_{\cS_Q}}(\Frob_q) = \alpha_q$ (and similarly for $B_q$). Then $A_q \circ \Art_{\Q_q} |_{\bbZ_q^\times}$ factors through a homomorphism $(\Z / q \Z)^\times(p) \to R_{\cS_Q}^\times$. These homomorphisms for $q \in Q$ collectively determine the algebra homomorphism $\cO[\Delta_Q] \to  R_{\cS_Q}$.
		\item The cohomology module $H_Q = H^1(Y_{U_1(M q_a) \cap U_2(Q)}, \Sym^{k-2} \cO^2)$, where we define
		\[ U_2(Q) = \left\{ \left(\begin{array}{cc} a & b \\ c & d \end{array}\right) \in \GL_2(\widehat{\bZ}) : c \equiv 0 \text{ mod }(\prod_{q \in Q} q), ad^{-1} \mapsto 1 \in \Delta_Q  \right\}, \]
		and commutative $\cO$-subalgebras $\T^{S \cup 
		Q} \subset \T^{S \cup Q}_Q \subset \End_\cO(H_Q)$. By definition, 
		$\T^{S \cup Q}$ is generated by the unramified Hecke operators $T_l, 
		S_l$ for $l \not\in S \cup Q$ and $\T^{S \cup Q}_Q$ is generated by 
		$\T^{S \cup Q}_Q$ and the operators $U_q$ for $q \in Q$. There are 
		maximal ideals $\ffrm_Q \subset \T^{S \cup Q}$ and $\ffrm_{Q, 1} 
		\subset \T^{S \cup Q}_Q$ with residue field $k$ defined as follows: 
		$\ffrm_Q$ is the unique maximal ideal such that for each prime $l 
		\not\in S \cup Q$, the characteristic polynomial of 
		$\overline{\rho}(\Frob_l)$ equals $X^2 - T_l X + l^{k-1} S_l \text{ mod 
		}\ffrm_Q$. The ideal $\ffrm_{Q, 1}$ is generated by $\ffrm_Q$ and the 
		elements $U_q - \alpha_q$ for $q \in Q$. There is a unique strict equivalence class of liftings
		$\rho_{\ffrm_Q} : G_\Q \to \GL_2(\T^{S \cup Q}_{\ffrm_Q})$ of type 
		$\cS_Q$ such that for each $l \not \in S \cup Q$, the characteristic 
		polynomial of $\rho_{\ffrm_Q}(\Frob_l)$ equals the image of $X^2 - T_l 
		X + l^{k-1} S_l$ in $\T^{S \cup Q}_{\ffrm_Q}[X]$. There is an 
		$\cO$-algebra morphism $R_{\cS_Q} \to \T_{\m_Q}^{S \cup Q}$ classifying 
		$\rho_{\ffrm_Q}$, which is surjective. Moreover, if we view $H_{Q, 
		\ffrm_{Q, 1}}$ as an $R_{\cS_Q}$-module via this map, then the two 
		$\cO[\Delta_Q]$-module structures on $H_{Q, \ffrm_{Q, 1}}$, one arising 
		from $R_{\cS_Q}$, the other arising from the action of $\Delta_Q$ via 
		Hecke operators, coincide. (These statements in turn may be justified 
		as in the proof of \cite[Lemma 6.8]{Tho16}.) Finally, $H_{Q, \ffrm_{Q, 
		1}}$ is a free $\cO[\Delta_Q]$-module and there is an isomorphism 
		$H_{Q, \ffrm_{Q, 1}} \otimes_{\cO[\Delta_Q]} \cO \cong H_{\ffrm}$ of 
		$R_{\cS_Q} \otimes_{ \cO[\Delta_Q]} \cO \cong R_{\cS}$-modules. (This 
		is again proved in a similar way to \cite[Lemma 6.8]{Tho16}, using the 
		fact that $H^i( Y_{U_1(M q_a) \cap U_0(Q)}, 
		\Sym^{k-2} (\cO / \varpi)^2)$ is Eisenstein for $i \neq 1$,
		together with \cite[Corollary 2.7]{KT}, to justify the freeness.)
\end{itemize}
		If $l \in S$, let $R_l \in \cC_\cO$ denote the universal lifting ring representing the local deformation problem $\cD_l$. By construction (if $l \neq p$) or arguing as in \cite[\S 2.4.1]{cht} (if $l = p$) $R_l$ is a formally smooth $\cO$-algebra; if $l \neq p$, then $R_l$ is formally smooth over $\cO$ of relative dimension 3, while $R_p$ has relative dimension 4. We set $T = S - \{ p, q_a \}$ and $A_\cS^T = \widehat{\otimes}_{l \in T} R_l$ (the completed tensor product being over $\cO$). The $T$-framed deformation rings $R_\cS^T$ and $R_{\cS_Q}^T$ are defined (see \cite[\S 5.2]{Tho16}) and there are canonical homomorphisms $A_{\cS}^T \to R_{\cS}^T$ and $A_{\cS}^T \to R_{\cS_Q}^T$. 
		
		By the argument of \cite[Proposition 3.2.5]{Kis09} and \cite[Proposition 5.10]{Tho16}, we can find an integer $q_0 \geq 0$ with the following property: for each $N \geq 1$, there exists a set $Q = Q_N$ of primes satisfying conditions (a), (b) above and also:
		\begin{enumerate}
			\item[(c)] $|Q_N| = q_0$.
			\item[(d)] For each $q \in Q_N$, $q \equiv 1 \text{ mod }p^N$.
			\item[(e)] The algebra map $A_{\cS}^T \to R^T_{\cS_{Q_N}}$ extends to a surjective algebra homomorphism $A_{\cS}^T\llbracket X_1, \dots, X_g \rrbracket \to R_{\cS^T_{Q_N}}$, where $g = q + |T| - 1$.
		\end{enumerate}
		We choose for each $N \geq 1$ a representative $\rho_{\cS_{Q_N}}$ of the universal deformation over $R_{\cS_{Q_N}}$ which lifts $\rho_\cS$. This choice determines an isomorphism $R_{\cS_{Q_N}}^T \cong R_{\cS_{Q_N}} \widehat{\otimes}_\cO \cT$, where $\cT$ is a power series ring over $\cO$ in $4|T| - 1$ variables. We set $H_{Q_N}^T = H_{{Q_N}, \ffrm_{{Q_N}, 1}} \widehat{\otimes}_\cO \cT$. It is a free $\cT[\Delta_{Q_N}]$-module, and there is an isomorphism $H_{Q_N}^T \otimes_{\cT[\Delta_{Q_N}]} \cO \cong H_\ffrm$ of $R_{\cS_{Q_N}^T} \otimes_{\cT[\Delta_{Q_N}]} \cO \cong R_{\cS}$-modules.
		
		We now come to the essential point of the proof. Let $l \in S - \{ p 
		\}$, and fix a Frobenius lift $\phi_l \in G_{\Q_l}$. 
\begin{lemma}
With our current assumptions, there is a principal ideal 
		$I_l \subset R_l$ with the following property: for any homomorphism $f 
		: R_l \to \overline{\Q}_p$, the resulting homomorphism $\rho_f : 
		G_{\Q_l} \to \GL_2(\overline{\Q}_p)$ has the property that the 
		eigenvalues $\alpha_l, \beta_l$ of $\rho_f(\phi_l)$ satisfy $(\alpha_l 
		/ \beta_l)^{i} = 1$ for some $i = 1, \dots, n-1$ if and only if $f(I_l) = 
		0$. Moreover, the quotient $R_l / I_l$ has dimension strictly smaller 
		than the dimension of $R_l$.
\end{lemma}		
\begin{proof}
Let $(r, N) = \rec^T_{\bQ_l}(\iota^{-1} \pi_l)$, a Weil--Deligne representation that we may assume is defined over $E$. The proof will use the fact that the Jacquet module of $\pi_l$ is non-trivial (equivalently, that the Weil--Deligne representation $(r, N)$ is reducible). 

We recall that the ring $R_l$ is a formally smooth $\cO$-algebra of relative dimension 3. Let $r_l^\text{univ} : G_{\bQ_l} \to \GL_2(R_l)$ be the universal lifting. We can take $I_l$ to be the ideal generated by the discriminant of the characteristic polynomial of $\Sym^{n-1} r_l^\text{univ}(\phi_l)$. To complete the proof of the lemma, we need to show that $\dim R_l / I_l < \dim R_l$. Since $R_l$ is an integral domain, it is equivalent to show that $I_l$ is not the zero ideal.

To show this, we split into cases. If $\pi_l$ is a twist of the Steinberg representation then the discriminant of the characteristic polynomial of $\Sym^{n-1} r(\phi_l)$ is non-zero (as the eigenvalues of $r(\phi_l)$ have eigenvalues whose ratio is a non-zero power of $l$), so we see that $I_l$ is not the zero ideal in this case. Otherwise, $N = 0$ and $r = \chi_1 \oplus \chi_2$ is a direct sum of two characters of $W_{\bQ_l}$. Let $\psi : W_{\bQ_l} \to E\llbracket T \rrbracket$ be the unramified character which sends $\psi$ to $1 + T$; then $r' = \chi_1 \psi \oplus \chi_2 \psi^{-1}$ is a deformation of $r$ to $E \llbracket T \rrbracket$ of determinant $\chi$ with the property that the discriminant of the characteristic polynomial of $\Sym^{n-1} r'(\phi_l)$ is non-zero in $E\llbracket T \rrbracket$. The existence of this deformation, together with \cite[Proposition 2.1.5]{gee061}, implies that $I_l$ cannot be the zero ideal in this case either.
\end{proof}
 We set $I = \prod_{l \in S - \{ p \}} I_l 
		A_{\cS}^T \subset A_{\cS}^T$. Then $\dim A_{\cS}^T / I = \dim 
		A_{\cS}^T - 1$.
		
		Suppose for contradiction that for each automorphic representation $\pi'$ contributing to $H_{Q_N}$ for some $N \geq 1$, there is a prime $l \in S$ such that $\pi'_l$ is ramified and there is an accessible refinement of $\pi'_l$ which is not $n$-regular. Then $I H_{Q_N}^T = 0$. On the other hand, a standard patching argument (cf. \cite[Lemma 6.10]{jack}) implies the existence of the following objects:
		\begin{itemize}
			\item A ring $S_\infty = \cT \llbracket S_1, \dots, S_{q_0} \rrbracket$ and an algebra homomorphism $S_\infty \to R_\infty = (A_{\cS}^T / I) \llbracket X_1, \dots, X_g \rrbracket$.
			\item A finite $R_\infty$-module $H_\infty$, which is finite free as $S_\infty$-module.
		\end{itemize}
		This is a contradiction. Indeed, \cite[Lemma 2.8]{KT} shows that the dimension of $H_\infty$ is the same, whether considered as $R_\infty$- or $S_\infty$-module. By freeness, its dimension as $S_\infty$-module is $\dim S_\infty = 4|T| + q_0$. On the other hand, its dimension as $R_\infty$-module is bounded above by $\dim R_\infty = \dim A_{\cS}^T - 1 + g =  4|T| + q_0 - 1$.
		
		We conclude that there exists an automorphic representation $\pi'$ 
		contributing to $H_{Q_N}$ for some $N \geq 1$ such that for each prime 
		$l \in S$ such that $\pi'_l$ is ramified, each accessible refinement of 
		$\pi'_l$ is $n$-regular. To complete the proof, we just need to explain 
		why $\pi'_q$ is $n$-regular for each prime $q \in Q_N$ such that 
		$\pi_q$ is ramified. However, our construction shows that $r_{\pi', 
		\iota}|_{I_{\Q_q}}$ has the form $C_q \oplus C_q^{-1}$, where $C_q : 
		I_{\Q_q} \to \overline{\Q}_p^\times$ has order a power of $p$. Since $p 
		> 2n$, by hypothesis, this is a fortiori $n$-regular. This completes 
		the proof.
\end{proof}
We can now finish the proof of Theorem \ref{thm_general_levels}.
\begin{proof}[Proof of Theorem \ref{thm_general_levels}]
	Let $\pi$ be a cuspidal automorphic representation of $\GL_2(\A_\Q)$ of weight $k \geq 2$, without CM, and such that each local component $\pi_l$ admits an accessible refinement. Let $p$, $\iota$, and $\pi'$ be as in the statement of Proposition \ref{prop_TW_congruence_to_n_regular}. Then $\Sym^{n-1} r_{\pi', \iota}$ is automorphic, by Proposition \ref{prop_general_levels_n_regular}.
	
	On the other hand, our assumptions imply that the residual representation $\Sym^{n-1} \overline{r}_{\pi, \iota} \cong \Sym^{n-1} \overline{r}_{\pi', \iota}$ is irreducible. We can therefore apply \cite[Theorem 4.2.1]{BLGGT} to conclude that $\Sym^{n-1} r_{\pi, \iota}$ is automorphic. This completes the proof.
\end{proof}
\bibliographystyle{amsalpha}
\bibliography{CMpatching}

\end{document}